\pgfplotsset{compat=1.17}
\numberwithin{equation}{section}
\def\HLO{\Upl\Upo}
\newcommand{\bbold}{\mathbb}
\def\R { {\bbold R} }
\def\Q { {\bbold Q} }
\def\Z { {\bbold Z} }
\def\N { {\bbold N} }
\def\T { {\bbold T} }
\def \cf{\operatorname{cf}}
\def \ci{\operatorname{ci}}
\def \te{\operatorname{te}}
\def \cH{\mathcal{H}}
\def \sign{\operatorname{sign}}
\def \sq{\operatorname{sq}}
 \def \nx{\operatorname{next}}
\def \ex{\operatorname{e}}
\renewcommand\epsilon{\varepsilon}
\def \d{\operatorname{d}}
\def \<{\langle}
\def \>{\rangle}
\def \rc{\operatorname{rc}}
\def \supp {\operatorname{supp}}
\def \((  {(\!(}
\def \)) {)\!)}
\def \Li{\operatorname{Li}}
\def \res{\operatorname{res}}
\def \k {{{\boldsymbol{k}}}}
\DeclareMathSymbol{\precequ}{\mathrel}{symbols}{"16}
\DeclareMathSymbol{\succequ}{\mathrel}{symbols}{"17}
\newcommand{\claim}[2][\!\!]{\medskip\noindent {\it Claim #1}\/: {\it #2}\medskip}
\newtheorem{theorem}{Theorem}[section]
\newcounter{tmptheorem}
\newtheorem{lemma}[theorem]{Lemma}
\newtheorem{prop}[theorem]{Proposition}
\newtheorem{cor}[theorem]{Corollary}
\theoremstyle{definition}
\theoremstyle{remark}
\newtheorem{remarkNumbered}[theorem]{Remark}
\newcommand{\abs}[1]{\lvert#1\rvert}
\def \No{\text{{\bf No}}}
\def \BM{\operatorname{BM}}
\let\oldi\i
\let\oldj\j
\renewcommand\i{\relax\ifmmode{\boldsymbol{i}}\else\oldi\fi}
\renewcommand\j{\relax\ifmmode{\boldsymbol{j}}\else\oldj\fi}
\renewcommand\leq{\leqslant}
\renewcommand\geq{\geqslant}
\renewcommand\preceq{\preccurlyeq}
\renewcommand\succeq{\succcurlyeq}
\renewcommand\le{\leq}
\renewcommand\ge{\geq}
\renewcommand\frak{\mathfrak}
\DeclareMathAlphabet{\mathbf}{OML}{cmm}{b}{it}
\DeclareFontFamily{U}{fsy}{}
\DeclareFontShape{U}{fsy}{m}{n}{<->s*[.9]psyr}{}
\DeclareSymbolFont{der@m}{U}{fsy}{m}{n}
\DeclareMathSymbol{\der}{\mathord}{der@m}{182}
\DeclareSymbolFont{der@m}{U}{fsy}{m}{n}
\DeclareMathSymbol{\derdelta}{\mathord}{der@m}{100}
\newcommand\ndeg{\operatorname{ndeg}}
\DeclareSymbolFont{imag@m}{OT1}{cmr}{m}{ui}
\DeclareMathSymbol{\imag}{\mathord}{imag@m}{105}
\DeclareFontFamily{OMS}{smallo}{}
\DeclareFontShape{OMS}{smallo}{m}{n}{<->s*[.65]cmsy10}{}
\DeclareSymbolFont{smallo@m}{OMS}{smallo}{m}{n}
\DeclareMathSymbol{\smallo}{\mathord}{smallo@m}{79}
\DeclareFontFamily{OMS}{largerdot}{}
\DeclareFontShape{OMS}{largerdot}{m}{n}{<->s*[.8]cmsy10}{}
\DeclareSymbolFont{largerdot@m}{OMS}{largerdot}{m}{n}
\DeclareMathSymbol{\largerdot}{\mathord}{largerdot@m}{15}
\DeclareMathSymbol{\llambda}{\mathord}{der@m}{108}
\DeclareMathSymbol{\rrho}{\mathord}{der@m}{114}
\def \upg{\upgamma}
\def \Upg{\Upgamma}
\def \upl{\uplambda}
\def \Upl{\Uplambda}
\def \upo{\upomega}
\def \Upo{\Upomega}
\newcommand{\equationqed}[1]{\[\pushQED{\qed}#1 \qedhere\popQED\]\let\qed\relax}
\newcommand{\alignqed}[1]{\begin{align*}\pushQED{\qed} #1 \qedhere\popQED\end{align*}\let\qed\relax}
\newcommand{\dminus}{\mathbin{\text{\@dminus}}}
\newcommand{\@dminus}{%
  \ooalign{\hidewidth\raise1ex\hbox{\bf.}\hidewidth\cr$\m@th-$\cr}%
}
\def\ddeg{\operatorname{ddeg}}
\def \Cc{\mathcal{C}}
\def \C{\mathcal{C}}
\def \Gi{\mathcal{C}^{<\infty}}
\def \inv{\operatorname{inv}}
\begin{document}

\title{Filling Gaps in Hardy Fields}

\author[Aschenbrenner]{Matthias Aschenbrenner}
\address{Kurt G\"odel Research Center for Mathematical Logic\\
Universit\"at Wien\\
1090 Wien\\ Austria}
\email{matthias.aschenbrenner@univie.ac.at}

\author[van den Dries]{Lou van den Dries}
\address{Department of Mathematics\\
University of Illinois at Urbana-Cham\-paign\\
Urbana, IL 61801\\
U.S.A.}
\email{vddries@illinois.edu}

\author[van der Hoeven]{Joris van der Hoeven}
\address{CNRS, LIX (UMR 7161)\\ 
Campus de l'\'Ecole Polytechnique\\  91120 Palaiseau \\ France}
\email{vdhoeven@lix.polytechnique.fr}

\begin{abstract} We show how to fill ``countable'' gaps in Hardy fields. We use this to prove that
any two maximal Hardy fields are back-and-forth equivalent.
\end{abstract}

\date{May 2024}

\maketitle

\tableofcontents

\begingroup
\setcounter{tmptheorem}{\value{theorem}}
\setcounter{theorem}{0} 
\renewcommand\thetheorem{\Alph{theorem}}

\section*{Introduction}\label{intr}

\noindent
By a ``Hardy field'' we mean in this paper a Hardy field at $+\infty$: 
 a subfield $H$ of the ring of germs at $+\infty$ of real valued differentiable functions  on intervals $(a,+\infty)$
 ($a\in\R$)  such that $H$ is closed under differentiation. For basics on Hardy fields, see
 \cite{Rosenlicht83a}.
Each  Hardy field is  an ordered differential field, the (total) ordering given by
$f\leq g$ iff~$f(t)\leq g(t)$ eventually (that is, for all sufficiently large $t$). Among functions whose germs
at $+\infty$ 
live in Hardy fields are all one-variable rational functions with real coefficients, the   real  exponential and logarithm functions (more generally,  Hardy's logarithmico-exponential functions~\cite{Har12a}),   Euler's $\Gamma$-function and Riemann's $\zeta$-function \cite{Rosenlicht83}, and many other ``regularly growing'' functions arising in mathematical practice. As a case in point, by \cite{vdDries} every o-minimal expansion of the ordered field
of real numbers gives rise to a Hardy field  (of germs of definable functions).  
Our main result is as  follows:

\begin{theorem}\label{mt} Let $H$ be a Hardy field, and let $A$, $B$ be countable subsets of $H$ such that $A < B$. Then $A< f < B$
for some $f$ in a
Hardy field extension of $H$.
\end{theorem}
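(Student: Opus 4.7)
\smallskip

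First I would perform trivial reductions: if $A$ has a maximum or $B$ has a minimum, then either the cut $(A,B)$ is already realized in $H$ (an average of $\max A$ and $\min B$ works, if both exist) or a Hardy field extension by an infinitesimal shift suffices. So one may assume $A$ has no maximum and $B$ no minimum, and extract cofinal strictly monotone sequences $(a_n)$ in $A$ and $(b_n)$ in $B$ with $a_n \nearrow$, $b_n \searrow$, and $b_n - a_n \to 0$ with respect to the natural valuation of $H$ (otherwise some $(a_n+b_n)/2 \in H$ fills the cut).

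The next step is to reduce to a Hardy field with strong closure properties. Every Hardy field extends to a larger Hardy field that is real closed, closed under integration and exponentiation, and in fact $\omega$-free and newtonian in the valuation-theoretic sense developed by the authors in prior work; cuts survive each such extension. So I may assume $H$ is ``$H$-closed'' of this form. In such a field, an unrealized cut $(A, B)$ corresponds to a pseudo-Cauchy sequence $(h_n)$ in $H$ without pseudolimit in $H$, and the abstract extension $H\langle f\rangle$, with $f$ an abstract pseudolimit, is again a pre-d-valued differential field in which $\sign P(f)$ is determined for every nonzero $P \in H\{Y\}$ by the newtonianity of $H$.

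The substantive step is then the \emph{realization} of $f$ as an actual germ of a $\C^\infty$-function on some $(a,+\infty)$. Using the countability of $A$ and $B$, I would construct $f$ pointwise by smoothly interpolating between the $a_n$ and $b_n$ on intervals $[t_n, t_{n+1}]$ with $t_n \to +\infty$, glued with standard bump functions. A diagonal argument over the countably many elements of $A \cup B$ and the countably many derivative levels $k$ allows one to arrange not merely $a_n < f < b_n$ eventually but also that each derivative $f^{(k)}$ asymptotically tracks the corresponding derivative-level pseudolimit data $a_n^{(k)}, b_n^{(k)}$. The countability hypothesis enters essentially here, making the infinitely many asymptotic conditions compatible in a single germ.

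The main obstacle — and the heart of the proof — is to verify that $H\langle f\rangle$, now viewed concretely as a differential field of germs, is indeed a Hardy field: every nonzero differential polynomial $P(f)$ with $P \in H\{Y\}$ must have eventually constant sign. The abstract pre-d-valued theory fixes $\sign P(f)$ at the pseudolimit level; what remains is to show that this sign is actually assumed by the germ $P(f)(t)$ for $t$ large. This is precisely where the derivative-matching built into the interpolation pays off: since $f^{(k)}$ realizes the pseudolimit at every order, evaluating $P$ at $f$ as a germ produces asymptotic agreement with the abstract Newton-polynomial sign computation, which converts the abstract sign assertion into an eventual sign assertion for actual germs.
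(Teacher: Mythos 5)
There is a genuine gap, and it is at the very first substantive reduction: your claim that an unrealized cut $(A,B)$ in a closed Hardy field ``corresponds to a pseudo-Cauchy sequence $(h_n)$ in $H$ without pseudolimit in $H$'' is false, and the parenthetical justification (``otherwise some $(a_n+b_n)/2\in H$ fills the cut'') does not establish it. A cut in an ordered field decomposes, via Alling's criterion (Lemma~\ref{eta} of the paper), into three possibilities: a residue-field extension, an immediate (pc-sequence) extension, or a \emph{value-group} extension. Since one can arrange $\R\subseteq H$, the residue field is already $\R$, but the value-group case remains and is not a pc-sequence at all: for instance the gap between $A=\{\exp_n(x)\}_n$ and $B=\emptyset$, or more generally any ``additive/group cut'' where the element $f$ filling the gap has $vf\notin v(H^\times)$. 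For such gaps $v(b_n-a_n)$ stays bounded and no averaging trick produces an element of $H$ between $A$ and $B$; the obstruction is that the value group of $H$ fails to be $\eta_1$, not that some pc-sequence diverges. Your entire construction (interpolating between $a_n$ and $b_n$ with derivative-matching, then invoking newtonianity to fix $\sign P(f)$) is aimed only at the immediate-extension case, which is roughly Sections~\ref{psh1}--\ref{psh2} of the paper; the larger part of the paper (Sections~\ref{bh}--\ref{fbg}) is devoted precisely to the value-group gaps you have skipped, via overhardian germs, wide gaps, the classification of Case~(b) extensions of the asymptotic couple, and the reverse-engineering diagonal construction. Newtonianity gives you no sign information for a differentially transcendental $f$ generating a non-immediate extension; there one must instead control the value group $v(H\langle f\rangle^\times)=v(H^\times)\oplus\bigoplus_i\Z v(f^{\langle i\rangle})$ directly, which is a different argument.

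Even restricted to the pc-sequence case, your sketch underestimates the difficulty of the sign verification. It is not enough to match $f^{(k)}$ against ``derivative-level pseudolimit data'': the paper's criterion (Lemma~\ref{unad}) requires $\derdelta^k\bigl((F-F_m)/f_m\bigr)\preceq 1$ for \emph{all} active $\phi\in H^{>}$ with $\derdelta=\phi^{-1}\der$, i.e.\ uniformly over all scales of differentiation, and a single interpolation cannot achieve this when $H^{>\R}$ has countable coinitiality. The paper resolves this by constructing one candidate $F_{\phi_n}$ per scale and gluing them with a carefully designed partition of unity $(\beta_n)$ whose $\derdelta_n^k$-derivatives are uniformly bounded; this gluing, and the reduction to it, is the actual content of Section~\ref{psh2} and has no counterpart in your proposal. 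So the proposal as written would need both a correct treatment of value-group gaps and a genuine replacement for the partition-of-unity argument before it could be considered a proof.
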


\noindent
Some of the gaps $A<B$ in this theorem correspond to 
pseudo-cauchy sequences ({\em pc-sequences\/} in our abbreviated terminology). The relevant pc-sequences have length $\omega$, and we can handle them using results from our book [ADH] and from~\cite{ADH2} in an essential way, and various glueing techniques. This is done in  Sections~\ref{psh1} and~\ref{psh2}. This dependence on [ADH] and~\cite{ADH2} makes this the deepest part of the present paper, but most of our work here deals with other gaps. 

Sj\"odin~\cite{S} deals with the case $B=\emptyset$ for $\Cc^{\infty}$-Hardy fields (whose elements are germs of $\Cc^{\infty}$-functions). This provides an important clue for other kinds of gaps:  Sj\"odin's construction of a suitable
$f$ can be varied in several ways, and that gives us a handle
on the relevant remaining cases. In Section~\ref{bh} we treat $B=\emptyset$, basically as in \cite{S}, and organized so that it helps in Section~\ref{fwg} where we deal with ``wide'' gaps. For the remaining gaps we use  results about asymptotic couples from \cite{ADH3} and an elaboration of  the ``reverse engineering'' in \cite{S};  see Sections~\ref{vg} and~\ref{fbg}.   (Sections~\ref{prel} and~\ref{sec:analytic} contain mainly  analytic preliminaries, and Section~\ref{secnmh} applies material in Sections~\ref{bh} and~\ref{fwg} to show that there are $2^{\mathfrak{c}}$ many maximal Hardy fields where $\mathfrak{c}=2^{\aleph_0}$ is the cardinality of the continuum. Here and below, ``maximal'' means ``maximal under inclusion''.) 

Most of \cite{ADH2} concerns {\em differentially algebraic\/} extensions of Hardy fields. The present paper complements this with a ``good enough'' overview of {\em differentially transcendental\/} Hardy field extensions $H\<y\>$ of a Liouville closed Hardy field $H\supseteq \R$.

 An equivalent formulation of Theorem~\ref{mt} is that every maximal Hardy field is~$\eta_1$.
The property $\eta_1$ (Hausdorff~\cite{H}) is defined at the end of the introduction. The main result of \cite{ADH2} is that all
maximal Hardy fields, as ordered differential fields, are $\upo$-free newtonian Liouville closed $H$-fields, and thus
by  [ADH, 15.0.2,~16.6.3]
elementarily equivalent to $\T$, the ordered differential
field of transseries. (On $\T$, see [ADH, Appendix~A] or \cite{ADH-ICM}.) Combining this fact with Theorem~\ref{mt}
and a result from \cite{ADH1+} we shall derive in Section~\ref{sec:iso}: 

\begin{cor}\label{cormt} Assuming $\operatorname{CH}$ \textup{(}the Continuum Hypothesis\textup{)}, every maximal Hardy field is isomorphic as an ordered differential field to the ordered field $\No(\omega_1)$ of surreal numbers of countable length equipped with the derivation $\der_{\operatorname{BM}}$ of \cite{BM}. 
\end{cor}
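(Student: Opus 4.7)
The plan is to combine three ingredients. First, by the main result of \cite{ADH2}, every maximal Hardy field $H$ is an $\upo$-free newtonian Liouville closed $H$-field, and thus by [ADH, 15.0.2, 16.6.3] elementarily equivalent to $\T$ as an ordered differential field. Second, Theorem \ref{mt} together with maximality of $H$ forces $H$ to be $\eta_1$: any countable cut $A<B$ in $H$ is realized by some $f$ in a Hardy field extension of $H$, and then by maximality this $f$ must already lie in $H$.

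Next I would pin down cardinalities. The set of germs at $+\infty$ of real-valued differentiable functions on half-lines $(a,+\infty)$ has cardinality $\mathfrak c$, so every Hardy field has cardinality at most $\mathfrak c$; on the other hand, being $\eta_1$ forces cardinality at least $\aleph_1$. Under $\operatorname{CH}$, every maximal Hardy field has cardinality exactly $\aleph_1$.

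The third ingredient is the analogous description of the target $(\No(\omega_1),\der_{\operatorname{BM}})$: the cited result from \cite{ADH1+} should tell us that this ordered differential field is also an $\upo$-free newtonian Liouville closed $H$-field of cardinality $\aleph_1$ that is $\eta_1$. Granting this, both $H$ and $(\No(\omega_1),\der_{\operatorname{BM}})$ are $\aleph_1$-sized, $\eta_1$ models of $\operatorname{Th}(\T)$ lying inside the elementary class of $\upo$-free newtonian Liouville closed $H$-fields.

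From this the corollary follows by a back-and-forth argument of length $\omega_1$: enumerate both structures as $(a_\alpha)_{\alpha<\omega_1}$ and $(b_\alpha)_{\alpha<\omega_1}$, and alternately extend a countable partial isomorphism by adjoining one generator at a time. For \emph{transcendental} extension steps, Theorem \ref{mt} is exactly the tool that places a new element in its prescribed countable cut. For \emph{differentially algebraic} extension steps, one uses the structure theory of $\upo$-free newtonian Liouville closed $H$-fields developed in [ADH], which governs the realization of differential-algebraic types over countable subsets. The main obstacle is the uniformity of the back-and-forth: each step must preserve $\upo$-freeness and newtonianity of the ambient field while the enumeration sweeps out both sides. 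Packaging these extension steps into a clean uniqueness statement for $\aleph_1$-sized $\eta_1$ $\upo$-free newtonian Liouville closed $H$-fields is precisely what the result imported from \cite{ADH1+} should provide; once it is in place, the isomorphism $H\cong(\No(\omega_1),\der_{\operatorname{BM}})$ is the union of the partial isomorphisms.
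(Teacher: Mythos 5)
Your overall strategy --- reduce to a back-and-forth of length $\omega_1$ between two $\eta_1$ closed $H$-fields of cardinality $2^{\aleph_0}=\aleph_1$ with constant field $\R$ --- is the paper's strategy, and your first three ingredients (elementary equivalence to $\T$, the $\eta_1$ property of maximal Hardy fields via Theorem~\ref{mt}, and the facts about $\No(\omega_1)$ from \cite{ADH1+}) are exactly the ones the paper assembles. The gap is in the step you leave as a black box: the extension step of the back-and-forth. You attribute the needed ``clean uniqueness statement'' to the result imported from \cite{ADH1+}, but that reference only supplies the properties of $\No(\omega_1)$ (that it is a closed $H$-field, $\eta_1$, of cardinality $2^{\aleph_0}$, with small derivation and constants $\R$); it says nothing about extending partial isomorphisms. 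The actual engine is the embedding theorem [ADH, 16.2.3] for $\upo$-free newtonian Liouville closed $H$-fields, and the paper's Lemma~\ref{iso1} is devoted to checking its hypotheses in this setting.

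Two concrete obstacles in your sketch deserve attention. First, the element-by-element back-and-forth (``adjoining one generator at a time'') runs into precisely the difficulty you name: a single-generator extension of a countable closed $H$-subfield need not be $\upo$-free or newtonian, so the inductive hypothesis is not preserved. The paper sidesteps this by never working with single generators: by downward L\"owenheim--Skolem every countable subset of a closed $H$-field sits inside a countable \emph{closed} $H$-subfield, and Lemma~\ref{iso2} takes as its back-and-forth system the isomorphisms between countable closed $H$-subfields, extending them in one stroke via [ADH, 16.2.3]. (In particular Theorem~\ref{mt} is not invoked inside the back-and-forth at all; its only role is to certify that maximal Hardy fields are $\eta_1$.) Second, your phrase ``$\aleph_1$-sized, $\eta_1$ models of $\operatorname{Th}(\T)$'' skirts the saturation issue the paper explicitly flags: these structures are \emph{not} $\aleph_1$-saturated as ordered differential fields, because $\R$ is not $\aleph_1$-saturated as an ordered field. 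The embedding lemma therefore has to be applied with only $\aleph_1$-saturation of the underlying \emph{ordered set} (which is what $\eta_1$ gives), and the constant field of the countable source has to be handled separately, by first extending the embedding to $E(C)$ using [ADH, 10.5.15, 10.5.16] and the fact that an archimedean ordered field admits a unique embedding into $\R$. Without these two repairs the back-and-forth does not close up.
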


\noindent
Thus with $\operatorname{CH}$, all maximal Hardy fields are isomorphic as ordered differential fields. Without $\operatorname{CH}$, the proof yields a nonempty back-and-forth system between any  
maximal Hardy field and the ordered differential field $\No(\omega_1)$. 
 (See [ADH, B.5] for  ``back-and-forth system''.) Then by Karp~\cite{Karp}, cf.~\cite[Theorem~3]{Barwise}, any maximal Hardy field and the ordered differential field $\No(\omega_1)$ are
 $\infty\omega$-equivalent. This is a strengthening of  \cite[Corollary~1]{ADH2}. 
 
 Key ingredients for proving Theorem~\ref{mt} include Lemma~\ref{unad}, the construction of a partition of unity in Section~\ref{psh2}, the reduction to Case \textup{(b)}  stated in Lemma~\ref{caseb}, and the elaborated reverse engineering in Section~\ref{fbg} that culminates in a diagonal argument. (The idea behind the original reverse engineering from \cite{S} is sketched in the remarks that follow the statement of Theorem~\ref{ha8}.)

Theorem~\ref{mt} answers a question of Ehrlich~\cite{Ehrlich} and establishes Conjecture~B from~\cite{ADH-ICM}.
(For Conjecture~A,  see \cite[Theorem~A]{ADH2}.) 

 In this paper our Hardy fields are not assumed to be $\Cc^{\infty}$-Hardy fields, and we do not know if maximal $\Cc^{\infty}$-Hardy fields are necessarily maximal Hardy fields (even under $\operatorname{CH}$). So the question arises if our main results go through for maximal $\Cc^{\infty}$-Hardy fields. This is indeed the case, and it is not hard to refine some of our proofs to that effect. The same question arises for the still more special $\Cc^{\omega}$-Hardy fields (analytic Hardy fields). Our main results also go through in that setting, but this is more delicate. We shall treat these refinements in a follow-up paper. 

We thank the referee for helpful comments.

\subsection*{Notations and conventions} We let $i$, $j$, $k$, $l$, $m$, $n$ range over $\N=\{0,1,2,\dots\}$. As in [ADH] the convention is that the ordering of an
ordered set, ordered  abelian group, or ordered field
is a {\it total}\/ ordering. Let $S$ be an ordered set. For any element~$b$ in an ordered set extending $S$ we set
$$S^{<b}\ :=\ \{s\in S:\, s<b\}, \qquad S^{>b}\ :=\ \{s\in S:\, s>b\}.$$
We have the usual notion of a set $P\subseteq S$ being cofinal in $S$ (respectively, coinitial in~$S$). In addition, sets $P, Q\subseteq S$ are said to be {\em cofinal\/} if for every $p\in P$ there exists~$q\in Q$ with $p\le q$ and for every $q\in Q$ there exists $p\in P$ with $q\le p$; replacing here $\le$ by $\ge$, we obtain the notion of $P$ and $Q$ being {\it coinitial.}\/ 
Thus~$P$ and $P^\downarrow=\{s\in S:\, \text{$s\leq p$ for some $p\in P$}\}$ are cofinal, hence
$P$, $Q$ are cofinal iff~$P^\downarrow=Q^\downarrow$.
Likewise,
$P$ and $P^\uparrow=\{s\in S:\, \text{$s\geq p$ for some $p\in P$}\}$ are coinitial, and 
$P$, $Q$ are coinitial iff $P^\uparrow=Q^\uparrow$.  We let~$\cf(S)$
and~$\ci(S)$  denote the cofinality and coinitiality of $S$; see [ADH, 2.1].
We say that $S$ is $\eta_1$ if for all countable~$P,Q\subseteq S$ with
$P < Q$ there exists an $s\in S$ with $P<s<Q$; in particular, such
$S$ is uncountable (cf.~Lemma~\ref{lmif} below), has no least element, no largest element, and
is dense in the sense that for all $p,q\in S$ with $p< q$ there exists $s\in S$ with $p<s<q$. We say that an ordered abelian group
(ordered field) is $\eta_1$ if its underlying ordered set is 
$\eta_1$.    
For basic facts about various $\eta_1$-structures, see \cite[Ka\-pi\-tel~IV]{PC}.

Let $(a_{\rho})$ be a well-indexed sequence. Its {\em length\/} is the (infinite limit) ordinal that
is the order type of its well-ordered set of indices $\rho$ (cf.~[ADH, p.~73]). Note that if $(a_{\rho})$ has countable length, then its length has cofinality $\omega$, and thus $(a_{\rho})$ has a cofinal subsequence~$(a_{\rho_n})$
of length $\omega$. 

Let $(\Gamma,\psi)$ be an asymptotic couple. As in [ADH, 6.5] we set $\Gamma_{\infty}:= \Gamma\cup \{\infty\}$, and
adopt  the convention that $\psi(0)=\psi(\infty)=\infty> \Gamma$. 
For $\alpha\in \Gamma_{\infty}$ we use~$\alpha^\dagger$ as an alternative notation for $\psi(\alpha)$ and define~$\alpha^{\<n\>}\in \Gamma_{\infty}$ by recursion on $n$ by~$\alpha^{\<0\>}:=\alpha$ and~$\alpha^{\<n+1\>}:=(\alpha^{\<n\>})^\dagger$.
We simplify terminology by calling an $H$-field {\em closed\/} (``$H$-closed'' in \cite{ADH-ICM,ADH2}) if it is $\upo$-free, newtonian, and Liouville closed. 

As in \cite{ADH5,ADH2}, $\Cc$ is the ring of germs at $+\infty$ of continuous functions~${[a,+\infty)\to \R}$, ${a\in\R}$, and
$\Cc^\times:=\{f\in \C:\, \text{$fg=1$  for some $g\in \Cc$}\}$, its multiplicative group of units. We often use the same notation for a real-valued function 
on a subset of~$\R$ containing a halfline $[a, +\infty)$, $a\in \R$,  as for its germ (at $+\infty$) if the resulting ambiguity is harmless.
With this convention, given a property~$P$ of real numbers
and $g\in \Cc$ we say that {\em $P\big(g(t)\big)$ holds eventually\/} if~$P\big(g(t)\big)$ holds for all sufficiently large real $t$.
We equip $\Cc$ with the partial ordering given by~$f\leq g:\Leftrightarrow f(t)\leq g(t)$, eventually. We define the asymptotic relations $\preceq$,~$\prec$,~$\sim$ on $\Cc$ as follows: for $f,g\in \Cc$,
\begin{align*} f\preceq g\quad &:\Longleftrightarrow\quad \text{there exists $c\in \R^{>}$ such that $|f|\le c|g|$,}\\
f\prec g\quad &:\Longleftrightarrow\quad \text{$g\in \Cc^\times$ and $\lim_{t\to \infty} f(t)/g(t)=0$} \\
 &\phantom{:} \Longleftrightarrow\quad \text{$g\in \Cc^\times$ and $\abs{f}\leq c\abs{g}$ for all $c\in\R^>$},\\
f\sim g\quad &:\Longleftrightarrow\quad \text{$g\in \Cc^\times$ and
$\lim_{t\to \infty} f(t)/g(t)=1$}\\ 
\quad&\phantom{:} \Longleftrightarrow\quad f-g\prec g.
\end{align*}
For $r\in \N\cup\{\infty\}$ we let
$\Cc^r$ be the subring of $\Cc$ consisting of the germs of $r$ times continuously differentiable functions $[a,+\infty)\to \R$, $a\in \R$. Thus $\Cc^{<\infty} :=  \bigcap_{n}\Cc^n$ is a differential ring with the obvious derivation, 
and has $\Cc^{\infty}$ as a differential subring.

\endgroup

\section{Preliminaries on Hausdorff Fields}\label{prel} 

\noindent
This section contains basic facts about Hausdorff fields.
 After a subsection on pc-sequences of length $\omega$ in an ordered field we 
 construct pseudolimits of such pc-sequences in the setting of Hausdorff fields, and show how to extend the value group of a Hausdorff field.

\subsection*{Ordered fields} Let $K$ be an ordered field.
We view $\Q$ as a subfield of $K$ in the natural way,
and consider $K$ also as a valued field with respect to the
standard valuation given by the valuation ring
$$\mathcal{O}\ =\ \big\{a\in K:\,\text{$|a|\le n$ for some $n$}\big\},$$
the smallest convex subring of $K$; see [ADH, p.~175].

\begin{lemma}[{Alling~\cite{Alling1,Alling2}}]\label{eta} The following two conditions on $K$ are equivalent:
{\samepage\begin{enumerate}
\item[\textup{(i)}] $K$ is $\eta_1$;
\item[\textup{(ii)}] the residue field of $K$ is isomorphic to $\R$, every pc-sequence of length $\omega$
in~$K$ has a pseudolimit in~$K$, and the value group of $K$ is 
$\eta_1$.
\end{enumerate}}
\end{lemma}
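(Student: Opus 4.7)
For the direction (i)$\Rightarrow$(ii), I would verify each of the three clauses separately by applying the $\eta_1$-property of $K$ to a suitable pair of countable sets. Since $\mathcal{O}$ is the convex hull of $\Q$ in $K$, the residue field is archimedean and embeds canonically in $\R$; surjectivity onto $\R$ follows by applying $\eta_1$ to $\Q^{<r}\subseteq K$ and $\Q^{>r}\subseteq K$, which produces for any $r\in\R$ an element of $K$ with residue $r$. For a pc-sequence $(a_n)_{n<\omega}$, after thinning out so that $\gamma_n:=v(a_{n+1}-a_n)$ is strictly increasing, the intervals $[a_n-|a_{n+1}-a_n|,\,a_n+|a_{n+1}-a_n|]$ are nested; applying $\eta_1$ to their endpoints produces $a\in K$ strictly inside all of them, and one then checks $v(a-a_n)>\gamma_n$, so $a$ is a pseudolimit. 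For $\Gamma$, given countable $P'<Q'$ there, pick positive $a_p,b_q\in K$ with $v(a_p)=p$ and $v(b_q)=q$; then $p<q$ amounts to $\{nb_q:n\geq 1,\,q\in Q'\}<\{a_p/n:n\geq 1,\,p\in P'\}$ in $K$, and $\eta_1$ produces $c\in K$ with $v(c)$ strictly between $P'$ and $Q'$.

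For (ii)$\Rightarrow$(i), let $P<Q$ be countable in $K$; after passing to cofinal/coinitial subsequences, take $(p_n)$ strictly increasing in $P$ and $(q_n)$ strictly decreasing in $Q$ with $p_n<q_n$, and set $\Delta_n:=q_n-p_n>0$ (WLOG decreasing) and $\gamma_n:=v(\Delta_n)$, nondecreasing in $\Gamma$. The plan is to produce $s\in K$ with $p_n<s<q_n$ for all $n$ by case analysis on $(\gamma_n)$. If $(\gamma_n)$ is cofinal in $\Gamma$, positivity of the telescoping summands in $p_m-p_n=\sum_{k=n}^{m-1}(p_{k+1}-p_k)$ gives $v(p_m-p_n)\geq\gamma_n\to\infty$, so both $(p_n)$ and $(q_n)$ become pseudo-Cauchy (after a further thinning); the pseudolimit clause of (ii) supplies pseudolimits $s,t\in K$, and $v(s-t)\geq\gamma_n$ for all $n$ forces $s=t$. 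The condition $v(s-p_n)\to\infty$ together with $p_n$ strictly increasing precludes $s\leq p_n$ for any $n$, and symmetrically $s\geq q_n$ is precluded, so $p_n<s<q_n$ throughout.

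When $(\gamma_n)$ is bounded above in $\Gamma$, I split further. If $\gamma_n$ is eventually constant equal to $\gamma$, pick $\delta\in K^{>0}$ with $v(\delta)=\gamma$ and rescale: $\tilde p_n:=(p_n-p_0)/\delta$ and $\tilde q_n:=(q_n-p_0)/\delta$ lie in $\mathcal{O}$, with residues $\overline{\tilde p_n}\uparrow r$ and $\overline{\tilde q_n}\downarrow r'$ in $\R$; picking $\rho\in[r,r']$, lifting to $\hat\rho\in\mathcal{O}$ via surjectivity of the residue map, and setting $s:=p_0+\hat\rho\,\delta$ (adjusted by an element of $\fm$ if necessary) yields an element with $p_n<s<q_n$ via a residue computation. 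If $(\gamma_n)$ is strictly increasing but bounded, I use the $\eta_1$-property of $\Gamma$ to find $\gamma^*\in\Gamma$ strictly above all $\gamma_n$ yet below any upper bound, and combine this with a rescaling/pseudolimit step. The principal obstacle is the degenerate subcase where $r=r'$ and both residue sequences attain the common value: this reduces to an analogous $\eta_1$-type interpolation one level deeper inside $\fm$, and one must argue, reinvoking the three clauses of (ii), that this nested problem either terminates or collapses into the pseudo-Cauchy setting already treated. Ensuring the constructed $s$ lies \emph{strictly} (not merely weakly) between $P$ and $Q$ in every subcase is what makes the infinitesimal shifts --- enabled by surjectivity onto $\R$ and $\eta_1$-ness of $\Gamma$ --- indispensable.
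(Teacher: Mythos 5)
The paper does not prove this lemma; it is attributed to Alling and the text immediately after it points to \cite[1.4]{Moresco} and \cite[p.~160]{PC} for the (standard) proof. So there is no in-paper argument to compare with, and your proposal must stand on its own. Its overall architecture — verify the three clauses of (ii) separately from $\eta_1$, and for the converse classify a countable cut by the behaviour of $\gamma_n=v(q_n-p_n)$ into a pseudo-Cauchy case, a residue-field case, and a value-group case — is the classical route and is sound. The direction (i)$\Rightarrow$(ii) is essentially correct, apart from one slip: the intervals $[a_n-|a_{n+1}-a_n|,a_n+|a_{n+1}-a_n|]$ need \emph{not} be nested, and worse, a left endpoint of one can exceed a right endpoint of another (take $a_{n+1}-a_n<0$; then $a_n-d_n>a_m+d_m$ for $m>n+1$ is possible), so ``$P<Q$'' can fail for the endpoint sets. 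This is repaired either by first passing to a monotone subsequence (as in the reduction of Lemma~\ref{pcsums}) or by doubling the radii to $2|a_{n+1}-a_n|$.

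The genuine gaps are both in (ii)$\Rightarrow$(i). First, in the subcase where $\gamma_n$ is eventually constant and the rescaled residues satisfy $r=r'$ with one of the two sequences attaining the common value, you do not give an argument — you only assert that the nested problem ``either terminates or collapses.'' In fact no recursion is needed: if, say, $\operatorname{res}\big((p_m-p_N)/\delta\big)$ is eventually $0$, one needs $u\in\fm$ with $u>\varepsilon_m:=(p_m-p_N)/\delta$ for all $m$, and this follows directly from $\eta_1$-ness of $\Gamma$ (apply it to $\{0\}<\{v(\varepsilon_m)\}$ in $\Gamma$ to get $\theta$ with $0<\theta<v(\varepsilon_m)$ for all $m$, and take $u>0$ with $vu=\theta$); then $s=p_N+\delta u$ works. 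You should supply this. Second, the subcase where $(\gamma_n)$ is strictly increasing but bounded above in $\Gamma$ is not actually handled: an element ``strictly above all $\gamma_n$ yet below any upper bound'' of $\{\gamma_n\}$ does not exist (it would itself be such an upper bound), and a single shift $p_N+\delta$ with $v\delta>\gamma_n$ for all $n$ need not exceed all $p_m$, since $v(p_m-p_N)$ is only bounded below by $\gamma_N$. This case requires its own dichotomy — either all $v(p_m-p_n)$ for large $m>n$ eventually lie above every $\gamma_l$ (then choose, by $\eta_1$-ness of $\Gamma$, an element $\theta$ strictly between the countable sets $\{\gamma_n\}$ and $\{v(p_m-p_N)\}$ and shift by $\delta$ with $v\delta=\theta$), or infinitely many such differences have valuation $\le$ some $\gamma_l$, in which case one extracts a pc-subsequence and falls back on the pseudolimit clause. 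As written, your proof of the harder direction is incomplete at exactly these two points.
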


\noindent
This is well-known, see 
\cite[1.4]{Moresco} or   \cite[p.~160]{PC}. 
For a maximal Hardy field $H$ we have~${\R\subseteq H}$, and so the residue field
of $H$ is indeed isomorphic to $\R$. Thus in order to show that $H$ is $\eta_1$ it remains to show that all pc-sequences in $H$ of length~$\omega$
have a pseudolimit in $H$ and that the value group of $H$ is $\eta_1$. The former will be taken care of
in Sections~\ref{psh1},~\ref{psh2}, and the latter will be handled in Sections~\ref{bh}--\ref{fbg}.

\medskip\noindent
We continue with generalities on
pc-sequences of length $\omega$ in our ordered field $K$. 

Let $(a_n)$ be a pc-sequence in $K$ of length $\omega$. When does $(a_n)$ have a pseudolimit in $K$?
We
indicate below a reduction of this question to something that
turns out to be more manageable. First, $(a_n)$ and any
infinite subsequence have the same pseudolimits in $K$, and so by passing to such a
subsequence we can arrange that~$(a_n)$ is either strictly increasing or strictly decreasing. Replacing $(a_n)$ by~$(-a_n)$, the strictly decreasing case reduces to the strictly increasing case. Replacing $(a_n)$ by~$(a+a_n)$ for a suitable
$a\in K$, the strictly increasing case reduces to the strictly increasing case where in addition all terms are positive.
Next, assume~$(a_n)$ is strictly increasing and all terms are
positive. Dropping some initial terms, if necessary, we arrange in
addition that $a_{n}-a_{n-1}\succ a_{n+1}-a_n$ for all $n\ge 1$.
Then we define $b_n$ by $b_0:=a_0$ and
$b_n:=a_n-a_{n-1}$ for $n\ge 1$, so that $b_n>0$, $b_n\succ b_{n+1}$,
and $a_n=b_0+\cdots + b_n$, for all $n$. 
 
 Reversing this last step, starting with a sequence $(b_n)$ in $K$ such that $b_n>0$ and~${b_n\succ b_{n+1}}$ for all $n$, we obtain
 a strictly increasing pc-sequence $(a_n)$ of positive
 terms $a_n$ by $a_n=b_0+\cdots + b_n$. This leads to:
 
\begin{lemma}\label{pcsums} The following are equivalent for $K$:
\begin{enumerate}
\item[\textup{(i)}] all pc-sequences in $K$ of length $\omega$ have a pseudolimit in $K$;
\item[\textup{(ii)}] for every sequence $(b_n)$ in $K$ with $b_n>0$ and $b_n\succ b_{n+1}$ for all $n$, the pc-sequence $(a_n)$ with $a_n=b_0+\cdots + b_n$ for all $n$ has a pseudolimit in $K$.
\end{enumerate}
\end{lemma}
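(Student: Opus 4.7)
The plan is to observe that (i) trivially implies (ii), and that (ii) implies (i) via exactly the sequence of reductions already outlined in the paragraphs immediately preceding the statement of the lemma.

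For (i) $\Rightarrow$ (ii), I would simply check that the partial sums $a_n := b_0 + \cdots + b_n$ appearing in (ii) form a pc-sequence of length $\omega$. Since $b_n > 0$ and $b_n \succ b_{n+1}$ for all $n$, the consecutive differences $a_n - a_{n-1} = b_n$ satisfy $a_n - a_{n-1} \succ a_{n+1} - a_n$ for every $n \geq 1$, which is the pc-sequence condition for a sequence of length $\omega$. Then (i) yields a pseudolimit of $(a_n)$ in $K$.

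For (ii) $\Rightarrow$ (i), let $(a_n)$ be an arbitrary pc-sequence in $K$ of length $\omega$. I would follow the four-step reduction sketched in the text above the lemma: first pass to a strictly monotone infinite subsequence (which is again a pc-sequence and has the same pseudolimits in $K$); then, if necessary, negate to make it strictly increasing (which simply negates pseudolimits); then translate by a suitable constant $c \in K$ to arrange that all terms are positive (translating pseudolimits accordingly); and finally drop finitely many initial terms to arrange that $a_n - a_{n-1} \succ a_{n+1} - a_n$ holds for every $n \geq 1$, which is possible by the very definition of a pc-sequence. Setting $b_0 := a_0 > 0$ and $b_n := a_n - a_{n-1} > 0$ for $n \geq 1$ then produces a sequence $(b_n)$ to which (ii) applies, and the pseudolimit it supplies for the transformed $(a_n)$, after undoing the translation and negation, supplies a pseudolimit of the original pc-sequence in $K$.

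There is no real obstacle here; the argument is pure bookkeeping. The one point worth verifying carefully is that each reduction step preserves the property ``has a pseudolimit in $K$''. For the subsequence step this uses the elementary fact that a pseudolimit of a pc-sequence of length $\omega$ is automatically a pseudolimit of every cofinal subsequence and vice versa (since pseudolimits are characterized by the eventual strict increase of $v(a_n - \ell)$, a condition preserved by passage to and from cofinal subsequences of a pc-sequence). The remaining three operations (negation, translation, and dropping a finite prefix) are transparent in this regard.
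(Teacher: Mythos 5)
Your proposal is correct and follows exactly the route the paper takes: the paper's ``proof'' of this lemma is precisely the reduction discussion in the paragraphs preceding its statement (subsequence, negation, translation, dropping a prefix, then encoding as successive differences), and your write-up reproduces this with the same bookkeeping checks. Nothing further is needed.
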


\subsection*{Hausdorff fields} As in \cite{ADH5} we define a {\em Hausdorff field\/} to be a
subfield of $\Cc$, that is, a subring of $\Cc$ that happens to be a field. Let $H$ be a Hausdorff field. Then $$\big\{f\in H:\, \text{$f(t)>0$,   eventually}\big\}$$ is the strictly positive cone for a (total) ordering on $K$ that makes $H$ an ordered field, and below we consider $H$ as an ordered field in this way. This yields the convex subring
$$\mathcal{O}\ :=\ \big\{f\in H:\,\text{$|f|\le n$ for some $n$}\big\},$$
which is a valuation ring of $H$, and we consider $H$ accordingly as a valued field as well. Restricting the relations $\preceq$, $\prec$, $\sim$ on $\Cc$ to $H$ gives exactly the asymptotic relations $\preceq$, $\prec$, $\sim$ on $H$ that it comes equipped with as a valued field.

\subsection*{Extending Hausdorff fields with pseudolimits}
Let $H$ be a Hausdorff field, and let a sequence  
$$f_0 \succ f_1 \succ f_2 \succ \cdots$$ in $H^{>}$
be given. Then $(f_0 + \cdots +f_n)$ is a pc-sequence
in $H$. We shall construct a pseudolimit of this pc-sequence in some 
Hausdorff field extension of $H$ (possibly $H$ itself).
To conform with some later parts we let $t$ range over real numbers $\ge 1$ in this subsection. We take for each $n$ a continuous function $\R^{\ge 1}\to \R$ that represents the germ $f_n$, to be denoted also by $f_n$, such
that $f_n(t)\ge 0$ and $f_{n+1}(t)\le f_n(t)/2$ for all
$t$. Now the sequence $(f_0+\cdots +f_n)$ of partial sums converges pointwise to a function $f=\sum_{n=0}^\infty f_n\colon \R^{\ge 1} \to \R$,
with the convergence being uniform on each compact subset of $\R^{\ge 1}$, so $f$ is continuous. We claim that for all $n$,
$$f-(f_0 + \cdots + f_n)\ \prec\ f_n\ \text{ in $\mathcal{C}$.}$$
Let $\epsilon >0$, and take $t_n\in \R^{\ge 1}$ with
$f_{n+1}(t)\le \epsilon f_n(t)$ for all $t\ge t_n$. Then for such $t$,
\begin{align*} f(t)-\big(f_0(t) + \cdots + f_n(t)\big)\ &=\ f_{n+1}(t) + f_{n+2}(t) + f_{n+3}(t) + \cdots \\
&\le\ f_{n+1}(t) + f_{n+1}(t)/2 + f_{n+1}(t)/4 + \cdots\\
&=\ 2f_{n+1}(t)\  \le\ 2\epsilon f_n(t),
\end{align*} 
which proves the claim. As usual we denote the germ of $f$ at $+\infty$ also by $f$, so that~$f\in \mathcal{C}$. Let $g,h\in \Cc$. Then (as defined earlier) $g\le h$ means $g(t)\le h(t)$, eventually,  and by  $g < h$  we mean $g\le h$ and $g\ne h$. Also
$$g<_{\ex} h\quad   :\Longleftrightarrow\quad g(t) < h(t), \text{ eventually,}$$
so $g<_{\ex} h\Rightarrow g< h$, and if $g,h\in H$, then $g<_{\ex} h\Leftrightarrow g< h$.   

\begin{lemma} Suppose $(f_0 + \cdots + f_n)$ has no pseudolimit in $H$. Let $g\in H$ be such that
$g> f_0 + \cdots + f_n$ in $\C$, for all $n$. Then for all $n$ we have
$$f_0 + \cdots + f_n <_{\ex} f <_{\ex} g \quad\text{ in $\mathcal{C}$}.$$ 
\end{lemma}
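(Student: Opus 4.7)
Write $a_n := f_0 + \cdots + f_n$. The claim splits into two strict pointwise inequalities, one trivial and one substantial. The first, $a_n <_{\ex} f$, is immediate from the construction: the representatives satisfy $f_k(t) \geq 0$ everywhere, so $f(t) - a_n(t) = \sum_{k > n} f_k(t) \geq f_{n+1}(t)$ for every $t$; and since $f_{n+1}$ is strictly positive in $H$, its representative is $>0$ eventually, giving $f(t) > a_n(t)$ eventually.

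For $f <_{\ex} g$, the key step is to establish, using the no-pseudolimit hypothesis, that there exists $n$ with $g - a_n \succ f_{n+1}$ in $H$. I would prove this by contradiction: suppose $g - a_n \preceq f_{n+1}$ for every $n$. If some $n$ satisfies the strict form $g - a_n \prec f_{n+1}$, then from $g - a_{n+1} = (g - a_n) - f_{n+1}$ one gets $g - a_{n+1} \asymp f_{n+1} \succ f_{n+2}$, contradicting the assumption at index $n+1$. So $g - a_n \asymp f_{n+1}$ for every $n$; but since the relations $f_n \succ f_{n+1}$ force the valuations of the $f_{n+1}$ to strictly increase, $g$ would then be a pseudolimit of the pc-sequence $(a_n)$ in $H$, contradicting the hypothesis.

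Granted this, the remaining deduction is quantitative. Because $g - a_n, f_{n+1} \in H^{>}$ and $g - a_n \succ f_{n+1}$, we have $g(t) - a_n(t) \geq 4 f_{n+1}(t)$ eventually. The geometric choice $f_{k+1}(t) \leq f_k(t)/2$ in the construction yields the pointwise bound $f(t) - a_n(t) = \sum_{k > n} f_k(t) \leq 2 f_{n+1}(t)$ for all $t$. Combining, eventually $g(t) - a_n(t) \geq 2(f(t) - a_n(t))$, whence $g(t) \geq f(t) + (f(t) - a_n(t)) > f(t)$, the strict step using the first inequality.

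I expect the main obstacle to be the preliminary claim: turning ``no pseudolimit in $H$'' into the single clean asymptotic inequality $g - a_n \succ f_{n+1}$ requires the small case analysis above, and one must consistently view $f_{n+1}$ as the successive difference $a_{n+1} - a_n$ of the pc-sequence.
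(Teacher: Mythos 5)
Your proposal is correct and follows essentially the same route as the paper: the paper likewise deduces from the no-pseudolimit hypothesis that $v\big(g-(f_0+\cdots+f_n)\big)<v(f_{n+1})$ for some $n$ (which you prove from scratch via your case analysis), and then combines the eventual bound $g(t)-\big(f_0(t)+\cdots+f_n(t)\big)>2f_{n+1}(t)$ with the pointwise geometric bound $f(t)-\big(f_0(t)+\cdots+f_n(t)\big)\le 2f_{n+1}(t)$. Your use of the constant $4$ and the separate treatment of the easy inequality $f_0+\cdots+f_n<_{\ex}f$ are harmless elaborations of the same argument.
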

\begin{proof} As $g$ is not a pseudolimit of $(f_0+ \cdots + f_n)$, we have $v\big(g-(f_0+\cdots + f_n)\big)< v(f_{n+1})$ for some $n$.
For such $n$ we have, eventually,
$g(t)-\big(f_0(t) + \cdots + f_n(t)\big) > 2f_{n+1}(t)$, and thus, eventually,
$g(t) > f_0(t) + \cdots + f_n(t)+ 2f_{n+1}(t) \ge f(t)$.  
\end{proof}

\noindent
In view of \cite[Lemma~2.11]{ADH5} this yields:

\begin{cor} If $H$ is real closed and $(f_0 + \cdots + f_n)$ has no pseudolimit in~$H$, then~$f$ generates over $H$ an immediate Hausdorff field extension $H(f)$ of $H$ such that~$f_0 + \cdots + f_n \leadsto f$.  
\end{cor}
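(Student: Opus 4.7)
The plan is to reduce to [ADH, Lemma~5.1.17] by verifying its hypotheses: that $f$ has a definite sign in $\mathcal{C}$ relative to every element of $H$, and that $(s_n) \leadsto f$ in the valuation sense, where I write $s_n := f_0+\cdots+f_n$. Given these, 5.1.17 will deliver both that $H(f)$ is a Hausdorff field and that the extension $H\subseteq H(f)$ is immediate with $s_n\leadsto f$.

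First I would record that $s_n\leadsto f$. The claim $f-s_n\prec f_n$ already proved, together with $f-s_n = f_{n+1}+(f-s_{n+1})$ and $f-s_{n+1}\prec f_{n+1}$, gives $f-s_n\sim f_{n+1}$, so $v(f-s_n)=v(f_{n+1})$, which is strictly increasing because $f_n\succ f_{n+1}$. Next, for each $a\in H$ I show that $f-a$ has a definite (strict) sign in $\mathcal{C}$ eventually. Since $(s_n)$ has no pseudolimit in $H$, there is $n$ with $|a-s_n|\succeq f_{n+1}$, equivalently $|a(t)-s_n(t)|\geq 3f_{n+1}(t)$ eventually. If $a>s_n$ in $H$, the preceding Lemma (applied to $a$ in place of $g$, or directly from $f-s_n\le 2f_{n+1}$ together with $a(t)-s_n(t)\geq 3f_{n+1}(t)$ eventually) yields $f<_{\mathrm{e}} a$; if $a<s_n$, then eventually $a(t)<s_n(t)\le f(t)$, so $f>_{\mathrm{e}} a$. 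In particular $f\notin H$.

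Since $H$ is real closed, any nonzero $P\in H[X]$ factors as $P(X)=c\prod_i(X-a_i)\prod_j Q_j(X)$ with $c\in H^\times$, $a_i\in H$, and $Q_j(X)=(X-b_j)^2+c_j$ with $c_j>0$ in $H$. Each $Q_j(f)(t)=(f(t)-b_j(t))^2+c_j(t)>0$ eventually since $c_j>_{\mathrm{e}}0$, and each $f-a_i$ has a definite nonzero sign eventually by the previous paragraph. Hence $P(f)$ has a definite nonzero sign eventually, so $P(f)$ is invertible in $\mathcal{C}$. This shows $H(f)\subseteq\mathcal{C}$ is a subfield, i.e., a Hausdorff field extension of $H$, and the sign information just established is exactly what [ADH,~5.1.17] consumes to conclude that $H(f)$ is immediate over $H$ with $s_n\leadsto f$.

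The main obstacle is packaging the strict-in-$\mathcal{C}$ comparisons $f\lessgtr_{\mathrm{e}} a$ cleanly enough to feed into~5.1.17; once the preceding Lemma and the claim $f-s_n\prec f_n$ are in hand, the real-closedness of $H$ handles the reduction of arbitrary polynomials to linear and positive-definite quadratic factors, and nothing further analytic is needed.
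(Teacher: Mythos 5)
Your proposal is correct and follows essentially the same route as the paper: the paper's entire proof is the one-line observation that the preceding Lemma supplies the cut condition needed to invoke \cite[Lemma~5.1.17]{ADH2}, which then delivers both the Hausdorff field $H(f)$ and the immediacy with $f_0+\cdots+f_n\leadsto f$. Your additional work (the sign analysis of $f-a$ via divergence of the pc-sequence, and the factorization into linear and positive-definite quadratic factors using real closedness) correctly re-derives material that is internal to that cited lemma, so it is redundant but sound.
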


\noindent
Even if $H$ is not real closed, 
$(f_0+ \cdots + f_n)$ pseudoconverges in some Hausdorff field extension of $H$, since we can pass to the real closure of $H$ by \cite[Proposition~2.4]{ADH5}. 

\subsection*{Extending the value group of a Hausdorff field} 
This is closely connected to filling   {\it additive}\/ gaps in Hausdorff fields: see Remark~\ref{rem:add gap} and Lemma~\ref{lem:5.1.18} below. 
For now, $H$ is just an ordered field and $v\colon H^\times\to\Gamma$ is its standard valuation.

\begin{lemma}\label{lem:add gap 1}
Let $A\subseteq H$. Then $A+A$, $2A$ are cofinal. Also,
 $A$, $2A$ are cofinal iff~$A$,~$\frac{1}{2}A$ are cofinal. Likewise with ``coinitial'' in place of ``cofinal''.
\end{lemma}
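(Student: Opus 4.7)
My plan is to prove both assertions by a direct manipulation of the downward (resp.\ upward) closures, using that $P,Q\subseteq H$ are cofinal iff $P^\downarrow=Q^\downarrow$ (resp.\ coinitial iff $P^\uparrow=Q^\uparrow$), as recalled in the introduction.

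For the first assertion, I would observe that $2a = a+a \in A+A$ for each $a \in A$, giving the inclusion $2A\subseteq A+A$ and hence $(2A)^\downarrow \subseteq (A+A)^\downarrow$. For the reverse inclusion, given any element $a_1+a_2\in A+A$, set $a := \max(a_1,a_2)\in A$; then $a_1+a_2\le 2a\in 2A$, so $(A+A)^\downarrow\subseteq (2A)^\downarrow$. Thus $A+A$ and $2A$ are cofinal.

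For the biconditional, the key observation is that since $\tfrac12\in H^{>}$ and $H$ is an ordered field, multiplication by $\tfrac12$ is an order automorphism of $H$. Consequently, for any subsets $P,Q\subseteq H$, the sets $P$ and $Q$ are cofinal iff $\tfrac12 P$ and $\tfrac12 Q$ are cofinal. Applying this with $P=A$, $Q=2A$, and noting $\tfrac12(2A)=A$, yields that $A,2A$ are cofinal iff $\tfrac12 A, A$ are cofinal, which is the biconditional.

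Finally, the ``coinitial'' versions follow by the same arguments with all inequalities reversed: in the first assertion one uses $a:=\min(a_1,a_2)$ instead of the max, and in the biconditional one again uses that multiplication by $\tfrac12$ preserves order (and hence preserves coinitiality). There is no real obstacle here; the proof is a short exercise in ordered-field arithmetic, and I would present it as such.
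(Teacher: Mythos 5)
Your proof is correct and follows essentially the same route as the paper, which notes $2A\subseteq A+A$ together with $a+b\le 2\max(a,b)$ and dismisses the biconditional as clear; you have simply spelled out the "clear" part via the order automorphism $h\mapsto \frac{1}{2}h$.
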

\begin{proof}
From $2A\subseteq A+A$ and $a+b\leq 2\max(a,b)$ for $a,b\in A$ it follows that $A+A$ and $2A$ are cofinal.
The rest is clear.
\end{proof}

\begin{cor}\label{cor:add gap}
Let $A,B\subseteq H^{>}$ be such that $A<B$ and there is no $h\in H$ with~$A<h<B$. Then the following are equivalent:
\begin{enumerate}
\item[(i)] $A$, $A+A$  are cofinal;
\item[(ii)] $A$, $2A$  are cofinal;
\item[(iii)] $B$, $B+B$ are coinitial;
\item[(iv)] $B$, $\frac{1}{2}B$  are coinitial.
\end{enumerate}
\end{cor}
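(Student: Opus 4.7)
My plan is to reduce everything to the equivalence (ii)$\iff$(iv), since the other equivalences come for free from Lemma~\ref{lem:add gap 1}. Specifically, $A$ and $A+A$ are cofinal iff $A^\downarrow=(A+A)^\downarrow$, so since that lemma gives $(A+A)^\downarrow=(2A)^\downarrow$, we immediately get (i)$\iff$(ii). Likewise the ``coinitial'' halves of the lemma (applied to $B$) give (iii)$\iff$(iv): $B$, $B+B$ coinitial iff $B$, $2B$ coinitial iff $B$, $\tfrac12 B$ coinitial.

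For (ii)$\iff$(iv) I would argue by contrapositive in both directions, using the gap hypothesis as the bridge. First note that (ii) is equivalent to the non-trivial half of cofinality, namely: for every $a\in A$ there is $a'\in A$ with $2a\leq a'$ (the reverse containment $A^\downarrow\subseteq (2A)^\downarrow$ being automatic from $a\leq 2a$). So $\neg$(ii) means there exists $a_0\in A$ with $2a_0>a$ \emph{strictly} for all $a\in A$; in particular $2a_0\in H^{>}$ and $2a_0>A$. The gap hypothesis forbids an element of $H$ strictly between $A$ and $B$, so $2a_0\geq b_0$ for some $b_0\in B$, giving $b_0/2\leq a_0$. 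For every $b'\in B$ we have $b'>a_0$ (since $A<B$ and $a_0\in A$), hence $b'>b_0/2$. This witnesses $\neg$(iv).

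The reverse direction is completely symmetric: assuming $\neg$(iv), fix $b_0\in B$ with $b'>b_0/2$ for all $b'\in B$, so that $b_0/2\in H^{>}$ satisfies $b_0/2<B$. The gap hypothesis then forces $b_0/2\leq a_0$ for some $a_0\in A$, i.e.\ $b_0\leq 2a_0$. For each $a\in A$ we have $a<b_0\leq 2a_0$, so $2a_0>A$ strictly, witnessing $\neg$(ii).

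The main thing to be careful about is the strict/weak inequality bookkeeping: one must observe that the negation of the cofinality condition in (ii) really gives a \emph{strict} upper bound $2a_0$ for $A$ inside $H$ (so that $2a_0\notin A$), because otherwise invoking the gap hypothesis would not produce the needed element of $B$. That observation is what lets the argument run cleanly without iteration or case splits, and it is the only subtle point; the rest is bookkeeping with the inequalities $A<B$ and with the gap property.
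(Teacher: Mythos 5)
Your proof is correct and follows essentially the same route as the paper: reduce (i)$\Leftrightarrow$(ii) and (iii)$\Leftrightarrow$(iv) to Lemma~\ref{lem:add gap 1}, then get (ii)$\Leftrightarrow$(iv) from the gap hypothesis, which the paper phrases compactly as the identity $B^\uparrow=H^{>}\setminus A^\downarrow$ and you unpack via contrapositives.
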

\begin{proof}
The equivalence of (i) and (ii) follows from Lemma~\ref{lem:add gap 1}; likewise with (iii) and (iv),  
The equivalence of (ii) and (iv) is a consequence of $B^\uparrow=H^{>}\setminus A^\downarrow$.
\end{proof}

\noindent
An {\bf additive gap} in $H$ is a pair $A$, $B$ of  subsets of $H^{>}$ with $A<B$ such that there is no $h\in H$ with $A<h<B$, and
one of the equivalent  conditions (i)--(iv) in Corollary~\ref{cor:add gap} holds.

\begin{remarkNumbered}\label{rem:add gap}
As in [ADH], a {\it cut}\/ in an ordered set $S$ is a downward closed subset of $S$.
Call a   cut $A$ in the ordered set $H^{>}$  {\it additive}\/ if $A$, $B:=H^{>}\setminus A$ is an additive gap in~$H$.
Then $A\mapsto A\cup(-A)\cup\{0\}$ defines an inclusion-preserving bijection 
$$\big\{\text{additive cuts in $H^{>}$}\big\}\to \{ \text{convex subgroups of $H$}\},$$ 
with inverse~$D\mapsto D^{>}$. (In some places additive cuts in $H^{>}$ are therefore called   ``group cuts'' in $H$; cf.~\cite{Kuhlmann-cuts}.)
Note: $D\mapsto v(D^>)$   is an inclusion-preserving bijection 
$$\{ \text{convex subgroups of $H$}\} \to \{ \text{upward closed subsets of $\Gamma$} \},$$
with inverse~$P\mapsto v^{-1}(P)\cup\{0\}$.
\end{remarkNumbered}

\noindent
{\it In~\ref{lem:mult gap}--\ref{lem:mult->add} below we assume that $H$ is real closed.}\/
We have multiplicative versions of Lemma~\ref{lem:add gap 1} and Corollary~\ref{cor:add gap},  obtained in the same way:

\begin{lemma}\label{lem:mult gap}
Let $A\subseteq H^>$. Then $A\cdot A$ and $\operatorname{sq}(A):=\{a^2:\,a\in A\}$ are cofinal.
Moreover,
 $A$ and $\operatorname{sq}(A)$ are cofinal iff~$A$ and~$\sqrt{A}:={\{b\in H^{>}:\, b^2\in A\}}$ are cofinal. Likewise with ``coinitial'' in place of ``cofinal''.
\end{lemma}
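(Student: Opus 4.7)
The plan is to mirror the proof of Lemma~\ref{lem:add gap 1}, with the roles of $+$, $2\cdot$, $\frac{1}{2}\cdot$ played by $\cdot$, $\operatorname{sq}$, $\sqrt{\phantom{A}}$ respectively. Throughout, $H$ being real closed guarantees that $\operatorname{sq}\colon H^{>}\to H^{>}$ is an order-preserving bijection with inverse $\sqrt{\phantom{A}}\colon H^{>}\to H^{>}$, which is the key structural fact enabling the analogy.

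For the first assertion, I would note the inclusion $\operatorname{sq}(A)\subseteq A\cdot A$ (take $a=b$) together with the inequality $ab\leq \max(a,b)^2$ valid for all $a,b\in H^{>}$. This pair of observations is the exact multiplicative counterpart of ``$2A\subseteq A+A$ and $a+b\leq 2\max(a,b)$'' used in Lemma~\ref{lem:add gap 1}, and the same bookkeeping shows that every element of $A\cdot A$ is bounded above by an element of $\operatorname{sq}(A)$ and vice versa, so the two sets are cofinal.

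For the second assertion, since $\operatorname{sq}$ is an order-preserving bijection of $H^{>}$ onto itself, for any two subsets $X,Y\subseteq H^{>}$ we have: $X$ and $Y$ are cofinal iff $\operatorname{sq}(X)$ and $\operatorname{sq}(Y)$ are cofinal. Applying this with $X=A$ and $Y=\sqrt{A}$, and using the identity $\operatorname{sq}(\sqrt{A})=A$ (which holds because every element of $A$ has a positive square root in $H$), we conclude that $A$ and $\sqrt{A}$ are cofinal iff $\operatorname{sq}(A)$ and $A$ are cofinal. The coinitial versions follow by the same arguments, since $\operatorname{sq}$ and $\sqrt{\phantom{A}}$ preserve the order in both directions.

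There is no real obstacle here: the proof is a routine transcription of Lemma~\ref{lem:add gap 1} through the order-isomorphism $\operatorname{sq}$. The only point requiring care is the invocation of ``$H$ real closed'' to ensure that $\sqrt{A}$ is a subset of $H^{>}$ (not merely of some extension) and that $\operatorname{sq}$ is surjective onto $H^{>}$; without this, the equivalence between cofinality of $A$ with $\operatorname{sq}(A)$ and with $\sqrt{A}$ would break down.
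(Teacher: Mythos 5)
Your proof is correct and follows exactly the route the paper intends: the paper gives no separate argument for this lemma, saying only that it is ``obtained in the same way'' as Lemma~\ref{lem:add gap 1}, and your transcription via $\operatorname{sq}(A)\subseteq A\cdot A$, $ab\le\max(a,b)^2$, and the order-isomorphism $\operatorname{sq}$ (with inverse $\sqrt{\phantom{A}}$, using real closedness) is precisely that translation.
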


\begin{cor}\label{cor:mult gap}
Let $A,B\subseteq H^{>}$ be such that $A<B$ and there is no $h\in H$ with~$A<h<B$. Then the following are equivalent:  
\begin{enumerate}
\item[(i)] $A$, $A\cdot A$  are cofinal;
\item[(ii)] $A$, $\operatorname{sq}(A)$  are cofinal;
\item[(iii)] $B$, $B\cdot B$ are coinitial;
\item[(iv)] $B$, $\sqrt{B}$  are coinitial.
\end{enumerate}
\end{cor}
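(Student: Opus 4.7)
The plan is to follow the proof of Corollary~\ref{cor:add gap} almost verbatim, substituting Lemma~\ref{lem:mult gap} for Lemma~\ref{lem:add gap 1} and exploiting that real closedness of $H$ makes squaring $x\mapsto x^2$ an order-preserving bijection $H^{>}\to H^{>}$ with inverse $x\mapsto\sqrt{x}$.

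For (i)$\Leftrightarrow$(ii), the first assertion of Lemma~\ref{lem:mult gap} gives that $A\cdot A$ and $\operatorname{sq}(A)$ are cofinal, so $A$, $A\cdot A$ are cofinal iff $A$, $\operatorname{sq}(A)$ are cofinal. The analogous coinitial statement, applied to $B$, gives that $B\cdot B$ and $\operatorname{sq}(B)$ are coinitial, and the coinitial half of the second assertion of Lemma~\ref{lem:mult gap} gives that $B$, $\operatorname{sq}(B)$ are coinitial iff $B$, $\sqrt{B}$ are coinitial; chaining these yields (iii)$\Leftrightarrow$(iv).

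The main step is then (ii)$\Leftrightarrow$(iv). Since $A<B$ and no $h\in H$ lies strictly between $A$ and $B$, we get a disjoint decomposition $H^{>}=A^{\downarrow}\sqcup B^{\uparrow}$. A direct computation using that squaring preserves $\le$ on $H^{>}$ shows $\operatorname{sq}(A)^{\downarrow}=\{s\in H^{>}:\sqrt{s}\in A^{\downarrow}\}$, hence (ii) is equivalent to $A^{\downarrow}$ being closed under $x\mapsto x^{2}$; analogously, (iv) is equivalent to $B^{\uparrow}$ being closed under $x\mapsto\sqrt{x}$. Since $x\mapsto x^{2}$ is a bijection $H^{>}\to H^{>}$ with inverse $x\mapsto\sqrt{x}$, and since $B^{\uparrow}$ is the complement of $A^{\downarrow}$ in $H^{>}$, closure of $A^{\downarrow}$ under squaring is equivalent to closure of $B^{\uparrow}$ under square roots, yielding (ii)$\Leftrightarrow$(iv).

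This is a purely formal analog of the additive case; real closedness merely supplies the square-root map that plays the role of halving, and no step presents a substantive obstacle.
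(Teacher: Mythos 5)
Your route is exactly the one the paper intends: it gives no separate proof of this corollary, saying only that the multiplicative versions are "obtained in the same way" as Lemma~\ref{lem:add gap 1} and Corollary~\ref{cor:add gap}, i.e.\ (i)$\Leftrightarrow$(ii) and (iii)$\Leftrightarrow$(iv) from Lemma~\ref{lem:mult gap}, and (ii)$\Leftrightarrow$(iv) from $B^{\uparrow}=H^{>}\setminus A^{\downarrow}$ together with the fact that squaring is an order-automorphism of $H^{>}$ (here real closedness enters). So the structure is right.

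One step is misstated, though. From your (correct) computation $\operatorname{sq}(A)^{\downarrow}=\{s\in H^{>}:\sqrt{s}\in A^{\downarrow}\}=\operatorname{sq}(A^{\downarrow})$, condition (ii) is equivalent to $A^{\downarrow}=\operatorname{sq}(A^{\downarrow})$, i.e.\ to $A^{\downarrow}$ being \emph{invariant} under squaring (equivalently: closed under both $x\mapsto x^{2}$ and $x\mapsto\sqrt{x}$), not merely \emph{closed} under squaring. Mere closure is strictly weaker: for $H=\R$, $A=(0,\tfrac14]$, $B=H^{>}\setminus A$, the set $A^{\downarrow}=(0,\tfrac14]$ is closed under squaring, yet $A$, $\operatorname{sq}(A)$ are not cofinal. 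The same correction applies to your reading of (iv) as closure of $B^{\uparrow}$ under square roots. With "closed under" replaced by "equal to its image under" throughout, your final step goes through verbatim: since squaring is a bijection of $H^{>}$ and $B^{\uparrow}$ is the complement of $A^{\downarrow}$, the set $A^{\downarrow}$ is invariant under squaring iff $B^{\uparrow}$ is, and the latter is (iv). As written, however, the chain (ii)$\Rightarrow$(closure of $A^{\downarrow}$)$\Rightarrow$(closure of $B^{\uparrow}$) does not return you to (iv), so do make the one-word repair.
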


\begin{lemma}\label{lem:mult->add}
Let $A\subseteq H^{>\Q}$. If $A$, $\operatorname{sq}(A)$ are cofinal, then so are $A$, $2A$, and if~$A$,~$\sqrt{A}$ are coinitial, then so are $A$, $\frac{1}{2}A$.
\end{lemma}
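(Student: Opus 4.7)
The plan hinges on the single observation that every element $a \in A$ lies strictly above $\Q$, in particular $a > 2$, so that $a \cdot a > 2a$. This one inequality bridges each multiplicative hypothesis to its additive counterpart.

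For the first implication, the plan is to invoke Lemma~\ref{lem:add gap 1} (or equivalently the equivalence of (i) and (ii) in Corollary~\ref{cor:add gap}) to reduce ``$A$, $2A$ are cofinal'' to its nontrivial direction: given $a \in A$, produce $a' \in A$ with $2a \leq a'$. Applying the cofinality of $A$ with $\operatorname{sq}(A)$ to the element $a^2 \in \operatorname{sq}(A)$ yields $a' \in A$ with $a^2 \leq a'$; combined with $2a < a^2$ this closes the argument.

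For the second implication, the plan is symmetric. Reduce ``$A$, $\tfrac{1}{2}A$ coinitial'' to its nontrivial direction: given $a' \in A$, produce $a \in A$ with $a \leq a'/2$. Coinitiality of $A$ with $\sqrt{A}$ (unwinding the definition $\sqrt{A} = \{b \in H^{>}:\, b^2 \in A\}$) produces $a \in A$ with $a^2 \leq a'$; and again $a > 2$ upgrades this to $2a < a^2 \leq a'$, i.e., $a \leq a'/2$.

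I do not foresee any real obstacle: the content is just the bound $a^2 > 2a$ forced by $A \subseteq H^{>\Q}$. The only care required is the bookkeeping around which direction of each cofinality/coinitiality statement is immediate (the one available even without the hypothesis, since $a \leq 2a$ and $a/2 \leq a$) and which genuinely needs $A \subseteq H^{>\Q}$.
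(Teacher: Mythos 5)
Your proposal is correct and is essentially the paper's own argument, which consists of the single hint ``for the first part, use $2a<a^2$; for the second, use $\sqrt{a}<\frac{a}{2}$'' — both instances of the observation that $a>\Q$ forces $a^2$ to dominate $2a$. The only cosmetic quibble is that the appeal to Lemma~\ref{lem:add gap 1}/Corollary~\ref{cor:add gap} to ``reduce to the nontrivial direction'' is unnecessary (the trivial direction is just $a\le 2a$ and $a/2\le a$, as you note yourself); everything else, including the bookkeeping of which half of each cofinality/coinitiality claim needs the hypothesis, is right.
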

\begin{proof}
Let $a\in A$. For the first part, use $2a < a^2$; for the second, use $\sqrt{a} < \frac{a}{2}$.
\end{proof}

\noindent
Now suppose $H$ is a Hausdorff field, turned into an ordered field as described earlier in this section.
The following is \cite[Lemma 2.12]{ADH5}:

\begin{lemma}\label{lem:5.1.18}
Suppose $\Gamma=v(H^\times)$ is divisible.
Let $P$ be a nonempty upward closed subset of $\Gamma$, and let $f\in \mathcal{C}$ be such that 
$a < f$ for all~$a\in H^{>}$ with 
$va\in P$, and~$f<b$ for all $b\in H^{>}$ with $vb < P$.
Then $f$ generates a Hausdorff field~$H(f)$ with 
$P >vf >  \Gamma\setminus P$.
\end{lemma}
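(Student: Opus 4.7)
The plan is to prove the lemma in two stages: first reduce to the case that $H$ is real closed, then handle that case by verifying (i) the valuation cut $P > vf > \Gamma \setminus P$ and (ii) that $H(f) \subseteq \mathcal{C}$ is a Hausdorff field. For the reduction I would appeal to \cite[Proposition~5.1.4]{ADH2}: the real closure $K$ of $H$ is itself a Hausdorff field, and since $\Gamma$ is divisible we have $v(K^\times) = \Gamma$; hence the hypotheses of the lemma apply verbatim to $K$ with the same $P$. Proving that $K(f) \subseteq \mathcal{C}$ is Hausdorff with $P > vf > \Gamma \setminus P$ then transfers to its subfield $H(f)$.

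Now assuming $H$ is real closed: for (i), given $\alpha \in P$, choose $a \in H^>$ with $va = \alpha$. For every positive integer $n$ we have $v(na) = \alpha \in P$, so the hypothesis gives $na < f$ in $\mathcal{C}$; thus $f(t)/a(t) \to \infty$, which forces $vf < \alpha$. (If $vf = \alpha$, then $f \asymp a$ and $f/a$ would be bounded, contradicting what was just shown.) A mirror argument, using $b/n$ for $b \in H^>$ with $vb = \beta \in \Gamma \setminus P$ (note $\beta < P$ by upward closedness of $P$), yields $vf > \beta$.

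For (ii), the goal is to show that $p(f) \in \mathcal{C}$ is eventually of constant nonzero sign for every nonzero $p \in H[X]$; this simultaneously yields that $f$ is transcendental over $H$ and exhibits $H(f) = \Frac(H[f])$ as a Hausdorff subfield of $\mathcal{C}$. Real-closedness factors $p$ into linear factors $X - a$ ($a \in H$) and irreducible quadratic factors $(X - b)^2 + c^2$ ($b \in H$, $c \in H^>$), the latter evaluating to germs that are eventually $\ge c^2 > 0$. For a linear factor $f - a$: if $a \le 0$ then $f - a > f > 0$ (note $f > 0$ since $P \ne \emptyset$ supplies some positive $a_0 \in H$ with $a_0 < f$); if $a > 0$, the dichotomy on whether $va \in P$ or $va \in \Gamma \setminus P$ determines the sign of $f - a$ directly from the hypothesis.

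The main subtle point is the integer-scaling trick in (i), which upgrades the soft bound $a < f$ into a strict valuation inequality by forcing $f/a$ to grow without bound; once this is in place, the real-closed factorization in (ii) reduces everything to routine bookkeeping, and divisibility of $\Gamma$ is used only to perform the initial reduction cleanly.
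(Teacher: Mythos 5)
The paper does not actually prove this lemma itself — it is quoted from \cite[Lemma~5.1.18]{ADH2} — so there is no in-paper argument to compare against, and I am judging your proposal on its own. The overall structure (pass to the real closure, upgrade the hypothesis to $a\prec f\prec b$ in $\mathcal{C}$ by integer scaling, then use the real-closed factorization to show every nonzero $p(f)$ is a unit of $\mathcal{C}$) is sound and does yield the lemma. Two steps are written more loosely than they should be. First, the hypotheses do not transfer to the real closure $K$ ``verbatim'': knowing $v(K^\times)=\Gamma$ only tells you that for $a\in K^{>}$ with $va\in P$ there is $a'\in H^{>}$ with $va'=va$ and $a\preceq a'$, so you must invoke $na'<f$ for a suitable integer $n$ (i.e., the same scaling trick) to conclude $a<f$; this transfer is precisely where divisibility of $\Gamma$ earns its keep. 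Second, in (ii) the hypothesis ``directly'' gives only $f-a\ge 0$ eventually when $a>0$ and $va\in P$ (recall the paper's convention that $g<h$ for germs means $g\le h$ and $g\ne h$), which is not enough to make $f-a$ a unit of $\mathcal{C}$; you need the strengthened relation $a\prec f$ from (i), which gives $f-a\sim f$ and hence $f(t)-a(t)\ge f(t)/2>0$ eventually (symmetrically $f-a\sim -a<0$ when $va\in\Gamma\setminus P$). Both repairs are exactly the scaling trick you single out at the end, so the proof is essentially complete — just route those two steps through it explicitly. Finally, since $vf$ only makes sense once $H(f)$ is known to be a Hausdorff field, stage (i) should be phrased as establishing the relations $a\prec f\prec b$ in $\mathcal{C}$, which become the asserted valuation inequalities $P>vf>\Gamma\setminus P$ only after stage (ii).
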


 

\noindent
A {\em Hardy field\/}  is a differential subfield of the differential ring $\Cc^{<\infty}$. Given a Hardy field $F\supseteq \R$ we let $\Li(F)$ be the {\it Hardy-Liouville closure}\/ of $F$, that is, the smallest real closed Hardy field extension of $F$ that contains with any~$f$ also $\exp(f)$, and contains any $g\in \Cc^1$ whenever it contains $g'$; see \cite[Section~2]{ADH2}.

We now   specialize $H$ even further: {\it
in the rest of this subsection we assume that~$H$ is a Liouville closed Hardy field and $H\supseteq\R$.}\/

\begin{lemma}\label{in2} Let $A\subseteq H^{>\R}$. Then: \begin{enumerate}
\item[(i)] if $A$ and $\exp(A)$ are cofinal, then so are $A$ and 
$\operatorname{sq}(A)$;
\end{enumerate}
Next, assume also that $\ex^x\in A$, and that $A$ and $\operatorname{sq}(A)$ are cofinal. Then:\begin{enumerate}
\item[(ii)] $A$ and $A':=\{a':\,a\in A\}$ are cofinal;
\item[(iii)]  $A$ and $\int\! A:=\{b\in H:\,b'\in A\}$ are cofinal, and $\int\! A\subseteq H^{>\R}$.
\end{enumerate}
\end{lemma}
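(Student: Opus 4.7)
The plan is to handle the three parts in turn, drawing on the total order of $H$, eventual monotonicity of Hardy elements tending to $\pm\infty$, the standard fact that $a^\dagger\prec a$ (equivalently $a'\prec a^2$) for $a\in H^{>\R}$, and Liouville closure for antiderivatives.

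Part (i) is quick: for $a\in A$, $a\to+\infty$ gives $a^2\leq\ex^a$ eventually, and cofinality of $\exp(A)$ with $A$ then bounds $\ex^a$ by some $b\in A$; the reverse cofinality is immediate from $a\leq a^2$ (as $a\geq 1$).

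For (ii), with the additional hypotheses $\ex^x\in A$ and $A,\operatorname{sq}(A)$ cofinal, I prove the two cofinality directions separately. To majorize $a\in A$ by a derivative from $A'$: take $m:=\max(a,\ex^x)\in A$ (using the total order on $H$), pick $b\in A$ with $b\geq m^2\geq a\,\ex^x$ via squaring-cofinality, and observe that $b/\ex^x\geq a>\R$ is eventually strictly increasing in the Hardy field; expanding $(b/\ex^x)'>0$ yields $b'>b\geq a$, so $a\leq b'\in A'$. For the reverse direction, $a^\dagger\prec a$ gives $a'\prec a^2$, which squaring-cofinality then bounds by an element of $A$.

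For (iii), the inclusion $\int\! A\subseteq H^{>\R}$ follows at once from divergence of $\int c$ when $c=b'\in A$ tends to $+\infty$. For cofinality, the direction $(\int\! A)^\downarrow\subseteq A^\downarrow$ proceeds by a case split on the eventual sign of $b^\dagger-1$ in the totally ordered $H$: if $b^\dagger\geq 1$ eventually then $b\leq b'=c\in A$, while if $b^\dagger<1$ eventually then $(\log b-x)'=b^\dagger-1<0$ makes $\log b-x$ eventually decreasing, forcing $b\leq C\,\ex^x\leq\ex^{2x}=(\ex^x)^2$ for some real $C$, which squaring-cofinality then dominates by an element of $A$. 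The reverse direction is the key construction and the main obstacle: a naive $b=\int a$ can be strictly smaller than $a$ for super-exponentially large $a$, so instead pick $c\in A$ with $c\geq a^2$ (squaring-cofinality) and use Liouville closure to choose $b\in H$ with $b'=c$, giving $b\in\int\! A$; then $(b-a)'=c-a'\geq a^2-a^2/2=a^2/2$ eventually (using $a'\prec a^2$ from (ii), so $a'\leq a^2/2$ eventually), whose divergent integral forces $b-a\to+\infty$, and hence $b\geq a$ eventually.
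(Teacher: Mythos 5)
Your proof is correct, and while its skeleton (prove each inclusion of downward closures separately, use $a'\prec a^2$, lean repeatedly on squaring-cofinality) matches the paper's, several key steps take a genuinely different route. For the forward inclusion in (ii), the paper splits on whether $a^\dagger\succ 1$ or $a^\dagger\asymp 1$ and invokes [ADH, 9.1.11] in the latter case to trap $a$ between $\ex^{nx}$ and $\ex^{(n+1)x}$; your device of first majorizing $a$ by some $b\in A$ with $b\ge a\,\ex^x$ and reading off $b'>b$ from $(b/\ex^x)'>0$ avoids both the case split and the external citation, and is arguably cleaner. For the forward inclusion in (iii), the paper takes $b$ with $b'=a$ directly and shows $\sqrt{a}<b$, concluding via the cofinality of $A$ and $\sqrt{A}$; you instead integrate an element $c\ge a^2$ and compare $(b-a)'$ with $a^2/2$ — both arguments hinge on squaring-cofinality, just applied on opposite sides of the gap, and your version makes explicit the obstacle (that a naive antiderivative of $a$ can undershoot $a$) which the paper silently sidesteps. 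For the reverse inclusion in (iii), your dichotomy $b^\dagger\ge 1$ versus $b^\dagger<1$ is in fact slightly more complete than the paper's text, which only treats the case $a=b'\succ\ex^x$ explicitly and leaves the complementary case (where $b\preceq\ex^x\prec(\ex^x)^2$) to the reader.
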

\begin{proof} Item  (i) follows from $a^2\le \exp a$ for $a\in A$. 

Next, assume $\ex^x\in A$, and   $A$, $\operatorname{sq}(A)$ are cofinal.
Then $\ex^{nx}\in A^\downarrow$ if $n\geq 1$. 
Now for~(ii),
let $a\in A$. Then $1/a\prec 1$, so $-a'/a^2=(1/a)'\prec 1$,  
and thus $0 < a' < a^2$. This yields $(A')^\downarrow\subseteq \operatorname{sq}(A)^\downarrow=A^\downarrow$.
Suppose in addition~$a\ge\ex^x$, so~$a^\dagger \succeq 1$.  
If~$a^\dagger\succ 1$, then $a<a'$, and
if $a^\dagger\asymp 1$,
then~[ADH, 9.1.11] yields $n\geq 1$ with $a \leq \ex^{nx}$, and
taking~$b\in A$ with $b\geq \ex^{(n+1)x}\succ\ex^{nx}$ we get~$b' > (\ex^{nx})'\geq \ex^{nx}\geq a$. Thus~$A^\downarrow\subseteq (A')^\downarrow$. 

As to~(iii), let $a\in A$, $b\in H$, and $b'=a$. Then $b> \R$, even~$b\succ x$. 
Moreover, $0<b'=a<b^2$, so $\sqrt{a}<b$.
Thus $A^\downarrow=(\sqrt{A})^\downarrow\subseteq (\int\!A)^\downarrow$.  Next, assume
also~$a=b'\succ \ex^x$. Then~$b\succ \ex^x$, since $H$ is asymptotic, so
$a/b=b^\dagger \succeq 1$, hence  $b\preceq a\prec a^2$ and thus $b<a^2$.  This yields $(\int\!A)^\downarrow\subseteq \operatorname{sq}(A)^\downarrow=A^\downarrow$. 
\end{proof}

\begin{lemma}\label{in3} Let $B\subseteq H$, $B>\ex^x$, and assume $B$, $\sqrt{B}$ are coinitial. Then: 
\begin{enumerate}
\item[(i)] $B$ and $B':=\{b': b\in B\}$ are coinitial;
\item[(ii)] $B$ and $\int B:=\{a\in H:\,a'\in B\}$ are coinitial;
\item[(iii)] $B^{-1}$ and 
${-\int B^{-1}}:=\big\{{-g}:\,g\in H^{\prec 1},\, g'\in B^{-1}\big\}$ are cofinal.
\end{enumerate}
\end{lemma}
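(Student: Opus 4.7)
The plan is to mirror the proof of Lemma~\ref{in2}, with the hypothesis of coinitiality of $B$ and $\sqrt{B}$ playing the dual role of the cofinality of $A$ and $\operatorname{sq}(A)$ there. A key preliminary, to be used throughout, is that every $b\in B$ satisfies $b^\dagger\succ 1$ (and hence $b'\succ b$, in particular $b<b'$). I will establish this by iterating the coinitiality to produce a sequence $b=b_0,b_1,b_2,\dots$ in $B$ with $b_{n+1}\leq\sqrt{b_n}$, giving $b\geq b_n^{2^n}>\ex^{2^n x}$ for all $n$; then $b\succ\ex^{nx}$ for every $n$ (otherwise $b\asymp\ex^{mx}$ for some $m$ would force $b<\ex^{(m+1)x}$ eventually, contradicting the growth bound), so [ADH,~9.1.11] yields $b^\dagger\succ 1$. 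Two standard Hardy-field facts I will invoke are the bound $c^\dagger\prec\sqrt{c}$ whenever $c\succ 1$ (from $c^\dagger\prec c^{\epsilon}$ for every $\epsilon>0$), and the asymptotic integration formula $a\sim c/c^\dagger$ for any antiderivative $a\in H$ of $c$ when $c^\dagger\succ 1$ and $c\to\infty$, together with its infinitesimal dual.

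\textbf{Parts~(i) and~(ii).} For~(i), the inclusion $B'\subseteq B^\uparrow$ will be immediate since $b_0\leq b_0'$ for each $b_0\in B$. For $B\subseteq (B')^\uparrow$: given $b\in B$, coinitiality supplies $b_0\in B$ with $b_0\leq\sqrt{b}$, and then $(1/b_0)'\prec 1$ yields $b_0'\prec b_0^2\leq b$. For~(ii), Liouville closure puts $\int B\subseteq H$. To show $\int B\subseteq B^\uparrow$, I will take $a\in\int B$ with $c:=a'\in B$, combine $a\sim c/c^\dagger$ with $c^\dagger\prec\sqrt{c}$ to obtain $a\succ\sqrt{c}$, and invoke coinitiality to pick $b_0\in B$ with $b_0\leq\sqrt{c}\leq a$. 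For $B\subseteq(\int B)^\uparrow$: given $b\in B$, take $c\in B$ with $c\leq\sqrt{b}$ and any antiderivative $a\in H$ of $c$; then $a\sim c/c^\dagger\prec c\leq\sqrt{b}\prec b$, hence $a\leq b$.

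\textbf{Part~(iii).} For each $b_1\in B$, the element $1/b_1\prec\ex^{-x}$ is integrable at $+\infty$, so Liouville closure provides a unique $g\in H^{\prec 1}$ with $g'=1/b_1$; this $g$ is negative, and $-g\in -\int\! B^{-1}$. The dual form of asymptotic integration will give $-g\sim 1/(b_1 b_1^\dagger)=1/b_1'$. For $B^{-1}\subseteq(-\int\! B^{-1})^\downarrow$: given $1/b\in B^{-1}$, coinitiality supplies $b_1\in B$ with $b_1\leq\sqrt{b}$, so $b_1'\prec b_1^2\leq b$ and hence $-g\sim 1/b_1'\succ 1/b$, yielding $-g\geq 1/b$. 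Conversely, for any $-g\in -\int\! B^{-1}$ with $g'=1/b_1$, the relation $b_1^\dagger\succ 1$ forces $b_1'\succ b_1$, whence $-g\sim 1/b_1'\prec 1/b_1\in B^{-1}$.

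\textbf{Main obstacle.} The principal technical point will be the precise invocation of asymptotic integration, in both its direct ($a\sim c/c^\dagger$) and dual forms, for which I will rely on the Hardy-field/$H$-field framework of [ADH]; establishing the preliminary fact $b^\dagger\succ 1$ and checking that $c^\dagger\prec\sqrt{c}$ (which secures $a\succ\sqrt{c}$ in~(ii)) also require attention. Careful sign bookkeeping in~(iii) and confirming the existence and uniqueness of the infinitesimal antiderivative $g$ are the other delicate points; once these are in place, the remainder of the argument is routine manipulation of $\prec$, $\asymp$, $\succ$ combined with the coinitiality hypothesis.
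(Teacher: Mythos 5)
Your proof is correct and follows essentially the same route as the paper's: you prove the same pairs of inclusions, using the coinitiality of $B$ and $\sqrt{B}$ to absorb the square/square-root losses that differentiation and integration introduce, and your key estimates ($b\prec b'\prec b^2$, $\sqrt{c}\prec\int c\prec c$, $b^{-2}\prec -g\prec b^{-1}$) coincide with the paper's. The only difference is cosmetic: the paper justifies these bounds by computing in the asymptotic couple ($\beta'=\beta+\beta^\dagger$, $\beta^\dagger=o(\beta)$ via [ADH, 9.2.10]), whereas you use the equivalent L'H\^opital-style formulas $a\sim c/c^\dagger$ and $c^\dagger\prec c^{\varepsilon}$, together with a slightly stronger preliminary ($b^\dagger\succ 1$, obtained by iterating coinitiality) than the paper's $b^\dagger\succeq 1$.
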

\begin{proof} 
Since $B$, $\sqrt{B}$ are coinitial, so are~$B$,~$\frac{1}{2}B$, by Lemma~\ref{lem:mult->add}. Thus ~$B$,~$\R^>B$ are coinitial. Let $b\in B$. Then $b\succ \sqrt b > \ex^x$, so
$\beta:= vb <0$ gives~$\beta^\dagger \le 0$, hence~$\beta'\le \beta$, and thus $b'\ge b$. Also $\beta^\dagger=o(\beta)$ by [ADH, 9.2.10], so $\beta < \textstyle\frac{1}{2}\beta + \beta^\dagger = (\frac{1}{2}\beta)'$ 
and thus $ b\succ (\sqrt b)'\succeq d'$ for some $d\in B$. This proves (i).  

For (ii), let $a\in H$ and $a'=b\in B$. Then $a>\R$, and also $a\succ \ex^x$, since $a\preceq \ex^x$ gives~$b=a'\preceq \ex^x$, a
contradiction. Hence~$\alpha^\dagger\le 0$ for $\alpha:= va$, so
$\alpha\ge \alpha'=\beta:=vb$,  which gives 
$a\preceq b$, and thus $a\le b^2$. Since~$b\in B$ was arbitrary and
$B$ and $\operatorname{sq}(B)$ are coinitial by Lemma~\ref{lem:mult gap}, this shows that every element of $B$ is $\ge a$ for some~$a\in \int B$. With $a$ and~$b$ as above we also have~$\alpha < \beta/2$, so $a > \sqrt{b}$. This
proves (ii). 

As to (iii), let $b\in B$, $\beta:= vb$, and $g\in H^{\prec 1}$ with~$g'=b^{-1}$, so $g<0$, and for~$\gamma:=vg$ we have $-\beta=\gamma+ \gamma^{\dagger}$.
We have $\sqrt{b} > \ex^x$, so $b^{-1}< \ex^{-2x}$. Claim: $\gamma^\dagger \le 0$. If this claim does not hold, then $0 <\gamma^\dagger <v(x^{-2})$, so~$g\succ\ex^{-x}$, and~$g^\dagger \succ x^{-2}\succ \ex^{-x}$,
and  thus $b^{-1}=g'\succ \ex^{-2x}$, a contradiction.  Now~$\gamma^{\dagger}\le 0$ gives $-\beta\le \gamma$, hence~$b^{-1}=|b^{-1}|\succeq |g|=-g$, and thus $-g\le d$ for some $d\in B^{-1}$. From 
$\gamma^\dagger=(-\gamma)^\dagger\le 0$ we get~$\gamma^\dagger=o(\gamma)$ by    [ADH, 9.2.10], hence~$-2\beta > \gamma$, and thus~$b^{-2} < -g$. It remains to use that $B^{-1}$, $\operatorname{sq}(B^{-1})$ are cofinal.  
\end{proof}

\section{Analytic Preliminaries}\label{sec:analytic}

\noindent
In this  section  $a$, $b$, $c$, $s$, $t$ range over $\R$. 

\subsection*{Constructing smooth functions}  We prove here some facts about smooth functions needed later. 
Let $\rho\colon \R\to \R$ be the $\mathcal{C}^\infty$-function of \cite[(8.12), Exercise~2(a)]{D}. It is defined by
$$\rho(t)\ :=\ \exp\!\left(-\frac{1}{(1+t)^2}-\frac{1}{(1-t)^2}\right)\ \text{if $-1< t < 1$,}  \quad \rho(t):= 0\  \text{if $t\le -1$ or $t\ge 1$.}$$
Thus $\rho(t) >0$ for $-1<t<1$, $\rho$ is even, and $\rho(0)=\ex^{-2}$.  (See Figure~\ref{fig:rho}.)

\begin{figure}[ht]
 \begin{tikzpicture}
  \begin{axis} [axis equal image, axis lines=center, xmin=-1.25, xmax=1.25, ymin = -0.1, ymax =0.4, width=0.75\textwidth,  xlabel={$t$}, xtick={-1,0,1}, ytick=\empty]
    \addplot [domain=-0.9:0.9, smooth, very thick] { exp(-(1+x)^(-2)-(1-x)^(-2)) };
    \addplot [domain=-1.1:-0.9, smooth, very thick] {0}; 
    \addplot [domain=0.9:1.1, smooth, very thick] {0};  
    \node[right] at (axis cs:-0.9,0.1) {$\rho$};
  \end{axis}
\end{tikzpicture}
  \caption{Sketch of $\rho$}\label{fig:rho}
\end{figure}
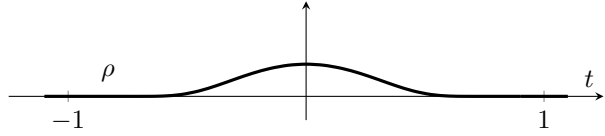

\noindent
For any subset $I$ of $\R$ and $r\in \N\cup\{\infty,\omega\}$ we define
$\Cc^r(I)$ to be the set of $f\colon I \to \R$ for which $f=g|_{I}$ for some $\Cc^r$-function $g\colon U\to \R$ with $U$ an open neighborhood of $I$ in $\R$; instead of ``$f\in \Cc^r(I)$'' we also write ``$f\colon I \to \R$ is a $\Cc^r$-function'' or ``$f\colon I \to \R$ is of class $\Cc^r$''. 
We use this mainly for sets $I=[a,b]$ with $a<b$ and sets~$I=[a,\infty)$. 
As in \cite{ADH5,ADH2} we  denote $\Cc^r[a,+\infty)$ by $\Cc_a^r$, and $\Cc_a:=\Cc_a^0$. 

\begin{lemma}\label{bump} There is a $\Cc^{\infty}$-function $\alpha\colon \R \to \R$ such that $\alpha=0$ on $(-\infty,0]$, $\alpha$ is strictly increasing on $[0,1]$, and $\alpha=1$ on $[1,+\infty)$.
\end{lemma}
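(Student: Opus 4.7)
The plan is to build $\alpha$ as the normalized antiderivative of a smooth bump supported on $[0,1]$, a standard transition-function construction. First I would set
\[
\beta(t)\ :=\ \rho(2t-1)\qquad(t\in\R).
\]
Since $\rho$ is of class $\mathcal{C}^\infty$ and vanishes outside $(-1,1)$ while being strictly positive on $(-1,1)$, the function $\beta$ is of class $\mathcal{C}^\infty$, satisfies $\beta(t)>0$ for $t\in(0,1)$, and $\beta(t)=0$ for $t\in(-\infty,0]\cup[1,+\infty)$.

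Next, put $c:=\int_0^1\beta(s)\,ds$; then $c>0$ because $\beta\geq 0$ everywhere and $\beta>0$ on the nonempty open interval $(0,1)$. Define
\[
\alpha(t)\ :=\ \frac{1}{c}\int_{-\infty}^{t}\beta(s)\,ds\qquad(t\in\R).
\]
By the fundamental theorem of calculus, $\alpha'=\beta/c$ is of class $\mathcal{C}^\infty$, hence so is $\alpha$. For $t\leq 0$ the integrand vanishes on $(-\infty,t]$, giving $\alpha(t)=0$; for $t\geq 1$ we get $\alpha(t)=\frac{1}{c}\int_0^1\beta(s)\,ds=1$ since $\beta=0$ outside $[0,1]$.

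It remains to verify that $\alpha$ is strictly increasing on the closed interval $[0,1]$. For $0\leq s<t\leq 1$, the intersection $(s,t)\cap(0,1)$ is a nonempty open subinterval of $(0,1)$ on which $\alpha'=\beta/c$ is strictly positive, while $\alpha'\geq 0$ everywhere; hence
\[
\alpha(t)-\alpha(s)\ =\ \int_s^t\alpha'(u)\,du\ >\ 0,
\]
as required. There is no real obstacle here: the only subtle point is ensuring strict monotonicity at the endpoints $0$ and $1$ of $[0,1]$, and this is handled by the above integral argument rather than by the pointwise inequality $\alpha'>0$ (which fails at the endpoints).
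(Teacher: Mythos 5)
Your construction is exactly the one the paper uses: the paper's proof is the single line $\alpha(t):=c^{-1}\int_{-\infty}^t\rho(2s-1)\,ds$ with $c:=\int_{-\infty}^\infty\rho(2s-1)\,ds$, and your verification of smoothness, the boundary values, and strict monotonicity on $[0,1]$ is correct.
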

\begin{proof} One can take $\alpha(t):= c^{-1}\int_{-\infty}^t \rho(2s-1)ds$ where $c:=\int_{-\infty}^\infty \rho(2s-1)ds$.
\end{proof}

\begin{lemma}\label{thetazeta} Let $\theta\colon [a,\infty) \to \R^{>}$ be continuous. Then there exists a decreasing $\Cc^{\infty}$-function $\zeta\colon [a,\infty) \to \R^{>}$ such that $\theta(t) > \zeta(t)$ and $\zeta'(t) > -1$ for all $t\ge a$.
\end{lemma}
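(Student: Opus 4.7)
The plan is to build $\zeta$ by two successive regularizations of $\theta$. First, replace $\theta$ by its cumulative infimum $\mu(t) := \min_{s \in [a,t]} \theta(s)$; continuity and positivity of $\theta$ make $\mu$ continuous, non-increasing, and strictly positive, with $\mu \leq \theta$.

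Next, Lipschitz-regularize $\mu/2$ from below by setting
\[
\eta(t)\ :=\ \inf_{s \geq t} \Big(\tfrac12 \mu(s) + \tfrac12 (s - t)\Big), \qquad t \geq a.
\]
A short case analysis (on whether the witness $s$ for $\eta(t)$ satisfies $s \geq t'$ or $s < t'$ when comparing $\eta(t)$ with $\eta(t')$ for $t' > t$, using that $\mu$ is non-increasing) shows that $\eta$ is non-increasing. The defining infimum is visibly $\tfrac12$-Lipschitz in $t$, and $\eta \leq \mu/2$ upon taking $s = t$. To see $\eta(t) > 0$: if $\eta(t) = 0$, then witnesses $s_n \geq t$ would force both $\mu(s_n) \to 0$ and $s_n \to t$, contradicting continuity and positivity of $\mu$ at $t$.

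Finally, pick a $\Cc^\infty$-function $\psi \colon \R \to [0, \infty)$ with $\int \psi = 1$ and $\supp \psi \subseteq [0,1]$ (easily built from the bump $\alpha$ of Lemma~\ref{bump}), and define
\[
\zeta(t)\ :=\ \int_0^1 \psi(u)\, \eta(t + u)\, du, \qquad t \geq a.
\]
Standard convolution facts yield $\zeta \in \Cc^\infty$. Since $\eta$ is non-increasing, so is $\zeta$, and $\zeta(t) \leq \eta(t) \leq \mu(t)/2 < \mu(t) \leq \theta(t)$, strict thanks to $\mu(t) > 0$. Positivity of $\zeta$ follows from strict positivity and continuity of $\eta$ on the compact interval $[t, t+1]$. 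Finally, differentiation under the integral (using absolute continuity of $\eta$ with $\eta' \in [-\tfrac12, 0]$ almost everywhere) gives $\zeta'(t) = \int_0^1 \psi(u)\, \eta'(t + u)\, du \in [-\tfrac12, 0]$, whence $\zeta'(t) > -1$.

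The main delicate point is securing both the strict pointwise inequality $\zeta < \theta$ and the derivative bound $\zeta' > -1$ simultaneously. A direct mollification of $\theta$ itself would not respect these constraints, since $\theta$ may oscillate arbitrarily and need not admit any a priori Lipschitz estimate; the intermediate passage through $\mu$ (which flattens oscillations from above) and then through its Lipschitz envelope $\eta$ is precisely what reconciles the pointwise upper bound with the slope bound.
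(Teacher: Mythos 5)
Your proof is correct, and it shares its skeleton with the paper's: both first pass to the cumulative minimum to make the function non-increasing, and both finish by averaging against a smooth nonnegative kernel supported on one side. The mechanism for the slope bound $\zeta'>-1$ is genuinely different, though. The paper additionally truncates at $1$, arranging $0\le\theta\le1$ and $\theta$ decreasing, and then reads the bound off the kernel itself: $f'(t)=\int_{-1}^{1}\theta(t-s)\rho'(s)\,ds\ge\int_0^1\rho'(s)\,ds=-\ex^{-2}>-1$, while the same normalization $2\ex^{-2}<1$ gives $f(t)<\theta(t-1)$, and a shift by $1$ finishes. You instead interpose an inf-convolution, replacing $\mu/2$ by its $\tfrac12$-Lipschitz non-increasing lower envelope $\eta$, after which any nonnegative unit-mass averaging inherits $\zeta'\in[-\tfrac12,0]$ regardless of the kernel, and the factor $\tfrac12$ together with the right-sided monotone averaging gives $\zeta\le\eta\le\mu/2<\theta$. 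Your route costs one extra regularization step but is independent of the particular mollifier and of any normalization of $\rho$; the paper's is shorter because boundedness by $1$ already controls the derivative of the convolution, so no Lipschitz estimate on the input is needed --- your closing remark somewhat oversells the necessity of the Lipschitz envelope. Two minor points to tidy: to get $\zeta\in\Cc^\infty[a,\infty)$ in the paper's sense (restriction of a smooth function on an open neighborhood of $[a,\infty)$), take $\supp\psi\subseteq[\tfrac12,1]$ or extend $\eta$ to the left by the constant $\eta(a)$; and for the derivative bound it is cleanest to estimate the difference quotients $\bigl(\eta(t+h+u)-\eta(t+u)\bigr)/h\in[-\tfrac12,0]$ directly, rather than invoke almost-everywhere differentiability and differentiation under the integral sign.
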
 
\begin{proof} Replacing $\theta$ by the function $t\mapsto \min_{a\le s\le t}\min\!\big(\theta(s),1\big) \colon [a,\infty)\to \R^{>}$  we arrange that $\theta$ is decreasing and $0\le \theta\le 1$ on $[a,\infty)$.  Next we follow Exercise~2 of~\cite[(8.12)]{D}, taking the convolution with $\rho$; in other words, we extend $\theta$ to all of~$\R$ by setting $\theta(t)=0$ for $t<a$, and then define $f\colon \R \to \R$ by $$f(t):=\int_{-\infty}^\infty \theta(s)\rho(t-s)\,ds=\int_{-\infty}^\infty\theta(t-s)\rho(s)\,ds.$$
Instead of $-\infty$, $\infty$ we can take in the left integral any real bounds  $c \le t-1$, $d\ge t+1$, and in the right integral  any real bounds
$c\le -1$, $d\ge 1$. As in that exercise one shows that $f$ is of class $\Cc^\infty$ (in fact, $f^{(p)}(t)=\int_{-\infty}^\infty\theta(s)\rho^{(p)}(t-s)ds$ for all $p\in \N$ and  all $t$) and decreasing on $[a+1, \infty)$.  For $t\ge a+1$ we have
$$0 < f(t)\ =\ \int_{-1}^1\theta(t-s)\rho(s)\,ds\ \le\ 2\ex^{-2}\theta(t-1)\  <\ \theta(t-1).$$
Using
$\rho'(s)\ge 0$ for $-1\le s\le 0$ and $\rho'(s)\le 0$ for $0\le s\le 1$, we obtain for all $t$,
$$f'(t)\ =\ \int_{-1}^1 \theta(t-s)\rho'(s)\,ds\ \ge\ \int_0^1\theta(t-s)\rho'(s)\,ds\ \ge\  \int_0^1\rho'(s)\,ds\ =\ -\ex^{-2}\ >\ -1.$$ 
Thus $\zeta\colon [a,\infty)\to \R^{>}$ defined by $\zeta(t):= f(t+1)$ has the desired property.
\end{proof}

\begin{lemma}\label{phizeta} Let $a< b$ and $\phi,\zeta\in\Cc^{\infty}[a,b]$
be such that $\phi(a)=\zeta(a)$ and $\phi < \zeta$ on~$(a,b]$,  
and let real numbers $c_n$ be given with
$\phi(b) < c_0 < \zeta(b)$. Then there exists a function $\theta\in \Cc^{\infty}[a,b]$ such that $\theta^{(n)}(a)=\phi^{(n)}(a)$ for all $n$, $\phi < \theta < \zeta$
on $(a, b]$, and $\theta^{(n)}(b)=c_n$ for all $n$. \textup{(See Figure~\ref{fig:phizeta}.)}
\end{lemma}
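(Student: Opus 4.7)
The plan is to reduce to finding $h := \theta - \phi \in \Cc^\infty[a,b]$ with $h^{(n)}(a) = 0$ and $h^{(n)}(b) = d_n := c_n - \phi^{(n)}(b)$ for all $n$, and with $0 < h < g$ on $(a,b]$, where $g := \zeta - \phi$. Then $g \in \Cc^\infty[a,b]$ with $g(a)=0$, $g>0$ on $(a,b]$, and $0 < d_0 < g(b)$, and $\theta := \phi + h$ is the function sought.

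To build $h$, I would assemble it from two pieces, one managing each endpoint, glued by a smooth cutoff. For the left endpoint, take $\chi \in \Cc^\infty[a,b]$ flat at $a$ with $0 < \chi < 1$ on $(a,b]$ (for instance $\chi(t) := \ex^{-1/(t-a)}$ for $t > a$ and $\chi(a) := 0$), and set $w := (g/2)\cdot\chi$; by Leibniz this satisfies $w^{(n)}(a)=0$ for all $n$ and $0 < w < g$ on $(a,b]$. For the right endpoint, invoke Borel's theorem to obtain $u \in \Cc^\infty(\R)$ with $u^{(n)}(b) = d_n$ for all $n$; since $u(b) = d_0 \in (0, g(b))$, continuity yields $\delta > 0$ with $0 < u < g$ on $[b-\delta, b]$. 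Using Lemma~\ref{bump}, pick a cutoff $\beta \in \Cc^\infty[a,b]$ with $0 \leq \beta \leq 1$, $\beta \equiv 0$ on $[a, b-\delta]$, and $\beta \equiv 1$ on $[b-\delta/2, b]$, and define $h := (1-\beta)\, w + \beta\, u$.

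The verification is then routine. Near $a$, $h \equiv w$, so $h^{(n)}(a) = 0$; near $b$, $h \equiv u$, so $h^{(n)}(b) = d_n$. For the strict bounds on $(a,b]$: on $(a, b-\delta]$ we have $h = w \in (0, g)$, on $[b-\delta/2, b]$ we have $h = u \in (0, g)$, and on the transition band $[b-\delta, b-\delta/2]$ both $w, u \in (0, g)$, so the convex combination is strictly in $(0,g)$ as well (here it is crucial that both pieces already satisfy the bounds on the transition band, which is why $\delta$ was shrunk so that $u < g$ on all of $[b-\delta, b]$).

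The conceptual obstacle is the simultaneous matching of infinite jets at \emph{both} endpoints under pointwise constraints throughout $(a,b]$; this is resolved by dividing the work between Borel's theorem (at $b$) and the flat function $\chi$ (at $a$), and by observing that the convexity of the admissible tube $(0, g(t))$ for each $t$ allows a partition-of-unity-style glueing to preserve every strict inequality.
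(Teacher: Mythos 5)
Your proof is correct and follows essentially the same route as the paper's: reduce to $\phi=0$, use Borel's theorem to match the jet at $b$, use a function flat at $a$ (your $w=(g/2)\chi$, the paper's $\gamma\zeta$) for the left endpoint, and glue with a smooth cutoff supported near $b$. The only cosmetic difference is that you preserve the strict bounds on the transition band via convexity of the tube $\bigl(0,g(t)\bigr)$, whereas the paper writes $\theta=\alpha\beta+\gamma\zeta$ and uses a $\delta$-margin ($\delta<\beta<\zeta-\delta$ and $\gamma\zeta<\delta$) to the same effect; just make sure to shrink $\delta$ so that $b-\delta>a$.
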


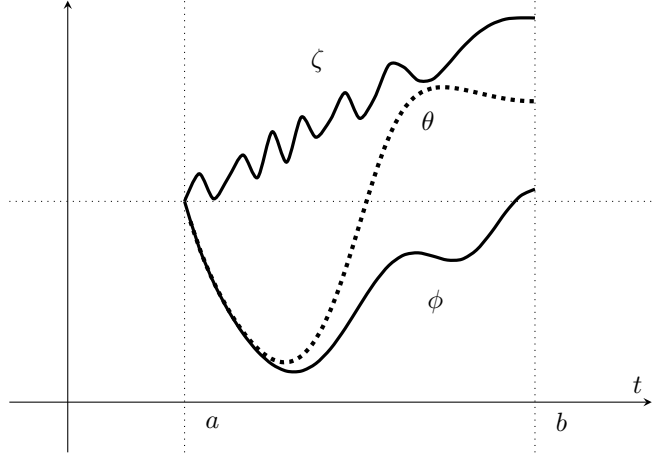
\begin{figure}[ht]
\begin{tikzpicture}
  \begin{axis} [axis lines=center, xmin=-0.5, xmax=5, ymin = -0.25, ymax = 2, width=0.8\textwidth, height = 0.6\textwidth, xlabel={$t$}, xtick=\empty, ytick=\empty]
  \draw[dotted] (axis cs: 1,-0.25) -- (axis cs: 1,2);
  \draw[dotted] (axis cs: 4,-0.25) -- (axis cs: 4,2);
  \node[right] at (axis cs: 1.1,-0.1) {$a$};
  \node[right] at (axis cs: 4.1,-0.1) {$b$};
  \node[right] at (axis cs:3,0.5) {$\phi$};
  \node[right] at (axis cs:2,1.7) {$\zeta$};
  \node[right] at (axis cs:2.95,1.4) {$\theta$};

  \def\deltaval{1.7};
  \def\epsilonval{0.0005};
  \def\alphafn{(1/(1+exp(-2*( (4/\deltaval)*(x-4)+3))))};
  \def\phifn{((x-1)/3 +  exp((x-1)/2)*(1/x^3*cos((x^2-1)*50)))};
  \def\zetafn{1 + (x-1)/3 +   sin((x^2-1)*400)/10};
  \def\betafn{(1.5+(1/4)*(x-4)^2)};
  \def\thetafnsecond{(\phifn)+(((\betafn)-(\phifn))*(\alphafn))};
  \def\transfn{(-1)+(2/(3-0.5*\deltaval))*(x-1)}; 
  \def\rhofn{ exp(-1/(1+(\transfn))^2 - 1/(1-(\transfn))^2) };
  \def\thetafnfirst{((\phifn)+((\betafn)-(\phifn))*(\alphafn))+((\epsilonval)*(\rhofn)*((\zetafn)-(\phifn)))};
  
  \draw[dotted] (axis cs: -0.5,1) -- (axis cs:5,1);
  \addplot [domain=1:4, smooth, very thick] { \zetafn};
  \addplot [domain=1:4, smooth, very thick] { \phifn };
    \addplot [domain=(4-(1/2)*\deltaval):4, smooth, ultra thick, dotted] { \thetafnsecond };
    \addplot [domain=1.05:(3.985-(1/2)*\deltaval), smooth, ultra thick, dotted] { \thetafnfirst };

   \end{axis}
\end{tikzpicture}
\label{fig:phizeta}\caption{Sketch of $\phi$, $\theta$, $\zeta$ in Lemma~\ref{phizeta}}
\end{figure}

\begin{proof} By subtracting $\phi$ throughout we arrange $\phi=0$. A result due to E.~Bo\-rel~\cite[Ex\-er\-cise~4(a), p.~192]{D} yields a
function $\beta\in \Cc^{\infty}[a,b]$ such that $\beta^{(n)}(b)=c_n$ for all~$n$. Take $\delta \in (0,b-a)$ with
$\delta < \beta < \zeta-\delta$ on $[b-\delta, b]$, and then~$\alpha\in \Cc^{\infty}[a,b]$ such that
\begin{itemize}
\item  $\alpha=0$ on $[a,b-\delta]$, 
\item
$\alpha$ is strictly increasing on $[b-\delta, b-\frac{1}{2}\delta]$, and
\item  $\alpha=1$ on $[b-\frac{1}{2}\delta,b]$.
\end{itemize}
Take $\epsilon\in (0,\frac{1}{2})$ such that $\epsilon\zeta< \delta$ on $[b-\delta,b]$, and take $\gamma\in \Cc^{\infty}[a,b]$ such that
\begin{itemize}
\item $\gamma^{(n)}(a)=0$ for all $n$,
\item $0 <\gamma< \epsilon$ on $(a,b-\frac{1}{2}\delta)$,
and
\item $\gamma=0$ on $[b-\frac{1}{2}\delta,b]$. 
\end{itemize}
Then the function
$\theta:= \alpha\beta+\gamma\zeta$ has the desired properties.  \end{proof}

\begin{lemma}\label{lem:pwa intermed} 
Let $a<b$, $f,g\in\Cc[a,b]$,  and $f<g$ on $[a,b]$.
Then there are~$a_0<a_1<\cdots<a_n$ with $a_0=a,\ a_n=b$, and a function $\phi\colon [a,b]\to\R$ such
that {\samepage
\begin{enumerate}
\item[(i)]
$f<\phi<g$ on $[a,b]$,
\item[(ii)] $\phi(a)=\frac{1}{2}\big(f(a)+g(a)\big)$ and $\phi(b)=\frac{1}{2}\big(f(b)+g(b)\big)$,
and
\item[(iii)]
for $i=0,\dots,n-1$, the restriction of $\phi$ to $[a_i,a_{i+1}]$ is the restriction of
an affine function $\R\to\R$.
\end{enumerate}}
\end{lemma}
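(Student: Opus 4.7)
The plan is to take $\phi$ to be the piecewise-affine interpolant of the midpoint function $m:=\tfrac12(f+g)$ at a sufficiently fine partition of $[a,b]$. With this choice, conditions (ii) and (iii) will hold automatically---the endpoint values of $\phi$ are exactly $m(a)$ and $m(b)$, and $\phi$ is affine on each piece by construction---so the entire task reduces to arranging (i).

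For (i), I would first use compactness of $[a,b]$ together with continuity of $g-f>0$ to fix $\delta>0$ with $g-f\ge\delta$ on $[a,b]$. Uniform continuity of $f$ and of $g$ then lets me choose a partition $a=a_0<a_1<\cdots<a_n=b$ so fine that the oscillations of $f$ and $g$ on each closed subinterval $[a_i,a_{i+1}]$ are at most $\delta/4$. Setting $\phi(a_i):=\tfrac12\bigl(f(a_i)+g(a_i)\bigr)$ and extending affinely on each piece, the key estimate is that for $t\in[a_i,a_{i+1}]$ and $j\in\{i,i+1\}$,
\[
\phi(a_j)-f(t)\ =\ \tfrac12\bigl(g(a_j)-f(a_j)\bigr)+\tfrac12\bigl(f(a_j)-f(t)\bigr)\ \ge\ \tfrac{\delta}{2}-\tfrac{\delta}{8}\ >\ 0,
\]
and symmetrically $g(t)-\phi(a_j)>0$. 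Since $\phi$ restricted to $[a_i,a_{i+1}]$ is affine, its value at any $t$ in that subinterval is a convex combination of $\phi(a_i)$ and $\phi(a_{i+1})$, and therefore lies in $\bigl(f(t),g(t)\bigr)$ as well.

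There is essentially no obstacle here. The only point worth flagging is that the propagation of the strict inequalities from the partition nodes to all of $[a_i,a_{i+1}]$ is free: it is bought by affinity of $\phi$ on each piece, once the strip $f<\cdot<g$ has been shown to contain both endpoint values $\phi(a_i)$ and $\phi(a_{i+1})$. The substance of the argument is just the uniform-continuity-and-compactness bookkeeping that selects the partition, and matching the endpoint-values condition (ii) costs nothing because $\phi(a)=m(a)$ and $\phi(b)=m(b)$ by construction.
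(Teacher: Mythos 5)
Your proof is correct and follows essentially the same route as the paper's: fix a uniform positive lower bound for $g-f$ by compactness, refine the partition by uniform continuity until the node values of $\tfrac12(f+g)$ lie strictly between $f(t)$ and $g(t)$ for every $t$ in the corresponding subinterval, and conclude by convexity of the affine pieces. One small algebra slip: the displayed identity should read $\phi(a_j)-f(t)=\tfrac12\bigl(g(a_j)-f(a_j)\bigr)+\bigl(f(a_j)-f(t)\bigr)$ (no factor $\tfrac12$ on the second term), giving the bound $\ge\delta/2-\delta/4>0$; the conclusion is unaffected.
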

\begin{proof}
Let $\varepsilon:=\frac{1}{2}\min\big\{g(t)-f(t):t\in [a,b]\big\}$, so $\varepsilon>0$. Choose $n\geq 1$ such that
for all $s,t\in [a,b]$ with $\abs{s-t}\leq \delta:=\frac{b-a}{n}$ we have $\abs{f(s)-f(t)},\abs{g(s)-g(t)}<\varepsilon$.
For~$i=0,\dots,n$ set $a_i:=a+i\delta$, and for $i=0,\dots,n-1$ take affine~$\phi_i\colon\R\to\R$ with~$\phi_i(a_i)=\frac{1}{2}\big(f(a_i)+g(a_i)\big)$ and $\phi_i(a_{i+1})=\frac{1}{2}\big(f(a_{i+1})+g(a_{i+1})\big)$.
It suffices  to show that $f<\phi_i<g$ on $[a_i, a_{i+1}]$ for $i=0,\dots, n-1$. For such $i$ and
 $s,t \in [a_i,a_{i+1}]$,
$$f(t)<\varepsilon+f(s)\leq\textstyle\frac{1}{2}\big(f(s)+g(s)\big)  \leq -\varepsilon+g(s)<g(t), 
$$
in particular, $f(t)<\phi_i(a_i),\phi_i(a_{i+1})<g(t)$. Since $\phi_i(a_i)\leq\phi_i(t)\leq\phi_i(a_{i+1})$ or~$\phi_i(a_{i+1})\leq\phi_i(t)\leq\phi_i(a_{i})$, we are done.
\end{proof}

\begin{lemma}\label{smooth} Let $f, g\in\Cc_a$  be such that $f < g$ on $[a,+\infty)$. Then there exists a function~$y\in\Cc^\infty_a$   such that $f < y < g$ on $[a,+\infty)$.  
\end{lemma}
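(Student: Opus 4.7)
The plan is to first produce a continuous piecewise affine function $\phi\colon[a,+\infty)\to\R$ strictly between $f$ and $g$, and then smooth out its corners by a carefully localized mollification. For the first step, I apply Lemma~\ref{lem:pwa intermed} successively on each compact interval $[a+n,a+n+1]$, $n\in\N$. Since the endpoint values prescribed by that lemma are $\frac{1}{2}(f(a+n)+g(a+n))$, the pieces agree at each $a+n$ and glue to a continuous piecewise affine $\phi$ on $[a,+\infty)$ with $f<\phi<g$ throughout. The break-point set $D$ of $\phi$ is locally finite (so either finite or of the form $\{d_1<d_2<\cdots\}$ with $d_n\to+\infty$), and $\phi$ is already $\Cc^\infty$ on $[a,+\infty)\setminus D$. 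Put $\delta(t):=\frac{1}{2}\min\bigl(\phi(t)-f(t),\,g(t)-\phi(t)\bigr)$, a continuous strictly positive function; it then suffices to construct $y\in\Cc^\infty_a$ with $|y-\phi|<\delta$ on $[a,+\infty)$.

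For the smoothing, normalize the bump $\rho$ from just before Lemma~\ref{bump} to an even kernel $\chi\in\Cc^\infty(\R)$ with $\chi\geq 0$, $\operatorname{supp}\chi\subseteq[-1,1]$, and $\int\chi=1$, and set $\chi_\varepsilon(t):=\varepsilon^{-1}\chi(t/\varepsilon)$. For each $d_n\in D$ choose $\varepsilon_n>0$ so small that the closed intervals $I_n:=[d_n-\varepsilon_n,d_n+\varepsilon_n]$ are pairwise disjoint subsets of $[a,+\infty)$, $\phi$ is affine on each of the two halves $[d_n-\varepsilon_n,d_n]$ and $[d_n,d_n+\varepsilon_n]$, and $L_n\varepsilon_n<\min_{I_n}\delta$, where $L_n$ is the larger of the two slope magnitudes of $\phi$ on those halves. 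All three requirements can be met simultaneously by local finiteness of $D$ and continuity/positivity of $\delta$. Now extend $\phi|_{I_n}$ to a globally defined tent function $\tilde\phi_n\colon\R\to\R$ by continuing its two affine pieces to $\pm\infty$, and define $y:=\tilde\phi_n*\chi_{\varepsilon_n/2}$ on $I_n$ and $y:=\phi$ elsewhere on $[a,+\infty)$.

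The key identity is that convolution of an affine function with an even, unit-mass kernel reproduces that affine function. Applied in the collars $[d_n-\varepsilon_n,d_n-\varepsilon_n/2]$ and $[d_n+\varepsilon_n/2,d_n+\varepsilon_n]$—where $\tilde\phi_n$ is affine on the entire $\varepsilon_n/2$-window around every point—this identity forces $\tilde\phi_n*\chi_{\varepsilon_n/2}$ to coincide with $\phi$, so $y$ matches $\phi$ smoothly across the boundaries $d_n\pm\varepsilon_n$. In the interior of $I_n$, $y$ is $\Cc^\infty$ as a convolution of a continuous function with a $\Cc^\infty$ kernel, and outside $\bigcup_n I_n$ we have $y=\phi$, which is locally affine and hence $\Cc^\infty$; thus $y\in\Cc^\infty_a$. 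The standard Lipschitz bound for mollification gives $|(\tilde\phi_n*\chi_{\varepsilon_n/2})(t)-\tilde\phi_n(t)|\leq L_n\varepsilon_n/2<\frac{1}{2}\delta(t)$ on $I_n$, and $\tilde\phi_n=\phi$ there, so $|y-\phi|<\delta$ throughout $[a,+\infty)$, yielding $f<y<g$. The only point requiring real care is the smooth attachment of the convolution pieces to $\phi$ at $d_n\pm\varepsilon_n$, which is handled entirely by the ``affine in, affine out'' identity—this is precisely why $\varepsilon_n$ must be chosen so that $\phi$ is affine on both halves of $I_n$.
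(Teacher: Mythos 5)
Your argument is correct. The first half---applying Lemma~\ref{lem:pwa intermed} on consecutive unit intervals and gluing, using that the prescribed endpoint values $\frac{1}{2}(f+g)$ make adjacent pieces match, to obtain a continuous piecewise affine $\phi$ with $f<\phi<g$---is exactly the paper's first step. Where you diverge is in how the corners of $\phi$ are smoothed. The paper reduces to an auxiliary gluing lemma in which the two affine pieces $\phi$, $\theta$ meeting at a break point are blended by the convex combination $y=(1-\beta)\phi+\beta\theta$ with a smooth cutoff $\beta$; you instead mollify the local tent function with a scaled even unit-mass kernel and exploit the identity $h*\chi_\varepsilon=h$ for affine $h$ to get \emph{exact} agreement with $\phi$ on the outer collars of each $I_n$, which is what makes your piecewise definition globally $\Cc^\infty$. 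Both devices do the same job: the convex-combination blend controls $|y-\phi|$ and $|y-\theta|$ by $|\phi-\theta|$ essentially for free, while your mollification needs the Lipschitz estimate $L_n\varepsilon_n/2$ together with the requirement that $\phi$ be affine on both halves of $I_n$---a requirement you correctly single out as the crux of the collar argument. In either version the perturbation is dominated by your $\delta=\frac{1}{2}\min(\phi-f,g-\phi)$, so $f<y<g$ persists.
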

\begin{proof} Lemma~\ref{lem:pwa intermed} yields a piecewise affine intermediary $\phi$, more precisely,   a
strictly increasing sequence $(a_n)$ in $\R$ with $a_0=a$ and $a_n\to +\infty$ as $n\to \infty $  and a $\phi\in \Cc_a$ such that for each $n$ the restriction of
$\phi$ to $[a_n, a_{n+1}]$ is the restriction of an affine function $\R\to \R$, and such that $f< \phi< g$ on $[a,+\infty)$. This reduces the problem of constructing
$y$ to  proving the next lemma.
\end{proof}

\begin{lemma} Let $a<b<c$ and $\phi,\theta\in \Cc^{\infty}[a,c]$ be such that $\phi(b)=\theta(b)$, and let~$0 < \epsilon< b-a, c-b$.
Then there exists $y\in \Cc^{\infty}[a,c]$ such that \begin{align*}
y(t)\ &=\ \phi(t)\ \text{ for $a\le t\le b-\epsilon$,} &  \abs{y(t)-\phi(t)} &\ <\  \epsilon\ \text{ for $b-\epsilon \le t\le b$,}\\
y(t)\ &=\ \theta(t)\ \text{ for $b+\epsilon \le t\le c$,} &  \abs{y(t)-\theta(t)} &\ <\ \epsilon\ \text{ for $b\le t \le b+\epsilon$.}
\end{align*}
\end{lemma}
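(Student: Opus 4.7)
The plan is to smoothly interpolate between $\phi$ and $\theta$ using a rescaled version of the bump function $\alpha$ of Lemma~\ref{bump}. Since $\phi(b)=\theta(b)$ and $\phi,\theta$ are continuous on the compact interval $[a,c]$, by uniform continuity we can choose $\delta\in(0,\epsilon]$ small enough that $|\phi(t)-\theta(t)|<\epsilon$ for all $t\in[b-\delta,b+\delta]$.

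Next, using $\alpha$ from Lemma~\ref{bump}, define the smooth cutoff $\lambda\colon\R\to[0,1]$ by
$$\lambda(t)\ :=\ \alpha\!\left(\frac{t-(b-\delta)}{2\delta}\right),$$
so that $\lambda\in\Cc^\infty(\R)$, $\lambda=0$ on $(-\infty,b-\delta]$, $\lambda=1$ on $[b+\delta,+\infty)$, and $0\le\lambda\le 1$ everywhere. Put
$$y\ :=\ (1-\lambda)\phi+\lambda\theta\ \in\ \Cc^\infty[a,c].$$

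It remains to verify the four conditions. Since $\delta\leq\epsilon$, we have $[a,b-\epsilon]\subseteq[a,b-\delta]$; on this set $\lambda=0$, so $y=\phi$. Similarly, $[b+\epsilon,c]\subseteq[b+\delta,c]$, where $\lambda=1$ and hence $y=\theta$. On $[b-\epsilon,b]$ we split into $[b-\epsilon,b-\delta]$, where $y=\phi$ (so the bound is trivial), and $[b-\delta,b]\subseteq[b-\delta,b+\delta]$, where
$$|y(t)-\phi(t)|\ =\ \lambda(t)\,|\theta(t)-\phi(t)|\ \le\ |\theta(t)-\phi(t)|\ <\ \epsilon.$$
The bound on $[b,b+\epsilon]$ is symmetric, using $y-\theta=(1-\lambda)(\phi-\theta)$ on $[b,b+\delta]$ and $y=\theta$ on $[b+\delta,b+\epsilon]$.

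The main (and really only) thing to watch is the choice of $\delta$: it must be small enough to force $|\phi-\theta|<\epsilon$ on $[b-\delta,b+\delta]$, and it must also be at most $\epsilon$ so that the smooth transition region $[b-\delta,b+\delta]$ fits inside $[b-\epsilon,b+\epsilon]$, which is where the bound on $|y-\phi|$ or $|y-\theta|$ is required. Both constraints are easily met by picking $\delta\in(0,\epsilon]$ small using continuity of $\phi-\theta$ at $b$.
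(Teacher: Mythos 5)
Your proof is correct and follows essentially the same route as the paper: both choose $\delta<\epsilon$ with $\abs{\phi-\theta}$ small on $[b-\delta,b+\delta]$, take a smooth cutoff that is $0$ before $b-\delta$ and $1$ after $b+\delta$, and set $y=(1-\lambda)\phi+\lambda\theta$. The only difference is that you explicitly build the cutoff from the bump function of Lemma~\ref{bump}, which the paper leaves implicit.
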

\begin{proof} Take $0<\delta < \epsilon$ such that $|\phi-\theta|\le \epsilon/2$ on $[b-\delta, b+\delta]$. Next, take
$\beta\in \Cc^{\infty}[a,c]$ such that 
\begin{itemize}
\item $\beta=0$ on $[a,b-\delta]$, 
\item $0\le \beta\le 1$ on $[b-\delta, b+\delta]$, and
\item $\beta=1$ on $[b+\delta, c]$. 
\end{itemize}
Then $y:=(1-\beta)\phi+\beta\theta$
has the desired property. 
\end{proof}

\begin{lemma}\label{lem:smooth} 
For each $n$, let $f_n,g_n\in\Cc$ be such that $f_n\leq f_{n+1}$, $g_{n+1}\leq g_n$, and~$f_n <_{\ex} g_n$. Then there exists~$\phi\in \Cc^\infty$ such that $f_n <_{\ex} \phi <_{\ex} g_n$ for each $n$.
\end{lemma}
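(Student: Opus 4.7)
The plan is to patch smooth pieces $\phi_n\in\Cc^\infty[a_n,a_{n+1}]$ along a sequence of abutting compact intervals, matching Taylor data at each junction. Fix continuous representatives of the germs $f_n, g_n$. Using the germ hypotheses $f_n\le f_{n+1}$, $g_{n+1}\le g_n$, and $f_n<_\ex g_n$, I can choose strictly increasing $a_0<a_1<\cdots$ with $a_n\to+\infty$ such that on $[a_n,+\infty)$ the pointwise inequalities $f_k(t)\le f_n(t)<g_n(t)\le g_k(t)$ hold for all $k\le n$. Set $c_n:=\tfrac{1}{2}\big(f_n(a_n)+g_n(a_n)\big)$; then $c_n\in(f_k(a_n),g_k(a_n))$ for every $k\le n$, so $c_n$ is a legal ``interior'' value for both $\phi_{n-1}$ and $\phi_n$ at their common endpoint $a_n$.

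The key construction is, for each $n$, a function $\phi_n\in\Cc^\infty[a_n,a_{n+1}]$ with $f_n<\phi_n<g_n$ on $[a_n,a_{n+1}]$ and prescribed Taylor data $\phi_n(a_n)=c_n$, $\phi_n(a_{n+1})=c_{n+1}$, and $\phi_n^{(k)}(a_n)=\phi_n^{(k)}(a_{n+1})=0$ for $k\ge 1$. Running the proof of Lemma~\ref{smooth} on the compact interval $[a_n,a_{n+1}]$ in place of $[a,+\infty)$ yields some $\psi\in\Cc^\infty[a_n,a_{n+1}]$ with $f_n<\psi<g_n$. To install the Taylor data at $a_n$: by Borel's theorem (cf.\ the proof of Lemma~\ref{phizeta}) there is a $\Cc^\infty$-function $\beta_-$ near $a_n$ realizing the prescribed Taylor series there; since $\beta_-(a_n)=c_n$ lies strictly in $(f_n(a_n),g_n(a_n))$, continuity gives $f_n<\beta_-<g_n$ on $[a_n,a_n+\delta]$ for some small $\delta>0$. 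Choosing a smooth cutoff $\eta$ with $\eta=0$ on $[a_n,a_n+\tfrac{\delta}{3}]$ and $\eta=1$ on $[a_n+\tfrac{2\delta}{3},a_{n+1}]$ (in the spirit of the unnumbered lemma preceding Lemma~\ref{lem:smooth}) and setting $\psi':=(1-\eta)\beta_-+\eta\psi$, I obtain $\psi'\in\Cc^\infty[a_n,a_{n+1}]$ which equals $\beta_-$ near $a_n$ and $\psi$ on $[a_n+\delta,a_{n+1}]$; on the transition region the values are convex combinations of $\beta_-$ and $\psi$, both in $(f_n,g_n)$ there, so $\psi'\in(f_n,g_n)$ throughout. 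A symmetric adjustment at $a_{n+1}$ using a Borel function $\beta_+$ produces the final $\phi_n$.

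Finally, the concatenation $\phi|_{[a_n,a_{n+1}]}:=\phi_n$ defines an element of $\Cc^\infty[a_0,+\infty)$, since at each interior junction $a_n$ the one-sided Taylor series of $\phi_{n-1}$ and $\phi_n$ both equal $(c_n,0,0,\dots)$. For every $m\ge n$ and $t\in[a_m,a_{m+1}]$, the pointwise monotonicity arranged above yields $f_n(t)\le f_m(t)<\phi(t)<g_m(t)\le g_n(t)$; hence $f_n<_\ex\phi<_\ex g_n$ for each $n$, as required. The main obstacle is the prescribed-Taylor-data strengthening of Lemma~\ref{smooth} used in the second step: Lemma~\ref{phizeta} covers only the one-sided touching case, whereas here the envelopes $f_n,g_n$ are strictly separated on all of $[a_n,a_{n+1}]$ and data must be prescribed at \emph{both} endpoints. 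Fortunately the standard Borel-plus-cutoff construction outlined above---built entirely from tools already developed in this section---suffices, so the adaptation is routine rather than conceptually new.
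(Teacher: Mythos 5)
Your proof is correct, but it follows a different route from the paper's. The paper's proof first builds two \emph{continuous} envelopes on all of $[0,\infty)$ by convex interpolation: on each $[a_n,a_{n+1}]$ it sets $f=\alpha_nf_n+\beta_nf_{n+1}$ and $g=\alpha_ng_n+\beta_ng_{n+1}$ with a continuous partition of unity $\alpha_n+\beta_n=1$, so that $f_n\le f<g\le g_n$ on $[a_n,\infty)$, and then applies Lemma~\ref{smooth} once to get a single $\phi\in\Cc^\infty$ with $f<\phi<g$ everywhere; all the gluing is thus absorbed into the already-proved global smoothing lemma, and no jet-matching is needed. You instead smooth each compact piece separately and glue by forcing the one-sided Taylor data $(c_n,0,0,\dots)$ at every junction, which amounts to re-deriving inline the two-endpoint prescribed-jet statement that the paper records separately as Lemma~\ref{phizetavar} (stated just after this lemma, and itself proved by the same Borel-plus-cutoff device you describe). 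Your construction is sound --- the choice $c_n=\frac12\big(f_n(a_n)+g_n(a_n)\big)$ does lie in $\big(f_k(a_n),g_k(a_n)\big)$ for all $k\le n$, convex combinations of $\beta_{\pm}$ and $\psi$ stay in the open band, and matching all one-sided derivatives at each $a_n$ does make the concatenation $\Cc^\infty$ --- but it costs an appeal to Borel's theorem and a jet-matching argument that the paper's envelope-first approach avoids entirely; in exchange it gives finer local control (prescribed values and flat jets at the $a_n$), which is exactly the kind of control the paper defers to Lemma~\ref{phizetavar} where it is actually needed.
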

\begin{proof}
Take for each $n$ representatives
of $f_n$ and $g_n$ in $\mathcal{C}_0$, denoted also by $f_n$ and~$g_n$, such that $f_n < g_n$ on $[0,\infty)$. 
 Next, take a strictly increasing sequence
$(a_n)$ of real numbers  $\ge 0$ with $a_n\to \infty$ as $n\to \infty$, such that $f_n \le f_{n+1}$ and~$g_n\ge g_{n+1}$ on~$[a_n,\infty)$, and  take continuous functions~$\alpha_n, \beta_n\colon [0,\infty) \to [0,1]$ with $\alpha_n(a_n)=1$, $\alpha_n(a_{n+1})=0$ and $\alpha_n+\beta_n=1$ on $[a_n, a_{n+1}]$. Let~$f, g\colon [0,\infty)\to \R$ be  given by
\begin{align*} f&=f_0\text{ on }[0,a_0],\quad  f=\alpha_{n} f_n + \beta_{n} f_{n+1} \text{ on }[a_n, a_{n+1}],\\
g&=g_0\text{ on }[0,a_0],\quad  g=\alpha_{n} g_n + \beta_{n} g_{n+1} \text{ on }[a_n, a_{n+1}],
\end{align*} 
so $f$, $g$ are continuous, $f_n\le f$  and $g\le g_n$ on $[a_n,\infty)$, and $f< g$ on $[0,\infty)$. 
(See Figure~\ref{fig:lemsmooth}.)
Now Lemma~\ref{smooth}  gives $\phi\in\Cc_0^\infty$ such that $f < \phi < g$ on $[0,\infty)$, and then its germ at $+\infty$, denoted also by $\phi$,
satisfies~$f_n <_{\ex} \phi <_{\ex} g_n$ for all~$n$.
\end{proof}

\begin{figure}[ht]
\begin{tikzpicture}
  \begin{axis} [axis lines=center, xmin=-0.5, xmax=5, ymin = -0.25, ymax = 2,  height = 0.5\textwidth, xlabel={$t$}, xtick=\empty, ytick=\empty]
  \def\anval{1}; \def\anploneval{4}; 
  \def\alphanfn{1+(x-\anval)/(\anval-\anploneval)};
  \def\betanfn{1-(\alphanfn)};
  \draw[dotted] (axis cs: \anval,-0.25) -- (axis cs: \anval,2);
  \draw[dotted] (axis cs: \anploneval,-0.25) -- (axis cs: \anploneval,2);
  \node[right] at (axis cs: \anval,-0.1) {$a_n$};
  \node[right] at (axis cs: \anploneval,-0.1) {$a_{n+1}$};
  \addplot [domain=\anval:\anploneval, smooth, very thick] {\alphanfn};
  \addplot [domain=\anval:\anploneval, smooth, very thick] {\betanfn};
  \node[left] at (axis cs: \anploneval-0.6,0.9) {$\beta_n$};
    \node[right] at (axis cs: \anval+0.5,0.9) {$\alpha_n$};
  \end{axis}
\end{tikzpicture}\quad
\begin{tikzpicture}
  \begin{axis} [axis lines=center, xmin=-0.5, xmax=5, ymin = -0.25, ymax = 2,  height = 0.5\textwidth, xlabel={$t$}, xtick=\empty, ytick=\empty]
  \def\anval{1}; \def\anploneval{4}; 
  \def\alphanfn{1+(x-\anval)/(\anval-\anploneval)};
  \def\betanfn{1-(\alphanfn)};
  \def\fnfn{0.04*x*x};
  \def\fnplonefn{0.1+0.04*(x*x+x)};
  \def\gnfn{2*x*x};
  \def\gnfn{1.3+0.03*(x*x+(1.5)*x)};
  \def\gnplonefn{1.2+0.03*(x*x-x)};
  \def\ffn{((\alphanfn)*(\fnfn))+((\betanfn)*(\fnplonefn))};
  \def\gfn{((\alphanfn)*(\gnfn))+((\betanfn)*(\gnplonefn))};
  \def\phifn{((\ffn)+(\gfn))/2};
  
  \draw[dotted] (axis cs: \anval,-0.25) -- (axis cs: \anval,2);
  \draw[dotted] (axis cs: \anploneval,-0.25) -- (axis cs: \anploneval,2);
  \node[right] at (axis cs: \anval,-0.1) {$a_n$};
  \node[right] at (axis cs: \anploneval,-0.1) {$a_{n+1}$};
  
    \addplot [domain=\anval:\anploneval, smooth, very thick] {\fnfn};
    \addplot [domain=\anval:\anploneval, smooth, very thick] {\fnplonefn};
    \addplot [domain=\anval:\anploneval, smooth, very thick] {\gnplonefn};
    \addplot [domain=\anval:\anploneval, smooth, very thick] {\gnfn};
    \addplot [domain=\anval:\anploneval, smooth, very thick, dotted] {\ffn};
    \addplot [domain=\anval:\anploneval, smooth, very thick, dotted] {\gfn};
    \addplot [domain=\anval:\anploneval, smooth, very thick, dashed] {\phifn};
 \node[right] at (axis cs:4,1.25) {$\phi$};
 \node[right] at (axis cs:4,1.55) {$g_{n+1}$};
 \node[right] at (axis cs:4,1.95) {$g_n$};
 \node[right] at (axis cs:4,0.9) {$f_{n+1}$};
 \node[right] at (axis cs:4,0.6) {$f_n$};
  \end{axis}
\end{tikzpicture}
\caption{Constructing $\phi$ in the proof of Lemma~\ref{lem:smooth}}\label{fig:lemsmooth}
\end{figure}
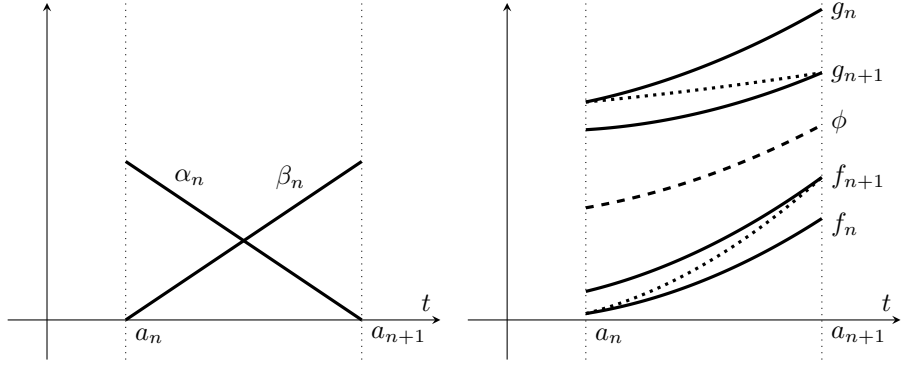

\begin{cor}\label{corsmooth} Let $H$ be a Hausdorff field and $A$, $B$ nonempty countable subsets of~$H$ with  $A < B$. Then there exists
$\phi\in \Cc^\infty$ such that $A<_{\ex} \phi <_{\ex} B$.
\end{cor}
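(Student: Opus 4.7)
The plan is to reduce directly to Lemma \ref{lem:smooth} by replacing the (possibly unstructured) countable families $A$ and $B$ by monotone sequences that sandwich them.

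First, enumerate $A = \{a_0, a_1, a_2, \ldots\}$ and $B = \{b_0, b_1, b_2, \ldots\}$, allowing repetition so that this works whether $A, B$ are finite or countably infinite; each $a_i, b_j$ is a germ of a continuous function since $H \subseteq \Cc$. Define
\[
f_n\ :=\ \max(a_0, \ldots, a_n), \qquad g_n\ :=\ \min(b_0, \ldots, b_n).
\]
Pointwise max and min of finitely many continuous representatives are continuous, so $f_n, g_n \in \Cc$. By construction $f_n \leq f_{n+1}$ and $g_{n+1} \leq g_n$ in $\Cc$.

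Next, I would verify that $f_n <_{\ex} g_n$ for each $n$. For all $i, j \leq n$ we have $a_i < b_j$ in the ordered field $H$; since $H$ is a Hausdorff field, this is equivalent to $a_i <_{\ex} b_j$, that is, $a_i(t) < b_j(t)$ eventually. There are only finitely many such pairs $(i,j)$, so we may choose a common $T_n$ beyond which $a_i(t) < b_j(t)$ holds for all $i, j \leq n$ simultaneously. For $t \geq T_n$,
\[
f_n(t)\ =\ \max_{i \leq n} a_i(t)\ <\ \min_{j \leq n} b_j(t)\ =\ g_n(t),
\]
which gives $f_n <_{\ex} g_n$, as required.

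Now apply Lemma \ref{lem:smooth} to the sequences $(f_n)$ and $(g_n)$: this yields $\phi \in \Cc^\infty$ with $f_n <_{\ex} \phi <_{\ex} g_n$ for every $n$. Finally, for any $a \in A$ pick $n$ with $a = a_n$; then $a \leq f_n$ pointwise (on a representing interval), so $a(t) \leq f_n(t) < \phi(t)$ eventually, i.e.\ $a <_{\ex} \phi$. Symmetrically $\phi <_{\ex} b$ for every $b \in B$. There is no real obstacle here — everything reduces to the already-established Lemma \ref{lem:smooth}; the only mild point is the equivalence $a_i < b_j \Leftrightarrow a_i <_{\ex} b_j$ in a Hausdorff field, which was noted in the definition of $<_{\ex}$.
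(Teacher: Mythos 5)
Your proof is correct and is essentially the paper's argument: the paper takes an increasing cofinal sequence in $A$ and a decreasing coinitial sequence in $B$ and feeds them to Lemma~\ref{lem:smooth}, and your running pointwise maxima and minima $f_n$, $g_n$ are (as germs) exactly such sequences, since for finitely many elements of the totally ordered Hausdorff field $H$ the pointwise max eventually agrees with the largest element in the field ordering. The verification $f_n <_{\ex} g_n$ via a common threshold $T_n$ for the finitely many pairs is the right (and only) point needing care, and you handle it correctly.
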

\begin{proof}
Take an increasing and cofinal sequence $(f_n)$ in $A$ and a decreasing  coinitial sequence $(g_n)$ in $B$, 
and apply the previous lemma.
\end{proof}

\noindent
We shall also use the following variant of Lemma~\ref{phizeta}:

\begin{lemma} \label{phizetavar} Let $a< b$, $f, g\in \Cc[a,b]$, and $c_n, d_n\in \R$ for $n=0,1,2,\dots$ be such that $f(a) < c_0 < g(a)$, $f<g$ on  $[a,b]$, and $f(b) < d_0 < g(b)$. Then there exists~$y\in \Cc^{\infty}[a,b]$ with $f< y < g$ on $[a,b]$ and $y^{(n)}(a)=c_n$, $y^{(n)}(b)=d_n$ for all $n$. 
\end{lemma}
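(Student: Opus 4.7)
The plan is to obtain $y$ by smoothly interpolating, via a partition of unity on $[a,b]$, among three auxiliary functions: one matching all derivatives $(c_n)$ at $a$, one matching all derivatives $(d_n)$ at $b$, and a global smooth intermediary between $f$ and $g$.

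First, by E.~Borel's theorem (invoked in the proof of Lemma~\ref{phizeta}), pick $\beta_a,\beta_b\in\Cc^\infty[a,b]$ with $\beta_a^{(n)}(a)=c_n$ and $\beta_b^{(n)}(b)=d_n$ for all $n$. Since $\beta_a(a)=c_0\in\bigl(f(a),g(a)\bigr)$ and $f,g,\beta_a$ are continuous, there is $\delta_a\in\bigl(0,\tfrac{b-a}{3}\bigr)$ with $f<\beta_a<g$ on $[a,a+\delta_a]$; symmetrically choose $\delta_b\in\bigl(0,\tfrac{b-a}{3}\bigr)$ with $f<\beta_b<g$ on $[b-\delta_b,b]$. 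In parallel, arguing as in the proof of Lemma~\ref{smooth} (that is, using Lemma~\ref{lem:pwa intermed} to find a piecewise affine intermediary and then smoothing it via the gluing lemma already established), we obtain $y_0\in\Cc^\infty[a,b]$ with $f<y_0<g$ on all of $[a,b]$.

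Next, using Lemma~\ref{bump}, construct cutoffs $\chi_a,\chi_b\in\Cc^\infty[a,b]$ with values in $[0,1]$ such that $\chi_a=1$ on $[a,a+\delta_a/3]$ and $\chi_a=0$ on $[a+2\delta_a/3,b]$, and symmetrically $\chi_b=1$ on $[b-\delta_b/3,b]$ and $\chi_b=0$ on $[a,b-2\delta_b/3]$. The bound $\delta_a,\delta_b<\tfrac{b-a}{3}$ forces the supports of $\chi_a$ and $\chi_b$ to be disjoint, so $\chi_m:=1-\chi_a-\chi_b$ lies in $[0,1]$, and $(\chi_a,\chi_m,\chi_b)$ is a smooth partition of unity on $[a,b]$. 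Set
\[
y\ :=\ \chi_a\beta_a+\chi_m y_0+\chi_b\beta_b\ \in\ \Cc^\infty[a,b].
\]
On $[a,a+\delta_a/3]$ one has $\chi_a=1$ and $\chi_m=\chi_b=0$, so $y$ agrees with $\beta_a$ there, yielding $y^{(n)}(a)=c_n$ for all $n$; the situation at $b$ is symmetric. At any $t\in[a,b]$ the value $y(t)$ is a convex combination of those among $\beta_a(t),y_0(t),\beta_b(t)$ whose coefficient is nonzero, and (by the disjointness of the supports and the choice of $\delta_a,\delta_b$) every summand with nonzero weight lies in $\bigl(f(t),g(t)\bigr)$; hence $f<y<g$ on $[a,b]$.

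I do not expect a serious obstacle: the argument is essentially a partition-of-unity gluing layered on Lemma~\ref{smooth} and Borel's theorem, both already in hand. The only point requiring care is guaranteeing the supports of $\chi_a$ and $\chi_b$ are genuinely disjoint, which is arranged by taking $\delta_a,\delta_b$ small enough at the outset.
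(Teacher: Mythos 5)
Your proof is correct. It uses the same basic ingredients as the paper's argument (Borel's theorem for the endpoint jets, bump functions from Lemma~\ref{bump}, and a smooth intermediary obtained as in Lemma~\ref{smooth}), but the gluing is organized differently. The paper first normalizes $f=0$ (by inserting a smooth function between $f$ and $f+\epsilon$ and subtracting it), and then forms $y=\gamma\alpha+\delta\beta+\theta$ where the coefficient functions $\gamma$, $\delta$ do \emph{not} vanish on the middle of $[a,b]$ but are equal to a small constant $\eta_1$ there, and $\theta$ is a small positive plateau of height $\eta_2$; keeping $y$ between $0$ and $g$ then rests on quantitative smallness conditions ($2M\eta_1<\eta_2$, $2M\eta_1+\eta_2<g$, etc.) and a final ``provided $\eta_1,\eta_2$ are sufficiently small''. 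You instead take a genuine partition of unity $\chi_a+\chi_m+\chi_b=1$ with $\chi_a$, $\chi_b$ of disjoint supports contained in the regions where $\beta_a$, resp.\ $\beta_b$, already lie strictly between $f$ and $g$, and observe that $y(t)$ is then a convex combination of points of the interval $\bigl(f(t),g(t)\bigr)$. This makes the verification of $f<y<g$ immediate and dispenses with both the normalization $f=0$ and the smallness bookkeeping; the only care needed is the disjointness of the supports, which you arrange via $\delta_a,\delta_b<\tfrac{b-a}{3}$. Both arguments are sound; yours is the more transparent of the two.
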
 
\begin{proof}  Take $\epsilon>0$ such that $f(a)+\epsilon< c_0$, $f+\epsilon < g$ on
$[a,b]$, and~$f(b)+\epsilon< d_0$. Lemma~\ref{smooth} gives $\phi\in \Cc^{\infty}[a,b]$ with
$f < \phi < f+\epsilon$ on $[a,b]$, and so replacing $f$ by $\phi$ 
 and then subtracting $\phi$ throughout (replacing~$g$ by $g-\phi$ and $c_n$, $d_n$ by $c_n-\phi^{(n)}(a)$, $d_n-\phi^{(n)}(b)$, respectively) we arrange $f=0$. 
 
 Borel's result gives $\alpha, \beta\in \Cc^{\infty}[a,b]$ with
 $\alpha^{(n)}(a)=c_n$ and $\beta^{(n)}(b)=d_n$ for all~$n$.  Take a real number $M>0$ such that
 $|\alpha|, |\beta|\le M$ on $[a,b]$.  Take ``small'' real numbers $\eta_1, \eta_2>0$ such that
 $a+2\eta_1 < b-2\eta_1$, $2M\eta_1 < \eta_2$, and~$2M\eta_1+\eta_2 < g$ on~$[a,b]$. Take $\gamma,\delta\in \Cc^{\infty}[a,b]$ such that
\begin{itemize} 
\item $\gamma=1$ on~$[a, a+\eta_1]$, 
\item $\gamma$ is decreasing on $[a+\eta_1, a+2\eta_1]$,
\item $\gamma=\eta_1$ on $[a+2\eta_1, b-2\eta_1]$, 
\item $\gamma$ is decreasing on~$[b-2\eta_1, b-\eta_1]$, and
\item $\gamma=0$ on $[b-\eta_1,b]$,
\end{itemize} and $\delta$  behaves similarly in the opposite direction: 
\begin{itemize}
\item $\delta=0$ on $[a, a+\eta_1]$, 
\item $\delta$ is increasing on $[a+\eta_1, a+2\eta_1]$,
\item $\delta=\eta_1$ on~$[a+2\eta_1, b-2\eta_1]$, 
\item $\delta$ is increasing on $[b-2\eta_1, b-\eta_1]$, and 
\item $\delta=1$ on $[b-\eta_1,b]$. 
\end{itemize}
Finally, take $\theta\in \Cc^{\infty}[a,b]$ such that
\begin{itemize}
\item $\theta=0$ on $[a, a+\eta_1]$, 
\item $\theta$ is increasing on $[a+\eta_1, a+2\eta_1]$, 
\item $\theta=\eta_2$ on $[a+2\eta_1, b-2\eta_1]$, 
\item $\theta$ is decreasing on $[b-2\eta_1, b-\eta_1]$, and 
\item $\theta=0$ on $[b-\eta_1, b]$. 
\end{itemize}
(See Figure~\ref{fig:phizetavar}.) Then $y:=\gamma\alpha+ \delta\beta + \theta$ has the desired property, provided~$\eta_1$,~$\eta_2$ are sufficiently small.  
 \end{proof}

\begin{figure}[ht]
\begin{tikzpicture}
\def\an{0.5};\def\bn{4.75};\def\etaone{0.6};\def\etatwo{0.85}; 
  \begin{axis} [axis lines=center, xmin=-0.2, xmax=5, ymin = -0.01, ymax = 1.25, width=0.67\textwidth, height = 0.275\textwidth, xlabel={$t$}, xtick={\an,\an+\etaone,\an+2*\etaone,\bn-2*\etaone,\bn-\etaone,\bn}, ytick={\etaone, \etatwo, 1}, xticklabels={\strut $a$, \strut $a+\eta_1$, \strut $\hskip2.25em a+2\eta_1$, \strut $\hskip-1.25em b-2\eta_1$, \strut $\hskip0.5em b-\eta_1$, \strut $b$}, yticklabels={\strut $\eta_1$, \strut $\eta_2$, \strut $1$},
  legend style={font=\small},
 legend cell align=left, legend style={at={(1.05,0.7)},anchor=west}]

   \addplot [domain=\an:\an+\etaone, smooth, thick] {1};
   
    \addlegendentry{$\gamma$};

    \addplot [domain=\an:\an+\etaone, smooth,     thick, dotted] {0};

    \addlegendentry{$\delta$};

      \addplot [domain=\an:\an+\etaone, smooth,   thick, dashed] {0};

    \addlegendentry{$\theta$};

   \addplot [domain=\an+\etaone:\an+2*\etaone, smooth, thick] { ((1-\etaone)/2)*(-tanh( 10*(x-\an-1.5*\etaone) )) + 1-(1-\etaone)/2};
   \addplot [domain=\an+2*\etaone:\bn-2*\etaone, smooth, thick] {\etaone};
  \addplot [domain=\bn-2*\etaone:\bn-\etaone, smooth, thick] { (\etaone/2)+(\etaone/2)*(-tanh( 10*(x-\bn+1.5*\etaone) ) )};
   \addplot [domain=\bn-\etaone:\bn, smooth,  thick] {0};

   \addplot [domain=\an:\an+\etaone, smooth,      thick, dotted] {0};
   \addplot [domain=\an+\etaone:\an+2*\etaone, smooth,      thick, dotted] { (\etaone/2)*(tanh( 9*(x-\an-1.5*\etaone) ) + 1)};
   \addplot [domain=\an+2*\etaone:\bn-2*\etaone, smooth,      thick, dotted] {\etaone};
   \addplot [domain=\bn-2*\etaone:\bn-\etaone, smooth,      thick, dotted] { \etaone+((1-\etaone)/2)*(tanh( 9*(x-\bn+1.5*\etaone) ) + 1)};
   \addplot [domain=\bn-\etaone:\bn, smooth,      thick, dotted] {1};
    
   \addplot [domain=\an+2*\etaone:\bn-2*\etaone, smooth,   thick, dashed] {\etatwo};
   \addplot [domain=\bn-\etaone:\bn, smooth,    thick, dashed] {0};
   \addplot [domain=\an+\etaone:\an+2*\etaone, smooth,    thick, dashed] { (\etatwo/2)*(tanh( 9*(x-\an-1.5*\etaone) ) + 1)};
   \addplot [domain=\bn-2*\etaone:\bn-\etaone, smooth,   thick, dashed] { (\etatwo/2)+(\etatwo/2)*(-tanh( 10*(x-\bn+1.5*\etaone) ) )};
  \end{axis}
\end{tikzpicture}
\caption{The functions $\gamma$, $\delta$, $\theta$}\label{fig:phizetavar}
\end{figure}
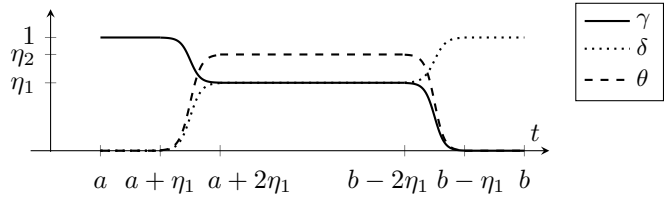

\subsection*{Constructing infinite sums}   
The next lemma follows from \cite[(8.6.4)]{D}:

\begin{lemma}  Let $(f_n)$ be a sequence of functions in $\mathcal{C}_a^1$ such that $f_n(a)\to c $ as~${n\to \infty}$,  for some $c\in \R$. Suppose also that $(f_n')$ converges to $g\in \mathcal{C}_a$, uniformly on $[a,b]$ for every $b>a$. Then $(f_n)$ converges to a function
$f\in \mathcal{C}_a^1$, uniformly on~$[a,b]$ for every $b>a$, with $f'=g$. 
\end{lemma}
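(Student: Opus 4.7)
The plan is to derive this from the fundamental theorem of calculus combined with a uniform estimate. Since the paper already cites Dieudonn\'e's $(8.6.4)$ as the source, the proof should be essentially a routine passage-to-the-limit argument under an integral sign.

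First I would fix $b>a$ and observe that the uniform limit $g$ of the continuous functions $f_n'$ on $[a,b]$ is itself continuous on $[a,b]$, so defining
\[
f(t)\ :=\ c+\int_a^t g(s)\,ds\qquad (t\ge a)
\]
makes sense and yields an element $f\in\mathcal{C}_a^1$ with $f'=g$ on $[a,\infty)$. This handles the identification of the candidate limit and the claim that $f'=g$.

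Next I would use the fundamental theorem of calculus applied to each $f_n\in\mathcal{C}_a^1$ to write
\[
f_n(t)\ =\ f_n(a)+\int_a^t f_n'(s)\,ds\qquad (t\ge a, \ n\in\N).
\]
Subtracting and estimating uniformly for $t\in[a,b]$ gives
\[
\bigl|f_n(t)-f(t)\bigr|\ \le\ \bigl|f_n(a)-c\bigr|+(b-a)\sup_{s\in[a,b]}\bigl|f_n'(s)-g(s)\bigr|.
\]
The right-hand side tends to $0$ as $n\to\infty$ by the two hypotheses ($f_n(a)\to c$ and $f_n'\to g$ uniformly on $[a,b]$), and this bound is independent of $t\in[a,b]$, so $f_n\to f$ uniformly on $[a,b]$. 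Since $b>a$ was arbitrary, the conclusion follows.

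There is no real obstacle here beyond keeping track of the two hypotheses; the only mildly delicate point is that a priori $g$ is only assumed to be a $\mathcal{C}_a$-function, but on $[a,b]$ it is automatically continuous as the uniform limit of continuous $f_n'$, which is all that is needed to make the integral defining $f$ of class $\mathcal{C}^1$.
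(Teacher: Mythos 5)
Your proof is correct. Note that the paper does not actually prove this lemma at all: it simply records it as a consequence of Dieudonn\'e's (8.6.4), so there is no in-text argument to compare against. Your fundamental-theorem-of-calculus argument --- writing $f_n(t)=f_n(a)+\int_a^t f_n'(s)\,ds$, setting $f(t):=c+\int_a^t g(s)\,ds$, and estimating the difference by $\abs{f_n(a)-c}+(b-a)\sup_{[a,b]}\abs{f_n'-g}$ --- is the standard and cleanest route for real-valued functions, and every step checks out (in particular, $g$ is continuous by hypothesis, being an element of $\Cc_a$, so the integral defining $f$ is indeed of class $\Cc^1$ with derivative $g$). For what it is worth, Dieudonn\'e's own proof of (8.6.4) avoids the integral and instead applies the mean value theorem to the differences $f_m-f_n$ to get a uniform Cauchy estimate; that version is needed in his Banach-space-valued setting, but for the scalar case at hand your FTC argument is simpler and buys exactly the same conclusion.
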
 

\noindent
We use this for infinite series where the $f_n$ are the partial sums,  and with higher derivatives where the assumptions allow us to apply the lemma
inductively. We shall also need a slight twist, where instead of the derivation $\der\colon \mathcal{C}_a^1\to \mathcal{C}_a$ we use~${\derdelta\colon \mathcal{C}_a^1\to \mathcal{C}_a}$, with $\derdelta:= \phi^{-1}\der$, $\phi\in (\mathcal{C}_a)^{\times}$:

\begin{lemma} Let $(f_n)$ be a sequence of functions in $\mathcal{C}_a^1$ such that $f_n(a)\to c $ as~${n\to \infty}$,  for some $c$. Suppose also that $(\derdelta f_n)$ converges to $g\in \mathcal{C}_a$, uniformly on~$[a,b]$ for every $b>a$. Then $(f_n)$ converges to a function
$f\in \mathcal{C}_a^1$, uniformly on~$[a,b]$ for every $b>a$, with $\derdelta f=g$. 
\end{lemma}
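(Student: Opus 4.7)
\medskip\noindent\textit{Proof proposal.} The plan is to reduce this lemma directly to the preceding one by multiplying through by $\phi$. Since $\derdelta = \phi^{-1}\der$, we have $f_n' = \phi\cdot \derdelta f_n$ for each $n$. First I would fix $b>a$; as $\phi\in\mathcal{C}_a$, the function $\phi$ is continuous on $[a,b]$ and hence bounded there, say $|\phi(t)|\leq M_b$ for $t\in[a,b]$. The hypothesis that $(\derdelta f_n)$ converges uniformly to $g$ on $[a,b]$, combined with this uniform bound on $\phi$, then gives that $(f_n')=(\phi\cdot\derdelta f_n)$ converges uniformly on $[a,b]$ to $\phi\cdot g$ (a standard estimate: if $\|\derdelta f_n - g\|_{[a,b]}<\epsilon/M_b$, then $\|f_n' - \phi g\|_{[a,b]}<\epsilon$).

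Having established this, I would apply the previous lemma (the one for the ordinary derivation~$\der$) with $g$ replaced by $\phi g\in\mathcal{C}_a$, using the same hypothesis that $f_n(a)\to c$. The conclusion is that $(f_n)$ converges to some $f\in\mathcal{C}_a^1$, uniformly on $[a,b]$ for every $b>a$, with $f'=\phi g$. Dividing by $\phi$ yields $\derdelta f = \phi^{-1} f' = \phi^{-1}(\phi g)=g$, as required.

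The proof is essentially a one-line reduction; no genuine obstacle arises. The only mild point to check is that multiplication by the continuous but not necessarily bounded function $\phi$ preserves uniform convergence on each compact subinterval $[a,b]$, which follows from local boundedness of $\phi$. No global control on $\phi$ or $1/\phi$ over all of $[a,+\infty)$ is needed, since the conclusion is only about uniform convergence on compact subintervals.
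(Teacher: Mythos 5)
Your proposal is correct and is exactly the paper's argument: the paper disposes of this lemma with the single remark that it ``follows from the previous lemma in view of $\der=\phi\derdelta$,'' which is precisely your reduction (multiply by $\phi$, apply the $\der$-version, divide by $\phi$). Your added observation that uniform convergence on compact subintervals is preserved under multiplication by the locally bounded $\phi$ is the only detail the paper leaves implicit, and it is handled correctly.
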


\noindent
This follows from the previous lemma in view of $\der=\phi\derdelta$. Induction on $m$ yields:

\begin{cor}\label{corsumdif} Let $m\ge 1$, $\phi\in (\mathcal{C}_a^{m-1})^\times$, and $\derdelta :=  \phi^{-1}\der \colon \mathcal{C}_a^1\to \mathcal{C}_a$.
Then $\derdelta$ maps~$\mathcal{C}_a^j$ into $\mathcal{C}_a^{j-1}$ for $j=1,\dots, m$.
 Let $(f_n)$ be a sequence of functions in $\mathcal{C}_a^m$ such that  for $k=0,\dots, m$ the series $\sum_{n=0}^\infty\derdelta^kf_n$ converges to $g_k\in \mathcal{C}_a$, uniformly on~$[a,b]$ for every $b>a$. Then for $k=0,\dots,m$ we have $g_k\in \mathcal{C}_a^{m-k}$ and $f:= g_0\in \mathcal{C}_a^m$ satisfies~$\derdelta^k f=g_k$. 
\end{cor}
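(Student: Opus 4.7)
The plan is to argue by induction on $m\geq 1$, with the preceding lemma (the variant for $\derdelta$) providing the engine. Before starting the induction, observe that the first assertion is immediate: since $\phi\in(\mathcal{C}_a^{m-1})^\times$ we have $\phi^{-1}\in\mathcal{C}_a^{m-1}\subseteq\mathcal{C}_a^{j-1}$ for $j\leq m$, and for $f\in\mathcal{C}_a^j$ we have $\der f\in\mathcal{C}_a^{j-1}$, so $\derdelta f=\phi^{-1}\der f\in\mathcal{C}_a^{j-1}$.

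For the base case $m=1$, set $F_N:=\sum_{n=0}^N f_n\in\mathcal{C}_a^1$. By hypothesis $F_N(a)\to g_0(a)$ and $\derdelta F_N=\sum_{n=0}^N\derdelta f_n\to g_1$ uniformly on $[a,b]$ for every $b>a$, so the preceding lemma applied to $(F_N)$ produces a function $F\in\mathcal{C}_a^1$ with $F_N\to F$ uniformly on compact intervals and $\derdelta F=g_1$. Since $F_N\to g_0$ as well, $F=g_0$, which yields $g_0\in\mathcal{C}_a^1$ and $\derdelta g_0=g_1$.

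For the inductive step, suppose the statement holds for $m-1$ and let $(f_n)$, $g_0,\dots,g_m$ satisfy the hypotheses for $m$. The sequence $(\derdelta f_n)$ lies in $\mathcal{C}_a^{m-1}$, and for $k=0,\dots,m-1$ the series $\sum_n\derdelta^k(\derdelta f_n)=\sum_n\derdelta^{k+1}f_n$ converges uniformly on compact intervals to $g_{k+1}\in\mathcal{C}_a$. The inductive hypothesis, applied to $(\derdelta f_n)$ with $m-1$ in place of $m$, then gives $g_{k+1}\in\mathcal{C}_a^{m-1-k}$ together with $\derdelta^k g_1=g_{k+1}$, i.e., $g_k\in\mathcal{C}_a^{m-k}$ and $\derdelta^{k-1}g_1=g_k$ for $k=1,\dots,m$. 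Separately, the base case applied to $(f_n)\subseteq\mathcal{C}_a^1$ yields $g_0\in\mathcal{C}_a^1$ with $\derdelta g_0=g_1$. Writing $\der g_0=\phi g_1$ and using $\phi,g_1\in\mathcal{C}_a^{m-1}$ upgrades this to $\der g_0\in\mathcal{C}_a^{m-1}$, hence $g_0\in\mathcal{C}_a^m$. Finally, for $k\geq 1$, $\derdelta^k g_0=\derdelta^{k-1}(\derdelta g_0)=\derdelta^{k-1}g_1=g_k$.

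There is no genuine obstacle here; the only thing to watch is keeping the regularity bookkeeping straight, namely that the factor $\phi^{-1}$ in $\derdelta$ costs one derivative per application, which is exactly what the hypothesis $\phi\in(\mathcal{C}_a^{m-1})^\times$ affords, and that the identity $\der g_0=\phi g_1$ is what promotes $g_0$ from $\mathcal{C}_a^1$ to $\mathcal{C}_a^m$ in the inductive step.
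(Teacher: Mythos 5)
Your proof is correct and follows the same route the paper intends: the paper's entire proof is the remark ``Induction on $m$ yields'' applied to the preceding lemma on $\derdelta$, and your base case, inductive step via $(\derdelta f_n)$, and the upgrade of $g_0$ from $\mathcal{C}_a^1$ to $\mathcal{C}_a^m$ via $\der g_0=\phi g_1$ are exactly the bookkeeping that remark leaves implicit.
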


\noindent
This corollary and the following results on infinite sums will be used in the next two sections. 
Let $a\in \R$, and for $i=0,1,2,\dots$, let a continuous function $f_i\colon [a,\infty) \to \R$ be given, and set $M_i^n:=\max\limits_{a\le t\le a+n}|f_i(t)|$, so $$0\ \le\  M_i^0\ \le\ M_i^1\ \le\ M_i^2\ \le\ \cdots.$$ Suppose the real numbers
$\epsilon_i> 0$ are such that $\sum_i\epsilon_iM^n_i<\infty$ for every $n$. Then~$\sum_i\epsilon_i f_i$ converges uniformly on each set $[a,a+n]$, and so this sum defines a continuous function on $[a,\infty)$. We can certainly take real numbers $\epsilon_i> 0$ such that~$\sum_i \epsilon_i M_i^i< \infty$, and then we do indeed have for every $n$ that $\sum_i\epsilon_iM^n_i<\infty$, since 
$$\sum_i\epsilon_iM^n_i\ =\ \sum_{i=0}^n\epsilon_i M^n_i + \sum_{i>n}\epsilon_i M^n_i\ \le\ \sum_{i=0}^n\epsilon_i M^n_i + \sum_{i>n}\epsilon_i M^i_i.$$
Thus there exist $\epsilon_i$ as in the hypothesis of the next lemma. {\em In the rest of this subsection we assume that for every $i$
we have $f_i\ge 0$ on $[a,\infty)$ and 
$f_i\prec f_{i+1}$ in $\Cc$}. 

\begin{lemma}\label{sum1}  Let the reals $\epsilon_i>0$
be such that $\sum_i \epsilon_i f_i$ converges to a function~$f\colon [a,\infty)\to \R$, uniformly on each 
compact subset of $[a,\infty)$. Then $f\succ f_n$ \textup{(}in $\Cc$\textup{)} for all $n$. If all $f_i$ are increasing, then so is $f=\sum_i \epsilon_i f_i$. 
\end{lemma}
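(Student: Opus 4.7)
The plan is to derive both assertions directly from the standing hypotheses that $f_i\ge 0$ on $[a,\infty)$, $f_i\prec f_{i+1}$ in $\Cc$, and $\epsilon_i>0$ for all $i$.

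For the first assertion, $f\succ f_n$, I would use the crude bound that drops all summands except one: since every $\epsilon_i f_i$ is non-negative,
\[
f(t)\ =\ \sum_{i=0}^\infty \epsilon_i f_i(t)\ \ge\ \epsilon_{n+1}\, f_{n+1}(t)\qquad\text{for all }t\ge a,
\]
so $f\succeq f_{n+1}$ in $\Cc$. Combined with the given $f_n\prec f_{n+1}$, this yields $f_n\prec f$, i.e.\ $f\succ f_n$, by transitivity of the asymptotic relations on $\Cc$. More explicitly, given any $c>0$, pick $t_0\ge a$ such that $f_n(t)\le c\,\epsilon_{n+1}\,f_{n+1}(t)$ for all $t\ge t_0$; then the displayed inequality gives $f_n(t)\le c\,f(t)$ for those $t$.

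For the second assertion, assume each $f_i$ is increasing on $[a,\infty)$. Every partial sum $S_N(t):=\sum_{i=0}^N \epsilon_i f_i(t)$ is then a non-negative linear combination of increasing functions and is therefore increasing. Fixing $s\le t$ in $[a,\infty)$, we have $S_N(s)\le S_N(t)$ for every $N$, and letting $N\to\infty$ (the series converges pointwise by the uniform convergence hypothesis) gives $f(s)\le f(t)$, so $f$ is increasing.

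There is no real obstacle here; both parts reduce to a one-step manipulation. The only point worth emphasizing is that non-negativity of the entire omitted tail $\sum_{i\ne n+1}\epsilon_i f_i$ is what converts control of the single summand $\epsilon_{n+1} f_{n+1}$ into a lower bound on the whole sum $f$, and it is this lower bound that makes the first assertion come out with $\succ$ (rather than merely $\succeq$), since the given asymptotic comparison $f_n\prec f_{n+1}$ already carries the strict relation.
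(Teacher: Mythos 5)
Your proof is correct and is essentially the paper's own argument: the paper's entire proof is the one-line observation $\sum_i \epsilon_i f_i\ge \epsilon_{n+1}f_{n+1}\succ f_n$, which is exactly your bound obtained by dropping the non-negative tail, and the monotonicity claim follows from the partial sums as you say. Nothing to add.
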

\begin{proof} Note that
$\sum_i \epsilon_i f_i\ge \epsilon_{n+1}f_{n+1} \succ f_n$.
\end{proof}

\begin{lemma}\label{sum2} Let for each $n$ a continuous function $g_n\colon [a,\infty) \to \R^{>}$ be given such that $f_i\prec g_n$, for all $i$ and $n$.
Then there exist reals $\epsilon_i>0$ for which $\sum_i \epsilon_i f_i$ converges to a function $f\colon [a,\infty)\to \R$, uniformly on each 
compact subset of $[a,\infty)$, such  that $f \le g_n$ in $\Cc$ for all $n$. 
\end{lemma}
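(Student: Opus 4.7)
The plan is to pick each $\epsilon_i>0$ small enough to satisfy two independent kinds of requirements at once: one ensuring uniform convergence on compacts, the other ensuring the eventual bound by every $g_n$.

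First, for uniform convergence, I arrange $\epsilon_i M_i^i\le 2^{-i}$. As remarked in the discussion preceding Lemma~\ref{sum1}, this makes $\sum_i\epsilon_iM_i^n<\infty$ for every $n$, so the Weierstrass $M$-test gives that $f:=\sum_i\epsilon_i f_i$ converges uniformly on each $[a,a+n]$ to a continuous function $f\colon[a,\infty)\to\R$.

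Second, to force $f\le g_n$ in $\Cc$ for each $n$, I exploit that for every pair $(i,n)$ with $n\le i$ the quotient $f_i/g_n$ is continuous on $[a,\infty)$ (since $g_n>0$) and tends to $0$ at $+\infty$ (since $f_i\prec g_n$); hence $S_{i,n}:=\sup_{t\ge a}f_i(t)/g_n(t)$ is finite. I impose the additional constraints $\epsilon_iS_{i,n}\le 2^{-i-1}$ for all $n\le i$. For each $i$ this is a finite set of upper bounds on $\epsilon_i$, all positive, so a positive $\epsilon_i$ satisfying these together with $\epsilon_iM_i^i\le 2^{-i}$ can be chosen. The resulting \emph{global} bound $\epsilon_if_i(t)\le g_n(t)/2^{i+1}$ on all of $[a,\infty)$, valid whenever $n\le i$, is the key point.

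Verification is then routine: fixing $n$, split $f=\sum_{i<n}\epsilon_if_i+\sum_{i\ge n}\epsilon_if_i$. The tail is bounded pointwise on $[a,\infty)$ by $g_n(t)\sum_{i\ge n}2^{-i-1}\le g_n(t)/2$ via the global bound and a geometric sum. The head is a finite sum of functions $\prec g_n$, hence is itself $\prec g_n$, and so eventually $\le g_n/2$. Adding, $f(t)\le g_n(t)$ eventually, i.e.\ $f\le g_n$ in $\Cc$. I do not anticipate a genuine obstacle; the only thing to notice is that working with the \emph{global} sup $S_{i,n}$ rather than an eventual one is what allows the tail estimate to hold everywhere on $[a,\infty)$ and be combined with a routine eventual estimate on the finite head.
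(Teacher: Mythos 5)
Your proof is correct and rests on the same diagonal idea as the paper's: the constraint coming from $g_n$ is imposed only on the $\epsilon_i$ with $i\ge n$ (as a global bound on $[a,\infty)$, using that $\sup_{t\ge a} f_i(t)/g_n(t)$ is finite because $f_i/g_n$ is continuous and tends to $0$), while the finitely many head terms $i<n$ are absorbed by an eventual estimate. The only difference is organizational: you impose all the (finitely many) constraints on each $\epsilon_i$ in one shot, whereas the paper reaches the same configuration by successively decreasing the tail $\epsilon_i$, $i\ge n$, as it processes $g_0, g_1, \dots$ on intervals $[b_n,\infty)$.
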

\begin{proof} For the moment we just consider one continuous function $g\colon [a,\infty)\to \R^{>}$ with $f_i \prec g$ for all $i$. Then we pick the $\epsilon_i>0$ so small that
$\sum_i \epsilon_iM_i^i< \infty$ and~$\epsilon_if_i\le g/2^{i+1}$. This results in $f:=\sum_i \epsilon_if_i \le g$. Let a second continuous function $h\colon [a,\infty)\to \R^{>}$
be given with $f_i\prec h$ for all $i$. 
Take $b\ge a$ such that~$\epsilon_0f_0\le h/2$ on 
$[b,\infty)$, and next decrease, if necessary, the $\epsilon_i$ with $i\ge 1$ so that $\epsilon_if_i\le h/2^{i+1}$ on $[b,\infty)$ for the new values of $\epsilon_i$.
This results in $f\le h$ on $[b,\infty)$ for the new $f$; note that we did not change $\epsilon_0$. 
Starting with $g=g_0$ we apply this procedure successively to $g_1, g_2,\dots$ in the role of $h$: 
we recursively pick $b_1, b_2,\dots \ge a$, decreasing only the~$\epsilon_i$ for $i\ge n$ when dealing with $g_n$, $n\ge 1$.
Then at the end we have not only $f\le g_0$ on $[a,\infty)$,
but also $f\le g_n$ on $[b_n,\infty)$, for all $n\ge 1$ simultaneously.  
\end{proof}

\noindent
Note that if in Lemma~\ref{sum2} we have $g_0\succ g_1 \succ g_2\succ\cdots$, then $f\prec g_n$ for all $n$. 
Lemmas~\ref{sum1} and~\ref{sum2} are more precise versions of results of du Bois-Reymond~\cite{dBR75} and 
Hadamard~\cite[\S{}19]{Hadamard}, respectively; cf.~\cite[Chapter~II]{Ha}.

\medskip\noindent
Assume next that the $f_i$ are of class $\Cc^\infty$. Then we set
$$M^n_i\ :=\ \max_{j\le n,\ a\le t\le a+n} \big|f_i^{(j)}(t)\big|.$$ 
Again, $0\le M_i^0 \le M_i^1\le M_i^2\le \cdots$. Taking the 
$\epsilon_i> 0$ such that $\sum_i \epsilon_i M_i^i< \infty$, we have
$\sum_i\epsilon_iM^n_i<\infty$ for every $n$, as before.
Hence $\sum_i \epsilon_i f_i$ converges, say to the continuous function $f\colon [a,\infty) \to \R^{\ge}$, uniformly on each $[a, a+n]$. Also $\sum_i \epsilon_i f_i^{(j)}$ converges for every $j$ to a continuous function $f^{(j)}\colon [a,\infty)\to \R$,
uniformly
on each~$[a,a+n]$.  An easy induction on $j$ shows that $f$ is in fact of class $\Cc^j$ with $f^{(j)}$ as its $j$th derivative, as suggested by the notation. Thus $f$ is of class $\Cc^{\infty}$.

\subsection*{Useful inequalities in constructing Hardy fields} The lemmas in this subsection will be used in Sections~\ref{fwg} and~\ref{fbg}.

\begin{lemma}\label{intineq} 
Let   $F,G\in \Cc^1_a$ satisfy $F'(t)\leq G'(t)$ for all $t\geq a$. Then there is a real constant $c$ such that $F<G+c$ on $[a,\infty)$. 
\end{lemma}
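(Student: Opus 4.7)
The plan is to integrate the inequality $F'\leq G'$ from $a$ to $t$ using the fundamental theorem of calculus, which yields a bound on $F-G$ in terms of its value at $a$, and then pad by a constant to get strict inequality.

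More precisely, since $F,G\in\mathcal{C}^1_a$, for each $t\geq a$ we have
\[
F(t)-G(t)\ =\ \bigl(F(a)-G(a)\bigr)+\int_a^t\bigl(F'(s)-G'(s)\bigr)\,ds.
\]
By hypothesis the integrand is $\leq 0$ throughout, so the integral is $\leq 0$, giving $F(t)-G(t)\leq F(a)-G(a)$ for all $t\geq a$. Setting $c:=F(a)-G(a)+1$ then yields $F(t)<G(t)+c$ on $[a,\infty)$, as required.

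There is no real obstacle here; the only minor point to watch is that the conclusion asks for strict inequality, which is why one adds $1$ (or any positive constant) to $F(a)-G(a)$. The lemma is essentially a restatement of the mean value / fundamental theorem of calculus and will be short.
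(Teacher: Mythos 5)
Your proof is correct and matches the paper's argument: the paper simply notes that $F-G$ is decreasing (since its derivative is $\le 0$) and hence bounded above by $F(a)-G(a) < c := F(a)-G(a)+1$, which is exactly your fundamental-theorem-of-calculus computation phrased more briefly.
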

\begin{proof} The function $F-G$ is continuous and decreasing, hence on $[a,\infty)$ we have~$F-G\le F(a)-G(a) < c:= F(a)-G(a)+1$.
\end{proof}

\noindent
Here is a useful multiplicative version: 

\begin{lemma}\label{logintineq}  Let   $F,G\colon [a,+\infty) \to \R^{>}$ of class  $\Cc^1$ be such that~$F^\dagger\le  G^\dagger$ on $[a,\infty)$. Then  there is $c\in \R^{>}$ such that  $F< cG$ on $[a,\infty)$. 
\end{lemma}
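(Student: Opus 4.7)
The plan is to reduce the multiplicative inequality to the additive one already handled in Lemma \ref{intineq} by passing to logarithms. Since $F$ and $G$ take values in $\R^{>}$ and are of class $\Cc^1$ on $[a,\infty)$, the compositions $\log F$ and $\log G$ are well-defined and of class $\Cc^1$ on $[a,\infty)$, with derivatives
\[
(\log F)'\ =\ F'/F\ =\ F^\dagger, \qquad (\log G)'\ =\ G'/G\ =\ G^\dagger.
\]
The hypothesis $F^\dagger \le G^\dagger$ on $[a,\infty)$ therefore says exactly that $(\log F)' \le (\log G)'$ on $[a,\infty)$.

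Applying Lemma \ref{intineq} to the $\Cc^1$-functions $\log F$ and $\log G$ yields a real constant $d$ with
\[
\log F\ <\ \log G + d \quad\text{on } [a,\infty).
\]
Exponentiating both sides and using the strict monotonicity of $\exp$, we obtain $F < \ex^{d} G$ on $[a,\infty)$. Setting $c := \ex^{d} \in \R^{>}$ then gives the desired inequality $F < cG$ on $[a,\infty)$.

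There is essentially no obstacle here: the argument is a one-line reduction once one notices that $F^\dagger$ is literally the derivative of $\log F$, so that the additive statement of Lemma \ref{intineq} transports verbatim to the multiplicative setting via $\exp$/$\log$. The only thing worth double-checking is that $\log F$ and $\log G$ are indeed in $\Cc^1[a,\infty)$, which follows immediately from $F,G > 0$ on $[a,\infty)$ and the chain rule.
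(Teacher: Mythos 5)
Your proof is correct and is essentially identical to the paper's: both pass to logarithms, observe that $F^\dagger=(\log F)'$ and $G^\dagger=(\log G)'$, apply Lemma~\ref{intineq} to get $\log F < \log G + d$, and exponentiate with $c=\ex^d$.
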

\begin{proof} We have $F^\dagger=(\log F)'$ and $G^\dagger=(\log G)'$, so Lemma~\ref{intineq} yields $d\in\R$ with~$\log F <d+ \log G$
on $[a,\infty)$,  and thus $F<c\,G$ on $[a,\infty)$ for   $c:=\ex^d\in\R^>$.
\end{proof}

\begin{lemma}\label{in1} Suppose $f\in \Cc$ lies in a Hardy field.
Then the germ $f(x+1)$ satisfies:\begin{enumerate}
\item[(i)] $f(x)-x >_{\ex} \R\ \Longrightarrow\ f(x)+1 <_{\ex} f(x+1)$;
\item[(ii)] $0< f(x)\succ \ex^x\ \Longrightarrow\ f(x) <_{\ex} f(x+1)/2$;
\item[(iii)] $0 < f(x)\prec \ex^{-x}\ \Longrightarrow\ f(x) -f(x+1)>_{\ex} f(x)/2$.
\end{enumerate}
\end{lemma}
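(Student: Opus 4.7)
My plan is to deduce each of (i)--(iii) from the Mean Value Theorem (MVT) applied on the interval $[x,x+1]$: to $f$ itself in part (i), and to $\log f$ in parts (ii)--(iii), after picking a representative of $f$ that is $\Cc^1$ and (in (ii)--(iii)) strictly positive on some $[a,+\infty)$. This reduces matters to three pointwise inequalities valid for all large $t$: $f'(t)>1$ for (i), $f^\dagger(t)>1$ for (ii), and $f^\dagger(t)<-1$ for (iii), where $f^\dagger:=f'/f$. In each case the relevant germ ($f'-1$, or $f^\dagger\mp 1$) lies in the Hardy field $H\ni f$, so by trichotomy it has a definite sign eventually, and I just need to rule out the wrong one.

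For (i), were $f'\leq 1$ on some $[a,+\infty)$, integration would give $f(t)-t\leq f(a)-a$ for all $t\geq a$, contradicting $f(x)-x>_{\ex}\R$; hence $f'>1$ eventually, and MVT yields $f(x+1)-f(x)=f'(\xi)>1$ for $x$ large. For (ii), were $f^\dagger\leq 1$ on $[a,+\infty)$, then $(\log f-x)'\leq 0$ there, so $\log f(t)\leq t+(\log f(a)-a)$ and thus $f\leq c\,\ex^x$ eventually with $c:=\ex^{\log f(a)-a}$, contradicting $f\succ\ex^x$; so $f^\dagger>1$ eventually, and MVT applied to $\log f$ on $[x,x+1]$ gives $\log(f(x+1)/f(x))=f^\dagger(\xi)>1>\log 2$, i.e.\ $f(x+1)>2f(x)$. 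Part (iii) is entirely symmetric: if $f^\dagger\geq -1$ on $[a,+\infty)$, integrating $(\log f+x)'\geq 0$ yields $f\geq c\,\ex^{-x}$ eventually for some $c>0$, against $f\prec\ex^{-x}$; hence $f^\dagger<-1$ eventually, and MVT gives $f(x+1)/f(x)<\ex^{-1}<1/2$, so $f(x)-f(x+1)>f(x)/2$.

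No step is really delicate. The only recurring technical point is the pattern ``the wrong sign of the relevant Hardy-field element produces, upon integration, an asymptotic bound contradicting the hypothesis''. Equivalently, one could enlarge $H$ so that $f-x$ (respectively, $\log f\mp x$) lies in a Hardy field and is positive (respectively, negative) infinite, and invoke the standard fact that such a germ must be eventually strictly monotone with the appropriate sign of derivative; but working directly with $f'-1,\,f^\dagger\mp 1\in H$ avoids that detour.
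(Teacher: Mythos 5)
Your proof is correct and follows essentially the same route as the paper's: both reduce each part to the eventual sign of $f'-1$ (resp.\ of $(\log f)'\mp 1$), which is determined because that germ lies in a Hardy field, and then conclude by eventual monotonicity (MVT) on $[x,x+1]$. The only cosmetic difference is that the paper derives (ii) from (i) by applying (i) to $\log f$, and (iii) from (ii) by applying (ii) to $f^{-1}$, whereas you rerun the same argument directly in each case.
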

\begin{proof} For (i), assume $f=x+g$ with $g\in \Cc$ and $g>\R$. Then $g$
lies in a Hardy field, so $g$ is eventually strictly increasing,
hence $g=g(x) <_{\ex} g(x+1)$, and thus 
$$f(x)+1\  =\ x+1  + g\ <_{\ex}\ x+1+ g(x+1)\ =\ f(x+1).$$
Next, assume $0<f\succ \ex^x$. Then $(\log f)-x>_{\ex} \R$, so
$\big(\log f(x)\big)+1 <_{\ex} \log f(x+1)$ by (i), and thus $\ex f(x) <_{\ex}\ f(x+1)$,
hence $f(x) <_{\ex} f(x+1)/\ex <_{\ex} f(x+1)/2$. As to (iii), assume $0 < f(x) \prec \ex^{-x}$. Applying (ii) to $f^{-1}$ gives $f(x) >_{\ex} 2f(x+1)$, so~$0 <_{\ex} f(x+1)<_{\ex} f(x)/2$, and thus $f(x)-f(x+1) >_{\ex} f(x)/2$. 
\end{proof}

\section{Pseudoconvergence in Hardy Fields}\label{psh1} 

\noindent
Let $H$ be a Hardy field. Let a sequence $$f_0 \succ f_1 \succ f_2 \succ \dots$$ in $H^{>}$
be given. Then we have the pc-sequence $(F_i)$ in $H$, with $F_i:=f_0 + \cdots +f_i$.
Our aim in this section and the next is to show:

\begin{theorem}\label{immom} $(F_i)$ pseudoconverges in some Hardy field extension
of $H$. 
\end{theorem}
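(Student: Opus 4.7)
The plan is to produce a smooth germ $f$, necessarily outside $H$, with $F_i\leadsto f$, and to verify that $f$ generates (or lies in) a Hardy field extension of $H$. Two standard preliminary reductions are available: we may assume $(F_i)$ has no pseudolimit already in $H$ (otherwise there is nothing to prove), and we may pass to the real closure of $H$, which is a Hardy field and in which $(F_i)$ is still a pc-sequence. So the task is genuinely to realise an immediate pseudolimit analytically.

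A first attempt is to imitate the construction at the end of Section~1 while carrying smoothness along. Since germs in $H$ are eventually $\Cc^\infty$, we can choose $\Cc^\infty$-representatives of each $f_i$ on a common interval $[a,\infty)$ with $f_i\ge 0$ and $f_{i+1}\le f_i/2$. Picking positive weights $\epsilon_i$ tending to zero sufficiently fast (as in Lemmas~\ref{sum1}--\ref{sum2} and the remarks preceding them) we can arrange that, for every $j$, the series $\sum_i\epsilon_i f_i^{(j)}$ converges uniformly on each compact subinterval. Then Corollary~\ref{corsumdif} yields $f:=\sum_i\epsilon_if_i\in\Cc_a^\infty$ with the obvious termwise derivatives, and a mild tweak of the $\epsilon_i$ ensures that $F_i\leadsto f$ in the valuation on $\Cc$. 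This is enough to obtain an immediate \emph{Hausdorff} field extension (cf.\ the Corollary in Section~1), but not yet a Hardy field extension, since we have no handle on differential polynomials in $f$ over $H$.

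The main step, which I expect to require the glueing machinery promised for Section~\ref{psh2}, is the upgrade from Hausdorff to Hardy. The idea is to replace the crude infinite sum by a patched construction. Fix a rapidly increasing sequence $a_n\to+\infty$. On each interval $[a_n,a_{n+1}]$ pick an element $h_n\in H$ whose germ approximates $F_n$ to an order much smaller than $|f_{n+2}|$ (and similarly for a fixed number of derivatives, using that $H$ is a Hardy field and hence closed under differentiation). Glue the $h_n$ together on short transition zones at each $a_n$ by means of the smooth interpolation provided by Lemma~\ref{bump}, Lemma~\ref{phizeta}, and Lemma~\ref{phizetavar}, prescribing values and all derivatives at the endpoints so that the patched function is $\Cc^\infty$. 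The resulting germ $f$ is smooth, satisfies $F_i\leadsto f$ by construction, and — crucially — coincides on an unbounded union of intervals with actual elements of $H$; hence for every $P\in H\{Y\}$ the germ $P(f,f',\dots)$ agrees on a cofinal family of intervals with $P(h_n,h_n',\dots)\in H$, so unless $P$ is identically zero on these approximations the germ $P(f,f',\dots)$ is eventually nonzero.

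The main obstacle is then the transition zones together with the algebraic identification of the extension. One must ensure that the choice of the $h_n$, the transition length, and the interpolation data are coordinated finely enough that nothing ``accidentally vanishes'' in the transitions and that the abstract immediate extension prescribed by the valuation theory of [ADH] is the one actually realised by $f$. This is precisely where the essential use of results from [ADH] and \cite{ADH2} on pseudolimits of $\omega$-length pc-sequences in $\upo$-free newtonian Liouville-closed $H$-fields is needed, so that the analytic glueing can be matched to the correct differential-algebraic behaviour, yielding a Hardy field extension in which $(F_i)$ pseudoconverges to $f$.
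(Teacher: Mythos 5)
There are two genuine gaps here. First, the ``first attempt'' is internally inconsistent: if $f=\sum_i\epsilon_if_i$ with weights $\epsilon_i\to 0$, then $f-F_n=\sum_{i\le n}(\epsilon_i-1)f_i+\sum_{i>n}\epsilon_if_i\asymp f_0$ for all $n$ (once some $\epsilon_i\ne 1$), so $F_i\not\leadsto f$; no ``mild tweak'' of the weights can fix this, since $F_i\leadsto f$ forces every weight to equal $1$, and then termwise convergence of all the derivative series is exactly what you cannot arrange. The device the paper uses instead is to replace each $f_n$ by $\alpha_nf_n$ where $\alpha_n$ is a bump function vanishing on $[1,c_n]$: each summand keeps its germ, the sum is locally finite (only $f_0^*,\dots,f_n^*$ are nonzero on $[1,c_{n+1}]$), so $F^*=\sum_nf_n^*$ lies in $\Cc^{<\infty}$ and still satisfies $F^*-F_n\prec f_n$.

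Second, and more seriously, the upgrade from Hausdorff to Hardy does not follow from your patching scheme. If $f$ agrees with $h_n\in H$ on $[a_n,a_{n+1}]$, then $P(f)$ agrees there with $P(h_n)\in H$, which is eventually nonzero — but ``eventually'' depends on $n$: $P(h_n)$ may vanish or change sign inside $[a_n,a_{n+1}]$, and the signs of the $P(h_n)$ on their respective intervals need not stabilize, so $P(f)$ can have infinitely many zeros. The mechanism that actually works (Lemma~\ref{unad} and Corollary~\ref{unadcor}) is quantitative and uniform: one shows $\derdelta^k\big((F-F_m)/f_m\big)\preceq 1$ for $\derdelta=\phi^{-1}\der$ with $\phi$ ranging over a coinitial family of active elements, and combines this with the fact that $(F_m)$ is of $\d$-transcendental type over a \emph{closed} $H$ (not merely real closed) to conclude $P(F)\sim h$ for a single $h\in H^\times$ via Newton-degree computations. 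Achieving these derivative bounds is the whole difficulty: the paper first disposes of the ``fluent'' case by compositional conjugation (arranging $f_{i+1}/f_i\prec\ex^{-x}$), which covers $\ci(H^{>\R})>\omega$, and in the remaining case glues not elements of $H$ but the germs $\Phi_n=F_{\phi_n}$ — each built to control the single derivation $\derdelta_n=\phi_n^{-1}\der$ — using a partition of unity $(\beta_n)$ whose pieces have $\derdelta_n^k$ uniformly bounded. Your proposal does not engage with active elements, compositional conjugation, or the control of iterated $\phi^{-1}\der$, which is where the substance of the proof lies.
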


\noindent
In view of Lemma~\ref{pcsums} this has the following consequence:

\begin{cor}\label{corimmom} Every pc-sequence of countable length in a maximal Hardy field has a pseudolimit in that Hardy field. 
\end{cor}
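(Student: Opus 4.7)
The plan is to deduce Corollary~\ref{corimmom} directly from Theorem~\ref{immom} by invoking Lemma~\ref{pcsums} together with the reduction carried out in the paragraphs immediately preceding that lemma. Let $H$ be a maximal Hardy field and let $(a_\rho)$ be a pc-sequence in $H$ of countable length~$\lambda$.

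First I would use the cofinality remark after the Notations: since $\lambda$ is a countable limit ordinal, $\operatorname{cf}(\lambda)=\omega$, so there is a cofinal subsequence $(a_{\rho_n})$ of length~$\omega$, with the same pseudolimits in any extension of $H$ as $(a_\rho)$. This reduces the problem to pc-sequences of length $\omega$, which is exactly the setting of Lemma~\ref{pcsums}.

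Next I would run the reduction sketched above Lemma~\ref{pcsums}: by passing to a further subsequence, replacing $(a_{\rho_n})$ by $(-a_{\rho_n})$ if needed, and translating by a suitable element of $H$, we arrange a strictly increasing positive pc-sequence all of whose consecutive differences are asymptotically decreasing; setting $b_0 := a_{\rho_0}$ and $b_n := a_{\rho_n} - a_{\rho_{n-1}}$ gives a sequence $(b_n)$ in $H^{>}$ with $b_n \succ b_{n+1}$ such that the pc-sequence equals $(F_i)$ with $F_i = b_0 + \cdots + b_i$. Each step in this reduction preserves the property of pseudoconverging in a given ordered field extension of~$H$, so it suffices to find a pseudolimit of $(F_i)$.

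Finally I would apply Theorem~\ref{immom} to the sequence $(b_n)$: this yields a Hardy field extension $H'\supseteq H$ in which $(F_i)$ pseudoconverges. Maximality of $H$ forces $H'=H$, so the pseudolimit lies in $H$; undoing the reductions produces a pseudolimit of the original $(a_\rho)$ in~$H$. There is no real obstacle in the corollary itself, which is purely bookkeeping; the substance lies entirely in Theorem~\ref{immom}, whose proof (to be carried out in Sections~\ref{psh1} and~\ref{psh2}) is the actual deep step.
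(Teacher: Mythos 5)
Your proof is correct and matches the paper's intended argument: the corollary is derived from Theorem~\ref{immom} exactly via the cofinality reduction to length $\omega$, the normalization preceding Lemma~\ref{pcsums}, and maximality of $H$. Nothing further is needed.
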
 

\noindent
Towards the proof of Theorem~\ref{immom} we first recall from~\cite[Sections~3, 4]{ADH5}  the following. 
Let $\ell\in \Cc^{<\infty}$ be such that $\ell>\R$ and $\ell'\in H$. Then $\ell$ lies in a Hardy field extension of $H$,  $\phi:=\ell'\in H^{>}$ is active in $H$, and the compositional inverse~$\ell^{\inv}>\R$ of $\ell$ yields an isomorphism
$f\mapsto f^\circ:=f\circ \ell^{\inv} \colon (\mathcal{C}^{<\infty})^\phi\to \mathcal{C}^{<\infty}$ of differential rings that maps $H$ onto the Hardy field $H^\circ:=H\circ \ell^{\inv}$; moreover,
$f_1\prec f_2\Leftrightarrow f_1\circ \ell\prec f_2\circ \ell$, for all $f_1, f_2\in \mathcal{C}^{<\infty}$. Thus $(F_i^\circ)$ is a pc-sequence in $H^\circ$, and we have:

\begin{lemma}\label{compo} $(F_i)$ pseudoconverges in some Hardy field extension of $H$ if and only if $(F_i^\circ)$ pseudoconverges in some Hardy field extension of $H^\circ$.
\end{lemma}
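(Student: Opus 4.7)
My plan is to transport pseudoconvergence between $H$ and $H^\circ$ via the isomorphism $f\mapsto f^\circ$ recalled just before the statement. Since this map is a bijection from $(\Cc^{<\infty})^\phi$ onto $\Cc^{<\infty}$ that preserves the dominance relation $\prec$ (hence the valuation), and pseudoconvergence is a purely valuation-theoretic notion independent of the derivation, the only real task is to match up the Hardy field extensions on the two sides.

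For the forward direction I would assume $(F_i)$ pseudoconverges to $F$ in some Hardy field extension $K$ of $H$, and set $K^\circ := K\circ \ell^{\inv}$. I first view $K$ as a differential subring of $(\Cc^{<\infty})^\phi$: this is legitimate because $\phi\in H^\times\subseteq K^\times$ ensures that $K$ is closed under $\phi^{-1}\der$. Restricting the given iso then yields a differential-ring isomorphism $K\to K^\circ$ extending $H\to H^\circ$. In particular $K^\circ$ is a subfield of $\Cc$ closed under $\der$, i.e., a Hardy field extension of $H^\circ$. Since $f\mapsto f^\circ$ preserves $\prec$, the pseudoconvergence $F_i\leadsto F$ in $K$ transfers to $F_i^\circ\leadsto F^\circ$ in $K^\circ$, as required.

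The converse is entirely symmetric: since $\ell^{\inv}>\R$ with $(\ell^{\inv})' = 1/(\phi\circ \ell^{\inv}) = 1/\phi^\circ \in (H^\circ)^\times$, composition with $\ell$ is the inverse isomorphism $\Cc^{<\infty}\to (\Cc^{<\infty})^\phi$, and the same argument applied to this inverse takes a Hardy field extension of $H^\circ$ in which $(F_i^\circ)$ has a pseudolimit to a Hardy field extension of $H$ in which $(F_i)$ does. I expect no substantive obstacle here; the one point worth keeping in mind is the equivalence, for subrings of $\Cc^{<\infty}$ containing $\phi$ (equivalently, $\phi^{-1}$), between closure under $\der$ and closure under $\phi^{-1}\der$, which is what guarantees that the isomorphism identifies Hardy field extensions of $H$ with Hardy field extensions of $H^\circ$.
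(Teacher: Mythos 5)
Your argument is correct and is exactly the route the paper intends: the lemma is stated there without proof as an immediate consequence of the recalled differential-ring isomorphism $f\mapsto f^\circ$ and its compatibility with $\prec$, and your write-up just makes explicit the (correct) point that a Hardy field extension $K\supseteq H$ is automatically a differential subfield of $(\Cc^{<\infty})^\phi$ because $\phi\in H^\times\subseteq K^\times$. Nothing is missing.
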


\noindent
We can also use \cite[Theorem~11.19]{ADH2} to pass to an extension and arrange that $H\supseteq \R$ and $H$ is closed. Then the following lemma is relevant.   
 
\begin{lemma}\label{unad} Let $H\supseteq \R$ be closed. Suppose 
$(F_i)$ has no pseudolimit in $H$, and let any element $F\in \mathcal{C}^{<\infty}$ be given. Then the following are equivalent: \begin{enumerate}
\item[(i)] $F$ generates a Hardy field $H\<F\>$ over $H$ with $F_i\leadsto F$;
\item[(ii)] for all $k$, $m$ with $k < m$ and active $\phi\in H^{>}$ we have
$$\derdelta^k\!\left(\frac{F-F_m}{f_m}\right)\preceq 1 \text{ in }\mathcal{C}^{<\infty}$$
where $\derdelta:= \phi^{-1}\der$ is construed as a derivation of 
$\mathcal{C}^{<\infty}$.
\end{enumerate}  
\end{lemma}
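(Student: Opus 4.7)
We prove the two implications separately; set $g_m := (F - F_m)/f_m \in \mathcal{C}^{<\infty}$ for $m \geq 1$.

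For $(i) \Rightarrow (ii)$: since $H$ is closed and hence $d$-algebraically maximal, $F$ is $d$-transcendental over $H$, and $H\langle F\rangle$ is an immediate $H$-field extension of $H$. The pseudolimit relation yields $v(F - F_m) = v(f_{m+1})$ eventually, so $g_m \prec 1$ in $H\langle F \rangle$. To obtain the higher bounds, iterate the identity
\[
g_m\ =\ \frac{f_{m+1}}{f_m}(1 + g_{m+1}),
\]
which follows from $F - F_m = f_{m+1} + (F - F_{m+1})$, expressing $g_m$ as a pseudolimit of $H$-partial sums. Using $\upo$-freeness of $H$ under compositional conjugation by an active $\phi \in H^{>}$, the operators $\derdelta^k$ applied to these partial sums stay bounded in $H$; the constraint $k < m$ provides the valuation margin needed for $\derdelta^k g_m$ itself to remain bounded in $\mathcal{C}^{<\infty}$.

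For $(ii) \Rightarrow (i)$, the substantive direction: first observe that $F_i \leadsto F$ follows from the $k=0$ case of (ii). Indeed, applying it at $m+1$ gives $g_{m+1} \preceq 1$, and hence $g_m = (f_{m+1}/f_m)(1 + g_{m+1}) \preceq f_{m+1}/f_m \prec 1$ in $\mathcal{C}^{<\infty}$, which forces $v(F - F_m) = v(f_{m+1})$ eventually. The real task is to show that $H\langle F\rangle \subseteq \mathcal{C}^{<\infty}$ is a Hardy field, i.e., that for every nonzero $P \in H\{Y\}$ the germ $P(F)$ is eventually nonzero and of definite sign. Fix $P$ of order $\leq r$ and pick $m > r$. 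Taylor expand:
\[
P(F)\ =\ P(F_m)\ +\ \sum_{|\alpha|\geq 1}\frac{P^{(\alpha)}(F_m)}{\alpha!}\prod_{j=0}^{r}\bigl((F - F_m)^{(j)}\bigr)^{\alpha_j}.
\]
Substituting $F - F_m = f_m g_m$ and applying Leibniz to $(F - F_m)^{(j)}$ turns each summand into a product of elements of $H$ and derivatives $g_m^{(j)}$. Compositional conjugation by an active $\phi \in H^{>}$ chosen via the newton polynomial of $P$ at $F_m$ (cf.~[ADH, Chapters 11, 13--14]) converts the ordinary derivatives $g_m^{(j)}$ into $\derdelta^j g_m$ with $j \leq r < m$, each bounded by (ii).

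Newtonianity of $H$ then ensures $P(F_m) \neq 0$ for all sufficiently large $m$: otherwise the newton polynomial of $P$ (at the stage determined by $\phi$) would vanish on the pc-sequence $(F_i)$, giving $(F_i)$ a pseudolimit in $H$ and contradicting the hypothesis. The bounds from (ii) then show that every correction term in the Taylor expansion is dominated by $P(F_m)$, so $P(F)$ and $P(F_m)$ are asymptotic and have the same sign eventually. The principal obstacle is the asymptotic bookkeeping: coordinating the newton polygon choice of $\phi$ for $P$ with the $\derdelta^k g_m$ bounds, verifying the dominance of $P(F_m)$ across all multiindices $\alpha$, and confirming that the restriction ``$k<m$'' in (ii) is exactly enough to cover all $j \leq r$ arising from $P$ of order $r$.
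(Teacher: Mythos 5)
Your overall strategy for the substantive direction (ii)~$\Rightarrow$~(i) is the same as the paper's: conjugate $P$ additively at $F_m$ and multiplicatively by $f_m$, compositionally conjugate by a suitable active $\phi$, and use the bounds from (ii) to control the resulting monomials in $G=(F-F_m)/f_m$. But the step you yourself flag as the ``principal obstacle'' --- that the constant term dominates all the other coefficients, i.e.\ that $\ddeg\big(P_{+F_{m},\times f_{m}}\big)^{\phi}=0$ for suitable $m$ and all sufficiently small active $\phi$ --- is precisely the mathematical content of the lemma, and your sketch of it does not work. You argue that newtonianity forces $P(F_m)\neq 0$ for large $m$, ``otherwise the newton polynomial of $P$ would vanish on the pc-sequence, giving a pseudolimit in $H$.'' That is not the right statement: $P(F_m)\neq 0$ for all $m$ is neither sufficient for the dominance you need nor is its failure what newtonianity rules out. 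The correct route, which the paper takes, is: since $(F_m)$ is a divergent pc-sequence in the closed (hence $\upo$-free and newtonian) $H$-field $H$, it is of $\d$-transcendental type over $H$ [ADH, 11.4, 14.0.2]; therefore for every $P\in H\{Y\}^{\neq}$ one has $\ndeg_{\preceq f_{m+1}}P_{+F_m}=0$ for all sufficiently large $m$ [ADH, 11.4.11, 11.4.12], and it is \emph{this} Newton-degree statement (propagated along the sequence via $P_{+F_{m+1}}=(P_{+F_m})_{+f_{m+1}}$ and [ADH, 11.2.7]) that yields $Q^{\phi_0}(Y)=h+\sum_{|\j|\neq 0}Q^{\phi_0}_{\j}Y^{\j}$ with $Q^{\phi_0}_{\j}\prec h$. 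Without that input, the bounds $\derdelta^k G\preceq 1$ from (ii) give you nothing about the relative sizes of the coefficients, so the Taylor-expansion terms need not be dominated by $P(F_m)$.

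Two smaller points. First, (i)~$\Rightarrow$~(ii) is a one-line argument: $G_m:=(F-F_m)/f_m\prec 1$ as an element of the asymptotic field $H\<F\>$, and for active $\phi$ the derivation $\derdelta=\phi^{-1}\der$ is small on $H\<F\>$, so $\derdelta^k G_m\prec 1$ for \emph{all} $k$, $m$. Your route via iterating $g_m=(f_{m+1}/f_m)(1+g_{m+1})$ and an appeal to $\upo$-freeness is both unnecessary and, as written, not a proof; and your remark that ``the constraint $k<m$ provides the valuation margin'' misreads that restriction --- it only weakens the hypothesis (ii) to ease the converse, it is not needed to make the forward direction true. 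Second, in your Taylor expansion the passage from ordinary derivatives $(F-F_m)^{(j)}$ to the quantities $\derdelta^j G$ controlled by (ii) is exactly where the decomposition $Q^{\phi_0}(G)=h+\sum_{|\j|\neq 0}Q^{\phi_0}_{\j}G^{\j}$ (with $G^{\j}$ evaluated using $\derdelta$) does the work; asserting that conjugation ``converts'' one into the other is not a substitute for writing that identity down.
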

\begin{proof} Assume (i).
Then for all $k$, $m$ and active $\phi\in H^{>}$ we have $(F-F_m)/f_m\prec 1$, and thus $\derdelta^k\big(\frac{F-F_m}{f_m}\big)\prec 1$. This proves 
(i)~$\Rightarrow$~(ii). For (ii)~$\Rightarrow$~(i), assume (ii).
For~$k=0$ we get $F-F_m\preceq f_m$ for all $m\ge 1$. Let
$P\in H\{Y\}^{\ne}$. Now $(F_m)$ is of $\d$-transcendental type
over $H$ by  [ADH,   11.4, 14.0.2], so we have $m_0\ge \text{order}(P)$ in~$\N^{\ge 1}$ such that $\ndeg_{\preceq f_{m+1}} P_{+F_m}=0$ 
for all $m\ge m_0$, by 
[ADH, 11.4.11, 11.4.12]. Using $P_{+F_{m+1}}=(P_{+F_m})_{+f_{m+1}}$ and 
 [ADH, 11.2.7]  we obtain
$\ndeg_{\preceq f_{m+1}} P_{+F_{m+1}}=0$ for all $m\ge m_0$.
Thus for $m_1:= m_0+1$ and  $Q:= P_{+F_{m_1}, \times f_{m_1}}$ we have an active~$\phi_0\in H^{>}$ with $\ddeg Q^{\phi}=0$ for all active $\phi\preceq \phi_0$ in $H^{>}$.
This gives $h\in H^\times$ such that, with $\j$ ranging over $\N^{m_0}$ and $Q^{\phi_0}_{\j}:=(Q^{\phi_0})_{\j}$,
$$Q^{\phi_0}(Y)\ =\ h+\sum_{ |\j|\ne 0} Q^{\phi_0}_{\j}Y^{\j}, \qquad Q^{\phi_0}_{\j}\prec h \text{ for } |\j|\ne 0.$$
Thus 
with $G:=(F-F_{m_1})/f_{m_1}$ we have $G\preceq 1$ and $F=F_{m_1}+ f_{m_1}G$, so
$$ P(F)\ =\ Q^{\phi_0}(G)\ =\ 
h+ \sum_{|\j|\ne 0} Q^{\phi_0}_{\j}G^{\j}$$
where the factors $G^{\j}$ are evaluated in $\Cc^{<\infty}$ using the derivation $\derdelta=\phi_0^{-1}\der$, and so~$G^{\j}\preceq 1$ for $ |\j|\ne 0$, by (ii).
Hence $P(F) \,  \sim\, h$. This yields (i). 
\end{proof}

\begin{cor}\label{unadcor} In Lemma~\ref{unad}  we can replace {\rm(ii)} by any of the two variants
below: \begin{enumerate}
\item[(ii)$^{*}$] for all $m > k$ and active $\phi_0\in H$ there is an active $\phi\preceq \phi_0$ in $H^{>}$ such that
$$\derdelta^k\left(\frac{F-F_m}{f_m}\right)\preceq 1 \text{ in }\mathcal{C}^{<\infty}$$
where $\derdelta:= \phi^{-1}\der$ is construed as a derivation of 
$\mathcal{C}^{<\infty}$.
\item[(ii)$^{**}$] for all $m_0\ge 1$ and active $\phi_0\in H$ there is an active 
$\phi\preceq \phi_0$ in $H^{>}$ and an~$m\ge m_0$ such that for $k=0,\dots,m_0$,
$$\derdelta^k\left(\frac{F-F_m}{f_m}\right)\preceq 1 \text{ in $\mathcal{C}^{<\infty}$,}\  \text{with $\derdelta:=\phi^{-1}\der$.}$$
\end{enumerate}
\end{cor}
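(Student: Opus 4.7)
The plan is to show that Lemma~\ref{unad} remains valid with either variant in place of (ii), by adapting its proof. The implications (i)~$\Rightarrow$~(ii)$^*$ and (i)~$\Rightarrow$~(ii)$^{**}$ follow immediately from (i)~$\Rightarrow$~(ii) of Lemma~\ref{unad} by specialization: in (ii)$^*$ take $\phi := \phi_0$, and in (ii)$^{**}$ take $\phi := \phi_0$ together with $m := m_0+1$, so that each $k \in \{1,\dots,m_0\}$ satisfies $k < m$ as required by (ii).

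For the converse directions, I would revisit the proof of (ii)~$\Rightarrow$~(i). Given $P \in H\{Y\}^{\neq}$ of order $r$, that argument first produces $\tilde m_0 \geq r$ in $\N^{\ge 1}$ with $\ndeg_{\preceq f_{m+1}} P_{+F_m}=0$ for all $m \geq \tilde m_0$, and then, for $Q := P_{+F_{m_1},\times f_{m_1}}$ with $m_1 \ge \tilde m_0$, an active $\phi^\sharp \in H^>$ such that $\ddeg\, Q^{\phi}=0$ for all active $\phi \preceq \phi^\sharp$. Since the $\ndeg$ condition propagates uniformly in $m$ (via [ADH, 11.2.7] and the factorization $P_{+F_{m+1},\times f_{m+1}} = (P_{+F_m,\times f_m})_{+(f_{m+1}/f_m),\times(f_{m+1}/f_m)}$), this $\phi^\sharp$ can be chosen so that the $\ddeg$~condition holds for every $m \ge \tilde m_0$ simultaneously. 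The remaining part of the argument then shows: whenever $m_1 \geq \tilde m_0$, $\phi \preceq \phi^\sharp$ is active, and $\derdelta^k(G) \preceq 1$ for $k = 0,\dots,r$ with $G := (F-F_{m_1})/f_{m_1}$ and $\derdelta := \phi^{-1}\der$, one obtains $P(F) \sim h$ for some $h \in H^\times$, giving (i).

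Under (ii)$^{**}$, apply the variant with $m_0 := \max(\tilde m_0, r)$ and $\phi_0 := \phi^\sharp$ to obtain active $\phi \preceq \phi^\sharp$ and $m \ge \tilde m_0$ with $\derdelta^k((F-F_m)/f_m) \preceq 1$ for $k=1,\dots,m_0$; set $m_1 := m$, and handle the $k=0$ bound $F-F_m \preceq f_m$ separately (this is just the pseudolimit property of $F$, recoverable either from an initial assumption that $F$ is a candidate pseudolimit or from the $k=1$ bound). Under (ii)$^*$, set $m_1 := \tilde m_0+1$; for each $k \in \{1,\dots,r\}$ apply (ii)$^*$ with this $m_1$ and $\phi_0 := \phi^\sharp$ to get an active $\phi_k \preceq \phi^\sharp$ satisfying the index-$k$ bound, and then transfer all of these bounds to a common active $\phi \preceq \phi^\sharp$ using the Leibniz expansion of $\derdelta_0^k$ in terms of $\derdelta^j$ ($j \leq k$) with scaling factor $\lambda := \phi/\phi_0 \preceq 1$, together with the standard asymptotic inequality $a' \preceq \psi$ for $a \in \mathcal{O}$ and active $\psi$ (cf.~[ADH, 9.2.10]), which controls the Leibniz coefficients.

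The main obstacle is the (ii)$^*$ case, where the various $\phi_k$ produced for different $k$ must be reconciled into a single $\phi$ at which all the derivative bounds hold simultaneously. This is delicate because shrinking the base $\phi$ makes the bound $\derdelta^k(G) \preceq 1$ \emph{harder}, not easier, so the reconciliation proceeds by an iterative application of (ii)$^*$ combined with an induction on $k$, using that the Leibniz coefficients in the expansion of $\derdelta_0^k$ in terms of a rescaled derivation carry enough factors of $\lambda \preceq 1$ to absorb the rescaling.
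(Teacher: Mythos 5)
Your treatment of (ii)$^{**}\Rightarrow$(i) is essentially the paper's argument: the paper sets $Q_m := P_{+F_{m+1},\times f_{m+1}}$, deduces $\ddeg Q_m^\phi=0$ for all $m\ge m_0$ and all active $\phi\preceq\phi_0$ from $\ddeg Q^\phi=0$ via [ADH, 6.6.12], and then feeds the single pair $(\phi,m)$ supplied by (ii)$^{**}$ into the tail of the proof of Lemma~\ref{unad}; your route to the uniformity in $m$ via propagation of the $\ndeg$ condition is the same idea. The forward implications by specialization are fine. One caveat: the $k=0$ bound $G\preceq 1$ is \emph{not} recoverable from the $k=1$ bound --- e.g.\ $G=\log x$, $\phi=1/x$ active gives $\derdelta G=1\preceq 1$ but $G\succ 1$, since $\int\phi\succ 1$ for active $\phi$ --- so of your two suggested fixes only the first (an assumption on $F$ itself, which is what the paper's applications supply) works.

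The genuine gap is in (ii)$^{*}\Rightarrow$(i), exactly at the point you flag as the main obstacle. The transfer identity \eqref{eq:gd} reads $\derdelta_h^k(f)=\sum_{j=1}^k G^k_j(g/h)\derdelta_g^j(f)$ and therefore moves bounds only from a \emph{smaller} active $g$ to a \emph{larger} $h\succeq g$ (so that $g/h\preceq 1$), and even then it needs the bounds for \emph{all} orders $j\le k$ at $g$, not just order $k$. Your scheme produces, for each $k$, a possibly different $\phi_k\preceq\phi^\sharp$ at which only the order-$k$ bound is known: passing to a common smaller $\phi$ introduces factors of $(\phi_k/\phi)^{-1}\succeq 1$ rather than $\preceq 1$, while passing to the largest $\phi_k$ fails for lack of the lower-order bounds at the smaller ones. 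So the claim that ``the Leibniz coefficients carry enough factors of $\lambda\preceq 1$ to absorb the rescaling'' is backwards for the direction you need, and the induction on $k$ you sketch does not close. (In fairness, the paper's own proof of the corollary only writes out the (ii)$^{**}$ case; and where (ii)$^{*}$ is invoked, in Proposition~\ref{flimmom}, the bound is verified for \emph{every} active $\phi\prec 1$ and all $k<m$ simultaneously, so the reconciliation problem never actually arises.)
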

\begin{proof} 
For (ii)$^{**}$~$\Rightarrow$~(i),  assume (ii)$^{**}$.  As before we have $F-F_m\preceq f_m$ for all~$m\ge 1$. Take $m_0$, $Q$, $\phi_0$ as in the proof of (ii)~$\Rightarrow$~(i), and set
$Q_m:= P_{+F_{m+1}, \times f_{m+1}}$.  For any $m\ge m_0$ and active $\phi\preceq \phi_0$ in $H^{>}$ we have~$\ddeg Q^\phi=0$, so
 $\ddeg Q_m^{\phi}=0$ by  [ADH, 6.6.12].
Now (ii)$^{**}$ gives active $\phi\preceq \phi_0$ in $H^{>}$ and $m\ge m_0$ such that for $k=0,\dots,m_0$ we have
$\derdelta^k\big(\frac{F-F_m}{f_m}\big)\preceq 1 $ in $\mathcal{C}^{<\infty}$, with $\derdelta:=\phi^{-1}\der$. In view of~$\ddeg Q_m^\phi=0$,
the last part of the proof of the lemma with~$\phi_0$,~$Q$ replaced by~$\phi$,~$Q_m$, and $G$ replaced by $G_m:= (F-F_{m+1})/f_{m+1}$, but $\j$ still ranging over~$\N^{m_0}$, goes through, and
yields the desired conclusion. 
\end{proof}

\noindent
Rather than Lemma~\ref{unad} we shall use  in what follows the implications (ii)$^{*}$~$\Rightarrow$~(i) and  (ii)$^{**}$~$\Rightarrow$~(i) that are implicit in the proof of that lemma, as we saw.

\subsection*{Expressing the powers $\derdelta^k$ in terms of $\der$} To facilitate the use of Lemma~\ref{unad} and its variants we shall  
express $\derdelta^k$ in terms of $\der$. Let $R$ be any differential ring with derivation $\der$. Then $f\in R$
gives rise to a derivation $\derdelta:=f\der$ on the underlying ring of $R$. 
For $k\geq 1$, $0\leq j\leq k$, we define 
$G^k_j(Y)\in \mathbb{Q}\{Y\}\subseteq R\{Y\}$ by recursion:
\begin{itemize}
\item $G^k_0 = 0$,
\item $G^k_k = Y^k$,
\item $G^{k+1}_j = Y\left(\der(G^k_j) +G^{k}_{j-1}\right)$ for $1\leq j\leq k$. 
\end{itemize}
(See also [ADH, 5.7].)
For the additive operators $\der$ and $\derdelta$ on the underlying ring $R$ this recursion gives: 
\[\derdelta^k\ =\ \sum_{j=1}^k G^k_j(f)\der^j \qquad(k\ge 1).\]
For $1\le j \le k$ the differential polynomial~$G^k_j(Y)$ is homogeneous of degree $k$ 
and of order $\le k$, 
so we have a differential polynomial $R^k_j(Z)\in \Q\{Z\}$ 
of order $< k$ and
depending only on $j$ and $k$
such that~$G^k_j(f)=f^k R^k_j(f^\dagger)$ for all $f\in R^\times$; see also~[ADH,  5.8]. 
For~$g\in R$, $\phi\in R^\times$, $\derdelta=\phi^{-1}\der$, this gives
\begin{equation}\label{eq:derdelta^k} \derdelta^k(g)\ =\ \phi^{-k}\sum_{j=1}^k R^k_j(-\phi^\dagger)g^{(j)}\ \text{ with } g^{(j)}:= \der^j(g) \qquad(k\ge 1). \end{equation}
Given $a\in \R$, the identity \eqref{eq:derdelta^k} also holds for $g\in \Cc^k_a$ 
and $\phi\in (\Cc^k_a)^{\times}$, where $\derdelta^k$ and
the $\der^j$ for $j\le k$ are construed in the obvious way as maps
  $\Cc^k_a\to\Cc_a$.

\medskip\noindent
For use in the next section we add the following observation:

\begin{lemma}\label{gd} Let $g\in H$ be active and 
$g\preceq h\in H$, and suppose $f\in \Cc^{<\infty}$ satisfies~$(g^{-1}\der)^k(f)\prec 1$ for $k=0,\dots,m$. Then also $(h^{-1}\der)^k(f)\prec 1$ for $k=0,\dots,m$. 
\end{lemma}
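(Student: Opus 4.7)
The plan is to rewrite the operator $h^{-1}\der$ in terms of $g^{-1}\der$ using that $g\preceq h$, then expand and bound term by term. First I would set $\delta_g := g^{-1}\der$ and $\delta_h := h^{-1}\der$ (viewed as derivations on $\Cc^{<\infty}$) and $u := g/h\in H$. Since $g\preceq h$ in $H$, the element $u$ lies in the valuation ring $\mathcal{O}_H$ of $H$, and one has the identity $\delta_h = u\,\delta_g$ on $\Cc^{<\infty}$.

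Next, by induction on $k\geq 0$, I would show that $\delta_h^k(f)$ is a finite $\N$-linear combination of expressions of the form
\[ \delta_g^{a_1}(u)\,\delta_g^{a_2}(u)\cdots \delta_g^{a_r}(u)\cdot \delta_g^j(f) \qquad (a_1,\dots,a_r\in\N,\ 0\leq j\leq k). \]
The inductive step is immediate from $\delta_h^{k+1}(f)=u\,\delta_g\!\left(\delta_h^k(f)\right)$ and the Leibniz rule for $\delta_g$: applying $\delta_g$ to a summand either raises $\delta_g^j(f)$ to $\delta_g^{j+1}(f)$ or promotes one factor $\delta_g^{a_i}(u)$ to $\delta_g^{a_i+1}(u)$, while multiplication by $u = \delta_g^0(u)$ merely adds one more factor to the monomial in $u,\delta_g(u),\delta_g^2(u),\dots$

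The third step is to bound each factor in $\Cc^{<\infty}$. By hypothesis, $\delta_g^j(f)\prec 1$ in $\Cc^{<\infty}$ for $j=0,\dots,m$. Since $g\in H$ is active, the derivation $\delta_g$ preserves $H$ and sends $\mathcal{O}_H$ into $\mathcal{O}_H$; starting from $u\in\mathcal{O}_H$ and iterating, $\delta_g^a(u)\in\mathcal{O}_H$ for every $a\geq 0$. Hence each coefficient monomial $\delta_g^{a_1}(u)\cdots \delta_g^{a_r}(u)$ is an element of $\mathcal{O}_H$, and in particular is $\preceq 1$ in $\Cc^{<\infty}$ (the asymptotic preorder on $\Cc^{<\infty}$ restricts to the valuation on the Hausdorff field $H$). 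Consequently, for $0\leq k\leq m$, every summand in the expansion of $\delta_h^k(f)$ is a product of a $\preceq 1$ coefficient with a $\prec 1$ factor, and so is $\prec 1$ in $\Cc^{<\infty}$; this gives $\delta_h^k(f)\prec 1$, as desired.

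The main (and essentially only nontrivial) ingredient is the standard fact that activeness of $g$ in the asymptotic field $H$ is equivalent to $g^{-1}\der$ being a small derivation, i.e.\ to $(g^{-1}\der)(\mathcal{O}_H)\subseteq\mathcal{O}_H$ (cf.~[ADH, Chapter~9]); everything else is bookkeeping with the Leibniz rule.
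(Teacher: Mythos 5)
Your proposal is correct and follows essentially the same route as the paper: the paper also writes $\derdelta_h=u\derdelta_g$ with $u=g/h\preceq 1$ and expands $\derdelta_h^k(f)=\sum_{j=1}^k G^k_j(u)\,\derdelta_g^j(f)$, where the universal differential polynomials $G^k_j$ (defined earlier via the Leibniz recursion) are exactly the coefficient monomials in $u,\derdelta_g(u),\derdelta_g^2(u),\dots$ that you produce by hand, and then uses smallness of $\derdelta_g$ on $H$ to get $G^k_j(u)\preceq 1$. The only difference is packaging: the paper reuses its prefabricated identity \eqref{eq:gd} instead of redoing the induction.
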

\begin{proof} Set $u:= g/h\in H^{\preceq 1}$, $\derdelta_g:= g^{-1}\der$
and $\derdelta_h:= h^{-1}\der$. Then $\derdelta_h=u\derdelta_g$, as derivations on $H$ and on
$\Cc^{<\infty}$. For $k\ge 1$ we have by an earlier identity
 \begin{equation}\label{eq:gd} \derdelta_h^k(f)\ =\ \sum_{j=1}^k G^k_j(u)\derdelta_g^j(f) \end{equation}
 where each $G^k_j(u)$ is evaluated according to the small derivation
 $\derdelta_g$ on the asymptotic field $H$, and thus $G^k_j(u)\preceq 1$. This gives the desired result. 
\end{proof}

\begin{remarkNumbered}\label{rem:gd}
For later use we note that the identity \eqref{eq:gd}  also holds for~${1\le k\le m}$,
 $f, g\in \Cc_a^m$, $h\in (\Cc_a^m)^\times$, with $a\in \R$ and~$u:=g/h$ (an element of $\Cc_a^m$), and where~$\derdelta_g:= g^{-1}\der$, $\derdelta_h:= h^{-1}\der$ are taken as derivations $\Cc_a^j\to  \Cc_a^{j-1}$, for~$j=1,\dots,m$, and
each~$G^k_j$ is evaluated according to~$\derdelta_g$.  
\end{remarkNumbered}

\subsection*{Bump functions} In this subsection $t$ ranges over $\R$.  From Lemma~\ref{bump} we obtain an increasing $\Cc^{\infty}$-function $\alpha\colon \R\to \R$  with $\alpha(t)=0$ for $t\le 0$ and
$\alpha(t)=1$ for~$t\ge 1$, and below we fix such an $\alpha$. (See~Figure~\ref{fig:alpha}.)

\begin{figure}[ht]
\begin{tikzpicture}
  \begin{axis} [axis lines=center, xmin=-0.25, xmax=1.25, ymin = 0, ymax = 0.12, width=0.4\textwidth, height = 0.35\textwidth, xlabel={$t$}, xtick={0,1},  ytick=0.1, yticklabels={$1$},  extra x ticks = 0]
    \addplot [domain=-0.25:1.25, smooth, very thick] { 0.05*tanh(4+8*(x-1)) + 0.05};
     \node[right] at (axis cs:1,0.08) {$\alpha$};
  \end{axis}
\end{tikzpicture}
\caption{The bump function $\alpha$}\label{fig:alpha}
\end{figure}
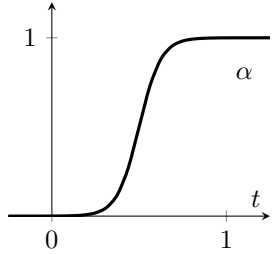

\medskip
\noindent
 For each $n$ we take a real constant $C_n$ such that $1\le C_0\le C_1\le C_2\le \cdots$ and
\begin{equation}\label{eq:alpha(n)}
|\alpha^{(n)}(t)|\ \le\ C_n\ \text{ for all $n$ and $t$.}
\end{equation}
For reals $a< b$ we define the increasing $\Cc^{\infty}$-function 
$\alpha_{a,b}\colon \R \to \R$ by
\begin{equation}\label{eq:alphaab}
\alpha_{a,b}(t)\ :=\  \alpha\!\left(\frac{t-a}{b-a}\right),
\end{equation}
so $\alpha_{a,b}(t)=0$ for $t\le a$ and $\alpha_{a,b}(t)=1$ for $t\ge b$. Also, 
\begin{equation}\label{eq:alphaab(m)}
\big|\alpha_{a,b}^{(m)}(t)\big|\ \le\  \frac{C_m}{(b-a)^m}\ 
\text{ for all $m$ and $t$}.
\end{equation}

\subsection*{Constructing $F^*$} 
We go back to our Hardy field $H$
(not necessarily $\upo$-free or newtonian) and its elements $f_n$ and $F_n:= f_0 + \cdots + f_n$, and in the rest of this section~$t$
ranges over $\R^{\ge 1}$. First, we take for each $n$ a continuous function~${\R^{\ge 1}\to \R}$ that represents the germ $f_n$, to be denoted also by $f_n$, such
that $f_n(t)> 0$ and~$f_{n+1}(t)\le f_n(t)/2$ for all
$t$: first choose the function $f_0$, then $f_1$,
next $f_2$, and so on.  

For each $n$ we fix an $a_n\in \R^{\ge 1}$
such that $f_0,\dots,f_n$ are of class $\Cc^{n}$ on~$[a_n,+\infty)$.
Next, let $c_0 < c_1 < c_2< \cdots$ be real numbers $\ge 1$
with $c_n \to \infty$ as~$n\to \infty$. We define $\alpha_n\colon \R^{\ge 1}\to \R$ by $\alpha_n(t):= \alpha_{c_n, c_{n+1}}(t)$, so
$\alpha_n$ is an increasing
$\Cc^{\infty}$-function with $\alpha_n(t)=0$ for $t\le c_n$ and $\alpha_n(t)=1$ for $t\ge c_{n+1}$, and we set
$$f_n^*\ :=\ \alpha_n f_n\ :\, \R^{\ge 1} \to \R^{\ge 0},$$ so $f_n^*(t)=0$ for $t\le c_n$ and
$f_n^*(t)=f_n(t)$ for $t\ge c_{n+1}$. Thus $f_n$ and $f_n^*$
have the same germ at $+\infty$, and we still have
$f^*_{n+1}(t)\le f_n^*(t)/2$ for all $n$ and $t$. As we saw in the subsection on Hausdorff fields in Section~\ref{prel}, this yields a continuous function
$$F^*\ :=\  \sum_{n=0}^\infty f_n^*\ :\, \R^{\ge 1} \to \R$$
such that $F^*-F_n\prec f_n$ (in $\mathcal{C}$)  for all $n$. 

\begin{lemma}\label{lem26} Assume $c_n > a_0,\dots, a_n$ for all $n$.
Then for all $n$, $f_n^*$ is of class $\Cc^n$, and
$F^*$ is of class $\Cc^n$ on $[c_n,+\infty)$. So the germ of $F^*$ at $+\infty$ belongs to $\mathcal{C}^{<\infty}$. 
\end{lemma}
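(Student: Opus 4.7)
The plan is to verify the two assertions separately: first that each single $f_n^*$ is $\Cc^n$ on $[1,+\infty)$, and then that $F^*$ is $\Cc^n$ on $[c_n,+\infty)$ by reducing to a finite sum on each bounded subinterval.

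For the first claim, split $[1,+\infty)$ into $[1,c_n]$ and $[c_n,+\infty)$. On $[1,c_n]$ we have $\alpha_n=0$, hence $f_n^*=0$, which is $\Cc^\infty$. On $(c_n,+\infty)\subseteq(a_n,+\infty)$ (using the hypothesis $c_n>a_n$), the function $f_n$ is of class $\Cc^n$ and $\alpha_n$ is $\Cc^\infty$, so $f_n^*=\alpha_n f_n$ is $\Cc^n$. The only gluing issue is at $c_n$: since $\alpha_n$ is $\Cc^\infty$ on $\R$ and identically zero on $(-\infty,c_n]$, every derivative $\alpha_n^{(j)}(c_n)$ vanishes, so Leibniz gives
\[
(f_n^*)^{(k)}(c_n)\ =\ \sum_{j=0}^k \binom{k}{j}\,\alpha_n^{(j)}(c_n)\,f_n^{(k-j)}(c_n)\ =\ 0
\]
for $0\le k\le n$ (the derivatives $f_n^{(k-j)}(c_n)$ making sense since $c_n>a_n$). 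This matches the vanishing derivatives from the left, so $f_n^*\in\Cc^n$ on all of $[1,+\infty)$.

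For the second claim, fix $n$ and any $N>n$. Since $f_k^*(t)=0$ for $t\le c_k$ and $c_k\ge c_N$ whenever $k\ge N$, the terms $f_k^*$ with $k\ge N$ vanish identically on $[c_n,c_N]$, so on that interval $F^*=\sum_{k=0}^{N-1} f_k^*$ reduces to a finite sum. For $k\le n$, the function $f_k$ is $\Cc^n$ on $[a_n,+\infty)\supseteq[c_n,c_N]$ by the choice of $a_n$, so $f_k^*=\alpha_k f_k$ is $\Cc^n$ on $[c_n,c_N]$. For $n<k<N$, the first part applied at level $k$ shows $f_k^*\in\Cc^k\subseteq\Cc^n$. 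Hence $F^*$ is a finite sum of $\Cc^n$-functions on $[c_n,c_N]$, so $F^*\in\Cc^n[c_n,c_N]$, and since $N$ is arbitrary, $F^*$ is of class $\Cc^n$ on $[c_n,+\infty)$. As this holds for every $n$, the germ of $F^*$ at $+\infty$ lies in $\bigcap_n \Cc^n=\Cc^{<\infty}$.

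The only genuinely delicate step is the flatness of $\alpha_n$ at $c_n$ used to patch $f_n^*$ across $t=c_n$; all other work is finitary bookkeeping, made possible by the separation condition $c_n>a_0,\dots,a_n$, which ensures that on each bounded interval $[c_n,c_N]$ only finitely many $f_k^*$ contribute and each of them already has at least $\Cc^n$-regularity there.
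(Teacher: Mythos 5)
Your proof is correct and follows essentially the same route as the paper's: $f_n^*$ vanishes on $[1,c_n]$ and equals the $\Cc^n$-product $\alpha_nf_n$ on $[a_n,+\infty)$, and $F^*$ reduces to a finite sum of $\Cc^n$-functions on each bounded piece of $[c_n,+\infty)$. The only (harmless) extra work is your derivative-matching at the single point $c_n$: since $f_n^*=\alpha_nf_n$ already holds on all of $[a_n,+\infty)$ and $f_n^*=0$ on $[1,c_n]$, the two $\Cc^n$-descriptions agree on the overlap interval $[a_n,c_n]$, so no pointwise gluing via the flatness of $\alpha_n$ is needed.
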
 
\begin{proof} We have $f_n^*=0$ on $[1, c_n]$, and $f_n$ is of class $\Cc^n$
on $[a_n, +\infty)$, so $f_n^*$ is of class $\mathcal{C}^n$.  
For $t\le c_{n+1}$ we have
$F^*(t)=f_0^*(t) + \cdots + f_n^*(t)$, so $F^*$ is of class $\Cc^n$
on~$[a_n, c_{n+1}]$. Likewise,
$F^*$ is of class $\Cc^{n+1}$ on 
$[a_{n+1}, c_{n+2}]$. Continuing this way we obtain that
$F^*$ is of class $\Cc^n$ on $[c_n,+\infty)$. 
\end{proof}

\noindent
We consider the $a_n$ as fixed, with the $c_n>a_0,\dots,a_n$ to be chosen as needed later. 
We set $\varepsilon_m:= f_{m+1}/f_m$, so $0 <\varepsilon_m(t)\le 1/2$ for all $t$ and $\varepsilon_m\prec 1$ in $H$. For any $n> m$ we also set $\varepsilon_{n,m}:=f_n/f_m$, so $\varepsilon_{n,m}$ is of class
$\mathcal{C}^n$ on $[a_n,+\infty)$ and~$0 <\varepsilon_{n,m}(t)\le 2\varepsilon_m(t)/2^{n-m}$ for all $t$.   
Then for $n> m$ we have $\varepsilon_{n,m}\preceq\varepsilon_m\prec 1$ in~$H$, so
$\varepsilon_{n,m}^{(k)}\prec x^{-1}$ for all $k\geq 1$: use [ADH, 9.1.9(iv), 9.1.10], first passing from~$H$ to a Hardy field extension
containing $x$ if necessary. 

\subsection*{Proof in the fluent case} This case of Theorem~\ref{immom} is as follows: 

\begin{prop}\label{flimmom} Suppose $\varepsilon\in H^{\prec 1}$ is such that
$f_{i+1}/f_{i}\prec \varepsilon$ for all $i$. Then $(F_i)$ 
pseudoconverges in some Hardy field extension of $H$.
\end{prop}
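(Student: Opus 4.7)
The plan is to construct $F := F^*$ with the parameters $c_n$ chosen carefully, and to verify via Corollary~\ref{unadcor}(ii)$^{**}$ that $F^*$ generates a Hardy field extension $H\langle F^*\rangle$ of $H$ with $F_i \leadsto F^*$. First, using [ADH, 6.7.22] (and, if convenient, Lemma~\ref{compo} with a suitable $\ell$ to arrange small derivation), I reduce to the case that $H\supseteq\R$ is closed. We may assume $(F_i)$ has no pseudolimit in $H$, since otherwise there is nothing to prove.

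Next, I choose $c_0<c_1<\cdots$ with $c_n\to\infty$, $c_n>a_0,\dots,a_n$ (ensuring $F^*\in\mathcal{C}^{<\infty}$ by Lemma~\ref{lem26}), and additionally $c_{n+1}-c_n$ growing fast enough to absorb the derivative bounds for the Hardy-field ratios $\varepsilon_{n,m}=f_n/f_m$ on compact intervals (specified below). This yields $F^*\in\mathcal{C}^{<\infty}$ with $F^*-F_n\prec f_n$ in $\mathcal{C}$ for each $n$. As germs, for $m\geq m_0$,
$$ g\ :=\ \frac{F^*-F_m}{f_m}\ =\ \sum_{n>m}\alpha_n\,\varepsilon_{n,m}, $$
where the fluent hypothesis gives $\varepsilon_{n,m}=\prod_{j=m}^{n-1}\varepsilon_j\preceq\varepsilon^{n-m}$ in $H^{\prec 1}$, so the series decays geometrically and is dominated by $\varepsilon_{m+1,m}\preceq\varepsilon$.

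To verify (ii)$^{**}$, fix $m_0\geq 1$ and active $\phi_0\in H^>$. By Leibniz, $g^{(j)}$ is a sum of terms $\binom{j}{i}\alpha_n^{(i)}\varepsilon_{n,m}^{(j-i)}$. For $i=0$, $\alpha_n\varepsilon_{n,m}^{(j)}$ is controlled by the Hardy-field derivative theory: using [ADH, 9.1.9(iv), 9.1.10] (after adjoining $x$ if necessary), one has $\varepsilon_{n,m}^{(j)}\prec x^{-1}\varepsilon_{n,m}\preceq\varepsilon^{n-m}$, which is uniform in $n$ and geometrically small. For $i\geq 1$, the contribution is supported in $[c_n,c_{n+1}]$ with sup-bound $C_i/(c_{n+1}-c_n)^i$ times the sup of $\varepsilon_{n,m}^{(j-i)}$ on that interval, and by the preselected growth of $c_{n+1}-c_n$ these bump contributions can be made $\preceq\varepsilon^{n-m}$ as well. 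Combined, $g^{(j)}$ is $\preceq \varepsilon$ up to factors controlled by the $\phi$-expansion \eqref{eq:derdelta^k} $\derdelta^k(g)=\phi^{-k}\sum_{j=1}^k R^k_j(-\phi^\dagger)\,g^{(j)}$. Choosing $\phi\preceq\phi_0$ active but small enough so that $\phi^{-k}\varepsilon\preceq 1$ for $k=1,\dots,m_0$ (possible because one can take $\phi\asymp\varepsilon^{1/m_0}$ or any active element dominating the derivative bounds), and using Lemma~\ref{gd} to reconcile the chosen $\phi$ with the requested bound $\phi\preceq\phi_0$, gives $\derdelta^k(g)\preceq 1$ in $\mathcal{C}^{<\infty}$ for $k=1,\dots,m_0$, provided $m\geq m_0$ is taken large enough.

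The main obstacle is the interplay between the bump derivatives $\alpha_n^{(i)}$ (which introduce non-Hardy-field-like behavior) and the $\phi^{-k}$ prefactor: the smoothing parameters $c_{n+1}-c_n$ must be chosen in advance of seeing $m_0$ and $\phi_0$, yet compatibly with all later active-$\phi$ challenges. This is exactly where the fluent hypothesis $f_{i+1}/f_i\prec\varepsilon$ pays off: the single dominating element $\varepsilon\in H^{\prec 1}$ gives uniform geometric control over all $\varepsilon_{n,m}$ and their derivatives simultaneously, so one sequence $(c_n)$ handles every $m_0$, $\phi_0$ at once. In the non-fluent case this uniformity fails and the finer partition-of-unity construction of Section~\ref{psh2} is needed.
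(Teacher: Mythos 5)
There is a genuine gap at the key quantitative step, and it is exactly the point you flag as ``the main obstacle'' and then dispose of too quickly. After expressing $\derdelta^k(g)=\phi^{-k}\sum_{j=1}^k R^k_j(-\phi^\dagger)g^{(j)}$, you propose to win by choosing $\phi\preceq\phi_0$ active with $\phi^{-k}\varepsilon\preceq 1$, e.g.\ ``$\phi\asymp\varepsilon^{1/m_0}$''. This is not available: active elements of a Liouville closed Hardy field are constrained from below (every active $\phi\prec 1$ satisfies $\phi\succ x^{-2}$), and they must also satisfy $\phi\preceq\phi_0$. Take $\varepsilon\asymp 1/\log x$ and $\phi_0=x^{-1}$: then every active $\phi\preceq\phi_0$ gives $\phi^{-k}\varepsilon\succeq x^{k}/\log x\succ 1$, so your bound fails; and $\varepsilon^{1/m_0}$ is in general either not active or not $\preceq\phi_0$. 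Nor does the fallback ``$\varepsilon_{n,m}^{(j)}\prec x^{-1}\varepsilon_{n,m}$'' help (it is false anyway, e.g.\ for $\varepsilon_{n,m}=\ex^{-\ex^x}$ one has $\varepsilon_{n,m}'\asymp \ex^x\varepsilon_{n,m}$); the correct general bound from [ADH, 9.1.9(iv), 9.1.10] is only $\varepsilon_{n,m}^{(j)}\prec x^{-1}$ for $j\ge 1$, and $x^{2k}\cdot x^{-1}\not\prec 1$. Lemma~\ref{gd} cannot rescue this either, since it transfers bounds from \emph{smaller} to \emph{larger} $\phi$, whereas (ii)$^{*}$/(ii)$^{**}$ force you to handle arbitrarily small active $\phi$, which is the hard direction.

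The missing idea is the normalization that the paper performs at the outset: set $\ell:=-\log|\varepsilon|\in H^{>\R}$ and replace $H$, $(f_i)$ by $H\circ\ell^{\inv}$, $(f_i\circ\ell^{\inv})$ via Lemma~\ref{compo}, so that after renaming $f_{i+1}/f_i\prec \ex^{-x}$ and hence $\varepsilon_{n,m}^{(j)}\prec\ex^{-x}$ for all $j$ and all $n>m$. Only then does the decay beat the worst-case prefactor: one gets $|\derdelta^k((F^*-F_m^*)/f_m)(t)|\le 2^{k+1}t^{2k}\ex^{-t}\to 0$ uniformly over all active $\phi\prec 1$ (using $\phi\succ x^{-2}$ and $R^k_j(0)=0$ for $j<k$), which is where the fluent hypothesis genuinely pays off. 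Your parenthetical use of Lemma~\ref{compo} merely ``to arrange small derivation'' misses this; with the composition put in as the normalization of $\varepsilon$ to $\ex^{-x}$, the rest of your outline (choice of $b_n$, $c_{n+1}-c_n\ge C_n$, Leibniz on $\alpha_n\varepsilon_{n,m}$, summing the geometric series via Corollary~\ref{corsumdif}) matches the paper's argument and goes through.
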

\begin{proof} By passing to a suitable extension we first
arrange that $H\supseteq \R$ is  closed. 
Then $\ell:= -\log|\varepsilon|\in H^{>\R}$,  
$|\varepsilon|=\ex^{-\ell}$, so $\abs{\varepsilon}\circ \ell^{\inv}=\ex^{-x}$, and thus~${(f_{i+1}/f_i) \circ \ell^{\inv}}\prec \ex^{-x}$ for all $i$. Replacing $H$ by $H\circ \ell^{\inv}$ and renaming we can arrange in view of Lemma~\ref{compo} that $f_{i+1}/f_i\prec \ex^{-x}$ for all $i$, and this is what we assume below.  Note that then $(f_n/f_m)^{(k)}\prec\ex^{-x}$
for all $n>m$ and all $k$.  We also assume
that $(F_i)$ does not pseudoconverge in $H$.

As in the subsection on constructing $F^*$ we choose for each germ
$f_n$ a continuous representative $\R^{\ge 1} \to \R$, also to be denoted by $f_n$,
and real numbers~$a_0, a_1, a_2,\dots$, $c_0, c_1, c_2,\dots$ with the properties listed there, and with $c_n> a_0,\dots, a_n$ for all $n$: the~$a_n$ are fixed and the $c_n$ are adjustable. As in that subsection this yields an~$F^*=\sum_{n=0}^\infty f_n^*\in \Cc^{<\infty}$ with $f_n^*=\alpha_n f_n$ for all $n$, and
we introduce the 
functions $\varepsilon_m=f_{m+1}/f_m$ and $\varepsilon_{n,m}=f_n/f_m$ for $n>m$. For each $n$ we take 
$b_n\ge a_0,\dots, a_n$ such that for all $k$, $m$ with
$0\le k\le m< n$,  
$$ t\ge b_n\ \Longrightarrow\ 
\big|\varepsilon_{n,m}^{(k)}(t)\big|\ \le\ \frac{\ex^{-t}}{2^{n-m}}.$$
Next, with the $C_n$ from \eqref{eq:alpha(n)},
take $c_n> b_0,\dots, b_n$ such that
$c_{n+1}-c_n\ge C_n$ (so~$c_n\to \infty$).  For $m\le n$ we have $|\alpha_n^{(m)}(t)|\le 1$
for all $t$: this is clear for $m=0$, and for $m\ge 1$ it follows from $C_m\le C_n$ and \eqref{eq:alphaab(m)}.

Let $\phi\in H^{>}$ be active and $\phi\prec 1$,  so
$\phi\succ x^{-2}$ and $\phi^\dagger\preceq x^{-1}$. This gives a derivation $\derdelta:=\phi^{-1}\der$ on $\Cc^{<\infty}$. 
Now we use (ii)$^{*}$~$\Rightarrow$~(i) from Corollary~\ref{unadcor}. It tells us that for $F^*$ to generate a Hardy field over $H$ with
$F_i\leadsto F^*$, it is enough to establish that the present assumptions on $\phi$ imply:

\claim{for all $m>k$ we have
$\derdelta^k\!\big(\frac{F^*-F_m}{f_m}\big)\prec 1$ in $\Cc^{<\infty}$.}

\noindent
Let $m\ge 1$ be given and represent the germ $\phi$ by a  
$\Cc^m$-function $\R^{\ge 1} \to \R^{>}$, to be denoted also by $\phi$. For $1\le j < k$, the  coefficient of $Y^k$ in the homogeneous differential polynomial $G^k_j$ of degree $k$  is $0$, so $G^k_j(1)=R^k_j(0)=0$.  Also $R^k_k=1$ for~$k\ge 1$.  Hence we can
take a real number $c_m^*\ge c_m$ such that for all $t\ge c_m^*$, 
$$\phi(t) \ge\ t^{-2}, \quad \big|R^k_j(-\phi^\dagger)(t)\big|\le 1\ \text{ whenever $1\le j\le k\le m$.}$$ 
Then \eqref{eq:derdelta^k}  yields
$$ \left|\derdelta^k\!\left(\frac{f_n^*}{f_m}\right)(t)\right|\ \le\ t^{2k}\sum_{j=1}^k\left|\left(\frac{f_n^*}{f_m}\right)^{(j)}(t)\right| \qquad(1\le k\le m < n,\ t\ge c_m^*).$$
Here it is relevant that the $f_n^*/f_m$ are of class $\Cc^m$
on $[c_m,+\infty)$ for the derivatives to exist. Next, for
$1\le j \le m < n$ and $t\ge c_m^*$,
\begin{align*} \left|\left(\frac{f_n^*}{f_m}\right)^{(j)}(t)\right|\ &\le\ \sum_{i=0}^j \binom{j}{i} \left|\alpha_n^{(j-i)}(t)\cdot \varepsilon_{n,m}^{(i)}(t)\right|\\
 &\le\ \sum_{i=0}^j \binom{j}{i}\frac{\ex^{-t}}{2^{n-m}}\ =\ 2^{j}\frac{\ex^{-t}}{2^{n-m}}.
 \end{align*}  
Combining this with the previous inequality we get 
$$ \left|\derdelta^k\!\left(\frac{f_n^*}{f_m}\right)(t)\right|\
   \le\ 2^{k+1}t^{2k} \frac{\ex^{-t}}{2^{n-m}}\qquad (1\le k \le m < n,\ t\ge c_m^*).$$
Now $F_m^*:= f_0^* + \cdots + f_m^*$ is of class $\C^m$ on $[c_m^*,\infty)$, so by Lemma~\ref{lem26} the function
$$\frac{F^*-F_m^*}{f_m}\ =\ \sum_{n=m+1}^\infty \frac{f_n^*}{f_m} $$ 
is of class $\C^{m}$ on $[c_m^*,\infty)$. Using also Corollary~\ref{corsumdif} we have for  $t\ge c_m^*$, 
$$ \left|\derdelta^k\!\left(\frac{F^*-F_m^*}{f_m}\right)(t)\right|\ \le\  2^{k+1}t^{2k} \ex^{-t} \qquad (1\le k\le m).$$
Hence
$\derdelta^k\!\left(\frac{F^*-F_m^*}{f_m}\right)
\prec 1 \text{ in }\Cc^{<\infty}$ for $1\le k\le m$.
As $F_m^*$ and $F_m$ are equal as germs in $\Cc^{<\infty}$, this
proves the claim when $k\ge 1$. For $k=0$, use that
$F^*-F_n\prec f_n$ for all $n$.  
\end{proof}

\begin{cor}\label{unccof} If $H^{>\R}$ has uncountable coinitiality, then
$(F_i)$ pseudoconverges in some Hardy field extension of $H$.
\end{cor}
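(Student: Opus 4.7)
The plan is to reduce directly to Proposition~\ref{flimmom}: I would produce an element $\varepsilon\in H^{\prec 1}$ satisfying $f_{i+1}/f_i\prec\varepsilon$ for all $i$, and then invoke that proposition verbatim. Note that since $f_i\succ f_{i+1}>0$, each ratio $f_i/f_{i+1}$ lies in $H^{>\R}$, so $\{f_i/f_{i+1}:i\in\N\}$ is a countable subset of $H^{>\R}$, and the question is really whether it admits a lower bound ``in the archimedean sense''.

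The main step would be to promote the uncountable-coinitiality hypothesis to the following asymptotic strengthening: for every countable $T\subseteq H^{>\R}$ there exists $h\in H^{>\R}$ with $h\prec t$ for all $t\in T$. I would obtain this by applying the coinitiality hypothesis to the still countable set $T':=\{t/m:t\in T,\ m\in\N^{\ge 1}\}\subseteq H^{>\R}$, which yields some $h\in H^{>\R}$ with $h<t/m$ for all $t\in T$ and all $m\geq 1$. Then for each fixed $t\in T$ the positive germ $h/t$ is bounded above by $1/m$ for every $m\geq 1$, hence is infinitesimal, which is to say $h\prec t$.

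Applying this strengthening with $T=\{f_i/f_{i+1}\}_i$ produces some $h\in H^{>\R}$ with $h\prec f_i/f_{i+1}$ for all $i$. Setting $\varepsilon:=1/h$, I then have $\varepsilon\in H^{\prec 1}$ (positive and infinitesimal, because $h\succ 1$), and the relations $h\prec f_i/f_{i+1}$ rewrite as $f_{i+1}/f_i\prec\varepsilon$ for all $i$. Proposition~\ref{flimmom} now supplies a Hardy field extension of $H$ in which $(F_i)$ pseudoconverges, as desired.

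I do not anticipate a substantive obstacle: the whole argument is essentially a cardinality/refinement trick, with all the real work already done in Proposition~\ref{flimmom}. The only care needed is the passage from the pointwise bounds that the raw coinitiality assumption provides to the asymptotic ($\prec$) domination required by Proposition~\ref{flimmom}, and this is exactly what the refinement $T\leadsto T'$ accomplishes.
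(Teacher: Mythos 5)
Your argument is correct and is exactly the intended deduction: the paper states this corollary without proof as an immediate consequence of Proposition~\ref{flimmom}, and your passage from the raw coinitiality hypothesis to asymptotic domination via the countable refinement $T'=\{t/m\}$ is the natural (and needed) way to make that reduction precise.
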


\noindent
Thus to prove Theorem~\ref{immom} it would be enough to show that in every maximal Hardy field its set of positive infinite elements has uncountable coinitiality. However, we were not able to prove the latter directly, and so couldn't exploit this remark. Instead we refine in the next section the previous constructions in the remaining case where $H^{>\R}$ has countable coinitiality.

\subsection*{Remarks on $H^{>\R}$ having countable coninitiality} 
We show that the property of $H^{>\R}$ having countable coinitiality is
fairly robust; this is not used later but has independent interest. More generally, in this subsection $K$ is a pre-$H$-field
with~$\Gamma:= v(K^\times)\ne \{0\}$. Note: 
$K^{>\mathcal O}=K^{>C}$ if $K$ is an $H$-field. If $K$ is ungrounded, then
$\operatorname{ci}(K^{>\mathcal O})=\operatorname{cf}(\Gamma^<)\ge \omega$, and
$\operatorname{cf}(\Gamma^<)=\omega$ iff 
$K$ has a logarithmic sequence (as defined in [ADH, 11.5])
of countable length.  First we refine \cite[Lemma~1.3.20]{ADH4}:

\begin{lemma}\label{uplcof}
Suppose $K$ is not $\upl$-free, and $L$ is a Liouville closed $\d$-algebraic $H$-field extension of $K$.
Then $L$ is $\upo$-free with a logarithmic sequence of length~$\omega$, and $\Gamma^<$ is not cofinal
in $\Gamma_{L}^<$.
\end{lemma}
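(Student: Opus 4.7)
\emph{Plan.}
By the weaker \cite[Lemma~1.4.20]{ADH2} (which this lemma refines), $L$ is $\upo$-free. In particular, $L$ is $\upl$-free, ungrounded, and closed under $\log$ (being Liouville closed). The plan is to extract the two additional claims from the failure of $\upl$-freeness of $K$, combined with the $\upl$-freeness of $L$.

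\emph{Non-cofinality of $\Gamma^<$ in $\Gamma_L^<$.}
Since $K$ is not $\upl$-free, the set $\Psi:=\Psi_K$ of $\psi$-values admits a \emph{pseudo-supremum} in $\Gamma$: either $\Psi$ has a maximum (the grounded case) or the natural pc-sequence along a length-$\omega$ logarithmic sequence of $K$ pseudoconverges to some $\lambda_0\in\Gamma$. Since $L$ is $\upl$-free, this pseudo-supremum cannot remain one in $\Gamma_L$; hence there exists $\beta\in\Gamma_L^<$ with $\psi(\beta)>\Psi$. Weak monotonicity of $\psi$ on the negative cone of an $H$-asymptotic couple then yields, for every $\gamma\in\Gamma^<$, that $\psi(\gamma)\in\Psi<\psi(\beta)$, forcing $\gamma<\beta$ in $\Gamma_L$. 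Thus $\beta\in\Gamma_L^<$ is a strict upper bound for $\Gamma^<$, witnessing non-cofinality.

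\emph{Logarithmic sequence of length $\omega$.} Picking $y\in L^{>\mathcal{O}}$ with $vy=\beta$, I inductively set $y_0:=y$ and $y_{n+1}:=\log y_n\in L$, which is possible by Liouville closedness. The sequence $(y_n)$ has strictly decreasing valuations by ungroundedness, and I would verify that it constitutes a logarithmic sequence of $L$ of length $\omega$ in the sense of [ADH, 11.5]. Combined with the remark preceding the lemma identifying the existence of such a length-$\omega$ log sequence with $\cf(\Gamma_L^<)=\omega$, this yields the desired conclusion.

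\emph{Main obstacle.} The sharpest point is the step ``$K$ not $\upl$-free, $L$ $\upl$-free'' $\Longrightarrow$ ``some $\psi$-value in $\Gamma_L$ lies strictly above $\Psi_K$''. This uses the characterization of $\upl$-freeness via pseudo-suprema of $\Psi$ from [ADH, Chapter~11] and must be applied uniformly across the grounded and ungrounded-but-not-$\upl$-free subcases, so the bookkeeping around the two subcases is where care is needed. The remaining monotonicity and Liouville-closure steps are routine once this point is in place.
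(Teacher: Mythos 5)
Your outline gets the easy half roughly right but leaves the hard half unproved. The non-cofinality of $\Gamma^<$ in $\Gamma_L^<$ can indeed be extracted from ``$K$ not $\upl$-free, $L$ $\upl$-free'' (though your phrase ``$\Psi$ admits a pseudo-supremum in $\Gamma$'' conflates the field-level notion of $\upl$-freeness with a value-group property, and the grounded, gap, and asymptotic-integration subcases each need their own short argument). The genuine gap is in the last step: after choosing $y$ with $vy=\beta$ and setting $y_{n+1}:=\log y_n$, you must show that $(vy_n)$ is cofinal in $\Gamma_L^<$ --- equivalently, that $L$ contains no $\ell\succ 1$ with $\ell\prec\log_n y$ for all $n$ --- and this is exactly the point where the hypothesis that $L$ is $\d$-algebraic over $K$ has to be used. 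Your proposal never invokes $\d$-algebraicity here, and without it the conclusion is false: a maximal Hardy field $L$ is a Liouville closed $H$-field extension of the grounded (hence non-$\upl$-free) field $\R(x)$, yet $\cf(\Gamma_L^<)>\omega$ by Corollary~\ref{cormhci}, so $L$ has no logarithmic sequence of length $\omega$ at all. So ``I would verify that it constitutes a logarithmic sequence of $L$'' is not a routine verification; it is the crux.

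The paper closes this gap with \cite[Theorem~1.4.1]{ADH2}. In the grounded case it embeds into $L$ the canonical $\upo$-free extension $K_\upo$ of $K$ from [ADH, 11.7.16], which comes equipped with a distinguished logarithmic sequence $(f_n)$ of length $\omega$ satisfying $\Gamma^< < v(f_n) < 0$; Theorem~1.4.1 then says that for the $\d$-algebraic extension $L$ of the $\upo$-free field $K_\upo$, the set $\Gamma_{K_\upo}^<$ remains cofinal in $\Gamma_L^<$. That single cofinality statement simultaneously shows that $(f_n)$ stays a logarithmic sequence in $L$ and that $\Gamma^<$ is not cofinal in $\Gamma_L^<$; the ungrounded case is then reduced to the grounded one following \cite[Lemmas~1.4.18--1.4.20]{ADH2}. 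You should reorganize your argument around this cofinality-preservation theorem rather than attempting to verify the logarithmic-sequence property directly from the choice of $\beta$.
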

\begin{proof}
Suppose first that $K$ is grounded. Let $K_\upo$ be the $\upo$-free pre-$H$-field extension of $K$
introduced before [ADH, 11.7.16] (with $K$ in place of $F$ there), identified with a pre-$H$-subfield of $L$ containing $K$ as in the proof of \cite[Lemma~1.3.18]{ADH4}.
The sequence~$(f_n)$ constructed before [ADH, 11.7.16] is a logarithmic sequence in $K_\upo$ with $\Gamma^< < v(f_n)<0$ for all $n\ge 1$.
By \cite[Theorem~1.3.1]{ADH4}, $L$ is $\upo$-free and $\Gamma_{K_{\upo}}^<$ is cofinal in $\Gamma_L^<$, so
$(f_n)$ remains a logarithmic sequence in $L$, and $\Gamma^<$ is not cofinal
in~$\Gamma_{L}^<$.
If $K$ is not grounded we reduce to the grounded case by following the proofs of
\cite[Lemmas~1.3.18--1.3.20]{ADH4}.
\end{proof}

\noindent
Next, let $\mathbf K=(K,I,\Lambda,\Omega)$ be a pre-$\HLO$-field with Newton-Liouville closure $\mathbf K^{\operatorname{nl}}=(K^{\operatorname{nl}},\dots)$;
see [ADH, 16.4]. Recall that $K^{\operatorname{nl}}$ is differentially algebraic over $K$.
The following proposition is analogous to the characterizations  of rational asymptotic integration and of $\upl$-freeness in~\cite[Propositions~1.3.8, 1.3.12]{ADH4}:

\begin{prop}\label{propuplcof} 
The following are equivalent:
\begin{enumerate}
\item[\textup{(i)}] $K$ is $\upo$-free;
\item[\textup{(ii)}] $\Gamma^<$ is cofinal in $\Gamma_L^<$ for every $\d$-algebraic $H$-field extension $L$ of $K$;
\item[\textup{(iii)}] $\Gamma^<$ is cofinal in $\Gamma_{K^{\operatorname{nl}}}^<$.
\end{enumerate}
Moreover, if  $K$ is not $\upo$-free, then $K^{\operatorname{nl}}$ has a logarithmic sequence of length $\omega$. 
\end{prop}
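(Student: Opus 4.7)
My plan is to prove the cycle $(i) \Rightarrow (ii) \Rightarrow (iii) \Rightarrow (i)$ and to read off the ``moreover'' statement in the course of the last step.

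For $(i) \Rightarrow (ii)$, I would invoke the preservation of $\upo$-freeness under $\d$-algebraic $H$-field extensions, namely the $\upo$-free analogue of the cofinality result used in the proof of Lemma~\ref{uplcof} (i.e.\ the counterpart of \cite[Theorem~1.4.1]{ADH2} for the $\upo$-free rather than the $\upl$-free situation, available in [ADH, Chapter~13/16]): if $K$ is $\upo$-free and $L$ is a $\d$-algebraic $H$-field extension of $K$, then $L$ is $\upo$-free and $\Gamma^<$ is cofinal in $\Gamma_L^<$. The implication $(ii) \Rightarrow (iii)$ is immediate, since $K^{\operatorname{nl}}$ is a $\d$-algebraic $H$-field extension of $K$ by the construction of the Newton-Liouville closure recalled from [ADH, 16.4].

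For $(iii) \Rightarrow (i)$, I would argue contrapositively: assume $K$ is not $\upo$-free, and split into two subcases.

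Subcase (a): $K$ is not $\upl$-free. Then $K^{\operatorname{nl}}$ is a Liouville closed $\d$-algebraic $H$-field extension of $K$ (being, in particular, Liouville closed because it is a Newton-Liouville closure), so Lemma~\ref{uplcof} applies with $L = K^{\operatorname{nl}}$ and yields both $\Gamma^< \not< \Gamma_{K^{\operatorname{nl}}}^<$ cofinally and a logarithmic sequence of length $\omega$ inside $K^{\operatorname{nl}}$. This disposes of the ``moreover'' claim in this subcase as well.

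Subcase (b): $K$ is $\upl$-free but not $\upo$-free. This is the technical heart of the argument and what I expect to be the main obstacle. Here I would follow the template laid down in [ADH, 11.7.16] and in the proof of \cite[Lemma~1.4.18]{ADH2}, but in the $\upo$-free setting: the failure of $\upo$-freeness together with $\upl$-freeness gives a pc-sequence whose approximate pseudolimit, when adjoined, creates an element $\alpha$ with $v(\alpha) < \Gamma^<$ in an intermediate pre-$H$-field $K_\upo \subseteq K^{\operatorname{nl}}$; the corresponding canonical sequence $(f_n)$ constructed there is a logarithmic sequence of length~$\omega$ with~$\Gamma^< < v(f_n) < 0$. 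It remains to embed $K_\upo$ into $K^{\operatorname{nl}}$ over~$K$, which is where the pre-$\HLO$-field structure $(I, \Lambda, \Omega)$ is needed: one checks that the element $\alpha$ is $\d$-algebraic over~$K$ (in the same way that logarithms of products of $\Omega$-type elements are), so that $\alpha \in K^{\operatorname{nl}}$ by universality of the Newton-Liouville closure. This gives $\Gamma^< < v(f_n) < 0$ in $\Gamma_{K^{\operatorname{nl}}}$ for all $n \geq 1$, contradicting (iii) and simultaneously producing the logarithmic sequence.

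The grounded-versus-non-grounded reduction at the end of Lemma~\ref{uplcof} should transfer to the $\upo$-free context in the same manner, so the main combinatorial/valuation-theoretic work is in subcase (b); the remaining implications and the ``moreover'' clause follow formally once the right $K_\upo \hookrightarrow K^{\operatorname{nl}}$ embedding is in place.
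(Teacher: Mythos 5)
Your overall skeleton matches the paper's: (i)~$\Rightarrow$~(ii) via \cite[Theorem~1.4.1]{ADH2} (which is already stated for $\upo$-freeness, so no ``analogue'' needs to be supplied), (ii)~$\Rightarrow$~(iii) trivially, and (iii)~$\Rightarrow$~(i) plus the ``moreover'' clause by contraposition, with the non-$\upl$-free subcase handled exactly as you propose via Lemma~\ref{uplcof} applied to $L=K^{\operatorname{nl}}$.

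The gap is in your subcase~(b). The step ``one checks that the element $\alpha$ is $\d$-algebraic over $K$, so that $\alpha\in K^{\operatorname{nl}}$ by universality of the Newton--Liouville closure'' is not a valid inference: $K^{\operatorname{nl}}$ is \emph{a} $\d$-algebraic extension of $K$, not one containing every $\d$-algebraic element of every extension. Precisely in the $\upl$-free, non-$\upo$-free situation there are two mutually incompatible $\d$-algebraic ways to extend $K$: by [ADH, 11.8.30] one has $\upo\in K$ with $\omega\big(\Upl(K)\big)<\upo<\sigma\big(\Upg(K)\big)$, and one can adjoin either a $\upg>0$ with $\sigma(\upg)=\upo$ (creating a gap in $K\langle\upg\rangle$) or a $\upl$ with $\omega(\upl)=\upo$ (making $K\langle\upl\rangle$ non-$\upl$-free). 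Which of these actually embeds into $K^{\operatorname{nl}}$ over $K$ is decided by the predicate $\Omega$ of the pre-$\HLO$-field $\mathbf K$: by the proof of [ADH, 16.4.6], either $\Omega=\omega(K)^\downarrow$ (and then $\upg$ lands in $K^{\operatorname{nl}}$) or $\Omega=K\setminus\sigma\big(\Upg(K)\big)^\uparrow$ (and then $\upl$ does). You note that the $\HLO$-structure ``is needed'' but do not supply this dichotomy, which is the actual content of the step; without it there is no reason the extension you build sits inside $K^{\operatorname{nl}}$. Relatedly, your description of what gets adjoined conflates the $K_\upo$-construction from before [ADH, 11.7.16] (which applies to \emph{grounded} $K$ and is what Lemma~\ref{uplcof} uses) with the $\upg$/$\upl$-adjunction relevant here; in subcase~(b) the field $K$ is $\upl$-free, hence ungrounded, so no grounded-versus-ungrounded reduction occurs. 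Once the $\Omega$-dichotomy is in place, the paper does not re-run the logarithmic-sequence construction at all: it simply observes that $K\langle\upg\rangle$ (resp.\ $K\langle\upl\rangle$) is not $\upl$-free, replaces $\mathbf K$ by the corresponding pre-$\HLO$-subfield of $\mathbf K^{\operatorname{nl}}$ (which has the same Newton--Liouville closure), and falls back on your subcase~(a).
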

\begin{proof}
The implication (i)~$\Rightarrow$~(ii) holds by  \cite[Theorem~1.3.1]{ADH4}, and
(ii)~$\Rightarrow$~(iii) is clear.
For the rest, note that  if $K$ is not $\upl$-free, then $K^{\operatorname{nl}}$ has a logarithmic sequence of length $\omega$
and $\Gamma^<$ is not cofinal
in $\Gamma_{K^{\operatorname{nl}}}^<$, by Lemma~\ref{uplcof}. Suppose now that $K$ is $\upl$-free but not $\upo$-free.
Then [ADH, 11.8.30] gives $\upo\in K$ with~$\omega\big(\Upl(K)\big)<\upo<\sigma\big(\Upg(K)\big)$.  
By the proof of [ADH, 16.4.6], either $\Omega=\omega(K)^\downarrow$ or
 $\Omega=K\setminus\sigma\big(\Upg(K)\big){}^\uparrow$.
If~$\Omega=\omega(K)^\downarrow$,  then the proof of [ADH, 16.4.6]   yields a   $\upg\in K^{\operatorname{nl}}$
such that~$\upg>0$, $\sigma(\upg)=\upo$, and the pre-$H$-subfield $K_{\upg}:=K\langle\upg\rangle$ of $K^{\operatorname{nl}}$ has a gap.  
  Replacing~$\mathbf K$ by the pre-$\HLO$-subfield $(K_{\upg},\dots)$ of $\mathbf K^{\operatorname{nl}}$
we reduce to the case that $K$ is not $\upl$-free. 
If~$\Omega=K\setminus\sigma\big(\Upg(K)\big){}^\uparrow$, then the proof of [ADH, 16.4.6]
yields $\upl\in K^{\operatorname{nl}}$ such that~$\omega(\upl)=\upo$ and the pre-$H$-subfield $K_\upl:=K\langle\upl\rangle$ of~$K^{\operatorname{nl}}$  
 is not $\upl$-free, so we can argue as before, with $K_\upl$ in place of $K_\upg$. 
\end{proof}

\noindent
Now assume $H\supseteq\R$. Let $M$ be a maximal Hardy field extension of $H$ and 
$$H^{\operatorname{da}}:=\{f\in M:\text{$f$ is $\d$-algebraic over $H$} \}$$
be the $\d$-closure of $H$ in $M$, and let $\mathbf H$, $\mathbf H^{\operatorname{da}}$, $\mathbf M$ be the
canonical $\HLO$-expansions of $H$, $H^{\operatorname{da}}$, $M$, respectively;
see \cite[Sections~12, 13]{ADH2}. Thus $\mathbf H\subseteq\mathbf H^{\operatorname{da}}\subseteq \mathbf M$.
Note that~$\mathbf H^{\operatorname{da}}$ is a Newton-Liouville closure of $\mathbf H$:
  the closed $\HLO$-field   $\mathbf M$ extends~$\mathbf H$ and thus contains  a Newton-Liouville closure  $\mathbf H^{\operatorname{nl}}$
of $\mathbf H$, and $H^{\operatorname{nl}} \subseteq H^{\operatorname{da}}$ since~$H^{\operatorname{nl}}$ is $\d$-algebraic over $H$, so
$H^{\operatorname{nl}} = H^{\operatorname{da}}$ by [ADH, 16.0.3].
If $M^*$ is also a maximal Hardy field extending $H$, then there is
an $H$-field embedding~$H^{\operatorname{da}}\to M^*$   over~$H$ whose image
is the $\d$-closure of~$H$ in $M^*$, by [ADH, 16.4.9].
By Proposition~\ref{propuplcof}:

\begin{cor}
If $H$ is $\upo$-free, then $H^{>\R}$ is coinitial in $(H^{\operatorname{da}})^{>\R}$. If $H$ is not $\upo$-free, then $H^{\operatorname{da}}$ has a logarithmic sequence of length $\omega$. 
In particular, if $H^{>\R}$ has countable coinitiality, then so does 
$(H^{\operatorname{da}})^{>\R}$.
\end{cor}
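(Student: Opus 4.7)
The plan is to deduce all three assertions directly from Proposition~\ref{propuplcof}, bridging the valuation-theoretic statement there with the order-theoretic one stated here. Specifically, for any Hardy field $H\supseteq\R$ with standard valuation $v\colon H^\times\to\Gamma$, the map $f\mapsto v(f)$ reverses order between $H^{>\R}$ and $\Gamma^<$, so a subset $S\subseteq H^{>\R}$ is coinitial in $H^{>\R}$ iff $v(S)$ is cofinal in $\Gamma^<$; in particular $\ci(H^{>\R})=\cf(\Gamma^<)$, and the analogous equality holds for $H^{\operatorname{da}}$ in place of $H$. A countable coinitial sequence in $H^{>\R}$ therefore corresponds exactly to a countable cofinal sequence in $\Gamma^<$. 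This translation will be used silently below.

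For the first claim, assume $H$ is $\upo$-free. Since $\mathbf H^{\operatorname{da}}$ is a Newton-Liouville closure of $\mathbf H$, the $H$-field $H^{\operatorname{da}}$ coincides with the $H$-field underlying $\mathbf H^{\operatorname{nl}}$ and is $\d$-algebraic over $H$. The implication (i)$\Rightarrow$(iii) of Proposition~\ref{propuplcof}, applied with $K=H$, then yields that $\Gamma^<$ is cofinal in $\Gamma_{H^{\operatorname{da}}}^<$; translating back gives coinitiality of $H^{>\R}$ in $(H^{\operatorname{da}})^{>\R}$. For the second claim, assume $H$ is not $\upo$-free; then the ``moreover'' clause of Proposition~\ref{propuplcof}, again with $\mathbf K=\mathbf H$, produces a logarithmic sequence of length $\omega$ in $K^{\operatorname{nl}}=H^{\operatorname{da}}$, as required.

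For the concluding ``in particular'' statement, suppose $H^{>\R}$ has countable coinitiality. If $H$ is $\upo$-free, pick a countable coinitial subset $S$ of $H^{>\R}$; by the first claim $S$ remains coinitial in $(H^{\operatorname{da}})^{>\R}$, whence the latter has countable coinitiality. If instead $H$ is not $\upo$-free, the logarithmic sequence $(\ell_n)_{n\in\N}$ furnished by the second claim consists of positive infinite elements of $H^{\operatorname{da}}$ whose valuations $v(\ell_n)$ form a countable cofinal subset of $\Gamma_{H^{\operatorname{da}}}^<$, by the definition of a logarithmic sequence in [ADH, 11.5]; hence $(\ell_n)$ itself is a countable coinitial subset of $(H^{\operatorname{da}})^{>\R}$. (Note that in this second case the hypothesis of countable coinitiality of $H^{>\R}$ is not even used.)

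There is no serious obstacle: essentially all the content has already been packaged into Proposition~\ref{propuplcof}, which in turn rests on Lemma~\ref{uplcof} and [ADH, Theorem~1.4.1]. The only points worth a moment's care are (a) checking that $v$ really does exchange coinitiality in $H^{>\R}$ with cofinality in $\Gamma^<$, and (b) recalling the definition of a logarithmic sequence in order to conclude that such a sequence of length $\omega$ produces a countable coinitial subset of $(H^{\operatorname{da}})^{>\R}$.
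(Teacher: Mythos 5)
Your proof is correct and is essentially the paper's own argument: the corollary is obtained directly from Proposition~\ref{propuplcof} (the implication (i)$\Rightarrow$(iii) for the first claim, the ``moreover'' clause for the second), together with exactly the order/valuation translation you make explicit. One small caveat: the blanket equivalence ``$S$ coinitial in $H^{>\R}$ iff $v(S)$ cofinal in $\Gamma^<$'' (and hence $\ci(H^{>\R})=\cf(\Gamma^<)$) fails in the direction $\Leftarrow$ when $\Gamma^<$ has a largest element (e.g.\ $H=\R(x)$, where $\cf(\Gamma^<)=1$ but $\ci(H^{>\R})=\omega$), but this is harmless in your argument, since you only invoke that direction inside $H^{\operatorname{da}}$, whose value group is nontrivial and divisible so that $\Gamma_{H^{\operatorname{da}}}^<$ has no largest element, and your remaining uses of the translation are just transitivity of coinitiality.
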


\section{The Remaining Case}\label{psh2}

\noindent
We keep the assumptions on $H$ and $(f_i)$ from the beginning of Section~\ref{psh1}, and let $t$ range over $\R^{\ge 1}$. For use in the ``remaining case'' we first derive
bounds like those of clause (ii) in Lemma~\ref{unad} for $\phi=1$.

\subsection*{Useful bounds} We adopt the conventions and notations in the subsection
on constructing $F^*$ from the previous section; in particular, the $a_n$ are fixed, and the~$c_n$ will be adjusted so as to get the desired bounds on certain derivatives of the functions $f_n^*/f_m$ with $n>m$.  For each $n$ we take 
$b_n\ge a_0,\dots, a_n$ such that for all~$k$,~$m$ with
$1\le k\le m< n$, 
$$ t\ge b_n\quad \Longrightarrow \quad 
\left|\varepsilon_{n,m}^{(k)}(t)\right|  \le\ \frac{t^{-1}}{2^{n-m}}.$$
Next, with the $C_n$ from \eqref{eq:alpha(n)}, we take
for each $n$ a $c_n > b_0,\dots, b_n$ with~$c_{n+1}-c_n\ge C_n$ (so $c_n\to \infty$ as $n\to \infty$). Then $|\alpha_n^{(m)}(t)|\le 1$
for all $t$ whenever $m\le n$, using that $C_m\le C_n$ for such~$m$,~$n$. Note also that for $m\le n$ the function $f_n^*/f_m$ is of class $\C^n$ on its entire domain $[1,\infty)$ in view of $f_n^*(t)=0$ for $t\le c_n$.

\begin{lemma} For all $k$, $m$ with $k \le m$ we have 
$$\der^k\!\left(\frac{F^*-F_m}{f_m}\right)\prec 1 \quad\text{ in $\mathcal{C}^{<\infty}$.}$$
\end{lemma}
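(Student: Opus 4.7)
The plan is to express the germ at issue as an explicit infinite sum, justify term-by-term differentiation via Corollary~\ref{corsumdif}, and then estimate each term with the Leibniz rule, using the bounds for $\alpha_n^{(m)}$ and $\varepsilon_{n,m}^{(k)}$ set up at the start of this section. As germs, $F^*-F_m=F^*-F_m^*$, since $f_i^*(t)=f_i(t)$ for $t\ge c_{i+1}$ and hence $F_m^*(t)=F_m(t)$ for $t\ge c_{m+1}$. Thus, as a germ,
$$\frac{F^*-F_m}{f_m}\ =\ \frac{F^*-F_m^*}{f_m}\ =\ \sum_{n>m}\frac{f_n^*}{f_m}\ =\ \sum_{n>m}\alpha_n\,\varepsilon_{n,m}.$$

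On any compact interval $[1,T]$ with $T\le c_N$ every term with index $n\ge N$ vanishes (since $f_n^*=0$ on $[1,c_n]$), so the series of $k$-th derivatives is in fact locally finite for each $k$. Corollary~\ref{corsumdif} with $\phi=1$, $\derdelta=\der$ then gives, as germs in $\mathcal{C}^{<\infty}$,
$$\der^k\!\left(\frac{F^*-F_m}{f_m}\right)\ =\ \sum_{n>m}\der^k\!\left(\frac{f_n^*}{f_m}\right)\qquad(0\le k\le m).$$
For $k=0$, $F^*-F_m\prec f_m$ (established in the construction of $F^*$ in Section~\ref{psh1}) gives the result at once. For $1\le k\le m$, Leibniz gives
$$\der^k\!\left(\frac{f_n^*}{f_m}\right)(t)\ =\ \sum_{i=0}^{k}\binom{k}{i}\alpha_n^{(k-i)}(t)\,\varepsilon_{n,m}^{(i)}(t).$$

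The key observation is that for each $t$, at most one index $n^*=n^*(t)$ satisfies $c_{n^*}\le t<c_{n^*+1}$, namely the one where $\alpha_{n^*}$ lies in its transition zone; for $n<n^*$ we have $\alpha_n(t)=1$ with $\alpha_n^{(j)}(t)=0$ for $j\ge 1$, and for $n>n^*$ we have $f_n^*(t)=0$. Combining this with the bounds $|\alpha_n^{(j)}(t)|\le 1$ (valid since $j\le k\le m<n$), $|\varepsilon_{n,m}^{(i)}(t)|\le t^{-1}/2^{n-m}$ for $1\le i\le m$ and $t\ge c_n>b_n$, and $|\varepsilon_{n,m}(t)|\le 2/2^{n-m}$, the sum simplifies and is bounded by
$$\left|\der^k\!\left(\frac{F^*-F_m^*}{f_m}\right)(t)\right|\ \le\ \sum_{m<n<n^*}\frac{t^{-1}}{2^{n-m}}+\frac{2}{2^{n^*-m}}+\frac{2^k t^{-1}}{2^{n^*-m}}\ \le\ 2t^{-1}+\frac{2+2^k}{2^{n^*-m}}.$$
Since $c_n\to\infty$, we have $n^*(t)\to\infty$ as $t\to\infty$, so both terms tend to $0$, yielding $\prec 1$ in $\mathcal{C}^{<\infty}$.

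The main obstacle, compared to the fluent case of Proposition~\ref{flimmom}, is that we no longer have an exponential decay bound on $\varepsilon_{n,m}$ itself (only $O(1/2^{n-m})$); so the $i=0$ contribution $\alpha_n^{(k)}\varepsilon_{n,m}$ from the transition index does not decay in $t$ within its support. The saving point is precisely the previous paragraph: for each $t$ only a single $n^*(t)$ has $\alpha_{n^*}^{(k)}(t)\neq 0$, and the prefactor $1/2^{n^*(t)-m}$ forces this lone contribution to $0$ as $t\to\infty$.
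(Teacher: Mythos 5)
Your proof is correct, and its skeleton is the same as the paper's: the decomposition $\frac{F^*-F_m^*}{f_m}=\sum_{n>m}\alpha_n\varepsilon_{n,m}$, termwise differentiation via Corollary~\ref{corsumdif}, the Leibniz rule, and the prepared bounds $|\alpha_n^{(j)}|\le 1$ and $|\varepsilon_{n,m}^{(i)}(t)|\le t^{-1}/2^{n-m}$ for $i\ge 1$, $t\ge b_n$. The one place you diverge is the $i=0$ Leibniz term $\alpha_n^{(k)}\varepsilon_{n,m}$: the paper simply bounds it by $2\varepsilon_m(t)/2^{n-m}$ and sums the geometric series over all $n>m$, obtaining the total bound $2\varepsilon_m(t)+2^kt^{-1}$, which is $\prec 1$ because $\varepsilon_m=f_{m+1}/f_m\prec 1$. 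Your "main obstacle'' paragraph is therefore slightly off: $\varepsilon_{n,m}(t)=f_n(t)/f_m(t)$ \emph{does} decay in $t$ (only the rate, not the fact of decay, is lost compared to the fluent case), so the localization to the single transition index $n^*(t)$ and the observation $n^*(t)\to\infty$ are not needed — though they do give a valid alternative mechanism, and have the minor advantage of not invoking $\varepsilon_m\prec 1$ for this term. Your displayed bound is fine once one notes $t^{-1}\le 1$ so that the $i\ge1$ contribution from $n=n^*$ can be absorbed into the $(2+2^k)/2^{n^*-m}$ summand.
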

\begin{proof} Let $1\le k \le m < n$. From
$f_n^*/f_m=\alpha_n\varepsilon_{n,m}$ we get for $t\ge c_n$, 
\begin{align*} \left|\left(\frac{f_n^*}{f_m}\right)^{(k)}(t)\right|\ &\le\ \sum_{j=0}^k \binom{k}{j} \left|\alpha_n^{(k-j)}(t)\cdot \varepsilon_{n,m}^{(j)}(t)\right|\\
 &\le\ |\alpha_n^{(k)}(t)| \frac{2\varepsilon_m(t)}{2^{n-m}} + \sum_{j=1}^k \binom{k}{j}\frac{t^{-1}}{2^{n-m}}\ \le \ \frac{2\varepsilon_m(t)}{2^{n-m}}+ \frac{2^k t^{-1}}{2^{n-m}}.
 \end{align*}
This also holds for $t < c_n$, since $(f_n^*/f_m)(t)=0$
for such $t$. Now fix $m\ge 1$ and set~$F_m^*:= f_0^* + \cdots + f_m^*$.  By Corollary~\ref{corsumdif} the function
$$\frac{F^*-F_m^*}{f_m}\ =\ \sum_{n=m+1}^\infty \frac{f_n^*}{f_m} $$ 
is of class $\C^{m+1}$ on its entire domain $[1,\infty)$, and for all $t$, 
$$ \left|\der^k\!\left(\frac{F^*-F_m^*}{f_m}\right)(t)\right|\ \le\  2\varepsilon_m(t)+ 2^k t^{-1} \qquad (k=1,\dots,m).$$
Hence $\der^k\big(\frac{F^*-F_m^*}{f_m}\big)\prec 1$ in $\mathcal{C}^{<\infty}$ for $k=1,\dots,m$. As $F_m^*$ and $F_m$ are equal as germs in $\mathcal{C}^{<\infty}$, this gives the desired result when $k\ge 1$. For $k=0$, use that $F^*-F_n\prec f_n$ for all $n$. 
\end{proof} 

\noindent
For later use we record the following consequence:

\begin{cor}\label{activecase} Let $\phi\in H^{>}$ be active, and $\derdelta:= \phi^{-1}\der$, as a derivation on $\Cc^{<\infty}$. Then there exists $F_{\phi}\in \Cc^{<\infty}$ such that for all 
$k$, $m$ with $k\le m$,
$$\derdelta^k\left(\frac{F_{\phi}-F_m}{f_m}\right)\prec 1\quad \text{ in $\mathcal{C}^{<\infty}$.}$$
\end{cor}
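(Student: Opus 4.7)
The plan is to reduce to the case $\phi=1$ just settled in the previous lemma, via compositional conjugation by an antiderivative of $\phi$. Since $\phi\in H^{>}$ is active, the setup recalled at the start of Section~\ref{psh1} (preceding Lemma~\ref{compo}) yields, possibly after first enlarging $H$ to a suitable Liouville closed Hardy field extension (which is harmless for our conclusion, as the target $\Cc^{<\infty}$ is unchanged), an $\ell\in\Cc^{<\infty}$ with $\ell>\R$ and $\ell'=\phi$. The compositional conjugation
$$f\ \mapsto\ f^{\circ}\ :=\ f\circ\ell^{\inv}$$
is then an isomorphism $(\Cc^{<\infty})^{\phi}\to\Cc^{<\infty}$ of differential rings which carries $H$ onto a Hardy field $H^{\circ}=H\circ\ell^{\inv}$, intertwines $\derdelta=\phi^{-1}\der$ with $\der$, and preserves the asymptotic relation $\prec$.

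Under this iso, our data transforms as follows: $(f_n^{\circ})$ is a sequence in $(H^{\circ})^{>}$ with $f_n^{\circ}\succ f_{n+1}^{\circ}$, and the partial sums are $F_m^{\circ}=f_0^{\circ}+\cdots+f_m^{\circ}$. Applying the previous lemma to the Hardy field $H^{\circ}$ and the sequence $(f_n^{\circ})$ in place of $H$ and $(f_n)$ produces some $G\in\Cc^{<\infty}$ with
$$\der^k\!\left(\frac{G-F_m^{\circ}}{f_m^{\circ}}\right)\ \prec\ 1\quad\text{in $\Cc^{<\infty}$, \ for all $k\leq m$.}$$

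Now set $F_{\phi}:=G\circ\ell\in\Cc^{<\infty}$ (well-defined as a germ, since $\ell>\R$), so that $F_{\phi}^{\circ}=G$. Then the iso carries $(F_{\phi}-F_m)/f_m$ to $(G-F_m^{\circ})/f_m^{\circ}$ and $\derdelta^k$ to $\der^k$, so
$$\derdelta^k\!\left(\frac{F_{\phi}-F_m}{f_m}\right)\ \prec\ 1\quad\text{in $\Cc^{<\infty}$, \ for all $k\leq m$,}$$
which is the desired conclusion.

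The main obstacle, if any, is arranging the existence of the antiderivative $\ell\in\Cc^{<\infty}$ with $\ell>\R$ and $\ell'=\phi$; this is precisely the content of activity that allows the compositional conjugation to be applied. Aside from this, the argument is a mechanical transfer of the previous lemma across the iso $f\mapsto f^{\circ}$, exactly parallel to the reduction used at the start of the proof of Proposition~\ref{flimmom}.
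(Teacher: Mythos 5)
Your proposal is correct and matches the paper's proof essentially verbatim: take $\ell$ in a Hardy field extension of $H$ with $\ell'=\phi$ (so $\ell>\R$), apply the preceding lemma to the sequence $(f_i\circ\ell^{\inv})$ in $H\circ\ell^{\inv}$, and set $F_\phi:=F^*\circ\ell$. The only cosmetic difference is that you spell out the intertwining of $\derdelta$ with $\der$ under compositional conjugation, which the paper leaves implicit by citing the setup before Lemma~\ref{compo}.
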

\begin{proof} Take an  $\ell$ in a Hardy field extension of $H$ with $\ell'=\phi$; note that $\ell>\R$. The lemma above applied to
the sequence $(f_i\circ \ell^{\inv})$ in $H\circ \ell^{\inv}$ yields $F^*\in \Cc^{<\infty}$ with~${\der^k\big(\frac{F^*-F_m\circ \ell^{\inv}}{f_m\circ \ell^{\inv}}\big)\prec 1}$ in~$\mathcal{C}^{<\infty}$ for all $m\ge k$. For
$F_{\phi}:= F^*\circ \ell\in \Cc^{<\infty}$ this gives the desired
result.
\end{proof}

\noindent
In view of Lemma~\ref{unad}, the problem is that
$F_{\phi}$ depends on $\phi$. The idea, to be carried out
in the next subsections, is to show that for suitable
$\phi_n$ and a kind of partition of unity $(\beta_n)$
the
infinite sum $\sum_n\beta_n F_{\phi_n}$  has the desired properties. In the previous section we
proved Theorem~\ref{immom} in the so-called fluent case,
which includes the case that $H^{>\R}$ has uncountable coinitiality. The remaining case where $H^{>\R}$ has countable
coinitiality will lead to the suitable $\phi_n$ and the partition of unity $(\beta_n)$ that we alluded to. The $a_n$ and $b_n$
below  are still real numbers but have little to do with the earlier $a_n$ and $b_n$;
reusing these symbols with another meaning simply reflects the limitations of the alphabet. 

\subsection*{Towards constructing a good partition of unity} Until further notice the Hardy field
$H\supseteq \R$ is Liouville closed and $H^{>\R}$ has countable
coinitiality. It follows that there is a sequence $(\phi_n)$ of active elements in $H^{>}$ such that $\big(v(\phi_n)\big)$ is strictly increasing and cofinal in $\Psi_H$. Below we fix such a sequence $(\phi_n)$, and set $\derdelta_n:=\phi_n^{-1}\der$, a derivation on $\Cc^{<\infty}$. Then Corollary~\ref{activecase}  provides for each $n$ a $\Phi_n\in \Cc^{<\infty}$ such that for all $k$, $m$ with $k\le m$,
$$\derdelta_n^k\!\left(\frac{\Phi_n-F_m}{f_m}\right)\ \prec\ 1\quad \text{ in $\Cc^{<\infty}$,}$$
and thus by Lemma~\ref{gd}, for all $k\le m$ and all $i\le n$,
 $$\derdelta_i^k\!\left(\frac{\Phi_n-F_m}{f_m}\right)\ \prec\ 1\quad \text{ in $\Cc^{<\infty}$.}$$
We represent the germs $\phi_n$, $f_n$, and $\Phi_n$ by
$\C^n$-functions $\R^{\ge 1} \to \R^{>}$, denoted also by
$\phi_n$, $f_n$, and $\Phi_n$. These functions $\phi_n$, $f_n$ and
$\Phi_n$ are fixed in the rest of this section, and the notion of
``admissible sequence'' defined below is relative to these given
sequences $(\phi_n)$, $(f_n)$, $(\Phi_n)$. 
Suppose the real numbers $a_n\ge 1$ are such that: \begin{enumerate}
\item[(I)] for each $n$, $f_0,\dots, f_n$ and $\phi_0,\dots, \phi_n$ are of class $\mathcal{C}^n$ on $[a_n,+\infty)$;  
\item[(II)] for all $i$, $k$, $m$, $n$ with $k\le m\le n$, $i\le n$, and all $t\ge a_n$ we have
$$\left|\derdelta_i^k\!\left(\frac{\Phi_n-F_m}{f_m}\right)(t)\right|\ \le\ 1.$$
\end{enumerate}
Note that (II) makes sense in view of (I), and that 
(I) and (II) remain valid upon increasing all $a_n$. We have $\phi_n/\phi_i\prec 1$ in $H$ for $i<n$ and $\phi_n/\phi_i=1$ for $i=n$,
and thus $\derdelta_n^k(\phi_n/\phi_i)\prec 1$ in $H$ for $i\le n$ and $k\ge 1$. Note also that $\derdelta_n^k(\phi_n/\phi_i)(t)$ is defined for $i,k\le n$ and $t\ge a_n$, since $\phi_n/\phi_i$ is of class $\Cc^n$ on $[a_n,+\infty)$ for $i\le n$. Thus by taking the $a_n$ large enough we can arrange in addition to (I) and (II): \begin{enumerate}
\item[(III)] for all $n$ and $i,k\le n$ and all $t\ge a_n$ we have
$$|\derdelta_n^k(\phi_n/\phi_i)(t)|\ \le\ 1.$$
\end{enumerate}
An {\bf admissible sequence\/} is 
a sequence $\big((a_n, b_n, \beta_n)\big)_{n\geq 0}$ of triples $(a_n, b_n, \beta_n)$ such that: \begin{enumerate}
\item[(i)]  $(a_n)$ is a strictly increasing sequence of real numbers $\ge 1$ with $a_n\to \infty$ as~$n\to \infty$ for which (I), (II), (III) hold; 
\end{enumerate}
and such that for all $n$: \begin{enumerate}
\item[(ii)]  $b_n$ is a real number with $a_n < b_n < a_{n+1}$;
\item[(iii)] $\beta_n$ is a function $\R^{\ge 1} \to \R$ of class $\mathcal{C}^n$;
\item[(iv)]
$\beta_n(t)=0$ if $t\le a_n$, $\beta_n$ is increasing on $[a_n, b_n]$, 
$\beta_n(t)=1$ if $b_n \le t\le a_{n+1}$, $\beta_{n}$ is decreasing on $[a_{n+1}, b_{n+1}]$, and $\beta_{n}(t)=0$ for $t\ge b_{n+1}$;
\item[(v)]  $\beta_n+\beta_{n+1}=1$ on $[a_{n+1}, b_{n+1}]$.
\end{enumerate}
(See Figure~\ref{fig:betan}.)

\begin{figure}[ht]
\begin{tikzpicture}
\def\an{0.75};\def\bn{1.5};\def\anplone{2.5};\def\bnplone{3.7};\def\anpltwo{5.2};\def\bnpltwo{6};
  \begin{axis} [axis lines=center, xmin=-0.2, xmax=7, ymin = -0.01, ymax = 0.15, width=0.8\textwidth, height = 0.4\textwidth, xlabel={$t$}, xtick={\an,\bn,\anplone,\bnplone,\anpltwo,\bnpltwo}, ytick=\empty, xticklabels={\strut $a_n$, \strut $b_n$, \strut $a_{n+1}$, \strut $b_{n+1}$, \strut $a_{n+2}$, \strut $b_{n+2}$},
  legend style={font=\small},
 legend cell align=left, legend style={at={(1,0.75)},anchor=west}
  ]
    \addplot [domain=0.25:\an+0.05, smooth, very thick] {0}; 
    \addplot [domain=0.25:\anplone, smooth, ultra thick, dotted] {0}; 
    
    \addlegendentry{$\beta_n$};
    \addlegendentry{$\beta_{n+1}$};

        \addplot [domain=\bn:\anplone, smooth, very thick] {0.1};  
    \addplot [domain=\bnplone:6.5, smooth, very thick] {0}; 

    \addplot [domain=\bnplone:\anpltwo, smooth, ultra thick, dotted] {0.1}; 
    \addplot [domain=\bnpltwo:6.5, smooth, ultra thick, dotted] {0}; 
    \def\transanbn{(6.5*(((2*(x-\bn)/((\bn)-(\an)))+1)))};
    \def\transanplonebnplone{(6.5*(((2*(x-\bnplone)/((\bnplone)-(\anplone)))+1)))};
    \def\transanpltwobnpltwo{(6.5*(((2*(x-\bnpltwo)/((\bnpltwo)-(\anpltwo)))+1)))};

    \addplot [domain=\an:\bn, smooth, very thick] { 0.1/(1+exp(-\transanbn)) };
    \addplot [domain=\anplone:\bnplone, smooth, very thick] { 0.1/(1+exp(\transanplonebnplone)) };
    \addplot [domain=\anplone:\bnplone, smooth, ultra thick, dotted] { 0.1-0.1/(1+exp(\transanplonebnplone)) };
    \addplot [domain=\anpltwo:\bnpltwo, smooth, ultra thick, dotted] { 0.1/(1+exp(\transanpltwobnpltwo)) };
  \end{axis}
\end{tikzpicture}
\caption{The functions $\beta_n$, $\beta_{n+1}$}\label{fig:betan}
\end{figure}
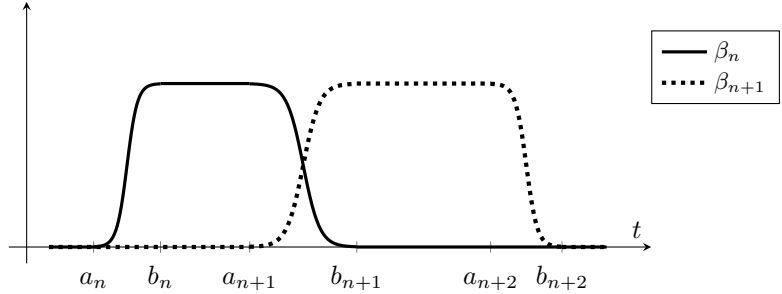

\medskip
\noindent
{\em In the rest of this subsection $\big((a_n, b_n, \beta_n)\big)$ denotes an admissible sequence}. Note that
$\supp \beta_n\subseteq [a_n, b_{n+1}]$ by (iv), and that (v)
expresses the ``partition of unity'' requirement. The series
$\sum_n \beta_n\Phi_n$ converges pointwise on $\R^{\ge 1}$ to
a continuous function $\Phi$ such that on each segment $[b_n, a_{n+2}]$ we have
$$\beta_n + \beta_{n+1}\ =\ 1\ \text{ and }\ \Phi\ =\ \beta_n\Phi_n+ \beta_{n+1}\Phi_{n+1}, $$
so $\Phi$ is of class $\mathcal{C}^n$ on $[b_n, a_{n+2})$. 
Likewise, $\Phi$ is of class $\mathcal{C}^{n+1}$ on the set
$(b_{n+1}, a_{n+3}]$, which overlaps the previous set. Continuing this way we see that
$\Phi$ is of class $\mathcal{C}^n$ on $[b_n, +\infty)$, and thus
the germ of $\Phi$ lies in $\mathcal{C}^{<\infty}$.

\begin{lemma}\label{redcon} Suppose for all $m$ and
$i,k\le m$ there is a positive constant $C=C(i,k,m)$ such that
for all $n\ge m$,
$$ \left|\derdelta_i^k\!\left(\frac{\beta_n\Phi_n + \beta_{n+1}\Phi_{n+1} - F_m}{f_m}\right)\right|\ \le\ C\ \text{ on }\ [a_{n+1}, b_{n+1}].$$
Then for all $m$ and $i,k\le m$ we have
$$\derdelta_i^k\!\left(\frac{\Phi-F_m}{f_m}\right)\ \preceq\ 1\quad \text{ in $\Cc^{<\infty}$.}$$
\end{lemma}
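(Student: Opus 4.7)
The plan is to bound $\derdelta_i^k\!\left(\frac{\Phi-F_m}{f_m}\right)$ on a cofinal interval by covering $[1,\infty)$ with two alternating types of intervals, on each of which one of the two hypotheses applies directly.

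Fix $m$ and $i,k\le m$, and set $N:=\max(i,m)$. First, I would verify that on each ``transition'' interval $[a_{n+1},b_{n+1}]$ the sum $\Phi=\sum_j\beta_j\Phi_j$ reduces to just two terms: by (iv) the support of $\beta_j$ lies in $[a_j,b_{j+1}]$, so for $j<n$ we have $\supp\beta_j\subseteq[a_j,b_n]$ which is disjoint from $[a_{n+1},b_{n+1}]$ (since $b_n<a_{n+1}$), and for $j>n+1$ we have $a_j\ge a_{n+2}>b_{n+1}$. Combined with (v), this yields $\Phi=\beta_n\Phi_n+\beta_{n+1}\Phi_{n+1}$ on $[a_{n+1},b_{n+1}]$. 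Therefore the hypothesis of the lemma gives, for $n\ge N$, the bound
\[
\left|\derdelta_i^k\!\left(\frac{\Phi-F_m}{f_m}\right)(t)\right|\ \le\ C(i,k,m)\qquad (t\in[a_{n+1},b_{n+1}]).
\]

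Next, on each ``plateau'' interval $[b_n,a_{n+1}]$ the same support analysis shows that only $\beta_n$ can be nonzero, and by (iv) $\beta_n=1$ there, so $\Phi=\Phi_n$. For $n\ge N$ we have $i\le n$ and $k\le m\le n$, so condition (II) in the definition of an admissible sequence applies and yields
\[
\left|\derdelta_i^k\!\left(\frac{\Phi-F_m}{f_m}\right)(t)\right|\ =\ \left|\derdelta_i^k\!\left(\frac{\Phi_n-F_m}{f_m}\right)(t)\right|\ \le\ 1\qquad (t\in[b_n,a_{n+1}]).
\]

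Finally, since the intervals $[b_n,a_{n+1}]$ and $[a_{n+1},b_{n+1}]$ for $n\ge N$ together cover $[b_N,+\infty)$, combining the two bounds gives
\[
\left|\derdelta_i^k\!\left(\frac{\Phi-F_m}{f_m}\right)(t)\right|\ \le\ \max\bigl(1,C(i,k,m)\bigr)\qquad (t\ge b_N),
\]
which is precisely $\derdelta_i^k\!\left(\frac{\Phi-F_m}{f_m}\right)\preceq 1$ in $\Cc^{<\infty}$. No obstacle is anticipated beyond carefully justifying the support computation and the applicability of (II) for $n\ge N$; both are immediate from the defining properties of an admissible sequence.
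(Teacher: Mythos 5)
Your proposal is correct and follows essentially the same route as the paper: bound the ``transition'' intervals $[a_{n+1},b_{n+1}]$ by the hypothesis and the ``plateau'' intervals $[b_n,a_{n+1}]$ by condition (II), then take the union of these intervals to get a uniform bound on $[b_m,+\infty)$. Your explicit verification that $\Phi=\beta_n\Phi_n+\beta_{n+1}\Phi_{n+1}$ on $[b_n,a_{n+2}]$ is a point the paper establishes in the discussion just before the lemma rather than inside the proof, and note that since $i\le m$ your $N=\max(i,m)$ is just $m$.
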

\begin{proof} Let $i,k\le m$, and take a $C\ge 1$ as in the hypothesis.
Then for all $n\ge m$ we have
$|\derdelta_i^k\big(\frac{\Phi - F_m}{f_m}\big)|\le C$ on $[a_{n+1}, b_{n+1}]$, and also by (II) above, 
$$\left|\derdelta_i^k\!\left(\frac{\Phi - F_m}{f_m}\right)\right|\le 1 \text{ on $[b_{n}, a_{n+1}]$,} \qquad
  \left|\derdelta_i^k\!\left(\frac{\Phi - F_m}{f_m}\right)\right|\le 1 \text{ on $[b_{n+1}, a_{n+2}]$,}$$ and thus
$|\derdelta_i^k\big(\frac{\Phi - F_m}{f_m}\big)|\le C \text{ on } [b_{n}, a_{n+2}]$. Taking the union over all $n\ge m$ we obtain
$$\left|\derdelta_i^k\!\left(\frac{\Phi - F_m}{f_m}\right)\right|\le C \text{ on } [b_{m}, +\infty),$$
which gives the desired result.
\end{proof}

\noindent
We didn't use (III) yet, but we need it for a further reduction:

\begin{lemma}\label{redc} Suppose for all $m$ and $k\le m$ there is a positive constant $C(k,m)$ such that for all $n\ge m$,
$$ \left|\derdelta_n^k\left(\frac{\beta_n\Phi_n + \beta_{n+1}\Phi_{n+1} - F_m}{f_m}\right)\right|\ \le\ C(k,m)\ \text{ on }\ [a_{n+1}, b_{n+1}].$$
Then for all $m$ and $i,k\le m$ we have
$$\derdelta_i^k\left(\frac{\Phi-F_m}{f_m}\right)\ \preceq\ 1\quad \text{ in $\Cc^{<\infty}$.}$$
\end{lemma}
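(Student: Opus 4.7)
The plan is to reduce the hypothesis of Lemma~\ref{redc} to the hypothesis of Lemma~\ref{redcon} by expressing each $\derdelta_i^k$ with $i \le m \le n$ as a linear combination of $\derdelta_n^1, \dots, \derdelta_n^k$ with bounded coefficients, using Remark~\ref{rem:gd} and condition~(III) from the definition of admissibility. Once this is achieved, Lemma~\ref{redcon} gives the conclusion.

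Concretely, fix $m$ and $i, k \le m$. For $n \ge m$ put $f_{n,m} := (\beta_n\Phi_n+\beta_{n+1}\Phi_{n+1}-F_m)/f_m$, viewed on $[a_n,+\infty)$ where by~(I) everything is of class $\mathcal{C}^n$. Set $g := \phi_n$, $h := \phi_i$, $u := \phi_n/\phi_i$; since $n \ge i$, $u$ is of class $\mathcal{C}^n$ on $[a_n,+\infty)$. For $k \ge 1$, Remark~\ref{rem:gd} (applied on $[a_n,+\infty)$ with these data) yields
\[
\derdelta_i^k(f_{n,m})\ =\ \sum_{j=1}^{k} G^k_j(u)\,\derdelta_n^j(f_{n,m}),
\]
where each $G^k_j(u)$ is a fixed $\Q$-polynomial expression in $u,\derdelta_n(u),\dots,\derdelta_n^{k-1}(u)$. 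Since $i \le m \le n$ and $k-1 < n$, condition~(III) gives $|\derdelta_n^\ell(u)(t)| \le 1$ on $[a_n,+\infty)$ for $\ell = 0, 1, \dots, k-1$. Because $G^k_j$ has only finitely many coefficients in $\Q$, there is a constant $C'(j,k)$, depending only on $j$ and $k$, such that $|G^k_j(u)(t)| \le C'(j,k)$ on $[a_n,+\infty)$, and hence in particular on $[a_{n+1}, b_{n+1}] \subseteq [a_n,+\infty)$.

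Applying the hypothesis of Lemma~\ref{redc} with $j$ in place of $k$ (since $j \le k \le m$), we get on $[a_{n+1}, b_{n+1}]$,
\[
\bigl|\derdelta_i^k(f_{n,m})\bigr|\ \le\ \sum_{j=1}^{k} C'(j,k)\, C(j,m)\ =:\ C(i,k,m),
\]
a constant independent of $n \ge m$. For $k = 0$ the same bound (with $C(i,0,m) := C(0,m)$) is immediate from the hypothesis since $\derdelta_i^0 = \operatorname{id}$. Thus the hypothesis of Lemma~\ref{redcon} holds for every $i, k \le m$, and Lemma~\ref{redcon} gives $\derdelta_i^k\!\left(\frac{\Phi-F_m}{f_m}\right) \preceq 1$ in $\Cc^{<\infty}$, as required. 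The only step requiring care is the verification that the polynomial bound on $|G^k_j(u)|$ truly depends only on $j, k$ and not on $i, n$; this is where~(III) is used in its full strength, since it provides uniform bounds on the $\derdelta_n$-derivatives of $\phi_n/\phi_i$ for all relevant $i$ and orders simultaneously.
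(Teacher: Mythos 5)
Your proof is correct and follows essentially the same route as the paper's: both apply Remark~\ref{rem:gd} with $u=\phi_n/\phi_i$ to write $\derdelta_i^k$ as a combination of the $\derdelta_n^j$, bound the coefficients $G^k_j(u)$ uniformly in $n$ via condition (III), and then invoke Lemma~\ref{redcon}. Your observation that the bound on $|G^k_j(u)|$ depends only on $j,k$ is a slight sharpening of the paper's constant $B(m)$, but the argument is the same.
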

\begin{proof} Let $C(m):= \max_{k\le m} C(k,m)$ where the $C(k,m)$ are as in the hypothesis. Let $i,k\le m\le n$, and let $F,g,h$ be the restrictions of $\frac{\beta_n\Phi_n + \beta_{n+1}\Phi_{n+1} - F_m}{f_m}$, $\phi_n$, $\phi_i$ to 
$[a_{n+1},+\infty)$, respectively; these functions are of class
$\Cc^{n}$. For $j=1,\dots,m$ we denote the derivations
$$f\mapsto g^{-1}f'\ :\,\Cc^{j}_{a_{n+1}}\to \Cc^{j-1}_{a_{n+1}},
\quad f\mapsto h^{-1}f'\ :\,\Cc^{j}_{a_{n+1}}\to \Cc^{j-1}_{a_{n+1}}$$ by $\derdelta_g$ and
$\derdelta_h$, suppressing for convenience the dependence on $j$. Let $u:= g/h\in \Cc^m_{a_{n+1}}$. Then Remark~\ref{rem:gd} gives for $f\in \Cc^{m}_{a_{n+1}}$:
$$\derdelta_h^k(f)\ =\ \sum_{j=0}^k G^k_j(u)\derdelta_g^j(f)\quad \text{ (with $G^0_0:=1$ to handle the case $k=0$)},$$
where $G^k_j$ is evaluated according to the derivation 
$\derdelta_g$.  By our hypothesis,
$$|\derdelta_g^j(F)|\le C(m)\ \text{ on }[a_{n+1}, b_{n+1}],\ j=0,\dots,k.$$
Now (III) provides a positive constant $B(m)$ depending on $m$
but not on $n$, such that $|G^k_j(u)|\le B(m)$  on $[a_{n+1}, b_{n+1}]$ for $j=0,\dots,k$.
Hence 
$$|\derdelta_h^k(F)|\ \le\  (k+1)B(m)C(m)\ \text{ on }[a_{n+1}, b_{n+1}], $$
and so the hypothesis of the previous lemma is satisfied. 
\end{proof}

\begin{cor}\label{corredc} If $H$ is closed, $(F_i)$ has no pseudolimit in $H$, and  the hypothesis of Lemma 4.4 is satisfied, then $\Phi$ generates a Hardy field over $H$ and
$F_i\leadsto \Phi$.
\end{cor}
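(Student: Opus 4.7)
The plan is to verify condition (ii)$^{**}$ of Corollary~\ref{unadcor} for $F := \Phi$, from which the desired conclusion follows via the implication (ii)$^{**}$~$\Rightarrow$~(i) noted immediately after the proof of that corollary.

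Under the present hypothesis, Lemma~\ref{redc} applies directly and yields
$$\derdelta_i^k\left(\frac{\Phi-F_m}{f_m}\right)\ \preceq\ 1\quad \text{in $\Cc^{<\infty}$}$$
for all $m$ and all $i,k\le m$. Now let $m_0\ge 1$ and an active $\phi_0\in H^{>}$ be given. Since $\phi_0$ is active we have $v(\phi_0)\le\psi(\gamma)$ for some $\gamma\in\Gamma_H^{\ne}$, and since $(v(\phi_n))$ is strictly increasing and cofinal in $\Psi_H$ there is some $n\ge m_0$ with $v(\phi_n)\ge v(\phi_0)$, that is, $\phi_n\preceq \phi_0$. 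Set $\phi:=\phi_n$ (active in $H^>$) and $m:=n\ge m_0$; then $i:=n\le m$ and $k\le m_0\le n=m$ for each $k\in\{1,\dots,m_0\}$, so the bound above specializes to
$$\derdelta^k\left(\frac{\Phi-F_m}{f_m}\right)\ \preceq\ 1\quad \text{in $\Cc^{<\infty}$},\qquad \derdelta:=\phi^{-1}\der.$$
This is precisely what condition (ii)$^{**}$ of Corollary~\ref{unadcor} demands. Since $H$ is closed and $(F_i)$ has no pseudolimit in $H$, invoking the implication (ii)$^{**}$~$\Rightarrow$~(i) from that corollary gives that $\Phi$ generates a Hardy field $H\langle\Phi\rangle$ over $H$ with $F_i\leadsto \Phi$.

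The argument is essentially a packaging of the substantial groundwork laid above: the real content lies in Lemma~\ref{redc} (exploiting the partition of unity to pass from pointwise bounds on the $\beta_n\Phi_n+\beta_{n+1}\Phi_{n+1}$ to global bounds on $\Phi$) and in Corollary~\ref{unadcor} (recognizing pseudolimits of transcendental pc-sequences via families of rescaled derivations). The only nontrivial point is the cofinality step, but it is immediate from the standing assumption that $H^{>\R}$ has countable coinitiality, which guarantees that the sequence $(\phi_n)$ is cofinal in $\Psi_H$ and therefore dominates every active element of $H$.
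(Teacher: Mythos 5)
Your proof is correct and follows exactly the route of the paper: the paper's own proof simply invokes the conclusion of Lemma~\ref{redc} together with the implication (ii)$^{**}$~$\Rightarrow$~(i) from Corollary~\ref{unadcor} with $\Phi$ in the role of $F$. Your additional detail — using the cofinality of $\big(v(\phi_n)\big)$ in $\Psi_H$ to produce an active $\phi_n\preceq\phi_0$ with $n\ge m_0$ and then specializing the bound of Lemma~\ref{redc} to $i=m=n$ — is precisely the verification the paper leaves implicit.
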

\begin{proof} Use the conclusion of Lemma~\ref{redc}, and the implication (ii)$^{**}$~$\Rightarrow$~(i) (with $\Phi$ in the role of $F$) from Corollary~\ref{unadcor}. \end{proof} 

\noindent
In order to make a further reduction, note that on $[a_{n+1}, b_{n+1}]$ we have $\beta_{n}=1-\beta_{n+1}$, and so, on $[a_{n+1}, b_{n+1}]$,
\begin{align}\tag{$\ast$} \label{eq:betaPhi} \frac{\beta_n\Phi_n + \beta_{n+1}\Phi_{n+1} - F_m}{f_m}\ &=\ \beta_{n+1}\cdot\left(\frac{\Phi_{n+1}-\Phi_n}{f_m}\right) + \frac{\Phi_n-F_m}{f_m}\\
&=\ \beta_{n+1}\cdot\left(\frac{\Phi_{n+1}-F_m}{f_m}-\frac{\Phi_n-F_m}{f_m}\right) + \frac{\Phi_n-F_m}{f_m}. \notag
\end{align}
This leads to a further simplification: 

\begin{lemma}\label{redcc} If for all $k$ there is a constant $B(k)>0$ such that for all $n\ge k$, $|\derdelta_n^k(\beta_{n+1})|\le B(k)$ on $[a_{n+1}, b_{n+1}]$,
then the hypothesis of Lemma~\ref{redc} is satisfied.
\end{lemma}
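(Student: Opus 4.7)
The plan is to start from the identity \eqref{eq:betaPhi}, which rewrites the quotient in question on $[a_{n+1},b_{n+1}]$ as
\[
\beta_{n+1}\cdot V_{n,m} + U_{n,m}, \qquad V_{n,m}:=\tfrac{\Phi_{n+1}-F_m}{f_m}-\tfrac{\Phi_n-F_m}{f_m},\quad U_{n,m}:=\tfrac{\Phi_n-F_m}{f_m},
\]
and then to apply $\derdelta_n^k$ and the Leibniz rule to the first summand. Fix $k\le m\le n$; since $\beta_{n+1}\in\Cc^{n+1}$ and the factors of $V_{n,m}$ are of class $\Cc^n$ on $[a_{n+1},+\infty)$, all iterated derivatives below make sense on $[a_{n+1},b_{n+1}]$.

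First I would apply (II) at two different indices. With $i=n$ and $k$ replaced by $k-j$ (noting $k-j\leq k\leq m\leq n$), condition (II) gives $|\derdelta_n^{k-j}(U_{n,m})(t)|\le 1$ for $t\ge a_n$. With $i=n$, $m$ unchanged, and the outer index $n$ replaced by $n+1$ (so $k-j\le m\le n+1$ and $i=n\le n+1$), condition (II) gives $|\derdelta_n^{k-j}(\frac{\Phi_{n+1}-F_m}{f_m})(t)|\le 1$ for $t\ge a_{n+1}$. Consequently
\[
\bigl|\derdelta_n^{k-j}(V_{n,m})\bigr|\ \le\ 2 \quad\text{on } [a_{n+1},b_{n+1}], \qquad j=0,\dots,k.
\]

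Next I would invoke the hypothesis of the lemma: since $n\ge m\ge k\ge j$ for each $j\le k$, we have $|\derdelta_n^j(\beta_{n+1})|\le B(j)$ on $[a_{n+1},b_{n+1}]$. Leibniz's rule for $\derdelta_n$ then yields, on $[a_{n+1},b_{n+1}]$,
\[
\bigl|\derdelta_n^k(\beta_{n+1}V_{n,m})\bigr|\ \le\ \sum_{j=0}^k\binom{k}{j}\bigl|\derdelta_n^j(\beta_{n+1})\bigr|\,\bigl|\derdelta_n^{k-j}(V_{n,m})\bigr|\ \le\ 2\sum_{j=0}^k\binom{k}{j}B(j).
\]
Combining this with the bound $|\derdelta_n^k(U_{n,m})|\le 1$ on $[a_{n+1},b_{n+1}]$ from (II) and setting
\[
C(k,m)\ :=\ 1+2\sum_{j=0}^k\binom{k}{j}B(j),
\]
we obtain the bound required by the hypothesis of Lemma~\ref{redc}. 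The constant $C(k,m)$ in fact depends only on $k$, but this is harmless. Since no difficult analytic input enters beyond the already-established properties (I), (II) of admissible sequences and the Leibniz rule, there is no substantive obstacle; the only point requiring a little care is ensuring that the inequality indices in (II) are met simultaneously for $\Phi_n$ and $\Phi_{n+1}$ on the common interval $[a_{n+1},b_{n+1}]$, which is why we work on that interval rather than on $[a_n,b_{n+1}]$.
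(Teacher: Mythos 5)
Your proof is correct and follows essentially the same route as the paper: both start from the identity \eqref{eq:betaPhi}, expand $\derdelta_n^k$ of the product $\beta_{n+1}\cdot(G_{n+1,m}-G_{n,m})$ by the Leibniz rule, bound the $\beta_{n+1}$-derivatives by the hypothesis and the remaining factors by two applications of (II) (at indices $n$ and $n+1$). The only cosmetic difference is that the paper first replaces the $B(j)$ by a single bound $B$, yielding the constant $2^{k+1}B+1$ instead of your $1+2\sum_j\binom{k}{j}B(j)$.
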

\begin{proof} To simplify notation, set $G_{n,m}:= \frac{\Phi_n-F_m}{f_m}$, and let $k\le m\le n$. Then by \eqref{eq:betaPhi},
\begin{align*} \derdelta_n^k\left(\frac{\beta_n\Phi_n + \beta_{n+1}\Phi_{n+1} - F_m}{f_m}\right)\ &=\ \sum_{j=0}^k\binom{k}{j}\derdelta_n^{j}(\beta_{n+1})\derdelta_n^{k-j}(G_{n+1,m}-G_{n,m})\\
&\qquad + \derdelta_n^k(G_{n,m})
\end{align*}
on $[a_{n+1}, b_{n+1}]$. Suppose $B\in \R^{>}$ and $|\derdelta_n^j(\beta_{n+1})|\le B$ on $[a_{n+1}, b_{n+1}]$
for $j=0,\dots, k$. Then the above identity and (II) gives that on
$[a_{n+1}, b_{n+1}]$,
$$ \left|\derdelta_n^k\left(\frac{\beta_n\Phi_n + \beta_{n+1}\Phi_{n+1} - F_m}{f_m}\right)\right|\ \le\ \left[\sum_{j=0}^k\binom{k}{j}\cdot B\cdot 2\right] + 1\ =\ 2^{k+1}B+1,$$
which gives the desired result. 
\end{proof}

\subsection*{Using composition}
In this subsection we explore how we might arrange
that our admissible sequence $\big((a_n, b_n, \beta_n)\big)$
satisfies the hypothesis of Lemma~\ref{redcc}, and thus of Lemma~\ref{redc}.  In the next subsection we then construct such a sequence. 

By (iv) there is no problem for $k=0$, since $0 \le \beta_{n+1}\le 1$. Assume~${1\le k \le n}$ and 
set $\phi:= \phi_n$, so $\derdelta:= \phi^{-1}\der=\derdelta_n$, and set $a:= a_{n+1}$, $b:= b_{n+1}$, $\beta\ :=\ \beta_{n+1}$.
We wish to
bound $|\derdelta^k(\beta)|$ on $[a, b]$  by a positive constant that may depend on~$k$ but not on~${n\ge k}$. To achieve this goal we introduce the
strictly increasing bijection~$g\colon\R^{\ge 1} \to \R^{\ge 0}$ given by $g(r)=\int_1^r\phi(t)\,dt$, so
$g\in \Cc_1^{n+1}$, $g'=\phi$, and~$g$ has as compositional inverse
the strictly increasing bijection $g^{\inv}\colon \R^{\ge 0} \to \R^{\ge 1}$ of class~$\Cc^{n+1}$.
Induction on $j\le n+1$ gives $\derdelta^j\beta=(\beta\circ g^{\inv})^{(j)}\circ g$ on $\R^{\ge 1}$. 
For $j=k$ this identity gives  for any $B\in \R^{>}$ the equivalence
\begin{equation}\label{eq:delta&k(beta)}
|\derdelta^k(\beta)|\le B \text{ on $[a, b]$}\quad \Longleftrightarrow\quad |(\beta\circ g^{\inv})^{(k)}|\ \le\ B \text{ on $\big[g(a), g(b)\big]$.}
\end{equation}
We shall arrange below that $b$ is given in terms of $a$ by $g(b)=g(a)+1$ and that on $\big[g(a), g(b)\big]$
the function $\beta\circ g^{\inv}$ equals $\alpha_{g(a), g(b)}$ with $\alpha$ the bump function from Section~\ref{psh1}.
In the next subsection we show that then for sufficiently fast growing~$(a_n)$ all our constraints are satisfied.

\subsection*{The construction} In view of the dependence of the function
$g$ on $n$ in the story above we restore here indices, defining the strictly increasing bijection 
$$g_n\ :\,\R^{\ge 1} \to \R^{\ge 0}, \qquad  g_n(r)\ :=\ \int_1^r\phi_n(t)\,dt,$$ so
$g_n\in \Cc^{n+1}_1$, $g_n'=\phi_n$, and $g_n$ has as compositional inverse
the strictly increasing bijection $g_n^{\inv}\colon \R^{\ge 0} \to \R^{\ge 1}$ of class $\Cc^{n+1}$. 

Next we take a strictly increasing sequence $(a_n)$ of real numbers $\ge 1$ such that~$a_n\to \infty$ and (I), (II), (III) hold, and such that for every $n$ we have
$b_{n+1} < a_{n+2}$ where the real number
$b_{n+1}$ is defined by $g_n(b_{n+1})=g_n(a_{n+1})+1$. 
(It will be clear that there are such sequences.) 
With $b_0$ any real number satisfying~$a_0 < b_0 < a_1$, we now have $a_n < b_n < a_{n+1}$ for all $n$. 
We define for each $n$ the $\Cc^{\infty}$-function
$$\alpha_n  := \alpha_{g_n(a_{n+1}), g_n(b_{n+1})} \colon\R \to \R .$$
The bump function $\alpha$ came with   constants~$C_k>0$ such that $|\alpha^{(k)}|\le C_k$ on $\R$,   thus~$|\alpha_n^{(k)}|\le C_k$ on $\R$
for all~$k$,~$n$ in view of $g_n(b_{n+1})-g_n(a_{n+1})=1$ and \eqref{eq:alphaab(m)}.   
Since $\alpha_n\big(g_n(b_{n+1})\big)=1=1-\alpha_{n+1}\big(g_{n+1}(b_{n+1})\big)$ we can define $\beta_{n+1}\colon \R^{\ge 1} \to \R$ by 
$$\beta_{n+1}(t)\ =\  \begin{cases}
\ \alpha_n\big(g_n(t)\big)			& \text{ for $t \le b_{n+1}$,}\\
\ 1-\alpha_{n+1}\big(g_{n+1}(t)\big)	& \text{ for $t \ge b_{n+1}$.} 
\end{cases}$$
Then $\beta_{n+1}$ is continuous. We also take a continuous function $\beta_0\colon \R^{\ge 1} \to \R$ 
such that (iii), (iv), (v) hold for $n=0$.
We now have constructed a sequence~$\big((a_n, b_n, \beta_n)\big)$ that satisfies
conditions (i), (ii), (iv), and (v) (and (iii) for $n=0$). In fact, it fulfills all our wishes:

\begin{prop}\label{goodseq} The sequence $\big((a_n, b_n, \beta_n)\big)$ is admissible, and for all $k$ and~$n\ge k$ we have
$|\derdelta_n^k(\beta_{n+1})|\le C_k$ on $[a_{n+1}, b_{n+1}]$. 
\end{prop}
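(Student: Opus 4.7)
The plan is to verify the five defining clauses of admissibility and then establish the derivative bound. Clauses (i), (ii) are built into the construction: the $a_n$ were chosen so that (I), (II), (III) hold, and the $b_n$ via $g_n(b_{n+1})=g_n(a_{n+1})+1$ together with $b_{n+1}<a_{n+2}$. Clauses (iv), (v) amount to bookkeeping from the support and monotonicity properties of $\alpha$: on $[a_{n+1},b_{n+1}]$ the formula $\beta_{n+1}=\alpha_n\circ g_n$ makes $\beta_{n+1}$ rise from $0$ to $1$; on $[b_{n+1},a_{n+2}]$ it is constantly $1$; on $[a_{n+2},b_{n+2}]$ it equals $1-\alpha_{n+1}\circ g_{n+1}$ and drops back to $0$, while simultaneously $\beta_{n+2}=\alpha_{n+1}\circ g_{n+1}$ there, giving~(v).

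The only nontrivial admissibility clause is (iii). On each of the two pieces away from $b_{n+1}$, $\beta_{n+1}$ is $\Cc^{n+1}$ as a composition of $\alpha\in\Cc^\infty$ with an element of $\Cc_1^{n+1}$. To match the two pieces at $b_{n+1}$ I would invoke that $\alpha$ is constant outside $[0,1]$, so $\alpha^{(k)}(0)=\alpha^{(k)}(1)=0$ for all $k\ge 1$; consequently all one-sided derivatives of $\beta_{n+1}$ at $b_{n+1}$ vanish from both sides, and the match is $\Cc^\infty$. For $n=0$ only continuity of $\beta_0$ is asked, which we have by direct construction.

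For the derivative bound I would apply the identity $\derdelta_n^k(\beta_{n+1})=(\beta_{n+1}\circ g_n^{\inv})^{(k)}\circ g_n$ from the previous subsection, valid since $\beta_{n+1}\in\Cc^{n+1}$ and $k\le n$. On $[a_{n+1},b_{n+1}]$ the composition $\beta_{n+1}\circ g_n^{\inv}$ coincides with $\alpha_n$ on $[g_n(a_{n+1}),g_n(b_{n+1})]$, so
\[
\derdelta_n^k(\beta_{n+1})(t)\ =\ \alpha_n^{(k)}\bigl(g_n(t)\bigr)\qquad\bigl(t\in[a_{n+1},b_{n+1}]\bigr).
\]
The main point---and the whole reason for defining $b_{n+1}$ by $g_n(b_{n+1})-g_n(a_{n+1})=1$---is that with this unit-length normalization, \eqref{eq:alphaab(m)} collapses to $|\alpha_n^{(k)}|\le C_k$ on $\R$ uniformly in $n$, and the claimed bound drops out. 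The one place needing forethought was precisely this uniform-in-$n$ control on the $\alpha_n$, secured by working on the $g_n$-scale.
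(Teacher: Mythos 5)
Your argument is correct and follows the paper's proof essentially verbatim: the smoothness match at $b_{n+1}$ via the vanishing of all derivatives of $\alpha_n\circ g_n$ there (and the local constancy of $\alpha_{n+1}\circ g_{n+1}$ near $b_{n+1}$), and the bound via $\derdelta_n^k(\beta_{n+1})=(\beta_{n+1}\circ g_n^{\inv})^{(k)}\circ g_n$ with $\beta_{n+1}\circ g_n^{\inv}=\alpha_n$ and the unit-length normalization $g_n(b_{n+1})-g_n(a_{n+1})=1$. The only nit: since $g_n$ is only of class $\Cc^{n+1}$, the glueing gives $\Cc^{n+1}$ rather than $\Cc^\infty$, which is exactly what clause (iii) requires.
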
 
\begin{proof} 
Clearly $\beta_{n+1}$ is of class $\mathcal C^{n+1}$ on $\R^{\geq 1}\setminus\{b_{n+1}\}$.
Now $\alpha_n\circ g_n=1$ on~$[b_{n+1},\infty)$, so ${(\alpha_n\circ g_n)^{(j)}(b_{n+1})}=0$ for $j=1,\dots, n+1$. Moreover, $\alpha_{n+1}\circ g_{n+1}=0$ on~$[1,a_{n+2}]$, so 
$\beta_{n+1}$ is  $\mathcal C^{n+1}$ on all of~$\R^{\ge 1}$.  Therefore condition (iii) is satisfied, and so~$\big((a_n, b_n, \beta_n)\big)$ is admissible.
 The  bound  $|\derdelta_n^k(\beta_{n+1})|\le C_k$ on~$[a_{n+1}, b_{n+1}]$  for~$n\ge k$ is clear from~$\beta_{n+1}\circ g_n^{\inv}=\alpha_n$  on~$\big[g_n(a_{n+1}), g_n(b_{n+1})\big]$ and the equivalence \eqref{eq:delta&k(beta)}.
\end{proof}

\subsection*{Finishing the proof of Theorem~\ref{immom}} As already mentioned we can use \cite[Theorem~11.19]{ADH2} to pass to an extension and arrange that
$H\supseteq \R$ is closed.
If~$H^{>\R}$ has uncountable coinitiality, we are
done by Corollary~\ref{unccof}. Suppose $H^{>\R}$ has countable coinitiality. Then we have an admissible sequence as in Proposition~\ref{goodseq}, and so
by Lemma~\ref{redcc}  and Corollary~\ref{corredc}, if $(F_i)$ has no
pseudolimit in $H$, then $\Phi$ is a
pseudolimit of $(F_i)$ in a Hardy field extension of $H$.
This concludes the proof. 

\begin{cor}\label{cormhci} Suppose $H$ is a maximal Hardy field. Then $\ci(H^{>\R})>\omega$.
\end{cor}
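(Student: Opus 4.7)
The plan is to argue by contradiction. Suppose $H$ is a maximal Hardy field with $\ci(H^{>\R})=\omega$. Since $H$ is maximal, $H$ is closed and contains $\R$ by the main result of~\cite{ADH2}, hence Liouville closed. I shall exhibit a pc-sequence of the form $F_i=f_0+\cdots+f_i$ with $f_0\succ f_1\succ\cdots$ in $H^{>}$ having no pseudolimit in $H$; Theorem~\ref{immom} will then yield a Hardy field extension of $H$ containing a pseudolimit, contradicting maximality.

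From $\ci(H^{>\R})=\omega$ I first fix a strictly decreasing coinitial sequence $(g_n)$ in $H^{>\R}$, passing to a subsequence if needed so that $(v(g_n))$ is strictly increasing and cofinal in $\Gamma^{<}$. A first attempt is to let $f_n:=g_n-g_{n+1}$: this lies in $H^{>}$ and satisfies $v(f_n)=v(g_n)$, so $f_n\succ f_{n+1}$, and the partial sums telescope to $F_i=g_0-g_{i+1}$. This choice alone is insufficient because $g_0\in H$ then serves as a pseudolimit (one has $v(g_0-F_i)=v(g_{i+1})$ strictly increasing), so the $f_n$ must be perturbed in order to rule out every element of $H$ as a pseudolimit. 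My approach is to combine this telescoping with the partition-of-unity $\Phi$-construction from the remaining case of the proof of Theorem~\ref{immom}: by varying the admissible sequence $\bigl((a_n,b_n,\beta_n)\bigr)$ one obtains candidate pseudolimits $\Phi\in\Cc^{<\infty}$, and the flexibility in that construction should be exploited to produce a perturbed $(F_i)$ whose pseudolimit must lie outside $H$.

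The main obstacle is precisely this last step: arranging that no $F^{*}\in H$ satisfies the pseudolimit condition $v(F^{*}-F_i)>v(f_{i+1})$ for all $i$. Since $|H|\geq\mathfrak{c}$, a plain enumeration of candidates will not suffice; the argument must leverage the countable cofinality of $\Psi_H$ (which $\ci(H^{>\R})=\omega$ entails, as noted at the start of the construction of the sequence $(\phi_n)$ in Section~\ref{psh2}) together with the $\upo$-freeness of $H$, whose valuation-theoretic gap above $\Psi_H$ confines where a would-be pseudolimit can sit. Once $(F_i)$ is built with the desired property, Theorem~\ref{immom} immediately produces a Hardy field extension in which $(F_i)$ pseudoconverges, contradicting the maximality of $H$.
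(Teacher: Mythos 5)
There is a genuine gap: your proof never actually produces the divergent pc-sequence, and you say so yourself ("the main obstacle is precisely this last step"). The telescoping choice $f_n=g_n-g_{n+1}$ converges to $g_0$, and the proposed fix — perturbing the $f_n$ using the flexibility of the partition-of-unity construction — cannot work as stated, because that construction produces candidate pseudolimits $\Phi$ in $\Cc^{<\infty}$ for a \emph{given} sequence $(f_i)$; it gives you no handle for choosing the $f_i$ in $H$ so that $(F_i)$ diverges in $H$. As you note, $|H|\geq\mathfrak c$ rules out a diagonalization over candidates, and you do not supply the valuation-theoretic argument that would replace it. So the contradiction is set up correctly (maximality plus Corollary~\ref{corimmom} kills any divergent pc-sequence of countable length), but the object that is supposed to trigger it is never constructed.

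The missing idea is that you do not need to build the divergent pc-sequence by hand, nor does it need to be of the special telescoping form (Corollary~\ref{corimmom} applies to arbitrary pc-sequences of countable length). Since $H$ is maximal it is $\upo$-free, in particular $\upl$-free, and $\upl$-freeness is \emph{by definition} the divergence in $H$ of the pc-sequence $(\upl_\rho)$ attached to a logarithmic sequence $(\ell_\rho)$. The hypothesis $\ci(H^{>\R})=\omega$ says exactly that $H$ has a logarithmic sequence of length $\omega$ (see the remarks before Lemma~\ref{uplcof}), so $(\upl_\rho)$ is a divergent pc-sequence of countable length, contradicting Corollary~\ref{corimmom} at once. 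This is the paper's one-line proof. You circle near this — you invoke $\upo$-freeness and the countable cofinality of $\Psi_H$ in the final paragraph — but you use them only to "confine where a would-be pseudolimit can sit" rather than to observe that freeness hands you the divergent sequence outright.
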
 
\begin{proof} If $\ci(H^{>\R})=\omega$, then $H$ being $\upl$-free yields a divergent pc-sequence $(\upl_{\rho})$ in~$H$ whose well-ordered index set has cofinality
$\omega$, contradicting Corollary~\ref{corimmom}. 
\end{proof}

\section{Constructing Overhardian Germs}\label{bh}

\noindent
Our goal in this section is the following:

\begin{theorem}\label{sjo} If $H\supseteq \R$ is a Liouville closed Hardy field
and $\phi\in \Cc$, $\phi>_{\ex} H$, then some
$y\in \Cc^{\infty}$ 
with $y>_{\ex}\phi$ generates a Hardy field $H\<y\>$ over $H$. 
\end{theorem}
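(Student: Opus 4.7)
Following Sj\"odin, I plan to construct $y$ via a careful piecewise interpolation on a sequence of expanding intervals, with derivative data at the endpoints chosen inductively to force the Hardy field property on $H\langle y\rangle$.

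First, using Lemma~\ref{smooth} applied to the pair $(\phi,g)$ with $g\in\Cc$ any convenient growing majorant of $\phi$ (such a $g$ exists since $\phi>_{\ex} H$), I would replace $\phi$ by a smooth strictly larger germ. Thus I may assume $\phi\in\Cc^{\infty}$, $\phi$ is strictly increasing, $\phi>_{\ex} x$, and $\phi>_{\ex} H$; in particular I can freely compare $\phi$ against iterated exponentials and logarithms of elements of $H$, which all live in $H$ by Liouville closure.

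Second, I would pick a strictly increasing sequence $(a_n)$ of real numbers with $a_n\to\infty$, and construct $y$ piecewise on each $[a_n,a_{n+1}]$ by means of Lemma~\ref{phizetavar}. On such an interval $y$ is to lie strictly between $\phi$ and a rapidly growing upper bound, with prescribed values for every derivative $y^{(k)}(a_n)$ and $y^{(k)}(a_{n+1})$. The boundary-derivative data are chosen inductively, using the full freedom afforded by Lemma~\ref{phizetavar}, to secure global $\Cc^{\infty}$-smoothness and $y>_{\ex}\phi$, and --- crucially --- to make the derivatives $y^{(k)}$ asymptotically \emph{generic} with respect to $H$. Concretely, one arranges that the logarithmic derivatives $y^{(k+1)}/y^{(k)}$ eventually dominate every element of $H$, which is possible because the $a_n$-prescription leaves infinite-dimensional freedom at each step.

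Third, I would verify that $H\<y\>$ is a Hardy field by showing that for every nonzero $P\in H\{Y\}$ the germ $P(y)\in\Cc^{<\infty}$ is eventually of constant sign. Expanding $P(y)=\sum_I a_I\prod_k (y^{(k)})^{I_k}$ with $a_I\in H$ and $I$ ranging over finitely many multi-indices, the genericity built into $y$ in Step~2 guarantees that the monomials on the right have pairwise distinct asymptotic growth rates. Hence a unique monomial $a_{I^{*}}\prod_k (y^{(k)})^{I_k^{*}}$ dominates the sum, and $P(y)$ is eventually of the sign of $a_{I^{*}}$ (which itself has eventual sign in the Hardy field $H$).

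The main obstacle is arranging the genericity in Step~2 so that Step~3 succeeds uniformly over all $P\in H\{Y\}^{\neq}$, even when $H$ is uncountable. The point is that the dominance relations that matter depend only on the germs of the $y^{(k)}$ and are entirely determined by the prescribed boundary values at the $a_n$; the coefficients $a_I$ enter only through their asymptotic behavior, which is itself controlled by the chain of iterated exponentials/logarithms available inside $H$ thanks to Liouville closure. A careful reverse engineering as in~\cite{S} then lets us inductively adjust the boundary-derivative data so that the products $\prod_k(y^{(k)})^{I_k}$ have mutually distinct growth across all multi-indices $I$, uniformly enough for the dominance argument to apply to every $P$.
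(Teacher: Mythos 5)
Your Step 3 contains a genuine error. You expand $P(y)=\sum_I a_I\prod_k (y^{(k)})^{I_k}$ in \emph{ordinary} derivatives and claim the monomials $\prod_k(y^{(k)})^{I_k}$ can be arranged to have pairwise distinct growth rates. This is impossible, no matter how the boundary data are chosen: for any $y$ of the kind you are building one has $y''=y\big((y^\dagger)^2+y^\dagger y^{\dagger\dagger}\big)$ with $y^{\dagger\dagger}\prec y^\dagger$, hence $yy''\sim (y')^2$, so the two monomials of $P=YY''-(Y')^2$ are asymptotically equivalent and the ``unique dominant monomial'' argument breaks down (after cancellation $P(y)=y^2y^\dagger y^{\dagger\dagger}$). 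This is precisely why the paper works with the iterated logarithmic derivatives $y^{\langle i\rangle}=(y^{\langle i-1\rangle})^\dagger$ and the \emph{logarithmic} decomposition $P(Y)=\sum_{\i}P_{\langle\i\rangle}Y^{\langle\i\rangle}$ of [ADH, 4.3]: for an overhardian $y$ (i.e.\ $y^{\langle i\rangle}>_{\ex}y^{\langle i+1\rangle}>_{\ex}0$ for all $i$) one gets $y^{\langle i\rangle}>_{\ex}(y^{\langle i+1\rangle})^n$ for all $n$ (Corollary~\ref{overh1}), so the logarithmic monomials really are totally separated; moreover $y>_{\ex}H$ automatically forces $y^{\langle n\rangle}>_{\ex}H$ for all $n$ (Lemma~\ref{lemoverh}, Corollary~\ref{overh3}), which disposes of the uncountability of $H$ without any diagonalization against its elements.

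There is a second gap in Step 2. Lemma~\ref{phizetavar} keeps $y$ between two envelopes on $[a_n,a_{n+1}]$ and prescribes the jets $y^{(k)}(a_n)$, $y^{(k)}(a_{n+1})$, but it gives no control over $y^{(k)}(t)$ for $t$ in the interior of the interval when $k\ge1$; yet the domination statements you need (e.g.\ $y^{(k+1)}/y^{(k)}>_{\ex}h$ for all $h\in H$) concern all large $t$, not just the partition points. The paper's ``reverse engineering'' avoids this: one prescribes the $m$-th iterated logarithmic derivative only on $[a_m,a_{m+1}]$, squeezed between $\phi$ and $\phi_{a_m}$ with matching jets at the endpoints (Lemma~\ref{phizeta}), and recovers $y$ by downward integration $y^{\langle k-1\rangle}(t)=y^{\langle k-1\rangle}(a_k)\exp\int_{a_k}^ty^{\langle k\rangle}(s)\,ds$; the monotonicity of this exponential-integral then propagates the lower bound $\phi$ and the separations $y^{\langle k\rangle}> y^{\langle k+1\rangle}$ to all of $[a_k,\infty)$ (Lemmas~\ref{ha11} and~\ref{ha12}). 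Your outline would need to be reorganized along these lines to close.
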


\noindent
This is Sj\"odin's main result in \cite{S}, except that he considers only $\Cc^{\infty}$-Hardy fields. 
Our construction of $y$ follows that of Sj\"odin, with the material organized so that much of it will also be useful in the next section where we fill more general gaps.

Boshernitzan~\cite[Theorems~1.1, 1.2]{Boshernitzan86} (see also \cite[proof of Corollary~5.23]{ADH5})
showed that $y$ in 
Theorem~\ref{sjo} can  be taken in $\Cc^\omega$,   using a result of Kneser~\cite{Kneser} on   solutions~$E\in\Cc^\omega$ to the 
functional equation ${\exp}\circ E=E\circ (x+1)$. (Our follow-up paper will have  a different argument that yields a $y\in\Cc^\omega$ in Theorem~\ref{sjo}.)

\medskip
\noindent
We state here an easy consequence of Theorem~\ref{sjo}:

\begin{cor}\label{corsjo} If $H$ is a maximal Hardy field, then $\cf(H) > \omega$, and thus
$$\ci(H)\ =\ \cf(H^{<a})\ =\ \ci(H^{>a})\ >\ \omega\  \text{ for all $a\in H$.}$$
\end{cor}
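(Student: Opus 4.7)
My plan is to deduce $\cf(H)>\omega$ from Theorem~\ref{sjo} by a contradiction argument, and then derive the remaining equalities by routine order-theoretic manipulations in the ordered field $H$.

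First I check that Theorem~\ref{sjo} is applicable to any maximal Hardy field $H$: we have $\R\subseteq H$ (any real constant can be adjoined to a Hardy field, so maximality forces all reals into $H$) and $H$ is Liouville closed (since $\Li(H)$ is a Hardy field extension of $H$, and maximality gives $\Li(H)=H$). Now suppose towards a contradiction that $\cf(H)=\omega$, and let $(a_n)$ be a countable increasing cofinal sequence in $H$. I claim there exists a germ $\phi\in\Cc$ with $\phi>_{\ex}a_n$ for every $n$, which by cofinality gives $\phi>_{\ex}H$. Such $\phi$ is provided by an elementary diagonal construction on continuous representatives $\tilde a_n\colon[0,\infty)\to\R$ of the germs $a_n$: take $\phi$ continuous and piecewise linear with $\phi(n)$ strictly exceeding $\max\{\tilde a_k(t):k\leq n,\ 0\leq t\leq n+1\}$, so that $\phi(t)>\tilde a_k(t)$ for all $t\geq k$. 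Applying Theorem~\ref{sjo} to $\phi$ yields $y\in\Cc^{\infty}$ with $y>_{\ex}\phi$ such that $H\<y\>$ is a Hardy field extension of $H$. But $y>_{\ex}H$ implies $y\notin H$, so $H\subsetneq H\<y\>$, contradicting the maximality of $H$. Hence $\cf(H)>\omega$.

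For the remaining assertions, the order-reversing bijection $x\mapsto -x$ of $H$ gives $\ci(H)=\cf(H)>\omega$; for $a\in H$, the order-reversing bijection $x\mapsto 2a-x\colon H^{<a}\to H^{>a}$ gives $\cf(H^{<a})=\ci(H^{>a})$; and the chain of order (anti-)isomorphisms given by translation $x\mapsto x-a\colon H^{<a}\to H^{<0}$, negation $x\mapsto -x\colon H^{<0}\to H^{>0}$ and inversion $x\mapsto 1/x$ of $H^{>0}$ together yield
\[\cf(H^{<a})\ =\ \cf(H^{<0})\ =\ \ci(H^{>0})\ =\ \cf(H^{>0})\ =\ \cf(H)\ >\ \omega.\]
The only analytic content beyond Theorem~\ref{sjo} is the elementary construction of $\phi\in\Cc$ dominating a given countable subset of $\Cc$; the substantive obstacle of the corollary is of course Theorem~\ref{sjo} itself, which is already proved above.
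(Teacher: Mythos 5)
Your proposal is correct and follows essentially the same route as the paper: assume $\cf(H)=\omega$, produce a continuous germ $\phi>_{\ex}H$ dominating the cofinal sequence, and contradict maximality via Theorem~\ref{sjo}; the equalities then follow from fractional linear transformations, exactly as the paper indicates. The only cosmetic difference is that you build $\phi$ by a direct piecewise-linear diagonal construction where the paper cites Lemma~\ref{sum1} (its du Bois-Reymond--type result on dominating sums); both are valid instances of the same elementary fact.
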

\begin{proof} If  $H$ is a maximal Hardy field with a strictly increasing cofinal sequence $(h_n)$ in~$H$, then Lemma~\ref{sum1} yields a $\phi\in \Cc$ such that
$h_n <_{\ex} \phi$ for all~$n$, contradicting Theorem~\ref{sjo}.
For any Hausdorff field~$F$ and $a\in F$ we   have~$\cf(F)=\ci(F)=\cf(F^{<a})=\ci(F^{>a})$
(use fractional linear transformations). 
\end{proof}

\noindent
This corollary yields Theorem~\ref{mt} in the case where $A$ or $B$ is finite.  

\subsection*{Lemmas on logarithmic derivatives} Let $f\in \Cc_a^1$. Note that if $f(t)>0$ and~$f'(t)>0$ for all $t\ge a$, then $f$ is strictly increasing, and thus $f(t)\ge f(a)>0$ for all $t\ge a$.
It is convenient to replace here $f'$ by $f^\dagger$, noting that if $f(t)>0$ for all~$t\ge a$, then $f^\dagger(t)$ is defined for all~$t\ge a$. Thus if $f(t)>0$ and $f^\dagger(t)>0$ for all~$t\ge a$, then $f$ is strictly increasing, and thus $f(t)\ge f(a)>0$ for all~$t\ge a$. 

\begin{lemma}\label{ha1} Let $f\in \Cc_a^2$. Assume that $f(t), f^\dagger(t), f^{\dagger\dagger}(t)>0$ for all~$t\ge a$.
Then~$f(t)\to +\infty$ as $t\to +\infty$. 
\end{lemma}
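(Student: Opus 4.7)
The plan is to use the fact that $f^\dagger=(\log f)'$ whenever $f>0$, and apply the observation immediately preceding the lemma twice in succession. Since $f(t)>0$ and $f^\dagger(t)>0$ on $[a,\infty)$, that observation already tells us $f$ is strictly increasing with $f(t)\ge f(a)>0$. The key new input is the second logarithmic derivative hypothesis $f^{\dagger\dagger}>0$.

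First I would apply the same observation to $f^\dagger$ in place of $f$: since $f^\dagger\in\Cc_a^1$ (as $f\in\Cc_a^2$) and both $f^\dagger(t)>0$ and $(f^\dagger)^\dagger(t)=f^{\dagger\dagger}(t)>0$ on $[a,\infty)$, the function $f^\dagger$ itself is strictly increasing on $[a,\infty)$. In particular, setting $c:=f^\dagger(a)>0$, we obtain $f^\dagger(t)\ge c$ for all $t\ge a$.

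Next, since $f>0$ on $[a,\infty)$, we have $(\log f)'(t)=f^\dagger(t)\ge c$ for all $t\ge a$. Integrating from $a$ to $t$ gives
\[
\log f(t)\ \ge\ \log f(a) + c(t-a)\ \to\ +\infty\quad\text{as $t\to+\infty$,}
\]
and therefore $f(t)\to+\infty$ as $t\to+\infty$, which is what was to be shown. There is no serious obstacle here; the only mild point to be careful about is verifying that the hypotheses do propagate from $f$ to $f^\dagger$ so that the preceding observation applies in the form needed, but this is immediate from the given sign conditions and the smoothness $f\in\Cc_a^2$.
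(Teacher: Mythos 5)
Your proof is correct and follows essentially the same route as the paper: both apply the observation preceding the lemma to $f^\dagger$ to get the uniform lower bound $f^\dagger(t)\ge f^\dagger(a)>0$. The only (harmless) difference is at the end, where you integrate $(\log f)'=f^\dagger\ge f^\dagger(a)$ to get an exponential lower bound on $f$, while the paper integrates $f'=ff^\dagger\ge f(a)f^\dagger(a)$ to get a linear one; either suffices.
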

\begin{proof}
Applying the remark preceding the lemma to $f^\dagger$ in the role of $f$ gives  
$f^\dagger(t)\ge f^\dagger(a)$ for $t\ge a$, so 
$f'(t)= f(t)f^\dagger(t)\ge f(a) f^\dagger(a)$ for $t\ge a$, where we apply that same remark also to $f$. Hence for $t\ge a$,
$$f(t)\ =\ f(a)+\int_{a}^t f'(s)\,ds\ \ge\ f(a) + (t-a)f(a) f^\dagger(a),$$
which gives the desired conclusion.  \end{proof}

\begin{lemma}\label{ha2} Let $f\in \Cc_a^3$, and suppose that for all $t\ge a$,
$$f(t)>0, \quad f^\dagger(t)>0,\quad f^{\dagger\dagger}(t)>0,\ \text{ and }
f^\dagger(t)\ >\ f^{\dagger\dagger}(t)\ >\ f^{\dagger\dagger\dagger}(t).$$
Then  $f(t)/f^\dagger(t)\to +\infty$ as $t\to +\infty$. 
\end{lemma}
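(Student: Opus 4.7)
\medskip

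\textbf{Proof proposal.} Set $g := f/f^\dagger$, which is well defined and positive on $[a,+\infty)$ since $f,f^\dagger>0$ there. The plan is to show $g(t)\to +\infty$ by getting a good lower bound on its logarithmic derivative. Since $\log g = \log f - \log f^\dagger$, we have
\[ (\log g)'\ =\ f^\dagger - f^{\dagger\dagger}\ =:\ v, \]
and $v>0$ everywhere by hypothesis. So it will suffice to show $\int_a^t v(s)\,ds\to +\infty$.

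The key calculation is to examine $v$'s own logarithmic derivative. Using $(f^\dagger)'=f^\dagger f^{\dagger\dagger}$ and $(f^{\dagger\dagger})'=f^{\dagger\dagger}f^{\dagger\dagger\dagger}$,
\[ v'\ =\ f^\dagger f^{\dagger\dagger}-f^{\dagger\dagger} f^{\dagger\dagger\dagger}\ =\ f^{\dagger\dagger}\,(f^\dagger-f^{\dagger\dagger\dagger}). \]
The hypothesis $f^{\dagger\dagger}>f^{\dagger\dagger\dagger}$ gives $f^\dagger-f^{\dagger\dagger\dagger}>f^\dagger-f^{\dagger\dagger}=v>0$, so after dividing by $v$ and using $f^{\dagger\dagger}>0$ I would obtain
\[ \frac{v'}{v}\ >\ f^{\dagger\dagger}\ =\ (\log f^\dagger)'\quad\text{on $[a,+\infty)$.} \]
Hence $(v/f^\dagger)'=v/f^\dagger\cdot(v^\dagger - f^{\dagger\dagger})>0$, so $v/f^\dagger$ is strictly increasing. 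In particular, with $c:=v(a)/f^\dagger(a)>0$,
\[ v(t)\ \ge\ c\,f^\dagger(t)\qquad\text{for all $t\ge a$.} \]

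Integrating from $a$ to $t$ and using $\int_a^t f^\dagger(s)\,ds=\log f(t)-\log f(a)$, I get
\[ \log g(t)-\log g(a)\ =\ \int_a^t v(s)\,ds\ \ge\ c\,\big(\log f(t)-\log f(a)\big). \]
Now Lemma~\ref{ha1}, applied to $f$ itself (whose hypotheses $f,f^\dagger,f^{\dagger\dagger}>0$ are given), yields $f(t)\to +\infty$, hence $\log f(t)\to +\infty$. Therefore $\log g(t)\to +\infty$, so $g(t)=f(t)/f^\dagger(t)\to +\infty$, as required.

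I do not see a real obstacle here: the whole argument reduces to the one identity $v'=f^{\dagger\dagger}(f^\dagger-f^{\dagger\dagger\dagger})$, which turns the chain $f^\dagger>f^{\dagger\dagger}>f^{\dagger\dagger\dagger}$ into the domination $v\gtrsim f^\dagger$, after which Lemma~\ref{ha1} closes the argument. The only mild subtlety is that the third hypothesis $f^{\dagger\dagger\dagger}$ is not assumed positive, but it is used only through the inequality $f^{\dagger\dagger}>f^{\dagger\dagger\dagger}$, which is exactly what is needed to make $v'/v>f^{\dagger\dagger}$.
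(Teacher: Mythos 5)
Your proof is correct. The core of your argument is the same as the paper's: you form $g=f/f^\dagger$, note $g^\dagger=f^\dagger-f^{\dagger\dagger}=v>0$, and derive the key identity $v'=f^{\dagger\dagger}(f^\dagger-f^{\dagger\dagger\dagger})>0$. Where you diverge is in how you finish. The paper observes that this identity, together with $v>0$, gives $g^{\dagger\dagger}=v'/v>0$, so that $g$, $g^\dagger$, $g^{\dagger\dagger}$ are all positive and Lemma~\ref{ha1} applies \emph{directly to $g$} in the role of $f$, yielding $g(t)\to+\infty$ immediately. You instead apply Lemma~\ref{ha1} to $f$ itself and route the conclusion through the quantitative comparison $v\ge c\,f^\dagger$ (obtained from $v^\dagger>f^{\dagger\dagger}$, i.e.\ $v/f^\dagger$ increasing) and the integration $\int_a^t v\ge c\,(\log f(t)-\log f(a))\to+\infty$. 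Every step of this is sound, and it even gives a slightly more explicit lower bound on $\log(f/f^\dagger)$, but the detour is unnecessary: once you have $v>0$ and $v'>0$ with the stronger fact $v'/v>f^{\dagger\dagger}$ in hand, the weaker consequence $v^\dagger>0$ already puts $g$ squarely in the hypotheses of Lemma~\ref{ha1}, which is the one-line finish the paper uses.
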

\begin{proof} We have
$(f/f^\dagger)^\dagger=f^\dagger-f^{\dagger\dagger}$, so $(f/f^\dagger)^\dagger(t)>0$ for $t\ge a$. Also
\begin{align*} {(f/f^\dagger)^{\dagger}}'\ &=\ (f^\dagger-f^{\dagger\dagger})'\ =\ {f^{\dagger}}'-{f^{\dagger\dagger}}'\\
&=\ f^{\dagger\dagger}f^\dagger-f^{\dagger\dagger\dagger}f^{\dagger\dagger}\  =\ f^{\dagger\dagger}(f^\dagger-f^{\dagger\dagger\dagger})
\end{align*}
and thus $(f/f^\dagger)^{\dagger\dagger}(t)>0$ for all $t\ge a$.
Applying Lemma~\ref{ha1} to $f/f^\dagger$ in the role of~$f$ now gives the desired result.
\end{proof}

\begin{lemma}\label{ha3} Let $f\in \Cc_a^4$, and suppose that for all $t\ge a$,
\begin{align*} &f(t)>0, \quad f^\dagger(t)>0,\quad f^{\dagger\dagger}(t)>0,\quad f^{\dagger\dagger\dagger}(t)>0, \text{ and}\\
&f^\dagger(t)\ >\ f^{\dagger\dagger}(t)\ >\ f^{\dagger\dagger\dagger}(t)\ >\  f^{\dagger\dagger\dagger\dagger}(t).
\end{align*}
Then $f^\dagger(t)\to +\infty$ as $t\to +\infty$, and for every $n$, $f(t)> f^\dagger(t)^n$, eventually. 
\end{lemma}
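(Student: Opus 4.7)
The approach is to peel off one layer of logarithmic derivative: setting $g:=f^\dagger$, the hypotheses on $f$ ensure that $g$ satisfies both the hypotheses of Lemma~\ref{ha1} (since $g,\,g^\dagger=f^{\dagger\dagger},\,g^{\dagger\dagger}=f^{\dagger\dagger\dagger}$ are all positive on $[a,\infty)$) and of Lemma~\ref{ha2} (since additionally $g^\dagger=f^{\dagger\dagger}>g^{\dagger\dagger}=f^{\dagger\dagger\dagger}>g^{\dagger\dagger\dagger}=f^{\dagger\dagger\dagger\dagger}>0$). Lemma~\ref{ha1} applied to $g$ yields $f^\dagger(t)\to+\infty$, which is the first claim. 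Lemma~\ref{ha2} applied to $g$ yields $f^\dagger/f^{\dagger\dagger}=g/g^\dagger\to+\infty$, equivalently $f^{\dagger\dagger}(t)/f^\dagger(t)\to 0$ as $t\to+\infty$.

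For the second claim, fix $n\geq 1$ and consider the auxiliary function $H:=\log f-n\log f^\dagger$ on $[a,\infty)$; this is well-defined and of class $\Cc^1$ since $f,f^\dagger>0$. Differentiating gives
\[ H'\ =\ \frac{f'}{f}-n\,\frac{(f^\dagger)'}{f^\dagger}\ =\ f^\dagger - n\,f^{\dagger\dagger}. \]
Since $f^{\dagger\dagger}/f^\dagger\to 0$, eventually $n\,f^{\dagger\dagger}\leq\tfrac{1}{2}f^\dagger$, hence $H'\geq\tfrac{1}{2}f^\dagger$ eventually; combined with $f^\dagger(t)\to+\infty$ this gives $H'\geq 1$ eventually, so $H(t)\to+\infty$. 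Exponentiating, $f/(f^\dagger)^n\to+\infty$, so $f>(f^\dagger)^n$ eventually, as required. The case $n=0$ reduces to $f\to+\infty$, which follows by applying Lemma~\ref{ha1} directly to $f$ (its three positivity hypotheses are part of the standing assumptions).

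I do not foresee any real obstacle here: the proof is essentially a one-step iteration of Lemmas~\ref{ha1} and~\ref{ha2}, with the standard logarithmic-derivative identity $(\log F)'=F^\dagger$ doing the bookkeeping. The regularity $f\in\Cc_a^4$ is exactly what is needed so that $g=f^\dagger\in\Cc_a^3$ lies in the domain of Lemma~\ref{ha2}; the only subtlety is to remember to phrase the decay $f^{\dagger\dagger}/f^\dagger\to 0$ quantitatively (via the factor $n$) when passing to $H'\geq\tfrac{1}{2}f^\dagger$.
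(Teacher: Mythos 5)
Your proof is correct. The first half coincides with the paper's: both apply Lemma~\ref{ha1} and Lemma~\ref{ha2} to $f^\dagger$ to get $f^\dagger(t)\to+\infty$ and $f^\dagger/f^{\dagger\dagger}\to+\infty$. The final step differs. The paper reuses Lemma~\ref{ha2} once more, applied to the restriction of $f^{1/n}$ to $[a_n,+\infty)$ where $a_n$ is chosen so that $f^\dagger/n>f^{\dagger\dagger}$ there; since $(f^{1/n})^\dagger=f^\dagger/n$ while the higher iterated logarithmic derivatives are unchanged, Lemma~\ref{ha2} yields $f^{1/n}/(f^\dagger/n)\to+\infty$ and hence $f/(f^\dagger)^n\to+\infty$. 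You instead argue directly with $H=\log\bigl(f/(f^\dagger)^n\bigr)$, whose derivative $f^\dagger-nf^{\dagger\dagger}$ is eventually $\ge\tfrac12 f^\dagger\ge 1$, so $H\to+\infty$. Both routes are sound; the paper's buys economy by recycling the already-proved lemma (at the cost of re-verifying its hypotheses for $f^{1/n}$), while yours is more self-contained and makes the quantitative use of $f^{\dagger\dagger}/f^\dagger\to 0$ explicit. Your handling of the $n=0$ case via Lemma~\ref{ha1} applied to $f$ itself is also fine.
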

\begin{proof} Applying Lemma~\ref{ha1} to $f^\dagger$ in the role of $f$ gives $f^\dagger(t)\to +\infty$ as $t\to +\infty$. Applying Lemma~\ref{ha2} to $f^\dagger$ in the role of $f$ gives $f^\dagger(t)/f^{\dagger\dagger}(t) \to +\infty$ as~${t\to +\infty}$. Let~$n\ge 1$ and take $a_n\ge a$ such that $f^\dagger(t)/n> f^{\dagger\dagger}(t)$ for all $t\ge a_n$. 
So the assumptions of Lemma~\ref{ha2} are satisfied for $a_n$ and
the restriction of
$f^{1/n}$ to~$[a_n,+\infty)$ in the role of $a$ and $f$, hence 
$f(t)^{1/n}/\big(f^\dagger(t)/n\big)\to +\infty$ as $t\to +\infty$, and thus~$f(t)/f^\dagger(t)^n\to +\infty$ as $t\to +\infty$.
\end{proof}

\subsection*{Hardian and overhardian germs} 
Let $y\in \Cc^{<\infty}$. 
Following the terminology of~\cite{S} we say that $y$ is {\bf hardian\/} if $y$ generates a Hardy field $\Q\<y\>$. 

\begin{lemma}\label{lemdaghar} If $y$ is hardian and $y>_{\ex} 0$, $y^\dagger>_{\ex} 0$, then
$y>_{\ex} (y^\dagger)^n$ for all $n$.
\end{lemma}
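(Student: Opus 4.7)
Plan: I would argue by contradiction via a one-shot ODE blow-up argument. Since $\Q\langle y\rangle$ is a Hardy field containing $y$ (with $y\ne 0$), it also contains $y^\dagger$ and hence $(y^\dagger)^n$, so the germ $y-(y^\dagger)^n$ lies in a Hardy field and therefore has a definite sign eventually. Fix $n\geq 1$ and suppose, towards a contradiction, that $y\leq_{\ex}(y^\dagger)^n$.

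The key step is to convert this bound into a superlinear differential inequality. Since $y>_{\ex}0$ and $y^\dagger>_{\ex}0$, taking positive $n$th roots in $(y^\dagger)^n\geq y$ gives $y^\dagger\geq y^{1/n}$ eventually, and multiplying by $y$ yields
\[
y'\ =\ y\,y^\dagger\ \geq\ y^{1+1/n}\qquad\text{eventually.}
\]
Rewriting this as $(-n\,y^{-1/n})' \geq 1$ and integrating from a sufficiently large $t_0$ to $t$, one obtains $n\,y^{-1/n}(t_0)\geq (t-t_0) + n\,y^{-1/n}(t)$. Because $y>0$ on $[t_0,\infty)$ the term $y^{-1/n}(t)$ is positive, so the right-hand side tends to $+\infty$ while the left-hand side is fixed — contradiction. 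Hence $y>_{\ex}(y^\dagger)^n$.

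I don't foresee a serious obstacle: the only subtlety is invoking the trichotomy for germs in a Hardy field to pass from the hypothesis $y\leq(y^\dagger)^n$ \emph{eventually} to genuine pointwise inequalities on some $[t_0,\infty)$, and this is immediate from the hardian hypothesis on $y$. One could alternatively try to invoke Lemma~\ref{ha3} directly, but this would require first establishing that $y^\dagger>y^{\dagger\dagger}>y^{\dagger\dagger\dagger}>y^{\dagger\dagger\dagger\dagger}$ (with all four positive) eventually, forcing a case analysis that does not always hold (e.g.\ for $y=x$ we have $y^{\dagger\dagger}<0$); the direct ODE argument bypasses this entirely.
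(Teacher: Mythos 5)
Your proof is correct, but it takes a genuinely different route from the paper's. The paper argues valuation-theoretically inside the Hardy field $\Q\langle y\rangle$: it splits into the cases $y\prec 1$ (impossible, since a positive germ of a Hardy field tending to $0$ has $y^\dagger<_{\ex}0$), $y\asymp 1$ (then $y^\dagger\prec 1$ and one is done), $y\succ 1$ with $y^\dagger\prec 1$ (immediate), and $y\succ 1$ with $y^\dagger\succeq 1$, where it invokes [ADH, 9.2.10] to get $v(y^\dagger)=o(vy)$ and hence $(y^\dagger)^n\prec y$. Your one-shot blow-up argument replaces both the case split and the appeal to [ADH, 9.2.10] by a self-contained integration of the differential inequality $y'\ge y^{1+1/n}$ on a half-line; this is exactly the du Bois-Reymond-style technique the paper itself deploys in Lemmas~\ref{ha1}--\ref{ha3}, so it fits the surrounding material well and is more elementary, at the cost of being less structural (the valuation argument exposes the reusable fact $v(y^\dagger)=o(vy)$). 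Your preliminary use of trichotomy in $\Q\langle y\rangle$ to upgrade the negated conclusion to a pointwise inequality on some $[t_0,\infty)$ is the right move, and the computation $(-ny^{-1/n})'=y^{-1-1/n}y'\ge 1$ is correct. One shared caveat: with the paper's convention that $n$ ranges over $\N$ including $0$, the assertion for $n=0$ reads $y>_{\ex}1$, which can fail (e.g.\ $y=1-1/x$); like the paper's own proof, your argument really covers $n\ge 1$, which is clearly what is intended.
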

\begin{proof} Suppose $y$ is hardian and $y>_{\ex} 0$, $y^\dagger>_{\ex} 0$. The case $y\prec 1$ is impossible, since it would give $y^\dagger <_{\ex} 0$. If $y\asymp 1$, then $y^\dagger\prec 1$, and we are done. If $y\succ 1$ and~$y^\dagger\prec 1$, we are done.
If $y\succ 1$ and~$y^\dagger \succeq 1$, then $v(y^\dagger)=o(vy)$ by  [ADH, 9.2.10], which gives the desired conclusion.  
\end{proof}

\noindent
We set $y^{\<0\>}:=y$, and inductively,
if $y^{\<i\>}\in \Cc^{<\infty}$ is defined and  $y^{\<i\>}\in (\Cc^{<\infty})^\times$ (so either $y^{\<i\>}<_{\ex} 0$
or $y^{\<i\>}>_{\ex} 0$), then
$y^{\<i+1\>}:=(y^{\<i\>})^\dagger$, and otherwise $y^{\<i+1\>}$ is not defined.  
As in \cite{S} we call $y$ {\bf overhardian\/} if for all $i$,
$$y^{\<i\>} \text{ is defined},\quad y^{\<i\>}>_{\ex} 0, \text{ and } y^{\<i\>} >_{\ex} y^{\<i+1\>}.$$ 
If $y$ is overhardian, then so is $y^\dagger$. By Lemma~\ref{ha3}:

\begin{cor}\label{overh1} If $y$ is overhardian, then for all $i$, $n$ we have 
$$y^{\<i\>}>_{\ex}\R, \quad y^{\<i\>}>_{\ex} (y^{\<i+1\>})^n.$$
\end{cor}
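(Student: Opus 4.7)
The plan is to deduce this corollary directly from Lemmas~\ref{ha1} and~\ref{ha3}, applied to the germ $f:=y^{\<i\>}$ for each fixed $i$. The definition of ``overhardian'' is tailored precisely to provide, on some common tail $[a,+\infty)$, the sign and ordering hypotheses those lemmas demand for $y^{\<i\>}$ and its first few iterated logarithmic derivatives.

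More concretely, I would first fix $i$ and unpack the definition of overhardian for $y$. Since $y$ is overhardian, each $y^{\<i+j\>}$ is defined for $j=0,1,2,3,4$, satisfies $y^{\<i+j\>}>_{\ex}0$, and one has $y^{\<i+1\>}>_{\ex} y^{\<i+2\>}>_{\ex} y^{\<i+3\>}>_{\ex} y^{\<i+4\>}$. Choosing representatives and some $a\in\R^{\ge 1}$ large enough, I can arrange that $f:=y^{\<i\>}$, viewed on $[a,+\infty)$, is of class $\Cc^4$ and that, pointwise on $[a,+\infty)$,
\[
f>0,\ f^\dagger>0,\ f^{\dagger\dagger}>0,\ f^{\dagger\dagger\dagger}>0\quad\text{and}\quad f^\dagger>f^{\dagger\dagger}>f^{\dagger\dagger\dagger}>f^{\dagger\dagger\dagger\dagger}.
\]
These are exactly the hypotheses of Lemma~\ref{ha3}.

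For the first assertion $y^{\<i\>}>_{\ex}\R$, it suffices to invoke Lemma~\ref{ha1} (whose hypotheses are implied by the above), which gives $f(t)\to+\infty$ as $t\to+\infty$, that is, $y^{\<i\>}>_{\ex}\R$. For the second assertion, Lemma~\ref{ha3} applied to $f$ yields $f(t)>f^\dagger(t)^n$ eventually for every $n$, i.e., $y^{\<i\>}>_{\ex}(y^{\<i+1\>})^n$, as desired.

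There is essentially no obstacle: the corollary is a straightforward repackaging of Lemma~\ref{ha3} once the overhardian definition is unpacked. The only point worth checking carefully is that the germ-level strict inequalities $y^{\<i+j\>}>_{\ex}0$ and $y^{\<i+1\>}>_{\ex}\cdots>_{\ex}y^{\<i+4\>}$ can be realized simultaneously by pointwise strict inequalities on a common interval $[a,+\infty)$; this is immediate from the definition of $<_{\ex}$ and the finiteness of the list of conditions involved.
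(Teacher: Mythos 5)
Your proposal is correct and is essentially the paper's own argument: the paper derives this corollary directly from Lemma~\ref{ha3} (with Lemma~\ref{ha1} implicitly covering the first assertion), exactly as you do by applying these lemmas to $f:=y^{\langle i\rangle}$ after unpacking the overhardian hypotheses into pointwise inequalities on a common tail.
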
 

\noindent
Next we recall from [ADH, 4.3]  that
a differential polynomial $P(Y)\in K\{Y\}$ over a differential field $K$ has a unique logarithmic decomposition 
$$P(Y)\ =\ \sum_{\i}P_{\<\i\>} Y^{\<\i\>}\qquad (P_{\<\i\>} \in K).$$ 
If $K$ is a Hardy field and $y^{\<i\>}$ is defined for all $i$, then we can substitute $y$ for the indeterminate $Y$ to get
$P(y)=\sum_{\i}P_{\<i\>} y^{\<\i\>}$ in $\Cc^{<\infty}$, where of course
$$y^{\<\i\>}:=(y^{\<0\>})^{i_0}\cdots (y^{\<r\>})^{i_r}\quad\text{for $\i=(i_0,\dots, i_r)\in \N^{1+r}$.}$$ 
Such a substitution is in particular possible if $y$ is overhardian. Thus for over\-hardian~$y$ and $P\in \R\{Y\}^{\ne}$ we obtain 
$P(y)\in (\Cc^{<\infty})^\times$ from Corollary~\ref{overh1}. Therefore:

\begin{cor}\label{overh2} If $y$ is overhardian, then $y$ is hardian.
\end{cor}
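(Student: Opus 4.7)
My plan is to observe that the corollary is essentially the conclusion flagged in the sentence immediately preceding its statement, and so reduces to verifying that for every nonzero $P \in \Q\{Y\}$ (in fact in $\R\{Y\}$) the germ $P(y)$ lies in $(\Cc^{<\infty})^\times$. Once that is in hand, the differential ring $\Q\{y\} \subseteq \Cc^{<\infty}$ generated by $y$ is a differential domain in which every nonzero element is invertible in $\Cc^{<\infty}$; hence its fraction field $\Q\<y\>$ embeds as a differential subfield of $\Cc^{<\infty}$, and this is what it means for $y$ to be hardian.

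To prove $P(y) \in (\Cc^{<\infty})^\times$, I would use the logarithmic decomposition
\[
P(y)\ =\ \sum_{\mathbf{i}\in S} P_{\langle\mathbf{i}\rangle}\, y^{\langle\mathbf{i}\rangle},
\]
valid under substitution because $y$ is overhardian (so every $y^{\langle k\rangle}$ is defined and eventually positive), with $S\subseteq\N^{r+1}$ finite and each $P_{\langle\mathbf{i}\rangle}\in\Q^{\neq}$. The one point that requires work is to show that the finitely many monomials $y^{\langle\mathbf{i}\rangle}$ appearing in this sum are pairwise strictly comparable for $>_{\ex}$; a lexicographic order on multi-indices will then single out a unique dominant monomial.

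Specifically, given distinct $\mathbf{i},\mathbf{j}\in S$, let $l$ be the least coordinate where they differ and assume $i_l>j_l$. Then
\[
y^{\langle\mathbf{i}\rangle}/y^{\langle\mathbf{j}\rangle}\ =\ (y^{\langle l\rangle})^{i_l-j_l}\prod_{k>l}(y^{\langle k\rangle})^{i_k-j_k},
\]
with $i_l-j_l\geq 1$. By Corollary~\ref{overh1}, $y^{\langle l\rangle}>_{\ex}(y^{\langle k\rangle})^n$ for every $n$ and every $k>l$, so taking logarithms shows that $(i_l-j_l)\log y^{\langle l\rangle}$ dominates the remainder $\sum_{k>l}(i_k-j_k)\log y^{\langle k\rangle}$ by any multiplicative constant; hence the ratio tends to $+\infty$ and $y^{\langle\mathbf{i}\rangle}>_{\ex}y^{\langle\mathbf{j}\rangle}$. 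Letting $\mathbf{i}^*\in S$ be the lex-maximum, we then have $P(y)=P_{\langle\mathbf{i}^*\rangle}\,y^{\langle\mathbf{i}^*\rangle}\bigl(1+o(1)\bigr)$, which is eventually nonzero and has the sign of $P_{\langle\mathbf{i}^*\rangle}$, so $P(y)\in(\Cc^{<\infty})^\times$. The only real obstacle is the monomial comparison, and it dissolves once one notices that the tower of iterated logarithmic derivatives of an overhardian germ is asymptotically well-separated: each level dominates every power of any higher level.
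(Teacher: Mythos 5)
Your proposal is correct and follows exactly the paper's route: the paper likewise deduces $P(y)\in(\Cc^{<\infty})^\times$ for $P\in\R\{Y\}^{\ne}$ from the logarithmic decomposition together with Corollary~\ref{overh1}, merely leaving implicit the lexicographic identification of the dominant monomial that you spell out. Your filled-in comparison of monomials is sound, since $y^{\<l\>}$ dominates every power of each $y^{\<k\>}$ with $k>l$.
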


\begin{lemma}\label{lemoverh} If $y$ is overhardian, then $\log y\prec y^\dagger$.
\end{lemma}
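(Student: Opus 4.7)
The plan is to prove $\log y(t)/y^\dagger(t)\to 0$ as $t\to\infty$, which is precisely the assertion $\log y\prec y^\dagger$ in $\Cc^{<\infty}$.

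By Corollary~\ref{overh1}, overhardianness of $y$ yields $y^{\<i\>}(t)\to+\infty$ for every $i$; in particular $y$, $y^\dagger$, $y^{\dagger\dagger}$ all tend to $+\infty$, so $\log y\to+\infty$ and the quotient $\log y/y^\dagger$ is an $\infty/\infty$ indeterminate form. The decisive observation is the pair of elementary identities $(\log y)'=y^\dagger$ and $(y^\dagger)'=y^\dagger\cdot y^{\dagger\dagger}$. Since $y^\dagger,y^{\dagger\dagger}>0$ eventually, $(y^\dagger)'>0$ eventually and $y^\dagger\to+\infty$, so L'Hôpital's rule applies and gives
$$\lim_{t\to\infty}\frac{\log y(t)}{y^\dagger(t)}\ =\ \lim_{t\to\infty}\frac{(\log y)'(t)}{(y^\dagger)'(t)}\ =\ \lim_{t\to\infty}\frac{1}{y^{\dagger\dagger}(t)}\ =\ 0,$$
using $y^{\dagger\dagger}\to+\infty$ in the last step.

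Should an L'Hôpital-free argument be preferred (more in line with the style of Lemmas~\ref{ha1}--\ref{ha3}), the same conclusion follows by direct integration: given any $N$, eventually $y^{\dagger\dagger}\ge N+1$, and then
$(y^\dagger-N\log y)'=y^\dagger(y^{\dagger\dagger}-N)\ge y^\dagger$ eventually; since also $y^\dagger\to+\infty$, this yields $y^\dagger-N\log y\to+\infty$, so $y^\dagger > N\log y$ eventually. As $N$ is arbitrary, $\log y/y^\dagger\to 0$.

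There is no real obstacle: once Corollary~\ref{overh1} supplies positivity and unboundedness of $y^\dagger$ and $y^{\dagger\dagger}$, the argument is a one-line computation built on the identity $(y^\dagger)'=y^\dagger y^{\dagger\dagger}$.
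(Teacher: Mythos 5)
Your proof is correct, but it takes a genuinely different route from the paper's. The paper deduces the lemma from valuation theory: it first proves the more general claim that any \emph{hardian} $y$ with $y,y^\dagger,y^{\dagger\dagger}>_{\ex}\R$ satisfies $\log y\asymp y^\dagger/y^{\dagger\dagger}\prec y^\dagger$, by placing $y$ in a Liouville closed Hardy field and applying [ADH, 9.2.18] to $\alpha=vy$ in its asymptotic couple. You instead give an elementary calculus argument in the spirit of Lemmas~\ref{ha1}--\ref{ha3}: the identities $(\log y)'=y^\dagger$ and $(y^\dagger)'=y^\dagger y^{\dagger\dagger}$ together with $y^\dagger, y^{\dagger\dagger}\to+\infty$ (supplied by Corollary~\ref{overh1}) give $\log y/y^\dagger\to 0$ via L'H\^opital, or via your integration of $(y^\dagger-N\log y)'\ge y^\dagger$; both variants are sound, and the needed smoothness is available since $y\in\Cc^{<\infty}$. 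What your approach buys is independence from [ADH, 9.2.18] and from the embedding into a Liouville closed Hardy field (indeed it never even uses that $y$ is hardian, only positivity and unboundedness of $y$, $y^\dagger$, $y^{\dagger\dagger}$, so it also covers the generalized hypotheses the paper works under). What it gives up is the sharper relation $\log y\asymp y^\dagger/y^{\dagger\dagger}$, which the paper's proof establishes and later cites in the proof of Corollary~\ref{overh4}; your argument as written yields only the one-sided $\prec$, so that later citation would need the extra (easy but separate) lower bound.
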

\begin{proof} More generally, let $y$ be hardian, $y>_{\ex} \R$,
$y^\dagger > _e \R$, and $y^{\dagger\dagger}>_{\ex} \R$; we claim that then
$\log y\prec y^\dagger$. To prove this, take a Liouville closed
Hardy field $H\supseteq \R$ with~$y\in H$. Applying 
 [ADH, 9.2.18]
to $\alpha=vy$ in the asymptotic couple $(\Gamma,\psi)$ of~$H$
gives~$\log y \asymp y^\dagger/y^{\dagger\dagger} \prec y^\dagger$. 
\end{proof}

\noindent
Given a Hardy field $H$, we say that a germ $y\in\Cc$ is {\bf $H$-hardian} if
$y$ is contained in a Hardy field extension of $H$; see also \cite[Section~4]{ADH5}.

\begin{cor}\label{overh3} Suppose $H\supseteq\R$ is a Liouville closed Hardy field and
$y>_{\ex} H$. Then the following are equivalent: \begin{enumerate}
\item[(i)] $y$ is overhardian;
\item[(ii)] $y$ is $H$-hardian;  
\item[(iii)] $y$ is hardian.
\end{enumerate}
\end{cor}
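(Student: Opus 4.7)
The implication (ii) $\Rightarrow$ (iii) is immediate, since any Hardy field extension of $H$ is itself a Hardy field (making $\Q\<y\>$ a Hardy subfield), and (i) $\Rightarrow$ (iii) is Corollary~\ref{overh2}. So I focus on (iii) $\Rightarrow$ (i) and (i) $\Rightarrow$ (ii), both of which hinge on the strengthened cofinality statement $y^{\<k\>} >_{\ex} H$ for each $k \ge 0$.

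For (iii) $\Rightarrow$ (i): since $H$ is Liouville closed, $\exp^n(x) \in H$ for every $n$, and hence $y \succ \exp^n(x)$ for every $n$. Iterated L'Hopital inside the Liouville closed Hardy field $\Li(\Q\<y\>)$ propagates this to $y^{\<i\>} \succ 1$ for every $i$: indeed from $\log y \succ \exp^{n-1}(x)$ one gets $y^\dagger \succ (\exp^{n-1}(x))' \asymp \exp^{n-1}(x)$, then $\log y^\dagger \succ \exp^{n-2}(x)$ and so on, varying $n$ freely. Thus each $y^{\<i\>}$ is eventually positive with $y^{\<i\>} \to +\infty$, so $y^{\<i+1\>} = (y^{\<i\>})^\dagger$ is defined and positive; and the asymptotic-couple inequality $v\bigl((y^{\<i\>})^\dagger\bigr) > v(y^{\<i\>})$ for $y^{\<i\>} \succ 1$ (see~[ADH,~9.2.10]) gives $y^{\<i+1\>} \prec y^{\<i\>}$. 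Hence $y$ is overhardian.

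For (i) $\Rightarrow$ (ii), I show $H\<y\>$ is a Hardy field by exhibiting a dominant term in the logarithmic decomposition of every $P = \sum_{\i} P_{\<\i\>} Y^{\<\i\>} \in H\{Y\}^{\ne 0}$. For distinct $\i \ne \j$ with nonzero coefficients, setting $k_0 = \min\{m : i_m \ne j_m\}$ and assuming $i_{k_0} > j_{k_0}$, the factorization
\[ y^{\<\i\>}/y^{\<\j\>}\ =\ (y^{\<k_0\>})^{i_{k_0} - j_{k_0}} \prod_{m > k_0}(y^{\<m\>})^{i_m - j_m} \]
combined with Corollary~\ref{overh1} (each tail factor is $\prec (y^{\<k_0\>})^\varepsilon$ for any $\varepsilon > 0$) yields $y^{\<\i\>}/y^{\<\j\>} \succ (y^{\<k_0\>})^{1/2}$, which together with $y^{\<k_0\>} >_{\ex} H$ dominates any ratio $|P_{\<\j\>}/P_{\<\i\>}| \in H$. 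Thus a unique maximal term $P_{\<\i_0\>} y^{\<\i_0\>}$ dominates and $P(y) \sim P_{\<\i_0\>} y^{\<\i_0\>}$ is eventually nonzero of definite sign. The cofinality $y^{\<k\>} >_{\ex} H$ is proved inductively: given $y^{\<k\>} >_{\ex} H$, if $y^{\<k+1\>} \le h$ on $[a,\infty)$ for some $h \in H^{>0}$, taking $g \in H$ with $g' = h$ via Liouville closure, Lemma~\ref{intineq} gives $\log y^{\<k\>} < g + c$ on $[a,\infty)$ for some $c \in \R$, hence $y^{\<k\>} < e^c \exp(g) \in H$, contradicting the induction hypothesis.

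The main obstacle is ruling out the residual oscillatory case where $y^{\<k+1\>} - h$ changes sign on arbitrarily large sets, so that neither $y^{\<k+1\>} \le h$ nor $y^{\<k+1\>} > h$ holds on any final segment $[a,\infty)$; since $y^{\<k+1\>}$ lies in $\Q\<y\>$ and $h$ in $H$, the difference need not a priori be in a common Hardy field. I plan to eliminate oscillation by a simultaneous induction on $k$ that jointly builds $y^{\<k\>} >_{\ex} H$ and the Hardy field structure of $H\<y, y^\dagger, \ldots, y^{\<k\>}\>$ stage by stage; once the latter is in place, $y^{\<k+1\>}$ and $h$ lie in a common Hardy field, so $y^{\<k+1\>} - h$ is eventually monotonic and the oscillatory case is automatically excluded.
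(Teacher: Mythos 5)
Your (iii)~$\Rightarrow$~(i) is correct and close in spirit to the paper's: the paper gets $y^{\dagger\dagger}>_{\ex}\R$ from $\log y\succ\exp(x^2)$ and iterates, while you run the iteration along the tower $\exp_n(x)$; both work. The dominant-monomial analysis in (i)~$\Rightarrow$~(ii) also matches the paper's route through the logarithmic decomposition. The genuine gap is exactly the step you flag yourself: propagating $y^{\<k\>}>_{\ex}H$ to $y^{\<k+1\>}>_{\ex}H$. Your integration argument only refutes ``$y^{\<k+1\>}\le h$ on some $[a,\infty)$'', which is not the negation of ``$y^{\<k+1\>}>_{\ex}h$'', and your proposed repair is circular: to place $y^{\<k+1\>}$ and $h$ in a common Hardy (or even Hausdorff) field you must first locate the cut of $y^{\<k+1\>}$ over $H(y^{\<0\>},\dots,y^{\<k\>})$, which already presupposes knowing how $y^{\<k+1\>}$ compares with the elements of $H$ --- the very statement under proof. (Moreover $H\<y,\dots,y^{\<k\>}\>$ as a \emph{differential} field is all of $H\<y\>$, so ``building its Hardy field structure'' is literally conclusion (ii).)

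The missing ingredient is Lemma~\ref{lemoverh}: for overhardian $y$ one has $\log y\prec y^\dagger$, a comparison that lives entirely inside the Liouville closed Hardy field generated by $y$ alone and is therefore immune to oscillation. With it the induction never needs a trichotomy for the pair $(y^{\<k+1\>},h)$: since $\exp(h)\in H$ for every $h\in H$, the hypothesis $y^{\<k\>}>_{\ex}\exp(h)$ gives $\log y^{\<k\>}>_{\ex}h$ by taking logarithms pointwise, hence $\log y^{\<k\>}>_{\ex}H$; and then $y^{\<k+1\>}=(y^{\<k\>})^\dagger\succ\log y^{\<k\>}$, with both germs in $\Li\big(\R\<y^{\<k\>}\>\big)$ and both positive infinite, yields $y^{\<k+1\>}>_{\ex}H$. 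Every individual comparison is either internal to a single Hardy field or a direct pointwise consequence of the previous stage, which is what closes the gap you identified.
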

\begin{proof} From $\exp(H)\subseteq H$ and 
$y>_{\ex} H$ we obtain $\log y>_{\ex} H$.
If $y$ is overhardian, this gives by induction on $n$
and Lemma~\ref{lemoverh} that $y^{\<n\>}>_{\ex} H$ for all $n$, and
so~${P(y)<_{\ex} H}$ or $P(y)>_{\ex} H$ for all $P(Y)\in H\{Y\}\setminus H$, hence
$y$ is $H$-hardian. This proves (i)~$\Rightarrow$~(ii), and (ii)~$\Rightarrow$~(iii) is trivial. To show (iii)~$\Rightarrow$~(i), assume~(iii). From $\log y >_{\ex} H$ and $\exp (x^2)\in H$ we obtain $\log y \succ \exp(x^2)$. Working in  a Hardy field
containing $y$, $\log y$, $x$, and $\exp(x^2)$, we have $(\log y)'\succ \exp(x^2)'$, so  $$v(y^\dagger)\  <\  v\big(x\exp(x^2)\big)\  <\  v\big(\!\exp(x^2)\big)\ <\ 0,$$
hence $v(y^{\dagger\dagger}) \le v\big(\!\exp(x^2)^\dagger\big)=v(x)<0$,
and thus $y^{\dagger\dagger}>_{\ex} \R$. Hence $y^\dagger >_{\ex}  \log y >_{\ex} H$ by the proof of Lemma~\ref{lemoverh}. Since $y^\dagger$ is hardian, we can iterate this argument, which by induction shows that all $y^{\<n\>}$
are defined and $>_{\ex} H$. This yields (i) in view of Lemma~\ref{lemdaghar}.
\end{proof} 

\noindent
Corollary~\ref{overh3} combines \cite[Theorems~3 and 4]{S};
the implication (iii)~$\Rightarrow$~(ii)  also follows from \cite[Theorem~12.23]{Boshernitzan82}. 

\begin{cor}\label{overh4} If $y$ is overhardian, then so is $\log y$. Moreover,
$$  \text{$y$ is overhardian}\ \Longleftrightarrow\  \text{$y$ is hardian and $y>_{\ex} \exp_n(x)$ for all $n$.}$$ 
\end{cor}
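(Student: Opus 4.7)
The plan is to reduce both claims to explicit valuation-theoretic computations in the asymptotic couple of a Hardy field containing $y$, extended as needed to include $x$ and each iterated log $\log_n y$. The central ingredient will be the identity from Lemma~\ref{lemoverh}, which in valuation form reads $v(\log y)\ =\ v(y^\dagger) - v(y^{\<2\>})$ for overhardian $y$. Since each $y^{\<k\>}$ is trivially overhardian (just shift the defining sequence), I would iterate this to obtain, by induction on $n$,
\[
v(\log_n y)\ =\ v(y^{\<n\>}) - v(y^{\<n+1\>}) \qquad (n\geq 1).
\]
Combined with the chain-rule identity $(\log_n y)^\dagger = y^\dagger/(\log y \cdots \log_n y)$, this telescopes to $v\big((\log_n y)^\dagger\big) = v(y^{\<n+1\>})$, which is $<0$.

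For the first assertion, I would iterate the same formula to get $v\big((\log y)^{\<k\>}\big) = v(y^{\<k+1\>})$ for $k\geq 1$, so each iterate is infinite; positivity follows by induction (the dagger of a positive, eventually monotone, infinite germ in a Hardy field is positive); and $(\log y)^{\<k\>}\succ(\log y)^{\<k+1\>}$ then follows from $y^{\<k+1\>}\succ y^{\<k+2\>}$. The delicate case $k=0$ (i.e.\ $\log y\succ (\log y)^\dagger$) reduces to $v(\log y)<v(y^{\<2\>})$, equivalently $|v(y^\dagger)|>2|v(y^{\<2\>})|$, which follows from [ADH, 9.2.10] since $|v(y^{\<2\>})|=o(|v(y^\dagger)|)$. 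Thus $\log y$ is overhardian.

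For the forward direction of the equivalence, $y$ is hardian by Corollary~\ref{overh2}; to show $y>_{\ex}\exp_n(x)$ for all $n$ I would argue by contradiction. If $y\preceq\exp_n(x)$, then iterating $\log$ yields $\log_n y\preceq x$, and the $H$-type property of the asymptotic couple forces
\[
v\big((\log_n y)^\dagger\big)\ \geq\ v(x^\dagger)\ >\ 0,
\]
so $(\log_n y)^\dagger\prec 1$; this contradicts the telescoping identity $v\big((\log_n y)^\dagger\big)=v(y^{\<n+1\>})<0$. For the reverse direction, assuming $y$ hardian with $y>_{\ex}\exp_n(x)$ for all $n$, the $H$-type property gives $v(y^\dagger)\leq v(\exp_n(x)^\dagger)=v(\exp_{n-1}(x))$ for every $n$; a brief argument rules out $y^\dagger\asymp\exp_m(x)$ for any $m$, yielding $y^\dagger>_{\ex}\exp_m(x)$ for all $m$. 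Iterating, each $y^{\<k\>}$ is defined, eventually positive, and strictly exceeds every iterated exponential, with strict $\succ$-decrease of the iterates again by [ADH, 9.2.10]. Thus $y$ is overhardian.

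The main obstacle will be the bookkeeping behind the telescoping formula in the first assertion: verifying $v(\log_n y)=v(y^{\<n\>})-v(y^{\<n+1\>})$ requires applying Lemma~\ref{lemoverh} not only to $y$ but to each nested $\log_k y$ and $y^{\<k\>}$, and one must check that the approximate logarithmic relations survive through all iterated daggers, in particular at $k=0$ where $v(\log y)$ differs from $v((\log y)^\dagger)=v(y^{\<2\>})$ by the nontrivial correction term $v(y^{\<2\>})$ itself.
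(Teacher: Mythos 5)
Your argument is correct, and it splits naturally into a part that mirrors the paper and a part that does not. For the first assertion and the forward implication you are on the paper's track: both hinge on Lemma~\ref{lemoverh} in the form $\log y\asymp y^\dagger/y^{\dagger\dagger}$, from which $(\log y)^{\<n\>}\sim y^{\<n+1\>}$ for $n\ge 1$ follows by induction; the paper then gets $y>_{\ex}\exp_n(x)$ by observing directly that $x\prec z$ for every overhardian $z$ (since $1\prec z\preceq x$ would force $z^\dagger\preceq 1/x$, contradicting $z^\dagger>_{\ex}\R$) and applying this to $z=\log_n y$, whereas you reach the same contradiction through the telescoped identity $v\big((\log_n y)^\dagger\big)=v(y^{\<n+1\>})<0$ — a slightly longer route to the same place. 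The genuine divergence is in the converse. The paper embeds $\Li\!\big(\R(x)\big)$ into $\T$ as an $H$-field over $\R(x)$, uses cofinality of $\big(\!\exp_n(x)\big)$ in $\T$ to conclude $y>_{\ex}\Li\!\big(\R(x)\big)$, and then quotes Corollary~\ref{overh3}. You instead run a direct valuation induction: $y\succ\exp_n(x)$ for all $n$ forces $v(y^\dagger)\le v\big(\!\exp_n(x)^\dagger\big)$ for all $n$, hence $y^\dagger\succ\exp_m(x)$ for all $m$, and iterate. This is self-contained, avoids both the transseries detour and Corollary~\ref{overh3}, and amounts to re-proving the implication (iii)$\Rightarrow$(i) of that corollary with the countable set $\{\exp_n(x)\}$ in place of $H$. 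Two small points to tidy up: since $\exp_n(x)^\dagger=\exp_1(x)\cdots\exp_{n-1}(x)$, your equality $v\big(\!\exp_n(x)^\dagger\big)=v\big(\!\exp_{n-1}(x)\big)$ should be the inequality $\le$ (harmless, as only an upper bound on $v(y^\dagger)$ is needed, and it still yields $v(y^\dagger)<v\big(\!\exp_m(x)\big)$ for all $m$, so no separate argument ruling out $y^\dagger\asymp\exp_m(x)$ is required); and every $\preceq$/$\dagger$ comparison between $y$ and $x$ presupposes a Hardy field containing both, so the standard extension of $\Q\<y\>$ by $x$ and the iterated logarithms should be invoked explicitly, as the paper does in the proof of Corollary~\ref{overh3}.
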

\begin{proof} Suppose $y$ is overhardian. Then $y$ is hardian and hence so is $\log y$. Moreover, $\log y> _e \R$, and $\log y \asymp y^\dagger/y^{\dagger\dagger}$ by the proof of Lemma~\ref{lemoverh}. Hence
$$(\log y)^\dagger\  \sim\  \big(y^\dagger/y^{\dagger\dagger}\big)^\dagger\ =\ y^{\dagger\dagger} - y^{\dagger\dagger\dagger}\ \sim\ y^{\dagger\dagger}\ =\ y^{\<2\>}.$$ Now an easy induction shows that all
$(\log y)^{\<n\>}$ are defined, and that for $n\ge 1$ we have $(\log y)^{\<n\>}\sim y^{\<n+1\>}$. This proves the first claim of the corollary. Also $x\prec y$, since~$1\prec y \preceq x$ would give $y^\dagger\preceq x^\dagger=1/x$, contradicting $y^\dagger>_{\ex}\R$. Applying this to $\log_n y $ (which we now know to be overhardian), gives
$\log_n y >_{\ex} x$, and thus~$y >_{\ex} \exp_n(x)$, proving the direction $\Rightarrow$ of the equivalence.  

For the converse, assume $y$ is hardian and $y>_{\ex}\exp_n(x)$ for all $n$. 
Then $H:= \Li\!\big(\R(x)\big)$, the Liouville closure of $\R(x)$ as a Hardy field,  embeds as an $H$-field over $\R(x)$ into the Liouville closed
$H$-field extension $\T$ of $\R(x)$. Since the se\-quence~$\big(\!\exp_n(x)\big)$ is cofinal in $\T$, this is also the case in $H$, so $y>_{\ex} H$, and 
hence~$y$ is overhardian by Corollary~\ref{overh3}.
\end{proof}

\subsection*{Constructing overhardian germs} Our goal is the following:

\begin{theorem}\label{ha8} For any $\phi\in \Cc$ there is an overhardian
$y\in \Cc^{\infty}$ such that~${y^{\<m\>}>_{\ex} \phi}$ for all $m$. 
\end{theorem}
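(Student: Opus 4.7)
The plan, following the lines of Sjödin's argument, is to construct $y$ as a smooth germ whose iterated logarithmic derivatives form a controlled tower, each layer dominating $\phi$.

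\emph{Reduction.} First, by Lemma~\ref{smooth} I would replace $\phi$ with a smooth majorant; after enlarging $a$, we may assume $\phi\in\Cc^\infty_a$ is strictly positive, strictly increasing, and unbounded, with $\phi(a)\ge 1$. Making $\phi$ larger strengthens the conclusion, so this is harmless.

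\emph{Reverse tower.} Build recursively a sequence $\phi_0:=\phi,\phi_1,\phi_2,\ldots$ of smooth positive increasing functions on $[a,\infty)$ satisfying $\phi_{n+1}^\dagger=\phi_n$, for instance by setting $\phi_{n+1}(t):=\exp\!\big(\int_a^t\phi_n(s)\,ds\big)$. Iterating gives $\phi_n^{\<k\>}=\phi_{n-k}$ for $0\le k\le n$, and each such iterated log derivative dominates $\phi$ pointwise on $[a,\infty)$. Moreover, Lemma~\ref{ha3} applied inductively guarantees the strict decreasing-positivity chain $\phi_n>\phi_n^\dagger>\phi_n^{\<2\>}>\cdots>\phi_n^{\<n\>}>0$ on $[a,\infty)$, so each $\phi_n$ is ``overhardian up to level $n$''.

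\emph{Gluing via partition of unity.} The main step is to glue the $\phi_n$'s into a single overhardian $y\in\Cc^\infty$. Choose a rapidly increasing sequence $a<a_0<a_1<\cdots\to\infty$ and, in the spirit of Section~\ref{psh2}, modify the recursion so that $\phi_{n+1}$ is obtained from $\phi_n$ by leaving $\phi_n$ unchanged on $[a,a_n]$ and replacing the tail by the iterated-exponential formula on $[a_{n+1},\infty)$, with a smooth bump-function transition (cf.\ Lemma~\ref{bump}, \eqref{eq:alphaab}) on $[a_n,a_{n+1}]$. The pointwise limit $y:=\lim_n\phi_n$ is then a well-defined smooth germ, and on each interval $[a_n,a_{n+1}]$ (for $n>m$) we have $y^{\<m\>}=\phi_{n-m}$ up to contributions from the transition zones; hence $y^{\<m\>}>_{\ex}\phi_0=\phi$ for every $m$.

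\emph{Main obstacle.} The delicate point is arranging the transitions so that the decreasing-positivity chain $y^{\<i\>}>y^{\<i+1\>}>0$ is preserved \emph{globally}, not merely on the ``cores'' of the intervals. The bump-function perturbations affect every iterated log derivative of $y$ simultaneously, and one must show these perturbations are small enough to respect the widening gap between $\phi_{n-m}$ and $\phi_{n-m-1}$. This requires quantitative bounds on the bump functions' derivatives together with the domination estimates of Lemmas~\ref{ha1}--\ref{ha3}, which, once $y^{\<i\>}$ and $y^{\<i+1\>}$ are positive and $y^{\<i\>}>y^{\<i+1\>}$ at one level, propagate the inequalities up the tower. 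Verifying that the resulting $y$ is truly overhardian, rather than only overhardian up to finite order on each piece, is the technical heart of the argument.
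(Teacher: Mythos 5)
Your overall architecture---an infinite exponential tower over $\phi$, stabilized on compact sets so that the limit exists---is the right one and matches the paper's. But the step you yourself flag as ``the technical heart of the argument'' is exactly the step that is missing, and your chosen gluing scheme makes it genuinely problematic rather than merely technical. You glue at the level of $y$ itself, by bump-function interpolation between $\phi_n$ and its exponential continuation on $[a_n,a_{n+1}]$. Two things then go wrong. First, the limit $y$ agrees on $[a_n,a_{n+1}]$ with $\phi_{n+1}$, whose transition zone is exactly $[a_n,a_{n+1}]$; so \emph{every} interval is a transition zone for $y$, and there are no core intervals on which the identities $(y^{\<m\>})^\dagger=y^{\<m+1\>}$ hold exactly---the error analysis is everywhere, not localized. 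Second, the $i$-th iterated logarithmic derivative of a convex combination $(1-\beta)\phi_n+\beta\psi$ involves derivatives of $\beta$ up to order $i$ multiplied by quantities of iterated-exponential size, and these terms change sign; establishing $y^{\<i\>}>_{\ex}y^{\<i+1\>}>_{\ex}0$ for all $i$ simultaneously from such expressions is not a routine estimate. Lemmas~\ref{ha1}--\ref{ha3} cannot supply the chain of inequalities: they \emph{assume} that chain and deduce growth consequences, and the same inversion occurs in your ``reverse tower'' paragraph (also, $\phi_{n+1}(a)=1$ there, so the claimed pointwise domination on all of $[a,\infty)$ fails near $a$).

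The paper sidesteps all of this by reverse-engineering at the level of the iterated logarithmic derivatives rather than of $y$: one freely prescribes $y^{\<m\>}$ only on $[a_m,a_{m+1}]$, pinned strictly between $\phi$ and $\phi_{a_m}(t)=\phi(a_m)\exp\int_{a_m}^t\phi(s)\,ds$, and lets the formula $y^{\<m\>}(t)=y^{\<m\>}(a_{m+1})\exp\int_{a_{m+1}}^t y^{\<m+1\>}(s)\,ds$ dictate $y^{\<m\>}$ beyond $a_{m+1}$. Smoothness at the seams is obtained not by bump functions but by matching \emph{all} derivatives of the free piece at its endpoints to those of the forced continuation (Borel's theorem, packaged in Lemma~\ref{phizeta}); consequently $(y^{\<m\>})^\dagger=y^{\<m+1\>}$ holds exactly on $[a_{m+1},\infty)$ with no error term (Lemma~\ref{ha9}), and the inequalities $y^{\<m\>}>\phi$ and $y^{\<m\>}>y^{\<m+1\>}$ follow from clean monotone comparisons of exponential integrals using $\phi>\phi^\dagger$ (Lemmas~\ref{ha11} and~\ref{ha12}). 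To repair your argument, replace the partition-of-unity gluing by this exact-matching scheme; as written, the proposal identifies the difficulty but does not resolve it.
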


\noindent
Note that Theorem~\ref{sjo} follows from Corollary~\ref{overh3} and Theorem~\ref{ha8}. 
To get an idea of how to construct a $y$ as in Theorem~\ref{ha8}, consider an
 overhardian $y$ represented by a function
in $\Cc_a^{\infty}$, to be denoted also by $y$. Then we have a strictly increasing sequence $(a_m)$ of real numbers $\ge a$ tending to $+\infty$ such that 
$y^{\<m\>}(t)$ is defined for $t\ge a_m$, for every $m$, and thus
$$y^{\<m-1\>}(t)\ =\ y^{\<m-1\>}(a_{m})\cdot \exp \int_{a_{m}}^t y^{\<m\>}(s)\, ds\quad \text{for $m\ge 1$, $t\ge a_{m}$.}$$
It follows that  $y$ is determined as a function
on $[a_0,+\infty)$ by the family of restrictions~$\big(y^{\<m\>}|_{[a_m, a_{m+1}]}\big)$: $y$ on $[a_0, a_1]$ and
$y^{\<1\>}$ on $[a_1,a_2]$ determine $y$ on $[a_0, a_2]$; likewise, $y^{\<1\>}$ on $[a_1,a_2]$ and $y^{\<2\>}$ on $[a_2,a_3]$ determine $y^{\<1\>}$ on $[a_1, a_3]$, and thus~$y$ on~$[a_0, a_3]$, and so on. 
We use this as a clue to reverse engineer overhardian elements.  

We start with $a\in \R$ and a strictly increasing sequence $(a_m)$ in $\R^{\ge a}$ tending to~$+\infty$ and for each $m\ge 1$ a continuous function
$y_{m-1,m}\colon [a_{m-1}, a_{m}]\to \R$. Let~${m\ge 1}$. We define the continuous
function $y_{k,m}\colon [a_k, a_{m}]\to \R$ for $0\le k < m$ by downward
recursion on $k$: $y_{m-1,m}$ is already given to us, and for $1\le k < m$,
$$ y_{k-1,m}(t) \ :=\ \begin{cases} y_{k-1,k}(t) & \text{ for $a_{k-1}\le t\le a_k$,}\\
y_{k-1,k}(a_k)\cdot\exp \displaystyle\int_{a_k}^t y_{k,m}(s)\, ds &\text{ for $a_k \le t\le a_m$.}
\end{cases}$$
(See Figure~\ref{fig:ykm}.)

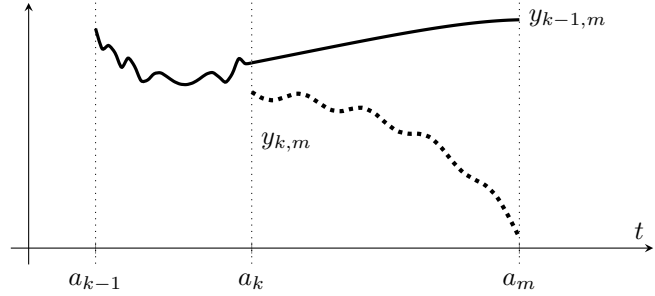
\begin{figure}[ht]
\begin{tikzpicture}
\def\akmone{0.75};\def\ak{2.5};\def\am{5.5};
  \begin{axis} [axis lines=center, xmin=-0.2, xmax=7, ymin = -0.01, ymax = 0.15, width=0.8\textwidth, height = 0.4\textwidth, xlabel={$t$}, xtick={\akmone,\ak,\am}, ytick=\empty, xticklabels={\strut $a_{k-1}$, \strut $a_k$, \strut $a_{m}$}]
    \draw[dotted] (axis cs: \akmone,-0.25) -- (axis cs: \akmone,2);
    \draw[dotted] (axis cs: \ak,-0.25) -- (axis cs: \ak,2);
    \draw[dotted] (axis cs: \am,-0.25) -- (axis cs: \am,2);

   	\addplot [domain=\akmone:\ak, smooth, very thick] { sin((x-1.75)^2*1200) * 0.005 + 0.1 + (x-1.75)^2*0.03};

   	\addplot [domain=\ak:\am, smooth, ultra thick, dotted] { ( (-exp(x*0.93)+1) + 6*sin(x*500) ) * 0.00055 + 0.1};
	\def\ykmonem { ( (-exp(x*0.93)/0.93+x) - (6*cos(x*500)/500) ) * 0.00055 + (0.1*x)  }; 
	\def\ykmonematak {  ( (-exp(\ak*0.93)/0.93+\ak) - (6*cos(\ak*500)/500) ) * 0.00055 + (0.1*\ak)  };
   	\addplot [domain=\ak:\am, smooth, very thick] { (sin((\ak-1.75)^2*1200) * 0.005 + 0.1 + (\ak-1.75)^2*0.03) * exp( (\ykmonem)  - (\ykmonematak) ) };

    \node[right] at (axis cs:\am,0.14) {$y_{k-1,m}$};
        \node[right] at (axis cs:\ak,0.065) {$y_{k,m}$};

  \end{axis}
\end{tikzpicture}
\caption{Passing from $y_{k,m}$ to $y_{k-1,m}$}\label{fig:ykm}
\end{figure}

\medskip
\noindent
Downward induction on $k$ gives $y_{k,m}=y_{k,m+1}$ on $[a_k, a_m]$ for~${k < m}$. This fact gives for each
$k$ a continuous function $y_k\colon [a_k, +\infty)\to \R$
such that $y_k=y_{k,m}$ on~$[a_k,a_m]$, for all $m>k$. Thus
for $k\ge 1$ we have 
$$ y_{k-1}(t)\ =\ y_{k-1}(a_{k})\cdot \exp \int_{a_{k}}^t y_{k}(s)\,ds\quad \text{for $t\ge a_{k}$.}$$
In the next lemma we use the notation $E(t)^{(r)}_{t=a}$ where the expression $E(t)$ defines a function
$t\mapsto E(t)$ in $\Cc^r(I)$, where $I=[b,c]$ ($b<c$ in $\R$) and $a\in I$. With $f$ this function,   
 $E(t)^{(r)}_{t=a}:= f^{(r)}(a)$. In connection with (ii) in that lemma we note that for~$r\in \N^{\ge 1}$ and
  $g\in \Cc^{r-1} [b,c]$ ($b<c$ in $\R$) and setting $G(t):=\int_b^t g(s)\, ds$ for~${t\in [b,c]}$ we have
  $(\exp G)^{(r)} = A_r\big(g,\dots, g^{(r-1)}\big)\cdot(\exp G)$ with $A_r\in \Z[X_0,\dots, X_{r-1}]$ depending only on $r$,
  and thus $\big(\!\exp G(t)\big)^{(r)}_{t=b}=A_r\big(g(b),\dots, g^{(r-1)}(b)\big)$.

\begin{lemma}\label{ha9} Assume the following holds for all $k\ge 1$:
\begin{enumerate}
\item[(i)] $y_{k-1,k}\in \Cc^{\infty}[a_{k-1},a_k]$ and $y_{k-1,k}(t)>0$ for $a_{k-1}\le t\le a_k$;
\item[(ii)]  $y_{k-1,k}^{(r)}(a_k)=y_{k-1,k}(a_k)\cdot\big(\!\exp \int_{a_k}^t y_{k,k+1}(s)\, ds\big)^{(r)}_{t=a_k}$ for all $r\in \N^{\ge 1}$.
\end{enumerate}
Then for all $k$ we have $y_k\in \Cc_{a_k}^{\infty}$, $y_k(t)>0$ for $t\ge a_k$, and $y_k^\dagger=y_{k+1}$ on $[a_{k+1},+\infty)$. Thus $y_0$ is overhardian if $y_k>_{\ex} y_{k+1}$ for all $k$. 
\end{lemma}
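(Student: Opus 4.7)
The plan is to fix $m\ge 1$ and prove by downward induction on $k$, running from $m-1$ down to $0$, that each auxiliary function $y_{k,m}$ lies in $\Cc^{\infty}[a_k,a_m]$ and is strictly positive there; everything else then follows by routine manipulation. The base case $k=m-1$ is immediate from (i). For the inductive step, both pieces of the defining formula for $y_{k-1,m}$ — namely $y_{k-1,k}$ on $[a_{k-1},a_k]$ and $y_{k-1,k}(a_k)\exp\int_{a_k}^t y_{k,m}(s)\,ds$ on $[a_k,a_m]$ — are smooth and positive (using (i) for the former, and the inductive hypothesis on $y_{k,m}$ together with positivity of the exponential and of $y_{k-1,k}(a_k)$ for the latter). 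They agree at $a_k$ because the exponential there evaluates to $1$.

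The crux is to check that all higher one-sided derivatives of $y_{k-1,m}$ match at $a_k$. Here I would invoke the identity $y_{k,m}=y_{k,k+1}$ on $[a_k,a_{k+1}]$ noted in the excerpt (itself a downward induction on $k$). Hence the right derivatives $y_{k,m}^{(j)}(a_k)$ of all orders coincide with those of $y_{k,k+1}$. For each $r\ge 1$, the right derivative at $a_k$ of the function $t\mapsto y_{k-1,k}(a_k)\exp\int_{a_k}^t y_{k,m}(s)\,ds$ is a fixed polynomial (with coefficients depending only on $r$) in $y_{k-1,k}(a_k)$ and in the values $y_{k,m}^{(j)}(a_k)$ for $j<r$; therefore this right derivative equals $y_{k-1,k}(a_k)\bigl(\exp\int_{a_k}^t y_{k,k+1}(s)\,ds\bigr)^{(r)}_{t=a_k}$, which by hypothesis (ii) is exactly the left derivative $y_{k-1,k}^{(r)}(a_k)$. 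Thus $y_{k-1,m}\in\Cc^{\infty}[a_{k-1},a_m]$ and is positive, completing the induction.

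Since $y_k=y_{k,m}$ on $[a_k,a_m]$ and the $y_{k,m}$ agree on overlaps as $m$ varies, we conclude $y_k\in\Cc_{a_k}^{\infty}$ and $y_k>0$ on $[a_k,+\infty)$. For $t\ge a_{k+1}$ the recursive formula (taking any $m>k+1$) gives $y_k(t)=y_k(a_{k+1})\exp\int_{a_{k+1}}^t y_{k+1}(s)\,ds$; differentiating yields $y_k'=y_k\,y_{k+1}$, that is, $y_k^{\dagger}=y_{k+1}$ on $[a_{k+1},+\infty)$. Passing to germs at $+\infty$, we obtain $y_0^{\langle i\rangle}=y_i$ for every $i$; so if in addition $y_k>_{\ex}y_{k+1}$ for all $k$, then $y_0^{\langle i\rangle}>_{\ex}0$ and $y_0^{\langle i\rangle}>_{\ex}y_0^{\langle i+1\rangle}$ for all $i$, verifying that $y_0$ is overhardian. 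The main obstacle is the derivative-matching step, which relies both on the ``memoryless'' behavior $y_{k,m}=y_{k,k+1}$ near $a_k$ and on the precise form of (ii), which is designed exactly to cancel the polynomial obstruction to smooth gluing across $a_k$.
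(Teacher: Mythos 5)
Your proof is correct and takes essentially the same route as the paper, whose entire argument is the one-line remark that downward induction on $k$ shows $y_{k,m}$ has the corresponding properties, followed by the observation that $y_k^{\langle m\rangle}=y_{k+m}$ as germs. Your write-up just supplies the details of that induction — in particular the derivative-matching at $a_k$ via hypothesis (ii), the fact that $g'=y_{k,m}g$ makes $g^{(r)}(a_k)$ a universal polynomial in the $y_{k,m}^{(j)}(a_k)$, and the use of $y_{k,m}=y_{k,k+1}$ on $[a_k,a_{k+1}]$ — all of which is exactly what the paper leaves implicit.
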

\begin{proof} Downward induction on $k$ shows that $y_{k,m}$ for $k<m$ has the corresponding properties. Note:  $y_k^{\<m\>}$ is defined in $\Cc^{<\infty}$ and equals $y_{k+m}$ in $\Cc^{<\infty}$ for all $k$, $m$. 
\end{proof} 

\noindent 
Towards proving Theorem~\ref{ha8} we may assume $\phi\in \Cc$ to be represented by a continuous function $\phi\colon [a,+\infty)\to \R^{>}$, so $\phi$ denotes the function and its germ.  

\begin{lemma}\label{ha10} There exists an increasing $\Cc^{\infty}$-function $f\colon [a,+\infty)\to \R$ such that~$\phi(t) < f(t)$ and 
$f(t) > f^{\dagger}(t)$ for all $t\ge a$. 
\end{lemma}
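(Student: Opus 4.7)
The plan is to reduce the problem to Lemma~\ref{thetazeta} by passing to reciprocals. The key observation is that if we look for $f$ in the form $f = 1/\zeta$ with $\zeta\colon[a,+\infty)\to\R^{>}$ smooth, then automatically $f>0$ and $f$ is increasing iff $\zeta$ is decreasing, while $f' = -\zeta'/\zeta^2$ and $f^\dagger = f'/f = -\zeta'/\zeta$, so the requirement $f(t)>f^\dagger(t)$ becomes
\[
\frac{1}{\zeta(t)}\ >\ \frac{-\zeta'(t)}{\zeta(t)},
\]
which simplifies to $\zeta'(t) > -1$. Similarly, the domination $\phi(t)<f(t)$ becomes $\zeta(t) < 1/\phi(t)$.

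So the steps would be: first, apply Lemma~\ref{thetazeta} to the continuous positive function $\theta := 1/\phi\colon[a,+\infty)\to\R^{>}$ to obtain a decreasing $\Cc^\infty$-function $\zeta\colon[a,+\infty)\to\R^{>}$ with $\zeta(t) < 1/\phi(t)$ and $\zeta'(t) > -1$ for all $t\geq a$. Second, define $f := 1/\zeta\colon[a,+\infty)\to\R^{>}$, which is then a smooth function; since $\zeta$ is decreasing and positive, $f$ is increasing. Third, verify the two inequalities: $\zeta < 1/\phi$ directly yields $\phi < f$; and from $-\zeta' < 1$ and $\zeta > 0$ we compute $f^\dagger = -\zeta'/\zeta < 1/\zeta = f$.

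There is no real obstacle here---Lemma~\ref{thetazeta} is perfectly set up for this substitution, and the reciprocal trick is the natural way to convert a ``decreasing with derivative bounded below'' statement into an ``increasing with $f^\dagger < f$'' statement. The only thing to check carefully is the sign conventions (that $\zeta$ decreasing gives $\zeta' \leq 0$, ensuring $f' \geq 0$) and the strictness of the inequalities, both of which hold as $\zeta' > -1$ is strict.
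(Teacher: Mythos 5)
Your proposal is correct and is essentially identical to the paper's proof: apply Lemma~\ref{thetazeta} to $1/\phi$ to get $\zeta$ decreasing with $\zeta<1/\phi$ and $\zeta'>-1$, then set $f:=1/\zeta$. The verification of the inequalities is exactly as you describe.
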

\begin{proof} Lemma~\ref{thetazeta} yields a decreasing $\Cc^{\infty}$-function
$\zeta\colon [a,+\infty)\to \R^{>}$ with~$1/\phi(t) > \zeta(t)$
and $\zeta'(t)> -1$ for all $t\ge a$. 
Then $f:= 1/\zeta$ works.
\end{proof}

\noindent
Replacing $\phi$ by $f$ and renaming,
we arrange that
$\phi\colon [a,+\infty)\to \R^{>}$ is increasing of class $\Cc^{\infty}$ and $\phi(t) > \phi^\dagger(t)$ for all $t \ge a$. With these assumptions:

\begin{lemma}\label{ha11} Suppose for all $k\ge 1$ we have $y_{k-1,k}(t)> \phi(t)$ for $a_{k-1}< t\le a_k$.
 Then for all $k$ we have $y_k(t) > \phi(t)$ for $t > a_k$. 
\end{lemma}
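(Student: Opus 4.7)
The plan is to prove, by induction on $j\ge 1$, the simultaneous statement: \emph{for every $k\ge 0$, we have $y_k(t)>\phi(t)$ for all $t\in(a_k,a_{k+j}]$.} Since $(a_{k+j})$ is unbounded in $j$, this yields $y_k(t)>\phi(t)$ for all $t>a_k$, as required. The reason for choosing this formulation is that the construction of $y_k$ on $[a_{k+1},\infty)$ uses $y_{k+1}$, so a naive downward induction on $k$ is unavailable; folding all $k$ into one parameter $j$ is the key trick.

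The base case $j=1$ is immediate: on $(a_k,a_{k+1}]$ we have $y_k=y_{k,k+1}$ (from the recursive definition), and $y_{k,k+1}(t)>\phi(t)$ there by hypothesis. For the inductive step from $j$ to $j+1$, fix $k$; on $(a_k,a_{k+j}]$ the conclusion is given by the inductive hypothesis applied to $k$, so suppose $t\in(a_{k+j},a_{k+j+1}]$. In particular $t>a_{k+1}$, so the construction gives
\[
y_k(t)\ =\ y_k(a_{k+1})\cdot\exp\int_{a_{k+1}}^t y_{k+1}(s)\,ds.
\]
Apply the inductive hypothesis to $k+1$: then $y_{k+1}(s)>\phi(s)$ for $s\in(a_{k+1},a_{(k+1)+j}]=(a_{k+1},a_{k+j+1}]$, hence on $(a_{k+1},t]$.

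Now invoke the standing assumption $\phi>\phi^\dagger$, i.e.\ $\phi(s)>(\log\phi)'(s)$, to obtain $y_{k+1}(s)>(\log\phi)'(s)$ on $(a_{k+1},t]$. Integrating,
\[
\int_{a_{k+1}}^t y_{k+1}(s)\,ds\ >\ \log\phi(t)-\log\phi(a_{k+1}),
\]
so $\exp\int_{a_{k+1}}^t y_{k+1}(s)\,ds>\phi(t)/\phi(a_{k+1})$. Finally, from the recursive definition $y_k(a_{k+1})=y_{k,k+1}(a_{k+1})$, and the latter exceeds $\phi(a_{k+1})$ by hypothesis (taking $t=a_{k+1}$ in the assumption for $k+1$). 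Multiplying yields $y_k(t)>\phi(t)$, completing the induction.

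The only step that requires any thought is recognising that one must induct on $j$ rather than $k$; once the right inductive statement is in place, the estimate reduces to a one-line application of the hypothesis $\phi>\phi^\dagger$ via logarithmic integration. Everything else is bookkeeping in the recursive formula for $y_k$.
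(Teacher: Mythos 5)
Your proof is correct and is essentially the paper's argument in a different indexing: the paper fixes $m$ and performs the downward induction on $k$ at the level of the functions $y_{k,m}$ (which is exactly the ``naive downward induction'' you declared unavailable --- it works because the $y_{k,m}$ live on the bounded intervals $[a_k,a_m]$), and then passes to $y_k$ via $y_k=y_{k,m}$ on $[a_k,a_m]$; your induction on the horizon $j=m-k$ traverses the same family of statements in a different order. The key estimate --- $y_{k,k+1}(a_{k+1})>\phi(a_{k+1})$ combined with $\exp\int_{a_{k+1}}^t y_{k+1}>\exp\int_{a_{k+1}}^t\phi>\exp\int_{a_{k+1}}^t\phi^\dagger=\phi(t)/\phi(a_{k+1})$ --- is identical to the paper's.
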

\begin{proof} Let $1\le k < m$, and assume as an inductive assumption that $y_{k,m}(t) > \phi(t)$ for $a_k < t \le a_m$. Our job is to show that then $y_{k-1,m}(t) > \phi(t)$ for $a_{k-1} < t \le a_m$, and this amounts to showing for $a_k \le t \le a_m$ that 
$$y_{k-1,k}(a_{k})\cdot \exp \int_{a_{k}}^t y_{k,m}(s)\, ds\ >\ \phi(t).$$ 
This holds for $t=a_k$, and for $a_k< t \le a_m$ we have
\begin{align*} y_{k-1,k}(a_{k})\cdot \exp \int_{a_{k}}^t y_{k,m}(s)\, ds\ &>\ \phi(a_k)\cdot \exp\int_{a_k}^t \phi(s)\,ds \\
&>\ \phi(a_k)\cdot \exp\int_{a_k}^t \phi^\dagger(s)\,ds\\
&=\ \phi(a_k)\cdot \exp\! \big(\log \phi(t)-\log \phi(a_k)\big)\ =\ \phi(t),
\end{align*}
which gives the desired result.
\end{proof} 

\noindent
For $b\ge a$ we define the $\Cc^{\infty}$-function $\phi_b\colon [a,+\infty)\to \R^{>}$ by
\begin{equation}\label{eq:phib}
\phi_b(t)\ =\ \phi(b)\cdot \exp \int_b^t \phi(s)\,ds,
\end{equation}
so $\phi(t) < \phi_b(t)$ for $t>b$, using again that $\phi(s)> \phi^\dagger(s)$ for $s>b$. 

\begin{lemma}\label{ha12} Suppose that for all $k\ge 1$ we have $\phi< y_{k-1,k} \le  \phi_{a_{k-1}}$ on $(a_{k-1}, a_k]$.
Then for $k+1 < m$ we have $y_{k,m} > y_{k+1,m}$ on $[a_{k+1},a_m]$.
\end{lemma}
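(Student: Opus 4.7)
The plan is to prove the inequality by downward induction on $k$, with $m$ fixed, starting from $k=m-2$ and descending to the given $k$. The key ingredient on top of the hypotheses will be the identity $y_{k,m}^\dagger(t)=y_{k+1,m}(t)$ for $t\in(a_{k+1},a_m)$, which falls out of differentiating the exponential formula defining $y_{k-1,m}$. I will also freely use the conclusion of Lemma~\ref{ha11} (which applies since the present hypotheses include $y_{k-1,k}>\phi$ on $(a_{k-1},a_k]$), giving $y_{k,m}(t)>\phi(t)$ on $(a_k,a_m]$ for every $k<m$.

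First I will handle the base case $k=m-2$, where the statement reduces to showing $y_{m-2,m}>y_{m-1,m}$ on $[a_{m-1},a_m]$. Here $y_{m-1,m}=y_{m-1,m}$ is the original datum on $[a_{m-1},a_m]$, and by hypothesis $y_{m-1,m}\leq\phi_{a_{m-1}}$ on $(a_{m-1},a_m]$. Since $y_{m-2,m-1}(a_{m-1})>\phi(a_{m-1})$ (from the hypothesis at level $k=m-1$), the recursive formula together with the elementary bound $\exp\!\int_{a_{m-1}}^{t}y_{m-1,m}(s)\,ds>\exp\!\int_{a_{m-1}}^{t}\phi(s)\,ds$ yields
\[
y_{m-2,m}(t)\ >\ \phi(a_{m-1})\exp\!\int_{a_{m-1}}^{t}\phi(s)\,ds\ =\ \phi_{a_{m-1}}(t)\ \geq\ y_{m-1,m}(t),
\]
with the boundary value $t=a_{m-1}$ handled by continuity.

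For the inductive step, assuming $y_{k+1,m}>y_{k+2,m}$ on $[a_{k+2},a_m]$, I will split the target interval $[a_{k+1},a_m]$ into two pieces. On $[a_{k+1},a_{k+2}]$ the argument of the base case applies verbatim: since $y_{k+1,m}$ restricted to $[a_{k+1},a_{k+2}]$ equals $y_{k+1,k+2}$, which is bounded above by $\phi_{a_{k+1}}$, while $y_{k,k+1}(a_{k+1})>\phi(a_{k+1})$, the same exponential estimate gives $y_{k,m}>y_{k+1,m}$ there, and in particular $y_{k,m}(a_{k+2})>y_{k+1,m}(a_{k+2})$. On the remaining interval $[a_{k+2},a_m]$ I invoke the logarithmic derivative trick:
\[
\bigl(\log(y_{k,m}/y_{k+1,m})\bigr)'\ =\ y_{k,m}^\dagger-y_{k+1,m}^\dagger\ =\ y_{k+1,m}-y_{k+2,m},
\]
which is strictly positive by the inductive hypothesis. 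Hence $y_{k,m}/y_{k+1,m}$ is strictly increasing on $[a_{k+2},a_m]$; combined with $y_{k,m}(a_{k+2})/y_{k+1,m}(a_{k+2})>1$ established above, this gives $y_{k,m}>y_{k+1,m}$ throughout.

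The main obstacle, as I see it, is not any single estimate but spotting the right decomposition: the direct comparison against $\phi_{a_{k+1}}$ only works on the single interval $[a_{k+1},a_{k+2}]$ where the upper bound from the hypothesis is available, whereas we need the inequality on the much longer interval $[a_{k+1},a_m]$. The passage from the short interval to the long one is precisely what the logarithmic derivative comparison provides, via the identity $y_{k,m}^\dagger=y_{k+1,m}$ and the inductive hypothesis; without this reduction one would be stuck trying to estimate $y_{k+1,m}$ on portions of its domain where no convenient upper bound is known.
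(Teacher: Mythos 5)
Your proof is correct and follows essentially the same route as the paper's: a downward induction with base case $k=m-2$ handled by comparison with $\phi_{a_{m-1}}$, and an inductive step that splits $[a_{k+1},a_m]$ at $a_{k+2}$, using the base-case estimate on the short piece and the induction hypothesis on the long piece. The only (cosmetic) difference is that on the long piece you compare logarithmic derivatives via $y_{k,m}^\dagger=y_{k+1,m}$, whereas the paper directly compares the integrated exponential formulas $y_{k,m}(t)=y_{k,m}(a_{k+2})\exp\int_{a_{k+2}}^t y_{k+1,m}$ versus $y_{k+1,m}(t)=y_{k+1,m}(a_{k+2})\exp\int_{a_{k+2}}^t y_{k+2,m}$ --- the two are equivalent by integrating your inequality.
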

\begin{proof} For $m=k+2$ and $a_{k+1}\le t \le a_m$ we have
\begin{align*} y_{k,m}(t)\ &=\ y_{k,k+1}(a_{k+1})\cdot\exp\int_{a_{k+1}}^t y_{k+1,m}(s)\,ds\\
&>\ \phi(a_{k+1})\exp \int_{a_{k+1}}^t \phi(s)\,ds\ =\ \phi_{a_{k+1}}(t)\ \ge\ y_{k+1,m}(t).
\end{align*} 
Let $1\le k < k+1 < m$ and assume inductively that $y_{k,m}(t) > y_{k+1,m}(t)$ whenever~$a_{k+1}\le t\le a_m$. Then for $a_k\le t\le a_{k+1}$ the special case above yields
$$y_{k-1,m}(t)\ =\ y_{k-1,k+1}(t)\ >\ y_{k,k+1}(t)\ =\ y_{k,m}(t),$$
and for $a_{k+1}\le t\le a_m$ the inductive assumption gives
\begin{align*} y_{k-1,m}(t)\ &=\ y_{k-1,k}(a_k)\cdot \exp\int _{a_k}^{t}y_{k,m}(s)\,ds\\
&=\ y_{k-1,k}(a_k)\cdot \exp\int _{a_k}^{a_{k+1}}y_{k,m}(s)\,ds\cdot \exp\int_{a_{k+1}}^t y_{k,m}(s)\,ds\\    &=\
y_{k-1,m}(a_{k+1})\cdot \exp \int_{a_{k+1}}^ty_{k,m}(s)\,ds\\ 
&>\ y_{k,m}(a_{k+1})\exp \int_{a_{k+1}}^ty_{k+1,m}(s)\,ds\ =\ y_{k,m}(t),
\end{align*}
which concludes the induction. 
\end{proof}

\begin{cor}\label{exyk} Suppose that for all $k\ge 1$ we have
\begin{enumerate}
\item[(i)] $y_{k-1,k}\in \Cc^{\infty}[a_{k-1},a_k]$;
\item[(ii)] $\phi < y_{k-1,k} \le \phi_{a_{k-1}}$ on $(a_{k-1}, a_k]$;
\item[(iii)] $y_{k-1,k}^{(r)}(a_{k-1})\ =\ \phi^{(r)}(a_{k-1})$ for all $r\in \N$;
\item[(iv)] $y_{k-1,k}^{(r)}(a_k)= y_{k-1,k}(a_k)\cdot\big(\!\exp \int_{a_k}^t \phi(s)\, ds\big)^{(r)}_{t=a_k}$ for all $r\in \N^{\ge 1}$. 
\end{enumerate}
Then $y:= y_0\in \Cc_{a_0}^{\infty}$, $y$ is overhardian, and $y^{\<k\>}>_{\ex} \phi$ for all $k$. 
\end{cor}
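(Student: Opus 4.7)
The plan is to reduce the corollary to Lemmas~\ref{ha9}, \ref{ha11}, and \ref{ha12} by verifying that the hypotheses of Lemma~\ref{ha9} hold for the sequence $(y_{k-1,k})_{k\ge 1}$. Clause (i) of that lemma will follow directly from our hypothesis (i) together with positivity: (ii) gives $y_{k-1,k}>\phi>0$ on $(a_{k-1},a_k]$, and (iii) gives $y_{k-1,k}(a_{k-1})=\phi(a_{k-1})>0$. For clause (ii) of Lemma~\ref{ha9}, which differs from our hypothesis (iv) in that it features $y_{k,k+1}$ in place of $\phi$, I will apply hypothesis (iii) with $k$ replaced by $k+1$ to get $y_{k,k+1}^{(r)}(a_k)=\phi^{(r)}(a_k)$ for every $r\in\N$. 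For any smooth $g$, the value $F^{(r)}(a_k)$ where $F(t):=\exp\int_{a_k}^t g(s)\,ds$ is a polynomial (independent of $g$) in $g(a_k),g'(a_k),\dots,g^{(r-1)}(a_k)$, as one sees by repeatedly differentiating the relation $F'=gF$ and setting $t=a_k$. Hence $\bigl(\exp\int_{a_k}^t y_{k,k+1}(s)\,ds\bigr)^{(r)}_{t=a_k} = \bigl(\exp\int_{a_k}^t \phi(s)\,ds\bigr)^{(r)}_{t=a_k}$ for every $r\ge 1$, so our (iv) is exactly clause (ii) of Lemma~\ref{ha9}. This derivative-matching step is the main subtlety in the proof.

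With Lemma~\ref{ha9} now applicable, I obtain $y_k\in\Cc^{\infty}_{a_k}$ with $y_k>0$ and $y_k^\dagger=y_{k+1}$ on $[a_{k+1},+\infty)$, so that $y:=y_0\in\Cc^{\infty}_{a_0}$ and $y^{\<k\>}=y_k$ as germs in $\Cc^{<\infty}$. To invoke the concluding clause of Lemma~\ref{ha9} and obtain overhardianness, I still need $y_k>_{\ex}y_{k+1}$ for each $k$. This will come from Lemma~\ref{ha12}, whose hypothesis is exactly our (ii): that lemma yields $y_{k,m}>y_{k+1,m}$ on $[a_{k+1},a_m]$ whenever $k+1<m$, and since $y_k=y_{k,m}$ on $[a_k,a_m]$ by construction, this gives $y_k(t)>y_{k+1}(t)$ for all $t\ge a_{k+1}$, whence $y_k>_{\ex}y_{k+1}$. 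Lemma~\ref{ha9} then concludes that $y$ is overhardian.

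Finally, Lemma~\ref{ha11} applied with the left inequality of our (ii) gives $y_k(t)>\phi(t)$ for all $t>a_k$ and every $k$; hence $y^{\<k\>}=y_k>_{\ex}\phi$ for all $k$, which completes the argument. In summary, hypotheses (i)--(iv) split naturally into their roles: (i) and (iii) guarantee smoothness across the joins $t=a_k$ (via the jet-matching explained above), the left inequality of (ii) feeds Lemma~\ref{ha11}, the right inequality of (ii) feeds Lemma~\ref{ha12} (through the defining choice $\phi_{a_{k-1}}$), and (iv) supplies the right-endpoint compatibility demanded by Lemma~\ref{ha9}.
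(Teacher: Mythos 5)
Your proof is correct and follows exactly the route the paper takes, which simply cites Lemmas~\ref{ha9}, \ref{ha11}, and~\ref{ha12}; you have in addition correctly supplied the one detail the paper leaves implicit, namely that hypothesis (iii) (applied with $k+1$) together with the fact that the jet of $\exp\int_{a_k}^t g(s)\,ds$ at $t=a_k$ depends only on the jet of $g$ there converts hypothesis (iv) into clause (ii) of Lemma~\ref{ha9}.
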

\begin{proof}  By (i), (iii), (iv), and the remark preceding Lemma~\ref{ha9}:
$$
y^{(r)}_{k-1,k}(a_k)\ =\   y_{k-1,k}(a_k)\cdot  \big(\!\exp\textstyle\int_{a_k}^t y_{k,k+1}(s)\, ds\big)^{(r)}_{t=a_k} \qquad (k\ge 1, r\in \N^{\ge 1}). $$
Then Lemma~\ref{ha9} yields $y_k\in \Cc_{a_k}^{\infty}$, and  $y_k>0$, $y_k^\dagger=y_{k+1}$ on $[a_{k+1},+\infty)$.
Also~${y_k>\phi}$ on $(a_{k},+\infty)$
by Lemma~\ref{ha11}, and $y_k>y_{k+1}$ on $[a_{k+1},+\infty)$
by (ii) and Lem\-ma~\ref{ha12}. It remains to appeal to the last sentence of Lemma~\ref{ha9}.
\end{proof}

\noindent
The phrase ``$y$ is overhardian'' in the corollary above is short for ``the germ of~$y$ at~$+\infty$ is overhardian''. Given the strictly increasing sequence $(a_k)$ of
real numbers~$\ge a$ tending to $+\infty$ and the increasing $\Cc^{\infty}$-function $\phi\colon [a,+\infty) \to \R^{>}$ such that~$\phi(t) > \phi^\dagger(t)$ for all $t\ge a$, it follows from Lemma~\ref{phizeta} that there exist functions~$y_{k-1,k}$ for~$k\ge 1$ satisfying conditions
(i)--(iv) of Corollary~\ref{exyk}, where each value $y_{k-1,k}(a_k)$ can be chosen arbitrarily in the interval $\big(\phi(a_k), \phi_{a_{k-1}}(a_k)\big)$. Now the conclusion of that corollary yields Theorem~\ref{ha8}, and thus Theorem~\ref{sjo}.

\section{Filling Wide Gaps}\label{fwg}

\noindent
We now adapt the material from the previous section to filling a {\em wide gap}.
To describe this situation, let $H\supseteq \R$ be a Liouville closed Hardy field. By a {\bf wide gap} in $H$
we mean a pair $A$, $B$ of nonempty subsets of $H^{>\R}$ such that $A< B$, 
there is no $h\in H$ with $A <  h < B$, and $A$ and $\exp A$ are cofinal; note that then~$A$ and~$\log A$ are  cofinal,  that $B$, $\exp B$, $\log B$ are coinitial, and 
that for any~$\phi\in \Cc$ with~$A<_{\ex} \phi <_{\ex} B$ we  have $A<_{\ex} \log \phi, \exp \phi <_{\ex} B$. 
Moreover, if $A,B$ is a wide gap in $H$, then it is  an additive gap in $H$, and
$A$, $\operatorname{sq}(A)$ are cofinal, and $B$, $2B$, $\sqrt{B}$ are coinitial, by Corollary~\ref{cor:mult gap} and Lemmas~\ref{lem:add gap 1}, \ref{lem:mult->add}, and~\ref{in2}(i). 
Let us also record the following, although we shall not explicitly use it:

\begin{lemma} Let $H\supseteq \R$ be a Liouville closed Hardy field and $A$, $B$ nonempty subsets of $H^{>\R}$ such that $A< B$ and there is no $h\in H$ with $A< h< B$. Suppose there exists $\phi\in \mathcal{C}$ such that $A <_{\ex}  \phi$ and $\ex^\phi <_{\ex} B$. Then $A$, $B$ is a wide gap in $H$.
\end{lemma}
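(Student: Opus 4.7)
The plan is to verify the only non-automatic clause in the definition of a wide gap, namely that $A$ and $\exp A$ are cofinal in $H^{>\R}$; nonemptiness, $A \subseteq H^{>\R}$, $A < B$, and the absence of an element of $H$ strictly between $A$ and $B$ are all hypotheses.

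First I would record a useful dichotomy: every $h \in H$ lies in $A^\downarrow \cup B^\uparrow$. Indeed, if $h \notin A^\downarrow$ then $A < h$, and if also $h \notin B^\uparrow$ then $h < B$, so $A < h < B$, contradicting the hypothesis that no such $h$ exists. Next, since $a > \R$ in the Hardy field $H$ forces $a < \exp a$, we have $A \subseteq (\exp A)^\downarrow$, so cofinality of $A$ and $\exp A$ reduces to establishing $\exp A \subseteq A^\downarrow$, i.e., that for each $a \in A$ there exists $a' \in A$ with $\exp a \leq a'$.

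The main step, then, is to rule out the alternative. Fix $a \in A$; because $H$ is Liouville closed, $\exp a \in H^{>\R}$. By the dichotomy above, either $\exp a \in A^\downarrow$ (which is what we want), or else there is $b \in B$ with $b \leq \exp a$. In the latter case, since $a \in A$ we have $a <_{\ex} \phi$ by hypothesis, hence $\exp a <_{\ex} \exp \phi$, and since $\exp\phi <_{\ex} b$ we obtain $\exp a <_{\ex} b$. But $\exp a$ and $b$ both lie in $H$, and on $H$ the relation $<_{\ex}$ coincides with the ordering $<$; thus $\exp a < b$, contradicting $b \leq \exp a$.

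I do not expect a real obstacle here: the work is entirely in converting the two pieces of data, $A <_{\ex} \phi$ and $\exp \phi <_{\ex} B$, through the continuous exponential on $\mathcal{C}$ and applying the hypothesis that no $H$-element separates $A$ from $B$. The one point to state explicitly is the reduction of $<_{\ex}$ to $<$ on $H$, which is immediate from the definition of $<_{\ex}$ since elements of $H$ represent germs in a Hardy field and hence are comparable eventually.
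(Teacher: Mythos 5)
Your proof is correct and follows essentially the same route as the paper: exponentiate the inequality $a<_{\ex}\phi$ to get $\exp a<_{\ex}\exp\phi<_{\ex}B$, then use the absence of an $H$-element between $A$ and $B$ to conclude $\exp a\in A^\downarrow$. The paper's version is just a more compressed statement of the same argument.
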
 
\begin{proof} For $\phi$ as above and $h\in A$ we have
$h<_{\ex} \phi$, so  $\ex^h<_{\ex} \ex^\phi<_{\ex} B$, and thus~$\ex^h\le f$ for some $f\in A$. 
\end{proof}

\noindent
Here is the main result of this section:

\begin{theorem}\label{sjo+} If $H\supseteq \R$ is a Liouville closed Hardy field and $A$,~$B$ is a wide gap in $H$ with $\cf(A)=\ci(B)=\omega$, then some
$y\in \Cc^{<\infty}$ 
with $A <_{\ex} y <_{\ex} B$ is $H$-hardian. 
\end{theorem}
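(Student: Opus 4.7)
The strategy is to adapt the construction behind Theorem~\ref{sjo} (specifically Theorem~\ref{ha8} and Corollary~\ref{exyk}) to produce not merely a germ above $H$, but one sandwiched strictly between $A$ and $B$. Since $\cf(A)=\ci(B)=\omega$, first fix a strictly increasing cofinal sequence $(a_n)$ in $A$ and a strictly decreasing coinitial sequence $(b_n)$ in $B$. The wide gap condition implies that $A$ is cofinal with $\exp_n A$ for every $n\geq 1$ (and dually, $B$ is coinitial with $\log_n B$), so after thinning we may assume $\exp_n(a_n)\leq a_{n+1}$ and $b_{n+1}\leq\log_n(b_n)$ for every $n$, leaving a great deal of vertical room between the two sequences.

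Next, via Corollary~\ref{corsmooth} followed by the smoothing of Lemma~\ref{ha10}, obtain an increasing $\phi\in\Cc^\infty_{a}$ with $A<_{\ex}\phi<_{\ex}B$ and $\phi(t)>\phi^\dagger(t)$ for all $t\geq a$. Now mimic the reverse engineering of Section~\ref{bh}: choose reals $t_0<t_1<\cdots\to\infty$ and, for each $k\geq 1$, a function $y_{k-1,k}\in\Cc^\infty[t_{k-1},t_k]$ satisfying the conditions of Corollary~\ref{exyk}. That corollary delivers an overhardian germ $y:=y_0\in\Cc^{<\infty}$ with $y^{\langle k\rangle}>_{\ex}\phi$, hence $A<_{\ex} y^{\langle k\rangle}$, for every $k$. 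The new constraint, exploiting the upper bound $B$, is to pick the interpolating values $y_{k-1,k}(t_k)\in\bigl(\phi(t_k),\phi_{t_{k-1}}(t_k)\bigr)$ close enough to $\phi(t_k)$ and the intervals $[t_{k-1},t_k]$ long enough that not only $y<_{\ex}B$ but also $y^{\langle k\rangle}<_{\ex}B$ for every $k$; the wide gap leaves enough room between $\phi$ and the lower edge of $B$ (e.g., $\exp_n\phi<_{\ex}B$ for every $n$) to make all these conditions simultaneously compatible through the freedom given by Lemma~\ref{phizeta}.

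Finally, verify that $y$ is $H$-hardian. Since $y\not>_{\ex}H$, Corollary~\ref{overh3} does not apply directly; instead, for $P\in H\{Y\}^{\neq}$ with logarithmic decomposition $P(Y)=\sum_{\i}P_{\langle\i\rangle}Y^{\langle\i\rangle}$, one has $P(y)=\sum_{\i}P_{\langle\i\rangle}y^{\langle\i\rangle}$ in $\Cc^{<\infty}$, and it suffices to exhibit a single dominating term. The crucial observation is that for each $k\geq 0$ and each nonzero $n\in\Z$, the germ $(y^{\langle k\rangle})^n$ is not $\asymp$ to any element of $H^\times$: such an asymptotic equivalence would, after taking an $\abs{n}$-th root in $H$ (real closed) and invoking the multiplicative cofinality features of the wide gap (Corollary~\ref{cor:mult gap} applied to $A,B$ or its images under logarithmic derivation), force an element of $H$ strictly between $A$ and $B$, contradicting the gap hypothesis. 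Combined with $y\succ y^\dagger\succ y^{\dagger\dagger}\succ\cdots$ from overhardianness (Corollary~\ref{overh1}), this incomparability shows that distinct logarithmic monomials $y^{\langle\i\rangle}$ are incomparable modulo $H^\times$, so exactly one term of $P(y)$ dominates and $P(y)$ has eventual sign.

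The main obstacle lies in the joint execution of the second and third steps: tuning the reverse engineering so that all iterated logarithmic derivatives $y^{\langle k\rangle}$ simultaneously remain below $B$, and then leveraging this together with the wide gap condition to rule out asymptotic identification of any $y^{\langle k\rangle}$ with elements of $H^\times$. The wide gap hypothesis—cofinality of $A,\exp A$ and coinitiality of $B,\log B$—is the structural input that makes both ingredients work: it supplies the room needed for the construction and the rigidity needed for the $H$-hardianness argument.
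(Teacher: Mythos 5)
Your overall architecture matches the paper's: produce a smooth intermediary $\phi$ between $A$ and $B$ with $0<_{\ex}\phi^\dagger<_{\ex}\phi$, run the reverse engineering of Section~\ref{bh} underneath it, and verify $H$-hardianness by valuation theory on the $y^{\<i\>}$. This last step is the paper's Lemma~\ref{val}, proved by inductively building the Hausdorff fields $H(y,\dots,y^{(n)})$ via Lemma~\ref{lem:5.1.18}; note that your ``incomparability of logarithmic monomials'' needs the joint $\Q$-linear independence of the $vy^{\<i\>}$ over $v(H^\times)$ (so that no \emph{mixed} monomial $(y^{\<0\>})^{i_0}\cdots(y^{\<r\>})^{i_r}$ is $\asymp$ an element of $H^\times$), not merely that single powers of each $y^{\<k\>}$ avoid $v(H^\times)$; this comes out of the value-group computation in that inductive construction. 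However, the two steps you defer as ``the main obstacle'' are exactly where the work lies, and as written they are gaps.

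First, Lemma~\ref{ha10} does not produce an intermediary: it yields $f>\phi$ with $f>f^\dagger$ but gives no control from above, so $f$ may overshoot $B$. The paper needs a separate and rather delicate argument (Lemma~\ref{intermed}): after replacing $f$ by an antiderivative, one sets $\phi:=\theta^{-1}$ with $\theta(t)=\int_t^{t+1}f(s)^{-1}\,ds$, and the lower bound $B^{-1}<_{\ex}\theta$ rests on Lemmas~\ref{in3}(iii) and~\ref{in1}(iii). Second, ``pick the interpolating values close enough to $\phi(t_k)$ and the intervals long enough'' is a hope, not an argument. The paper's resolution is a computation: for \emph{any} data satisfying the constraints of Corollary~\ref{exyk} one has the uniform bound $y\le\exp_n\!\big(t\phi(t)+nt\big)$ on $[a_0,a_n]$; this requires first arranging $x\in A$ by compositional conjugation with $g^{\inv}$ for some $g\in A$ (a reduction you omit, and which is needed since $x$ need not lie in $A^\downarrow$). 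Since the wide-gap hypothesis gives $\exp_n\!\big(t\phi(t)+nt\big)<_{\ex}B$, one then chooses the partition points late enough relative to a coinitial sequence in $B$ to force $y<_{\ex}B$. By contrast, your separate worry about keeping every $y^{\<k\>}$ below $B$ is vacuous: overhardianness already gives $y^{\<k\>}<_{\ex}y<_{\ex}B$.
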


\noindent
 Wide gaps as in Theorem~\ref{sjo+} do actually occur, as we show in the next subsection.
Towards proving Theorem~\ref{sjo+} and some variants  we begin with a result that is mainly an exercise in valuation theory:

\begin{lemma} \label{val} Let $H\supseteq \R$ be a Liouville closed Hardy field, let $A$, $B$ be a wide gap in $H$, and let $y\in \Cc^{<\infty}$ be overhardian with $A<_{\ex} y<_{\ex} B$. Then $y$ is $H$-hardian and   $\d$-transcendental over $H$. 
\end{lemma}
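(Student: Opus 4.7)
The plan is to prove the single statement that for every $P \in H\{Y\}^{\ne}$, the germ $P(y) \in \Cc^{<\infty}$ belongs to $(\Cc^{<\infty})^{\times}$, i.e., has a definite sign eventually. This simultaneously yields both conclusions: $\d$-transcendence of $y$ over $H$ (since $P(y) \neq 0$) and $H$-hardianness (since then, for any $Q \in H\{Y\}$ with $Q(y) \neq 0$, the ratio $P(y)/Q(y) \in \Cc^{<\infty}$ is invertible with a definite sign, so the fraction field of the differential ring $H\{y\}\subseteq\Cc^{<\infty}$ forms a Hardy field extending $H$).

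Using the logarithmic decomposition, write $P(Y) = \sum_{\i \in S} P_{\<\i\>} Y^{\<\i\>}$ with $S \subseteq \N^{<\omega}$ finite and $P_{\<\i\>} \in H^{\times}$ for $\i \in S$. Since $y$ is overhardian, each $y^{\<k\>} \in \Cc^{<\infty}$ is defined with $y^{\<k\>} >_{\ex} 0$, so $P(y) = \sum_{\i} P_{\<\i\>} y^{\<\i\>}$ is well-defined in $\Cc^{<\infty}$. I aim to identify a unique $\i^* \in S$ such that $|P_{\<\i^*\>} y^{\<\i^*\>}|$ strictly dominates $|P_{\<\i\>} y^{\<\i\>}|$ in $\Cc^{<\infty}$ for all other $\i \in S$; this gives $P(y) \sim P_{\<\i^*\>} y^{\<\i^*\>}$, hence $P(y) \in (\Cc^{<\infty})^{\times}$ with the sign of $P_{\<\i^*\>}$. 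For this I compare pairs: the ratio $(P_{\<\i\>}/P_{\<\j\>}) \cdot (y^{\<\i\>}/y^{\<\j\>})$ factors as an element of $H^{\times}$ times one of $\Q\<y\>^{\times}$, and by Corollary~\ref{overh1}, $y^{\<\i\>}/y^{\<\j\>}$ is asymptotic to $(y^{\<k_0\>})^d$ up to factors bounded by $(y^{\<k_0\>})^{\pm\epsilon}$ for any $\epsilon>0$, where $k_0$ is the first coordinate in which $\i$ and $\j$ differ and $d = i_{k_0} - j_{k_0}$.

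This is where the wide gap intervenes. Every $h \in H$ is $<_{\ex}$- or $>_{\ex}$-comparable with $y$ (as no $h \in H$ lies strictly between $A$ and $B$), and this comparability extends to $\exp y$, $\log y$, and every positive rational power $y^q$, since all of these lie in the gap too (using $A, \exp A$ cofinal, $B, \sqrt{B}$ coinitial, and $A, \operatorname{sq}(A)$ cofinal). Moreover, since $A$ is cofinally closed under $\exp$, every $h \in H$ with $h <_{\ex} y$ satisfies $\exp_n h <_{\ex} y$ for all $n$, and similarly every $h \in H$ with $h >_{\ex} y$ satisfies $\log_n h >_{\ex} y$ for all $n$. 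The valuation-theoretic core of the argument is to extend this comparability to $(y^{\<k\>})^d$ for $k \geq 1$, $d \geq 1$: passing through the asymptotic couple $(\Gamma, \psi)$ of $H$, the wide gap corresponds to a specific cut of $\Gamma$ which is stable under iteration of $\psi$, so that each $v(y^{\<k\>}) = \psi^k(v(y))$ sits in a definite cut of $\Gamma$. Once this is in hand, the ratio $|P_{\<\i\>}/P_{\<\j\>}| \cdot (y^{\<\i\>}/y^{\<\j\>})$ has a definite limit in $\{0, \infty\} \cup \R^{>}$, giving a total preorder on $S$ with a unique maximum $\i^*$.

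The main obstacle, which is precisely the ``exercise in valuation theory'' alluded to before the lemma, is this last step: translating the wide gap condition on $A, B$ into a precise statement about the position of $\psi^k(v(y))$ in $\Gamma$ for all $k$, and from it deducing the required comparability of $(y^{\<k\>})^d$ with every element of $H$. Once that comparability is established, the dominant term analysis above concludes the proof.
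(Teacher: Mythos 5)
Your overall strategy --- show that $P(y)\in(\Cc^{<\infty})^\times$ for every $P\in H\{Y\}^{\ne}$ by isolating a dominant monomial in the logarithmic decomposition --- is genuinely different from the paper's proof, which instead builds the Hausdorff fields $H_n=H(y,y',\dots,y^{(n)})$ by induction on $n$, computing $v(H_n^\times)=v(H^\times)\oplus\Z vy^{\<0\>}\oplus\cdots\oplus\Z vy^{\<n\>}$ at each stage and invoking Lemma~\ref{lem:5.1.18} (applied to $H_n^{\rc}$) to adjoin $y^{(n+1)}$. Your route would, if completed, yield both conclusions at once and is close in spirit to the proof of Corollary~\ref{overh2}. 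But as written it has a genuine gap, and you flag it yourself: the entire content of the lemma is concentrated in the step you call ``the main obstacle'' --- placing every monomial $h\,(y^{\<0\>})^{d_0}\cdots(y^{\<r\>})^{d_r}$, $h\in H^\times$, on a definite side of $1$ --- and you do not carry it out. Moreover, the way you propose to attack it is circular: you speak of $v(y)$ and of $\psi^k(v(y))$ in the asymptotic couple of $H$, but $y$ is not yet known to lie in any valued field extension of $H$; that it does is precisely (part of) what is being proved. One must instead work at the level of germs, with $<_{\ex}$ and $\prec$ in $\Cc$.

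The missing step can in fact be closed with material already in the paper, and this is what the paper's proof does first: Lemma~\ref{lemoverh} gives $\log y\prec y^\dagger$, and since for a wide gap $A$, $\log A$ are cofinal and $B$, $\log B$ are coinitial, one gets $A<_{\ex}\log y<_{\ex} y^\dagger<_{\ex} y<_{\ex} B$ and then, inductively, $A<_{\ex} y^{\<i\>}<_{\ex} B$ for all $i$; combining this with $A$, $\sq(A)$ cofinal, $B$, $\sqrt{B}$ coinitial, and Corollary~\ref{overh1} yields $A<_{\ex}(y^{\<i\>})^d<_{\ex} B$ and $(y^{\<i+1\>})^d<_{\ex} y^{\<i\>}$ for all $d\ge 1$. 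From this one checks that no nontrivial monomial can be $\asymp$ an element of $H^\times$ (that would produce $h\in H$ with $A<|h|<B$), which is exactly what makes your dominant-term comparison well defined; it is also the $\Q$-linear independence of the $vy^{\<i\>}$ over $v(H^\times)$ that the paper extracts for the $\d$-transcendence claim. So: a viable alternative route, but the proposal stops exactly where the work begins.
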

\begin{proof}  It will be convenient to work with the $y^{\<n\>}$. Note that
Lemma~\ref{lemoverh} and the cofinality of $A$ and $\exp(A)$ give $A <_{\ex} \log y <_{\ex} y^\dagger <_{\ex} y<_{\ex} B$. Using this inductively we obtain $A<_{\ex} y^{\<i\>} <_{\ex} B$ for all $i$. We  prove by induction on $n$ the claim that~$y, y',\dots, y^{(n)}$ generate a Hausdorff
field extension $H_n\ :=\ H(y,y',\dots, y^{(n)})$ of~$H$. 
For $n=0$ this claim follows by applying Lemma~\ref{lem:5.1.18}  to $$P\ :=\ \big\{vh:\,\text{$h\in H^{>}$, $h\preceq g$ for some $g\in A$}\big\}.$$ 
Assume the claim holds for a certain $n$.  It is easy to check that then $y^{\<0\>},\dots,y^{\<n\>}$ lie in $H_n$, that $H_n\ =\ H\big(y^{\<0\>},\dots,y^{\<n\>}\big)$, and
that $H_n$ has value group
$$v(H_n^\times)\ =\ v(H^\times) \oplus \Z vy^{\<0\>} \oplus\cdots \oplus \Z vy^{\<n\>}$$
with $vB < vy^{\<i\>} < vA$ for all $i\le n$ and $vy^{\<i+1\>} =o(vy^{\<i\>})$ for all $i<n$.  Note that~$vA<0$. Let $\Delta$ be the smallest convex subgroup of $v(H^\times)$
that includes $vA$. Then $vA$ is coinitial in $\Delta$, and $\Delta + \Z vy^{\<0\>} +\cdots + \Z vy^{\<n\>}$ is a convex subgroup of~$v(H_n^\times)$ with
$vB < \Delta+ \Z vy^{\<0\>} +\cdots + \Z vy^{\<n\>}$. Hence the real closure $H_n^{\rc}$ of $H_n$, taken as
a Hausdorff field extension of $H_n$, has value group   
$$v\big(H_n^{\rc,\times}\big)\ =\ v(H^\times) \oplus \Q vy^{\<0\>} \oplus\cdots \oplus \Q vy^{\<n\>}, $$
and $\Delta$ as well as $\Delta+\Q vy^{\<0\>} +\cdots + \Q vy^{\<n\>}$ are convex subgroups of $v(H_n^{\rc,\times})$ with~$vB <\Delta+ \Q vy^{\<0\>} +\cdots + \Q vy^{\<n\>}$. (See Figure~\ref{fig:val}.) In view of  $A <_{\ex} (y^{\<n+1\>})^i <_{\ex} y^{\<n\>}$ for all~$i\ge 1$ it now follows from Lemma~\ref{lem:5.1.18} (with $H_n^{\rc}$ in the role of $H$)
 that~$y^{(n+1)}$  generates a Hausdorff field over $H_n^{\rc}$. 
 
 The structure of the value group of $H_n$ yields that $y$ is $\d$-transcendental over $H$ by the Zariski-Abhyankar Inequality~[ADH, 3.1.11]. 
\end{proof}

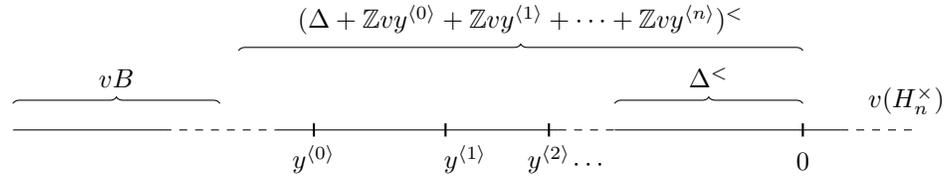
\begin{figure}[ht]
\begin{tikzpicture}

\draw[dashed] (2,0) -- (3.5,0);
 
\draw  (3.5,0) -- (7.25,0);
\draw[dashed] (7.25,0) -- (8,0);

\draw  (8,0) -- (11,0);

\draw (0,0) -- (2,0);
\draw[dashed] (11,0) -- (12,0);

\def\ypt{10.5};

\node at (\ypt+0.5, 0) [anchor = north,shift={(2.5em,2 em)}]{$v(H_n^\times)$};

\draw[thick] (\ypt, 0.25em)--(\ypt, -0.25em);
\node at (\ypt, 0) [anchor = north,shift={(0em,-.5 em)}]{$0$};

\draw[thick] (\ypt-3.375, 0.25em)--(\ypt-3.375, -0.25em);
\node at (\ypt-3.375, 0) [anchor = north,shift={(0em,-.2 em)}]{$y^{\<2\>}$};

\draw[thick] (\ypt-4.75, 0.25em)--(\ypt-4.75, -0.25em);
\node at (\ypt-4.75, 0) [anchor = north,shift={(0.75em,-.2 em)}]{$y^{\<1\>}$};

\draw[thick] (\ypt-6.5, 0.25em)--(\ypt-6.5, -0.25em);
\node at (\ypt-6.5, 0) [anchor = north,shift={(0em,-.2 em)}]{$y^{\<0\>}$};

\node at (\ypt-3, 0) [anchor = north,shift={(0.5em,-.75 em)}]{$\cdots$};

\draw[decoration={brace,raise=1em},decorate] (0,0) -- node [above=1.25em] {$vB$} (2.75,0);
\draw[decoration={brace,raise=1em},decorate] (8,0) -- node [above=1.25em] {$\Delta^<$} (10.5,0);
  
\draw[decoration={brace,raise=3em},decorate] (3,0) -- node [above=3.25em] {$(\Delta+\Z vy^{\<0\>}+\Z vy^{\<1\>}+\cdots+\Z vy^{\<n\>})^<$} (10.5,0);

\end{tikzpicture}
\caption{Value group of $H_n$}\label{fig:val}
\end{figure}

\noindent
Let us now consider the slightly different situation where $H\supseteq \R$ is a Liouville closed Hardy field and $y\in \Cc^{<\infty}$ is overhardian with
$y>_{\ex} H$. Then the proof of the lemma above goes through for $A:= H^{>\R}$ and $B=\emptyset$, although this pair $A$,~$B$ is not a wide gap. The proof not only shows in this situation that $y$ generates a Hardy field $H\<y\>$, but also that $v(H^\times)$ is a convex subgroup of $v\big(H\<y\>^\times\big)$, and that~$v\big(H\<y\>^\times\big)=v(H^\times) \oplus \bigoplus_i \Z vy^{\<i\>}$, and so   $y$ is $\d$-transcendental over $H$. (See also [ADH, 16.6.10].)

\subsection*{Constructing ``countable'' wide gaps} The Liouville closed Hardy field $\Li(\R)=\Li\!\big(\R(x)\big)$ is $\d$-algebraic over $\R$. Hence by \cite[Theorem~3.4]{AD} the sequence $\big(\!\exp_n(x)\big)$
is cofinal in $\Li(\R)$, so $\cf\!\big(\!\Li(\R)\big)=\omega$. More generally, let $H\supseteq \R$ be any Liouville closed Hardy field 
with $\cf(H)=\omega$. 
Then \cite[remarks after Lemma~5.17]{ADH5} yields a $\phi\in \Cc$ with $\phi>_{\ex} H$ and so Theorem~\ref{sjo} gives an $H$-hardian $y\in \Cc^{<\infty}$ such that~$y>_{\ex} H$. We now consider the Hardy-Liouville closure~$\Li\!\big(H\<y\>\big)$ of $H\<y\>$. We have a wide gap $A$,~$B$ in $\Li\!\big(H\<y\>\big)$ given by 
\begin{align*} A\ &:=\ \big\{f\in \Li\!\big(H\<y\>\big):\,  \text{$\R< f < h$  for some $h\in H$}\big\},\\
 B\ &:=\ \big\{g\in \Li\!\big(H\<y\>\big):\, g> H\big\}.
 \end{align*} 
Note that $\cf(H)=\omega$ gives $\cf(A)=\omega$. Moreover: 

\begin{lemma}  $B=\big\{g\in \Li\!\big(H\<y\>\big):\, \text{$g>\log_n y$ for some $n$}\big\}$. 
\end{lemma}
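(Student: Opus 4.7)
The inclusion $\supseteq$ is immediate. By Corollary~\ref{overh3}, $y$ is overhardian; iterating Corollary~\ref{overh4} shows each $\log_n y$ is overhardian, hence hardian, and lies in $\Li(H\langle y\rangle)$. Since $H$ is Liouville closed we have $\exp_n h\in H$ for each $h\in H^{>}$, so $y>H$ gives $y>\exp_n h$, whence $\log_n y>h$; thus $\log_n y>H$ for every $n$. Therefore $g>\log_n y$ implies $g>H$.

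For the nontrivial inclusion $\subseteq$, let $g\in\Li(H\langle y\rangle)$ satisfy $g>H$; the task is to produce some $n$ with $g>\log_n y$, which will also give $\ci(B)=\omega$ and so complete the construction of the wide gap. The plan is induction on the depth of $g$ in the Liouville construction of $\Li(H\langle y\rangle)$ over the real closure of $H\langle y\rangle$, where each application of $\exp$ or antidifferentiation increases the depth by one.

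For the base case $g\in H\langle y\rangle^{\mathrm{rc}}$ we invoke the valuation-theoretic argument from the proof of Lemma~\ref{val}, which applies in the degenerate setup $A=H^{>\R}$, $B=\emptyset$ as the paragraph following that lemma points out: the value group is
\[
v\bigl(H\langle y\rangle^{\mathrm{rc},\times}\bigr)\ =\ v(H^\times)\oplus\bigoplus_{i\in\N}\Q\,v(y^{\langle i\rangle}),
\]
with $v(H^\times)$ a convex subgroup and the classes $v(y^{\langle i\rangle})$ lying in pairwise distinct archimedean classes, strictly decreasing in absolute value (Corollary~\ref{overh1}) and all below $v(H^{>\R})$. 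Since $g>H$ forces $vg<v(H^{>\R})$, the decomposition $vg=\alpha+\sum_i q_i\,v(y^{\langle i\rangle})$ must have a nonzero $q_{i_0}$ for some least $i_0$; necessarily $q_{i_0}>0$, for otherwise the archimedean-dominant term of $vg$ would be positive, contradicting $vg<0$. Hence $g$ is archimedean-equivalent to a positive rational power of $y^{\langle i_0\rangle}$. A parallel valuation computation, using $\log y\asymp y^\dagger/y^{\dagger\dagger}$ from the proof of Lemma~\ref{lemoverh}, yields $\log_n y\asymp y^{\langle n\rangle}$; hence for every $n>i_0$ we have $\log_n y\prec y^{\langle i_0\rangle}\preceq g$, and in particular $g>\log_n y$ as germs.

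For the inductive step, assume the claim at smaller depth and let $g$ arise from some $f$ of strictly smaller depth. If $g=\exp(f)$ with $g>H$, then since $H$ is closed under $\log$ of positive elements, $f=\log g>H$, so by induction $f>\log_n y$ for some $n$, whence $g=\exp f>\log_{n-1} y$ (or $g>y$ if $n=0$). The real-closure operation preserves the desired bound trivially. The case $g'=f$ (antidifferentiation) is the main obstacle: $g>H$ does not immediately force $f>H$, so one must argue via asymptotic integration. The plan here is to exploit that $\Li(H\langle y\rangle)$ is closed (hence $\upo$-free, newtonian, and Liouville closed) and to apply the asymptotic-couple machinery of [ADH, Chapter~9] to rule out that a primitive can create a new archimedean class strictly between $H$ and the descending sequence $(\log_n y)$: concretely, one compares $v(g)$ with $v(g')=v(f)$ via the $\psi$-map of $\Li(H\langle y\rangle)$ and uses the inductive bound on $f$ (or on $1/f$ in the oscillatory case) to transfer it to $g$. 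With this antiderivative step handled, the induction closes and the lemma follows.
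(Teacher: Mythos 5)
Your strategy is genuinely different from the paper's, and as written it is incomplete. The paper's proof is a short $\d$-transcendence-degree count: if $g>H$ but $g<\log_n y$ for all $n$, then $g$ is overhardian (Corollary~\ref{overh3}), so by \cite[Theorem~3.4]{AD} the set $\big\{\exp_n(g)\big\}$ is cofinal in $\Li\big(H\<g\>\big)$; the hypothesis $\exp_n(g)<y$ then gives $y>\Li\big(H\<g\>\big)$, and running the same construction over $\Li\big(H\<g\>\big)$ forces $\Li\big(H\<y\>\big)$ to have $\d$-transcendence degree $2$ over $H$, contradicting the fact that it has degree $1$. No induction over the Liouville tower is needed, and the antiderivative issue never arises.

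The gap in your proposal is exactly where you flag it: the antiderivative step. If $g'=f$ with $f$ of smaller depth, then $g>H$ gives no control on $f$ (in particular $f>H$ may fail), and ``compare $v(g)$ with $v(g')$ via the $\psi$-map and transfer the inductive bound'' is a restatement of the problem, not a solution: the entire content of the lemma is to rule out that integration produces a value strictly between $v(H^\times)$ and the descending classes $\big[v(\log_n y)\big]$, and you give no mechanism for that. There is also a structural defect in the induction itself: an element appearing at a given stage of the Liouville tower is in general an algebraic or rational combination of earlier elements and the newly adjoined exponential or primitive, so the case split ``$g=\exp(f)$ or $g'=f$'' does not exhaust the elements you must treat, and the claim that real closure is ``trivial'' glosses over the same point. (Two minor remarks on the base case, which is otherwise fine: $\log_n y\asymp y^{\<n\>}$ is false as stated --- e.g.\ $v(\log y)=v(y^\dagger)-v(y^{\dagger\dagger})\neq v(y^\dagger)$ --- though equality of archimedean classes holds and suffices; and the whole base case is subsumed by the paper's cofinality argument.)
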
 
\begin{proof} By the remarks preceding this subsection $y$ is $\d$-transcendental over $H$ and~$\{y^n:\,n=0,1,2,\dots\}$ is cofinal in $H\<y\>$. Now $\Li\!\big(H\<y\>\big)$ is $\d$-algebraic over~$H\<y\>$, so $\big\{\!\exp_n(y):\,n=0,1,2,\dots\big\}$ is
cofinal in $\Li\!\big(H\<y\>\big)$ by \cite[Theorem~3.4]{AD} applied to~$K=H\<y\>$.  In particular, $\Li\!\big(H\<y\>\big)$  has $\d$-transcendence degree~$1$ over~$H$. Towards a contradiction, suppose $g\in B$ and $g<\log_n y$ for all
$n$. With $g$ instead of~$y$ we conclude that $\big\{\!\exp_n(g):\,n=0,1,2,\dots\big\}$ is
cofinal in $\Li\!\big(H\<g\>\big)$ and $\Li\!\big(H\<g\>\big)$  has $\d$-transcendence degree $1$ over $H$. 
Hence $y>\Li\!\big(H\<g\>\big)$, and so with $\Li\!\big(H\<g\>\big)$ in the role of $H$ we conclude that
$\Li\!\big(H\<y\>\big)=\Li\!\big(\Li\!\big(H\<g\>\big)\<y\>\big)$ has $\d$-transcendence degree $1$ over
$\Li\!\big(H\<g\>\big)$ and thus $\d$-transcendence degree $2$ over $H$,  a contradiction. 
\end{proof}

\noindent
Thus $A$,~$B$ is a wide gap in $\Li\!\big(H\<y\>\big)$ with $\cf(A)=\ci(B)=\omega$.    
See also Figure~\ref{fig:widegap}.

\begin{figure}[ht]
\begin{tikzpicture}

\draw[densely dotted] (-1,0) -- (0,0);

\draw[dashed] (0,0) -- (0.5,0);
\draw (0.5,0) -- (2,0);
\draw[dashed] (2,0) -- (2.5,0);

\draw[dashed] (3.25,0) -- (5,0);

\draw (5,0) -- (10.75,0);
\draw[dashed] (10.75,0) -- (11.5,0);

\def\ypt{10.5};

\draw[thick] (\ypt, 0.25em)--(\ypt, -0.25em);
\node at (\ypt, 0) [anchor = north,shift={(0em,-.5 em)}]{$y$};

\draw[thick] (\ypt-3, 0.25em)--(\ypt-3, -0.25em);
\node at (\ypt-3, 0) [anchor = north,shift={(0em,-.2 em)}]{$\log y$};

\draw[thick] (\ypt-4.5, 0.25em)--(\ypt-4.5, -0.25em);
\node at (\ypt-4.5, 0) [anchor = north,shift={(0.75em,-.2 em)}]{$\log_2 y$};

\draw[thick] (\ypt-5.25, 0.25em)--(\ypt-5.25, -0.25em);
\node at (\ypt-5.25, 0) [anchor = north,shift={(0em,-.2 em)}]{$\log_3 y$};

\node at (\ypt-6.5, 0) [anchor = north,shift={(0em,-.5 em)}]{$\cdots$};

\node at (-0.5, 0) [anchor = north,shift={(0em,-.5 em)}]{$\R$};

\draw[decoration={brace,raise=1em},decorate] (0,0) -- node [above=1.25em] {$A$} (2.5,0);
\draw[decoration={brace,raise=1em},decorate] (3.25,0) -- node [above=1.25em] {$B$} (11.5,0);  
\end{tikzpicture}
\caption{The countable wide gap $A$, $B$}\label{fig:widegap}
\end{figure}
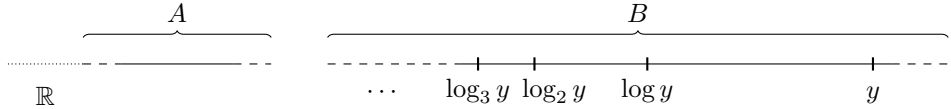

\subsection*{Upper bounds} 
Assume that $a\ge 1$, $\phi\colon [a,+\infty)\to \R^{>}$ is 
$\Cc^{\infty}$ and increasing, and~${\phi > \phi^\dagger}$ on $[a,+\infty)$. Let $(a_m)$ be a strictly increasing sequence of real 
numbers~${\ge a}$ tending to $+\infty$, and let for each $m\ge 1$ a continuous function
$$y_{m-1,m}\colon\ [a_{m-1}, a_{m}]\to \R$$ be given. As in the previous section this gives rise to functions $y_{k,m}$ for $k<m$ and
functions $y_m$ and $y:= y_0$. Finally, assume that $y_{k-1,k} \le \phi_{a_{k-1}}$ on $(a_{k-1}, a_k]$, for all~$k\ge 1$. 
(See \eqref{eq:phib} for the definition of $\phi_b$ for $b\geq a$.)
Our goal is to find an upper bound for $y$ on $[a_0, a_n]$ for $n\ge 1$ that depends only on $\phi$ and $n$, not on the sequence $(a_m)$
or the functions $y_{m-1,m}$.

For $n\ge 1$ and $a_{n-1}\le t \le a_n$, we have
\begin{align*} y_{n-1,n}(t)\ &\le\ \phi_{a_{n-1}}(t)\ =\ \phi(a_{n-1})\exp\int_{a_{n-1}}^t \phi(s)\,ds\\
&\le\  \phi(a_{n-1})\exp\!\big((t-a_{n-1})\phi(t)\big)\ =\ \frac{\phi(a_{n-1})}{\exp\!\big(a_{n-1}\phi(t)\big)}\exp\!\big(t\phi(t)\big)\\
 &\le\ \exp\!\big( t\phi(t)\big). 
\end{align*}
Let $1\le k < n$. Then $y_{k-1,n}(t)\le \exp\!\big(t\phi(t))\big)$ for $a_{k-1}\le t\le a_k$. We assume inductively that for $a_k\le t\le a_n$ we have $y_{k,n}(t)\le \exp_{n-k}\!\big(t\phi(t)+(n-k)t\big)$. 
Then for $a_{k}\le t\le a_n$,
\begin{align*} y_{k-1,n}(t)\ &=\ y_{k-1,k}(a_k)\exp\int_{a_{k}}^t y_{k,n}(s)\,ds\\
 &\le\ y_{k-1,k}(a_k)\exp\!\big[(t-a_{k})\exp_{n-k}
 \big(t\phi(t)+(n-k)t\big)\big]\\
 &\le \frac{y_{k-1,k}(a_k)}{\exp\!\big[a_k\exp_{n-k}\!
 \big(t\phi(t)+(n-k)t\big)\big]}\exp\!\big[t\exp_{n-k}\!\big(t\phi(t)+(n-k)t\big)\big]\\
 &\le\ \exp_{n-(k-1)}\!\big(t\phi(t)+(n-k+1)t\big), 
\end{align*}
where we use that for $t\ge a_k$ we have the inequalities
\begin{align*} y_{k-1,k}(a_k)\ &\le\ 
\exp\!\big(a_k\phi(a_k)\big)\ \le\ \exp\!\big[a_k\exp_{n-k}\!\big(t\phi(t)+(n-k)t\big)\big], \\
 t\exp_{n-k}\!\big(t\phi(t)+(n-k)t\big)\ &\le\  \exp_{n-k}\!\big(t\phi(t)+(n-k+1)t\big),
\end{align*}
the latter being a special case of the easily verified fact that 
$$t\exp_n\!\big(t\phi(t)+nt\big)\ \le\ 
\exp_n\!\big(t\phi(t)+(n+1)t\big) \qquad (n\ge 1,\ t\ge 1).$$ 
We have now proved by downward induction on $k$ that for all $k<n$,
$$y_{k,n}(t)\ \le\ \exp_{n-k}\!\big(t\phi(t)+(n-k)t\big)\qquad \text{ for $a_k\le t\le a_n$.}$$
For $y:= y_0$ this yields 
$$y(t)\ \le\  \exp_n\!\big(t\phi(t)+nt\big)\ \text{ for $n\ge 1$  and $a_0 \le t \le a_n$.} $$
To simplify notation, let $\phi_n\colon [a,+\infty)\to \R$ be the function given by $$\phi_n(t)\ :=\ \exp_n\!\big(t\phi(t)+nt\big),$$ so that that
$\phi < \phi_1 < \phi_2 < \phi_3 < \cdots$ on $[a,+\infty)$, and the bound above takes the form that for all $n\ge 1$ we have
$y\le \phi_n$ on $[a_0,a_n]$.

\subsection*{Back to wide gaps} {\em In the rest of this section $H$ is a Liouville closed Hardy field with~$\R\subseteq H$, and $A$,~$B$ is a wide gap in $H$}. We say that $\phi\in \Cc$ {\bf lies between\/} $A$ and $B$ if~$A<_{\ex} \phi <_{\ex} B$.  By an {\bf intermediary\/} for $A$,~$B$ we mean a $\phi\in \Cc^{\infty}$ lying between~$A$ and~$B$ such that
$0 <_{\ex}\phi^\dagger<_{\ex} \phi$; note that the condition $0 <_{\ex} \phi^\dagger$ implies that $\phi$ is eventually strictly increasing.

\begin{prop}\label{propintermed} Suppose $x\in A$, $\ci(B)=\omega$, and $\phi$ is an intermediary for~$A$,~$B$. Then there exists an overhardian $y\in \Cc^{\infty}$ such that
$\phi<_{\ex} y^{\<n\>} <_{\ex} B$ for all $n$, in particular $A<_{\ex} y <_{\ex} B$, and so $y$ is $H$-hardian, by Lemma~\ref{val}.
\end{prop}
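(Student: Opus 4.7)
\emph{Proof plan.} The plan is to apply the reverse-engineering construction of Corollary~\ref{exyk} with a \emph{diagonal} choice of endpoints $(a_k)$, arranged so that the pointwise upper bounds $\phi_n(t) := \exp_n\!\bigl(t\phi(t)+nt\bigr)$ from the subsection on upper bounds are dominated on each interval $[a_{n-1},a_n]$ by a fixed coinitial sequence in $B$. Using that $\phi$ is an intermediary, I will first pass to a tail to represent $\phi$ by an increasing $\Cc^{\infty}$-function $[a,+\infty)\to\R^{>}$ with $\phi>\phi^{\dagger}>0$ pointwise, and use $\ci(B)=\omega$ to fix a decreasing coinitial sequence $(g_m)$ in $B$.

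The crucial preparatory step will be the germ inequality $\phi_n<_{\ex} B$ for each $n$. The wide-gap hypothesis makes $B$ and $\log B$ coinitial, so for any $b\in B$ there is $b''\in B$ with $b\ge \exp b''$; since $\phi<_{\ex} b''$ this gives $\exp\phi<_{\ex} b$, and iterating, $\exp_k\phi<_{\ex} B$ for every $k$. The assumption $x\in A$ together with $A<_{\ex}\phi$ gives $t\prec\phi(t)$, hence $t\phi(t)+nt<\exp\phi(t)$ eventually, so $\phi_n(t)\le\exp_{n+1}\phi(t)$ eventually. Combined with $\exp_{n+1}\phi<_{\ex} B$, this yields $\phi_n<_{\ex} B$, which furnishes for each pair $(n,m)$ a real $T_{n,m}$ with $\phi_n\le g_m$ on $[T_{n,m},+\infty)$.

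Next I will choose a strictly increasing sequence $a=a_0<a_1<a_2<\cdots$ with $a_k\to\infty$ satisfying the diagonal constraint $a_{k-1}\ge T_{k,m}$ for all $m\le k$, and use Lemma~\ref{phizeta} to pick for each $k\ge 1$ a $y_{k-1,k}\in\Cc^{\infty}[a_{k-1},a_k]$ with $\phi<y_{k-1,k}\le\phi_{a_{k-1}}$ on $(a_{k-1},a_k]$ satisfying the derivative-matching conditions of Corollary~\ref{exyk}. That corollary then produces an overhardian $y=y_0\in\Cc^{\infty}$ with $y^{\<k\>}>_{\ex}\phi$ for all $k$. Applying the pointwise upper bound $y\le\phi_n$ on $[a_0,a_n]$ also to the shifted construction beginning at index $n$ (whose data are the functions $y_{n+k-1,n+k}$ on $[a_{n+k-1},a_{n+k}]$) gives $y^{\<n\>}\le\phi_{j-n}$ on $[a_{j-1},a_j]$ for all $j>n$; together with the pointwise inequality $\phi_{j-n}\le\phi_j$ and the diagonal choice, this will force $y^{\<n\>}\le g_m$ on $[a_{n+m-1},+\infty)$ for every $n$ and $m$. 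Coinitiality of $(g_m)$ in $B$ then gives $y^{\<n\>}<_{\ex} B$ for every $n$, and Lemma~\ref{val} finally supplies $H$-hardianness.

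The main obstacle will be the preparatory step — controlling the rapidly growing $\phi_n$ uniformly in $n$ — which genuinely requires both the wide-gap structure (to iterate $\exp$ under $B$) and the hypothesis $x\in A$ (to absorb the extraneous factor $t\phi(t)+nt$ into one further exponential iterate). Once those bounds are in hand, the remaining argument is routine diagonal bookkeeping.
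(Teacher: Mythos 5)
Your proposal is correct and follows essentially the same route as the paper: represent $\phi$ with $0<\phi^\dagger<\phi$ on a half-line, deduce $\phi_n<_{\ex}B$ from the wide-gap structure and $x\in A$, choose the partition points $(a_k)$ diagonally against a decreasing coinitial sequence in $B$, and combine Corollary~\ref{exyk} with the upper bound $y\le\phi_n$ on $[a_0,a_n]$. The only difference is that your shifted-construction bound on each $y^{\<n\>}$ separately is unnecessary: once $y<_{\ex}B$ is established, $y^{\<n\>}<_{\ex}y<_{\ex}B$ is automatic because $y$ is overhardian.
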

\begin{proof} 
Take a strictly increasing $\Cc^{\infty}$-function~$\phi\colon [a,+\infty)\to \R^{>}$ representing the germ $\phi$
such that $a\ge 1$ and $0 <\phi^\dagger< \phi$ on $[a,+\infty)$. 
For $n\ge 1$, let $\phi_n\colon [a,+\infty) \to \R$ be the function
from the previous subsection given by $$\phi_n(t)\ =\ \exp_n\!\big(t\phi(t)+nt\big).$$  From $x\in A$ and $B$ and $\log B$ being cofinal we obtain $\phi_n<_{\ex} \phi_{n+1} <_{\ex} B$.  
 
Take a strictly decreasing
sequence $g_1 > g_2 > g_3> \cdots$ in $B$, coinitial in $B$. Let~$g_n$ also denote a continuous function $[a,+\infty)\to \R$ representing the germ $g_n$. 
Choose a strictly increasing sequence $b_1 < b_2 < b_3 < \cdots$ of real numbers $\ge a$ tending to~${+\infty}$ such that
$\phi_n< g_n$ and $g_{n+1} < g_n$, on $[b_n, +\infty)$. 
Next, set $a_n:= b_{n+1}$, and choose functions $y_{k-1,k}\in \Cc^{\infty}[a_{k-1},a_k]$ for $k\ge 1$ such that conditions
(i)--(iv) of Corollary~\ref{exyk} are satisfied. (The discussion following that corollary indicates how to construct such functions,
using Lemma~\ref{phizeta}.) 
This yields an overhardian~$y:= y_0\in \Cc_{a_0}^{\infty}$ as in that corollary, with
$y^{\<k\>}>_{\ex} \phi$ for all $k$. 

Let $n\ge 1$. The upper bound from the previous subsection gives
$y\le \phi_n$ on~$[a_0, a_n]$, so
$y< g_n$ on $[b_n, b_{n+1}]$.  With $n+1$ instead of $n$ this gives $y< g_{n+1}$ on~$[b_{n+1}, b_{n+2}]$, and as 
$g_{n+1} < g_{n}$ on $[b_n, +\infty)$, we get $y< g_n$ on $[b_n, b_{n+2}]$. Continuing this way we get $y< g_n$ on $[b_n, b_{n+3}]$, and so on, and thus $y < g_n$ on $[b_n,+\infty)$. Since this holds for all $n\ge 1$, this yields $y<_{\ex}B$. 
\end{proof}

\begin{lemma}\label{intermed} Suppose $x\in A$, and some element of $\Cc$ lies between $A$ and $B$. Then there exists an intermediary for $A$,~$B$.
\end{lemma}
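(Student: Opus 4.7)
The plan is to construct $\phi$ of the form $\phi=\exp g$ where $g\in\Cc^{\infty}$ is eventually strictly increasing, lies strictly between $A$ and $B$, and satisfies $g'<_\ex\exp g$. With such a $g$, we will have $\phi\in\Cc^\infty$, $\phi^\dagger=g'>_\ex 0$, $\phi^\dagger=g'<_\ex e^g=\phi$, and $A<_\ex\phi<_\ex B$ by the wide-gap property that $A,\log A$ are cofinal and $B,\log B$ are coinitial (so $A<_\ex g<_\ex B$ transfers to $A<_\ex\exp g<_\ex B$). Thus $\phi$ will be an intermediary for $A, B$.

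Starting from the given $\psi\in\Cc$ with $A<_\ex\psi<_\ex B$, I would first replace $\psi$ by $\log\psi$ (still between $A$ and $B$ by the wide-gap property) so as to assume $\psi$ is represented by a positive continuous function on $[a,+\infty)$. Next, monotonize: $\tilde\psi(t):=\sup_{a\leq s\leq t}\psi(s)$ is continuous and non-decreasing and still lies between $A$ and $B$; the upper bound here uses that each $b\in B\subseteq H^{>\R}$, being a Hardy-field germ with $b>\R$, is eventually strictly increasing as a function, so $\tilde\psi(t)\leq b(t)$ eventually. Adding $x$ yields $f:=\tilde\psi+x\in\Cc$, a continuous, strictly increasing germ still between $A$ and $B$, using the additive-gap property of the wide gap: $A+1\subseteq A$ for the lower bound (since $A,A+A$ cofinal and $A$ may be taken downward closed in $H^{>\R}$), and $B,B+B$ coinitial for the upper bound.

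To smooth $f$ to $g\in\Cc^\infty$ I would pick $a_0<a_1<\cdots$ in $\R$ with $a_n\to+\infty$ chosen adaptively so that the affine slope $s_n:=(f(a_{n+1})-f(a_n))/(a_{n+1}-a_n)$ stays below $\tfrac13\exp(f(a_n)-1)$ on each $[a_n,a_{n+1}]$; this is possible because the budget $\exp(f(a_n)-1)\to+\infty$ extremely rapidly, allowing us to adaptively shrink $a_{n+1}-a_n$ as needed while still keeping $a_n\to+\infty$. On each interval I would take $g$ to be a smooth, strictly increasing interpolant of $f$ at the endpoints with $|g-f|<1$ and $|g'|\leq\tfrac32 s_n$, patched across the $a_n$ using Lemma~\ref{phizetavar} (to match derivatives at endpoints) together with the bump-function smoothing of Lemma~\ref{phizeta}. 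The resulting $g$ satisfies $|g-f|<1$, whence $A<_\ex g<_\ex B$ (again by the additive-gap properties), and on each $[a_n,a_{n+1}]$,
\[
g'\ \leq\ \tfrac32\, s_n\ \leq\ \tfrac12\exp(f(a_n)-1)\ \leq\ \tfrac12\exp(g)\ <\ \exp(g),
\]
so $g'<_\ex\exp g$, as required.

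The main obstacle is the simultaneous management of smoothness, strict monotonicity, proximity to $f$ (to keep $g$ between $A$ and $B$), and the pointwise derivative bound $g'<\exp g$. The last is the most delicate; it is handled by the adaptive partition and the careful bump-function construction on each piece. Since $\exp(f(a_n)-1)$ grows enormously faster than any fixed constant multiple of the affine slope needed to achieve $|g-f|<1$, there is ample room to arrange this—though verifying that the adaptive choice can always be made while preserving $a_n\to\infty$, in particular when $f$ has rapidly varying local slope, is where the construction requires care.
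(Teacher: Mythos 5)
Your reduction is fine: it suffices to produce a smooth, eventually strictly increasing $g$ with $A<_{\ex} g<_{\ex} B$ and $g'<_{\ex}\exp g$, and the monotonization $\tilde\psi+x$ is harmless. The construction of $g$, however, has a genuine gap. The two conditions you impose on each piece --- $\abs{g-f}<1$ on $[a_n,a_{n+1}]$ and $\abs{g'}\le\frac{3}{2}s_n$ --- are incompatible for a general continuous $f$ in the gap, and worse, \emph{no} $g$ with $\abs{g-f}<1$ can satisfy $g'<_{\ex}\exp g$ for such $f$. The reason: $f$ may realize essentially all of its increase $D=f(a_{n+1})-f(a_n)$ on a sub-interval of length $\varepsilon$ that is tiny compared to $\exp(-f)$ there, and then the mean value theorem forces any $g$ within distance $1$ of $f$ to have $g'\ge (D-2)/\varepsilon$ somewhere on that sub-interval, which exceeds $\exp(g)\le\exp(f+1)$. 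Concretely, starting from any germ $\psi_0$ in the gap, add to it bumps of height $\psi_0(n)$ supported on intervals of length $\exp\!\big(\!-\exp(\psi_0(n+1))\big)$ around each integer $n$; the result still lies between $A$ and $B$ (using that $B$, $2B$ are coinitial for a wide gap) and survives your monotonization, but each bump defeats every $g$ uniformly close to $f$. Your preliminary claim that $a_{n+1}$ can be chosen adaptively so that $s_n\le\frac13\exp(f(a_n)-1)$ is also unjustified: shrinking $a_{n+1}-a_n$ does not decrease the difference quotient of a merely continuous $f$ (it tends to the upper Dini derivative, possibly $+\infty$), while enlarging it makes $f(a_{n+1})$, hence $s_n$, grow transexponentially, since $x\in A$ and the wideness of the gap force $f$ to dominate every $\exp_k(x)$ eventually.

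The missing idea is that the intermediary must be allowed to lag behind $f$ by a bounded \emph{time shift} rather than a bounded \emph{amount}. The paper first replaces $f$ by $F=1+\int_a^x f$, which is smooth, increasing, $\ge 1$, and still in the gap by Lemmas~\ref{in2}(iii) and~\ref{in3}(ii); it then sets $\theta(t)=\int_t^{t+1}F(s)^{-1}\,ds$ and $\phi:=\theta^{-1}$, so that $F(t)\le\phi(t)\le F(t+1)$ and $\theta'(t)=F(t+1)^{-1}-F(t)^{-1}\in(-1,0)$ automatically, giving $0<_{\ex}\phi^\dagger<_{\ex}\phi$ with no pointwise control on $F'$ needed. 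The price of the time shift is that the upper bound $\phi<_{\ex}B$ is no longer inherited from $F<_{\ex}B$ by a soft additive argument: one must show that being below $B$ survives replacing $F(x)$ by $F(x+1)$, and this is exactly the delicate point handled via Lemma~\ref{in1}(iii) and Lemma~\ref{in3}(iii), using that every $b\in B$ satisfies $b\succ\ex^x$. Your proposal has no substitute for this step, and some such use of the Hardy-field structure of $B$ appears unavoidable.
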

\begin{proof} Let $f\colon [a,+\infty)\to \R^{>}$ be a continuous function whose germ at $+\infty$ lies between $A$ and $B$. Lemma~\ref{smooth} gives
a $\Cc^{\infty}$-function $f^*\colon [a,+\infty)\to \R^{>}$ such that~$f < f^* < f+1$ on $[a,+\infty)$. Then $f^*<_{\ex}B$, and so replacing
$f$ by $f^*$ we have arranged that $f\in \Cc_a^{\infty}$. Defining $F(t):=1 +\int_a^tf(s)\,ds$ we obtain a strictly increasing $F\in \Cc_a^{\infty}$ with $F'=f$. By Lemma~\ref{intineq} we have
$\int A<_{\ex} F <_{\ex} \int B$, and so $A<_{\ex} F<_{\ex} B$ by
Lemma~\ref{in2}(iii) and Lemma~\ref{in3}(ii). Thus we can replace $f$ by $F$ and arrange in this way that $f$ is also strictly increasing and $f\ge 1$. Next, consider the strictly decreasing $\Cc^{\infty}$-function $\theta\colon [a,+\infty) \to (0,1]$ given by 
$$\theta(t)\ :=\ \int_t^{t+1}f(s)^{-1}\,ds\ =\ \int_0^1 f^{-1}(s+t)\,ds, \qquad f^{-1}(s):= f(s)^{-1} \text{ for $s\ge a$.}$$ 

\claim{$\theta'> -1$ on $[a,+\infty)$, and $B^{-1} <_{\ex} \theta <_{\ex} A^{-1}$.}

\smallskip\noindent
That $\theta'>-1$ on $[a,+\infty)$ is clear from 
$$\theta'(t)\ =\ \int_0^1 (f^{-1})'(s+t)\,ds\  =\ f(t+1)^{-1}- f(t)^{-1}.$$
Also $\theta(t)< f(t)^{-1}$ for $t\ge a$, so $\theta <_{\ex}  f^{-1} <_{\ex} A^{-1}$. 

To establish the claim it remains to show that $B^{-1}<_{\ex} \theta$, and this is where we shall need Lemma~\ref{in3}(iii). 
Let $g\in \int B^{-1}$, so $g\in H^{\prec 1}$ and $g'=h^{-1}$ with~${h\in B}$.
We have $h\succ \ex^{x}$, so after increasing $a$ if necessary we can assume that the germ~$h$ is represented
by a continuous function $h\colon [a,+\infty) \to \R$ with~$h(t)> \ex^t$ and thus~${0 < h(t)^{-1} < \ex^{-t}}$, for all $t\ge a$. This yields a $\Cc^1$-function
$$t\mapsto \int_{+\infty}^t h(s)^{-1}\,ds\ :=\ -\int_t^{+\infty}h(s)^{-1}\,ds\ :\,[a,+\infty) \to \R$$
with derivative $h^{-1}$ and tending to $0$ as $t\to +\infty$, so
this function represents the germ $g$, and will be denoted below by $g$. Thus for $t\ge a$,
$$g(t+1)-g(t)\ =\ \int_t^{t+1}h(s)^{-1}\,ds. $$
Moreover, $h(s)^{-1} < f(s)^{-1}$ for all sufficiently large $s\ge a$, and thus 
\begin{equation}\label{eq:g(t+1)-g(t)}
g(t+1)-g(t)\ <\ \theta(t)\quad \text{ for all sufficiently large $t\ge a$.}
\end{equation} 
From $h^{-1}\prec \ex^{-x}$ we get $0 < -g\prec \ex^{-x}$, and so
Lemma~\ref{in1}(iii) applied to $-g$ and combined with \eqref{eq:g(t+1)-g(t)} gives $-g(t)/2 < \theta(t)$ for all sufficiently large
$t\ge a$. In view of Lemma~\ref{in3}(iii) and coinitiality of $B$, $2B$ this yields $B^{-1}<_{\ex} \theta$, as claimed.  

From the claim it follows that the germ of $\phi:= \theta^{-1}\colon [a,+\infty)\to \R$ is an intermediary for~$A$,~$B$.
\end{proof}

\begin{cor}\label{xo} Suppose $x\in A$ and $\cf(A)=\ci(B)=\omega$. Then
there exists an over\-hardian $y\in \Cc^{\infty}$ with 
$A<_{\ex} y  <_{\ex} B$, thus generating a Hardy field over $H$.
\end{cor}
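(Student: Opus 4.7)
The proof will combine the three main results of the section already proved in the excerpt: Corollary~\ref{corsmooth} on interpolating between countable sets by a $\Cc^\infty$-germ, Lemma~\ref{intermed} on upgrading a raw interpolant to an intermediary, and Proposition~\ref{propintermed} on constructing an overhardian germ once an intermediary is available. Since all of the technical content has been carried out in these results, the corollary amounts to just stitching them together under the countability hypotheses.

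First, I would use the hypothesis $\cf(A)=\ci(B)=\omega$ to pick an increasing cofinal sequence $(a_n)$ in $A$ and a decreasing coinitial sequence $(b_n)$ in $B$. Applying Corollary~\ref{corsmooth} to the countable sets $\{a_n\}$ and $\{b_n\}$ (which are nonempty by $x\in A$ and since $B\ne\emptyset$ as $A,B$ is a wide gap) yields some $\phi_0\in\Cc^\infty$ with $a_n<_{\ex}\phi_0<_{\ex}b_n$ for all $n$. Cofinality of $\{a_n\}$ in $A$ and coinitiality of $\{b_n\}$ in $B$ then upgrade these inequalities to $A<_{\ex}\phi_0<_{\ex}B$, so $\phi_0$ is an element of $\Cc$ lying between $A$ and $B$ in the sense of Lemma~\ref{intermed}.

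Next, since $x\in A$ is assumed, the hypotheses of Lemma~\ref{intermed} are met by $\phi_0$, producing an intermediary $\phi\in\Cc^\infty$ for $A,B$ (so $\phi$ lies between $A$ and $B$ and satisfies $0<_{\ex}\phi^\dagger<_{\ex}\phi$). Now all hypotheses of Proposition~\ref{propintermed} are in place: $x\in A$, $\ci(B)=\omega$, and the intermediary $\phi$. That proposition then delivers an overhardian $y\in\Cc^\infty$ with $\phi<_{\ex}y^{\<n\>}<_{\ex}B$ for every $n$; in particular $A<_{\ex}y<_{\ex}B$, and (by the closing remark of Proposition~\ref{propintermed}, which invokes Lemma~\ref{val}) $y$ is $H$-hardian, so $H\<y\>$ is a Hardy field extension of $H$.

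No genuine obstacle remains at this stage; the hard work was already done in Proposition~\ref{propintermed} (the reverse-engineering construction of overhardian germs together with the upper-bound estimates for $y_{k,m}$) and Lemma~\ref{intermed} (turning an arbitrary continuous interpolant into a strictly increasing one whose reciprocal mean value is small, via Lemma~\ref{in3}(iii)). The only point requiring mild care in writing up is making sure that the passage from the countable cofinal/coinitial witnesses to the full sets $A,B$ preserves the strict eventual inequalities $<_{\ex}$, which is immediate from cofinality and coinitiality.
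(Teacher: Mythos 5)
Your proof is correct and follows essentially the same route as the paper: produce a germ of $\Cc$ lying between $A$ and $B$, upgrade it to an intermediary via Lemma~\ref{intermed}, and conclude with Proposition~\ref{propintermed}. The only (harmless) difference is that you obtain the initial interpolant from Corollary~\ref{corsmooth} applied to cofinal/coinitial sequences, whereas the paper cites Lemmas~\ref{sum1} and~\ref{sum2} directly; both are valid, and your step passing from the countable witnesses to all of $A$, $B$ via cofinality/coinitiality is sound.
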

\begin{proof} Using $\cf(A)=\ci(B)=\omega$, Lemmas~\ref{sum1} and~\ref{sum2} give an
element of $\Cc$ that lies between $A$ and $B$. Then Lemma~\ref{intermed} provides an intermediary for~$A$,~$B$, which in view of Proposition~\ref{propintermed} gives the desired result.
\end{proof}

\subsection*{Proof of Theorem~\ref{sjo+}} We assume $\cf(A)=\ci(B)=\omega$. Our job is to obtain a $y\in \Cc^{<\infty}$ such that $A <_{\ex} y <_{\ex} B$ and $y$ generates a Hardy field over $H$. Take any $g\in A$. Then $g>\R$, so $g'$ is active in $H$, and we pass to the compositional conjugate $H\circ g^{\text{inv}}$, which is again a Liouville closed Hardy field containing $\R$ as a subfield, and having $A\circ g^{\inv}, B\circ g^{\inv}$ as a wide gap with $x=g\circ g^{\inv}\in A\circ g^{\inv}$. Now Corollary~\ref{xo} yields a $y\in \Cc^{\infty}$ such that
$A\circ g^{\inv} <_{\ex} y <_{\ex} B\circ g^{\inv}$ and $y$ generates a Hardy field over $H\circ g^{\inv}$. It follows that
$y\circ g\in \Cc^{<\infty}$, $A<_{\ex} y\circ g <_{\ex} B$, and
$y\circ g$ generates a Hardy field over $H$. 
This concludes the proof. \qed

\medskip\noindent 
If $A$ in Theorem~\ref{sjo+} contains an element of $\Cc^{\infty}$, then we can take $y$ in the conclusion of that theorem to be in $\Cc^{\infty}$ as well: in the proof, take $g\in \Cc ^{\infty}$.

\section{The Number of Maximal Hardy Fields}\label{secnmh}

\noindent
Since $\Cc$ has cardinality $\frak{c}=2^{\aleph_0}$, the number of Hardy fields (and thus of maximal Hardy fields) is at most $2^{\frak{c}}$.  By
Proposition~3.7 in \cite{Boshernitzan87} there are $\geq\frak c$ many maximal Hardy fields. 
In this short section we show:

\begin{theorem}\label{th2c} The number of maximal Hardy fields is equal to $2^{\frak{c}}$. 
\end{theorem}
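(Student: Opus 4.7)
The upper bound is immediate: every Hardy field is a subset of $\mathcal{C}$, and $\lvert\mathcal{C}\rvert=\mathfrak c$, so there are at most $2^{\mathfrak c}$ Hardy fields. For the lower bound, the plan is to build by transfinite recursion a binary tree $(H_\sigma)_{\sigma\in 2^{<\mathfrak c}}$ of Liouville closed Hardy fields containing $\mathbb R$, each of cardinality $\leq\mathfrak c$, with $H_\sigma\subseteq H_\tau$ whenever $\sigma\sqsubseteq\tau$, unions at limits, and at each successor a pair of \emph{incompatible extensions} $H_{\sigma^\frown 0}$, $H_{\sigma^\frown 1}$ realizing respective germs $y^0_\sigma,y^1_\sigma\in\mathcal C$ whose difference $y^0_\sigma-y^1_\sigma$ changes sign infinitely often. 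No Hardy field then contains both $y^0_\sigma$ and $y^1_\sigma$.

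Granting such a tree, for each $\xi\in 2^{\mathfrak c}$ set $H_\xi:=\bigcup_{\alpha<\mathfrak c}H_{\xi|\alpha}$, a Hardy field of cardinality $\leq\mathfrak c$, and extend it to a maximal Hardy field $M_\xi$. If $\xi\ne\xi'$ and $\alpha$ is least with $\xi(\alpha)\neq\xi'(\alpha)$, then $M_\xi\ni y_{\xi|\alpha}^{\xi(\alpha)}$ and $M_{\xi'}\ni y_{\xi|\alpha}^{\xi'(\alpha)}$; since these two germs cannot coexist in one Hardy field, $M_\xi\neq M_{\xi'}$. So $\{M_\xi:\xi\in 2^{\mathfrak c}\}$ yields $2^{\mathfrak c}$ distinct maximal Hardy fields.

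The core step is the construction at each successor stage. Given Liouville closed $H:=H_\sigma\supseteq\mathbb R$, Theorem~\ref{sjo} produces an overhardian $y^0\in\mathcal C^\infty$ with $y^0>_{\ex}H$, which is $H$-hardian by Corollary~\ref{overh3}. I would then define $y^1:=y^0+\epsilon$, where $\epsilon=\sum_{k}(-1)^k c_k\rho_k\in\mathcal C^\infty$ is a sum of shifted copies of the bump $\rho$ from Section~\ref{sec:analytic}, with $\rho_k$ supported on pairwise disjoint unit intervals $[a_k,a_k+1]$ (with $a_k\to\infty$) and coefficients $c_k>0$ chosen so rapidly decaying that $\epsilon^{(j)}\prec(y^0)^{(j)}$ in $\mathcal C$ for every $j$. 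Concretely, taking $c_k\leq (k\cdot\max_{j\leq k}C_j)^{-1}\min_{j\leq k}(y^0)^{(j)}(a_k)$, where $C_j=\sup|\rho^{(j)}|$, suffices. An inductive derivation computation, analogous to the one in Corollary~\ref{overh1} and Lemma~\ref{lemoverh}, then shows that $(y^1)^{\langle m\rangle}\sim(y^0)^{\langle m\rangle}$ in $\mathcal C$ for every $m$: the base case $m=0,1$ is direct from $\epsilon/y^0,\epsilon'/(y^0)'\to 0$, and each successive logarithmic derivative introduces only a correction term of the form $\delta_m'/(1+\delta_m)$ with $\delta_m\to 0$ and $\delta_m'\prec(y^0)^{\langle m+1\rangle}$. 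Hence $y^1$ is still overhardian, and $y^1>_{\ex}H$, so by Corollary~\ref{overh3} it is $H$-hardian. Meanwhile $y^1-y^0=\epsilon$ is positive on $[a_{2k},a_{2k}+1]$ and negative on $[a_{2k+1},a_{2k+1}+1]$, so it changes sign on an unbounded set and cannot lie in any Hardy field. Setting $H_{\sigma^\frown i}:=\operatorname{Li}(H\langle y^i\rangle)$ (each a Liouville closed Hardy field of cardinality $\leq\mathfrak c$) completes the successor step.

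The main obstacle is the technical verification that the oscillating perturbation $y^1:=y^0+\epsilon$ is still overhardian; this relies on controlling all iterated logarithmic derivatives $(y^1)^{\langle m\rangle}$ under a small-but-oscillating perturbation of $y^0$. The scheme above for choosing $c_k$ ensures the perturbation is asymptotically negligible at every differential order, but an honest proof must write out the recursion $(y^1)^{\langle m+1\rangle}=(y^0)^{\langle m+1\rangle}+\delta_m'/(1+\delta_m)$ and bound $\delta_m'$ uniformly in $m$. (An alternative route avoiding this bookkeeping is to apply Theorem~\ref{sjo+} after first producing a countable wide gap in $\operatorname{Li}(H\langle y^0\rangle)$ by the construction of Section~\ref{fwg}, and to extract two incompatible fillings by making the boundary-value choices in Proposition~\ref{propintermed} alternate between two specified open subintervals; either route supplies the required pair.)
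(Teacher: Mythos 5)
Your overall architecture — a binary tree of height $\mathfrak{c}$ whose successor steps produce two germs whose difference changes sign on an unbounded set, hence cannot coexist in a Hardy field — is the paper's architecture, and your perturbation $y^1=y^0+\epsilon$ with $\epsilon$ hardy-small is essentially the paper's Lemma~\ref{ohhb} (Boshernitzan: overhardian plus hardy-bounded is overhardian), applied there with $\epsilon=\sin x$; that part of your argument is fine and the bookkeeping you defer is already done in Section~\ref{secnmh}.

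The genuine gap is in the successor step. Theorem~\ref{sjo} has the hypothesis that \emph{some} $\phi\in\Cc$ satisfies $\phi>_{\ex}H$, and you never verify it. It holds when $\cf(H)=\omega$ (Lemma~\ref{sum1}), but your construction installs a new top element $y^i_\sigma>_{\ex}H_\sigma$ at every node, so at a node of height $\lambda$ with $\cf(\lambda)>\omega$ the field $H_\sigma$ is cofinal with a chain in $\Cc$ of uncountable cofinality, and ZFC does not guarantee that such a chain is eventually dominated by a single continuous function (this can fail already at length $\omega_1$ when the bounding number of the eventual-domination order is $\omega_1<\mathfrak{c}$). Corollary~\ref{corsjo} shows the hypothesis actually \emph{fails} for maximal Hardy fields, so it is a substantive condition, and your tree must reach height $\mathfrak{c}$. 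The paper avoids this by never insisting on extending at the top: it passes to the quotient $*H^{\te}$ of overhardian germs modulo exponential equivalence, keeps $\abs{*H_s^{\te}}\le\abs{\lambda}<\mathfrak{c}$ along the recursion, invokes Lemma~\ref{lmif} to produce a countable gap $P<Q$ in $*H_s^{\te}$, and fills it via Lemma~\ref{lemte}: when $Q\neq\emptyset$ this is an \emph{interior} countable wide gap handled by Corollary~\ref{xo} from Section~\ref{fwg}, and the top case $Q=\emptyset$ only arises when $*H^{\te}$, hence $H$ itself, has countable cofinality, so that Theorem~\ref{sjo}'s hypothesis is then automatic. Your parenthetical alternative via Theorem~\ref{sjo+} inherits the same defect: it still begins with Theorem~\ref{sjo}, and it does not explain where a fresh countable gap in $H_\sigma$ comes from at the later stages, which is precisely what Lemma~\ref{lmif} applied to $*H_\sigma^{\te}$ supplies.
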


\noindent
This is mainly an application of the previous two sections.  Let  $S$ be an ordered set. Define a {\bf countable gap\/} in $S$ to be a pair~$P$,~$Q$ of countable subsets of $S$ such that $P < Q$ and there is no $s\in S$ with $P<s<Q$; for example, if $P$ is a countable cofinal subset of $S$, then~$P$,~$\emptyset$ is a countable gap in $S$. Also, $S$ is $\eta_1$ iff it has no countable gap. 
We thank Ilijas Farah for pointing out that the following well-known lemma might be useful in proving statements like 
Theorem~\ref{th2c} via a suitable binary tree construction: 

\begin{lemma}\label{lmif} If $S$ has cardinality $<\frak{c}$, then $S$ has a countable gap.
\end{lemma}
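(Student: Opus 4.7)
My plan is to prove the contrapositive: if $S$ is $\eta_1$ (has no countable gap), then $|S|\ge\mathfrak{c}$. The degenerate case $S=\emptyset$ is immediate, since $P=Q=\emptyset$ already constitutes a countable gap in the sense of the lemma. Henceforth I assume $S$ is nonempty. Plugging $P=\{s\}$, $Q=\emptyset$ (and symmetrically) into the defining condition of $\eta_1$ shows that $S$ has neither a largest nor a least element; plugging $P=\{p\}$, $Q=\{q\}$ with $p<q$ in $S$ shows that $S$ is densely ordered.

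A classical Cantor back-and-forth argument then yields an order-preserving embedding $\iota\colon\Q\to S$: enumerate $\Q=\{q_0,q_1,\dots\}$, pick $\iota(q_0)$ arbitrarily, and at each subsequent step place the image of $q_{n+1}$ in its correct relative position among $\iota(q_0),\dots,\iota(q_n)$, using density and the absence of endpoints in $S$.

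For each $r\in\R$ I next produce $t_r\in S$: set $t_r:=\iota(r)$ if $r\in\Q$; otherwise the countable sets
\[
P_r\ :=\ \iota(\Q^{<r}),\qquad Q_r\ :=\ \iota(\Q^{>r})
\]
satisfy $P_r<Q_r$ in $S$, so the $\eta_1$ hypothesis furnishes some $t_r\in S$ with $P_r<t_r<Q_r$. A routine case check---given $r<r'$ in $\R$, insert a rational $q$ with $r<q<r'$ and verify $t_r<\iota(q)<t_{r'}$---shows that $r\mapsto t_r$ is strictly order-preserving, hence injective. Therefore $|S|\ge|\R|=\mathfrak{c}$, proving the contrapositive.

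There is no genuine obstacle here; the whole argument is a familiar Cantor/Dedekind-cut construction, using back-and-forth to realize an internal copy of $\Q$ in $S$ and then $\eta_1$-ness to realize every real Dedekind cut of that copy as a distinct element of $S$. The only slightly fussy point is the degenerate case $S=\emptyset$ and the interpretation of the $\eta_1$ condition when $P$ or $Q$ is empty, which I dispose of at the outset.
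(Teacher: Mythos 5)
Your proof is correct and follows essentially the same route as the paper's: embed $\Q$ into $S$ using density, then use the no-countable-gap hypothesis to realize each irrational Dedekind cut of the embedded copy by a distinct element of $S$, giving $\abs{S}\geq\frak{c}$. The paper's version is just terser, omitting the explicit treatment of the empty case and the verification of injectivity.
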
 
\begin{proof} Suppose $S$ has no countable gap. Then $S$ is in particular dense: for any~${p < q}$ in $S$ there is an $s\in S$ with $p<s<q$. Thus we can embed
 the ordered set $(\Q;{<})$ of rational numbers into $S$. Identifying $\Q$ with its image under such an embedding, there is for every $r\in \R\setminus \Q$
 an $s\in S$ such that for all $t\in \Q$:  $s>t$ in $S$ iff $r>t$ in~$\R$. Thus the cardinality of $S$ is at least that of $\R$, which is $\frak{c}$.
\end{proof} 

\noindent
Below $H\supseteq \R$ is a 
Hardy field. We set
$$H^{\te}\ :=\ \{f\in H:\, \text{$f$ is overhardian}\},$$
 the {\bf transexponential} (or {\bf overhardian}) {\bf part} of $H$. 
By   Corollary~\ref{overh4} we have
$$H^{\te}\ =\ \big\{f\in H:\, \text{$f>\exp_n(x)$ for all $n$}\big\},$$
so $H^{\te}$ is closed upward in $H^{>\R}$. On $H^{\te}$ we define the equivalence relation
$\sim_{\exp}$ of {\bf exponential equivalence\/} by 
\begin{align*}
f\sim_{\exp} g &\quad :\Longleftrightarrow\quad \text{$f\le \exp_n(g)$ and $g\le \exp_n(f)$ for some $n$}\\
 &\quad \ \Longleftrightarrow\quad \text{$f\le \exp_m(g)$ and $g\le \exp_n(f)$ for some $m$, $n$.}
\end{align*} Let $*f$ be the exponential equivalence class of $f\in H^{\te}$, 
 a convex subset of $H^{\te}$. 
We linearly order the set~$*H^{\te}$ of exponential equivalence classes by:
$$*f < *g\quad :\Longleftrightarrow\quad \text{$\exp_n(f) < g$ for all $n$}\qquad (f,g\in H^{\te}).$$
For a Hardy field extension  $H_1$ of $H$ we have $H_1^{\te}\cap H= H^{\te}$, and we identify $*H^{\te}$ with a subset of $*H_1^{\te}$ via the order-preserving embedding 
$$*f \,(\text{in $*H^{\te}$} ) \ \mapsto\ *f \,(\text{in $*H_1^{\te}$})\quad \text{ for }f\in H^{\te}.$$
Note that $\R(x)^{\te}\ =\ \emptyset$. 
If $H$ is Liouville closed, then $\exp(*f)=\log(*f)=*f$ for~$f\in H^{\te}$.
We record a few other  properties of $\sim_{\exp}$ used later: 

\begin{lemma}\label{lem:simexp}
Let $f\in H^{\te}$. Then
\begin{enumerate}
\item[(i)]  $(*f)\cdot (*f) = *f$;
\item[(ii)] if $g\in H^{>\R}$ and $[vf]=[vg]$, then $g\in H^{\te}$ and $*f=*g$;
\item[(iii)]   $(*f)^\dagger = *f$ and $\der(*f) = *f$.
\end{enumerate}
\end{lemma}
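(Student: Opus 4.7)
The plan is to verify each of the three claims by unwinding the definitions and exploiting the bounds already available: $f > \exp_n(x)$ for all $n$ (Corollary~\ref{overh4}), $f^\dagger < f$ (overhardianness), and $\log f \prec f^\dagger$ (Lemma~\ref{lemoverh}). All three parts reduce to verifying specific exponential equivalences together with well-definedness of the induced operations on $*H^{\te}$.

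For (i), the content is that $f^2 \sim_{\exp} f$ and that multiplication descends to $*H^{\te}$. Since $f > \exp(x) > 1$ eventually, $f \le f^2$; and because $y^2 \le \exp y$ for $y$ large, $f^2 \le \exp(f) = \exp_1(f)$. This gives $f \sim_{\exp} f^2$, so $(*f)\cdot(*f) = *(f^2) = *f$. Well-definedness follows from the analogous estimate: if $f_1 \le \exp_m(f_2)$ and $g_1 \le \exp_m(g_2)$ with $f_2, g_2 \ge 1$ eventually, then $f_1 g_1 \le \exp_m(f_2)\exp_m(g_2) \le \exp_m(f_2 g_2)^2 \le \exp_{m+1}(f_2 g_2)$.

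For (ii), note that $f,g > \R$ gives $vf, vg < 0$. The archimedean equality $[vf]=[vg]$ then yields an $n\ge 1$ with $vg \ge n\,vf$ and $vf \ge n\,vg$, equivalently $g \preceq f^n$ and $f \preceq g^n$. Hence eventually $f \le g^{n+1}$ and $g \le f^{n+1}$. From $f \in H^{\te}$ and $g^{n+1} \ge f > \exp_k(x)$ we get $g > \exp_k(x)^{1/(n+1)} > \exp_{k-1}(x)$ eventually (for $k \ge 1$), so $g \in H^{\te}$. Since $g$ is then large, $g^{n+1} \le \exp(g)$, giving $f \le \exp(g)$; symmetrically $g \le \exp(f)$. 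Thus $f \sim_{\exp} g$, i.e.\ $*f = *g$.

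For (iii), first handle $(*f)^\dagger = *f$. It was noted after the definition of ``overhardian'' that $f^\dagger$ is again overhardian, so $f^\dagger \in H^{\te}$. The inequality $f^\dagger < f$ gives $f^\dagger \le \exp_0(f)$, while Lemma~\ref{lemoverh} supplies $\log f \prec f^\dagger$, hence $\log f < f^\dagger$ eventually and $f < \exp(f^\dagger) = \exp_1(f^\dagger)$. Thus $f \sim_{\exp} f^\dagger$, which together with (i) (for well-definedness of $\dagger$ on the quotient) yields the first equation. For $\der(*f) = *f$, write $f' = f \cdot f^\dagger$; by (i) and the equivalence just established, $f \cdot f^\dagger \sim_{\exp} f \cdot f = f^2 \sim_{\exp} f$.

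The proofs are routine verifications once the right bounds are in hand; the only non-trivial input is the asymptotic estimate $\log f \prec f^\dagger$ from Lemma~\ref{lemoverh}, which rests on [ADH, 9.2.18]. I do not anticipate any real obstacle beyond bookkeeping of the eventually-valid inequalities.
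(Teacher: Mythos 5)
Your proof is correct and follows essentially the same route as the paper: parts (i) and (ii) are the routine verifications the paper leaves to the reader, and for (iii) you use exactly the paper's ingredients ($f^\dagger$ overhardian, $f^\dagger<f$, and $\log f\prec f^\dagger$ from Lemma~\ref{lemoverh} to get $f<\exp(f^\dagger)$, then $f'=f\cdot f^\dagger$ together with (i)). Your explicit check that multiplication is well defined on $*H^{\te}$ is a harmless addition; only the parenthetical crediting well-definedness of $\dagger$ to (i) is slightly misplaced, since that follows directly from $*(f^\dagger)=*f$ itself.
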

\begin{proof}
Parts (i) and (ii) follow easily from the definitions.
For (iii),   note first that~$f^\dagger$ is overhardian by a remark before Corollary~\ref{overh1} and
$\log f< f^\dagger$ by
Lemma~\ref{lemoverh}, so~$f<\exp(f^\dagger)$, and $f^\dagger<f<\exp(f)$, and thus
$f^\dagger\sim_{\exp} f$. This yields $(*f)^\dagger = *f$, hence $\der(*f) = *f$ by~(i). 
\end{proof}

\noindent
Using results of the previous section we shall prove: 

\begin{prop}\label{prbin} Suppose~$P$,~$Q$ is a countable gap in $*H^{\te}$.  Then $H$ has Hardy field extensions
$H_0=H\<f_0\>$, $H_1=H\<f_1\>$ with $f_0\in H_0^{\te}$, $f_1\in H_1^{\te}$, such that 
$$P< *f_0 < Q,\qquad P < *f_1 < Q,$$ $H_0$ and $H_1$ have no common Hardy field extension, and 
$$*H_0^{\te}=*H^{\te}\cup\{*f_0\},\qquad   *H_1^{\te}=*H^{\te}\cup\{*f_1\}.$$
\end{prop}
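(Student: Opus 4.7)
The plan is to translate the countable gap $P,Q$ in $*H^{\te}$ into a wide gap $A,B$ in $H$ (passing to $\operatorname{Li}(H)$ as a harmless preliminary step if needed, since $\d$-algebraic extensions do not add new exponential-equivalence classes outside $*H^{\te}$) and then to run the reverse-engineering construction of Section~\ref{bh} twice in parallel. Taking representatives $(p_k)$ of a cofinal sequence in $P$ and $(q_k)$ of a coinitial sequence in $Q$, set
\[
A\ :=\ \{h\in H^{>\R}:\, h\le\exp_n(p_k)\ \text{for some }n,k\}, \quad
B\ :=\ \{h\in H^{>\R}:\, h\ge q_k\ \text{for some }k\}.
\]
Then $A$ and $\exp A$ are cofinal by Liouville closedness, and there is no $h\in H$ with $A<h<B$, since such $h$ would be overhardian with $*h$ strictly between $P$ and $Q$ in $*H^{\te}$. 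Thus $A,B$ is a wide gap in $H$ with $\cf(A)=\ci(B)=\omega$ (edge cases $P=\emptyset$ or $Q=\emptyset$ are dispatched by Theorem~\ref{sjo} and a symmetric Sj\"odin-style construction below a coinitial sequence, respectively).

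Following the proof of Theorem~\ref{sjo+}, after compositional conjugation by a suitable $g\in A$ we may assume $x\in A$. Take an intermediary $\phi\in\Cc^\infty$ for $A,B$ via Lemma~\ref{intermed}, a strictly increasing sequence $(a_k)$ in $\R^{\ge 1}$ with $a_k\to\infty$, and common boundary values $w_k\in(\phi(a_k),\phi_{a_{k-1}}(a_k))$ as in the proof of Proposition~\ref{propintermed}, chosen so that the upper-bound analysis there forces the resulting germs to lie $<_{\ex} B$. For each $k\ge 1$, use Lemma~\ref{phizetavar} to pick $y^{(0)}_{k-1,k}, y^{(1)}_{k-1,k}\in\Cc^\infty[a_{k-1},a_k]$ both satisfying conditions (i)--(iv) of Corollary~\ref{exyk}, both lying strictly between $\phi$ and $\phi_{a_{k-1}}$ on $(a_{k-1},a_k]$, and matching to all orders at both endpoints, but with $y^{(0)}_{k-1,k}-y^{(1)}_{k-1,k}$ chosen with a sufficiently oscillatory interior profile (many rapid sign changes, vanishing boundary jet) that propagation through the $(k-1)$-fold nested exponential-of-integral recursion defining $y_0^{(i)}$ still produces a sign change in $f_0-f_1$ on $[a_{k-1},a_k]$. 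Corollary~\ref{exyk} then yields overhardian $f_i:=y_0^{(i)}\in\Cc_{a_0}^\infty$ with $f_i^{\<n\>}>_{\ex}\phi$ for all $n$, whence $A<_{\ex} f_i<_{\ex} B$ by the argument of Proposition~\ref{propintermed}. By construction, $f_0-f_1$ changes sign on arbitrarily large intervals.

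By Lemma~\ref{val}, each $f_i$ is $H$-hardian and $\d$-transcendental over $H$, so $H_i:=H\<f_i\>=H(f_i,f_i',f_i'',\dots)$ is a Hardy field extension of $H$ with value group $v(H^\times)\oplus\bigoplus_j\Z vf_i^{\<j\>}$, satisfying $vB<vf_i^{\<j\>}<vA$ and $vf_i^{\<j+1\>}=o(vf_i^{\<j\>})$. For $g\in H_i^{\te}$, decompose $vg=v_H+\sum_j n_j vf_i^{\<j\>}$: if all $n_j=0$ then $*g=*h\in *H^{\te}$ for some $h\in H$ with $[vh]=[v_H]$ by Lemma~\ref{lem:simexp}(ii); otherwise the least $j_0$ with $n_{j_0}\ne 0$ makes $vf_i^{\<j_0\>}$ archimedeanly dominant in $vg$ (since the $vf_i^{\<j\>}$ for $j>j_0$ are negligible and $v_H$ lies in $[vA]\cup[vB]$, which are separated from $[vf_i^{\<j_0\>}]$), giving $*g=*f_i^{\<j_0\>}=*f_i$ via Lemma~\ref{lem:simexp}(ii),(iii). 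Hence $*H_i^{\te}=*H^{\te}\cup\{*f_i\}$, and $P<*f_i<Q$ in $*H_i^{\te}$ by construction of $A,B$. A common Hardy field extension $H^*\supseteq H_0,H_1$ would contain the germ $f_0-f_1$, forcing it to have eventually constant sign, contradicting the construction.

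The principal obstacle is the parallel step in the middle paragraph: the differences $y^{(0)}_{k-1,k}-y^{(1)}_{k-1,k}$ must be arranged, within the narrow band $\phi<y^{(i)}_{k-1,k}\le\phi_{a_{k-1}}$ and under the endpoint-matching constraints (iii),(iv) of Corollary~\ref{exyk}, to have enough rapid interior oscillation that their sign changes survive propagation through the iterated exponentials-of-integrals defining $y_0^{(i)}$, each layer of which tends to smooth oscillations. Lemma~\ref{phizetavar} supplies enough interior freedom, but the book-keeping tracking how sign changes propagate through the nested integrals—ensuring a sign-changing $f_0-f_1$ on each $[a_{k-1},a_k]$ rather than cancellation—is the technical heart of the argument.
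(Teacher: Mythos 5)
Your overall architecture --- translate the countable gap $P,Q$ into a wide gap $A,B$ built from exponential towers, produce an overhardian germ in the gap via the machinery of Sections~\ref{bh} and~\ref{fwg}, and then compute $*H_i^{\te}=*H^{\te}\cup\{*f_i\}$ from the value-group decomposition in Lemma~\ref{val} --- matches the paper's Lemma~\ref{lemte} closely. But the one genuinely new thing Proposition~\ref{prbin} requires beyond Lemma~\ref{lemte} is the production of \emph{two mutually incompatible} extensions, and that is exactly where your argument has a gap that you yourself flag but do not close. You propose to run the reverse-engineering construction twice with oscillatory perturbations of the building blocks $y_{k-1,k}$ and to arrange that the sign changes survive propagation through the nested exponentials of integrals. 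This is the hard part: at each level of the recursion the sign of $y^{(0)}_{k-j,m}-y^{(1)}_{k-j,m}$ is governed by the sign of an iterated weighted integral of the original perturbation, and integration destroys sign changes unless the perturbation is carefully balanced at every level. Asserting that "Lemma~\ref{phizetavar} supplies enough interior freedom" is not a proof; as written, nothing rules out that after $k$ integrations the difference $f_0-f_1$ has eventually constant sign on $[a_{k-1},a_k]$, in which case the two germs could well be compatible.

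The paper avoids this entirely with a much cheaper device: it constructs a single overhardian $y$ in the gap and sets $f_0:=y$, $f_1:=y+\sin x$. Boshernitzan's result (Lemma~\ref{ohhb}) shows that adding a hardy-bounded germ to an overhardian germ yields an overhardian germ, so $f_1$ is again overhardian, lies in the same gap, and generates a Hardy field with the same $*{\cdot}^{\te}$-computation; meanwhile $f_0-f_1=-\sin x$ changes sign on every neighbourhood of $+\infty$, so no Hardy field can contain both. If you want to salvage your parallel construction you would have to carry out the propagation bookkeeping in detail; the far simpler route is the perturbation by $\sin x$. (Two smaller points: your edge cases are swapped --- it is $Q=\emptyset$ that is handled by Theorem~\ref{sjo}, while $P=\emptyset$ needs no special treatment since $\exp_n(x)$ always bounds $A$ from inside; and your dichotomy for $vg$ in $*H_i^{\te}$ omits the case $vg<\Delta+D$, where one must exhibit $b\in H$ with $vg=vb+o(vb)$ rather than appeal to a dominant $vf_i^{\langle j_0\rangle}$.)
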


\noindent
We accept this for the moment, and indicate how it enables a binary tree construction leading to Theorem~\ref{th2c}. Let $\abs{X}$ denote the cardinality of the set $X$, and
identify as usual a cardinal with the least ordinal of that cardinality, where an ordinal $\lambda$ is considered as the set of ordinals $<\lambda$. 
Let $\cH$ be the set of all Hardy fields~$H\supseteq \R$ such that $\abs{*H^{\te}} < \frak{c}$. We build by transfinite recursion a binary tree in $\cH$ by assigning to each ordinal
$\lambda < \frak{c}$ and function $s\colon \lambda\to \{0,1\}$ a Hardy field~$H_s\in \cH$ with~$\abs{*H_s^{\te}}\le \abs{\lambda}$.
For $\lambda=0$ the function $s$ has empty domain and we take $H_s=\R$. 
Suppose $s\colon \lambda\to \{0,1\}$ as above and $H_s\in \cH$ are given with $\abs{*H_s^{\te}}\le \abs{\lambda}$. Then Lemma~\ref{lmif}  provides a countable gap~$P$,~$Q$ in 
$*H_s^{\te}$. Let~$s0, s1\colon \lambda+1\to \{0,1\}$ be the obvious extensions of $s$, and  let $H_{s0}, H_{s1}\in \cH$ be obtained from $H_s$ as~$H_0$,~$H_1$ are obtained from $H$ in Proposition~\ref{prbin}.  Let~$\lambda< \frak{c}$ be an infinite limit ordinal and
$s\colon \lambda\to \{0,1\}$;  assume that for every $\alpha< \lambda$ there is given $H_{s|\alpha}\in \cH$ with~$H_{s|\alpha}\subseteq H_{s|\beta}$ whenever
$\alpha\le \beta < \lambda$.
Then we set~$H_s:= \bigcup_{\alpha<\lambda} H_{s|\alpha}$. Assuming also inductively that $\abs{*H_{s|\alpha}^{\te}}\le  |\alpha|$ for all $\alpha<\lambda$,
 we obtain $\abs{*H_s^{\te}}\le \abs{\lambda}\cdot \abs{\lambda}=\abs{\lambda}$, as desired.
This finishes the construction of our tree. It yields for any function~$s\colon \frak{c}\to \{0,1\}$ a Hardy field $H_s:=\bigcup_{\lambda<\frak{c}} H_{s|\lambda}$, and the way we constructed the tree guarantees that if $s, s'\colon \frak{c} \to \{0,1\}$ are different, then
$H_s$ and $H_{s'}$ have no common Hardy field extension. Thus there are $2^{\frak{c}}$ many
maximal Hardy fields. 

\medskip\noindent
It remains to prove Proposition~\ref{prbin}. This goes via some lemmas.

\begin{lemma}\label{lemza}
Let $K$ be an asymptotic field with value group $\Gamma$ and $\Psi:=\psi(\Gamma^{\ne})$ its $\Psi$-set.  Let $L$ be an asymptotic field extension of $K$ of finite transcendence degree over $K$. Then $\Psi_L\setminus \Psi$ is finite, where $\Psi_L$ is the $\Psi$-set of $L$.
\end{lemma}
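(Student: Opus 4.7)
\emph{Proof plan.} The plan is to identify $\Psi_L\setminus\Psi$ with (a subset of) the $\psi_L$-image of the archimedean classes of $\Gamma_L^{\ne}$ that are disjoint from $\Gamma^{\ne}$, and then to bound the number of such classes via Abhyankar's inequality on ranks.

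First I would invoke two standard facts about the asymptotic couple $(\Gamma_L,\psi_L)$: (a) $\psi_L$ is weakly decreasing on $\Gamma_L^{>}$ and satisfies $\psi_L(k\gamma)=\psi_L(\gamma)$ for all $k\in\Z^{\ne}$, so that $\psi_L$ takes a single value on each archimedean class of $\Gamma_L^{\ne}$; and (b) since $L\supseteq K$ is an asymptotic field extension, $\psi_L$ agrees with $\psi$ on $\Gamma^{\ne}$. Combined, these yield: whenever $\alpha\in\Gamma_L^{\ne}$ is archimedean-equivalent in $\Gamma_L$ to some $\alpha'\in\Gamma^{\ne}$, one has $\psi_L(\alpha)=\psi_L(\alpha')=\psi(\alpha')\in\Psi$. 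Hence $\Psi_L\setminus\Psi$ is contained in the $\psi_L$-image of the set $X$ of archimedean classes of $\Gamma_L^{\ne}$ that contain no element of $\Gamma^{\ne}$.

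Next I would bound $|X|$ by the rational rank $r$ of $\Gamma_L/\Gamma$. Given distinct classes $[\gamma_1]<\cdots<[\gamma_m]$ in $X$ with representatives $\gamma_i\in\Gamma_L^{>}$, I claim that $\gamma_1,\dots,\gamma_m$ are $\Q$-linearly independent modulo $\Q\Gamma$ in $\Q\Gamma_L$. Indeed, any relation $\sum_iq_i\gamma_i=\eta\in\Q\Gamma$ with the $q_i$ not all zero, letting $j$ be maximal with $q_j\ne 0$, forces $\eta$ to lie in archimedean class $[\gamma_j]$ of $\Gamma_L$ (in particular $\eta\ne 0$, since the left-hand side has archimedean class $[\gamma_j]$ strictly larger than the contribution of the terms with $i<j$); clearing denominators then yields a nonzero element of $\Gamma$ in class $[\gamma_j]$, contradicting $[\gamma_j]\in X$. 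Thus $m\le r$.

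Finally, Abhyankar's inequality for valued field extensions gives $r\le\operatorname{trdeg}(L/K)<\infty$, so $X$, and hence $\Psi_L\setminus\Psi$, is finite. The only slightly delicate point is the step-2 linear-independence argument, which hinges on the observation that nonzero integer (or rational) scaling preserves archimedean classes in $\Gamma_L$; the Abhyankar bound is standard valuation theory. One might also want to note explicitly that archimedean equivalence in $\Gamma_L$ restricts to archimedean equivalence in $\Gamma$, but this is automatic since $\Gamma\le\Gamma_L$ is an ordered-group embedding.
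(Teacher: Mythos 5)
Your overall strategy is the same as the paper's (Zariski--Abhyankar to bound the rational rank of $\Gamma_L/\Gamma$, plus a linear-independence-mod-$\Gamma$ argument), and your step 2 and step 3 are fine. The problem is step 1. You justify ``$\psi_L$ takes a single value on each archimedean class of $\Gamma_L^{\ne}$'' by asserting that $\psi_L$ is weakly decreasing on $\Gamma_L^{>}$. That monotonicity is precisely the condition that the asymptotic couple be of \emph{$H$-type}, which is an extra axiom, not a consequence of $L$ being an asymptotic field; the lemma here is stated for arbitrary asymptotic fields. For a general asymptotic couple the only structure available is that $\psi_L$ is a \emph{valuation} on $\Gamma_L$ (so $\psi_L(k\gamma)=\psi_L(\gamma)$ for $k\in\Z^{\ne}$ and $\psi_L(\alpha+\beta)\ge\min\bigl(\psi_L(\alpha),\psi_L(\beta)\bigr)$), and this does not force $\psi_L$ to be constant on archimedean classes: if $\alpha'\in\Gamma^{\ne}$ and $\epsilon\in\Gamma_L$ is archimedean-smaller than $\alpha'$ but has $\psi_L(\epsilon)<\psi_L(\alpha')$, then $\alpha'+\epsilon$ is archimedean-equivalent to $\alpha'$ yet $\psi_L(\alpha'+\epsilon)=\psi_L(\epsilon)$, which may well lie outside $\Psi$. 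So your containment of $\Psi_L\setminus\Psi$ in the $\psi_L$-image of the ``new'' archimedean classes is unjustified (and the map $\psi_L$ on classes is not even well defined) outside the $H$-type setting.

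The repair is to run your linear-independence argument with respect to the valuation $\psi_L$ itself rather than with respect to archimedean classes, which is what the paper does: if $\gamma_1,\dots,\gamma_n\in\Gamma_L^{\ne}$ have pairwise distinct $\gamma_i^\dagger\notin\Psi$ and $\sum_i k_i\gamma_i=\delta$ with integers $k_i$ not all zero, then the ultrametric equality (the minimum of the $\psi_L(k_i\gamma_i)=\gamma_i^\dagger$ is attained exactly once) gives $\delta\ne 0$ and $\psi_L(\delta)=\gamma_j^\dagger\notin\Psi$ for some $j$, so $\delta\notin\Gamma$; hence $\gamma_1,\dots,\gamma_n$ are $\Z$-linearly independent modulo $\Gamma$ and $n\le\operatorname{trdeg}(L|K)$ by Zariski--Abhyankar. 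Your version is correct as written when $K$ is $H$-asymptotic --- which covers the only use of the lemma in this paper, namely Hardy fields --- but as a proof of the stated lemma it has a genuine gap.
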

\begin{proof}  By  [ADH, 3.1.11] (Zariski-Abhyankar), $\Gamma_L/\Gamma$ has finite rational rank. Now use that if $\gamma_1,\dots, \gamma_n\in \Gamma_L^{\ne}$ and $\gamma_1^\dagger,\dots, \gamma_n^\dagger $ are pairwise distinct and not in $\Psi$, then~$\gamma_1,\dots, \gamma_n$ are $\Z$-linearly independent modulo $\Gamma$. 
\end{proof}

\begin{lemma}\label{lemli} 
Let $L\supseteq H\supseteq \R$ be   Hardy fields where $L$ is $\d$-algebraic over $H$. 
Then~$*H^{\te}=*L^{\te}$. \textup{(}In particular, $*H^{\te}=*\Li(H)^{\te}$.\textup{)}
\end{lemma}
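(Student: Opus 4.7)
My plan is to show that the natural order-preserving embedding $*H^{\te}\hookrightarrow *L^{\te}$ set up in the excerpt is surjective. Fix $f\in L^{\te}$. Since $f$ is $\d$-algebraic over $H$, the differential subfield $H\langle f\rangle$ of $L$ has finite transcendence degree over $H$, so replacing $L$ by $H\langle f\rangle$ I may assume $L$ itself has finite transcendence degree over $H$; then $\Psi_L\setminus\Psi_H$ is finite by Lemma~\ref{lemza}.

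By Corollary~\ref{overh4}, $f\in L^{\te}$ is overhardian in $L$, so each iterate $f^{\<k\>}$ is defined and $>_{\ex}\R$, and by Corollary~\ref{overh1} the archimedean classes $[vf^{\<k\>}]$ strictly decrease in $k$. I will invoke the fact (valid in any $H$-asymptotic couple, hence in that of $L$, which is a Hardy field containing $\R$) that $\psi$ factors through and strictly reverses the ordering of archimedean classes of $\Gamma^{\ne}$, and in particular bijects these classes with $\Psi$. Thus $vf^{\<k\>}=\psi^k(vf)$ for $k\geq 1$ are pairwise distinct elements of $\Psi_L$, and finiteness of $\Psi_L\setminus\Psi_H$ forces $vf^{\<k\>}\in\Psi_H$ for some $k\geq 1$.

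For such $k$, using the bijection in $H$, I pick $h\in H^{>0}$ with $vh\neq 0$ and $\psi_H(vh)=vf^{\<k\>}$. Then in $\Gamma_L$, $\psi_L(vh)=vf^{\<k\>}=\psi_L(vf^{\<k-1\>})$, so the $L$-bijection gives $[vh]=[vf^{\<k-1\>}]$ in $\Gamma_L$. Since $f^{\<k-1\>}>\R$ and $[vh]=[vf^{\<k-1\>}]$, also $h>\R$ in $L$. Lemma~\ref{lem:simexp}(ii), applied in $L$ with $f^{\<k-1\>}\in L^{\te}$, then yields $h\in L^{\te}$ with $*h=*f^{\<k-1\>}$ in $*L^{\te}$; and $h\in H$ gives $h\in L^{\te}\cap H=H^{\te}$. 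Iterating Lemma~\ref{lem:simexp}(iii) shows $*f=*f^{\<k-1\>}=*h$ in $*L^{\te}$, placing $*f$ in the image of $*H^{\te}\to *L^{\te}$, which is the desired surjectivity.

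The main obstacle will be recognizing the right tool, namely the bijection between archimedean classes of $\Gamma^{\ne}$ and $\Psi$ in $H$-asymptotic couples; once this is in hand, Lemma~\ref{lemza} provides the finiteness that forces an iterate $\psi^k(vf)$ into $\Psi_H$, and Lemma~\ref{lem:simexp}(ii),(iii) then bridges the gap from matching archimedean classes to equality of exponential equivalence classes. No further delicate analysis is needed.
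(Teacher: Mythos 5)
Your argument is essentially the paper's proof: replace $L$ by $H\langle f\rangle$, observe that the $\psi^k(vf)=v\big(f^{\langle k\rangle}\big)$ lie in pairwise distinct archimedean classes and hence are distinct elements of $\Psi_L$, use the finiteness of $\Psi_L\setminus\Psi_H$ from Lemma~\ref{lemza} to force one of them into $H$'s value group, and transfer back with Lemma~\ref{lem:simexp}. Three small remarks. First, your last step is more roundabout than needed: since $v\big(f^{\langle k\rangle}\big)\in\Psi_H\subseteq v(H^\times)$, you can take $h\in H^{>}$ with $vh=v\big(f^{\langle k\rangle}\big)$ and apply Lemma~\ref{lem:simexp}(ii) to $f^{\langle k\rangle}$ directly (this is what the paper does); your detour through a $\psi$-preimage is what forces you to invoke injectivity of $\psi$ on archimedean classes. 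Second, that injectivity is \emph{not} a property of arbitrary $H$-asymptotic couples as you assert: e.g.\ $\Gamma=\Z\epsilon_1\oplus\Z\epsilon_2$ (lexicographic) with $\psi\equiv-\epsilon_1$ on $\Gamma^{\neq}$ satisfies all the axioms but $\psi$ collapses the two classes. It does hold for the asymptotic couple of a Hardy field, because that couple embeds into the $H$-couple of a Liouville closed extension, which is of Hahn type over $\R$ (whence $[\alpha]\le[\beta]\Leftrightarrow\psi(\alpha)\ge\psi(\beta)$); you should justify it that way. Third, $[vh]=[vf^{\langle k-1\rangle}]$ alone does not give $h>\R$, since $h$ could be infinitesimal with the same archimedean class of valuation; replace $h$ by $h^{-1}$ if necessary before applying Lemma~\ref{lem:simexp}(ii). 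All three points are cosmetic; the substance of the proof is the paper's.
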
 
\begin{proof} Let $y\in L^{\te}$;  we claim that $y\sim_{\exp} h$ for some $h\in H^{\te}$.
To prove this, set~$\gamma:=vy$. Then  
$\gamma < \psi(\gamma) <  \dots  < \psi^n(\gamma) < \psi^{n+1}(\gamma) <\dots < 0$, so Lemma~\ref{lemza} gives $n\ge 1$ with $\psi^n(\gamma)\in \Gamma:=v(H^\times)$, say
$\psi^n(\gamma)=vh$ with $h\in H^{>}$. Then~$h$ is overhardian and 
$y\sim_{\exp} h$ by Lemma~\ref{lem:simexp}.
\end{proof} 

\begin{lemma}\label{lemte} Let $H\supseteq \R$ be a Hardy field and let $P$,~$Q$ be a countable gap in~$*H^{\te}$.
Then~$H$ has a Hardy field extension $H\<y\>$ with overhardian $y\in\Cc^{<\infty}$ such that~${P< *y < Q}$. For any such $y$ we have $*H\<y\>^{\te}=*H^{\te}\cup \{*y\}$. 
\end{lemma}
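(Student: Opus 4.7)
The plan is first to reduce to the case where $H$ is Liouville closed: by Lemma~\ref{lemli} applied to $\Li(H)/H$ (which is $\d$-algebraic) we have $*\Li(H)^{\te}=*H^{\te}$, so $P,Q$ remains a countable gap in $*\Li(H)^{\te}$, and applying Lemma~\ref{lemli} again to $\Li(H)\<y\>/H\<y\>$ at the end will transfer the conclusion back. Next, I would fix representatives $p_k,q_k\in H^{\te}$ of the classes in $P$ and $Q$.

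For the existence of $y$ I would split on whether $Q$ is empty. If $Q=\emptyset$, then either $P=\emptyset$ (so $H^{\te}=\emptyset$ and $(\exp_n(x))$ is cofinal in $H^{>\R}$) or $P$ is cofinal in $*H^{\te}$ (so $\{\exp_n(p_k)\}$ dominates $H^{\te}$); in either case $\cf(H)=\omega$, and Lemmas~\ref{sum1},~\ref{sum2} yield $\phi\in\Cc$ with $\phi>_{\ex}H$, after which Theorem~\ref{sjo} produces the required overhardian $y\in\Cc^\infty$ with $y>_{\ex}\phi$ generating $H\<y\>$. If $Q\neq\emptyset$, I would set $A:=\{\exp_n(x):n\}\cup\{\exp_n(p_k):n,k\}$ and $B:=\{\log_n(q_k):n,k\}$, and verify that $A,B$ is a wide gap in $H$ with $x\in A$ and $\cf(A)=\ci(B)=\omega$. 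The non-existence of an intermediary $h\in H$ with $A<h<B$ is precisely the assumption that $P,Q$ is a gap in $*H^{\te}$, via the translation $A<_{\ex}h<_{\ex}B$ iff $h\in H^{\te}$ and $P<*h<Q$. Corollary~\ref{xo} then yields an overhardian $y\in\Cc^\infty$ with $A<_{\ex}y<_{\ex}B$, and these unwind to $P<*y<Q$.

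For the equality $*H\<y\>^{\te}=*H^{\te}\cup\{*y\}$, the inclusion $\supseteq$ is clear. For $\subseteq$, let $z\in H\<y\>^{\te}$. The value-group decomposition $v(H\<y\>^\times)=v(H^\times)\oplus\bigoplus_{i\ge 0}\Z vy^{\<i\>}$, obtained from the proof of Lemma~\ref{val} when $Q\neq\emptyset$ and from the remark following it when $y>_{\ex}H$, lets me write $vz=\gamma_0+\sum_i n_i vy^{\<i\>}$ with $\gamma_0\in vH^\times$ and $n_i\in\Z$ of finite support. That same proof places each Archimedean class $[vy^{\<i\>}]$ strictly between $[vA]$ and $[vB]$. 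If all $n_i=0$, then $[vz]=[\gamma_0]=[vh]$ for some $h\in H^\times$, and Lemma~\ref{lem:simexp}(ii) gives $*z=*h\in*H^{\te}$. Otherwise, with $i_0$ the smallest index such that $n_{i_0}\neq 0$, the dominant Archimedean term of $vz$ is either $\gamma_0$ or $vy^{\<i_0\>}$---whichever has the larger Archimedean class---producing $*z=*h\in*H^{\te}$ or $*z=*y^{\<i_0\>}=*y$ (via Lemma~\ref{lem:simexp}(ii),~(iii)) respectively.

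The hard part will be this final dichotomy: I must rule out the possibility that $[vA]<[\gamma_0]<[vB]$ (which would leave $*z$ unidentified). This is precisely the content of the gap hypothesis: since $[vh_0]>[va]$ is equivalent to $h_0>a^n$ for all $n$, unwinding the iterated-exponential shape of $A$ and $B$ converts $[vA]<[vh_0]<[vB]$ into $h_0\in H^{\te}$ with $P<*h_0<Q$, contradicting the gap. The case $Q=\emptyset$ is easier: there all $[vy^{\<i\>}]$ dominate every Archimedean class in $v(H^\times)$ because $y>_{\ex}H$, so the analysis collapses immediately.
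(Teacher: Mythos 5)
Your proposal is correct and follows the paper's proof essentially verbatim: reduce to Liouville closed $H$ via Lemma~\ref{lemli}, form the wide gap $A$, $B$ from $P$, $Q$ and apply Corollary~\ref{xo} (resp.\ Theorem~\ref{sjo} when $Q=\emptyset$), then read off $*H\<y\>^{\te}$ from the value group decomposition in the proof of Lemma~\ref{val} together with Lemma~\ref{lem:simexp}. The only deviation is that your final ``hard part'' is a non-issue: whenever the dominant archimedean class of $vz$ lies in $[v(H^\times)]$, Lemma~\ref{lem:simexp}(ii) already yields $*z\in *H^{\te}$ no matter where $[\gamma_0]$ sits relative to $[vA]$ and $[vB]$, so nothing needs to be ruled out (though your ruling-out argument is itself correct).
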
 
\begin{proof} Using Lemma~\ref{lemli} we arrange that $H$ is Liouville closed, in particular, $\exp_n(x)\in H$ for all $n$. 
Assume for now that $Q\ne \emptyset$.  Then~$P$,~$Q$ gives rise to a wide gap $A$, $B$ in $H$ by 
\begin{align*} A\ &:=\ \big\{\!\exp_n(x):\,n=0,1,2,\dots\big\} \cup \big\{\!\exp_n(a): n=0,1,2,\dots,\  a\in H^{\te},\ *a\in P\big\},\\
B\ &:=\ \big\{\!\log_n(b): n=0,1,2,\dots,\ b\in H^{\te},\ *b\in Q\big\},
\end{align*}     
with $\text{cf}(A)=\omega$ and $\text{ci}(B)=\omega$. Then Corollary~\ref{xo} 
yields an overhardian $y\in \Cc^{<\infty}$ with $A <_{\ex} y < _e B$. Given any such $y$ it generates a Hardy field $H\<y\>$ over $H$ by Lemma~\ref{val}, with
$P<*y < Q$.  Moreover, by the proof of that lemma,
 $$v\big(H\<y\>^\times\big)\ =\ v(H^\times) \oplus \bigoplus_n \Z v\big(y^{\<n\>}\big),$$
 with convex subgroups $\Delta$ of $v(H^\times)$ (as defined in that proof) and $\Delta + D$ of $v\big(H\<y\>^\times\big)$  with $D:=\bigoplus_n \Z v\big(y^{\<n\>}\big)$. 
Let $f\in H\<y\>^{\te}$.  There are three possibilities: \begin{enumerate}
\item $vf\in \Delta$. Then $*f\in *H^{\te}$. 
\item $vf\in \Delta+D$, $vf<\Delta$. Then $vf=mvy^{\<i\>}+o(vy^{\<i\>})$ for some $i$ and some~$m\ge 1$, hence $*f=*y$ by Lemma~\ref{lem:simexp}. 
\item $vf < \Delta+D$. Then an easy argument gives $b>A$ in $H$ with $vf=vb+o(vb)$, hence $*f=*b\in *H^{\te}$  by Lemma~\ref{lem:simexp}.
\end{enumerate} 
 If $Q=\emptyset$, then we set $A:= H^{>\R}$, $B:=\emptyset$, and proceed as before, using results from
Section~\ref{bh} instead of Corollary~\ref{xo} to obtain the existence of an overhardian~$y\in \Cc^{<\infty}$ with $A<_{\ex} y$, and
using instead of Lemma~\ref{val}  the remark following the proof of that lemma.  
\end{proof} 

\noindent
The following consequence of Lemmas~\ref{lemte} and~\ref{lmif} is worth recording. (It also uses the fact that any Hardy field, as a subset of $\Cc$, has cardinality $\le \frak{c}$.) 

\begin{cor} If $H$ is a maximal Hardy field, then  the ordered set $*H^{\te}$ is $\eta_1$, and $\abs{*H^{\te}}=\frak{c}$.
\end{cor}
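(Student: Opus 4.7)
\medskip
\noindent
\textbf{Proof plan.} The strategy is to deduce both assertions directly from Lemma~\ref{lemte} (for the $\eta_1$ property) and Lemma~\ref{lmif} (for the cardinality lower bound), exploiting maximality of $H$ to force every hypothetical countable gap in $*H^{\te}$ to be closed.

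\medskip
\noindent
First I would handle the $\eta_1$ claim by showing that $*H^{\te}$ admits no countable gap. Let $P,Q$ be a countable gap in $*H^{\te}$ (in the degenerate case $P=Q=\emptyset$ this just asserts that $*H^{\te}$ is empty). Since $H$ is a maximal Hardy field, $H\supseteq\R$ and $H$ is Liouville closed, so Lemma~\ref{lemte} applies and produces a Hardy field extension $H\langle y\rangle$ of $H$ with overhardian $y\in\Cc^{<\infty}$ satisfying $P<*y<Q$, and moreover $*H\langle y\rangle^{\te}=*H^{\te}\cup\{*y\}$. Maximality of $H$ forces $H\langle y\rangle=H$, hence $y\in H$ and $*y\in *H^{\te}$. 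But then $*y\in *H^{\te}$ lies strictly between $P$ and $Q$, contradicting the hypothesis that $P,Q$ is a gap in $*H^{\te}$. Thus $*H^{\te}$ has no countable gap; by the definition of $\eta_1$ (which, applied to $P=Q=\emptyset$, in particular rules out emptiness), this means $*H^{\te}$ is $\eta_1$.

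\medskip
\noindent
For the cardinality assertion, the upper bound is immediate from $*H^{\te}\subseteq H^{\te}/{\sim_{\exp}}$ together with $|H|\leq|\Cc|=\frak c$. For the lower bound, suppose toward a contradiction that $|*H^{\te}|<\frak c$. Then Lemma~\ref{lmif} yields a countable gap in $*H^{\te}$, contradicting the $\eta_1$ property established in the previous paragraph. Hence $|*H^{\te}|\geq\frak c$, and combined with the upper bound we obtain $|*H^{\te}|=\frak c$.

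\medskip
\noindent
There is no real obstacle here, as both inputs (Lemma~\ref{lemte} and Lemma~\ref{lmif}) have already been proved; the only point requiring care is the bookkeeping around the degenerate gap $P=Q=\emptyset$, which is needed to conclude nonemptiness (and hence uncountability) of $*H^{\te}$ from the $\eta_1$ condition.
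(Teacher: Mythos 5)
Your proof is correct and follows exactly the argument the paper intends (the paper only gestures at it, citing Lemmas~\ref{lemte} and~\ref{lmif} plus the bound $\abs{H}\le\frak c$): maximality turns the extension from Lemma~\ref{lemte} into an element of $*H^{\te}$ filling any countable gap, giving $\eta_1$, and Lemma~\ref{lmif} then forces $\abs{*H^{\te}}\ge\frak c$. Your care with the degenerate gap $P=Q=\emptyset$ is a nice touch, though note that Lemma~\ref{lemte} only needs $H\supseteq\R$, not Liouville closedness.
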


\noindent
As to the $H$-field $\T$ of transseries that was studied extensively in  [ADH],
we usually think of $\T$ as rather large, but $H^{\te}=\emptyset$ for any Hardy field $H\supseteq \R$ which embeds into $\T$ (as $H$-fields); those $H$ are dwarfed by any maximal Hardy field. 

Next, given $\phi\in \Gi$, call $\phi$ {\bf hardy-small\/} if $\phi^{(n)}\prec 1$ for all $n$, and {\bf hardy-bounded\/} if $\phi^{(n)}\preceq 1$ for all $n$. 
For example, $\sin x$ is hardy-bounded. Here are some simple observations about these notions: 
If $\phi, \theta\in \Gi$ are hardy-small, then so is~$\phi+\theta$. If $\phi\in \Gi$ is hardian and $\phi\prec 1$, then $\phi$ is hardy-small.
If $\phi\in \Gi$ is hardian and $\phi\preceq 1$, then $\phi$ is hardy-bounded. 
If $\phi\in \Gi$ is hardy-bounded and~$\theta\in \Gi$ is hardy-small, then $\phi\theta$ is hardy-small.
If $\phi,\theta\in \Gi$ are hardy-bounded, then so are $\phi+\theta$ and $\phi\theta$. 
A routine computation gives:

\begin{lemma} \label{hshb} If $\phi\in \Gi$ is hardy-small, then $(1+\phi)^{-1}=1+\theta$ for some hardy-small $\theta\in \Gi$,
   and so $(1+\phi)^{-1}$ is hardy-bounded. 
\end{lemma}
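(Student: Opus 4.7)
The plan is to write $u := (1+\phi)^{-1}$ as a germ in $\Gi$ (legitimate because $\phi \prec 1$ gives $\phi(t)\to 0$, so $1+\phi$ is eventually bounded away from $0$ and lies in $(\Gi)^\times$), set $\theta := u - 1 = -\phi/(1+\phi)$, and prove by induction on $n$ that $u^{(n)} \prec 1$ for all $n \ge 1$, with also $\theta \prec 1$. This will give that $\theta$ is hardy-small, and then the final assertion follows from the preceding paragraph: $(1+\phi)^{-1} = 1 + \theta$ is a sum of hardy-bounded germs (the constant $1$ and the hardy-small $\theta$), hence hardy-bounded.

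For the induction, I would differentiate the identity $(1+\phi)\,u = 1$ using the Leibniz rule to get, for every $n \geq 1$,
\[
(1+\phi)\,u^{(n)} \ =\ -\sum_{k=1}^{n} \binom{n}{k} \phi^{(k)}\, u^{(n-k)}.
\]
The base case $n=0$ is $\theta = -\phi \cdot u$, and since $\phi \prec 1$ and $u \asymp 1$ eventually, we have $\theta \prec 1$. For the inductive step, assume $u \asymp 1$ and $u^{(j)} \prec 1$ for $1 \le j < n$, so that $u^{(j)} \preceq 1$ for $0 \le j < n$. Each term on the right-hand side is a product of $\phi^{(k)} \prec 1$ (since $\phi$ is hardy-small) with $u^{(n-k)} \preceq 1$, so the right-hand side is $\prec 1$. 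Since $1+\phi \asymp 1$ eventually, dividing through yields $u^{(n)} \prec 1$. This closes the induction and shows $\theta^{(n)} = u^{(n)} \prec 1$ for every $n \ge 1$, hence $\theta$ is hardy-small.

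There is no real obstacle here: the only mild subtlety is making sure the asymptotic relations $\prec$, $\preceq$, $\asymp$ interact with products and sums in the expected way inside $\Cc$, but this is immediate from the definitions, and the algebraic identity obtained by repeated differentiation of $(1+\phi)u = 1$ does all the work. The lemma then follows by invoking the closure observation (sum of a hardy-bounded and a hardy-small germ is hardy-bounded) noted immediately before its statement.
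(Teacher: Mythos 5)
Your proof is correct, and since the paper offers no proof beyond the remark that ``a routine computation gives'' the lemma, your Leibniz-rule induction on $(1+\phi)u=1$ is precisely the intended computation. The identification $\theta=-\phi/(1+\phi)$, the base case $\theta\prec 1$, and the inductive bound on $u^{(n)}$ are all sound, so nothing further is needed.
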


\noindent
For the proof of Proposition~\ref{prbin} we shall use (see also \cite[Corollary~5.14]{ADH5}): 

\begin{lemma}[{Boshernitzan \cite[Theorem~13.6]{Boshernitzan82}}]\label{ohhb} Suppose $\phi\in \Gi$ is overhardian and 
$\theta\in \Gi$ is hardy-bounded. 
Then
$\phi+\theta$ is overhardian.
\end{lemma}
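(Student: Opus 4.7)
The plan is to prove by induction on $n\ge 0$ the identity
\[
(\phi+\theta)^{\<n\>}\ =\ \phi^{\<n\>}\ +\ \eta_n\qquad \text{in }\Gi,
\]
where $\eta_0:=\theta$ and each $\eta_n$ is hardy-small. Granted this, overhardianness of $\phi+\theta$ falls out easily: since $\phi^{\<n\>}>_{\ex}\R$ and $\eta_n\preceq 1$, we get $(\phi+\theta)^{\<n\>}\sim \phi^{\<n\>}>_{\ex} 0$; Corollary~\ref{overh1} gives $\phi^{\<n\>}>_{\ex}(\phi^{\<n+1\>})^2$, so $\phi^{\<n\>}-\phi^{\<n+1\>}\to+\infty$ swamps the bounded errors $\eta_n,\eta_{n+1}$, yielding $(\phi+\theta)^{\<n\>}>_{\ex}(\phi+\theta)^{\<n+1\>}$.

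For the inductive step, assume $(\phi+\theta)^{\<n\>}=\phi^{\<n\>}+\eta_n$ with $\eta_n$ hardy-small (hence hardy-bounded). Because $\phi^{\<n\>}>_{\ex}\R$ dominates $\eta_n$, the sum is eventually positive, so its logarithmic derivative exists in $\Gi$, and a direct computation gives
\[
(\phi^{\<n\>}+\eta_n)^\dagger\ =\ \phi^{\<n+1\>}\ +\ \frac{(\eta_n/\phi^{\<n\>})'}{1+\eta_n/\phi^{\<n\>}}.
\]
I would then show the correction term is hardy-small. The germ $1/\phi^{\<n\>}$ lies in the Hardy field $\Q\<\phi\>$ (as $\phi$ is overhardian, hence hardian) and satisfies $1/\phi^{\<n\>}\prec 1$, so it is hardy-small; similarly $\phi^{\<n+1\>}/\phi^{\<n\>}$ is hardian and $\prec 1$ by Corollary~\ref{overh1}, so hardy-small. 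Using the observations before Lemma~\ref{hshb} that the product of a hardy-bounded germ and a hardy-small one is hardy-small, and that sums of hardy-small germs are hardy-small, I conclude that
\[
\eta_n/\phi^{\<n\>}\quad\text{and}\quad (\eta_n/\phi^{\<n\>})'\ =\ \eta_n'/\phi^{\<n\>}\ -\ \eta_n\cdot \phi^{\<n+1\>}/\phi^{\<n\>}
\]
are both hardy-small (note $\eta_n'$ is hardy-bounded because $\eta_n$ is). Then Lemma~\ref{hshb} makes $(1+\eta_n/\phi^{\<n\>})^{-1}$ hardy-bounded, so the correction is hardy-small (product of hardy-small with hardy-bounded). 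Set $\eta_{n+1}$ equal to this correction to close the induction.

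The main obstacle is the bookkeeping in the derivative step: $\eta_n$ is not assumed hardian, so I cannot appeal to Hardy-field lemmas to control its derivatives directly. The saving grace is that I only ever need one derivative of $\eta_n$, and the factors whose $\prec 1$-smallness I must upgrade to hardy-smallness ($1/\phi^{\<n\>}$ and $\phi^{\<n+1\>}/\phi^{\<n\>}$) both lie in $\Q\<\phi\>$, where the hardian-and-$\prec 1$-implies-hardy-small observation does apply. With the inductive identity established, the overhardian property of $\phi+\theta$ follows as noted.
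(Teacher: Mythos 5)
Your proposal is correct and follows essentially the same route as the paper: the identity $(\phi+\theta)^\dagger=\phi^\dagger+\frac{(\theta/\phi)'}{1+\theta/\phi}$, the verification that the correction term is hardy-small via the closure properties of hardy-small/hardy-bounded germs and Lemma~\ref{hshb}, and iteration plus Corollary~\ref{overh1}. The only nitpick is that $\eta_0=\theta$ is merely hardy-bounded, not hardy-small (the paper asserts hardy-smallness of $\theta_n$ only for $n\ge 1$), but since your inductive step only uses hardy-boundedness this does not affect the argument.
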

\begin{proof} Note that $\phi+\theta\in (\Gi)^\times$ by Corollary~\ref{overh1}. Moreover,
$$(\phi+\theta)^\dagger\ =\  \left[\phi\cdot \left(1+\frac{\theta}{\phi}\right)\right]^\dagger\ =\ \phi^\dagger + \left(1+\frac{\theta}{\phi}\right)^\dagger\ =\ \phi^\dagger + \frac{(\theta/\phi)'}{1+(\theta/\phi)}.$$
Now $\phi^{-1}$ is hardy-small, so $\theta/\phi$ and  $(\theta/\phi)'$ are hardy-small. Hence by Lemma~\ref{hshb},  $\big(1+(\theta/\phi\big)^{-1}$ is hardy-bounded, and so
$\frac{(\theta/\phi)'}{1+(\theta/\phi)}$ is hardy-small.  Therefore, as $\phi^\dagger$ is still overhardian, we can iterate the above to obtain
$$(\phi+\theta)^{\<n\>}\ =\ \phi^{\<n\>} + \theta_n,\quad \text{ with hardy-small $\theta_n$ for $n\ge 1$.}$$
Thus $\phi+\theta$ is overhardian in view of Corollary~\ref{overh1}. 
\end{proof} 

\noindent
In particular, if $\phi\in \Gi$ is overhardian, then so is $\phi + \sin x$. Thus for $H$, $P$, $Q$ as in the hypothesis of Proposition~\ref{prbin} and taking $y$ as in Lemma~\ref{lemte}, 
the conclusion of that proposition holds  for $f_0:=y$ and $f_1:= y + \sin x$ by the proof of Lemma~\ref{lemte}. 


\section{The $H$-couple of  a Maximal Hardy Field}\label{vg}

\noindent
Taking into account Lemma~\ref{eta}, proving Theorem~\ref{mt} has now been
reduced to showing that the value group of every maximal Hardy field is $\eta_1$. Let $H$ be a maximal Hardy field. Then $H$ is an asymptotic field in the sense of
 [ADH, 9.1], so it has an $H$-asymptotic couple
$(\Gamma, \psi)$ where $\Gamma$ is the value group of $H$.

\medskip\noindent
Let us consider more generally any asymptotic field $K$ with its asymptotic couple~$(\Gamma, \psi)$. Recall that
$\psi\colon \Gamma^{\ne}\to \Gamma$ is given by
$\psi(\gamma)=v(g^\dagger)$, with $g\in K^\times$ such that~$v(g)=\gamma$, and that $\psi(\gamma)$ is also written as $\gamma^\dagger$. Recall also that $\psi$ is a valuation on 
$\Gamma$. Let~$(\gamma_{\rho})$
be a pc-sequence in $\Gamma$ with respect to the valuation~$\psi$ on $\Gamma$. Take $g_{\rho}\in K^\times$ with~$v(g_\rho)=\gamma_{\rho}$. Then $(g_{\rho}^\dagger)$ is a pc-sequence
in $K$, since~$v(g_{\sigma}^\dagger-g_{\rho}^\dagger)=(\gamma_{\sigma}-\gamma_{\rho})^\dagger$ for $\sigma> \rho$, provided $\rho$ is sufficiently large. Suppose $g_{\rho}^\dagger\leadsto g^{\dagger}$ with nonzero $g$ in some asymptotic field extension
$L$ of $K$ (possibly~$L=K$). We claim that then for $\gamma=vg$
we have $\gamma_{\rho} \leadsto \gamma$. This is because eventually
$(\gamma-\gamma_{\rho})^\dagger=v(g^\dagger-g_{\rho}^\dagger)$,
and the latter is eventually strictly increasing, using also that
eventually $\gamma_{\rho} \ne \gamma$. 
In particular, if~$K=K^\dagger$ and
every pc-sequence in $K$ of length $\omega$
has a pseudolimit in $K$, then every pc-sequence in $\Gamma$ of length $\omega$ has a pseudolimit in $\Gamma$. Thus by Corollaries~\ref{corimmom},~\ref{cormhci}, and~\ref{corsjo}:

\begin{cor}\label{hco} If $H$ is a maximal Hardy field with asymptotic couple $(\Gamma,\psi)$, then every pc-sequence in $(\Gamma, \psi)$
of length $\omega$ has a pseudolimit in $(\Gamma,\psi)$, and 
$$\cf(\Gamma^{<})\ =\ \ci(\Gamma^{>})\ >\ \omega, \qquad \ci(\Gamma)\ =\ \cf(\Gamma)\ >\ \omega.$$
\end{cor}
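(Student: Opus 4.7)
The plan is to assemble the three assertions of Corollary~\ref{hco} from the preceding discussion in this section together with Corollaries~\ref{corimmom}, \ref{cormhci}, and~\ref{corsjo}; no new constructions are needed.

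For the pseudolimit claim, the paragraph immediately preceding the corollary has already established the key implication: if an asymptotic field $K$ satisfies $K=K^\dagger$ and every pc-sequence in $K$ of length $\omega$ has a pseudolimit in $K$, then every pc-sequence in $(\Gamma,\psi)$ of length $\omega$ has a pseudolimit in $(\Gamma,\psi)$. I would apply this with $K=H$: by the main result of \cite{ADH2}, $H$ is closed and in particular Liouville closed, so $H=H^\dagger$; and Corollary~\ref{corimmom} delivers the pseudolimit hypothesis inside $H$. The lift of a pc-sequence $(\gamma_\rho)$ from $\Gamma$ via any choice of $g_\rho\in H^\times$ with $v(g_\rho)=\gamma_\rho$, together with writing its pseudolimit in $H$ as $g^\dagger$, then gives the desired pseudolimit $vg\in\Gamma$.

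For the cofinality statements, my strategy is to transfer them across the valuation. The map $v\colon H^{>\R}\to\Gamma^<$ is surjective (every $\gamma<0$ in $\Gamma$ is $v(h)$ for some positive infinite $h\in H$) and order-reversing modulo $\asymp$. Using lifts together with $\R^>$-multiplication to absorb the $\asymp$-ambiguity, I would check the equalities of infinite cardinals
\[ \cf(H^{>\R})\ =\ \ci(\Gamma^<), \qquad \ci(H^{>\R})\ =\ \cf(\Gamma^<). \]
Since $H^{>\R}$ is cofinal in $H$ (any positive infinite $g\in H$ dominates a given $h\in H$ after shifting by $|h|$), the general fact $\cf(P)=\cf(Q)$ for cofinal $P\subseteq Q$ gives $\cf(H^{>\R})=\cf(H)$; likewise $\Gamma^<$ is coinitial in $\Gamma$, so $\ci(\Gamma^<)=\ci(\Gamma)$. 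Combining these identifications, $\ci(\Gamma)=\cf(H)>\omega$ by Corollary~\ref{corsjo}, and negation symmetry on the ordered abelian group $\Gamma$ yields $\cf(\Gamma)=\ci(\Gamma)$. Likewise $\cf(\Gamma^<)=\ci(H^{>\R})>\omega$ by Corollary~\ref{cormhci}, and negation gives $\ci(\Gamma^>)=\cf(\Gamma^<)$.

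No substantial obstacle is expected: the only care required is to keep the order-reversal and the $\asymp$-collapse straight when transporting cofinal and coinitial subsets between $H^{>\R}$ and $\Gamma^<$. All the deep ingredients are already packaged in the three cited corollaries and in the implication carried out in the paragraph preceding the corollary; the present argument is essentially bookkeeping on top of them.
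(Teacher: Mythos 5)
Your proposal is correct and follows essentially the same route as the paper: the paper's proof consists precisely of the pc-sequence transfer argument in the paragraph preceding the corollary together with an appeal to Corollaries~\ref{corimmom}, \ref{cormhci}, and~\ref{corsjo}, leaving the valuation-theoretic bookkeeping (order-reversal of $v$ on $H^{>\R}$, cofinality of $H^{>\R}$ in $H$, coinitiality of $\Gamma^<$ in $\Gamma$, and negation symmetry) implicit. You have simply made that bookkeeping explicit, and correctly so.
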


\noindent
Corollary~\ref{hco} includes \cite[Proposition~8.1]{ADH5}:
every maximal Hardy field   contains a germ $\ell$ which is {\it translogarithmic,}\/ that is,
$\R <  \ell\leq\ell_n$ for all $n$, where $\ell_n$ is inductively defined by  $\ell_0:=x$ and   $\ell_{n+1}:=\log\ell_n$.

\subsection*{Ordered vector spaces and $H$-couples over an ordered field}
In the rest of this section we fix an ordered field $\k$ (only the case $\k=\R$ is really needed) and use notation, terminology, and results from \cite{ADH3}.  Let $\Gamma$ be an ordered vector space over~$\k$ (as defined there). For $\alpha\in \Gamma$ we defined its
{\it $\k$-archimedean class}\/ 
$$[\alpha]_{\k}\ :=\  \big\{\gamma\in \Gamma:\,\text{$|\gamma|\le c|\alpha|$ and $|\alpha| \le c|\gamma|$ for some $c\in \k^{>}$}\big\},$$
and we linearly ordered the set $[\Gamma]_{\k}$ of $\k$-archimedean classes. We defined $\Gamma$ to be a {\it Hahn space\/} if
for all $\alpha, \gamma\in \Gamma^{\ne}$ with $[\alpha]_{\k}=[\gamma]_{\k}$ 
 there is a scalar $c\in \k^{\times}$ such that $[\alpha-c\gamma]_{\k}<[\alpha]_{\k}$. 
(If $\k=\R$, then the $\k$-archimedean class $[\alpha]_{\k}$ of an element~$\alpha$ in an ordered vector space over $\k$ equals its archimedean class $[\alpha]$, and every ordered vector space over $\k$ is a Hahn space.)  For an ordered vector space $\Delta$ over $\k$ extending
$\Gamma$ we  identify $[\Gamma]_{\k}$ with a subset of $[\Delta]_{\k}$ via the order-preserving embedding $[\gamma]_{\k}\mapsto [\gamma]_{\k}\colon [\Gamma]_{\k} \to [\Delta]_{\k}$. 

Let now $(\Gamma, \psi)$ be an $H$-couple  over  $\k$, as defined in  \cite{ADH3}, so for all $\alpha,\beta\in \Gamma^{\ne}$, 
$$ [\alpha]_{\k}\le [\beta]_{\k}\ \Longrightarrow\  \psi(\alpha) \ge \psi(\beta).$$ We defined $(\Gamma, \psi)$ to be of {\it Hahn type\/} if for all  $\alpha,\beta\in \Gamma^{\ne}$ with
$\psi(\alpha)=\psi(\beta)$ there exists a scalar $c\in \k^\times$ such that $\psi(\alpha-c\beta)> \psi(\alpha)$; a consequence of ``Hahn type''
is that for all $\alpha,\beta\in \Gamma^{\ne}$, 
$$[\alpha]_\k\le [\beta]_\k\ \Longleftrightarrow\ \psi(\alpha)\ge \psi(\beta),$$
and so the underlying ordered vector space $\Gamma$ over $\k$ is a Hahn space. 
We defined~$(\Gamma,\psi)$  to be {\it closed\/}  if $\Psi:= \psi(\Gamma^{\ne})$ is downward closed in the ordered set $\Gamma$, and 
$(\Gamma,\psi)$ has asymptotic integration. 

Let now $K$ be a Liouville closed $H$-field. Recall from \cite{ADH3} that its value group~$\Gamma$ is then an ordered vector space over its (ordered) constant field $C$, with scalar multiplication given by 
$c\,vf=vg$ whenever $f,g\in K^\times$ and 
$cf^\dagger=g^\dagger$. Its asymptotic couple $(\Gamma,\psi)$ with this scalar multiplication is a closed $H$-couple over $C$ of Hahn type. 
For a Liouville closed Hardy field $H\supseteq \R$ its constant field is $\R$, and we construe its asymptotic couple as an $H$-couple over $\R$ as indicated.

\subsection*{Elements of countable type} Let $\Gamma$ be an ordered vector space over $\k$. Let $\beta$ be an element in an ordered vector space over $\k$ that 
extends $\Gamma$. Then we say that~{\bf $\beta$~has countable type over $\Gamma$\/} if $\beta\notin \Gamma$ and
$\cf(\Gamma^{<\beta}), \ci(\Gamma^{>\beta})\le \omega$;  in that case every element in 
$(\Gamma+\k\beta)\setminus \Gamma$ has countable type over 
$\Gamma$. See [ADH, 2.2] for immediate extensions of valued abelian groups and [ADH, 2.4]
for the $\k$-valuation of an ordered vector space over $\k$.

\begin{lemma}\label{lem:immext} Suppose $\beta$ has countable type
over $\Gamma$ and    the ordered vector space $\Gamma+\k\beta$
over $\k$ is an immediate extension of $\Gamma$ with respect to the
$\k$-valuation.
Then~$\beta$ is a pseudolimit of a divergent pc-sequence in 
$\Gamma$ of length~$\omega$.
\end{lemma}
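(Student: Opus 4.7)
The plan is to build $(\gamma_n)$ by iterating immediateness and then force divergence using the countable-type data. By hypothesis, every $\delta\in(\Gamma+\k\beta)^{\ne}$ admits $\gamma\in\Gamma$ with $[\delta-\gamma]_\k<[\delta]_\k$; applying this with $\delta:=\beta-\gamma_n$ lets me construct inductively, from any starting $\gamma_0\in\Gamma$, a sequence $(\gamma_n)_{n\in\omega}$ in $\Gamma$ with $[\beta-\gamma_n]_\k$ strictly decreasing in $[\Gamma]_\k$. Such a sequence is a pc-sequence of length $\omega$ pseudoconverging to $\beta$ in the valuation extending the $\k$-valuation of $\Gamma$ to $\Gamma+\k\beta$. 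The remaining task is to arrange divergence in $\Gamma$.

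The reduction I would use is that $(\gamma_n)$ is divergent in $\Gamma$ precisely when $\{[\beta-\gamma_n]_\k:n\in\omega\}$ is coinitial in $V:=\{[\beta-\gamma]_\k:\gamma\in\Gamma\}\subseteq[\Gamma]_\k$. Indeed, if some $\gamma\in\Gamma$ satisfied $\gamma_n\leadsto\gamma$, then for all large $n$ the ultrametric identity $[\gamma_{n+1}-\gamma_n]_\k=[\beta-\gamma_n]_\k$ combined with the pseudolimit condition would force $[\beta-\gamma]_\k<[\beta-\gamma_n]_\k$, contradicting coinitiality. To secure this coinitiality I would invoke countable type: fix a cofinal sequence $(a_k)$ in $\Gamma^{<\beta}$ and a coinitial sequence $(b_k)$ in $\Gamma^{>\beta}$, each of length at most $\omega$. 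The monotonicity $0<\eta\leq\eta'\Rightarrow[\eta]_\k\leq[\eta']_\k$ shows that for any $\gamma\in\Gamma^{<\beta}$, choosing $a_k\geq\gamma$ yields $[\beta-a_k]_\k\leq[\beta-\gamma]_\k$, and symmetrically for $\gamma\in\Gamma^{>\beta}$. Thus $\{[\beta-a_k]_\k,[\beta-b_k]_\k:k\}$ is a countable coinitial subset of $V$.

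I would then refine the construction so that $\gamma_{n+1}$ is chosen with $[\beta-\gamma_{n+1}]_\k$ strictly less than each of $[\beta-\gamma_n]_\k$, $[\beta-a_n]_\k$, $[\beta-b_n]_\k$ (dropping whichever of $a_n,b_n$ are undefined); this requires only a single application of immediateness to whichever of $\gamma_n,a_n,b_n$ realizes the minimum of these three values. The resulting $\{[\beta-\gamma_n]_\k\}$ then lies coinitially below both $\{[\beta-a_k]_\k\}$ and $\{[\beta-b_k]_\k\}$ and so is coinitial in $V$, completing the divergence argument. The degenerate configuration in which $\Gamma^{<\beta}$ has a maximum $a$ and $\Gamma^{>\beta}$ has a minimum $b^{*}$ simultaneously cannot arise under the hypotheses, for then $|\beta-\gamma|\geq\min(\beta-a,b^{*}-\beta)$ for every $\gamma\in\Gamma$, giving $V$ a strictly positive lower bound in $[\Gamma]_\k$ and contradicting the infinite supply of ever-smaller $[\beta-\gamma']_\k$ demanded by immediateness. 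The main obstacle is really just bookkeeping: keeping the $[\cdot]_\k$-order consistent throughout and verifying that the refined construction genuinely produces coinitiality of $\{[\beta-\gamma_n]_\k\}$ in $V$; once this is done, divergence reduces to the one-line ultrametric observation in the second paragraph.
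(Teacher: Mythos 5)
Your proposal is correct and follows essentially the same route as the paper's (very terse) proof: extract a countable set whose $[\beta-\cdot]_\k$-values are coinitial in $\{[\beta-\gamma]_\k:\gamma\in\Gamma\}$ from the countable-type hypothesis, use immediateness to drive the values strictly down, and observe that coinitiality of the resulting value set rules out a pseudolimit in $\Gamma$. The extra paragraph on the degenerate max/min configuration is harmless but superfluous, since immediateness already guarantees the construction never stalls.
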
 
\begin{proof} The assumptions yield a countable (necessarily infinite) set $A\subseteq \Gamma$ such that for every $\gamma\in \Gamma$ there exists an $\alpha\in A$ with $[\alpha-\beta]_{\k} < [\gamma-\beta]_{\k}$. This easily yields a divergent pc-sequence $(\alpha_n)$ in $\Gamma$ with all $\alpha_n\in A$ such that $\alpha_n\leadsto \beta$.
\end{proof} 

\begin{lemma}\label{keyprep}
Suppose $\cf(\Gamma),\cf(\Gamma^{<})>\omega$, and   $\beta$ has countable type over $\Gamma$. Then  
$$\cf(\Gamma^{<\beta})\ =\ \ci(\Gamma^{>\beta})\ =\ \omega.$$
\end{lemma}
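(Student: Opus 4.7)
The plan is to argue by contradiction, exploiting that the hypothesis $\cf(\Gamma^{<\beta}),\ci(\Gamma^{>\beta})\le\omega$ is already built into the definition of ``countable type''. What needs to be ruled out is that either of these is strictly less than~$\omega$. Since $\Gamma$ is an ordered vector space over $\k$, hence densely ordered and closed under addition, a cut of the form $\Gamma^{<\beta}$ has finite cofinality precisely when it is empty or has a largest element; similarly for $\Gamma^{>\beta}$ (with ``smallest'').

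The empty cases go quickly. If $\Gamma^{<\beta}=\emptyset$ then $\beta<\Gamma$, so $\Gamma^{>\beta}=\Gamma$ and
\[\ci(\Gamma^{>\beta})\ =\ \ci(\Gamma)\ =\ \cf(\Gamma)\ >\ \omega,\]
contradicting countable type; symmetrically $\Gamma^{>\beta}\ne\emptyset$. Now suppose, towards a contradiction, that $\gamma_0=\max\Gamma^{<\beta}$ exists. For any $\delta\in\Gamma^>$ the element $\gamma_0+\delta$ lies in~$\Gamma$, is strictly larger than $\gamma_0$, and differs from $\beta$ (since $\beta\notin\Gamma$); by maximality of $\gamma_0$ we conclude $\gamma_0+\delta>\beta$. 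Hence $\delta>\beta-\gamma_0$ for every $\delta\in\Gamma^>$, so $\beta-\gamma_0$ is a positive ``infinitesimal'' below all of $\Gamma^>$, which forces the translate identity
\[\Gamma^{>\beta}\ =\ \gamma_0+\Gamma^>.\]
Therefore $\ci(\Gamma^{>\beta})=\ci(\Gamma^>)=\cf(\Gamma^<)>\omega$ (using the order-reversing bijection $\gamma\mapsto-\gamma$ to identify $\ci(\Gamma^>)$ with $\cf(\Gamma^<)$), contradicting $\ci(\Gamma^{>\beta})\le\omega$.

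By the symmetric argument (if $\gamma_1=\min\Gamma^{>\beta}$ existed, the same reasoning yields $\Gamma^{<\beta}=\gamma_1+\Gamma^<$ and hence $\cf(\Gamma^{<\beta})=\cf(\Gamma^<)>\omega$), $\Gamma^{>\beta}$ has no smallest element. Consequently $\cf(\Gamma^{<\beta})$ and $\ci(\Gamma^{>\beta})$ are both infinite, and the countable-type hypothesis pins each to exactly $\omega$. There is no real obstacle here; the one observation that does the work is that a putative extremum $\gamma_0$ of $\Gamma^{<\beta}$ collapses $\Gamma^{>\beta}$ onto a translate of $\Gamma^>$, after which the symmetry-based identity $\ci(\Gamma^>)=\cf(\Gamma^<)$ finishes the argument. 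In particular, the proof uses nothing about the $\k$-valuation or the map $\psi$—only the ordered abelian group structure of $\Gamma$ together with $\beta\notin\Gamma$.
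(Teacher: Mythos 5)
Your proof is correct and follows essentially the same route as the paper's: rule out $\beta<\Gamma$ and $\beta>\Gamma$ using $\ci(\Gamma)=\cf(\Gamma)>\omega$, then observe that a largest element $\gamma_0$ of $\Gamma^{<\beta}$ would give $\Gamma^{>\beta}=\Gamma^{>\gamma_0}=\gamma_0+\Gamma^{>}$, whose coinitiality is $\cf(\Gamma^{<})>\omega$, a contradiction (and symmetrically for a least element of $\Gamma^{>\beta}$). The only difference is cosmetic: you spell out the translation identity explicitly, while the paper states $\Gamma^{>\beta}=\Gamma^{>\gamma}$ directly.
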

\begin{proof}
If $\beta<\Gamma$, then $\ci(\Gamma^{>\beta})=\ci(\Gamma)=\cf(\Gamma)>\omega$,
contradicting $\ci(\Gamma^{>\beta})\leq\omega$. Thus~$\Gamma^{<\beta}\neq\emptyset$.
If $\cf(\Gamma^{<\beta})\ne \omega$, then
$\Gamma^{<\beta}$ has a largest element $\gamma$,  so
$\Gamma^{>\beta}=\Gamma^{>\gamma}$, contradicting~$\ci(\Gamma^{>\gamma})=\cf(\Gamma^{<})>\omega$. Thus 
$\cf(\Gamma^{<\beta})= \omega$; likewise, $\ci(\Gamma^{>\beta})= \omega$.
\end{proof}

\noindent
For us the relevant fact relating  ``countable type'' to the $\eta_1$-property is as follows:  given an $H$-couple $(\Gamma, \psi)$ over $\k$, 
$$\text{$\Gamma$ is $\eta_1$}\quad\Longleftrightarrow\quad
\begin{cases}
&\parbox{20em}{there is no $H$-couple over $\k$ extending~$(\Gamma, \psi)$ with an
element of countable type over $\Gamma$.}\end{cases}$$
(For ``$\Leftarrow$'' use model-theoretic compactness.) 

\begin{lemma}\label{cps} Let $(\Gamma, \psi)$ be a closed $H$-couple
over $\k$, and suppose $\beta$ in an $H$-couple over $\k$ extending $(\Gamma,\psi)$ has countable type over $\Gamma$
and $\beta^\dagger\notin \Gamma$. 
Then $\beta^\dagger$ has countable
type over $\Gamma$.
\end{lemma}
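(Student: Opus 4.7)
My plan is a case analysis based on the position of the $\k$-archimedean class $[\beta]_\k$ within $[\Gamma]_\k$. The closed $H$-couples appearing in this paper are of Hahn type, so $\psi$ induces an order-reversing bijection between $[\Gamma^{\ne}]_\k$ and $\Psi$; in particular, $\beta^\dagger\notin\Gamma$ forces $[\beta]_\k\notin[\Gamma^{\ne}]_\k$ (else $\beta^\dagger=\psi(\beta)=\psi(\gamma)\in\Gamma$ for any $\gamma\in\Gamma^{\ne}$ with $[\gamma]_\k=[\beta]_\k$). After replacing $\beta$ by $-\beta$ if necessary (which preserves $\beta^\dagger$ and the hypotheses), I assume $\beta>0$.

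In the main case --- $[\beta]_\k$ strictly between two existing classes of $[\Gamma^{\ne}]_\k$ --- I observe that for $\gamma\in\Gamma^{>0}$ one has $\gamma<\beta$ iff $[\gamma]_\k<[\beta]_\k$, and likewise $\gamma>\beta$ iff $[\gamma]_\k>[\beta]_\k$. From countable type of $\beta$ I extract countable cofinal and coinitial sequences in $\Gamma^{>0}\cap\Gamma^{<\beta}$ and $\Gamma^{>0}\cap\Gamma^{>\beta}$ respectively. These produce countable cofinal and coinitial sequences of $\k$-archimedean classes approaching $[\beta]_\k$ from below and above in $[\Gamma]_\k$, and applying the order-reversing bijection induced by $\psi$ yields countable coinitial and cofinal sequences in $\Psi^{>\beta^\dagger}$ and $\Psi^{<\beta^\dagger}$ respectively. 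Finally, $\Psi$ being downward closed in $\Gamma$ (by closedness of $(\Gamma,\psi)$) gives $\Gamma^{<\beta^\dagger}=\Psi^{<\beta^\dagger}$ and identifies $\Psi^{>\beta^\dagger}$ as a coinitial subset of $\Gamma^{>\beta^\dagger}$, so these sequences witness countable type of $\beta^\dagger$.

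The boundary case $[\beta]_\k>[\Gamma^{\ne}]_\k$ is easy: here $\beta>\Gamma$, so $\Gamma^{<\beta}=\Gamma$ and countable type of $\beta$ gives $\cf(\Gamma)\le\omega$ (hence $\ci(\Gamma)\le\omega$ by symmetry); meanwhile $\beta^\dagger<\Psi$, and downward-closedness of $\Psi$ in $\Gamma$ forces no element of $\Gamma$ to lie strictly below $\Psi$, so $\Gamma^{<\beta^\dagger}=\emptyset$ and $\Gamma^{>\beta^\dagger}=\Gamma$, both trivially witnessing countable type. The reverse boundary case $[\beta]_\k<[\Gamma^{\ne}]_\k$ is the main obstacle: then $\beta$ is infinitesimal relative to $\Gamma$, $\beta^\dagger>\Psi$, and countable type of $\beta$ yields $\ci(\Gamma^{>0})\le\omega$ but not $\cf(\Gamma)\le\omega$. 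Here one must precisely locate $\beta^\dagger$ --- it may lie in a gap just above $\Psi$, inside the convex hull of $\Gamma^{>\Psi}$, or even above all of $\Gamma$ --- and each placement demands its own construction of countable cofinal or coinitial sequences, drawing on the asymptotic-integration property of $(\Gamma,\psi)$ and the $H$-couple axioms constraining the extension $(\Gamma+\k\beta,\psi)$ to rule out or resolve the problematic ``above $\Gamma$'' subcase. This sub-case analysis is the technically demanding part of the argument.
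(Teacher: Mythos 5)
Your first two cases go through and match the paper's argument (the paper phrases the "main" case directly in terms of a cofinal sequence $(\alpha_m)$ in $\Gamma^{<\beta}\cap\Gamma^{>}$ and a coinitial sequence $(\gamma_n)$ in $\Gamma^{>\beta}$, applies $\psi$, and uses downward closedness of $\Psi$ exactly as you do; the case $\beta>\Gamma$ is handled the same way). Two remarks on that part: the lemma as stated does not assume Hahn type, so you should not invoke the order-reversing \emph{bijection} between $[\Gamma^{\ne}]_{\k}$ and $\Psi$ --- but this is harmless, since the $H$-couple axiom $[\alpha]_{\k}\le[\gamma]_{\k}\Rightarrow\alpha^\dagger\ge\gamma^\dagger$ (applied in the extension) is all your argument actually uses.

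The genuine gap is the third case, $0<|\beta|<\Gamma^{>}$: you name it as "the main obstacle" and sketch a taxonomy of possible locations of $\beta^\dagger$, but you do not prove anything there, so the proof is incomplete. Moreover, the elaborate sub-case analysis you anticipate is not needed. The paper's argument is short: take a strictly decreasing sequence $(\gamma_n)$ coinitial in $\Gamma^{>}$ (possible since $\ci(\Gamma^{>\beta})=\ci(\Gamma^{>})\le\omega$). The basic asymptotic-couple inequality $\psi(\alpha)<\psi(\gamma)+|\gamma|$ pins down $\beta^\dagger$ immediately: $\gamma_n^\dagger<\beta^\dagger<\gamma_n+\gamma_n^\dagger$ for all $n$ (the left inequality because $[\beta]_{\k}<[\gamma_n]_{\k}$ and $\beta^\dagger\notin\Gamma$). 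In particular $\beta^\dagger$ cannot lie above all of $\Gamma$. Closedness then finishes it: $(\gamma_n^\dagger)$ is cofinal in $\Psi$, any $\delta\in\Gamma$ with $\delta>\Psi$ equals $\epsilon+\epsilon^\dagger$ for some $\epsilon\in\Gamma^{>}$ by asymptotic integration and downward closedness of $\Psi$, and $\epsilon\mapsto\epsilon+\epsilon^\dagger$ is increasing; hence $(\gamma_n^\dagger)$ is cofinal in $\Gamma^{<\beta^\dagger}$ and $(\gamma_n+\gamma_n^\dagger)$ is coinitial in $\Gamma^{>\beta^\dagger}$, which is exactly countable type of $\beta^\dagger$. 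You had the right ingredients in hand (asymptotic integration, the $H$-couple axioms) but left the step unexecuted.
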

\begin{proof} Without loss of generality we assume $\beta>0$. Consider first the case
where we have a strictly increasing sequence $(\alpha_m)$ in $\Gamma^{>}$ 
and a strictly decreasing sequence~$(\gamma_n)$ in $\Gamma^{>}$, such that $\alpha_m < \beta < \gamma_n$ for all~$m$,~$n$, and
$(\alpha_m)$ is cofinal in~$\Gamma^{< \beta}$, and $(\gamma_n)$ is coinitial in $ \Gamma^{>\beta}$.
Then $(\alpha_m^\dagger)$ is decreasing, $(\gamma_n^\dagger)$ is increasing, $\alpha_m^\dagger > \beta^\dagger > \gamma_n^\dagger $ for all~$m$,~$n$. 
Using  that the $H$-couple $(\Gamma,\psi)$ is closed we
also obtain that $(\alpha_m^\dagger)$ is coinitial in $\Gamma^{>\beta^\dagger}$ and
that $(\gamma_n^\dagger)$ is cofinal in
$ \Gamma^{<\beta^\dagger}$. Thus
$\beta^\dagger$ has countable type over $\Gamma$. Next consider the case $\beta> \Gamma$. Then  the cofinality of $\Gamma$ is $\omega$,
$\beta^\dagger < \Gamma$, and  so $\beta^\dagger$ has countable type over $\Gamma$, since the coinitiality of $\Gamma$ is also $\omega$. 

The case that there are $\alpha, \gamma\in \Gamma^{>}$ with $\alpha < \beta < \gamma$ and there is a largest $\alpha\in \Gamma^{>}$ with $\alpha < \beta$ or a least
$\gamma\in \Gamma^{>}$ with $\beta < \gamma$ cannot occur, since for such a  largest $\alpha$ we would have $\alpha < \beta < 2\alpha$, so $\alpha^\dagger=\beta^\dagger$, contradicting $\beta^\dagger\notin \Gamma$ (and a least such $\gamma$ yields the same contradiction).  

It remains to consider the case
$0 <  \beta < \Gamma^{>}$. Then $\beta$ being of countable type over~$\Gamma$ yields a strictly decreasing sequence $(\gamma_n)$ in $\Gamma^{>}$ that is coinitial in $\Gamma^{>}$.
Then~$(\gamma_m^\dagger)$ is increasing, $(\gamma'_n)$
is decreasing, $\gamma_m^\dagger < \beta^\dagger < \gamma'_n$ for all~$m$,~$n$, and $(\gamma_m^\dagger)$ is cofinal in 
$\Gamma^{<\beta^\dagger}$ and $(\gamma'_n)$ is coinitial in $\Gamma^{>\beta^\dagger}$. So here
$\beta^\dagger$ is also of countable type over $\Gamma$. 
\end{proof}

\subsection*{Good approximations} Let $\Gamma$ be an ordered vector space over $\k$, and let $\alpha$,~$\gamma$ range over
$\Gamma$. An {\it extension}\/ of $\Gamma$ is an ordered vector space over $\k$ extending $\Gamma$.

\begin{lemma}\label{bapp} Let $\beta\notin \Gamma$  be an element in an extension of $\Gamma$. Then for any $\alpha$,
$$[\beta-\alpha]_{\k} \notin [\Gamma]_{\k}\ \Longrightarrow\  [\beta-\alpha]_{\k} =\min_{\gamma}\  [\beta-\gamma]_{\k}.$$
If $\Gamma+\k\beta$ is a Hahn space, then this implication turns into an equivalence. 
\end{lemma}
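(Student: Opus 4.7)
\medskip

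The plan is to use the ultrametric property of the $\k$-valuation $[\,\cdot\,]_\k$ on $\Gamma+\k\beta$: for any $a,b$ in an ordered vector space over $\k$, $[a+b]_\k\leq\max([a]_\k,[b]_\k)$, with equality when $[a]_\k\neq [b]_\k$.

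For the forward implication, I would assume $[\beta-\alpha]_\k\notin[\Gamma]_\k$ and let $\gamma\in\Gamma$ be arbitrary. Writing $\beta-\gamma=(\beta-\alpha)+(\alpha-\gamma)$, note that $\alpha-\gamma\in\Gamma$, so either $\alpha=\gamma$ (in which case $[\beta-\gamma]_\k=[\beta-\alpha]_\k$) or $[\alpha-\gamma]_\k\in[\Gamma]_\k$, which forces $[\alpha-\gamma]_\k\neq[\beta-\alpha]_\k$. In the latter case, the ultrametric equality gives
\[
[\beta-\gamma]_\k\ =\ \max\big([\beta-\alpha]_\k,\,[\alpha-\gamma]_\k\big)\ \geq\ [\beta-\alpha]_\k.
\]
Since $\beta\notin\Gamma$, also $\beta-\gamma\neq 0$, so $[\beta-\gamma]_\k$ is defined in the usual sense, and the minimum is indeed attained at $\gamma=\alpha$.

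For the converse, assume $\Gamma+\k\beta$ is a Hahn space and that $[\beta-\alpha]_\k=\min_\gamma[\beta-\gamma]_\k$; I would argue by contradiction. If $[\beta-\alpha]_\k\in[\Gamma]_\k$, choose $\delta\in\Gamma^{\ne}$ with $[\delta]_\k=[\beta-\alpha]_\k$. Because $\beta-\alpha\in(\Gamma+\k\beta)^{\ne}$ lies in the same $\k$-archimedean class as $\delta$, the Hahn space property applied in $\Gamma+\k\beta$ yields $c\in\k^\times$ with $[(\beta-\alpha)-c\delta]_\k<[\beta-\alpha]_\k$. Setting $\gamma:=\alpha+c\delta\in\Gamma$, this reads $[\beta-\gamma]_\k<[\beta-\alpha]_\k$, contradicting minimality.

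There is no real obstacle here: both directions are essentially one-line applications, the forward one of the ultrametric inequality and the backward one of the defining property of a Hahn space. The only point to watch is that $\beta-\gamma\neq 0$ for every $\gamma\in\Gamma$ (so that the classes $[\beta-\gamma]_\k$ are meaningful), which is immediate from $\beta\notin\Gamma$.
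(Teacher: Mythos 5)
Your proof is correct and follows essentially the same route as the paper: the forward direction is the ultrametric identity applied to $\beta-\gamma=(\beta-\alpha)+(\alpha-\gamma)$ (the paper states it in contrapositive form via $(\beta-\alpha)-(\beta-\gamma)=\gamma-\alpha$, which is the same computation), and the converse is the same application of the Hahn space property to $\beta-\alpha$ and a representative $\delta\in\Gamma^{\ne}$ of its class. No gaps.
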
 
\begin{proof}  If $[\beta-\gamma]_{\k}<[\beta-\alpha]_{\k}$, then $(\beta-\alpha)-(\beta-\gamma)=\gamma-\alpha$ yields
$[\beta-\alpha]_{\k}=[\gamma-\alpha]_{\k}\in [\Gamma]_{\k}$; this gives (the contrapositive of)  ``$\Rightarrow$''. Suppose $\Gamma+\k\beta$ is a Hahn space.
If $[\beta-\alpha]\in [\Gamma]_{\k}$, say $[\beta-\alpha]_{\k}=[\gamma]_{\k}$, then
$\big[\beta- (\alpha+c\gamma)\big]_{\k}<[\beta-\alpha]_{\k}$ for some~$c\in \k^\times$; this proves (the contrapositive of) ``$\Leftarrow$''.
\end{proof} 

\noindent
Suppose $\beta\notin \Gamma$ lies in an extension of $\Gamma$. Then a {\bf good approximation} of $\beta$ in $\Gamma$ is by definition an $\alpha$ such that $[\beta-\alpha]_{\k} \notin [\Gamma]_{\k}$. 
Note that a good approximation of~$\beta$
 in $\Gamma$ exists iff $[\Gamma+\k\beta]_{\k}\neq[\Gamma]_{\k}$.
Together with Lemma~\ref{lem:immext} this yields:

\begin{cor}\label{cor:immext}
Suppose $\beta$ lies in an extension $\Gamma^*$ of $\Gamma$ and $\Gamma^*$ is a Hahn space. Assume also that
there is no divergent pc-sequence of length $\omega$ in $\Gamma$ and that
$\beta$ has
 countable type over~$\Gamma$. Then $\beta$
has a good approximation in $\Gamma$. 
\end{cor}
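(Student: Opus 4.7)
The plan is to argue by contradiction: assume $\beta$ has no good approximation in $\Gamma$, and use Lemma~\ref{lem:immext} together with the Hahn space hypothesis to derive a divergent pc-sequence of length $\omega$ in $\Gamma$, contradicting the hypothesis.

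First, I would note that since $\Gamma^*$ is a Hahn space and $\Gamma+\k\beta\subseteq\Gamma^*$, the subspace $\Gamma+\k\beta$ is itself a Hahn space (the witnessing scalar $c\in\k^\times$ produced in the definition keeps $\alpha-c\gamma$ inside the subspace). Therefore the equivalence in Lemma~\ref{bapp} applies, and the assumption that $\beta$ has no good approximation in $\Gamma$ translates into $[\beta-\gamma]_{\k}\in[\Gamma]_{\k}$ for every $\gamma\in\Gamma$.

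Next, I would show $[\Gamma+\k\beta]_\k=[\Gamma]_\k$. A general nonzero element of $\Gamma+\k\beta$ lying outside $\Gamma$ has the form $c\beta+\alpha$ with $c\in\k^\times$, $\alpha\in\Gamma$. Writing $c\beta+\alpha=c\big(\beta-(-\alpha/c)\big)$ and using that $\k$-archimedean classes are preserved under scaling by nonzero elements of $\k$, we get $[c\beta+\alpha]_\k=[\beta-(-\alpha/c)]_\k\in[\Gamma]_\k$ by the previous step. Thus $\Gamma+\k\beta$ is an immediate extension of $\Gamma$ with respect to the $\k$-valuation.

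Now I would apply Lemma~\ref{lem:immext} to $\beta$: together with the hypothesis that $\beta$ has countable type over $\Gamma$, this yields a divergent pc-sequence in $\Gamma$ of length $\omega$, directly contradicting the hypothesis of the corollary. This finishes the proof. There is no real obstacle here; the argument is just a short bookkeeping combination of Lemmas~\ref{bapp} and~\ref{lem:immext}, with the Hahn space hypothesis serving precisely to supply the nontrivial (``$\Leftarrow$'') direction of Lemma~\ref{bapp}.
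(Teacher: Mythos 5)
Your argument is correct and is essentially the paper's proof, which observes that a good approximation of $\beta$ in $\Gamma$ exists iff $[\Gamma+\k\beta]_{\k}\neq[\Gamma]_{\k}$ and then applies Lemma~\ref{lem:immext}. One small point of bookkeeping: ``$\beta$ has no good approximation'' means $[\beta-\gamma]_{\k}\in[\Gamma]_{\k}$ for all $\gamma\in\Gamma$ purely by definition, with no appeal to Lemma~\ref{bapp}; where the Hahn property of $\Gamma+\k\beta$ (equivalently, the ``$\Leftarrow$'' direction of Lemma~\ref{bapp}) genuinely enters is at your later step upgrading ``$[\Gamma+\k\beta]_{\k}=[\Gamma]_{\k}$'' to ``immediate extension'', since immediacy also requires that for each $\alpha$ there be $\gamma$ with $[\beta-\gamma]_{\k}<[\beta-\alpha]_{\k}$, i.e.\ that the residue spaces do not grow.
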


\begin{lemma}\label{sides}  Suppose $\beta\notin\Gamma$ in an extension of $\Gamma$ has a
good approximation $\alpha$ in~$\Gamma$.  Then the following holds:  \begin{enumerate}
\item[(i)] 
if $[\beta]_{\k}\in [\Gamma]_{\k}$, then
$\alpha\ne 0$, $[\beta-\alpha]_{\k} <[\beta]_{\k}=[\alpha]_{\k}$; and
\item[(ii)] for all $\gamma$, if $\sign(\beta-\gamma)\ne \sign(\beta-\alpha)$, then 
$[\alpha-\gamma]_{\k}=[\beta-\gamma]_{\k}>[\beta-\alpha]_{\k}$.
\end{enumerate}
\end{lemma}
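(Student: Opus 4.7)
The plan is to handle (i) and (ii) separately, both by essentially elementary manipulation of the $\k$-valuation, using the fact that for $a,b\in\Gamma^{\ge 0}$ (or in any ordered $\k$-vector space extension) one has the ultrametric inequality $[a+b]_{\k}\leq\max([a]_{\k},[b]_{\k})$, with equality when $[a]_{\k}\ne[b]_{\k}$, and equality also when $a,b$ are of the same sign (since then $|a+b|\geq |a|,|b|$).

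For (i): if $\alpha=0$, then $[\beta-\alpha]_{\k}=[\beta]_{\k}\in[\Gamma]_{\k}$, contradicting that $\alpha$ is a good approximation. So $\alpha\ne 0$ and $[\alpha]_{\k}\in[\Gamma]_{\k}$. Now $\beta=\alpha+(\beta-\alpha)$, so $[\beta]_{\k}\leq\max([\alpha]_{\k},[\beta-\alpha]_{\k})$; and similarly $[\alpha]_{\k}\leq\max([\beta]_{\k},[\beta-\alpha]_{\k})$. If $[\beta]_{\k}\ne[\alpha]_{\k}$, then one of these two max's strictly dominates the remaining term, forcing $[\beta-\alpha]_{\k}$ to equal either $[\beta]_{\k}$ or $[\alpha]_{\k}$, both in $[\Gamma]_{\k}$ — contradiction. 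Hence $[\beta]_{\k}=[\alpha]_{\k}$, and then $[\beta-\alpha]_{\k}\leq[\beta]_{\k}$; strictness follows since $[\beta]_{\k}\in[\Gamma]_{\k}$ while $[\beta-\alpha]_{\k}\notin[\Gamma]_{\k}$.

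For (ii): the sign condition $\sign(\beta-\gamma)\ne\sign(\beta-\alpha)$, together with $\beta\ne\alpha$ (a good approximation is never $\beta$) and $\beta\ne\gamma$ (else $\beta\in\Gamma$), means $\beta-\gamma$ and $\beta-\alpha$ are nonzero of opposite signs, i.e.\ $\beta$ lies strictly between $\alpha$ and $\gamma$. Consequently the two terms in
\[
\alpha-\gamma\ =\ (\alpha-\beta)+(\beta-\gamma)
\]
have the same sign, so $|\alpha-\gamma|=|\alpha-\beta|+|\beta-\gamma|$ and therefore $[\alpha-\gamma]_{\k}=\max\!\big([\alpha-\beta]_{\k},[\beta-\gamma]_{\k}\big)$. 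Since $\alpha,\gamma\in\Gamma$ gives $[\alpha-\gamma]_{\k}\in[\Gamma]_{\k}$ while $[\alpha-\beta]_{\k}\notin[\Gamma]_{\k}$, the maximum cannot be $[\alpha-\beta]_{\k}$; thus $[\beta-\gamma]_{\k}>[\alpha-\beta]_{\k}=[\beta-\alpha]_{\k}$ and $[\alpha-\gamma]_{\k}=[\beta-\gamma]_{\k}$, as claimed.

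There is no serious obstacle here: once the sign hypothesis is translated into the same-sign decomposition for $\alpha-\gamma$, the result reduces to the standard ``dominance'' behavior of a valuation, exactly as in the proof of Lemma~\ref{bapp}. The only small point to watch is to make sure the good approximation property ($[\beta-\alpha]_{\k}\notin[\Gamma]_{\k}$) is invoked precisely at the step where it rules out the bad case of the maximum.
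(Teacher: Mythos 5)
Your proof is correct and follows essentially the same route as the paper's: an elementary archimedean-class computation in which the good-approximation property $[\beta-\alpha]_{\k}\notin[\Gamma]_{\k}$ is used exactly once to break the tie in the ultrametric inequality. The only cosmetic difference is that for (ii) the paper first notes $0<\beta-\alpha<\gamma-\alpha$ to get $[\gamma-\alpha]_{\k}>[\beta-\alpha]_{\k}$ and then applies dominance to $\beta-\gamma=(\beta-\alpha)+(\alpha-\gamma)$, whereas you read the same conclusion off the same-sign decomposition $\alpha-\gamma=(\alpha-\beta)+(\beta-\gamma)$.
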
 
\begin{proof} 
Part (i) is clear. For (ii), assume  
$\alpha<\beta<\gamma$; the case $\gamma< \beta<\alpha$ reduces to this case by taking negatives. 
Then $\gamma-\alpha>\beta-\alpha>0$, so~$[\gamma-\alpha]_{\k}>[\beta-\alpha]_{\k}$, since
$[\beta-\alpha]_{\k}\notin[\Gamma]_{\k}$. Thus $[\beta-\gamma]_{\k}=
\big[(\beta-\alpha)+(\alpha-\gamma)\big]_{\k}=[\alpha-\gamma]_{\k}$. 
\end{proof} 

\noindent
{\it In the rest of this section $(\Gamma,\psi)$ is an $H$-couple over $\k$, and  $\alpha$, $\gamma$ range over $\Gamma$. By an \emph{extension}\/ of $(\Gamma,\psi)$ we mean an $H$-couple over $\k$ that extends $(\Gamma,\psi)$. We consider $(\Gamma,\psi)$ as a valued ordered vector space over $\k$ with the valuation on $\Gamma$ given by $\psi$, so $\alpha\sim \gamma$ means $(\alpha-\gamma)^\dagger > \alpha^\dagger$.}\/ 
For $\alpha\neq 0$ we  set $\alpha^{\sim}:=\{\gamma:\,\alpha\sim\gamma\}$.

\begin{lemma}\label{lem:alphasim} 
Suppose $(\Gamma,\psi)$ is   closed and $\alpha\neq 0$.  Then
$$\cf(\alpha^\sim)\ =\  \ci(\alpha^\sim)\  =\ \cf(\Gamma^<)\ =\ \ci(\Gamma^>).$$
\end{lemma}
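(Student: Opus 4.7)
Set $\mu:=\psi(\alpha)\in\Gamma$. Since $\psi(0)=\infty>\mu$, we have $\alpha\in\alpha^\sim$, and the map
$\gamma\mapsto 2\alpha-\gamma$ is an order-reversing bijection $\alpha^\sim\to\alpha^\sim$ (using $\psi(x)=\psi(-x)$), which already yields $\cf(\alpha^\sim)=\ci(\alpha^\sim)$; likewise $\cf(\Gamma^<)=\ci(\Gamma^>)$ via $\gamma\mapsto -\gamma$. So it suffices to prove $\cf(\alpha^\sim)=\ci(\Gamma^>)$.

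The first step is to identify $\alpha^\sim$ with an additive translate of a convex subgroup. Using $\psi(x)=\psi(-x)$,
$$\alpha^\sim\ =\ \alpha+\Delta,\quad \text{where}\quad \Delta\ :=\ \{\delta\in\Gamma:\psi(\delta)>\mu\}\cup\{0\}.$$
The ultrametric inequality (valid for the valuation $\psi$ on $\Gamma$) shows $\Delta$ is a subgroup, and the $H$-couple axiom ($[\alpha]_\k\le[\beta]_\k\Rightarrow\psi(\alpha)\ge\psi(\beta)$) makes $\Delta$ convex. The closedness hypothesis, which includes asymptotic integration, guarantees $\Psi$ has no largest element, hence $\Psi^{>\mu}\ne\emptyset$ and $\Delta\ne\{0\}$. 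Thus
$\cf(\alpha^\sim)\ =\ \cf(\Delta)\ =\ \cf(\Delta^{>0})$.

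The heart of the argument is to compare $\cf(\Delta^{>0})$ with $\ci(\Gamma^>)$ by means of the surjective, weakly decreasing map $\psi\colon\Delta^{>0}\to\Psi^{>\mu}$. Surjectivity uses $\psi(-\gamma)=\psi(\gamma)$ (so every value of $\psi$ is attained on a positive element), and weak decreasingness is the $H$-couple axiom. I plan two inequalities:
\begin{itemize}
\item[(a)] $\cf(\Delta^{>0})\ge\ci(\Psi^{>\mu})$: given a cofinal $S\subseteq\Delta^{>0}$ and $\nu_0\in\Psi^{>\mu}$, pick $\beta\in\Delta^{>0}$ with $\psi(\beta)=\nu_0$, take $s\in S$ with $s\ge\beta$, and use weak decreasingness to get $\psi(s)\le\nu_0$; hence $\psi(S)$ is coinitial in $\Psi^{>\mu}$.
\item[(b)] $\cf(\Delta^{>0})\le\ci(\Gamma^{>\mu})$: given a coinitial $T\subseteq\Gamma^{>\mu}$, for each $\tau\in T$ small enough (below some fixed element of $\Psi^{>\mu}$) we have $\tau\in\Psi^{>\mu}$ by downward closedness of $\Psi$, so pick $\delta_\tau\in\Delta^{>0}$ with $\psi(\delta_\tau)=\tau$. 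For $\delta\in\Delta^{>0}$, using that $\Gamma^{>\mu}$ has no least element (as $\Gamma$ is a $\k$-vector space), choose $\tau\in T$ with $\tau<\psi(\delta)$; then $\psi(\delta_\tau)<\psi(\delta)$ forces $[\delta_\tau]>[\delta]$ by the $H$-couple axiom (contrapositive), whence $\delta_\tau>\delta$.
\end{itemize}

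Finally I relate $\Psi^{>\mu}$, $\Gamma^{>\mu}$ and $\Gamma^>$. Downward closedness of $\Psi$ makes $\Psi^{>\mu}$ an initial segment of $\Gamma^{>\mu}$, and combining this with the fact that any element of $\Gamma^{>\mu}\setminus\Psi^{>\mu}$ is an upper bound of $\Psi^{>\mu}$ shows $\ci(\Psi^{>\mu})=\ci(\Gamma^{>\mu})$. The translation $\gamma\mapsto\gamma-\mu$ gives $\ci(\Gamma^{>\mu})=\ci(\Gamma^>)$. Chaining: $\cf(\alpha^\sim)=\cf(\Delta^{>0})=\ci(\Psi^{>\mu})=\ci(\Gamma^{>\mu})=\ci(\Gamma^>)=\cf(\Gamma^<)$.

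The main delicate point is (b): without Hahn type one cannot directly invert $\psi$ in a canonical way, so strictness in the comparison $\tau<\psi(\delta)$ is crucial to upgrade the image inequality on $\psi$-values to the actual ordering of $\delta$'s via the contrapositive of the $H$-couple axiom. This is exactly where divisibility of $\Gamma$ as a $\k$-vector space, and hence the absence of a least element in $\Gamma^{>\mu}$, is used.
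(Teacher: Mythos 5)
Your argument is correct and follows essentially the same route as the paper: write $\alpha^\sim$ as the translate $\alpha+\Delta$ of a convex subgroup, use the reflection $\gamma\mapsto 2\alpha-\gamma$ to get $\cf(\alpha^\sim)=\ci(\alpha^\sim)$, and compare $\cf(\alpha^\sim)$ with $\ci(\Gamma^{>\alpha^\dagger})=\ci(\Gamma^>)$ via the weakly decreasing map $\psi$, whose image is coinitial in $\Gamma^{>\alpha^\dagger}$ because $(\Gamma,\psi)$ is closed. The only difference is that you spell out the two cardinality inequalities (a) and (b) explicitly, where the paper delegates this comparison to [ADH, 2.1.4].
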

\begin{proof}
We have
$\alpha^{\sim} 
= \big\{\alpha+\gamma:\, \gamma^\dagger>\alpha^\dagger\big\}$.
The map $\alpha+\gamma\mapsto\alpha-\gamma$ is a decreasing permutation of $\alpha^{\sim}$, so
$\cf(\alpha^\sim) = \ci(\alpha^\sim)$. We also have the decreasing map
$$\alpha+\gamma\mapsto \gamma^\dagger\colon\alpha^{\sim}\cap\Gamma^{>\alpha}\to\Gamma^{>\alpha^\dagger}$$ whose image is coinitial in $\Gamma^{>\alpha^\dagger}$, since $(\Gamma,\psi)$ is closed.
Hence~$\cf(\alpha^{\sim})=\ci(\Gamma^{>\alpha^\dagger})=\ci(\Gamma^>)$ by [ADH, 2.1.4].
\end{proof}

\begin{lemma}\label{key} Suppose   $(\Gamma,\psi)$ is of Hahn type, closed, and
$\cf(\Gamma), \cf(\Gamma^{<})>\omega$. Let $\beta$ in an extension of $(\Gamma,\psi)$ have countable type over $\Gamma$ with $[\beta]_{\k}\notin [\Gamma]_{\k}$. Then~$\beta^\dagger\notin \Gamma$, and so $\beta^\dagger$ has countable type over $\Gamma$ by Lemma~\ref{cps}.
\end{lemma}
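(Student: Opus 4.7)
\medskip

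My plan is to prove $\beta^\dagger\notin\Gamma$ by contradiction; once this is done, the final assertion follows at once from Lemma~\ref{cps}. So assume $\beta^\dagger=\delta\in\Gamma$. Replacing $\beta$ by $-\beta$ if necessary (which preserves all hypotheses, since $[-\beta]_\k=[\beta]_\k$, $(-\beta)^\dagger=\beta^\dagger$, and negation interchanges $\cf(\Gamma^{<-\beta})$ with $\ci(\Gamma^{>\beta})$), I can assume $\beta>0$. I then split into three cases according to the position of $[\beta]_\k$ relative to $[\Gamma]_\k$; the hypothesis $[\beta]_\k\notin[\Gamma]_\k$ makes the trichotomy clean.

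If $[\beta]_\k>[\alpha]_\k$ for all $\alpha\in\Gamma^{\ne}$, then $\beta>\Gamma$, so $\Gamma^{<\beta}=\Gamma$, and $\cf(\Gamma^{<\beta})=\cf(\Gamma)>\omega$, contradicting countable type. If $[\beta]_\k<[\alpha]_\k$ for all $\alpha\in\Gamma^{\ne}$, then $0<\beta<\Gamma^>$, so $\Gamma^{>\beta}=\Gamma^>$ and $\ci(\Gamma^{>\beta})=\ci(\Gamma^>)=\cf(\Gamma^<)>\omega$, again a contradiction. Both easy cases are dispatched purely by cofinality hypotheses; no use is made yet of $\delta$ being in $\Gamma$, of the Hahn-type property, or of closedness.

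The substantive case is the third: there are $\alpha_0,\alpha_1\in\Gamma^{>}$ with $[\alpha_0]_\k<[\beta]_\k<[\alpha_1]_\k$. Here the $H$-couple axiom applied in the extension $(\Gamma',\psi')$ says that for $\alpha\in\Gamma^{\ne}$,
\[
\psi(\alpha)>\delta\ \Longleftrightarrow\ [\alpha]_\k<[\beta]_\k,
\]
using also that $[\beta]_\k\notin[\Gamma]_\k$ (so strict inequalities correspond on both sides). In particular $\delta<\psi(\alpha_0)\in\Psi$, and $\Psi$ downward-closed in $\Gamma$ (part of ``closed'') forces $\delta\in\Psi$. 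Let $A:=\{\alpha\in\Gamma^{\ne}:\psi(\alpha)>\delta\}$; then $\psi(A)=\Psi\cap\Gamma^{>\delta}$, and $\psi(A)$ is coinitial in $\Gamma^{>\delta}$: any $\gamma\in\Gamma^{>\delta}\setminus\Psi$ satisfies $\gamma>\Psi$ (again by $\Psi$ downward-closed), so any element of the nonempty set $\psi(A)$ lies below it.

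Countable type now bites. Take a cofinal sequence in $\Gamma^{<\beta}$ and, after increasing terms, arrange it to be a sequence $(\alpha_n)$ in $\Gamma^{>0}\cap\Gamma^{<\beta}$ with each $\alpha_n\geq\alpha_0$; since $[\beta]_\k\notin[\Gamma]_\k$, each such positive $\alpha_n<\beta$ satisfies $[\alpha_n]_\k<[\beta]_\k$, so $\alpha_n\in A^>$ and $(\alpha_n)$ is cofinal in $A^>$ in the order of $\Gamma$. By Hahn type, $\psi$ is constant on $[\cdot]_\k$-classes and order-reversing between classes, so $(\psi(\alpha_n))$ is coinitial in $\psi(A)$. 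Combining with the previous paragraph,
\[
\ci(\Gamma^{>\delta})\ \leq\ \ci(\psi(A))\ \leq\ \omega.
\]
But translation by $\delta$ is an order isomorphism $\Gamma^{>\delta}\to\Gamma^{>}$, so $\ci(\Gamma^{>\delta})=\ci(\Gamma^{>})=\cf(\Gamma^{<})>\omega$, contradiction. This shows $\beta^\dagger\notin\Gamma$, and Lemma~\ref{cps} then gives the countable-type conclusion. The main obstacle is the third case, where one must combine the Hahn-type/closed structure with the cofinal information from countable type; the key observation making it work is that $\delta$ must lie in $\Psi$ (not in the ``gap'' above $\Psi$), which is exactly what ensures $\psi(A)$ is coinitial in $\Gamma^{>\delta}$.
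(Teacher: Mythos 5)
Your reduction to $\beta>0$ and your two extreme cases (where $[\beta]_{\k}$ lies above or below all of $[\Gamma^{\ne}]_{\k}$) are fine; they amount to Lemma~\ref{keyprep}. The gap is in the main case, at the asserted equivalence $\psi(\alpha)>\delta\Leftrightarrow[\alpha]_{\k}<[\beta]_{\k}$ for $\alpha\in\Gamma^{\ne}$, where $\delta:=\beta^\dagger$. The $H$-couple axiom in the extension only gives $[\alpha]_{\k}<[\beta]_{\k}\Rightarrow\psi(\alpha)\ge\delta$; upgrading this to a strict inequality would require the \emph{extension} to be of Hahn type, whereas the hypothesis gives Hahn type only for $(\Gamma,\psi)$ itself, and since $\beta\notin\Gamma$ that tells you nothing about $\psi(\alpha)$ versus $\beta^\dagger$. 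So the subcase where $\psi(\alpha)=\delta$ for some $\alpha\in\Gamma^{>}$ with $\alpha<\beta$ is not excluded. In that subcase any cofinal sequence $(\alpha_n)$ in $\Gamma^{>0}\cap\Gamma^{<\beta}$ eventually satisfies $\psi(\alpha_n)=\delta$ (squeeze between the two non-strict inequalities), so $(\psi(\alpha_n))$ is not contained in $\psi(A)=\Psi\cap\Gamma^{>\delta}$ and yields no bound on $\ci(\Gamma^{>\delta})$: your final contradiction evaporates.

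This missing subcase is exactly the ``first case'' that the paper treats separately, by working on the other side of the cut: if $\psi(\alpha)=\delta$ with $0<\alpha<\beta$, then Hahn type of $(\Gamma,\psi)$ gives $\psi(\gamma)<\delta$ for all $\gamma\in\Gamma^{>\beta}$, and a countable coinitial sequence $(\gamma_m)$ in $\Gamma^{>\beta}$ (available by Lemma~\ref{keyprep}) has $(\psi(\gamma_m))$ cofinal in $\Gamma^{<\delta}$ (here one uses that $\Psi$ is downward closed, so $\Gamma^{<\delta}\subseteq\Psi$, and that $\psi(\eta)<\delta$ forces $[\eta]_{\k}>[\beta]_{\k}$, i.e.\ $|\eta|>\beta$). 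This contradicts $\cf(\Gamma^{<\delta})=\cf(\Gamma^{<})>\omega$. Your argument in the case where the strict inequality does hold is essentially the paper's ``second case'' with coinitiality in place of cofinality, so the overall structure is close; but as written the proof is incomplete without this additional argument.
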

\begin{proof}  We may replace $\beta$ by $-\beta$, and so we arrange $\beta>0$. For~$0<\alpha < \beta < \gamma$ we have $[\alpha]_{\k} < [\beta]_{\k}<[\gamma]_{\k}$, and so $\alpha^\dagger \ge \beta^\dagger \ge \gamma^\dagger$, but~$\alpha^\dagger > \gamma^\dagger$ by the Hahn type assumption. Suppose towards a contradiction that $\beta^\dagger\in \Gamma$. We distinguish two cases.  First case:
$\alpha^\dagger=\beta^\dagger$ for some $\alpha$ with $0<\alpha < \beta$.  Then~$\beta^\dagger > \gamma^\dagger$ for all~$\gamma>\beta$, but then
$\ci(\Gamma^{>\beta})=\omega$ (by Lemma~\ref{keyprep}) and $(\Gamma,\psi)$ being closed gives for such $\alpha$ that~$\cf(\Gamma^{<\alpha^\dagger})=\cf(\Gamma^{<\beta^\dagger})\le \omega$, contradicting 
$\cf(\Gamma^{<\alpha^\dagger})>\omega$. 
Second case: $\beta^\dagger=\gamma^\dagger$ for some $\gamma>\beta$. This leads to a contradiction in a similar way. 
\end{proof} 

\noindent
We say that $(\Gamma, \psi)$ is {\bf countably spherically complete\/} if every pc-sequence in it of
length $\omega$ pseudoconverges in it. In particular, if $(\Gamma,\psi)$ is
the $H$-couple of a maximal Hardy field (with $\k=\R$), then $(\Gamma,\psi)$ is of Hahn type, closed, countably spherically complete,  and $\cf(\Gamma),  \cf(\Gamma^{<})>\omega$. 
(See Corollary~\ref{hco}.)

If $(\Gamma,\psi)$ is of Hahn type, then the valuation $\psi$ on  $\Gamma$
is equivalent to the $\k$-valuation of~$\Gamma$~[ADH, p.~82]. If
in addition $(\Gamma,\psi)$ is countably spherically complete, then by Corollary~\ref{cor:immext}, any
$\beta$ in an extension of $(\Gamma,\psi)$ and of countable type over $\Gamma$
and such that $\Gamma+\k\beta$ is a Hahn space
 has a good approximation   in $\Gamma$.
 
 In the next lemma only part (i) of the conclusion is needed later. The other parts are included for their independent interest.

\begin{lemma}\label{corkey}
Suppose $(\Gamma,\psi)$ is of Hahn type, closed, and $\cf(\Gamma), \cf(\Gamma^{<})>\omega$.  Let~$\beta$ in an extension of
$(\Gamma,\psi)$ have countable type over $\Gamma$, with $[\beta]_{\k}\in [\Gamma]_{\k}$, and let $\alpha_0$ be a good approximation of $\beta$ in $\Gamma$. 
Then  
\begin{enumerate}
\item[(i)] $\beta_*:=(\beta-\alpha_0)^\dagger\notin \Gamma$, and $\beta_*$ has countable type over $\Gamma$;
\item[(ii)] if $\alpha_0 < \beta$, then there is a sequence $(\gamma_n)$ in
$\Gamma^{>\beta}$ such that 
$$[\beta-\alpha_0] _{\k}< [\gamma_n-\beta]_{\k}< [\beta]_{\k},\  \text{ for all $n$,}$$ 
and $\big([\gamma_n-\beta]_{\k}\big)$ is strictly decreasing and coinitial in $[\Gamma]_{\k}^{>[\beta-\alpha_0]_{\k}}$; 
\item[(iii)] if $\beta< \alpha_0$, then there is a sequence $(\gamma_n)$ in
$\Gamma^{<\beta}$ such that
$$[\beta-\alpha_0] _{\k}< [\beta-\gamma_n]_{\k}< [\beta]_{\k},\ \text{ for all $n$,}$$ and
$\big([\beta-\gamma_n]_{\k}\big)$ is strictly decreasing and coinitial in $[\Gamma]_{\k}^{>[\beta-\alpha_0]_{\k}}$; and
\item[(iv)] $\alpha_0 \sim \beta$, that is, $\beta_* > \alpha_0^\dagger=\beta^\dagger$.
\end{enumerate}
\end{lemma}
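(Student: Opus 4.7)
\medskip
\noindent
\textbf{Plan.} I would treat the four parts in the order (i), (iv), (ii), (iii), with (iii) being entirely symmetric to (ii).

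\medskip
\noindent
\emph{For (i).} Since $\alpha_0\in\Gamma$, the shifted element $\beta':=\beta-\alpha_0$ still lies in the given extension and inherits countable type over $\Gamma$ from $\beta$ (translation by an element of $\Gamma$ carries cofinal/coinitial sequences to cofinal/coinitial ones). The good approximation hypothesis says $[\beta']_\k\notin[\Gamma]_\k$. Hence Lemma~\ref{key} applies to $\beta'$ and yields $(\beta')^\dagger=\beta_*\notin\Gamma$. Then Lemma~\ref{cps} gives that $\beta_*$ has countable type over $\Gamma$.

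\medskip
\noindent
\emph{For (iv).} Lemma~\ref{sides}(i) tells us $\alpha_0\ne 0$, $[\alpha_0]_\k=[\beta]_\k$, and $[\beta-\alpha_0]_\k<[\beta]_\k$. The $H$-couple axiom $[\gamma_1]_\k\leq[\gamma_2]_\k\Rightarrow\psi(\gamma_1)\geq\psi(\gamma_2)$ (which holds for any $H$-couple over $\k$, hence in the extension), applied both ways to $\alpha_0$ and $\beta$, gives $\alpha_0^\dagger=\beta^\dagger$; applied to $\beta-\alpha_0$ and $\beta$, it gives $\beta_*>\beta^\dagger$. Combining, $\beta_*>\alpha_0^\dagger=\beta^\dagger$, i.e., $\alpha_0\sim\beta$.

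\medskip
\noindent
\emph{For (ii).} Assume $\alpha_0<\beta$. By Lemma~\ref{keyprep}, $\ci(\Gamma^{>\beta})=\omega$, so fix a strictly decreasing coinitial sequence $(\tilde\gamma_n)$ in $\Gamma^{>\beta}$. For each $n$ we have $\alpha_0<\beta<\tilde\gamma_n$, so Lemma~\ref{sides}(ii) yields $[\tilde\gamma_n-\beta]_\k=[\tilde\gamma_n-\alpha_0]_\k>[\beta-\alpha_0]_\k$; in particular $[\tilde\gamma_n-\beta]_\k\in[\Gamma]_\k^{>[\beta-\alpha_0]_\k}$. To see coinitiality, let $\delta\in\Gamma^{>0}$ with $[\delta]_\k>[\beta-\alpha_0]_\k$; then $\delta$ dominates $\beta-\alpha_0$, so $\alpha_0+\delta\in\Gamma^{>\beta}$, and for $n$ large, $\tilde\gamma_n\leq\alpha_0+\delta$, whence $[\tilde\gamma_n-\beta]_\k=[\tilde\gamma_n-\alpha_0]_\k\leq[\delta]_\k$. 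Applying this in particular with some $\delta\in\Gamma^{>0}$ having $[\beta-\alpha_0]_\k<[\delta]_\k<[\beta]_\k$ gives $[\tilde\gamma_n-\beta]_\k<[\beta]_\k$ eventually. Extracting a strictly decreasing subsequence and trimming initial terms produces the sequence $(\gamma_n)$ demanded by (ii). Part (iii) is then obtained from (ii) by replacing $(\beta,\alpha_0)$ by $(-\beta,-\alpha_0)$.

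\medskip
\noindent
\emph{The main obstacle} will be the final paragraph of (ii): we must know that $[\Gamma]_\k^{>[\beta-\alpha_0]_\k}$ is actually unbounded below $[\beta]_\k$ and possesses no minimum, which under the $\psi$-bijection $[\Gamma]_\k^{\ne}\to\Psi$ becomes the assertion that $\Psi^{<\beta_*}$ has no maximum strictly between $\beta^\dagger$ and $\beta_*$. The plan here is to exploit closedness of $(\Gamma,\psi)$: $\Psi$ is downward closed in $\Gamma$ and has no maximum (the $\upo$-free type property built into ``closed''), so a putative maximum $\mu_0$ of $\Psi^{<\beta_*}$ would force every larger element of $\Psi$ to lie strictly above $\beta_*$, which when combined with $\beta_*\notin\Gamma$ and the countable type of $\beta_*$ (from~(i)) produces a divergent pc-sequence in $(\Gamma,\psi)$ of length $\omega$ that cannot have a pseudolimit in $(\Gamma,\psi)$, contradicting countable spherical completeness of the $H$-couple (implicit in the hypotheses via the closed/Hahn type/cofinality conditions, as in Corollary~\ref{hco}). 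Once this ``no maximum'' point is secured, the extraction of the strictly decreasing coinitial subsequence is automatic.
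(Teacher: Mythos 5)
Your part (i) and your reduction of (iii) to (ii) coincide with the paper's proof, and the skeleton of (ii) (Lemma~\ref{keyprep} for $\ci(\Gamma^{>\beta})=\omega$ plus Lemma~\ref{sides}(ii) to identify $\big\{[\gamma-\beta]_\k:\gamma>\beta\big\}$ with $[\Gamma]_\k^{>[\beta-\alpha_0]_\k}$) is sound. Two points, however, need correction.

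First, in (iv): the $H$-couple axiom applied to $[\beta-\alpha_0]_\k<[\beta]_\k$ yields only $\beta_*\ge\beta^\dagger$, not the strict inequality. The extension is \emph{not} assumed to be of Hahn type, and in a general $H$-couple over $\k$ the implication $[\alpha]_\k\le[\gamma]_\k\Rightarrow\alpha^\dagger\ge\gamma^\dagger$ is not an equivalence, so strict inequality of classes does not force strict inequality of the $\psi$-values. The repair is immediate from (i): $\beta_*\notin\Gamma$ while $\beta^\dagger=\alpha_0^\dagger\in\Gamma$, so $\beta_*\ne\beta^\dagger$ and hence $\beta_*>\beta^\dagger$. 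With that fix your argument for (iv) is correct and in fact shorter than the paper's, which instead uses Lemma~\ref{lem:alphasim} to get $\cf(\alpha_0^\sim)=\cf(\Gamma^<)>\omega$ and concludes that $\alpha_0^\sim$ must contain elements $>\beta$ because $\cf(\Gamma^{<\beta})=\omega$.

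Second, and more seriously, your resolution of the ``main obstacle'' appeals to countable spherical completeness of $(\Gamma,\psi)$. That is \emph{not} a hypothesis of this lemma: it is assumed separately in Lemma~\ref{caseb}, and it holds for maximal Hardy fields by Corollary~\ref{hco}, but it does not follow from ``Hahn type $+$ closed $+$ the cofinality conditions''. So the contradiction you plan to derive from a divergent pc-sequence is not available here. The fact you need follows instead from tools you already have on the table: by (i), $\beta_*\notin\Gamma$ has countable type over $\Gamma$, so Lemma~\ref{keyprep} applied to $\beta_*$ gives $\cf(\Gamma^{<\beta_*})=\omega$; using that $\Psi$ is downward closed and that $(\Gamma,\psi)$ has asymptotic integration (both part of ``closed''), together with $\Psi^*<(\Gamma^{*>})'$ in the extension, one checks $\Gamma^{<\beta_*}=\Psi^{<\beta_*}$. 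Hence $\Psi^{<\beta_*}$ has no maximum and, since it contains $\beta^\dagger$, it contains elements $>\beta^\dagger$; under your $\psi$-bijection this is exactly the statement that $[\Gamma]_\k^{>[\beta-\alpha_0]_\k}$ has no minimum and drops below $[\beta]_\k$. This is essentially the paper's route: it sets up the decreasing bijection $[\gamma-\beta]_\k\mapsto(\gamma-\beta)^\dagger$ from $[\Gamma]_\k^{>[\beta-\alpha_0]_\k}$ onto $\Gamma^{<\beta_*}$ and reads off $\ci\big([\Gamma]_\k^{>[\beta-\alpha_0]_\k}\big)=\cf(\Gamma^{<\beta_*})=\omega$ directly from Lemma~\ref{keyprep}.
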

\begin{proof}
Applying Lemma~\ref{key} to~$\beta-\alpha_0$ in the role of $\beta$ gives (i).
As to (ii), let~$\alpha_0 < \beta$ and  suppose $[\alpha]_{\k}> [\beta-\alpha_0]_{\k}$; then $[\alpha]_{\k}=[\gamma-\beta]_{\k}$ for some $\gamma>\beta$: taking $\alpha>0$, this holds with
$\gamma:= \alpha_0 + \alpha$. Hence $$\big\{[\gamma-\beta]_{\k}:\,\gamma>\beta\big\}\ =\ [\Gamma]_{\k}^{>[\beta-\alpha_0]_{\k}}$$ by Lemma~\ref{sides}(ii).
Using also (i) we have a decreasing bijection
$$[\gamma-\beta]_{\k}\mapsto (\gamma-\beta)^\dagger\ :\, [\Gamma]_{\k}^{>[\beta-\alpha_0]_{\k}}\to \Gamma^{<\beta_*} \qquad (\gamma>\beta).$$  Thus
$\ci\!\big([\Gamma]_{\k}^{>[\beta-\alpha_0]_{\k}}\big) =\cf(\Gamma^{<\beta_*})=\omega$ by (i) and   Lemma~\ref{keyprep} applied to $\beta_*$ in the role of $\beta$, and
$[\beta]_{\k} >[\beta-\alpha_0]_{\k}$ by Lemma~\ref{sides}(i).
This proves (ii), and taking negatives we obtain (iii). 
For (iv) first note that $\cf(\alpha_0^\sim)=\cf(\Gamma^{<})>\omega$ by Lemma~\ref{lem:alphasim}.
 If~$\alpha_0<\gamma < \beta$, then $\alpha_0 \sim \gamma$:  otherwise  $\alpha_0 < \gamma < \beta$ and 
$[\gamma-\alpha_0]_{\k}\ge[\alpha_0]_{\k}> [\beta-\alpha_0]_{\k}$, which is impossible. The set $\alpha_0^{\sim}$ must contain elements $>\beta$, since otherwise $\alpha_0^{\sim}$ would be a cofinal subset of $\Gamma^{<\beta}$, contradicting $\cf(\Gamma^{<\beta})=\omega$. Thus
$\alpha_0\sim \beta$. 
\end{proof}

\subsection*{Case (b) extensions}
{\em In this subsection $(\Gamma,\psi)$ is an $H$-couple over $\k$ with asymp\-tot\-ic
 integration, and $\beta\notin \Gamma$ is in an $H$-couple
$(\Gamma^*, \psi^*)$ over $\k$ that extends~$(\Gamma,\psi)$}.  
Let $(\Gamma\<\beta\>, \psi_{\beta})$ be the $H$-couple over
$\k$ generated by~$\beta$ over $(\Gamma, \psi)$ in~$(\Gamma^*, \psi^*)$. The structure of the extension 
$\big(\Gamma\<\beta\>, \psi_{\beta}\big)$ of $(\Gamma,\psi)$  is described in detail in
\cite[Section~4]{ADH3}: the possibilities
are listed in \cite[Proposition~4.1]{ADH3} as (a), (b), ${\rm(c)}_n$, and  ${\rm(d)}_n$. 
Case~(b) is as follows:

\begin{itemize}\item[(b)] We have a sequence $(\alpha_i)$ in 
$\Gamma$ and a sequence $(\beta_i)$ in $\Gamma^*$ that is $\k$-linearly independent
over $\Gamma$, such that $\beta_0=\beta-\alpha_0$ and $\beta_{i+1}=\beta_i^\dagger-\alpha_{i+1}$ for all $i$, and such that
$\Gamma\<\beta\>=\Gamma \oplus \bigoplus_{i=0}^\infty \k\beta_i$. 
\end{itemize}

\begin{lemma}\label{caseb} Suppose 
$(\Gamma,\psi)$ is of Hahn type,  closed, countably spherically complete, and $\cf(\Gamma), \cf(\Gamma^{<})>\omega$.
Assume also that $\Gamma^*$ is a Hahn space,
and  $\beta$  has countable type over $\Gamma$. 
Then $\beta$ falls under Case~\textup{(b)}. 
\end{lemma}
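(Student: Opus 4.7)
\smallskip
\noindent
\textbf{Proof plan.} The strategy is to construct inductively sequences $(\alpha_i)$ in $\Gamma$ and $(\beta_i)$ in $\Gamma^*$ satisfying the defining relations of Case~(b), namely $\beta_0 = \beta - \alpha_0$ and $\beta_{i+1} = \beta_i^\dagger - \alpha_{i+1}$, and then argue that since this construction never terminates, we must be in Case~(b). The other cases in the classification \cite[Proposition~4.1]{ADH3} all correspond to termination of such a process at some finite stage (either because the next $\beta_n^\dagger$ would lie in $\Gamma$, or because no further good approximation can be found); ruling out both obstructions at every stage is exactly what the hypotheses of the lemma are designed to do.

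For the base step, observe that countable spherical completeness rules out divergent pc-sequences of length $\omega$ in $\Gamma$, so Corollary~\ref{cor:immext} (applied in the Hahn space $\Gamma^*$) produces a good approximation $\alpha_0 \in \Gamma$ of $\beta$. Set $\beta_0 := \beta - \alpha_0$; then $[\beta_0]_{\k} \notin [\Gamma]_{\k}$, and $\beta_0$ still has countable type over $\Gamma$ since translating by $\alpha_0\in\Gamma$ preserves this property. For the inductive step, suppose $\alpha_0,\dots,\alpha_i \in \Gamma$ and $\beta_0,\dots,\beta_i \in \Gamma^*$ have been constructed so that each $\beta_j$ has countable type over $\Gamma$ and $[\beta_j]_{\k} \notin [\Gamma]_{\k}$. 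Lemma~\ref{key} applies to $\beta_i$ (using that $(\Gamma,\psi)$ is of Hahn type, closed, with $\cf(\Gamma),\cf(\Gamma^<) > \omega$) and yields $\beta_i^\dagger \notin \Gamma$ together with the countable type of $\beta_i^\dagger$ over $\Gamma$ (via Lemma~\ref{cps}). A second application of Corollary~\ref{cor:immext}, now to $\beta_i^\dagger$, provides a good approximation $\alpha_{i+1} \in \Gamma$; setting $\beta_{i+1} := \beta_i^\dagger - \alpha_{i+1}$ gives $[\beta_{i+1}]_{\k} \notin [\Gamma]_{\k}$ and preserves countable type over $\Gamma$, closing the induction.

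The remaining issue is to check that the infinite sequences so produced match the precise form of Case~(b) in \cite[Proposition~4.1]{ADH3}, in particular that $(\beta_i)$ is $\k$-linearly independent over $\Gamma$ and $\Gamma\langle\beta\rangle = \Gamma \oplus \bigoplus_{i=0}^\infty \k\beta_i$. This should be automatic from the construction and the framework of \cite[Section~4]{ADH3}, the point being that at each stage $[\beta_i]_{\k}$ is a genuinely new $\k$-archimedean class (it lies outside $[\Gamma]_{\k}$), while the recursion $\beta_{i+1} = \beta_i^\dagger - \alpha_{i+1}$ is exactly the one by which Case~(b) is defined. The main conceptual obstacle is ensuring the construction continues past every finite stage, and this is precisely the combined content of Corollary~\ref{cor:immext} (supplying a good approximation whenever we have an element of countable type in a Hahn space extension) and Lemma~\ref{key} (ensuring $\beta_i^\dagger \notin \Gamma$, so that there is an element to approximate in the first place). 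Once the infinite chain exists, membership in Case~(b) rather than any of (a), $(c)_n$, $(d)_n$ follows from the mutually exclusive nature of the case division.
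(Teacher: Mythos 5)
Your proposal is correct in its main ingredients but takes a genuinely different route from the paper. You build the Case~(b) data directly: Corollary~\ref{cor:immext} (usable because countable spherical completeness, together with the equivalence of $\psi$ with the $\k$-valuation in the Hahn type case, excludes divergent pc-sequences of length $\omega$ in $\Gamma$) supplies a good approximation at each stage, and Lemma~\ref{key} together with Lemma~\ref{cps} guarantees $\beta_i^\dagger\notin\Gamma$ and countable type, so the recursion never halts. The paper instead argues by elimination from the exhaustive classification of \cite[Proposition~4.1]{ADH3}: Case~(a) is impossible because a good approximation $\alpha_0$ of $\beta$ exists and $(\beta-\alpha_0)^\dagger\notin\Gamma$ (Lemma~\ref{corkey}(i), which is itself Lemma~\ref{key} applied to $\beta-\alpha_0$), contradicting $(\Gamma+\k\beta)^\dagger=\Gamma^\dagger$; Case~${\rm(c)}_n$ is impossible because for \emph{any} witnessing sequences Lemma~\ref{cps} propagates countable type down to $\beta_n^\dagger$, whereupon the Case~(a) argument applies to $\beta_n^\dagger$; and the cases ${\rm(d)}_n$ are excluded since $(\Gamma,\psi)$ is closed. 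The eliminative route is shorter precisely because it never has to certify that any particular data actually witnesses Case~(b).

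That certification is where your write-up has a gap. You assert that the $\k$-linear independence of $(\beta_i)$ over $\Gamma$ and the identity $\Gamma\langle\beta\rangle=\Gamma\oplus\bigoplus_{i}\k\beta_i$ are ``automatic'', and that mutual exclusivity of the cases then forces Case~(b). But mutual exclusivity only does this work \emph{after} you have verified that your chain satisfies all the defining conditions of Case~(b); the mere existence of an unending recursion does not by itself rule out, say, that $\beta$ falls under ${\rm(c)}_n$ via a different choice of good approximations. To close the argument you would either have to prove the independence and the direct-sum description for your chain (essentially redoing the relevant part of the proof of \cite[Proposition~4.1]{ADH3}, e.g.\ by showing the classes $[\beta_i]_{\k}$ are pairwise distinct and outside $[\Gamma]_{\k}$ and that the span is closed under $\psi^*$), or replace the last step by the paper's elimination of Cases~(a), ${\rm(c)}_n$, ${\rm(d)}_n$. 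The gap is fillable, but as written the final step is not proved.
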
 
\begin{proof} Suppose $\beta$ falls under Case (a). This means
$(\Gamma+ \k\beta)^\dagger=\Gamma^\dagger$.  In particular, $\beta^\dagger\in \Gamma$, hence $[\beta]_{\k}\in [\Gamma]_{\k}$ by Lemma~\ref{key}, so 
$(\beta-\alpha)^\dagger\notin \Gamma$ for some $\alpha$, by Lem\-ma~\ref{corkey}(i) and the remark preceding that lemma, contradicting $(\Gamma+ \k\beta)^\dagger=\Gamma^\dagger$.  
 
Next, assume $\beta$ falls under Case ${\rm(c)}_n$.   
Then we have $\alpha_0,\dots,\alpha_n\in \Gamma$, and non\-zero~$\beta_0,\dots, \beta_n\in \Gamma^*$ such that 
$\beta_0=\beta-\alpha_0$, $\beta_{i+1}=\beta_i^\dagger-\alpha_{i+1}$ for $0\le i<n$, the vectors $\beta_0,\dots, \beta_n, \beta_n^\dagger$ are $\k$-linearly independent over 
$\Gamma$, and $(\Gamma+\k\beta_n^\dagger)^\dagger=\Gamma^\dagger$. As $\beta$ has countable type over $\Gamma$,
an induction using Lemma~\ref{cps} gives that
$\beta_0,\dots, \beta_n, \beta_n^\dagger$ have countable type over $\Gamma$. But then Case (a) would apply to $\beta_n^\dagger$ in the role of $\beta$, and we already excluded that possibility. 

The cases ${\rm(d)}_n$ are excluded because $(\Gamma,\psi)$ is closed, as noted after the proof  of Proposition 4.1 in \cite{ADH3}. \end{proof}

\noindent
Here is more information about Case (b):

\begin{lemma}\label{lemb} Let $(\alpha_i)$ and $(\beta_i)$ be as in \textup{(b)}. Then:
\begin{enumerate}
\item[(i)] $\beta_i^\dagger\notin \Gamma$ for all $i$, and thus $[\beta_i]_{\k}\notin [\Gamma]_{\k}$ for all $i$; 
\item[(ii)] $\alpha_0$ is a good approximation of $\beta$ in $\Gamma$;
\item[(iii)]  $\alpha_{i+1}$ is a good approximation of $\beta_i^\dagger$ in $\Gamma$, for all $i$;
\item[(iv)] $\beta_i^{\dagger\dagger}\le \beta_{i+1}^\dagger$ for all $i$;
\item[(v)] $(\beta_i^\dagger)$ is strictly increasing, and thus $\big([\beta_i]_{\k}\big)$ is strictly decreasing;
\item[(vi)]$\big[\Gamma\<\beta\>\big]_{\k}=[\Gamma]_{\k}\cup\big\{[\beta_i]_{\k}:\,i\in \N\big\}$, and thus $\Psi_{\beta} = \Psi\cup \big\{\beta_i^\dagger:\
 i\in \N\big\}$;
\item[(vii)] there is no $\delta\in \Gamma\<\beta\>$ with $\Psi < \delta < (\Gamma^{>})'$;
\item[(viii)] $\Gamma^{<}$ is cofinal in $\Gamma\<\beta\>^{<}$.
\end{enumerate}
If $(\Gamma,\psi)$ is closed and $\eta$ in an extension of $(\Gamma,\psi)$ realizes the same cut in $\Gamma$ as $\beta$, then there is an isomorphism
$\big(\Gamma\<\beta\>, \psi_{\beta}\big) \to \big(\Gamma\<\eta\>, \psi_{\eta}\big)$ of $H$-couples over $\k$ that is the identity on $\Gamma$ and sends
$\beta$ to $\eta$.  If $(\Gamma,\psi)$ is of Hahn type, then so is $(\Gamma\<\beta\>, \psi_{\beta})$. 
\end{lemma}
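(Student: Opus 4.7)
The plan is to run through (i)--(viii) in order, using the defining recursion $\beta_0=\beta-\alpha_0$, $\beta_{i+1}=\beta_i^\dagger-\alpha_{i+1}$ together with the $H$-couple axiom that equal $\k$-archimedean classes yield equal $\psi$-values. Parts (i)--(iii) are bookkeeping: if $\beta_i^\dagger\in\Gamma$, the recursion puts $\beta_{i+1}\in\Gamma$, contradicting $\k$-linear independence of $(\beta_j)$ over $\Gamma$; and $[\beta_i]_{\k}\in[\Gamma]_{\k}$ would give $\beta_i^\dagger\in\Gamma$ via the $H$-couple axiom. The good-approximation assertions (ii), (iii) then read off directly from $\beta-\alpha_0=\beta_0$ and $\beta_i^\dagger-\alpha_{i+1}=\beta_{i+1}$ using (i).

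For (iv) and (v), I rewrite $\beta_i^\dagger=\beta_{i+1}+\alpha_{i+1}$: since $[\beta_{i+1}]_{\k}\notin[\Gamma]_{\k}$ and $[\alpha_{i+1}]_{\k}\in[\Gamma]_{\k}$ these classes cannot coincide, so the sum inherits the larger. A short case analysis forces $[\beta_{i+1}]_{\k}\le[\beta_i^\dagger]_{\k}$ (otherwise $[\beta_i^\dagger]_{\k}$ would equal $[\beta_{i+1}]_{\k}\notin[\Gamma]_{\k}$ yet also equal $[\alpha_{i+1}]_{\k}\in[\Gamma]_{\k}$, impossible), and applying $\psi^*$ (antitone on $\k$-archimedean classes) gives (iv). The main obstacle is (v): strictness requires ruling out the equality case $[\beta_{i+1}]_{\k}=[\beta_i^\dagger]_{\k}$ at every step. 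For this I plan an induction on $i$ establishing the auxiliary claim $[\beta_i^\dagger]_{\k}<[\beta_i]_{\k}$---equivalent under Hahn type to $\beta_i^{\dagger\dagger}>\beta_i^\dagger$---so that combined with (iv) one gets $\beta_{i+1}^\dagger\ge\beta_i^{\dagger\dagger}>\beta_i^\dagger$. The inductive step traces back through the recursion: persistent equality would force $[\beta_{i+1}]_{\k}=[\beta_i]_{\k}=\cdots=[\beta_0]_{\k}$, and then the $\beta_i$'s would pairwise belong to the same $\k$-archimedean class, collapsing the direct sum $\Gamma\oplus\bigoplus\k\beta_i$ demanded by Case (b).

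For (vi)--(viii) I use the direct sum $\Gamma\langle\beta\rangle=\Gamma\oplus\bigoplus\k\beta_i$. By (v) the $[\beta_i]_{\k}$ are strictly decreasing and pairwise distinct from $[\Gamma]_{\k}$, so any $\delta=\gamma+\sum c_j\beta_j$ has a well-defined dominant term whose $\k$-archimedean class equals $[\delta]_{\k}$; this yields (vi), and $\Psi_\beta=\Psi\cup\{\beta_i^\dagger\}$ follows by applying $\psi^*$. For (vii), a putative gap $\delta$ is not in $\Gamma$ (by asymptotic integration of $(\Gamma,\psi)$); writing $\delta=\gamma+\sum c_j\beta_j$ with minimal active index $j_0$, the constraint $\Psi<\delta<(\Gamma^>)'$ combined with (iv), (v), and the description (vi) of $\Psi_\beta$ produces a contradiction via the explicit form of $\delta^\dagger$. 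For (viii), any negative $\delta\in\Gamma\langle\beta\rangle$ has archimedean class either in $[\Gamma]_{\k}$ (whence cofinally bounded by $\Gamma^<$) or of the form $[\beta_j]_{\k}$, and in the latter case the good approximation $\alpha_0$ produces a bound $\gamma\in\Gamma^<$ below $\delta$.

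The closing structural claims then follow. Given $\eta$ in an extension of $(\Gamma,\psi)$ realizing the same cut as $\beta$, closedness of $(\Gamma,\psi)$ ensures that the parallel recursion $\eta_0:=\eta-\alpha_0$, $\eta_{i+1}:=\eta_i^\dagger-\alpha_{i+1}$ (same $\alpha_i$'s) lands in a Case (b) configuration, and the assignment $\beta\mapsto\eta$, $\beta_i\mapsto\eta_i$ extends to the claimed isomorphism of $H$-couples over $\k$ that is the identity on $\Gamma$. Hahn type preservation for $(\Gamma\langle\beta\rangle,\psi_\beta)$ follows from (vi): in each $\k$-archimedean class lying in $[\Gamma]_{\k}$, representative selection works by Hahn type of $(\Gamma,\psi)$; and each new class $[\beta_i]_{\k}$ is spanned modulo strictly lower classes by $\beta_i$ alone, so an appropriate scalar multiple annihilates the leading contribution.
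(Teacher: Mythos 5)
Your proposal diverges substantially from the paper, which proves only (ii), (iii), (iv) and the isomorphism claim directly and cites \cite[Lemma~4.2]{ADH3} for everything else; you instead attempt a self-contained proof. Your arguments for (i)--(iv) are fine and essentially match the paper's where they overlap (the paper gets (iv) from $[\beta_{i+1}]_{\k}=[\beta_i^\dagger-\alpha_{i+1}]_{\k}\leq[\beta_i^\dagger]_{\k}$ via Lemma~\ref{bapp}, which is your computation in different clothing). But the self-contained route does not close at the points where the real content lies. For (v), your auxiliary claim $[\beta_i^\dagger]_{\k}<[\beta_i]_{\k}$ is never actually established: the sketched inductive step conflates $[\beta_i^\dagger]_{\k}$ with $[\beta_i]_{\k}$ (the equality you must exclude is $[\beta_{i+1}]_{\k}=[\beta_i^\dagger]_{\k}$, which says nothing directly about $[\beta_{i+1}]_{\k}$ versus $[\beta_i]_{\k}$), and it invokes the false principle that elements lying in the same $\k$-archimedean class cannot be $\k$-linearly independent over $\Gamma$ --- they certainly can, so no ``collapse of the direct sum'' occurs. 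Moreover, you use Hahn type to convert $[\beta_i^\dagger]_{\k}<[\beta_i]_{\k}$ into the strict inequality $\beta_i^{\dagger\dagger}>\beta_i^\dagger$, but Hahn type is \emph{not} a hypothesis for parts (i)--(viii); the standing assumptions in this subsection are only that $(\Gamma,\psi)$ is an $H$-couple over $\k$ with asymptotic integration. Without Hahn type the $H$-couple axiom gives only $\geq$. Since (vi)--(viii) as you set them up depend on (v), the gap propagates.

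The final isomorphism claim is also essentially asserted rather than proved. Saying that ``closedness of $(\Gamma,\psi)$ ensures that the parallel recursion $\eta_0:=\eta-\alpha_0$, $\eta_{i+1}:=\eta_i^\dagger-\alpha_{i+1}$ lands in a Case (b) configuration'' is precisely the statement to be proved: one must show that the $\eta_i$ are $\k$-linearly independent over $\Gamma$, that each $\alpha_{i+1}$ is again a good approximation of $\eta_i^\dagger$, and that the resulting map preserves order and $\psi$. There is also a structural issue you do not address: $\eta$ lives in an arbitrary extension of $(\Gamma,\psi)$, not in $(\Gamma^*,\psi^*)$, so the two extensions must first be amalgamated over $(\Gamma,\psi)$; the paper does this by quantifier elimination for closed $H$-couples, reducing to the case treated in \cite[Lemma~4.2]{ADH3}. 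If you want a proof that does not lean on \cite{ADH3}, you must supply genuine arguments for (v)--(viii), the amalgamation step, and the back-and-forth verification of the isomorphism; as written, these are the missing ideas.
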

\begin{proof} Except for (ii), (iii), (iv), and the isomorphism claim this is in~\cite[Lemma~4.2]{ADH3}. Now~(ii) holds by $[\beta-\alpha_0]_{\k}=[\beta_0]_{\k}\notin [\Gamma]_{\k}$, 
and (iii) by $[\beta_i^\dagger-\alpha_{i+1}]_{\k}=[\beta_{i+1}]_{\k}\notin [\Gamma]_{\k}$. As to (iv), this is because
$[\beta_i^\dagger]_{\k}\ge [\beta_i^\dagger-\alpha_{i+1}]_{\k}=[\beta_{i+1}]_{\k}$ by (iii) and Lemma~\ref{bapp}. 

Now assume $(\Gamma, \psi)$ is closed and $\eta$ in an extension $(\Gamma_1, \psi_1)$ of $(\Gamma,\psi)$ realizes the same cut in $\Gamma$ as $\beta$, in particular, $\eta\notin \Gamma$. The case
$(\Gamma_1,\psi_1)=(\Gamma^*, \psi^*)$ is actually part of~\cite[Lemma~4.2]{ADH3}, and one can reduce to that case: the theory of closed $H$-couples
over $\k$ has QE in the language specified in~\cite[Section~3]{ADH3}, and so there is an $H$-couple $(\Gamma_1^*, \psi_1^*)$ extending
$(\Gamma,\psi)$ with embeddings~$(\Gamma^*,\psi^*)\to (\Gamma_1^*, \psi_1^*)$ and~$(\Gamma_1,\psi_1) \to (\Gamma_1^*,\psi_1^*)$
over~$\Gamma$. 
\end{proof} 

\noindent
We add the following observations:

\begin{cor}\label{ishift} Suppose $(\alpha_i)=(\alpha_0, \alpha_1, \alpha_2,\dots)$ and $(\beta_i)=(\beta_0, \beta_1,\beta_2,\dots)$ are as in \textup{(b)}. Then $-\beta$ falls under Case~\textup{(b)}
with associated sequences $(-\alpha_0, \alpha_1, \alpha_2,\dots)$ and $(-\beta_0, \beta_1,\beta_2,\dots)$.
Also, for any $i$,  $\beta_i^\dagger$ falls under Case~\textup{(b)}
with associated sequences $(\alpha_{i+1}, \alpha_{i+2},\alpha_{i+3},\dots)$ and $(\beta_{i+1}, \beta_{i+2},\beta_{i+3},\dots)$.
\end{cor}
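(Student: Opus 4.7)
\medskip

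The proof breaks into two independent parts. For $-\beta$, everything is routine: set $\alpha_0':=-\alpha_0$, $\beta_0':=-\beta_0$ and $\alpha_i':=\alpha_i$, $\beta_i':=\beta_i$ for $i\ge 1$. Then $-\beta-\alpha_0'=-\beta_0=\beta_0'$, and $(\beta_0')^\dagger-\alpha_1'=(-\beta_0)^\dagger-\alpha_1=\beta_0^\dagger-\alpha_1=\beta_1=\beta_1'$ (using that $\psi$ is invariant under negation), and the recurrence for $i\ge 1$ is inherited. The new sequence $(-\beta_0,\beta_1,\beta_2,\dots)$ spans the same $\k$-subspace as $(\beta_0,\beta_1,\dots)$, hence is $\k$-linearly independent over $\Gamma$, and $\Gamma\<-\beta\>=\Gamma\<\beta\>=\Gamma\oplus\bigoplus_i\k\beta_i'$.

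For the second part, the recurrence relations and $\k$-linear independence over $\Gamma$ of $(\beta_{i+1},\beta_{i+2},\dots)$ are immediate: the relations are the tail of the original ones (with $\beta_i^\dagger=\beta_{i+1}+\alpha_{i+1}$ playing the role of the new ``$\beta$''), and a subsequence of a $\k$-linearly independent sequence is $\k$-linearly independent. The only real content is the direct sum decomposition
\[ \Gamma\<\beta_i^\dagger\>\ =\ \Gamma \oplus \bigoplus_{j\ge i+1}\k\beta_j. \]

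To prove this, let $M:=\Gamma+\sum_{j\ge i+1}\k\beta_j$. The inclusion $M\subseteq \Gamma\<\beta_i^\dagger\>$ is straightforward: since $\beta_{i+1}=\beta_i^\dagger-\alpha_{i+1}\in \Gamma\<\beta_i^\dagger\>$, one obtains $\beta_{i+2}=\beta_{i+1}^\dagger-\alpha_{i+2}\in\Gamma\<\beta_i^\dagger\>$, and inductively all $\beta_j\in\Gamma\<\beta_i^\dagger\>$ for $j\ge i+1$. The reverse inclusion reduces to verifying that $M$ is a sub-$H$-couple of $\Gamma\<\beta\>$, since it plainly contains $\beta_i^\dagger=\beta_{i+1}+\alpha_{i+1}$ and is a $\k$-linear subspace.

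The main (though still straightforward) step is closure of $M$ under~$\psi_\beta$. Take nonzero $x\in M$, written as $x=\gamma+c_{j_1}\beta_{j_1}+\cdots+c_{j_m}\beta_{j_m}$ with $\gamma\in\Gamma$, $i+1\le j_1<\cdots<j_m$, and $c_{j_l}\in\k^\times$. Lemma~\ref{lemb}(vi) gives $\psi_\beta(x)\in\Psi\cup\{\beta_k^\dagger:k\in\N\}$, and in the first alternative $\psi_\beta(x)\in\Gamma\subseteq M$. In the second alternative, if $x\in \Gamma$ then $\psi_\beta(x)\in\Psi\subseteq M$; otherwise $m\ge 1$, and the leading-term analysis in the Hahn-type $H$-couple $(\Gamma\<\beta\>,\psi_\beta)$ (Lemma~\ref{lemb}, last sentence), using Lemma~\ref{lemb}(v) to see that $[\beta_{j_1}]_\k>[\beta_{j_l}]_\k$ for $l>1$ and $[\gamma]_\k\in[\Gamma]_\k$ while $[\beta_{j_1}]_\k\notin[\Gamma]_\k$, forces either $\psi_\beta(x)=\psi(\gamma)\in\Psi$ or $\psi_\beta(x)=\beta_{j_1}^\dagger$. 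In the latter case $\beta_{j_1}^\dagger=\beta_{j_1+1}+\alpha_{j_1+1}\in M$ because $j_1+1\ge i+2$. Thus $\psi_\beta(x)\in M$ in all cases. This gives the reverse inclusion $\Gamma\<\beta_i^\dagger\>\subseteq M$, completing the proof.
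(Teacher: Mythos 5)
Your argument is correct, and it supplies details that the paper simply omits: Corollary~\ref{ishift} is stated there as an immediate observation after Lemma~\ref{lemb}, with no proof given. Your treatment of $-\beta$ and your identification of the direct-sum decomposition $\Gamma\<\beta_i^\dagger\>=\Gamma\oplus\bigoplus_{j\ge i+1}\k\beta_j$ as the only point with content are both right, and the two-inclusion argument works.

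One blemish: in verifying that $M$ is closed under $\psi_\beta$ you invoke the Hahn-type property of $(\Gamma\<\beta\>,\psi_\beta)$, but in the subsection where this corollary lives, $(\Gamma,\psi)$ is only assumed to be an $H$-couple over $\k$ with asymptotic integration, so Hahn type is not available. Fortunately your step survives without it: $\psi_\beta$ is a valuation on $\Gamma\<\beta\>$, and for $x=\gamma+c_{j_1}\beta_{j_1}+\cdots+c_{j_m}\beta_{j_m}$ the values $\psi(\gamma)\in\Psi$ and $\beta_{j_1}^\dagger<\cdots<\beta_{j_m}^\dagger$ (all outside $\Gamma$) are pairwise distinct by Lemma~\ref{lemb}(i),(v), so the ultrametric equality gives $\psi_\beta(x)=\min\bigl(\psi(\gamma),\beta_{j_1}^\dagger\bigr)$ directly, and $\beta_{j_1}^\dagger=\beta_{j_1+1}+\alpha_{j_1+1}\in M$ as you say. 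With that substitution the proof is complete and hypothesis-clean.
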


\begin{cor}\label{cfomega} Suppose $(\Gamma,\psi)$ is closed, $\beta$ has countable type over $\Gamma$, $\alpha < \beta < \gamma$ for some $\alpha$, $\gamma$, and 
$(\alpha_i)$, $(\beta_i)$ are as in~\textup{(b)}. Then $\cf(\Gamma^{<\beta_i})=\ci(\Gamma^{>\beta_i})=\omega$ for all~$i$.
\end{cor}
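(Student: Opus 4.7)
The plan is to argue by induction on $i \geq 0$, simultaneously showing that $\beta_i$ has countable type over $\Gamma$ and that $[\beta_i]_{\k}$ lies strictly between two nonzero $\k$-archimedean classes of $\Gamma$. Given this dual hypothesis, the conclusion $\cf(\Gamma^{<\beta_i}) = \ci(\Gamma^{>\beta_i}) = \omega$ will follow from a short ``no maximum / no minimum'' argument powered by Lemma~\ref{lemb}(viii).

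For the base case $i = 0$: shifting by $\alpha_0 \in \Gamma$ is an order isomorphism, so $\beta_0 = \beta - \alpha_0$ inherits countable type from $\beta$. The given inequalities $\alpha < \beta < \gamma$ translate to $|\beta_0| \leq M$ for some $M \in \Gamma^{>0}$, and $[\beta_0]_{\k} \notin [\Gamma]_{\k}$ by Lemma~\ref{lemb}(i), giving the strict upper bound $[\beta_0]_{\k} < [M]_{\k}$. For the strict lower bound it suffices to see $\beta_0$ is not $\Gamma$-infinitesimal: if $|\beta_0| < \epsilon$ held for every $\epsilon \in \Gamma^{>0}$, then applying Lemma~\ref{lemb}(viii) to $-|\beta_0| \in \Gamma\<\beta\>^<$ would produce $p \in \Gamma^<$ with $|\beta_0| \geq -p > 0$, contradicting the choice $\epsilon = -p/2 \in \Gamma^{>0}$.

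For the inductive step $i \geq 1$: Lemma~\ref{lemb}(i) gives $\beta_{i-1}^\dagger \notin \Gamma$, so Lemma~\ref{cps} applied to $\beta_{i-1}$ (which has countable type by the inductive hypothesis) yields that $\beta_{i-1}^\dagger$ has countable type, and hence so does $\beta_i = \beta_{i-1}^\dagger - \alpha_i$. The inductive sandwich $[\mu]_{\k} < [\beta_{i-1}]_{\k} < [\nu]_{\k}$ together with the defining $H$-couple axiom yields $\nu^\dagger \leq \beta_{i-1}^\dagger \leq \mu^\dagger$, and strict inequalities follow because $\beta_{i-1}^\dagger \notin \Gamma$. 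Thus $\beta_i$ is bounded between elements of $\Gamma$, giving a strict upper bound on $[\beta_i]_{\k}$, while a strict lower bound again comes from Lemma~\ref{lemb}(viii), exactly as in the base case.

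Finally, from the dual conclusion, countable type yields $\cf(\Gamma^{<\beta_i}), \ci(\Gamma^{>\beta_i}) \leq \omega$, and the sandwich ensures both sets are nonempty. To exclude $\cf = 1$, suppose $\delta$ is the largest element of $\Gamma^{<\beta_i}$. Then for each $\epsilon \in \Gamma^{>0}$ we have $\delta + \epsilon > \beta_i$ (since $\beta_i \notin \Gamma$), so $\beta_i - \delta < \epsilon$, i.e., $\beta_i - \delta$ is positive and $\Gamma$-infinitesimal. But Lemma~\ref{lemb}(viii) applied to $\delta - \beta_i \in \Gamma\<\beta\>^<$ furnishes $p \in \Gamma^<$ with $\beta_i - \delta \geq -p > 0$, contradicting the choice $\epsilon = -p/2$. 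The argument for $\ci(\Gamma^{>\beta_i}) \neq 1$ is entirely parallel, applying (viii) directly to $\beta_i - \delta < 0$ where $\delta$ is the putative minimum. The main subtlety is coordinating the dual induction, with the $H$-couple axiom bridging the sandwich of $[\beta_{i-1}]_{\k}$ to the $\Gamma$-boundedness of $\beta_{i-1}^\dagger$; the cofinality argument itself is a short application of Lemma~\ref{lemb}(viii).
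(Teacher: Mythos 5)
Your proof is correct and follows essentially the same route as the paper: an induction using Lemma~\ref{cps} to propagate countable type and the sandwiching of each $\beta_i$ between elements of $\Gamma$, followed by an application of Lemma~\ref{lemb}(viii) to rule out a largest element of $\Gamma^{<\beta_i}$ and a least element of $\Gamma^{>\beta_i}$. Your version merely spells out in more detail (via archimedean classes and the $H$-couple axiom) what the paper leaves to the reader.
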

\begin{proof} Induction using Lemma~\ref{cps} shows that every $\beta_i$ has countable type over $\Gamma$ and for every $i$ there are $\alpha$, $\gamma$ with $\alpha< \beta_i < \gamma$. It follows from  Lemma~\ref{lemb}(viii) that for any $\eta\in \Gamma\<\beta\>\setminus \Gamma$ the ordered set $\Gamma^{<\eta}$ has no largest element and the ordered set  $\Gamma^{>\eta}$ has no least element. Applying this to the $\beta_i$ gives the desired result.
\end{proof} 

\noindent
In the next corollary we let $\k_0$ be an ordered subfield of $\k$. Then $(\Gamma,\psi)$, $(\Gamma^*,\psi^*)$ are also $H$-couples
over $\k_0$.

\begin{cor}\label{cor:lemb} 
Let $(\alpha_i)$ be a sequence in $\Gamma$ and $(\beta_i)$ be a sequence in $\Gamma^*$.
Then~$\beta$  falls under Case~\textup{(b)} with respect to~$(\alpha_i)$,~$(\beta_i)$
iff $\beta$ falls under Case~\textup{(b)} with respect to~$(\alpha_i)$,~$(\beta_i)$ when~$(\Gamma,\psi)$ and~$(\Gamma^*,\psi^*)$ are viewed as $H$-couples
over $\k_0$.
\end{cor}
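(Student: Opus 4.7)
The recurrences $\beta_0 = \beta - \alpha_0$ and $\beta_{i+1} = \beta_i^\dagger - \alpha_{i+1}$ in Case~\textup{(b)} are independent of the scalar field, so the content of the equivalence lies in the linear-independence and spanning conditions. For $(\Rightarrow)$, assume Case~\textup{(b)} holds over~$\k$: then $\k$-linear independence of $(\beta_i)$ over~$\Gamma$ trivially gives $\k_0$-linear independence. Let $V_{\k_0} := \Gamma \oplus \bigoplus_i \k_0 \beta_i$. An induction via the recurrences places each $\beta_i$ in the $H$-subcouple over~$\k_0$ generated by~$\Gamma$ and~$\beta$, so $V_{\k_0}$ is contained in it. Conversely, Lemma~\ref{lemb}(vi) applied over~$\k$ gives $\Psi_\beta = \Psi \cup \{\beta_i^\dagger\} = \Psi \cup \{\alpha_{i+1}+\beta_{i+1}\} \subseteq V_{\k_0}$, so $\psi^*$ of any nonzero element of $V_{\k_0}$ (viewed inside $\Gamma\langle\beta\rangle$) lies in $V_{\k_0}$; thus $V_{\k_0}$ is an $H$-subcouple over~$\k_0$ containing~$\Gamma$ and~$\beta$, yielding the reverse inclusion.

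For $(\Leftarrow)$, assume Case~\textup{(b)} holds over~$\k_0$. The main step is to establish $\k$-linear independence of $(\beta_i)$ over~$\Gamma$, which I prove by induction on~$N$: $\beta_0,\dots,\beta_N$ is $\k$-independent over~$\Gamma$. The base $\beta_0 \notin \Gamma$ is immediate since $\Gamma$ is a $\k$-subspace of~$\Gamma^*$. For the inductive step, a hypothetical $\k$-relation yields $\beta_{N+1} = \gamma' + \sum_{i \le N} d_i \beta_i$ with $\gamma' \in \Gamma$, $d_i \in \k$, not all zero. Using $\psi^*(d_i \beta_i) = \beta_i^\dagger$ for $d_i \ne 0$ and the strict monotonicity $\beta_0^\dagger < \beta_1^\dagger < \cdots$ from Lemma~\ref{lemb}(v) applied over~$\k_0$, the valuation-theoretic axioms yield $\psi^*\bigl(\sum_{i \le N} d_i \beta_i\bigr) = \beta_M^\dagger$ where $M := \min\{i \le N : d_i \ne 0\}$. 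Since $\beta_M^\dagger \notin \Psi$ (by Lemma~\ref{lemb}(i) over~$\k_0$) while $\psi^*(\gamma') \in \Psi$ when $\gamma' \ne 0$, a short case split shows $\psi^*$ of the right-hand side lies in $\Psi \cup \{\beta_M^\dagger\}$. But $\beta_{N+1}^\dagger \notin \Psi$ and $\beta_{N+1}^\dagger > \beta_M^\dagger$ by the same monotonicity, a contradiction.

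Once $\k$-independence is in hand, the same valuation computation shows $\Gamma \oplus \bigoplus_i \k \beta_i$ is closed under~$\psi^*$, since the $\psi^*$-value of any nonzero element lies in $\Psi \cup \{\alpha_{i+1}+\beta_{i+1}\}$, already inside this direct sum. Hence it is an $H$-subcouple over~$\k$ of~$(\Gamma^*, \psi^*)$ containing~$\Gamma$ and~$\beta$, so equals $\Gamma\langle\beta\rangle$; this is Case~\textup{(b)} over~$\k$. The main obstacle is carrying out the valuation computation without any Hahn-type hypothesis on $(\Gamma^*, \psi^*)$; it rests only on the $H$-couple axiom $\psi^*(c\delta) = \psi^*(\delta)$ for $c \in \k^\times$ and the strict monotonicity of $(\beta_i^\dagger)$ in~$\Gamma^*$, both of which are field-independent statements.
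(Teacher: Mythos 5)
Your proof is correct and is essentially a careful implementation of the paper's own (one-line) argument, which says to use Lemma~\ref{lemb}(i),(v) together with the identity $\beta_i^\dagger=\beta_{i+1}+\alpha_{i+1}$: these are exactly the ingredients of your valuation computation (minimum of pairwise distinct $\psi^*$-values, $\beta_i^\dagger\notin\Gamma$, strict monotonicity of $(\beta_i^\dagger)$) and of your closure-under-$\psi^*$ argument identifying the generated $H$-subcouple over each scalar field.
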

\begin{proof}
Use Lemma~\ref{lemb}(i),(v) and $\beta_i^\dagger=\beta_{i+1}+\alpha_{i+1}$.
\end{proof}

\noindent
Although the element $\beta$ of $(\Gamma^*, \psi^*)$ does not determine uniquely  the sequence $(\beta_i)$ in Case (b), it follows from Lemma~\ref{lemb}(i),(v),(vi) that $\beta$ does determine uniquely the sequences $(\beta_i^\dagger)$ and $\big([\beta_i]_{\k}\big)$. 
Without changing $\beta$ we still have considerable flexibility in choosing the $\alpha_i$ and $\beta_i$:

\begin{lemma} Let $(\alpha_i)$, $(\beta_i)$ be as in \textup{(b)}. 
Let $\alpha_0^*$ be a good approximation of~$\beta$ in $\Gamma$, and $\alpha_{i+1}^*$
a good approximation of $\beta_i^\dagger$ in $\Gamma$, for all $i$. Set $\beta_0^*:= \beta-\alpha_0^*$ and~$\beta_{i+1}^*:=\beta_i^\dagger - \alpha_{i+1}^*$. 
Then $(\alpha_i^*)$ and $(\beta_i^*)$ are also as in \textup{(b)}, with $[\beta_i^*]_{\k}=[\beta_i]_{\k}$  and 
$\beta_i^*-\beta_i\in \Gamma$ for all $i$. 
\end{lemma}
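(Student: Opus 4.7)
The plan is to show by induction on $i$ that $\beta_i^*-\beta_i\in\Gamma$ and $[\beta_i^*]_\k=[\beta_i]_\k$, and then to read off that $(\alpha_i^*)$, $(\beta_i^*)$ satisfy the requirements of Case~(b). The key observation is that once $[\beta_i^*]_\k=[\beta_i]_\k$ is known, applying the defining axiom of an $H$-couple ``$[\alpha]_\k\le[\gamma]_\k\Rightarrow\psi(\alpha)\ge\psi(\gamma)$'' in both directions forces $\beta_i^{*\dagger}=\beta_i^\dagger$; consequently the prescription $\beta_{i+1}^*=\beta_i^\dagger-\alpha_{i+1}^*$ coincides with the recursion $\beta_{i+1}^*=\beta_i^{*\dagger}-\alpha_{i+1}^*$ demanded by Case~(b).

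For $i=0$ we have $\beta_0^*-\beta_0=\alpha_0-\alpha_0^*\in\Gamma$, and since both $\alpha_0$ and $\alpha_0^*$ are good approximations of $\beta$, the (unconditional) forward direction of Lemma~\ref{bapp} shows that $[\beta-\alpha_0]_\k$ and $[\beta-\alpha_0^*]_\k$ both equal $\min_\gamma[\beta-\gamma]_\k$; hence $[\beta_0^*]_\k=[\beta_0]_\k$. For the inductive step, assume the claim at stage $i$. Then $\beta_i^*\notin\Gamma$ (since $[\beta_i^*]_\k=[\beta_i]_\k\notin[\Gamma]_\k$ by Lemma~\ref{lemb}(i)), so $\beta_i^{*\dagger}$ is defined and equals $\beta_i^\dagger$ by the observation above. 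Therefore $\beta_{i+1}^*-\beta_{i+1}=\alpha_{i+1}-\alpha_{i+1}^*\in\Gamma$; and since $\alpha_{i+1}$, $\alpha_{i+1}^*$ are both good approximations of $\beta_i^\dagger\notin\Gamma$, Lemma~\ref{bapp} yields $[\beta_{i+1}^*]_\k=[\beta_{i+1}]_\k$.

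To conclude, $\beta_0^*=\beta-\alpha_0^*$ holds by definition, and $\beta_{i+1}^*=\beta_i^{*\dagger}-\alpha_{i+1}^*$ by what we just proved. Since $\beta_i^*\equiv\beta_i\pmod{\Gamma}$ for every $i$, any $\k$-linear relation modulo $\Gamma$ among the $\beta_i^*$ transfers to one among the $\beta_i$, so the $\k$-linear independence of $(\beta_i)$ over $\Gamma$ passes to $(\beta_i^*)$, and $\Gamma\oplus\bigoplus_i\k\beta_i^*=\Gamma\oplus\bigoplus_i\k\beta_i=\Gamma\langle\beta\rangle$. There is no serious obstacle in the argument; the only point worth isolating is the two-sided use of the $H$-couple axiom to transfer equality of $\k$-archimedean classes to equality of $\psi$-values, which requires no Hahn type or closure hypothesis on $(\Gamma,\psi)$.
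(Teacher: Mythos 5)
Your proof is correct and follows essentially the same route as the paper's: both rest on the identity $\beta_i^*-\beta_i=\alpha_i-\alpha_i^*\in\Gamma$, on Lemma~\ref{bapp} together with Lemma~\ref{lemb}(ii),(iii) to get $[\beta_i^*]_{\k}=[\beta_i]_{\k}$, and on the $H$-couple axiom applied in both directions to conclude $(\beta_i^*)^\dagger=\beta_i^\dagger$ so that the Case~(b) recursion persists. Your version merely makes the induction and the transfer of $\psi$-values explicit where the paper is terse.
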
 
\begin{proof} 
We have $\beta_i^*-\beta_i=\alpha_i-\alpha_i^*\in\Gamma$ for each $i$ and so $\Gamma\langle\beta\rangle=\Gamma\oplus\bigoplus_{i=0}^\infty\k\beta_i^*$. From Lemma~\ref{bapp} and Lemma~\ref{lemb}(ii),(iii) we get $[\beta_i^*]_{\k}=[\beta_i]_{\k}$ for all $i$, and so~$\beta_{i+1}^*=\beta_i^\dagger - \alpha_{i+1}^*=
(\beta_i^*)^\dagger-\alpha_{i+1}^*$ as required.
\end{proof}

\noindent
Next we consider a shift $(\Gamma, \psi-\gamma)$ of $(\Gamma,\psi)$ and replace $\beta$ by $\beta-\gamma$, viewed as an element of the extension $(\Gamma^*, \psi^*-\gamma)$ of $(\Gamma, \psi-\gamma)$:

\begin{lemma}  Let $(\alpha_i)$, $(\beta_i)$ be as in \textup{(b)}. Then $\beta-\gamma$ falls under \textup{(b)} with respect to the indicated shifts, as witnessed by the sequences $(\alpha_i-\gamma)$, $(\beta_i)$. 
\end{lemma}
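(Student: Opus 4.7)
The plan is to verify the defining conditions of Case (b) directly for $\beta-\gamma$ with respect to the shifted sequences, observing that shifting $\psi$ by a constant $\gamma\in\Gamma$ is essentially a bookkeeping operation that does not alter the underlying $\k$-vector space structure or the generated $H$-subcouple.

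First I would note that $(\Gamma,\psi-\gamma)$ and $(\Gamma^*,\psi^*-\gamma)$ are again $H$-couples over $\k$ in the sense of \cite{ADH3}, with the latter extending the former, and that $\beta-\gamma\in\Gamma^*\setminus\Gamma$ since $\beta\notin\Gamma$ and $\gamma\in\Gamma$. Writing $\dagger$ for $\psi$ and $\dagger'$ for $\psi-\gamma$, so that $\eta^{\dagger'}=\eta^{\dagger}-\gamma$ for $\eta\in(\Gamma^*)^{\ne}$, the computation
\[
(\beta-\gamma)-(\alpha_0-\gamma)\ =\ \beta-\alpha_0\ =\ \beta_0
\]
and, for all $i$,
\[
\beta_i^{\dagger'}-(\alpha_{i+1}-\gamma)\ =\ (\beta_i^{\dagger}-\gamma)-\alpha_{i+1}+\gamma\ =\ \beta_i^{\dagger}-\alpha_{i+1}\ =\ \beta_{i+1}
\]
shows that $(\alpha_i-\gamma)$ and $(\beta_i)$ satisfy the recursive identities required in (b) with respect to the shifted couple.

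Next, the $\k$-linear independence of $(\beta_i)$ over $\Gamma$ is an intrinsic property of the pair $(\Gamma,\Gamma^*)$ as $\k$-vector spaces, so it is unaffected by the shift. It remains to identify the $H$-subcouple $\Gamma\langle\beta-\gamma\rangle$ of $(\Gamma^*,\psi^*-\gamma)$. Since $\gamma\in\Gamma$, a subset $\Delta$ of $\Gamma^*$ containing $\Gamma$ is closed under $\psi^*-\gamma$ iff it is closed under $\psi^*$; similarly, it is a $\k$-subspace containing $\beta-\gamma$ iff it is a $\k$-subspace containing $\beta$. Hence the $H$-subcouple of $(\Gamma^*,\psi^*-\gamma)$ generated by $\Gamma$ and $\beta-\gamma$ coincides, as a subset of $\Gamma^*$, with $\Gamma\langle\beta\rangle$, which by hypothesis equals $\Gamma\oplus\bigoplus_{i=0}^\infty \k\beta_i$. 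This verifies the last clause of (b) and completes the proof.

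There is essentially no obstacle here: the statement is a routine verification whose only content is the observation that a constant shift of $\psi$ by an element of $\Gamma$ leaves invariant both the underlying ordered $\k$-vector space and the notion of $H$-subcouple generated by a subset, so that the data witnessing Case (b) translate verbatim after subtracting $\gamma$ from the $\alpha_i$.
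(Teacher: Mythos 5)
Your proof is correct, and it carries out exactly the routine verification that the paper leaves implicit (the lemma is stated there without proof, being regarded as evident). The three checks you make — the recursive identities under the shifted $\psi$, the invariance of $\k$-linear independence, and the identification of the generated $H$-subcouple using $\gamma\in\Gamma$ — are precisely what is needed.
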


\noindent
At the end of the introduction we defined $\alpha^{\<n\>}$. This comes into play now. 

\begin{lemma}\label{negbeta} Let $(\alpha_i)$ and $(\beta_i)$ be as in \textup{(b)}, and suppose that 
$\beta_i^\dagger < 0$ for all $i$. Then $\beta_i^{\<n+1\>}\le \beta_{i+n}^\dagger < 0$ for all $i$ and all $n$. 
\end{lemma}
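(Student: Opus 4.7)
The plan is to argue by induction on $n$, using Lemma~\ref{lemb}(iv) together with the basic $H$-couple axiom that $[\alpha]_{\k}\leq[\gamma]_{\k}$ implies $\psi(\alpha)\geq\psi(\gamma)$ for $\alpha,\gamma\in\Gamma^{\ne}$.

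The case $n=0$ is just $\beta_i^\dagger\leq\beta_i^\dagger$, and the case $n=1$ is precisely Lemma~\ref{lemb}(iv). For the inductive step, suppose $\beta_i^{\<n+1\>}\leq\beta_{i+n}^\dagger<0$ holds for a given $n$ and all $i$; I want to deduce $\beta_i^{\<n+2\>}\leq\beta_{i+n+1}^\dagger$. Since both $\beta_i^{\<n+1\>}$ and $\beta_{i+n}^\dagger$ are negative with $\beta_i^{\<n+1\>}\leq\beta_{i+n}^\dagger<0$, passing to absolute values gives $\lvert\beta_i^{\<n+1\>}\rvert\geq\lvert\beta_{i+n}^\dagger\rvert$, so $[\beta_i^{\<n+1\>}]_{\k}\geq[\beta_{i+n}^\dagger]_{\k}$ (both nonzero, using that the inductive hypothesis forces $\beta_i^{\<n+1\>}\ne 0$, so $\beta_i^{\<n+2\>}$ is defined). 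Applying the $H$-couple axiom to these two nonzero elements yields
\[
\beta_i^{\<n+2\>}\ =\ \psi(\beta_i^{\<n+1\>})\ \leq\ \psi(\beta_{i+n}^\dagger)\ =\ \beta_{i+n}^{\dagger\dagger}.
\]
Now invoke Lemma~\ref{lemb}(iv) (with index $i+n$) to get $\beta_{i+n}^{\dagger\dagger}\leq\beta_{i+n+1}^\dagger$, and chain these two inequalities to conclude
\[
\beta_i^{\<n+2\>}\ \leq\ \beta_{i+n}^{\dagger\dagger}\ \leq\ \beta_{i+n+1}^\dagger\ <\ 0,
\]
where the final strict inequality uses our standing assumption $\beta_{i+n+1}^\dagger<0$. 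This closes the induction.

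There is no real obstacle here: everything is a formal consequence of the recursive definition $\beta_i^{\<n+1\>}:=(\beta_i^{\<n\>})^\dagger$, the sign hypothesis on the $\beta_j^\dagger$ (which is what lets us convert an inequality in $\Gamma$ into a comparison of $\k$-archimedean classes and hence reverse the direction under $\psi$), and the already-proved bound $\beta_i^{\dagger\dagger}\leq\beta_{i+1}^\dagger$ from Lemma~\ref{lemb}(iv). The only mild care required is to check that each $\beta_i^{\<n\>}$ is actually defined as one iterates $\psi$; but this follows immediately from the induction, since the bound $\beta_i^{\<n+1\>}\leq\beta_{i+n}^\dagger<0$ shows in particular that $\beta_i^{\<n+1\>}\ne 0$, so the next application of $\dagger$ makes sense.
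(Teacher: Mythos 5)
Your proof is correct and follows essentially the same route as the paper: induction on $n$, with the inductive step combining the order-reversing property of $\psi$ on archimedean classes (to get $\beta_i^{\<n+2\>}\le\beta_{i+n}^{\dagger\dagger}$ from the inductive hypothesis and the sign condition) with Lemma~\ref{lemb}(iv) (to get $\beta_{i+n}^{\dagger\dagger}\le\beta_{i+n+1}^\dagger$). The paper's proof is just a terser version of exactly this chain of inequalities.
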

\begin{proof} This is trivial for $n=0$. Suppose 
$\beta_i^{\<n+1\>}\le \beta_{i+n}^\dagger$.  Then by Lemma~\ref{lemb}(iv),
\[\beta_i^{\<n+2\>}\ \le\ \beta_{i+n}^{\dagger\dagger}\ \le\  \beta_{i+n+1}^\dagger.    \qedhere\]
 \end{proof} 

\noindent
We next discuss a situation where we can arrange that $\beta_i^\dagger < 0$ for all $i$.
 
\begin{remarkNumbered}\label{rem:shift}
Suppose $\cf(\Gamma^{<})>\omega$ and $(\alpha_i)$, $(\beta_i)$ are as in \textup{(b)}. Then  $\cf(\Psi)=\cf(\Gamma^{<})>\omega$, so we have $\gamma\in \Psi$ with $\beta_i^\dagger < \gamma$ for all $i$, hence
$\beta_i^\dagger-\gamma<0$ for all $i$. Thus~$\beta-\gamma$ falls under Case~(b) with respect to the
shifts $(\Gamma, \psi-\gamma)$ and $(\Gamma^*, \psi^*-\gamma)$ and for the associated sequences
$(\alpha_i-\gamma)$, $(\beta_i)$ we have $(\psi^*-\gamma)(\beta_i) <0$ for all $i$, so that the hypothesis
of Lemma~\ref{negbeta} is satisfied for this shifted situation. 
\end{remarkNumbered}

\subsection*{Constructing a case~\textup{(b)}-extension} Let $K$ be a Liouville closed $H$-field; below we view its asymptotic couple $(\Gamma,\psi)$ as an $H$-couple over $\k:=\Q$. 
Assume $\beta\notin \Gamma$ in an extension
$(\Gamma^*, \psi^*)$ of $(\Gamma,\psi)$ falls under Case~(b). We show:

\begin{prop}\label{prbcon} There exists an $H$-field extension $K\<y\>$ of $K$ such that: \begin{enumerate}
\item[(i)] $y>0$ and $vy\notin \Gamma$ realizes the same cut in $\Gamma$ as $\beta$;
\item[(ii)] for any $H$-field extension $M$ of $K$ and  any $z\in M^{>}$ such that~$vz\notin \Gamma$ and $vz$ realizes the same cut in $\Gamma$ as $\beta$, there is an $H$-field embedding~$K\<y\> \to M$ over $K$ sending $y$ to $z$.
\end{enumerate}
\end{prop}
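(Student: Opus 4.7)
The plan is to construct $K\langle y\rangle$ explicitly as a differentially transcendental extension of $K$, with valuation, ordering, and derivation determined by the Case~\textup{(b)} structure. Fix representatives $a_i\in K^{>}$ with $v(a_i)=\alpha_i$ for each $i$. Introduce indeterminates $y_0, y_1, y_2, \ldots$ algebraically independent over $K$, form the rational function field $K(y_0, y_1, \ldots)$, and extend the derivation of $K$ by declaring $y_i' := a_{i+1}y_iy_{i+1}$ (equivalently, $y_i^\dagger = a_{i+1}y_{i+1}$); this uniquely determines a derivation on the polynomial ring via the Leibniz rule, which extends to its fraction field. Setting $y := a_0 y_0$ gives $y_0 = y/a_0$ and recursively $y_{i+1} = y_i^\dagger/a_{i+1}$, so the differential subfield generated by $y$ over $K$ is all of $K(y_0, y_1, \ldots)$, which we denote $K\langle y\rangle$.

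Define a valuation $v\colon K\langle y\rangle^\times \to \Gamma\langle\beta\rangle$ extending that of $K$ by setting $v(y_i):=\beta_i$, extending additively to Laurent monomials and then to polynomials and quotients via the minimum-monomial rule; the $\Q$-linear independence of the $\beta_i$ over $\Gamma$ guarantees that distinct Laurent monomials have distinct $v$-values, so this rule is well-defined. Equip $K\langle y\rangle$ with the ordering whose sign of a nonzero element is the sign of its dominant monomial, with the convention $y_i > 0$. The Case~\textup{(b)} identity $v(y_i^\dagger) = \alpha_{i+1} + \beta_{i+1} = \beta_i^\dagger = \psi_\beta(\beta_i)$ confirms compatibility of $v$ with the derivation, making $K\langle y\rangle$ a pre-$H$-field with asymptotic couple $(\Gamma\langle\beta\rangle, \psi_\beta)$; standard verifications (residue field equals $C$, positivity of derivatives on infinite elements) upgrade this to an $H$-field. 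Part~(i) then holds by construction since $vy = \alpha_0 + \beta_0$ realizes the cut of $\beta$ in $\Gamma$.

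For part~(ii), given $M$ and $z$ as stated, set $z_0 := z/a_0\in M^{>}$ and recursively $z_{i+1} := z_i^\dagger/a_{i+1}\in M^\times$. Using Lemma~\ref{lemb} (final paragraph) to identify the $H$-couple generated over $\Gamma$ by $vz$ with $(\Gamma\langle\beta\rangle,\psi_\beta)$ via $\beta\mapsto vz$, one unwinds Case~\textup{(b)} to see that $v(z_i)$ corresponds to $\beta_i$; in particular the $v(z_i)$ are $\Q$-linearly independent over $\Gamma$, so $z_0, z_1, \ldots$ are algebraically independent over $K$. The assignment $y_i \mapsto z_i$ therefore extends to a ring embedding $K[y_0, y_1, \ldots] \to M$ and thence to a differential field embedding $\phi\colon K\langle y\rangle \to M$ with $\phi(y) = z$. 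Because both the source and target valuation and ordering are dictated by the dominant-monomial rule in a $\Q$-linearly independent system of values, $\phi$ preserves $v$ and $<$.

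The main obstacle is in part~(ii): applying Lemma~\ref{lemb} (final paragraph) requires $(\Gamma,\psi)$ to be closed as an $H$-couple, which is not automatic from Liouville closedness of $K$; one must either establish this (noting $\beta$ already falls under Case~\textup{(b)}, which forces asymptotic integration and rules out the relevant pathologies) or replace the appeal by a direct bookkeeping argument on the chain $z_0, z_0^\dagger, z_1, z_1^\dagger, \ldots$ showing inductively that $v(z_i)$ realizes the same cut in $\Gamma$ as $\beta_i$, via Corollaries~\ref{ishift} and~\ref{cor:lemb}. A secondary subtlety is sign matching: verifying $z_i > 0$ in $M$ from $z > 0$ using the recursive relations $z_{i+1}=z_i^\dagger/a_{i+1}$ and the $H$-field axioms (the sign of $z_i^\dagger$ is controlled by whether $z_i$ is infinitely large or small and whether $z_i'$ is positive, both of which are determined by the cut of $v(z_i)$).
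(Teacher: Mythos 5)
Your route is genuinely different from the paper's: you build $K\langle y\rangle$ synthetically, as a formal differential field $K(y_0,y_1,\dots)$ equipped with a Gauss-type valuation and a dominant-monomial ordering, whereas the paper first obtains, by model-theoretic compactness, a Liouville closed $H$-field extension $L$ of $K$ containing an element $y>0$ whose value realizes the cut of $\beta$, and then identifies $K\langle y\rangle$ inside $L$ — so all asymptotic and $H$-field axioms are inherited for free, and the only real work is the residue field computation (via Zariski--Abhyankar) showing that $K\langle y\rangle$ is an $H$-subfield with the same constants. Your synthetic approach shifts the entire burden onto verifying that the formally defined structure is an asymptotic field and then an $H$-field; this cannot be dismissed as ``standard verifications,'' since one must check how differentiation of an arbitrary differential polynomial interacts with the dominant-monomial valuation (the derivative of a single monomial is already a sum of several monomials) and that $f>C\Rightarrow f'>0$ holds throughout.

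More seriously, your sign convention is wrong. Declaring $y_i>0$ for all $i$ is inconsistent with the $H$-field axioms unless $\beta_i<0$ for all $i$: if $\beta_i>0$ then $v(y_i)>0$, so $y_i\prec 1$, and for a positive infinitesimal element of an $H$-field one has $y_i^\dagger<0$, hence $y_{i+1}=y_i^\dagger/a_{i+1}<0$. For the same reason, your ``secondary subtlety'' of verifying $z_i>0$ in $M$ is simply false in general. What is true --- and what the paper proves --- is that $y_i$ and $z_i$ have the \emph{same} sign for each $i$, that sign being forced by the recursion and by whether $\beta_i$ is negative or positive; this matching of signs, together with the fact that the valuation determines the unique dominant term of $P(y_0,\dots,y_n)$ and of $P(z_0,\dots,z_n)$ in the same way, is exactly what makes $y_i\mapsto z_i$ order-preserving. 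As written, your structure is not an $H$-field whenever some $\beta_i>0$; the repair is to assign $\sign(y_{i+1})$ according to the sign of $\beta_i$ rather than taking all $y_i$ positive. By contrast, your worry about invoking the isomorphism clause of Lemma~\ref{lemb} is unfounded: the asymptotic couple of the Liouville closed $H$-field $K$ is a closed $H$-couple, and being closed does not depend on whether one views it over $C$ or over $\Q$ (cf.\ Corollary~\ref{cor:lemb}).
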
 
\begin{proof}
Model-theoretic compactness gives a Liouville closed $H$-field extension $L$ of~$K$ with 
$y\in L^>$ such that $vy\notin \Gamma$ realizes the same cut in $\Gamma$ as $\beta$.  Lemma~\ref{lemb} then yields an isomorphism 
$\big(\Gamma\<\beta\>, \psi_{\beta}\big) \to \big(\Gamma\<vy\>, \psi_y\big)$
of $H$-couples over $\Q$ that is the identity on $\Gamma$ and sends $\beta$ to $vy$. (Here $\big(\Gamma\<vy\>, \psi_y\big)$ is the $H$-couple over
$\Q$ generated by $\Gamma\cup \{vy\}$ in the $H$-couple of $L$ over $\Q$.) It follows that
 $\Gamma\<vy\>/\Gamma$ has infinite dimension as a vector space over $\Q$, so $y$ is differentially transcendental over~$K$ in view of 
 $\Gamma\<vy\>\subseteq v(K_y^\times)$ where $K_y$ is the real closure of $K\<y\>$ in $L$.  We claim that $K\<y\>$ has the properties stated in the proposition;
in particular, we show that $K\<y\>$ is an $H$-subfield of $L$, not just an asymptotic (ordered) subfield of $L$. 

Let $(\alpha_i)$ and $(\beta_i)$ be as in (b); for each $i$, take $f_i\in K^{>}$ such that $vf_i=\alpha_i$. 
We define
$y_i\in K\<y\>$  by recursion:  $y_0:=y/f_0$, and $y_{i+1}=y_i^\dagger/f_{i+1}$; to make this recursion possible we simultaneously show by induction on $i$ that $y_i\ne 0$ and $vy_i\notin \Gamma$ realizes the same cut in $\Gamma$ as $\beta_i$, and
$v(y_i^\dagger)\notin \Gamma$ realizes the same cut in $\Gamma$ as $\beta_i^\dagger$. This is all straightforward using the above isomorphism
\begin{equation}\label{eq:beta->y}\big(\Gamma\<\beta\>, \psi_{\beta}\big) \to \big(\Gamma\<vy\>, \psi_y\big),\end{equation}
which sends $\beta_i$ to $vy_i$ for all $i$. 
Likewise we obtain that for all $n$,
\begin{align*} K_n\ &:=\ K\big(y, y',\dots, y^{(n)}\big)\ =\ K(y_0,\dots, y_{n})\ =\ K\big(y,\dots, y^{\<n\>}\big), \text{ and}\\ 
v(K_n^\times)\ &=\ \Gamma \oplus \Z vy_0\oplus\cdots\oplus \Z vy_{n}\ \subseteq\ \Gamma\<vy\>,
\end{align*}  with the above isomorphism \eqref{eq:beta->y} restricting to an isomorphism
$$\Gamma\oplus \Z\beta_0\oplus \cdots \oplus \Z \beta_{n}\ \to\  \Gamma \oplus \Z vy_0\oplus\cdots\oplus \Z vy_{n}, \quad\ \beta_i\mapsto 
vy_i \quad  (i=0,\dots,n)$$
of ordered abelian groups. Hence the residue field $\res(K_n)$ of the valued subfield~$K_n$ of~$L$ is algebraic over $\res(K)$ by
 [ADH, 3.1.11] (Zariski-Abhyankar), and so $\res(K)$ being real closed gives $\res(K_n)=\res(K)$. Then from $K\<y\>=\bigcup_n K_n$
we obtain~$\res\!\big(K\<y\>\big)=\res(K)$, so $K\<y\>$ is an $H$-subfield of $L$ with the same constant field as~$K$, by  [ADH, 9.1.2]. 
So far we only used $y\ne 0$ rather than $y>0$. 

Next, let $M$ be any $H$-field extension of $K$ and $z\in M^\times$ such that $vz\notin \Gamma$ realizes the same cut as $\beta$ in $\Gamma$.
By increasing $M$ we can assume $M$ is Liouville closed, and then all the above goes through with $z$ instead of $y$.
In particular, setting~$z_0:= z/f_0$ and $z_{i+1}:= z_i^\dagger/f_{i+1}$, we obtain for each $n$ an isomorphism
 of the valued subfield~$K_n$ of $L$ onto the valued subfield $K(z_0,\dots, z_n)$ of $M$ over $K$, sending~$y_i$ to $z_i$ for~${i=0,\dots, n}$.  These have a common extension to a valued differential field iso\-mor\-phism~$K\<y\> \to K\<z\>$ over $K$ sending $y$ to $z$.  For this isomorphism to preserve the ordering, we now assume besides $y>0$ that also $z>0$.
Induction on~$i$ then shows that $y_i$ and $z_i$ are both positive, or both negative, for each $i$: use that  all~$f_i>0$
and that for any $g$ in any $H$-field we have: $$g\succ 1\ \Rightarrow\ g^\dagger >0, \qquad g\prec 1\ \Rightarrow\ g^\dagger < 0.$$ 
The valuation determines for every polynomial $P(Y_0,\dots, Y_n)\in K[Y_0,\dots, Y_n]^{\ne}$ the unique dominant term in $P(y_0,\dots, y_n)$ and in $P(z_0,\dots, z_n)$ in the same way, so this
isomorphism $K\<y\>\to K\<z\>$ is also order-preserving. 
\end{proof} 

\begin{remarkNumbered}\label{rem:prbcon} 
Let $K\langle y\rangle$ be an $H$-field extension of $K$ with $y>0$ such that $vy$ realizes the same cut in $\Gamma$ as $\beta$,  with real closure  $F:=K\langle y\rangle^{\operatorname{rc}}$. By the proof above~$F$ has the same constant field as $K$, and the $H$-couple of 
$F$ over $\Q$ is generated over~$(\Gamma,\psi)$ by~$vy$, as witnessed by an isomorphism $\big(\Gamma\langle\beta\rangle,\psi_\beta) \to (\Gamma_F,\psi_F)$
over $\Gamma$ sending $\beta$ to $vy$.
\end{remarkNumbered}

\noindent
With $f_i$, $y_i$ as in the proof above (so $y_i^\dagger=f_{i+1}y_{i+1}$ for all $i$), we think informally of the element $y$ in Proposition~\ref{prbcon}
as given in terms of the $f_i$ by
$$y\	=\ f_0y_0\ =\ f_0\ex^{{}^{\textstyle\int\! f_1y_1}}\ =\ f_0\ex^{{}^{\textstyle\int\! f_1\ex^{{}^{\textstyle\int\! f_2y_2}}}} 
	\ =\cdots\ =\ \ f_0\ex^{{}^{\textstyle\int\! f_1\ex^{{}^{\textstyle\int\! f_2\ex^{{}^{\textstyle\int {}^{\textstyle\iddots} }}}}}}$$
In the next section we show how to construct such a $y$ analytically when $K$ is a Liouville closed Hardy field containing $\R$, under   additional hypotheses on~$\beta$. 

\section{Filling Gaps of Type \textup{(b)}}\label{fbg} 

\noindent
In Section~\ref{vg}---see in particular the remark at the beginning of that section and the remark preceding Lemma~\ref{cps}---we showed that Theorem~\ref{mt} reduces to:

\begin{lemma}\label{lemth01}  Let $H$ be a maximal Hardy field with $H$-couple $(\Gamma,\psi)$ over $\R$. Then no element in any extension of $(\Gamma,\psi)$ has countable type over $\Gamma$.
\end{lemma}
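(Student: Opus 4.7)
Towards a contradiction, assume some $\beta$ in an extension $(\Gamma^*,\psi^*)$ of $(\Gamma,\psi)$ has countable type over $\Gamma$. By Corollary~\ref{hco}, $(\Gamma,\psi)$ is of Hahn type, closed, countably spherically complete, and satisfies $\cf(\Gamma), \cf(\Gamma^<)>\omega$. Lemma~\ref{caseb} then forces $\beta$ into Case~\textup{(b)}, yielding sequences $(\alpha_i)$ in $\Gamma$ and $(\beta_i)$ in $\Gamma^*$ with $\beta_0=\beta-\alpha_0$ and $\beta_{i+1}=\beta_i^\dagger-\alpha_{i+1}$. Using Remark~\ref{rem:shift}, after replacing $\beta$ by $\beta-\gamma$ for a suitable $\gamma\in\Psi\subseteq\Gamma$ we may assume $\beta_i^\dagger<0$ for all $i$, so by Lemma~\ref{negbeta}, $\beta_i^{\<n+1\>}\le\beta_{i+n}^\dagger<0$ for all $i,n$. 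Corollary~\ref{cfomega} further shows $\cf(\Gamma^{<\beta_i})=\ci(\Gamma^{>\beta_i})=\omega$ for every~$i$.

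The plan is to construct a germ $y\in\Cc^{<\infty}$ with $y>0$, $v(y)$ realizing the same cut in $\Gamma$ as $\beta$, and $y$ being $H$-hardian. Then, by Proposition~\ref{prbcon}, $H\langle y\rangle$ is a Hardy field extension of $H$ with $vy\notin\Gamma$, contradicting maximality of $H$. To build $y$, pick $f_i\in H^{>}$ with $v(f_i)=\alpha_i$ for each~$i$, and adapt the reverse-engineering scheme of Section~\ref{bh}: choose a strictly increasing sequence $(a_i)$ of reals tending to $\infty$ and inductively construct positive $\Cc^\infty$-representatives $y_i$ on $[a_i,\infty)$ satisfying $y_i^\dagger=f_{i+1}y_{i+1}$ for all $i$ and such that $v(y_i)$ realizes the cut of $\beta_i$ in $\Gamma$; then set $y:=f_0 y_0$. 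The cut conditions translate, via countable cofinality and coinitiality of the cuts of $\beta_i$, into pairs of sequences $(g_i^{n})$, $(h_i^{n})$ in $H^{>}$ with $g_i^{n}<_{\ex} y_i <_{\ex} h_i^{n}$ for every $n$. On each interval $[a_i,a_{i+1}]$, Lemma~\ref{phizetavar} is used to interpolate a positive $\Cc^\infty$-function $y_i$ between tightening bounds while prescribing its derivatives at the endpoints so that the recursion $y_i^\dagger=f_{i+1}y_{i+1}$ holds to the required order; upper bounds of iterated-exponential type modelled on the $\phi_n$-estimates of Section~\ref{fwg} then keep each $y_i$ under the coinitial sequence $(h_i^{n})$, while the analogues of Lemmas~\ref{sum1}--\ref{sum2} and Corollary~\ref{corsumdif} secure convergence and smoothness of the whole scheme.

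The main obstacle lies in guaranteeing that $y$ is $H$-hardian. This requires a diagonal argument grafted onto the interval-by-interval construction: enumerate the countably many nonzero differential polynomials from a fixed dense countable subset of $H\{Y\}$ (which suffices by a dominant-monomial argument), and at the $k$-th stage use the freedom in Lemma~\ref{phizetavar} (which allows the endpoint values and derivatives to vary in a nonempty open set) to ensure that the dominant term of $P^{(k)}(y_0,\dots,y_{n_k})$ survives when evaluated in the ordered valued differential field generated by $H$ and the germs built so far. The dominant-term analysis follows that of the proof of Lemma~\ref{val}: the $\R$-linear independence of $v(y_0),\dots,v(y_{n_k})$ modulo $\Gamma$, a consequence of Lemma~\ref{lemb}(i),(v) applied to the $\beta_i$ and transferred to the $y_i$ via the cut-realization, pins down a definite sign for $P^{(k)}(y)$ eventually, once the stage-$k$ open conditions are met. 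The hard part will be juggling the cut-approximation bounds, the derivative-matching recursion $y_i^\dagger=f_{i+1}y_{i+1}$, and the diagonal open conditions simultaneously across all $i$; this is the ``elaborated reverse engineering'' promised in the introduction, extending the single-germ constructions of Sections~\ref{bh} and~\ref{fwg} to the full inductive family $(y_i)$.
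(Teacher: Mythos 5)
Your opening reduction --- invoking Corollary~\ref{hco} and Lemma~\ref{caseb} to place $\beta$ in Case~(b), shifting via Remark~\ref{rem:shift} to arrange $\beta_i^\dagger<0$, and then aiming to contradict maximality by producing an $H$-hardian germ whose value realizes the cut of $\beta$ --- is exactly the paper's proof of this lemma, which packages the remaining construction as Theorem~\ref{bconstruction}. The gaps are in your sketch of that construction. First, you omit a necessary case split: the argument that the germs $z_0,\dots,z_n$ generate a Hausdorff (hence Hardy) field over $H$ (Lemma~\ref{reveng1}) depends on having, for each $i$, some $\gamma\in\Gamma$ with $[\beta_i]>[\gamma]>[\beta_{i+1}]$; when this fails for some $n$ the pair $A_n$, $B_n$ is a wide gap and must be handled separately by the methods of Section~\ref{fwg} (Lemma~\ref{bwg}). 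Second, your diagonal argument is aimed at the wrong target. The hardian property needs no enumeration of differential polynomials and no genericity conditions: once $A_i<_{\ex}z_i<_{\ex}B_i$ holds for all $i$, the value-group computation in Lemma~\ref{reveng1} (via Lemma~\ref{lem:5.1.18} and the archimedean-class separation) already pins down the dominant term of every $P(z_0,\dots,z_n)$. Moreover, it is not credible that adjusting endpoint data on compact intervals via Lemma~\ref{phizetavar} could control the germ of $P(y)$ at $+\infty$; asymptotic behavior there is governed by the valuation, not by finitely many interpolation choices.

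The diagonalization the paper actually needs is elsewhere: it runs over doubly-indexed families $p_{i,n}\in A_i$, $q_{i,n}\in B_i$, cofinal resp.\ coinitial in $n$ and chosen compatibly with the dagger recursion (e.g.\ $p_{i+1,n}=p_{i,N}^\dagger/f_{i+1}$ for some $N>n$ when $\beta_{i+1}<0$), whose diagonal $p_i:=p_{i,i}$, $q_i:=q_{i,i}$ satisfies the differential inequalities of Lemma~\ref{piqi}; the resulting two-sided bounds then propagate downward through the recursion (the subclaim in the final step) to give $A_i<_{\ex}z_i<_{\ex}B_i$ for \emph{all} $i$ simultaneously. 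Your sketch never explains how the recursion $y_i^\dagger=f_{i+1}y_{i+1}$ (which must read $y_i^\dagger=f_{i+1}/y_{i+1}$ when $\beta_{i+1}>0$ --- a case you drop) is to be reconciled with both the lower bounds $g_i^n<_{\ex}y_i$ and the upper bounds $y_i<_{\ex}h_i^n$ for every $n$ and every $i$ at once; the iterated-exponential estimates of Section~\ref{fwg} give only upper bounds for the single top-level germ and do not address the lower bounds or the interaction across levels. This simultaneous two-sided control is the substance of the ``elaborated reverse engineering,'' and it is missing from the proposal.
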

\begin{proof} Suppose towards a contradiction that  $\beta$  in some extension of $(\Gamma,\psi)$ has countable type
over $\Gamma$. Then $\beta$ falls under Case \textup{(b)} by 
the remarks that precede Lem\-ma~\ref{corkey} and by
Lem\-ma~\ref{caseb}.
Let $(\alpha_i)$, $(\beta_i)$ be as in (b). 
Then $(\beta_i^\dagger)$ is strictly increasing by Lemma~\ref{lemb}(v). 
Since $\cf(\Gamma^{<})=\cf(\Psi) >\omega$, we   can take~$\gamma\in \Psi$ such that $\beta_i^\dagger < \gamma$ for all $i$. 
Take $g\in H^{>}$ with $vg=\gamma$, and $\ell\in H$ with $\ell'=g$, so~$\ell>\R$. Composing with~$\ell^{\inv}$ yields a
maximal Hardy field~$H\circ \ell^{\inv}$ whose $H$-couple over~$\R$ we identify with the shift $(\Gamma, \psi-\gamma)$
of $(\Gamma, \psi)$. As indicated in Remark~\ref{rem:shift} this allows us to replace $H$ by~$H\circ \ell^{\inv}$ and $\beta$ by $\beta-\gamma$. By renaming we thus arrange that $\beta_i^\dagger<0$ for all
$i$. This situation is impossible by Theorem~\ref{bconstruction} below. 
\end{proof}  

\noindent
Theorem~\ref{bconstruction} is of interest independent of Theorem~\ref{mt} and Lemma~\ref{lemth01}, since
it involves a new way of constructing certain Hardy field extensions. 

\begin{theorem}\label{bconstruction} Let $H\supseteq \R$ be a Liouville closed Hardy field with $H$-couple $(\Gamma,\psi)$
over $\R$. Suppose $\beta$ in an extension of $(\Gamma,\psi)$ and of countable type over $\Gamma$ falls under Case \textup{(b)}, and $\beta_i^\dagger < 0$ for all $i$, where
$(\alpha_i)$, $(\beta_i)$ are as in \textup{(b)}. Then there exists~${y\ne 0}$  in a Hardy field extension of $H$ such that $vy$ realizes the same cut in~$\Gamma$ as $\beta$. 
\end{theorem}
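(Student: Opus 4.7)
By Proposition~\ref{prbcon} and Remark~\ref{rem:prbcon}, it suffices to produce \emph{any} Hardy field extension of $H$ containing a germ $y\neq 0$ with $vy$ realizing the cut of $\beta$ in $\Gamma$; the proposition then supplies the isomorphism of $H$-couples matching $vy$ with $\beta$. I plan to construct $y$ analytically as the base of an iterated exponential-integral tower, generalizing the reverse engineering of Section~\ref{bh}. An induction based on Lemma~\ref{cps} shows each $\beta_i$ has countable type over $\Gamma$, and then Corollary~\ref{cfomega} gives $\cf(\Gamma^{<\beta_i})=\ci(\Gamma^{>\beta_i})=\omega$ for every $i$. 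For each $i$ fix strictly monotone sequences $(g_{i,n})_n,(h_{i,n})_n$ in $H^{>}$ so that $(vg_{i,n})$ is cofinal in $\Gamma^{<\beta_i}$ and $(vh_{i,n})$ coinitial in $\Gamma^{>\beta_i}$; forcing $h_{i,n}\prec y_i\prec g_{i,n}$ for all $n$ will make $vy_i$ realize the cut of $\beta_i$ in $\Gamma$. Also choose representatives $f_i\in H^>$ of $\alpha_i$ and realize all the $f_i,g_{i,n},h_{i,n}$ as $\Cc^\infty$-functions on a common late interval $[a,\infty)$.

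\textbf{Construction.} Adapting Corollary~\ref{exyk}, fix a strictly increasing sequence $(a_m)$ in $\R^{\ge a}$ tending to $\infty$ and, for each $m\ge 1$, a $\Cc^\infty$-function $y_{m-1,m}\colon[a_{m-1},a_m]\to\R^{>}$; then define $y_{k,m}\in\Cc^\infty[a_k,a_m]$ for $0\le k<m$ by downward recursion
\[
y_{k-1,m}(t)\ :=\ y_{k-1,k}(a_k)\cdot\exp\!\int_{a_k}^t f_k(s)\,y_{k,m}(s)\,ds\qquad(a_k\le t\le a_m),
\]
pieced with $y_{k-1,k}$ on $[a_{k-1},a_k]$. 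As in Section~\ref{bh}, compatibility $y_{k,m}=y_{k,m+1}$ on common domains yields germs $y_k\in\Cc^\infty_{a_k}$ satisfying the tower equation $y_k^\dagger=f_{k+1}y_{k+1}$ on $[a_{k+1},\infty)$. At stage $m$ I impose only finitely many constraints, namely $h_{i,n}<y_i<g_{i,n}$ on $[a_m,\infty)$ for all $i,n\le m$. Using intermediary constructions (Lemma~\ref{phizeta}, Lemma~\ref{phizetavar}, Lemma~\ref{smooth}) together with upper/lower-bound estimates in the spirit of the \emph{Upper bounds} subsection of Section~\ref{fwg}, now adapted to the weights $f_k$, these can be arranged provided the $a_m$ grow sufficiently fast. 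The hypothesis $\beta_i^\dagger<0$---equivalently, by Lemma~\ref{negbeta}, $\beta_i^{\<n\>}<0$ for all $n$---provides strict archimedean separation at every level, absorbing the blow-up coming from exponential propagation of bounds between levels, and making the bounds compatible with the relation $y_k^\dagger = f_{k+1}y_{k+1}$.

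\textbf{Hardy field property and main obstacle.} It remains to show that $y:=f_0y_0$ is $H$-hardian. Mirroring the proof of Lemma~\ref{val}, I proceed by induction on $n$ to show that $H_n:=H(y_0,y_0',\dots,y_0^{(n)})=H(y_0,y_1,\dots,y_n)$ is a Hausdorff field with $v(H_n^\times)=\Gamma\oplus\bigoplus_{j=0}^n\Z vy_j$. The inductive step applies Lemma~\ref{lem:5.1.18} to the real closure $H_n^\rc$ with $y_{n+1}$ as the new factor: the Case~(b) structure (Lemma~\ref{lemb}, in particular $[\beta_{n+1}]_\R<[\beta_n]_\R$) ensures that the cut of $\beta_{n+1}$ in $\Gamma$ extends canonically to a cut in $v(H_n^{\rc,\times})$, and the diagonal constraints from the construction realize that cut by $vy_{n+1}$. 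The \emph{main obstacle} is the simultaneous compatibility of countably many constraints across all levels of the tower: a bound on $y_i$ at a high level $i$ translates, after $i$ weighted integrations and exponentiations, into a much looser bound on $y_0$, while conversely a low-level constraint restricts the admissible shape of $y_{m-1,m}$. Carrying this out requires extending the upper-bound estimates of Section~\ref{fwg} to the weighted iterated tower and choosing the breakpoints $a_m$ to grow fast enough that, beyond $a_m$, all constraints from levels $\le m$ hold simultaneously. This is the heart of the ``diagonal argument'' alluded to in the introduction, and it is where the strict negativity $\beta_i^\dagger<0$ is essential---it alone produces the strict inequalities that can absorb the cumulative loss in the propagation of bounds.
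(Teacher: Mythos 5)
Your overall architecture --- a reverse-engineered tower of weighted exponential integrals, pieced together on intervals $[a_{m-1},a_m]$, followed by a Hausdorff-field induction modeled on Lemma~\ref{val} --- is indeed the paper's strategy. But there are concrete gaps. First, you ignore the sign of the $\beta_i$. In Case~(b) the hypothesis is only $\beta_i^\dagger<0$; the $\beta_i=\beta_{i-1}^\dagger-\alpha_i$ themselves can be positive, in which case the corresponding germ must be \emph{infinitesimal}, not positive infinite. Your uniform recursion $y_{k-1}=y_{k-1}(a_k)\exp\int f_k y_{k}$ with $f_k,y_k>0$ makes every $y_{k-1}$ increasing, hence $\succeq 1$, so it cannot realize a cut with $\beta_{k-1}>0$. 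The paper resolves this by working with $z_i:=|y_i|^{\pm1}$ and a recursion that branches: $z_i^\dagger=f_{i+1}z_{i+1}$ when $\beta_{i+1}<0$ but $z_i^\dagger=f_{i+1}/z_{i+1}$ when $\beta_{i+1}>0$ (see Corollaries~\ref{az0z1} and~\ref{bz0z1} and the conditions \eqref{eq:Ik}--\eqref{eq:IVk}). Second, your inductive step via Lemma~\ref{lem:5.1.18} needs, for each $i$, some $\gamma\in\Gamma$ with $[\beta_i]>[\gamma]>[\beta_{i+1}]$: this is exactly what rules out an element of $H_n^{\rc}$ sitting inside the cut of $\beta_{n+1}$ (the Claim in the proof of Lemma~\ref{reveng1}). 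When no such $\gamma$ exists, $A_i$, $B_i$ is a wide gap and the argument must switch to the machinery of Section~\ref{fwg} (Lemma~\ref{bwg}); your proposal has no provision for this dichotomy, and the phrase ``the cut of $\beta_{n+1}$ extends canonically'' is precisely the point that fails there.

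Third, the diagonalization --- which you correctly identify as the main obstacle --- is not actually carried out, and the mechanism you gesture at is not the one that works. Imposing ``$h_{i,n}<y_i<g_{i,n}$ on $[a_m,\infty)$ for all $i,n\le m$'' for arbitrary cofinal/coinitial families cannot be done by choosing $a_m$ large, because a bound on $y_i$ over $[a_m,\infty)$ depends on all \emph{future} pieces $y_{j-1,j}$, $j>i$. The paper's resolution is to choose the bounding families coherently: doubly indexed $p_{i,n}\in A_i$, $q_{i,n}\in B_i$ satisfying \emph{exact} relations such as $p_{i+1,n}=p_{i,N}^\dagger/f_{i+1}$ with $N=N(i,n)>n$, so that a two-sided bound at level $i+1$ integrates up to a bound at level $i$ via Lemma~\ref{logintineq}, the multiplicative constant being absorbed by dropping one index ($N\mapsto N-1\ge n$); one then takes the diagonal $p_i:=p_{i,i}$, $q_i:=q_{i,i}$ and propagates bounds downward. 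Note also that the role of $\beta_i^\dagger<0$ is not to ``absorb blow-up'': via Lemma~\ref{negbeta} it gives $\beta_i^{\<n\>}<0$ for all $n$, which is what makes $\exp_n(x)\in A_i$ and activates the whole $A_i$/$B_i$ calculus (Lemma~\ref{AB}, Corollary~\ref{corAB}, Proposition~\ref{powergap}) on which the absorption of constants actually rests.
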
 

\noindent
The special cases $\beta< \Gamma$ and $\beta>\Gamma$ of Theorem~\ref{bconstruction} are taken care of by Section~\ref{bh}: say $\beta< \Gamma$; then $\ci(\Gamma)=\cf(H)=\omega$, and so there are overhardian
$y>_{\ex} H$, and any such $y$ has the desired property by Corollary~\ref{overh3}. 

The rest of this section proves Theorem~\ref{bconstruction} in the case where $\alpha < \beta < \gamma$ for some
$\alpha, \gamma\in \Gamma$. As we saw, this is also the final step in proving Theorem~\ref{mt}.

\subsection*{Some useful inclusions} Let $H\supseteq \R$ be a Liouville closed Hardy field with $H$-couple $(\Gamma,\psi)$ over $\R$, and let $\alpha$, $\gamma$ range over $\Gamma$. 
Let $\beta$ in an extension of $(\Gamma,\psi)$ of Hahn type be such that $\alpha < \beta< \gamma$ for some $\alpha$, $\gamma$, $[\beta]\notin [\Gamma]$ (so $\beta^\dagger\notin \Gamma$), and~$\beta^{\<n\>}<0$ for all $n\ge 1$. (We do allow $\beta>0$, but use $-|\beta|$ below to arrange a value~$<0$, with~$(-|\beta|)^{\<n\>}=\beta^{\<n\>}$ for $n\ge 1$.) Set  
$$A\ :=\ \big\{h\in H^{>\R}:\, -|\beta|< vh\big\},\qquad B :=\ \big\{h\in H^{>\R}: vh < -|\beta|\big\}.$$
Then $A\cup B=H^{>\R}$, $A< B$, and so there is no $h\in H$ with $A<h<B$. Also  
$$ vA\cup vB\ =\ \Gamma^{<},\quad vB\  <\  -|\beta|\  <\  vA\ <\ 0,$$ and so there is no $\alpha$ with $vB < \alpha < vA$. 

\begin{lemma}\label{AB}  The sets $A$ and $B$ have the following properties:\begin{enumerate}
\item[(i)] $\ex_n:= \exp_n(x)\in A$ for all $n$, and $B\ne \emptyset$;
\item[(ii)] $A=\sq(A)$ and $B=\sqrt{B}$.
 \end{enumerate}
 \end{lemma}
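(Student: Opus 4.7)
The plan is to handle (i) by first establishing the crucial archimedean comparison $[\beta] > [\gamma_n]$ for all $n$, where $\gamma_n := v(\ex_n)$; this encodes the idea that the sign condition $\beta^{\<k\>}<0$ forces $\beta$ to dominate every iterated exponential in archimedean class. I would begin by recording the direct computations $\psi(\gamma_0) = -\gamma_0 > 0$, $\psi(\gamma_1) = v(1) = 0$, and $\psi(\gamma_n) = \gamma_{n-1} + \cdots + \gamma_1$ for $n\geq 2$, noting that $[\psi(\gamma_n)] = [\gamma_{n-1}]$ since the sequence $([\gamma_k])$ is strictly increasing. Then prove $[\beta]>[\gamma_n]$ by induction on $n$: the base $n=0$ is immediate from $\beta^\dagger < 0 < \psi(\gamma_0)$ together with the fact that in a Hahn type $H$-couple the map $\psi^*$ reverses the order on archimedean classes. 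For the step, apply the induction hypothesis to $\beta^\dagger$---which inherits the sign condition since $(\beta^\dagger)^{\<k\>} = \beta^{\<k+1\>}<0$ for all $k\geq 1$---to obtain $[\beta^\dagger] > [\gamma_{n-1}] = [\psi(\gamma_n)]$; both sides being $\leq 0$ with $\beta^\dagger<0$, this gives $\beta^\dagger < \psi(\gamma_n)$ in the ordered group, and order-reversal yields $[\beta]>[\gamma_n]$. Translating back, $|\beta|>|\gamma_n|$ gives $-|\beta|<\gamma_n$, i.e., $\ex_n\in A$.

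For the second half of (i), the bound $\alpha<\beta<\gamma$ with $\alpha,\gamma\in\Gamma$ places $-|\beta|$ above some element of $\Gamma^{<0}$: take $\delta := -\gamma$ if $\beta\geq 0$ and $\delta := \alpha$ if $\beta<0$. Then $\delta\in\Gamma^{<0}$ with $\delta<-|\beta|$, and any positive $h\in H^\times$ with $vh=\delta$ lies in $B$.

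For (ii), both identities are routine ordered-group manipulations whose essential ingredient is $[\beta]\notin[\Gamma]$. For $\sq(A)\subseteq A$: if $a\in A$ then $|va|<|\beta|$, and since $[va]\in[\Gamma]$ while $[\beta]\notin[\Gamma]$ the inequality $[va]<[\beta]$ is strict, so $|2va|<|\beta|$ and $a^2\in A$. The reverse inclusion uses real closedness of $H$ (to form $\sqrt{a}\in H$) together with the trivial $-\tfrac{1}{2}|\beta| > -|\beta|$. For $B=\sqrt{B}$: if $vb<-|\beta|<0$ then $2vb<-|\beta|$, so $b^2\in B$; conversely, if $2vb<-|\beta|$ then $[vb]=[2vb]\in[\Gamma]$ cannot equal $[\beta]$, forcing $[vb]>[\beta]$ and hence $vb<-|\beta|$.

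The main obstacle is the inductive claim $[\beta]>[\gamma_n]$ in (i): everything else reduces to careful arithmetic in the ambient Hahn space, but this step genuinely combines the structural Hahn type hypothesis on the extension with the sign condition characterising our $\beta$. All other verifications are direct and do not require the structure of $H$ beyond real closedness.
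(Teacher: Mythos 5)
Your proof is correct and follows essentially the same route as the paper's: the paper records $\ex_n^{\<m\>}\sim\ex_{n-m+1}^\dagger$, notes $\ex_n^{\<n\>}\sim 1$ while $\beta^{\<n\>}<0$ for all $n\ge 1$, and leaves the resulting comparison $[\gamma_n]<[\beta]$ as an ``easy induction'' --- precisely the induction, via the order-reversal between $\psi$-values and archimedean classes, that you spell out (and your treatment of (ii) and of $B\neq\emptyset$ matches what the paper leaves implicit). The only cosmetic wrinkle is that your identity $[\psi(\gamma_n)]=[\gamma_{n-1}]$ fails at $n=1$, where $\psi(\gamma_1)=0$; but there the needed inequality $\beta^\dagger<\psi(\gamma_1)=0$ is immediate from the hypothesis, so nothing breaks.
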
 
 \begin{proof} 
As to (i), an easy induction shows that 
$\ex_n^\dagger=\ex_1\cdots \ex_{n-1}$ for $n\ge 1$ and~$\ex_n^{\<m\>}\sim \ex_{n-m+1}^\dagger$ for $n\ge m\ge 1$. In particular, $\ex_n^{\<n\>}\sim 1$ for $n\ge 1$. Since $\beta^{\<n\>}<0$ for all~$n\ge 1$, this gives $v(\ex_n)>\beta$ for all $n$. Item (ii) follows from $[\beta]\notin [\Gamma]$. 
\end{proof}

\noindent
We now set 
\begin{align*} A^\dagger\ &:=\ \big\{a^\dagger:\,a\in A,\  a^\dagger\succ 1\big\}, \quad
B^\dagger\ :=\ \big\{b^\dagger:\,b\in B\big\}, \text{ so in view of $\beta^\dagger\notin \Gamma$:}\\
A^\dagger\ &=\ \big\{h\in H^{>\R}:\,\beta^\dagger < vh\big\}, \quad
B^\dagger\ =\ \big\{h\in H^{>\R}:\,vh < \beta^\dagger\big\}.
\end{align*}
Thus $A^\dagger\cup B^\dagger=H^{>\R}$, $A^\dagger < B^\dagger$, and there is no $h\in H$ with $A^\dagger <h<B^\dagger$. Also
 $$v(A^\dagger) \cup v(B^\dagger)\ =\ \Gamma^{<}, \qquad v(B^\dagger)\  <\  \beta^\dagger\  <\  v(A^\dagger)\  <\ 0,$$ and there is no $\alpha$ with $v(B^\dagger) < \alpha < v(A^\dagger)$. Note also that $\ex_n^\dagger\in A^\dagger$ for all $n\ge 2$. 
 
 \begin{cor}\label{corAB} $\log A\subseteq A^\dagger\subseteq A$ and $\log B\supseteq B^\dagger\supseteq B$.
 \end{cor}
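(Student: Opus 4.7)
My plan is to prove the four inclusions one at a time, with a common central estimate feeding the two middle inclusions.

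For the middle inclusions $A^\dagger\subseteq A$ and $B\subseteq B^\dagger$, I would first establish $\beta^\dagger > -|\beta|$ in $\Gamma^*$. This is an instance of the $H$-couple identity $\delta':=\delta+\psi(\delta)>0$ for $\delta>0$, applied to $\delta:=|\beta|$: it gives $|\beta|+\beta^\dagger>0$. With this, $\{h\in H^{>\R}:vh>\beta^\dagger\}\subseteq\{h\in H^{>\R}:vh>-|\beta|\}$ yields $A^\dagger\subseteq A$, and dually $B\subseteq B^\dagger$.

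For $\log A\subseteq A^\dagger$, I would take $h\in A$; then $\log h\in H^{>\R}$ by Liouville closure, so it remains to show $v(\log h)>\beta^\dagger$. A first observation is that $[vh]<[\beta]$ in $[\Gamma^*]$: since $0>vh>-|\beta|$ gives $|vh|<|\beta|$, we have $[vh]\leq[\beta]$, with equality ruled out by $[\beta]\notin[\Gamma]$. The Hahn-type property (which carries over from $(\Gamma,\psi)$ to the extension in Case~(b) by Lemma~\ref{lemb}) then gives $\psi(vh)>\psi(\beta)=\beta^\dagger$ strictly, by the reversal of $\psi$ on archimedean classes. To pass from $\psi(vh)$ to $v(\log h)$, I would use the relation $\log h\asymp h^\dagger/h^{\dagger\dagger}$ valid in $H$-asymptotic fields with small derivation (cf.\ [ADH, 9.2.18]), which gives $v(\log h)=\psi(vh)-\psi(\psi(vh))$. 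The remaining inequality $\psi(vh)-\psi(\psi(vh))>\beta^\dagger$ follows by comparing archimedean classes, using the assumption $\beta^{\<n\>}<0$ for all $n\geq 1$ to control the iterated $\psi$-values relative to the cut determined by $\beta^\dagger$.

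Dually, for $B^\dagger\subseteq\log B$, I would take $h\in B^\dagger$, set $g:=\exp h$ (well-defined in $H^{>\R}$ by Liouville closure), and note $h=\log g$; it then suffices to prove $g\in B$, i.e., $[vg]>[\beta]$. The identity $(e^h)^\dagger=h'$ yields $\psi(vg)=vh+\psi(vh)=(vh)'$, and a symmetric Hahn-type argument converts $vh<\beta^\dagger$ together with the negativity of the $\beta^{\<n\>}$ into $[vg]>[\beta]$, whence $vg<-|\beta|$.

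The main obstacle I foresee is the comparison of the element $v(\log h)\in\Gamma$ (respectively $vg\in\Gamma$) with the non-$\Gamma$ element $\beta^\dagger$ (respectively $-|\beta|$) of $\Gamma^*$: although the dominant term $\psi(vh)$ (respectively $(vh)'$) lands on the correct side by the Hahn-type reversal, one must verify that the correction term $\psi(\psi(vh))$ (respectively the lower-order terms in the computation of $v(\exp h)$) does not straddle the cut, which is precisely where the hypothesis $\beta^{\<n\>}<0$ for all $n\geq 1$ does the essential work.
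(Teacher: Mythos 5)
Your treatment of the two middle inclusions is fine, and in fact slightly slicker than the paper's: the paper argues on the ``$h$-side'' (for $a\in A$ with $a^\dagger\succ1$, [ADH, 9.2.10(iv)] gives $v(a^\dagger)=o(va)$, hence $|v(a^\dagger)|<|va|<|\beta|$), whereas you argue on the ``$\beta$-side'' via $\beta^\dagger>-|\beta|$, i.e.\ $\beta^\dagger=o(\beta)$ --- the same fact. The genuine gap is in $\log A\subseteq A^\dagger$. The relation $\log h\asymp h^\dagger/h^{\dagger\dagger}$ is \emph{not} valid for all $h\in H^{>\R}$: in this paper [ADH, 9.2.18] is invoked (proof of Lemma~\ref{lemoverh}) only under the hypotheses $h,h^\dagger,h^{\dagger\dagger}>\R$, and it fails below that range --- for $h=x$ one gets $h^\dagger/h^{\dagger\dagger}=(1/x)/(-1/x)=-1\asymp1\nasymp\log x$, and already $h=\ex_2$ has $h^{\dagger\dagger}\asymp1$. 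Since $A$ always contains $x,\ex_1,\ex_2$ (Lemma~\ref{AB}(i)), your argument omits a nonempty and unavoidable part of $A$. Moreover your concluding inequality $\psi(vh)-\psi(\psi(vh))>\beta^\dagger$ is only gestured at; note that precisely where the formula does apply one has $\psi(\psi(vh))=v(h^{\dagger\dagger})<0$, so that inequality is immediate from $\psi(vh)>\beta^\dagger$, and the hypothesis $\beta^{\<n\>}<0$ plays no role there --- a sign that this step was not actually worked out.

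The paper's route is more elementary and avoids the dichotomy: for $h\in A$ with $h\ge\ex_2$ one has $(\log h)^\dagger\succeq1$, hence $\log h\preceq(\log h)'=h^\dagger\in A^\dagger$, and $A^\dagger$ is downward closed in $H^{>\R}$; for $h<\ex_2$ one has $\log h<\ex_1=\ex_2^\dagger\in A^\dagger$, using $\ex_2\in A$. Finally $B^\dagger\subseteq\log B$ then comes for free from $\log A<\log B$, $\log A\cup\log B=A^\dagger\cup B^\dagger=H^{>\R}$, and $A^\dagger<B^\dagger$ --- no separate symmetric argument with $\exp$ is needed. To salvage your approach you must at least handle the case $h^{\dagger}\preceq1$ or $h^{\dagger\dagger}\preceq1$ separately, e.g.\ exactly as the paper does via $\ex_2\in A$.
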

 \begin{proof} If $h\in H$, $h\ge \ex_2$, then $\log h \preceq (\log h)'=h^\dagger$. 
 Then by Lemma~\ref{AB}(i) we have~$\log A \subseteq A^\dagger$.
 Now use $\log A< \log B$ and $\log A \cup \log B=A^\dagger\cup B^\dagger=H^{>\R}$. As to $A^\dagger\subseteq A$: if $h\in H^{>\R}$ and $h^\dagger\succ 1$, then $vh^\dagger=o(vh)$ by [ADH, 9.2.10(iv)]. 
 \end{proof}
  
 \noindent
 To indicate the dependence of $A$, $B$, $A^\dagger$, $B^\dagger$ on $\beta$ we may denote these sets by 
 $$A(\beta),\quad B(\beta),\quad A^\dagger(\beta),\quad  B^\dagger(\beta). $$ 
 In fact, these four sets depend only on $[\beta]$ rather than $\beta$, in view of $[\beta]\notin [\Gamma]$. 

Recall that $\beta^\dagger\notin \Gamma$ and $\beta^\dagger < 0$, so if $\beta^\dagger$ has a good approximation in $\Gamma$, it
has a good approximation $\le 0$ in $\Gamma$.  Note: if $[\beta^\dagger]\notin [\Gamma]$, then $0$ is a good approximation
of $\beta$ in $\Gamma$ and any good approximation $\alpha\le 0$ to $\beta^\dagger$ in $\Gamma$ satisfies $\beta^\dagger < \alpha$. 

Suppose now that $\alpha\le 0$ is a good approximation of $\beta^\dagger$ in $\Gamma$, so $[\beta^\dagger-\alpha]\notin [\Gamma]$. 
Set $\beta_{\nx}:= \beta^\dagger-\alpha$, and assume also that $\beta_{\nx}^{\<n\>}<0$ for all $n\ge 1$. This means that the conditions we imposed earlier on $\beta$ are now also satisfied by $\beta_{\nx}$. Since $[\beta_{\nx}]$ does not depend on the particular
good approximation $\alpha\le 0$ of $\beta^\dagger$ in $\Gamma$, 
$$A(\beta_{\nx})\ =\ \big\{h\in H^{>\R}: vh >-|\beta_{\nx}|\big\}$$ 
doesn't either, and the assumption  that $\beta_{\nx}^{\<n\>}<0$ for all $n\ge 1$ will still be satisfied for any such $\alpha$.

\begin{lemma}\label{nextlog} $A(\beta_{\nx})\subseteq \log A(\beta)$. 
\end{lemma}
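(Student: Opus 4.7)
The plan is to argue via exponentiation: given $h\in A(\beta_{\nx})$, I will show $\exp(h)\in A(\beta)$, which suffices because $H$ is Liouville closed, so then $h=\log(\exp h)\in\log A(\beta)$. Since $h\in H^{>\R}$ gives $\exp h\in H^{>\R}$, and since $[\beta]\notin[\Gamma]$, the comparison $v(\exp h)>-\abs{\beta}$ is equivalent to $[v(\exp h)]<[\beta]$. Using that the extended $H$-couple is of Hahn type, this rephrases as $\psi(v(\exp h))>\beta^\dagger$; and since $\psi(v(\exp h))=v\bigl((\exp h)^\dagger\bigr)=v(h')$, the identity $v(h')=vh+\psi(vh)$ turns the target into
\[
vh+\psi(vh)\ >\ \beta^\dagger.
\]

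To feed in the hypothesis $h\in A(\beta_{\nx})$, I use $[\beta_{\nx}]\notin[\Gamma]$ to rephrase it as $[vh]<[\beta_{\nx}]$, and Hahn type then yields $\psi(vh)>\beta_{\nx}^\dagger$. Next I compare $\beta_{\nx}^\dagger$ with $\beta^{\dagger\dagger}$: goodness of $\alpha$ forces $[\beta_{\nx}]=[\beta^\dagger-\alpha]\le[\beta^\dagger]$, for otherwise the ultrametric law gives $[\beta_{\nx}]=[\alpha]\in[\Gamma]$, contradicting that $\alpha$ is a good approximation of $\beta^\dagger$. Hahn type converts $[\beta_{\nx}]\le[\beta^\dagger]$ into $\beta_{\nx}^\dagger=\psi(\beta_{\nx})\ge\psi(\beta^\dagger)=\beta^{\dagger\dagger}$, so chaining with the previous step gives $\psi(vh)>\beta^{\dagger\dagger}$.

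The main step will be the archimedean estimate \textup{[ADH, 9.2.10(iv)]}: applied to $\beta^\dagger$ in the extended Hahn-type $H$-couple and using the hypothesis $\beta^{\dagger\dagger}<0$, it yields $[\beta^{\dagger\dagger}]<[\beta^\dagger]$, which in turn delivers both $\beta^{\dagger\dagger}>\beta^\dagger$ (both being negative, so the smaller archimedean class has smaller absolute value) and $[\beta^{\dagger\dagger}-\beta^\dagger]=[\beta^\dagger]$ with $\beta^{\dagger\dagger}-\beta^\dagger>0$. From $[vh]<[\beta_{\nx}]\le[\beta^\dagger]=[\beta^{\dagger\dagger}-\beta^\dagger]$ I conclude $\abs{vh}<\beta^{\dagger\dagger}-\beta^\dagger$, hence $vh+\beta^{\dagger\dagger}>\beta^\dagger$. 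Combining at last with $\psi(vh)>\beta^{\dagger\dagger}$ produces $vh+\psi(vh)>vh+\beta^{\dagger\dagger}>\beta^\dagger$, as required. The main obstacle is essentially the translation: recognizing that the inequality to prove lives in the extended couple and should be analyzed in terms of archimedean classes via Hahn type; once that reframing is made, the whole proof is bookkeeping around the estimate $[\beta^{\dagger\dagger}]<[\beta^\dagger]$.
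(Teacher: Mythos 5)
Your proof is correct, and it takes a genuinely different route from the paper's. The paper also reduces the statement to showing $\exp h\in A(\beta)$ for $h\in A(\beta_{\nx})$, but then argues by contradiction: if $\exp h\in B(\beta)$, then $v(h')=\psi\big(v(\exp h)\big)<\beta^\dagger$, hence $h'\in B(\beta_{\nx})$ (after a case split on whether $[\beta^\dagger]\in[\Gamma]$), and Lemma~\ref{in3}(ii) --- coinitiality of $B$ with $\int B$ --- then pulls this back to $h\in B(\beta_{\nx})$, contradicting $h\in A(\beta_{\nx})$. You instead argue directly and stay entirely inside the asymptotic couple: the Hahn-type biconditional $[\alpha]\le[\gamma]\Leftrightarrow\psi(\alpha)\ge\psi(\gamma)$ converts membership in $A(\beta)$ and $A(\beta_{\nx})$ into $\psi$-inequalities, so the whole claim becomes $vh+\psi(vh)>\beta^\dagger$, which you extract from $\psi(vh)>\beta_{\nx}^\dagger\ge\beta^{\dagger\dagger}$ together with the estimate $[\beta^{\dagger\dagger}]<[\beta^\dagger]$ coming from [ADH, 9.2.10(iv)] (legitimately applicable here, since $\beta^{\dagger\dagger}=\beta^{\langle 2\rangle}<0$ by the standing hypothesis). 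The two arguments rest on the same underlying valuation-theoretic fact --- $\psi(\gamma)=o(\gamma)$ when $\psi(\gamma)<0$ is also what powers the proof of Lemma~\ref{in3}(ii) --- but yours is self-contained at the level of the $H$-couple and avoids the case split, at the modest cost of first recording $[\beta_{\nx}]\le[\beta^\dagger]$ (which the paper only notes afterwards, in Lemma~\ref{ABwide}). All the individual steps check out: the equivalence of $vg>-|\beta|$ with $[vg]<[\beta]$ uses $[\beta]\notin[\Gamma]$ correctly, the ultrametric argument for $[\beta_{\nx}]\le[\beta^\dagger]$ is exactly Lemma~\ref{bapp}, and the final comparison $|vh|<\beta^{\dagger\dagger}-\beta^\dagger$ is justified by $[vh]<[\beta^\dagger]=[\beta^{\dagger\dagger}-\beta^\dagger]$.
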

\begin{proof} Let $h\in A(\beta_{\nx})$; it suffices to show that then  $\ex^h\in A(\beta)$. Suppose towards a contradiction that $\ex^h\in B$. Then $v \ex^h< -|\beta|$, so $v(\ex^h)^\dagger =vh'<\beta^\dagger <0$.
If $[\beta^\dagger]\in \Gamma$, then $[\beta^\dagger] > [\beta_{\nx}]$, so $h'\in B(\beta_{\nx})$, and thus
$h\in B(\beta_{\nx})$ by Lemma~\ref{in3}(ii)  applied to $B(\beta_{\nx})$ in the role of $B$. If $[\beta^\dagger]\notin \Gamma$, then 
$[\beta^\dagger] = [\beta_{\nx}]$, and again~$h'\in B(\beta_{\nx})$, so
$h\in B(\beta_{\nx})$. In both cases we contradict  $h\in A(\beta_{\nx})$.
\end{proof}

\noindent
The diagram in Figure~\ref{fig:nextlog} depicts the gaps 
$$(A,B)=\big(A(\beta),B(\beta)\big),\quad (A^\dagger,B^\dagger),\quad (\log A,\log B), \quad \big(A(\beta_{\nx}),B(\beta_{\nx})\big)$$ in $H$ and   hypothetical $H$-hardian germs $y$, $y_{\nx}$ with $A<y<B$
and~$A(\beta_{\nx})<y_{\nx}<B(\beta_{\nx})$, as well as $y^\dagger$ and $\log y$.

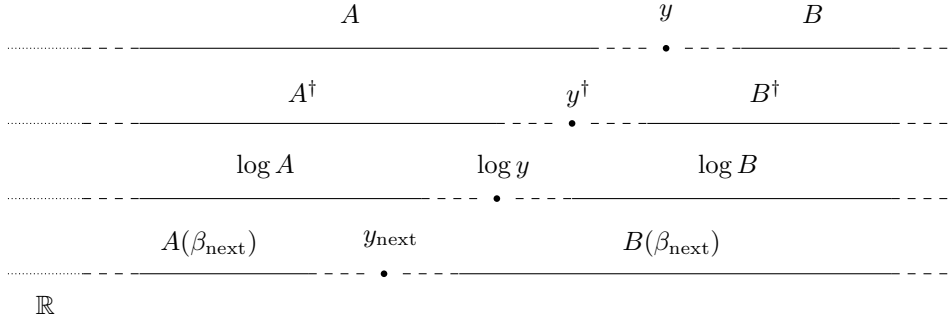
\begin{figure}[ht]
\begin{tikzpicture}

\def\betapt{7.75};
\def\betadaggerpt{6.5};
\def\logbetapt{5.5};
\def\betanxpt{4};

\draw[densely dotted] (-1,0) -- (0,0);
\draw[dashed] (0,0) -- (0.75,0);
\draw (0.75,0) -- (\betapt-1,0);
\draw[dashed] (\betapt-1,0) -- (\betapt-0.25,0);
\draw[dashed] (\betapt+0.25,0) -- (\betapt+1,0);
\draw (\betapt+1,0) -- (10.75,0);
\draw[dashed] (10.75,0) -- (11.5,0);
\node at ( 0.5*\betapt-0.325, 0) [anchor = north,shift={(0em,2em)}]{$A$};
\node at ( 5.825+0.5*\betapt, 0) [anchor = north,shift={(0em,2em)}]{$B$};

\node[circle,fill=black,inner sep=0pt,minimum size=0.25em]   at (\betapt,0) {};
\node at ( \betapt, 0) [anchor = north,shift={(0em,2em)}]{$y$};

\draw[densely dotted] (-1,-1) -- (0,-1);
\draw[dashed] (0,-1) -- (0.75,-1);
\draw (0.75,-1) -- (\betadaggerpt-1,-1);
\draw[dashed] (\betadaggerpt-1,-1) -- (\betadaggerpt-0.25,-1);
\draw[dashed] (\betadaggerpt+0.25,-1) -- (\betadaggerpt+1,-1);
\draw (\betadaggerpt+1,-1) -- (10.75,-1);
\draw[dashed] (10.75,-1) -- (11.5,-1);
\node at ( 0.5*\betadaggerpt-0.325,-1) [anchor = north,shift={(0em,2em)}]{$A^\dagger$};
\node at ( 5.825+0.5*\betadaggerpt, -1) [anchor = north,shift={(0em,2em)}]{$B^\dagger$};

\node[circle,fill=black,inner sep=0pt,minimum size=0.25em]   at (\betadaggerpt,-1) {};
\node at ( \betadaggerpt, -1) [anchor = north,shift={(0.25em,2em)}]{$y^\dagger$};

\draw[densely dotted] (-1,-2) -- (0,-2);
\draw[dashed] (0,-2) -- (0.75,-2);
\draw (0.75,-2) -- (\logbetapt-1,-2);
\draw[dashed] (\logbetapt-1,-2) -- (\logbetapt-0.25,-2);
\draw[dashed] (\logbetapt+0.25,-2) -- (\logbetapt+1,-2);
\draw (\logbetapt+1,-2) -- (10.75,-2);
\draw[dashed] (10.75,-2) -- (11.5,-2);
\node at ( 0.5*\logbetapt-0.325,-2) [anchor = north,shift={(0em,2em)}]{$\log A$};
\node at ( 5.825+0.5*\logbetapt, -2) [anchor = north,shift={(0em,2em)}]{$\log B$};

\node[circle,fill=black,inner sep=0pt,minimum size=0.25em]   at (\logbetapt,-2) {};
\node at ( \logbetapt, -2) [anchor = north,shift={(0.25em,2em)}]{$\log y$};

\draw[densely dotted] (-1,-3) -- (0,-3);
\draw[dashed] (0,-3) -- (0.75,-3);
\draw (0.75,-3) -- (\betanxpt-1,-3);
\draw[dashed] (\betanxpt-1,-3) -- (\betanxpt-0.25,-3);
\draw[dashed] (\betanxpt+0.25,-3) -- (\betanxpt+1,-3);
\draw (\betanxpt+1,-3) -- (10.75,-3);
\draw[dashed] (10.75,-3) -- (11.5,-3);
\node at ( 0.5*\betanxpt-0.325,-3) [anchor = north,shift={(0em,2em)}]{$A(\beta_{\nx})$};
\node at ( 5.825+0.5*\betanxpt, -3) [anchor = north,shift={(0em,2em)}]{$B(\beta_{\nx})$};

\node[circle,fill=black,inner sep=0pt,minimum size=0.25em]   at (\betanxpt,-3) {};

\node at (-0.5, -3) [anchor = north,shift={(0em,-.5 em)}]{$\R$};
\node at ( \betanxpt, -3) [anchor = north,shift={(0.25em,2em)}]{$y_{\nx}$};
\end{tikzpicture}
\caption{Various gaps in $H$ associated to  $(A,B)$}\label{fig:nextlog}
\end{figure}

\begin{lemma}\label{ABwide} We have $[\beta]> [\beta_{\nx}]$. If there is no $\gamma$ such that
$[\beta] > [\gamma] > [\beta_{\nx}]$, then $A(\beta)=A(\beta_{\nx})$, $B(\beta)=B(\beta_{\nx})$, and $A$, $B$ is a wide gap. 
\end{lemma}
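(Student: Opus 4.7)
The plan is to proceed in three stages: establish the strict inequality $[\beta]>[\beta_{\nx}]$, then extract the set-theoretic identifications $A(\beta)=A(\beta_{\nx})$ and $B(\beta)=B(\beta_{\nx})$ from the no-$\gamma$ hypothesis, and finally deduce that $A,B$ is a wide gap by combining these identifications with Lemma~\ref{nextlog}. The heart of the argument is the middle stage, in particular the nontrivial inclusion $A(\beta)\subseteq A(\beta_{\nx})$; the other inclusion and the first and third stages are essentially bookkeeping that rests on lemmas already in place.

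For the first stage, the extension of $(\Gamma,\psi)$ is a Hahn space (by the Hahn-type hypothesis on the ambient extension), so Lemma~\ref{bapp} gives
\[
[\beta_{\nx}]\ =\ [\beta^\dagger-\alpha]\ =\ \min_{\gamma\in\Gamma}\,[\beta^\dagger-\gamma]\ \leq\ [\beta^\dagger].
\]
On the other hand, $\beta^\dagger=o(\beta)$, i.e., $[\beta^\dagger]<[\beta]$, by [ADH, 9.2.10] applied inside the ambient $H$-couple (this is the same input used in the proofs of Lemmas~\ref{in2} and~\ref{in3}). Together these yield $[\beta]>[\beta_{\nx}]$.

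For the second stage, the inclusions $A(\beta_{\nx})\subseteq A(\beta)$ and $B(\beta)\subseteq B(\beta_{\nx})$ are immediate from $|\beta_{\nx}|<|\beta|$. For the reverse inclusions, assume no $\gamma\in\Gamma$ satisfies $[\beta]>[\gamma]>[\beta_{\nx}]$, and suppose $h\in A(\beta)\setminus A(\beta_{\nx})$; then $\delta:=vh\in\Gamma^{<0}$ satisfies $|\beta_{\nx}|\leq|\delta|<|\beta|$. By hypothesis $[\beta]\notin[\Gamma]$, and the defining property of good approximation gives $[\beta_{\nx}]\notin[\Gamma]$, so $[\delta]$ differs from both of these classes; the two inequalities therefore strengthen at the level of archimedean classes to $[\beta_{\nx}]<[\delta]<[\beta]$, contradicting the hypothesis. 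Hence $A(\beta)=A(\beta_{\nx})$, and a symmetric argument applied to some $h\in B(\beta_{\nx})\setminus B(\beta)$ yields $B(\beta)=B(\beta_{\nx})$. For the third stage, $A$ and $B$ are nonempty subsets of $H^{>\R}$ by Lemma~\ref{AB}(i), and $A<B$ with no intermediate element of $H$ is part of the setup preceding that lemma; the only remaining point is that $A$ and $\exp A$ are cofinal, which reduces to $\exp A\subseteq A$ (since trivially $a\leq\exp(a)$). This last inclusion follows by combining Lemma~\ref{nextlog}, which supplies $\exp A(\beta_{\nx})\subseteq A(\beta)$, with the just-proved equality $A(\beta)=A(\beta_{\nx})$.
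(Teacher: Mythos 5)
Your proof is correct and follows essentially the same route as the paper: $[\beta]>[\beta^\dagger]\ge[\beta_{\nx}]$ via $\beta^\dagger=o(\beta)$ and the good-approximation/minimum property, the set equalities from the absence of an intermediate archimedean class, and wideness via Lemma~\ref{nextlog} giving $\exp A\subseteq A$. The only difference is that you spell out the step the paper dismisses with ``clearly'' (that $h\in A(\beta)\setminus A(\beta_{\nx})$ would produce a $\gamma=vh$ with $[\beta]>[\gamma]>[\beta_{\nx}]$), which is a welcome addition rather than a deviation.
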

\begin{proof} From  [ADH, 9.2.10(iv)] and $\beta^\dagger < 0$ we get $\beta^\dagger=o(\beta)$, so
$[\beta]>[\beta^\dagger]\ge[\beta_{\nx}]$. 
Suppose there is no $\gamma$ with
$[\beta] > [\gamma] > [\beta_{\nx}]$. Then clearly $A(\beta)=A(\beta_{\nx})$ and~$B(\beta)=B(\beta_{\nx})$,   so
$A\subseteq \log A$ by Lemma~\ref{nextlog}. Thus $A$, $B$ is a wide gap. 
\end{proof}

\noindent
By Lemma~\ref{AB} we have $\ex_n\in A(\beta_{\nx})$ for all $n$. In combination with the next result this gives further information about the behavior of
$A$ and $B$ and of the gap between them.  For $p, \phi \in \Cc$ with $\phi>_{\ex} 0$ we have the germ
$\phi^p\in \Cc$. Let  $p\in H$; then $h\in H^{>}$ gives $h^p=\exp(p\log h)\in H^{>}$, and for $S\subseteq H^{>}$ we set $S^p:= \{h^p:\,h\in S\}\subseteq H^{>}$.  

\begin{prop}\label{powergap} The sets $A$, $A^\dagger$, $A(\beta_{\nx})$, $B$ have the following properties: \begin{enumerate}
\item[(i)]$A(\beta_{\nx})\cdot A^\dagger\subseteq A^\dagger$;
\item[(ii)] if $p\in A(\beta_{\nx})$, then $A^p\subseteq A$ and $B^{1/p}\subseteq B$;
\item[(iii)]  if $p\in A(\beta_{\nx})$, $\phi\in \Cc$, and
$A <_{\ex} \phi <_{\ex} B$, then $A<_{\ex} \phi^{1/p}<_{\ex} \phi <_{\ex} \phi^p <_{\ex} B$. 
\end{enumerate}
\end{prop}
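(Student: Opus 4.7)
The three parts build on each other: (iii) follows cleanly from (ii), which in turn rests on (i).

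For (i), let $p\in A(\beta_{\nx})$ and $h\in A^\dagger$; I will show $v(ph)=vp+vh$ lies in $(\beta^\dagger,0)\cap\Gamma = v(A^\dagger)$. The upper bound is immediate since $vp,vh<0$. For the lower bound $vp+vh>\beta^\dagger$, note that $[\beta_{\nx}]\notin[\Gamma]$ combined with $|vp|<|\beta_{\nx}|$ forces $[vp]<[\beta_{\nx}]$ strictly. With $\alpha_0\leq 0$ the good approximation of $\beta^\dagger$ used to define $\beta_{\nx}=\beta^\dagger-\alpha_0$, I rewrite
\[
vp+vh-\beta^\dagger\ =\ vp+(vh-\alpha_0)-\beta_{\nx}
\]
and perform a case analysis on the archimedean class of $vh-\alpha_0\in\Gamma$ relative to $[\alpha_0]$ and $[\beta_{\nx}]$ (using that $\Gamma$ is a Hahn space over $\R$), identifying the dominant positive term in each subcase.

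For (ii), the core claim is $A^p\subseteq A$. Given $h\in A$, write $h^p=\exp(p\log h)$. Corollary~\ref{corAB} gives $\log h\in\log A\subseteq A^\dagger$, and (i) then yields $p\log h\in A^\dagger$. The identity $(h^p)^\dagger=(p\log h)'=(p\log h)\cdot(p\log h)^\dagger$ produces the key relation
\[
\psi\big(v(h^p)\big)\ =\ v(p\log h)+\psi\big(v(p\log h)\big).
\]
Combined with the universal relation $v(\log h)+\psi(v(\log h))=\psi(vh)$, the target $\psi(v(h^p))>\beta^\dagger$---which, by Hahn type, is equivalent to $[v(h^p)]<[\beta]$, i.e., $h^p\in A$---reduces via a subcase split on $[vp]$ versus $[v(\log h)]$ to estimates controlled by $[vp]<[\beta_{\nx}]\leq[\beta^\dagger]$. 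For $B^{1/p}\subseteq B$ I argue formally: for $b\in B$, $\log b\succ p$ (since $\log b\in\log B$ lies above $B\supseteq\{p\}$), so $b^{1/p}=\exp((\log b)/p)>\R$ and hence $b^{1/p}\in A\cup B$; if $b^{1/p}\in A$ then $(b^{1/p})^p=b\in A^p\subseteq A$, contradicting $b\in B$.

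For (iii), given $\phi\in\Cc$ with $A<_{\ex}\phi<_{\ex}B$: the inequalities $\phi<_{\ex}\phi^p$ and $\phi^{1/p}<_{\ex}\phi$ follow from $\phi>\R$ and $p>\R$ (so $p>1$, $1/p<1$ eventually). For $\phi^p<_{\ex}B$: given $b\in B$, (ii) gives $b^{1/p}\in B$, so $\phi<_{\ex}b^{1/p}$; raising to the $p$-th power (valid since $p,\phi,b^{1/p}>0$ eventually) gives $\phi^p<_{\ex}(b^{1/p})^p=b$. Symmetrically, $A<_{\ex}\phi^{1/p}$ follows from $A^p\subseteq A$ applied to elements of $A$.

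The main obstacle is the verification of $\psi(v(h^p))>\beta^\dagger$ in (ii): the delicate point arises when $h$ is close to the lower boundary of $A$ (so that $\psi(vh)$ is close to $\beta^\dagger$), and the Hahn-type structure of $(\Gamma,\psi)$ together with the hypothesis $\beta^{\langle n\rangle}<0$ for all $n\geq 1$ must be fully exploited to ensure the $A(\beta_{\nx})$-control on $p$ propagates correctly through the logarithm and the $\psi$-map.
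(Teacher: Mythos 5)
Your strategy coincides with the paper's: (i) reduces to the inequality $vp+vh>\beta^\dagger$ in $\Gamma$, settled by cases around the good approximation $\alpha_0$ of $\beta^\dagger$; (ii) follows by showing $v\big((h^p)^\dagger\big)>\beta^\dagger$ using (i) and Corollary~\ref{corAB}, with $B^{1/p}\subseteq B$ deduced from $A^p\subseteq A$ by inversion; and (iii) is routine from (ii). Two points, however.

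First, in both (i) and (ii) you announce the decisive case analysis without carrying it out, and it is not automatic. In (i) the split is on the sign of $\beta_{\nx}=\beta^\dagger-\alpha_0$ and, when $\beta_{\nx}<0$, on whether $vh\ge\alpha_0$ or $\beta^\dagger<vh<\alpha_0$; in the last subcase the ``dominant positive term'' reading of $vp+(vh-\alpha_0)-\beta_{\nx}$ does not work at first sight, since a priori one only gets $|vp|+(\alpha_0-vh)<2|\beta_{\nx}|$. One must first observe that $vh$ is then itself a good approximation of $\beta^\dagger$ (whence $[\alpha_0-vh]<[\beta_{\nx}]$; the paper instead simply replaces $\alpha_0$ by $vh$, which leaves $A(\beta_{\nx})$ unchanged). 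Likewise in (ii): the bare facts $v(p\log h)>\beta^\dagger$ and $\psi\big(v(p\log h)\big)>\beta^\dagger$ give only $\psi\big(v(h^p)\big)>2\beta^\dagger$, which is useless since $\beta^\dagger<0$; your subcase split on $[vp]$ versus $[v(\log h)]$ is exactly the paper's decomposition $(h^p)^\dagger=p'\log h+ph^\dagger$, each summand being controlled by (i) together with Lemma~\ref{in2}(ii) for $p'$. You correctly flag this as the main obstacle but leave it unresolved.

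Second, your justification of $\log b\succ p$ is wrong as stated: $p\in A(\beta_{\nx})\subseteq\log A\subseteq A$ by Lemma~\ref{nextlog} and Corollary~\ref{corAB}, so $p\notin B$ and ``$B\supseteq\{p\}$'' is false; moreover $\log b>p$ alone would not yield $(\log b)/p>\R$. The conclusion is nevertheless correct and provable with the tools at hand: if $\log b\preceq p$, then $v(\log b)\ge vp>-|\beta_{\nx}|$, so $\log b\in A(\beta_{\nx})$ and hence $b=\exp(\log b)\in A$ by Lemma~\ref{nextlog}, contradicting $b\in B$. (The paper sidesteps the issue by also admitting the case $b^{1/p}\preceq 1$ and deriving a contradiction there as well.)
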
 
\begin{proof} For (i) we distinguish two cases. Suppose first that $\beta^\dagger < \alpha$. Let $p\in A(\beta_{\nx})$, $h\in A^\dagger$; we need to show $ph\in A^\dagger$, that is, 
$v(ph) >\beta^\dagger$, equivalently, $vp > \beta^\dagger - vh$. Since $vp> \beta_{\nx}=\beta^\dagger-\alpha$, we do have $vp > \beta^\dagger - vh$ if $vh\ge \alpha$. If $vh< \alpha$, then~$\beta^\dagger < vh < \alpha$, so $vh$ is also a good approximation of $\beta^\dagger$ in $\Gamma$, and then replacing~$\alpha$ by $vh$ yields $vp > \beta^\dagger - vh$ in view of remarks made earlier about~$A(\beta_{\nx})$.

Next, suppose $\alpha < \beta^\dagger$, so $[\beta^\dagger]\in [\Gamma]$ by
an earlier remark. Let $p\in A(\beta_{\nx})$, $h\in A^\dagger$; as before we need to show $vp > \beta^\dagger - vh$. Now $\alpha < \beta^\dagger < vh$ gives~$[\alpha-\beta^\dagger] < [\beta^\dagger-vh]$ by Lemma~\ref{sides}(ii). Since $\alpha-\beta^\dagger$ and $\beta^\dagger - vh$ are both negative, this yields~$\alpha-\beta^\dagger > \beta^\dagger-vh$, which together with $vp> \alpha-\beta^\dagger$ gives 
$vp > \beta^\dagger - vh$.

 As to (ii), let $p\in A(\beta_{\nx})$ and $h\in A$.  We have $h^p>\R$ and $$(h^p)^\dagger\ =\ (p\log h)'\ =\ p'\log h +ph^\dagger,$$ 
and $ph^\dagger\preceq 1$ or $ph^\dagger \in A^\dagger$ by (i). Also $p'\in A(\beta_{\nx})$ or $0<p'\preceq 1$, by Lemma~\ref{in2}, and $\log h\in A^\dagger$ by Corollary~\ref{corAB}, so $p'\log h\preceq 1$ or $p'\log h\in A^\dagger$ by (i). Hence~$(h^p)^\dagger \in A^\dagger$, and thus $h^p\in A$. 
Next, let  $p\in A(\beta_{\nx})$ and $h\in B$. Then $h^{1/p}\notin B$ would mean $h^{1/p}\in A$ or $0<h^{1/p}\preceq 1$, and in either case  $h=(h^{1/p})^p$ would give $h \in A$ or~$h\preceq 1$, contradicting $h\in B$. This concludes the proof of (ii).  

 Property (iii) is a routine consequence of (ii).
\end{proof} 

\noindent
Part (iii) of Proposition~\ref{powergap} is only relevant if there is any $\phi\in \Cc$ with $A<_{\ex} \phi <_{\ex} B$. There are indeed such $\phi$ if $\cf(A)=\ci(B)=\omega$, by Corollary~\ref{corsmooth}.

\medskip\noindent
To describe $A(\beta_{\nx})$ directly in terms of $A^\dagger$, take $f\in H^{>}$ with $vf=\alpha$. Then:

\begin{lemma}\label{lemdir} If $\beta^\dagger < \alpha$, then $f\in A^\dagger$ or $f\asymp 1$, and
 $$A(\beta_{\nx})\ =\ H^{>\R}\cap f^{-1}A^\dagger, \qquad B(\beta_{\nx})\ =\ f^{-1}B^\dagger. $$
 If $\alpha < \beta^\dagger$, then $f\in B^\dagger$, $[\beta^\dagger - \alpha] < [\beta^\dagger]=[\alpha]\in [\Gamma]$, and
 $$  A(\beta_{\nx})\ =\ H^{>\R}\cap f (B^\dagger)^{-1},\quad
  f(A^\dagger)^{-1} \text{ is a coinitial subset of }B(\beta_{\nx}). $$
 \end{lemma}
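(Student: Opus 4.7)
My plan is to split on the sign of $\beta^\dagger-\alpha$; the case $\alpha=\beta^\dagger$ is excluded since $\alpha\in\Gamma$ but $\beta^\dagger\notin\Gamma$. In both cases the workhorse is the identity $vf=\alpha$, which rewrites the defining valuation conditions of $A(\beta_{\nx})$ and $B(\beta_{\nx})$ as conditions on $v(fh)$ or $v(f^{-1}g)$ relative to $\beta^\dagger$.

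In Case~1 ($\beta^\dagger<\alpha$) one has $\beta_{\nx}<0$ and $-|\beta_{\nx}|=\beta^\dagger-\alpha$. The classification of $f$ is immediate: $vf=\alpha\in(\beta^\dagger,0]$, so either $\alpha<0$ and $f\in A^\dagger$, or $\alpha=0$ and $f\asymp 1$. For $A(\beta_{\nx})=H^{>\R}\cap f^{-1}A^\dagger$: $h\in H^{>\R}$ satisfies $vh>-|\beta_{\nx}|=\beta^\dagger-\alpha$ iff $v(fh)=\alpha+vh>\beta^\dagger$, with $v(fh)<0$ automatic from $\alpha\le 0$ and $vh<0$, so the condition reads $fh\in A^\dagger$. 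The equality $B(\beta_{\nx})=f^{-1}B^\dagger$ is analogous, the inclusion $f^{-1}B^\dagger\subseteq H^{>\R}$ being automatic since $v(f^{-1}g)=vg-\alpha<\beta^\dagger-\alpha<0$ for $g\in B^\dagger$.

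In Case~2 ($\alpha<\beta^\dagger$) the remark preceding the lemma gives $[\beta^\dagger]\in[\Gamma]$, and Lemma~\ref{sides}(i) applied to $\beta^\dagger$ with good approximation $\alpha$ yields $[\beta_{\nx}]<[\beta^\dagger]=[\alpha]$. From $vf=\alpha<\beta^\dagger<0$ we get $f\in B^\dagger$, and $-|\beta_{\nx}|=\alpha-\beta^\dagger$. For $A(\beta_{\nx})=H^{>\R}\cap f(B^\dagger)^{-1}$: one inclusion uses $v(f/g)=\alpha-vg>\alpha-\beta^\dagger=-|\beta_{\nx}|$ for $g\in B^\dagger$; the reverse inclusion sets $g:=f/h$ for $h\in A(\beta_{\nx})$ and verifies $vg=\alpha-vh<\alpha-(-|\beta_{\nx}|)=\beta^\dagger<0$, so $g\in B^\dagger$ and $h=f/g\in f(B^\dagger)^{-1}$. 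For coinitiality of $f(A^\dagger)^{-1}$ in $B(\beta_{\nx})$, inclusion follows from $v(f/g)\in(\alpha,-|\beta_{\nx}|)$ for $g\in A^\dagger$. For the coinitial property, given $h\in B(\beta_{\nx})$, I split on the sign of $vh-\alpha$: if $vh>\alpha$, then $g:=f/h$ has $vg=\alpha-vh\in(\beta^\dagger,0)$ (using $vh<-|\beta_{\nx}|=\alpha-\beta^\dagger$), so $g\in A^\dagger$ and $f/g=h$; if $vh\le\alpha$, any fixed $g_0\in A^\dagger$ (nonempty since $\ex_n^\dagger\in A^\dagger$ for $n\ge 2$) gives $v(f/g_0)>\alpha\ge vh$, hence $f/g_0\prec h$ and therefore $f/g_0\le h$ eventually.

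The main obstacle is the coinitiality step in Case~2. The statement is \emph{not} valuation-theoretic, since $v(f/g)$ for $g\in A^\dagger$ is bounded below by $\alpha$ and so $f(A^\dagger)^{-1}$ fails to be valuation-coinitial in $B(\beta_{\nx})$ for $h$ with $vh<\alpha$. The point is the order-theoretic observation that $v(f/g)>vh$ already forces $f/g\prec h$ (and hence $f/g\le h$ eventually), combined with the self-matching trick $g=f/h$ in the regime $vh>\alpha$ where valuation can be arranged exactly inside $(\beta^\dagger,0)$ courtesy of the archimedean inequality $[\beta_{\nx}]<[\alpha]$ supplied by Lemma~\ref{sides}(i).
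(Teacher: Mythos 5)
Your proof is correct and follows essentially the same route as the paper's, which merely records the valuation-theoretic descriptions of the relevant sets (e.g.\ $f(A^\dagger)^{-1}=\{h\in H^{>\R}:\alpha<vh<\alpha-\beta^\dagger\}$ and $B(\beta_{\nx})=\{h\in H^{>\R}:vh<\alpha-\beta^\dagger\}$) and leaves the verifications to the reader. Your case split on $vh>\alpha$ versus $vh\le\alpha$ for the coinitiality claim is exactly the intended filling-in of that last step.
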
 
 \begin{proof} If $\beta^\dagger < \alpha$, then the inclusion $A(\beta_{\nx})\supseteq H^{>\R}\cap  f^{-1}A^\dagger$ and the equality~$B(\beta_{\nx})=f^{-1}B^\dagger$ are almost obvious, and one can use $\alpha\le 0$ to prove the inclusion $A(\beta_{\nx})\subseteq H^{>\R}\cap  f^{-1}A^\dagger$. 
 
 Next, suppose $\alpha < \beta^\dagger$. Then for the inclusion $A(\beta_{\nx})\supseteq H^{>\R}\cap f (B^\dagger)^{-1}$, use~$\beta^\dagger < \alpha-\beta^\dagger$, and for the statement about $B(\beta_{\nx})$, note that  
 \begin{align*} B(\beta_{\nx})\ &=\  \big\{h\in H^{>\R}:\,vh < \alpha-\beta^\dagger\big\}, \text{ and }\\
f(A^\dagger)^{-1}\ &=\ \big\{h\in H^{>\R}:\,\alpha < vh < \alpha-\beta^\dagger\big\}. \qedhere
\end{align*} 
 \end{proof} 
 
 \noindent
 Using the first part of Lemma~\ref{lemdir} we obtain: 
 
 \begin{cor}\label{az0z1}Suppose $\beta_{\nx}<0$ and $z_0, z_1\in \Cc^{<\infty}$ are such that
 $$z_0 >_{\ex} 0,\qquad z_0^\dagger\ =\ fz_1, \qquad A(\beta_{\nx}) <_{\ex} z_1 <_{\ex} B(\beta_{\nx}).$$
 Then $A<_{\ex} z_0 <_{\ex} B$. 
 \end{cor}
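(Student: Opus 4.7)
The plan is to exploit the explicit description of $A(\beta_{\nx})$ and $B(\beta_{\nx})$ provided by Lemma~\ref{lemdir}. Since $\beta_{\nx} = \beta^\dagger - \alpha < 0$, we are in the first case of that lemma, so
$$A(\beta_{\nx}) = H^{>\R}\cap f^{-1}A^\dagger, \qquad B(\beta_{\nx}) = f^{-1}B^\dagger,$$
and $f\in A^\dagger$ or $f\asymp 1$; either way $f$ is eventually bounded below by a positive real. The main tool will be Lemma~\ref{logintineq}, which converts bounds on logarithmic derivatives into bounds on the functions themselves up to a positive multiplicative constant. To absorb this constant I will raise $h$ or $z_0$ to the square and invoke Lemma~\ref{AB}(ii).

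First I would verify that $z_0\to+\infty$: from $\ex_n\in A(\beta_{\nx})$ (Lemma~\ref{AB}(i)) and $z_1>_{\ex} A(\beta_{\nx})$ we get $z_1>_{\ex}\R$; since $f$ is eventually bounded below by a positive constant, $z_0^\dagger = fz_1 >_{\ex}\R$, and integrating yields $z_0\to+\infty$.

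For $A<_{\ex} z_0$, take $h\in A$, so $h^2\in A$ by Lemma~\ref{AB}(ii). The key computation is that $[vh]<[\beta]$ strictly (because $[\beta]\notin[\Gamma]$), hence by the Hahn type assumption $v(h^\dagger)=\psi(vh)>\beta^\dagger$, and therefore $v(2h^\dagger/f)>\beta_{\nx}$. As $h$ is eventually increasing, $h^\dagger>0$ eventually, so $2h^\dagger/f>0$ eventually. A small case split completes the derivative comparison: if $2h^\dagger/f\succ 1$ it lies in $A(\beta_{\nx})$ and so $<_{\ex} z_1$ by hypothesis; if $2h^\dagger/f\preceq 1$ then again $<_{\ex} z_1$ because $z_1>_{\ex}\R$. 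Either way $(h^2)^\dagger = 2h^\dagger <_{\ex} fz_1 = z_0^\dagger$, and Lemma~\ref{logintineq} produces $c>0$ with $h^2 < cz_0$ eventually. Since $z_0\to +\infty$, we upgrade this to $h<\sqrt{cz_0}<_{\ex} z_0$.

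The bound $z_0<_{\ex} B$ is symmetric. Given $h\in B$, Hahn type gives $v(h^\dagger)<\beta^\dagger$, hence $v(h^\dagger/(2f))<\beta_{\nx}$; as $h^\dagger>0$ eventually, $h^\dagger/(2f)\in B(\beta_{\nx})$. The hypothesis then yields $z_1<_{\ex} h^\dagger/(2f)$, so $(z_0^2)^\dagger = 2z_0^\dagger = 2fz_1<_{\ex} h^\dagger$. Lemma~\ref{logintineq} gives $c>0$ with $z_0^2<ch$ eventually, and since $h\in H^{>\R}$ forces $h\to+\infty$, we conclude $z_0<_{\ex}\sqrt{ch}<_{\ex} h$. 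No step should be difficult; the only real subtlety is the squaring trick to eat the constant $c$ in Lemma~\ref{logintineq} and the minor case analysis on whether $2h^\dagger/f$ belongs to $H^{>\R}$.
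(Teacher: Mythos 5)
Your proof is correct and follows essentially the same route as the paper's: compare logarithmic derivatives using the description of $A(\beta_{\nx})$, $B(\beta_{\nx})$ from Lemma~\ref{lemdir}, apply Lemma~\ref{logintineq}, and absorb the resulting multiplicative constant by squaring (via $A=\operatorname{sq}(A)$, $B=\sqrt{B}$). The only cosmetic difference is that in the second half you square $z_0$ where the paper reruns the argument with $h$ replaced by a square root, which changes nothing of substance.
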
 
 \begin{proof}  Let $h\in A$. Then $f^{-1}h^\dagger\in A(\beta_{\nx})$ or $f^{-1}h^\dagger \preceq 1$, so
 $f^{-1}h^\dagger <_{\ex} z_1=f^{-1}z_0^\dagger$, and thus $h^\dagger  <_{\ex} z_0^\dagger$.
 Then Lemma~\ref{logintineq} gives $c\in \R^>$ with~$ch <_{\ex} z_0$. 
 Applying this argument to $h^2$ instead of $h$ gives 
 $d\in \R^{>}$ with~$dh^2<_{\ex} z_0$, which in view of~$h<_{\ex} dh^2$ gives $h<_{\ex} z_0$. 
 In the same way one shows that if $h\in B$, then~$z_0 <_{\ex} h$. 
 \end{proof} 

\noindent
 Likewise, using the second part of Lemma~\ref{lemdir}:

\begin{cor} \label{bz0z1} If $\beta_{\nx}>0$ and $z_0, z_1\in \Cc^{<\infty}$ are such that
 $$z_0, z_1 >_{\ex} 0,\qquad z_0^\dagger\ =\ f/z_1, \qquad A(\beta_{\nx}) <_{\ex} z_1 <_{\ex} B(\beta_{\nx}),$$
 then $A<_{\ex} z_0 <_{\ex} B$. 
 \end{cor}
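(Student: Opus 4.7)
\medskip\noindent
\textbf{Proof proposal.} The plan is to mirror the proof of Corollary~\ref{az0z1} step by step, now using the second half of Lemma~\ref{lemdir}, which describes $A(\beta_{\nx})$ and $B(\beta_{\nx})$ in the case $\alpha<\beta^\dagger$ (i.e., $\beta_{\nx}>0$). The key algebraic change is that $z_0^\dagger=f/z_1$ replaces the earlier $z_0^\dagger=fz_1$, so the quantity $f^{-1}h^\dagger$ appearing there should be replaced throughout by $f/h^\dagger$; this in turn swaps the roles of $A(\beta_{\nx})$ and $B(\beta_{\nx})$ in the relevant valuation comparisons. Note that for $h\in A\cup B$ (a subset of $H^{>\R}$, hence $h\succ 1$) the element $h^\dagger$ is a positive germ in the Hardy field generated by $h$, so $f/h^\dagger$ is a well-defined positive element of $H$.

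First I would treat $h\in A$. We have either $h^\dagger\in A^\dagger$ or $h^\dagger\preceq 1$. If $h^\dagger\in A^\dagger$, i.e.\ $vh^\dagger>\beta^\dagger$, then $v(f/h^\dagger)=\alpha-vh^\dagger<\alpha-\beta^\dagger=-\beta_{\nx}$, so $f/h^\dagger$ belongs to $f(A^\dagger)^{-1}$, which by Lemma~\ref{lemdir} is a coinitial subset of $B(\beta_{\nx})$; if instead $h^\dagger\preceq 1$, then $f/h^\dagger\preceq f$, and since $z_1>_{\ex}\ex_n$ for all $n$ (by Lemma~\ref{AB}(i) applied to $\beta_{\nx}$) we still get $f/h^\dagger<_{\ex}z_1$. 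Either way, $h^\dagger<_{\ex}f/z_1=z_0^\dagger$. Lemma~\ref{logintineq} then yields $c\in\R^>$ with $ch<_{\ex}z_0$. Applying the same argument to $h^2\in A$ (permitted by $A=\operatorname{sq}(A)$ from Lemma~\ref{AB}(ii)) gives $dh^2<_{\ex}z_0$ for some $d\in\R^>$, and since $h<_{\ex}dh^2$ this upgrades to $h<_{\ex}z_0$.

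For $h\in B$, the mirror computation applies. Here $h^\dagger\in B^\dagger$, hence $vh^\dagger<\beta^\dagger$, giving $v(f/h^\dagger)=\alpha-vh^\dagger>\alpha-\beta^\dagger=-\beta_{\nx}$; so $f/h^\dagger$ lies in $H^{>\R}\cap f(B^\dagger)^{-1}=A(\beta_{\nx})$ when it is $>\R$, and in the valuation ring otherwise. In either case $f/h^\dagger<_{\ex}z_1$, whence $z_0^\dagger=f/z_1<_{\ex}h^\dagger$. Lemma~\ref{logintineq} supplies $c\in\R^>$ with $z_0<_{\ex}ch$, and since $B=\sqrt{B}$ (Lemma~\ref{AB}(ii)) we may rerun the argument on $h^{1/2}\in B$ to obtain $d\in\R^>$ with $z_0<_{\ex}dh^{1/2}$, which together with $dh^{1/2}<_{\ex}h$ gives $z_0<_{\ex}h$.

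The only delicate point is the bookkeeping of the boundary cases ($h^\dagger\asymp 1$, or $\alpha=0$ giving $f\asymp 1$); in all such cases the element $f/h^\dagger$ falls into the valuation ring and is then absorbed below $z_1$ by the transexponential growth of $z_1$ guaranteed by $A(\beta_{\nx})<_{\ex}z_1$ together with $\ex_n\in A(\beta_{\nx})$. Everything else is a direct translation of the pattern used in Corollary~\ref{az0z1}.
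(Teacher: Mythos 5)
Your overall strategy is exactly the intended one (the paper gives no separate proof of this corollary; it is meant to be the mirror of Corollary~\ref{az0z1} via the second part of Lemma~\ref{lemdir}), and your treatment of $h\in B$ and of the constant-removal tricks via $\operatorname{sq}(A)$ and $\sqrt{B}$ is correct. But the subcase $h\in A$, $h^\dagger\preceq 1$ is genuinely wrong as written. First, $h^\dagger\preceq 1$ gives $1/h^\dagger\succeq 1$ and hence $f/h^\dagger\succeq f$, not $f/h^\dagger\preceq f$. Second, the inequality you then assert, $f/h^\dagger<_{\ex}z_1$, is false and would in any case yield $z_0^\dagger=f/z_1<_{\ex}h^\dagger$ — the opposite of the inequality "$h^\dagger<_{\ex}z_0^\dagger$" you need and then claim "either way". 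The underlying confusion is that $z_1>_{\ex}\ex_n$ for all $n$ does not make $z_1$ larger than every element of $H$ dominated by $f$: here $f$ itself lies in $B(\beta_{\nx})$ (since $vf=\alpha<\alpha-\beta^\dagger=-\beta_{\nx}$), so in fact $z_1<_{\ex}f$.

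The correct handling of this subcase is shorter than what you wrote. Since $\beta_{\nx}=\beta^\dagger-\alpha>0$ and $\beta^\dagger<0$, we must have $\alpha<\beta^\dagger<0$; in particular $\alpha<0$, so the worry about $f\asymp 1$ in your last paragraph never arises. For $h\in A$ with $h^\dagger\preceq 1$ we get $v(f/h^\dagger)=\alpha-v(h^\dagger)\le\alpha<\alpha-\beta^\dagger=-\beta_{\nx}$, so $f/h^\dagger\succeq f\succ 1$ is positive infinite and lies in $B(\beta_{\nx})$. Hence $z_1<_{\ex}f/h^\dagger$ — the same conclusion as in the subcase $h^\dagger\in A^\dagger$ — and therefore $h^\dagger<_{\ex}f/z_1=z_0^\dagger$, after which Lemma~\ref{logintineq} and the squaring trick finish as you describe. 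With this repair the proof is complete and coincides with the paper's intended argument.
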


\subsection*{What remains to be done} 
Let $H\supseteq \R$ be a Liouville closed Hardy field with $H$-couple $(\Gamma,\psi)$ over $\R$, and let $\alpha$, $\gamma$ range over $\Gamma$. Suppose
$\beta$ in an extension of~$(\Gamma,\psi)$ is of countable type over $\Gamma$ and falls under Case 
\textup{(b)}, with $(\alpha_i)$, $(\beta_i)$ as in~\textup{(b)}, and $\beta_i^\dagger < 0$, $\alpha_{i+1}\le 0$ for all $i$.  Assume also that $\alpha < \beta < \gamma$ for some~$\alpha$,~$\gamma$. 
Then $\big(\Gamma\<\beta\>, \psi_{\beta}\big)$ is of Hahn type, by the last claim in Lemma~\ref{lemb}, and so 
 all $\beta_i$ lie in this extension of $(\Gamma,\psi)$ of Hahn type; this is significant because of the initial assumption on $\beta$ in the previous subsection. Note also that for all $i$ there are~$\alpha$,~$\gamma$
 with $\alpha < \beta_i < \gamma$, and that by Lemma~\ref{negbeta} we have $\beta_i^{\<n\>}<0$ for all $i$ and all $n\ge 1$. Thus we can apply the previous subsection to each
 $\beta_i$ in the role of $\beta$ there.  Set
$$A_i\ :=\ \big\{h\in H^{>\R}:\,vh> -|\beta_i|\big\},\qquad B_i\ :=\ \big\{h\in H: vh < -|\beta_i|\big\},$$
so $A_i=A(\beta_i)$, $B_i=B(\beta_i)$, and $\beta_{i+1}=(\beta_i)_{\nx}$
in the notation of the previous subsection. Thus by lemmas in that subsection: \begin{enumerate}
\item[(i)] $\ex_n\in A_i$ for all $i$, $n$; 
\item[(ii)] $A_i$ and $\sq(A_i)$ are cofinal, and $B_i$ and $\sqrt{B_i}$ are coinitial;
\item[(iii)] $h\in A_i\Rightarrow h^{\ex_n}\in A_i$, and $h\in B_i\Rightarrow h^{1/\!\ex_n}\in B_i$;
\end{enumerate}
By Corollary~\ref{cfomega} we have $\cf(\Gamma^{<\beta_i})=\ci(\Gamma^{>\beta_i})=\omega$ for all $i$.  Hence $\cf(A_i)=\ci(B_i)=\omega$ for all $i$. What remains to be done is to show the existence of a $y>0$ in a Hardy field extension of $H$ such that $vy$ realizes the same cut in $\Gamma$ as $\beta$.

\medskip\noindent
For each $i$, take $f_i\in H^{>}$ with $vf_i=\alpha_i$, and $f_i\ge 1$ for $i\ge 1$. To get the right idea for our reverse engineering,
suppose $y>0$ is $H$-hardian and
$vy$ realizes the same cut in $\Gamma$ as $\beta$. 
As in the proof of Proposition~\ref{prbcon}, let $y_i\in H\<y\>$ be given by~$y_0:= y/f_0$, and $y_{i+1} = y_i^\dagger/f_{i+1}$. Then
$vy_i$ realizes the same cut in $\Gamma$ as $\beta_i$, so~$y_i\succ 1$ if $\beta_i<0$ and
$y_i \prec 1$ if $\beta_i>0$. To have only positive infinite germs, set 
$$ z_i\ :=\  |y_i|\  \text{ if }\ \beta_i < 0, \qquad z_i\ :=\  |y_i|^{-1}\ \text{ if }\ \beta_i >0.$$
One verifies easily that then $A_i <_{\ex} z_i <_{\ex} B_i$, and
$$ \beta_{i+1} < 0\ \Longrightarrow\  z_i^\dagger\, =\, f_{i+1}z_{i+1}, \qquad \beta_{i+1} > 0\ \Longrightarrow\ z_i^\dagger\, =\, f_{i+1}/z_{i+1}.    $$
We first deal with a ``wide gap'' case:

\begin{lemma} \label{bwg} Suppose for some $n$ there is no $\gamma$ with $[\beta_n] > [\gamma] > [\beta_{n+1}]$.
Then there exists $H$-hardian $y>0$ such that
$vy$ realizes the same cut in $\Gamma$ as $\beta$. 
\end{lemma}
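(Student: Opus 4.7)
The plan is to use the wideness of the gap $A_n,B_n$ to construct $z_n$ by the results of Section~\ref{fwg}, and then build $z_{n-1},z_{n-2},\dots,z_0$ in turn by exponential integration inside suitable Liouville closed Hardy field extensions of $H$, invoking Corollaries~\ref{az0z1} and~\ref{bz0z1} at each step to keep the new germ inside the correct cut. The $y$ required by the lemma will then be~$f_0z_0^{\pm 1}$.

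First, by Lemma~\ref{ABwide} the assumption on $n$ gives $A_n=A_{n+1}$, $B_n=B_{n+1}$, and that $A_n$, $B_n$ is a wide gap in $H$. Since $\cf(A_n)=\ci(B_n)=\omega$ (by Corollary~\ref{cfomega} as noted just before the lemma), Theorem~\ref{sjo+} yields an $H$-hardian germ $z_n\in\Cc^{<\infty}$ with $A_n<_{\ex}z_n<_{\ex}B_n$; since $A_n\subseteq H^{>\R}$, we may take $z_n>_{\ex}0$. Let $H_n:=\Li(H\langle z_n\rangle)$, a Liouville closed Hardy field extending $H$ and containing $z_n$.

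Next, define $z_{n-1},\dots,z_0$ by downward recursion.  Assume $z_{i+1}>0$ has been produced in a Liouville closed Hardy field $H_{i+1}\supseteq H$ and satisfies $A_{i+1}<_{\ex}z_{i+1}<_{\ex}B_{i+1}$. Set
\[
g_{i+1}\ :=\ f_{i+1}z_{i+1}\ \text{ if $\beta_{i+1}<0$,}\qquad g_{i+1}\ :=\ f_{i+1}/z_{i+1}\ \text{ if $\beta_{i+1}>0$}
\]
(one of these cases applies because $[\beta_{i+1}]\notin[\Gamma]$ forces $\beta_{i+1}\ne 0$). Since $H_{i+1}$ is Liouville closed, there is $z_i\in H_{i+1}^{>}$ with $z_i^\dagger=g_{i+1}$. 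Now apply Corollary~\ref{az0z1} if $\beta_{i+1}<0$, or Corollary~\ref{bz0z1} if $\beta_{i+1}>0$, with $\beta:=\beta_i$, $\beta_{\nx}:=\beta_{i+1}$, $f:=f_{i+1}$ (so $vf=\alpha_{i+1}\le 0$ is a good approximation of $\beta_i^\dagger$ since $\beta_{i+1}=\beta_i^\dagger-\alpha_{i+1}$ has $[\beta_{i+1}]\notin[\Gamma]$; moreover $\beta_i^{\<n\>}<0$ for all $n\ge 1$ by Lemma~\ref{negbeta}, which is the standing hypothesis of that subsection). This yields $A_i<_{\ex}z_i<_{\ex}B_i$, and we take $H_i:=H_{i+1}$.

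Finally, set $y:=f_0z_0$ if $\beta_0<0$ and $y:=f_0/z_0$ if $\beta_0>0$. Then $y>0$ and $y\in H_0$, so $y$ is $H$-hardian; and the relation $A_0<_{\ex}z_0<_{\ex}B_0$ together with $A_0=\{h\in H^{>\R}:vh>-|\beta_0|\}$ shows that $vz_0$ realizes the same cut in $\Gamma$ as $-|\beta_0|$, whence in either case $vy=\alpha_0\pm vz_0$ realizes the same cut in $\Gamma$ as $\alpha_0+\beta_0=\beta$. The only step that is not pure bookkeeping is passage from $z_{i+1}$ to $z_i$, and the analytic content of that step has been packaged into Corollaries~\ref{az0z1}, \ref{bz0z1}; verifying their hypotheses here amounts to the sign conditions $\alpha_{i+1}\le 0$ and $\beta_i^{\<n\>}<0$ that were arranged in the reduction preceding Theorem~\ref{bconstruction}.
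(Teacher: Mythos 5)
Your proof is correct and follows essentially the same route as the paper's: Lemma~\ref{ABwide} to get the wide gap $A_n$, $B_n$, Theorem~\ref{sjo+} (Section~\ref{fwg}) to produce $z_n$, exponential integration of $f_{i+1}z_{i+1}^{\pm 1}$ inside $\Li\!\big(H\<z_n\>\big)$ to obtain $z_{n-1},\dots,z_0$, downward induction via Corollaries~\ref{az0z1} and~\ref{bz0z1}, and finally $y=f_0z_0^{\pm1}$. Your explicit verification of the hypotheses of those corollaries (the sign conditions $\alpha_{i+1}\le 0$ and $\beta_i^{\<n\>}<0$) is a detail the paper leaves implicit, but the argument is the same.
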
 
\begin{proof} Let $n$ be as in the hypothesis. Then $A_n$, $B_n$ is a wide gap by Lemma~\ref{ABwide}, hence Section~\ref{fwg} gives an $H$-hardian
$z_n$ such that~${A_n <_{\ex} z_n <_{\ex}  B_n}$. 
Let $L:=\Li\!\big(H\<z_n\>\big)$. Then we have~$z_{n-1},\dots, z_0\in L^{>}$ such that for all $i<n$: 
$$ \beta_{i+1} < 0\ \Longrightarrow \ z_i^\dagger\, =\, f_{i+1}z_{i+1}, \qquad \beta_{i+1} > 0\ \Longrightarrow\ z_i^\dagger\, =\, f_{i+1}/z_{i+1}.$$
Downward induction on $i$ using Corollaries~\ref{az0z1}  and~\ref{bz0z1} then gives $A_i <_{\ex} z_i <_{\ex} B_i$ for all $i\le n$.
Thus if $\beta_0<0$, then $v(z_0)$ realizes the same gap in $\Gamma$ as $\beta_0$, and so~$y:= f_0z_0$ has the desired property. If $\beta_0 >0$, then $v(z_0)$ realizes the same gap in $\Gamma$ as
$-\beta_0$, and so  $y:= f_0/z_0$ has the desired property.
\end{proof}

\noindent
{\em It remains to consider the case that for all $i$ there exists $\gamma$ with 
$[\beta_i] > [\gamma] > [\beta_{i+1}]$. We assume this for the rest of this section}. 
The goal of our reverse engineering will be to construct germs $z_i$ as in the next lemma:

\begin{lemma}\label{reveng1} Let the germs $z_i\in \Cc^{<\infty}$ be such that  for all $i$, $A_i <_{\ex} z_i <_{\ex} B_i$ and 
$$ \beta_{i+1} < 0\ \Longrightarrow\  z_i^\dagger\ =\ f_{i+1}z_{i+1}, \qquad \beta_{i+1} > 0\ \Longrightarrow\ z_i^\dagger\ =\ f_{i+1}/z_{i+1}.$$
Then there exists $H$-hardian $y>0$ such that
$vy$ realizes the same cut in $\Gamma$ as $\beta$. 
\end{lemma}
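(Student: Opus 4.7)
The plan is to mimic the strategy of Lemma~\ref{bwg}, but without the luxury of having $z_0$ generate a Hardy field all by itself. Set $\epsilon_i := +1$ if $\beta_i<0$ and $\epsilon_i := -1$ if $\beta_i>0$, and put
$$y\ :=\ f_0 z_0 \text{ if } \beta_0<0,\qquad y\ :=\ f_0/z_0 \text{ if } \beta_0>0,$$
so $y>0$ is a germ in $\mathcal{C}^{<\infty}$. Define $y_0 := y/f_0$ and recursively $y_{i+1} := y_i^\dagger/f_{i+1}$ in $\mathcal{C}^{<\infty}$. The hypothesized relation $z_i^\dagger = f_{i+1} z_{i+1}$ (resp.~$z_i^\dagger = f_{i+1}/z_{i+1}$) when $\beta_{i+1}<0$ (resp.~$\beta_{i+1}>0$), together with $(z_i^{-1})^\dagger = -z_i^\dagger$, shows by induction that $|y_i|=z_i$ when $\beta_i<0$ and $|y_i|=z_i^{-1}$ when $\beta_i>0$; in particular each $y_i$ is a well-defined germ in $\mathcal{C}^{<\infty}$ of constant sign, and the sequence is closed under the operation $y_i\mapsto y_i^\dagger/f_{i+1}$.

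I then claim that $H\langle y\rangle := \bigcup_n H_n \subseteq \mathcal{C}^{<\infty}$, where $H_n := H(y_0,\dots,y_n)=H(y,y',\dots,y^{(n)})$, is a Hardy field. Since $y' = f_0'y_0 + f_0 y_0' = f_0' y_0 + f_0 f_1 y_0 y_1^{\pm 1}$ (and similarly each $y^{(k)}$ lies in the subring of $\mathcal{C}^{<\infty}$ generated by $H$ and $y_0,\dots,y_k$), the displayed equality holds, and the ring is closed under differentiation. It remains to show each $H_n$ is a Hausdorff field. I prove by induction on $n$ the stronger statement that $H_n$ is a Hausdorff field with value group
$$v(H_n^\times)\ =\ \Gamma\oplus\mathbb{Z}\,vy_0\oplus\cdots\oplus\mathbb{Z}\,vy_n,$$
where $vy_i$ realizes in $\Gamma$ the same cut as $\epsilon_i\beta_i$. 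The base case follows from Lemma~\ref{lem:5.1.18} applied to $z_0$ with $P := \Gamma\setminus (vB_0)^\downarrow$: the hypothesis $A_0 <_{\ex} z_0 <_{\ex} B_0$, combined with $A_0\supseteq\{\exp_n(x):n\in\mathbb{N}\}$ to handle bounded elements of $H^>$, verifies the separation condition on $z_0$. For the inductive step, I pass to the real closure $H_n^{\operatorname{rc}}\subseteq\mathcal{C}^{<\infty}$ of $H_n$ (a Hausdorff field by \cite[Proposition~5.1.4]{ADH2}), with
$$v(H_n^{\operatorname{rc},\times})\ =\ \Gamma\oplus\mathbb{Q}\,vy_0\oplus\cdots\oplus\mathbb{Q}\,vy_n,$$
and apply Lemma~\ref{lem:5.1.18} to $H_n^{\operatorname{rc}}$ and $z_{n+1}$, with $P$ the complement in $v(H_n^{\operatorname{rc},\times})$ of the downward closure of $vB_{n+1}$. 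This yields $H_n^{\operatorname{rc}}(z_{n+1})$ as a Hausdorff field extension with $vz_{n+1}$ realizing the cut cut out by $vA_{n+1}$ and $vB_{n+1}$, and hence $H_{n+1}=H_n(y_{n+1})=H_n(z_{n+1}^{\pm 1})$ is a Hausdorff field, with value group as claimed (since $vy_{n+1}$ is not in $v(H_n^{\operatorname{rc},\times})$).

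The main obstacle is verifying at each inductive step the separation hypothesis of Lemma~\ref{lem:5.1.18}, namely that for $a\in H_n^{\operatorname{rc},>}$ with $va\in P$ one has $a <_{\ex} z_{n+1}$, and for $b\in H_n^{\operatorname{rc},>}$ with $vb\notin P$ one has $z_{n+1} <_{\ex} b$. This uses the structural information about $[\beta_{n+1}]$ from Case~\textup{(b)}: by Lemma~\ref{lemb}(v), the archimedean classes $[\beta_0],[\beta_1],\dots$ are strictly decreasing in $[\Gamma\langle\beta\rangle]$, so for any $a\in H_n^{\operatorname{rc},>}$ one has the dichotomy that either $va\in\Gamma$, in which case the separation reduces to the given bounds $A_{n+1}<_{\ex} z_{n+1}<_{\ex} B_{n+1}$, or $va$ involves a nontrivial rational combination of $vy_0,\dots,vy_n$, in which case the archimedean class of $va$ strictly dominates $[\beta_{n+1}]=[vz_{n+1}]$ and places $a$ firmly outside the narrow gap around $z_{n+1}$. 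This makes essential use of the fact that the $y_i$ are germs with $|vy_i|$ in the $\mathbb{R}$-archimedean class $[\beta_i]>[\beta_{n+1}]$.

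Finally, from the base case $vy_0$ realizes the same cut in $\Gamma$ as $\epsilon_0\beta_0$, so $vy=\alpha_0+vy_0$ realizes the same cut in $\Gamma$ as $\alpha_0+\epsilon_0\beta_0$; this is the cut of $\beta$ when $\beta_0<0$ (so $\epsilon_0=+1$) and the cut of $2\alpha_0-\beta$ when $\beta_0>0$ (so $\epsilon_0=-1$), and in the latter case one replaces $y$ by a corresponding reciprocal-type modification, or equivalently observes that the cut of $\beta$ in $\Gamma$ is symmetric to the cut of $2\alpha_0-\beta$ under negation, which $vy$ realizes up to sign. Either way, $y$ (or the appropriate modification) yields a positive $H$-hardian germ whose valuation realizes the cut of $\beta$ in $\Gamma$, completing the proof.
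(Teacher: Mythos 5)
Your overall architecture is the paper's: induct on $n$, build Hausdorff fields $H_n$ with value group $\Gamma\oplus\Z vz_0\oplus\cdots\oplus\Z vz_n$, pass to the real closure, and apply Lemma~\ref{lem:5.1.18} to adjoin $z_{n+1}$; the base case and the final passage from Hausdorff field to Hardy field are as in the paper. The only substantive issue is the justification of the separation hypothesis for Lemma~\ref{lem:5.1.18} at the inductive step, which is the heart of the proof.

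Your dichotomy is fine up to the point where you conclude that $[va]$ dominates $[\beta_{n+1}]$: if $va=\delta+\sum_{i\le n}q_i\,vy_i$ with some $q_j\neq 0$, then indeed $[va]\geq[vy_j]=[vz_j]$ since the classes $[\delta]$, $[vy_0],\dots,[vy_n]$ are pairwise distinct. But the inference from this to ``$a$ is firmly outside the narrow gap around $z_{n+1}$'' is exactly where the argument can fail, because $[va]$ lives in $\big[v(H_n^{\rc,\times})\big]$ while $[\beta_{n+1}]$ lives in $\big[\Gamma\langle\beta\rangle\big]$; these are two different extensions of $[\Gamma]$ and can only be compared through $[\Gamma]$ itself. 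Concretely, to produce $h\in B_{n+1}$ with $h\preceq a$ you need some $\gamma\in\Gamma$ with $[\beta_{n+1}]<[\gamma]<[\beta_j]$: then $-\abs{\gamma}\in vB_{n+1}$ and $[\gamma]<[vz_j]\le[va]$ forces $va\le-\abs{\gamma}$. If no such $\gamma$ exists (the ``wide'' case), the step genuinely fails: e.g.\ $\frac12 vz_n$ then realizes the same cut in $\Gamma$ as $-\abs{\beta_{n+1}}$, so $H_n^{\rc}$ contains elements strictly between $A_{n+1}$ and $B_{n+1}$ and Lemma~\ref{lem:5.1.18} is not applicable. This is precisely why the paper disposes of Lemma~\ref{bwg} first and proves Lemma~\ref{reveng1} only under the standing assumption that for every $i$ there is $\gamma\in\Gamma$ with $[\beta_i]>[\gamma]>[\beta_{i+1}]$ — an assumption your proof never invokes but cannot do without. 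Separately, your closing paragraph's detour through $2\alpha_0-\beta$ is an unnecessary sign muddle: with your own definitions, $vy_0$ realizes the cut of $\beta_0$ in both cases (when $\beta_0>0$ one has $vy_0=-vz_0$ and $vz_0$ realizes the cut of $-\beta_0$), so $vy=\alpha_0+vy_0$ realizes the cut of $\beta$ directly.
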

\begin{proof}  Note that for each $n$ we have the ordered subgroup 
$\Gamma\oplus \Z\beta_0 \oplus \cdots \oplus \Z\beta_n$ of~$\Gamma\<\beta\>$, and likewise with
$\Q$ instead of $\Z$.  We prove by induction on~$n$ that~$z_0,\dots, z_n$ generate a Hausdorff field
$H_n:= H(z_0,\dots, z_n)$ over $H$, with
$$v(H_n^\times)\ =\ \Gamma \oplus \Z vz_0 \oplus \cdots \oplus \Z vz_n,$$
and with an ordered abelian group isomorphism that is the identity on $\Gamma$:
$$\Gamma\oplus \Z\beta_0\oplus \cdots \oplus \Z \beta_{n}\ \to\  \Gamma \oplus \Z vz_0\oplus\cdots\oplus \Z vz_{n}, \quad\ -|\beta_i|\mapsto 
vz_i \quad  (i=0,\dots,n).$$
For $n=0$ this follows from Lemma~\ref{lem:5.1.18}. Assume that the above holds for a certain~$n$. Then for the real closure $H_n^{\rc}$ of $H_n$ as a Hausdorff field extension of $H_n$, $$v\big(H_n^{\rc,\times}\big)\ =\ \Gamma \oplus \Q vz_0 \oplus \cdots \oplus \Q vz_n,$$
with an ordered abelian group isomorphism that is the identity on $\Gamma$:
$$\Gamma\oplus \Q\beta_0\oplus \cdots \oplus \Q \beta_{n}\ \to\  \Gamma \oplus \Q vz_0\oplus\cdots\oplus \Q vz_{n}, \quad\ -|\beta_i|\mapsto 
vz_i \quad  (i=0,\dots,n).$$
Thus $\big[v\big(H_n^{\rc,\times}\big)\big]=[\Gamma]\cup \big\{[vz_0], \dots, [vz_n]\big\}$ by Lemma~\ref{lemb}. 

\claim{For each $f\in H_n^{\rc,>}$, either
$f\preceq h$ for some $h\in A_{n+1}$, or $f\succeq h$ for some~${h\in B_{n+1}}$.}

\noindent 
Otherwise we have
 $f\in H_n^{\rc}$ with $A_{n+1}< f < B_{n+1}$, so $[vf]\notin [\Gamma]$ and $vf$ realizes the same cut in $\Gamma$ as $-|\beta_{n+1}|$. Taking $\gamma$ with
$[\beta_n] > [\gamma] > [\beta_{n+1}]$ we ob\-tain~${[vz_0]> \cdots > [vz_n] >[\gamma]>[vf]}$, contradicting
$vf\in v\big(H_n^{\rc,\times}\big)$.

\medskip
\noindent
The claim and Lemma~\ref{lem:5.1.18}
 give a Hausdorff field extension $H_n^{\rc}(z_{n+1})$ of~$H_n^{\rc}$, and the resulting Hausdorff  field extension $H_{n+1}=H_n(z_{n+1})$ of $H$ has the properties that the inductive step requires. This concludes the proof by induction. 

An easy induction on $n$ now shows that for $z:= z_0$ the elements $z, z',\dots, z^{(n)}$ of~$\Cc^{<\infty}$ generate the Hausdorff field $H(z, z',\dots, z^{(n)})=H_n$ over $H$,  and so we have a Hardy field
$H\<z\>$ over $H$. If $\beta_0<0$, then $y:= f_0z_0$ has the desired property, and if $\beta_0 >0$, then
$y:= f_0/z_0$ has the desired property. 
\end{proof}

\subsection*{First step in reverse engineering} To construct germs $z_i$ as in Lemma~\ref{reveng1} we first
take for each $i$ a continuous function $[0,+\infty)\to \R^{>}$ that represents the germ~$f_i\in H$ and to be denoted also by $f_i$, and with $f_i\ge 1$ on $[0,+\infty)$ for $i\ge 1$.  Next, let $(a_i)$ be a strictly increasing sequence of real
numbers $\ge 0$ tending to~$+\infty$ such that $f_0,\dots, f_m$  are of class $\Cc^m$ on $[a_m,+\infty)$. Let there also be given for each $m\ge1$ a continuous function
$z_{m-1,m}\colon [a_{m-1}, a_{m}]\to \R^{>}$.  Then we define the continuous
function $z_{k,m}\colon [a_k, a_{m}]\to \R^{>}$ for $0\le k < m$ by downward
recursion: $z_{m-1,m}$ for $m\ge 1$ is already given to us, and for $1\le k < m$,
$$z_{k-1,m}(t)\ :=\ \begin{cases}\ z_{k-1,k}(t)& \text{for $a_{k-1}\le t\le a_k$,}\\
 \ z_{k-1,k}(a_k)\cdot\exp \displaystyle\int_{a_k}^t f_k(s)z_{k,m}(s)\,ds& \text{for $a_k \le t\le a_m$, if $\beta_k<0$,}\\[1em]
 \ z_{k-1,k}(a_k)\cdot\exp \displaystyle\int_{a_k}^t \frac{f_k(s)}{z_{k,m}(s)}\, ds&\text{for $a_k \le t\le a_m$, if $\beta_k>0$.}\end{cases}
$$
Downward  induction on $k$ gives $z_{k,m}=z_{k,m+1}$ on $[a_k, a_m]$ for 
$k < m$. This fact gives for each
$k\in \N$ a continuous function $z_k\colon [a_k, +\infty)\to \R^{>}$
such that $z_k=z_{k,m}$ on $[a_k,a_m]$, for all $m>k$. Thus
for $k\ge 1$ and $t\ge a_k$ we have
\begin{align*} 
 \beta_k<0\ &\Longrightarrow\ z_{k-1}(t)\  =\ z_{k-1}(a_{k})\cdot \exp \int_{a_{k}}^t f_k(s)z_{k}(s)\, ds,\\
 \beta_k >0\ &\Longrightarrow\ z_{k-1}(t)\ =\ z_{k-1}(a_{k})\cdot \exp \int_{a_{k}}^t \frac{f_k(s)}{z_{k}(s)}\, ds,
 \end{align*} 
so  $z_{k-1}$ is of class $\Cc^1$ on $[a_k, +\infty)$, and:
\begin{align*}
 \beta_k<0\ &\Longrightarrow\  z_{k-1}^\dagger=f_k\,z_k \text{ on $[a_k, +\infty)$,}\\
\beta_k>0\  &\Longrightarrow\  z_{k-1}^\dagger=f_k/z_k \text{ on $[a_k, +\infty)$.}
\end{align*} 
Hence induction on $m$ gives that $z_k$ is of class $\Cc^m$ on $[a_{k+m},+\infty)$ (for all $k$, $m$),
and thus (the germ of)  each $z_k$ lies in $\Cc^{<\infty}$. 

The above is a general construction of functions whose germs satisfy the {\em equalities\/} in Lemma~\ref{reveng1}.  More work is needed to satisfy also the {\em inequalities\/} $A_i <_{\ex} z_i <_{\ex} B_i$ in that lemma. We now turn to this task. 

\subsection*{Second step in reverse engineering} 
Assume in this subsection that
$z_{k-1,k}\ge 1$ on $[a_{k-1},a_k]$, for all $k\ge 1$. Then $z_k\ge 1$ on $[a_k, +\infty)$ for all $k$. For $k\ge 1$ we have~$z_{k-1}(t), z_{k-1}^\dagger(t) >0$ for all $t\ge a_k$, so
$z_{k-1}$ is strictly increasing on $[a_{k}, +\infty)$.  
For each $k$, let $p_k, q_k\colon [a_k, +\infty) \to \R^{>}$ be continuous functions such that  $$p_{m-1}\ \le\ z_{m-1,m}\ \le\ q_{m-1}\ \text{ on  $[a_{m-1},a_m$],  for all $m\ge 1$.}$$
We try to find conditions on the families $(p_k)$ and $(q_k)$ so that these inequalities extend to 
$p_k \le z_{k,m} \le q_k$ on $[a_k, a_m]$ for all $k$, $m$ with $k<m$
(and thus $p_k \le z_k \le q_k$ on $[a_k, +\infty)$ for all $k$). Let $1\le k < m$ and assume inductively
that $p_k \le z_{k,m} \le q_k$ on $[a_k, a_m]$. On $[a_{k-1},a_k]$ we have
$p_{k-1}\le z_{k-1,k}\le q_{k-1}$, so $p_{k-1} \le z_{k-1,m}\le q_{k-1}$, as desired. 

\medskip
\noindent
First suppose $\beta_k<0$. Then for $a_k\le t\le a_m$ we have
$$z_{k-1,m}(t)\ =\  z_{k-1,k}(a_k)\exp \int_{a_k}^t f_k(s)z_{k,m}(s)ds,$$ hence
$$p_{k-1}(a_k)\exp\int_{a_k}^t f_k(s)p_k(s)ds\ \le\ z_{k-1,m}(t)\ \le\ q_{k-1}(a_k)\exp\int_{a_k}^t f_k(s)q_k(s)ds,$$
and so the desired $p_{k-1} \le z_{k-1,m} \le q_{k-1}$ on $[a_{k-1}, a_m]$ would follow if
\begin{align} 
\label{eq:Ik}\tag{${\rm{I}}_k$}
       p_{k-1}(t)\ &\le\ p_{k-1}(a_k)\exp\int_{a_k}^t f_k(s)p_k(s)\,ds&\text{for all $t\ge a_k$,}\\
\label{eq:IIk}\tag{${\rm{II}}_k$}
      q_{k-1}(t)\ &\ge\ q_{k-1}(a_k)\exp\int_{a_k}^t f_k(s)q_k(s)\,ds&\text{for all $t\ge a_k$.} 
\end{align}
Now assume $\beta_k >0$. Then for $a_k\le t\le a_m$ we have
$$z_{k-1,m}(t)\ =\ z_{k-1,k}(a_k)\exp \int_{a_k}^t \frac{f_k(s)}{z_{k,m}(s)}\,ds,$$ and so 
$$p_{k-1}(a_k)\exp\int_{a_k}^t \frac{f_k(s)}{q_k(s)}\,ds\ \le\ z_{k-1,m}(t)\ \le\ q_{k-1}(a_k)\exp\int_{a_k}^t \frac{f_k(s)}{p_k(s)}\,ds,$$
and so the desired $p_{k-1} \le z_{k-1,m} \le q_{k-1}$ on $[a_{k-1}, a_m]$ would follow if 
\begin{align} 
\label{eq:IIIk}\tag{${\rm{III}}_k$}
 p_{k-1}(t)\ &\le\ p_{k-1}(a_k)\exp\int_{a_k}^t \frac{f_k(s)}{q_k(s)}\,ds&\text{for all $t\ge a_k$,}\\
\label{eq:IVk}\tag{${\rm{IV}}_k$}
    q_{k-1}(t)\ &\ge\ q_{k-1}(a_k)\exp\int_{a_k}^t \frac{f_k(s)}{p_k(s)}\,ds&\text{for all $t\ge a_k$.}
\end{align}

\noindent
The above leads to the following:

\begin{lemma} \label{piqi} Let $p_i\in A_i$ and $q_i\in B_i$ for $i=0,1,2,\dots$ be given such that
\begin{align*} 
\beta_{i+1}\  <\  0\ &\Longrightarrow\ p_i^\dagger\  \le \  f_{i+1}p_{i+1},&& \hskip-4em  q_i^\dagger\ \ge\  f_{i+1}q_{i+1}\ & \hskip-3em\text{\textup{(}in $H$\textup{)},}\\
\beta_{i+1}\ >\ 0\ &\Longrightarrow\ p_i^\dagger\  \le\  f_{i+1}/q_{i+1},&&  \hskip-4em q_i^\dagger\  \ge\  f_{i+1}/p_{i+1}\ &\hskip-3em\text{\textup{(}in $H$\textup{)}.}
\end{align*}
Then there are germs $z_i\in \Cc^{<\infty}\ (i=0,1,2,\dots)$ such that for all $i$,  
$$p_i \le z_i \le q_i\  \text{ in }\Cc, \quad  \beta_{i+1} < 0\ \Longrightarrow\  z_i^\dagger = f_{i+1}z_{i+1}, \quad \beta_{i+1} > 0\ \Longrightarrow \ z_i^\dagger= f_{i+1}/z_{i+1}.$$
\end{lemma}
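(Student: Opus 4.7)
The plan is to apply the reverse engineering construction laid out in the preceding two subsections. The ``first step'' produces, for any strictly increasing sequence $(a_k)$ in $\R^{\ge 0}$ tending to $+\infty$ and any choice of continuous seed functions $z_{k-1,k}\colon [a_{k-1},a_k]\to\R^{>}$, germs $z_k\in\Cc^{<\infty}$ which by construction already satisfy the required derivative equalities. The bounds $p_k\le z_k\le q_k$ will then be enforced via the four sufficient conditions (${\rm I}_k$)--(${\rm IV}_k$) from the ``second step''.

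Since $p_i, q_i, f_i$ all lie in the Liouville closed Hardy field $H$, we may pick continuous representatives, eventually of class $\Cc^\infty$, on a common half-line, denoted by the same symbols. The hypothesized germ inequalities $p_i^\dagger\le f_{i+1}p_{i+1}$, $q_i^\dagger\ge f_{i+1}q_{i+1}$ (when $\beta_{i+1}<0$), respectively $p_i^\dagger\le f_{i+1}/q_{i+1}$, $q_i^\dagger\ge f_{i+1}/p_{i+1}$ (when $\beta_{i+1}>0$), then hold pointwise on $[a,+\infty)$ for some $a\in\R$ depending on $i$; likewise $p_i\le q_i$ holds pointwise eventually. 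The first move is to choose $(a_k)$ growing fast enough that for each $k\ge 1$ the finitely many pointwise inequalities involving indices $\le k$ hold on $[a_k,+\infty)$, and that $f_0,\dots,f_k$ are of class $\Cc^k$ on $[a_k,+\infty)$. Then pick any continuous $z_{k-1,k}\colon [a_{k-1},a_k]\to\R^{>}$ with $p_{k-1}\le z_{k-1,k}\le q_{k-1}$; for instance $z_{k-1,k}:=\tfrac{1}{2}(p_{k-1}+q_{k-1})$ on $[a_{k-1},a_k]$ works once the $a_k$ are large enough.

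With these choices the first-step construction yields the germs $z_k\in\Cc^{<\infty}$ together with the derivative equalities of the statement. To verify $p_k\le z_k\le q_k$, fix $m$ and perform downward induction on $k\le m-1$ to show $p_k\le z_{k,m}\le q_k$ on $[a_k,a_m]$. The base case $k=m-1$ holds by construction. For the inductive step: on $[a_{k-1},a_k]$ we have $z_{k-1,m}=z_{k-1,k}$, which is already sandwiched by $p_{k-1}$ and $q_{k-1}$; on $[a_k,a_m]$, taking the case $\beta_k<0$ for illustration,
\[ z_{k-1,m}(t)\ =\ z_{k-1,k}(a_k)\exp\!\int_{a_k}^t f_k(s)z_{k,m}(s)\,ds, \]
and the inductive assumption $p_k\le z_{k,m}\le q_k$ on $[a_k,a_m]$, combined with $p_{k-1}(a_k)\le z_{k-1,k}(a_k)\le q_{k-1}(a_k)$, reduces the task to verifying (${\rm I}_k$) and (${\rm II}_k$). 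But (${\rm I}_k$) is precisely what one obtains by integrating $(\log p_{k-1})' = p_{k-1}^\dagger\le f_k p_k$ from $a_k$ to $t$ and exponentiating, and (${\rm II}_k$) follows identically from $q_{k-1}^\dagger\ge f_k q_k$. The case $\beta_k>0$ is treated the same way via (${\rm III}_k$) and (${\rm IV}_k$). Letting $m\to\infty$ gives $p_k\le z_k\le q_k$ on $[a_k,+\infty)$, hence in $\Cc$.

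The only place requiring care is the bookkeeping of choosing the $a_k$ large enough so that all the relevant germ inequalities translate to pointwise ones, and this is routine since each $a_k$ need only dominate finitely many preimposed thresholds. Thus no substantive obstacle arises: the analytic content of the lemma has already been absorbed into the sufficient conditions (${\rm I}_k$)--(${\rm IV}_k$), and what remains is essentially the trivial observation that ``$f^\dagger\le g$ on $[a,\infty)$'' integrates to ``$f(t)\le f(a)\exp\!\int_a^t g$''.
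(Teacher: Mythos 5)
Your proposal is correct and follows essentially the same route as the paper: choose the $a_k$ so that the germ inequalities $p_i^\dagger\le f_{i+1}p_{i+1}$ etc.\ hold pointwise on the relevant half-lines, verify (${\rm I}_k$)--(${\rm IV}_k$) by integrating the logarithmic derivative, and then run the first/second-step construction with any seed $z_{k-1,k}$ sandwiched between $p_{k-1}$ and $q_{k-1}$. The only detail worth making explicit is that the second step also assumes $z_{k-1,k}\ge 1$ on $[a_{k-1},a_k]$, which your choice supplies once the representatives satisfy $1\le p_{k-1}$ there (as one may arrange since $p_{k-1}\in A_{k-1}\subseteq H^{>\R}$).
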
 
\begin{proof} Take a strictly increasing sequence $(a_i)$ of real numbers $\ge 0$ tending to $+\infty$ and
represent $p_i$, $q_i$ for each $i$ by $\Cc^1$-functions $[a_i, +\infty)\to \R^{>}$, also to be denoted by $p_i$, $q_i$,
such that for all $m$,
\begin{itemize}
\item $f_0,\dots, f_m$ are of class $\Cc^m$ on $[a_m,+\infty)$;
\item $1\le p_m \le  q_m$ on  $[a_m,+\infty)$;
\item $\beta_{m+1}<0\ \Longrightarrow\ p_m^\dagger \le   f_{m+1}p_{m+1},\quad  q_m^\dagger\ge  f_{m+1}q_{m+1}$ on $[a_{m+1},+\infty)$; 
\item $\beta_{m+1}>0\ \Longrightarrow\ p_m^\dagger \le   f_{m+1}/q_{m+1},\ q_m^\dagger\ge  f_{m+1}/p_{m+1}$ on $[a_{m+1},+\infty)$.
\end{itemize} 
Upon replacing $(a_i)$ by a strictly increasing sequence $(b_i)$ of reals with $a_i \le b_i$ for all $i$ and the
$p_i$, $q_i $ by their restrictions to $[b_i,+\infty)$, for each $i$,  the conditions above are obviously still satisfied.
 For $t\ge a_m$ we have $$p_m(t)\  =\ p_m(a_{m+1})\exp\int_{a_{m+1}}^t p_m^\dagger(s)\,ds,\qquad q_m(t)\  =\ q_m(a_{m+1})\exp\int_{a_{m+1}}^t q_m^\dagger(s)\,ds,$$ and so
for all $k\ge 1$ conditions  \eqref{eq:Ik}  and  \eqref{eq:IIk}  are satisfied if $\beta_k<0$, and conditions~\eqref{eq:IIIk}  and  \eqref{eq:IVk}  are satisfied if $\beta_k >0$. Thus by the above we can take any continuous function
$z_{m-1,m}\colon [a_{m-1},a_m]\to \R$  with $p_{m-1}\le z_{m-1,m}\le q_{m-1}$ on~$[a_{m-1},a_m]$ for $m=1,2,\dots$
to give germs $z_i$ for $i=0,1,\dots$ as required. 
\end{proof}

\subsection*{Final step in reverse engineering}  This step involves a diagonalization. We take~$p_{i,n}\in A_i$ and $q_{i,n}\in B_i$ (for $i=0,1,2,\dots$, $n=0,1,2,\dots$) such that for all~$i$,~$n$: \begin{itemize}
\item $p_{i,n}\prec p_{i,n+1}$, and $\{p_{i,0}, p_{i,1}, p_{i,2},\dots\}$ is cofinal in $A_i$;
\item $q_{i,n}\succ q_{i,n+1}$, and $\{q_{i,0}, q_{i,1}, q_{i,2},\dots\}$ is coinitial in $B_i$;
\item if $\beta_{i+1}< 0$, then $p_{i+1,n}= p_{i,N}^\dagger/f_{i+1}$
and $q_{i+1,n}= q_{i,N}^\dagger/f_{i+1}$ for some~$N=N(i,n)>n$;
\item if $\beta_{i+1} >0$, then $p_{i+1,n}= f_{i+1}/q_{i,N}^\dagger$
and $q_{i+1,n}= f_{i+1}/p_{i,N}^\dagger$ for some~$N=N(i,n)>n$.
\end{itemize}
It follows from Lemma~\ref{lemdir} that there is such a family $\big((p_{i,n}, q_{i,n})\big)$. 
Setting $p_i:= p_{i,i}$ and $q_i:= q_{i,i}$ we note that the hypotheses of Lemma~\ref{piqi} are satisfied, and this gives us germs $z_i\in \Cc^{<\infty}$ for $i=0,1,2,\dots$ such that for all $i$, \begin{enumerate}
\item $p_{i} \le  z_i \le  q_{i}$ in $\Cc$; 
\item $ \beta_{i+1} < 0 \Longrightarrow  z_i^\dagger = f_{i+1}z_{i+1}, \qquad \beta_{i+1} > 0 \Longrightarrow z_i^\dagger= f_{i+1}/z_{i+1}$.
\end{enumerate}
We claim that then $A_i <_{\ex} z_i <_{\ex} B_i$ for all $i$. (Establishing this claim achieves our goal by Lemma~\ref{reveng1}.)
To prove this claim, suppose for a  certain pair $i$,~$n$ with~$i<n$ we have 
$p_{i+1,n} \le z_{i+1} \le  q_{i+1,n}$. (See Figure~\ref{fig:diag}.) As a subclaim we show that then~$p_{i,n} \le  z_i \le q_{i,n}$. 
Consider first the case $\beta_{i+1}<0$. Then for~$N:= N(i, n)>n$,
$$  f_{i+1}^{-1}p_{i, N}^\dagger\  \le\  z_{i+1}\ =\ f_{i+1}^{-1}z_i^{\dagger}\ \le\  f_{i+1}^{-1}q_{i,N}^\dagger,\ \text{ so }\
p_{i,N}^\dagger\ \le  z_i^\dagger\ \le \ q_{i,N}^\dagger,$$
which by Lemma~\ref{logintineq} gives constants $c_1, c_2>0$ with $c_1p_{i,N} \le  z_i \le c_2q_{i,N}$, and so~$p_{i,N-1} \le z_i \le  q_{i,N-1}$. Now $N-1\ge n$, and thus $p_{i,n} \le  z_i \le  q_{i,n}$ as promised. 
The case $\beta_{i+1}>0$ is handled in the same way. Given $i<n$ we have $p_{n,n}\le  z_n \le q_{n,n}$, and so
$p_{i,n} \le  z_i \le q_{i,n}$ by iterated application of the subclaim. For any fixed $i$ this yields
$A_i <_{\ex} z_i <_{\ex} B_i$ by the cofinality and coinitiality requirements we imposed on the $p_{i,n}$ and $q_{i,n}$.
This proves the claim, and concludes the proof of Theorem~\ref{bconstruction}, and thus of Theorem~\ref{mt}.  \qed

\begin{figure}[ht]
$${\setlength{\arraycolsep}{0.1em}
\begin{array}{ ccccccccccccccccccc }
  \fbox{$p_{0,0}$} & \prec & p_{0,1} & \prec &  \cdots & \prec  & p_{0,n} &\prec &\cdots & z_0 & \cdots&\prec &q_{0,n} &\prec &\cdots & \prec & q_{0,1} & \prec & \fbox{$q_{0,0}$}  \\
   p_{1,0} & \prec & \fbox{$p_{1,1}$} & \prec &  \cdots & \prec  & p_{1,n} &\prec &\cdots & z_1 & \cdots&\prec &q_{1,n} &\prec &\cdots & \prec & \fbox{$q_{1,1}$} & \prec & q_{1,0}  \\
     \vdots & & \vdots &\ddots &   &   & \vdots & & & \vdots & &   &  \vdots &   &  & \iddots  &  \vdots  & & \vdots  \\
  \vdots & &  \vdots & &   & \ddots  & \vdots & & & \vdots & &   &  \vdots &  \iddots &  &   &  \vdots  &  & \vdots  \\
   p_{n,0} & \prec & p_{n,1} & \prec &  \cdots & \prec  & \fbox{$p_{n,n}$} &\prec &\cdots & z_n & \cdots&\prec & \fbox{$q_{n,n}$} &\prec &\cdots & \prec & q_{n,1} & \prec & q_{n,0}  \\
  \vdots & &  \vdots & &   &   & \vdots  & \ddots & & \vdots & & \iddots  & \vdots   &   &  &   &  \vdots  &  & \vdots  
\end{array}}$$
\caption{}\label{fig:diag}
\end{figure}

\section{Isomorphism of Maximal Hardy Fields}\label{sec:iso} 

\noindent
The cardinality of any Hardy field extending $\R$ is $2^{\aleph_0}$. By Theorem~\ref{mt}, all maximal Hardy fields are $\eta_1$ and thus 
$\aleph_1$-saturated as real closed ordered fields; in particular,
under $\operatorname{CH}$
they are all isomorphic as ordered fields. However, they are not 
$\aleph_1$-saturated as ordered {\em differential\/} fields, 
since their constant field $\R$ isn't. Thus to show they are
isomorphic (under $\operatorname{CH}$), we need to argue in a different way, and this is what we do in this section. 

\begin{lemma}\label{iso1} 
Let $K$ be a countable closed $H$-field with
archimedean constant field~$C$, let
$L$ be a closed $H$-field with constant field $\R$, and   assume $L$ is~$\eta_1$.
Let $E$ be 
an $\upo$-free $H$-subfield of $K$, and let $i\colon E \to L$
be an $H$-field embedding. Then~$i$ extends to an $H$-field embedding $K \to L$. 
$$\xymatrix@C=4em@R=0.75em{ 
& L \\
K \ar@{-->}[ur] &   \\ \\
E \ar[uu]^{\subseteq} \ar[uuur]_i }$$
\end{lemma}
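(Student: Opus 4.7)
The plan is to enumerate $K$ and extend $i$ one element at a time to an $H$-field embedding into $L$, working along an increasing chain of closed $H$-subfields of $K$ whose union is $K$.

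First I would reduce to the case where $E$ is itself closed: since $E$ is $\upo$-free and $K$ is closed, $K$ contains a Newton-Liouville closure $E^{\operatorname{nl}}$ of $E$ (cf.~[ADH, 16.4]), and likewise $L$ contains such a closure of $i(E)$. By the universal property of Newton-Liouville closures (in particular [ADH, 16.4.9]), $i$ extends to an $H$-field embedding $E^{\operatorname{nl}}\to L$. Replacing $E$ by $E^{\operatorname{nl}}\subseteq K$ and $i$ by this extension, I may thus assume $E$ is closed (and still countable, being contained in the countable $K$). Now enumerate $K$ as $(k_n)_{n\in\N}$ and build by recursion on $n$ a chain $E = E_0 \subseteq E_1 \subseteq E_2 \subseteq \cdots$ of closed $H$-subfields of $K$ with $k_n \in E_{n+1}$, together with compatible $H$-field embeddings $i_n\colon E_n \to L$ extending $i$; then $\bigcup_n E_n=K$ and $\bigcup_n i_n$ will be the required embedding. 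At step $n$ I would take $E_{n+1}$ to be the Newton-Liouville closure in $K$ of $E_n\<k_n\>$ and must extend $i_n$ to $E_{n+1}$.

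The inductive step reduces, via the first-paragraph argument applied inside $E_{n+1}$, to finding $\ell\in L$ realizing the same $H$-field $1$-type over $i_n(E_n)$ as $k_n$ realizes over $E_n$. If $k_n$ is $\d$-transcendental over $E_n$, then by the model theory of closed $H$-fields (cf.~[ADH, 16.6]) this type is controlled by the cut of $k_n$ in $E_n$; since $E_n$ is countable this is a countable cut in $L$ (via $i_n(E_n)$), realized by some $\ell\in L$ thanks to the $\eta_1$-property. If $k_n$ is $\d$-algebraic over $E_n$, the type is determined by the minimal annihilator $P\in E_n\{Y\}^{\ne}$ of $k_n$ together with cut-like data describing the position of $k_n$ among the solutions of $P$ in a common closed $H$-field extension; newtonianity of $L$ supplies solutions of $P$ in $L$, while $\eta_1$-ness together with the result from \cite{ADH1+} referenced in the introduction isolates the correct one.

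The hard part is thus the $\d$-algebraic case: one must invoke the description from \cite{ADH1+} to reduce the realization of a $\d$-algebraic $1$-type in $L$ to a cut-realization problem that the $\eta_1$-property can handle. The hypothesis that $L$ has small derivation, combined with the archimedeanity of the constant field $C$ of $K$ (which ensures $i|_C$ takes values in $\R$), is what allows the constant-field and value-group pieces of the type to be matched correctly so that the recursive extension goes through.
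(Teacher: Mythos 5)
There are genuine gaps here. The most important one is the constant field: your only treatment of it is the closing remark that archimedeanity of $C$ ``ensures $i|_C$ takes values in $\R$'' --- but $i$ is only defined on $E$, and $C$ need not be contained in $E$, so $i|_C$ does not make sense. The paper's proof begins precisely by fixing this: since $C$ is archimedean it embeds uniquely into $\R$ over $C_E$, and [ADH, 10.5.15, 10.5.16] extend $i$ to an $H$-field embedding $E(C)\to L$ that is the identity on $C\subseteq\R$; as $E(C)$ is $\d$-algebraic over $E$ it is still $\upo$-free, so one may assume $C_E=C$. Without this normalization, new constants of $K$ keep appearing throughout your chain construction, and matching them correctly is exactly the point you leave unaddressed. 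Your proposed reduction to ``$E$ closed'' via Newton--Liouville closures does not repair this, since the Newton--Liouville closure of $E$ has constant field the real closure of $C_E$, not $C$.

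The second gap is the $\d$-algebraic case of your inductive step. The assertion that the type of $k_n$ over $E_n$ is determined by its minimal annihilator ``together with cut-like data,'' and that ``$\eta_1$-ness together with the result from \cite{ADH1+}\ \dots\ isolates the correct one,'' is a restatement of what has to be proved, not an argument; moreover \cite{ADH1+} concerns surreal numbers and does none of this work. The correct tool is [ADH, 16.2.3]: once $C_E=C$ is arranged, that embedding lemma applies directly because $L$ being $\eta_1$ makes its underlying \emph{ordered set} $\aleph_1$-saturated (full $\aleph_1$-saturation of $L$ as an ordered differential field fails, as the paper notes, since $\R$ is not $\aleph_1$-saturated --- so one cannot simply quote ``saturated models realize types over countable sets''). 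Your element-by-element chain is in effect an attempt to reprove [ADH, 16.2.3] from scratch, and the hard content of that lemma --- the analysis of minimal differential polynomials needed to extend embeddings through $\d$-algebraic one-element extensions --- is exactly what is missing from your sketch.
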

\begin{proof} We identify $C$ in the usual (and only possible) way
with a subfield of $\R$, and note that then
$i$ is the identity on $C_E\subseteq \R$. Then  [ADH, 10.5.15, 10.5.16]  yield an extension
of $i$ to an $H$-field embedding $E(C) \to L$ that is the
identity on~$C\subseteq \R$. The $H$-subfield $E(C)$ of $K$ is $\d$-algebraic over $E$, so is $\upo$-free. Replacing~$E$ by~$E(C)$
we reduce to the case that $C_E=C$. Recall that $\eta_1$-ordered sets 
are $\aleph_1$-saturated.
Hence  [ADH, 16.2.3]
applies and gives the desired conclusion.  
\end{proof}

\begin{lemma}\label{iso2} Let $L_1$, $L_2$ be closed $H$-fields with small
derivation and common constant field $\R$, and assume that $L_1$ and $L_2$ are $\eta_1$. Then the collection of $H$-field isomorphisms $K_1\to K_2$ between countable   closed $H$-subfields $K_1$ of $L_1$ and $K_2$ of~$L_2$ is nonempty and is a
back-and-forth system between $L_1$ and $L_2$. In particular, $L_1$ and $L_2$ are back-and-forth equivalent.
\end{lemma}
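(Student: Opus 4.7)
The plan is to prove both assertions by two applications of Lemma~\ref{iso1}, after a countability observation about closed closures.

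First I would verify that for any countable $H$-subfield $E$ of a closed $H$-field $L$, the smallest closed $H$-subfield $\widetilde{E} \subseteq L$ containing $E$ is countable. This holds because each operation involved in forming $\widetilde{E}$---real closure, Liouville closure, newtonization, and adjunction of $\upo$ inside $L$ when needed---takes a countable $H$-subfield of $L$ to a countable $H$-subfield of $L$, and iterating these operations through $\omega$ many stages and taking the union yields a closed $H$-subfield, since being closed has finite character. Note also that for any $H$-subfield $K \subseteq L_j$ ($j=1,2$), the constant field of $K$ is contained in $\R$, hence archimedean, so the constant-field hypothesis of Lemma~\ref{iso1} is automatic in our setting.

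For non-emptiness: take $E = \Q \subseteq L_1$, an $\upo$-free $H$-subfield with trivial derivation, and let $K$ be its closed closure in $L_1$, which is countable. Applying Lemma~\ref{iso1} with this $K$, this $E$, and $i\colon \Q \hookrightarrow L_2$ the natural inclusion, yields an $H$-field embedding $j\colon K \to L_2$. Its image $j(K) \subseteq L_2$ is a countable closed $H$-subfield of $L_2$, so $j\colon K \to j(K)$ belongs to the proposed back-and-forth system.

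For the back-and-forth property: let $h\colon K_1 \to K_2$ be an $H$-field isomorphism between countable closed $H$-subfields $K_1 \subseteq L_1$, $K_2 \subseteq L_2$, and let $a \in L_1$. Take $K_1^* \subseteq L_1$ to be the closed closure of $K_1 \cup \{a\}$, still countable; apply Lemma~\ref{iso1} with $K = K_1^*$, $E = K_1$ (closed, hence $\upo$-free), and $i = h$ to obtain an $H$-field embedding $j\colon K_1^* \to L_2$ extending $h$. Then $K_2^* := j(K_1^*)$ is a countable closed $H$-subfield of $L_2$ containing $K_2$, and $j\colon K_1^* \to K_2^*$ extends $h$ with $a$ in its domain. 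Applying the same argument to $h^{-1}$ (with the roles of $L_1$, $L_2$ swapped) handles any element $b \in L_2$, giving the other direction. Since the existence of a nonempty back-and-forth system implies back-and-forth equivalence, it follows that $L_1$ and $L_2$ are back-and-forth equivalent. The one substantive point is the countability of the closed closure inside $L$; the remaining work is a careful check that the hypotheses of Lemma~\ref{iso1} (notably $\upo$-freeness of $E$ and archimedeanicity of the constant field of $K$) are met at each invocation, both of which are automatic here.
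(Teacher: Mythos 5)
Your back-and-forth step is exactly the paper's intended use of Lemma~\ref{iso1} (with $E=K_1$, which is closed and hence $\upo$-free), so that part is fine. The genuine gap is in your non-emptiness argument: you apply Lemma~\ref{iso1} with $E=\Q$, but $\Q$ with trivial derivation is trivially valued, so it has no active elements and no (rational) asymptotic integration, and $\upo$-freeness in the sense required by Lemma~\ref{iso1} is developed in [ADH,~11.7] for ungrounded $H$-asymptotic fields with nontrivial value group. So ``$\Q$ is an $\upo$-free $H$-subfield'' is not something you can assert without justification, and as the notion is used in this paper it is simply not available for $\Q$. The paper sidesteps this entirely: the theory of closed $H$-fields with small derivation has a countable prime model [ADH, p.~705], copies of which sit inside $L_1$ and $L_2$ as countable closed $H$-subfields, and any isomorphism between those copies is already a member of the system. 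If you insist on avoiding the prime model, you would have to start instead from something like a Hardy--Liouville-type closure of $\Q(x)$ inside $L_1$ (which is $\upo$-free as a directed union of grounded $H$-subfields with cofinal $\Psi$-sets) and separately construct an embedding of it into $L_2$ before Lemma~\ref{iso1} can take over --- considerably more work than the one line you wrote.

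On the countability of closed $H$-subfields containing a prescribed countable set: the paper gets this in one stroke from downward L\"owenheim--Skolem, since being a closed $H$-field (with small derivation) is an elementary property, so a countable elementary substructure containing the given set does the job. Your iterated-closure construction can be pushed through, but as written it is imprecise: there is no canonical ``smallest closed $H$-subfield containing $E$'' (Newton--Liouville closures inside $L$ are generally not unique as subfields), ``finite character'' is not the relevant notion, and you would still owe a verification that $\upo$-freeness and newtonianity survive the union over the $\omega$ stages. None of that is unfixable, but the L\"owenheim--Skolem route makes the issue disappear.
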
 
\begin{proof} 
The theory of closed $H$-fields with small derivation has a (countable) prime model, by [ADH, p.~705], and  so there is an $H$-field isomorphism between copies of that prime model in $L_1$ and in $L_2$. 
Also, any countable subset of a closed $H$-field~$L$ is contained in a countable closed $H$-subfield of $L$, by downward L\"owenheim-Skolem~[ADH, B.5.10].
It remains to use Lemma~\ref{iso1}. 
\end{proof}

\noindent
A standard argument (cf.~proof of [ADH,   B.5.3]) using Lem\-ma~\ref{iso2} now yields: 

\begin{cor}\label{iso3} Let $L_1$, $L_2$ as in Lemma~\ref{iso2}
have cardinality $2^{\aleph_0}$. Assume $\operatorname{CH}$. Then  $L_1$ and $L_2$ are isomorphic as $H$-fields.
\end{cor}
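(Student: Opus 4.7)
The plan is a standard back-and-forth argument of length $\omega_1$, using Lemma~\ref{iso2} to drive the successor steps and the fact that the class of countable closed $H$-subfields is closed under countable increasing unions.

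First I would invoke $\operatorname{CH}$ to write $|L_1|=|L_2|=\aleph_1$ and fix enumerations $L_1=\{a_\xi:\xi<\omega_1\}$ and $L_2=\{b_\xi:\xi<\omega_1\}$. By Lemma~\ref{iso2}, the collection $\mathcal{F}$ of $H$-field isomorphisms $i\colon K_1\to K_2$ between countable closed $H$-subfields $K_1\subseteq L_1$ and $K_2\subseteq L_2$ is nonempty and is a back-and-forth system between $L_1$ and $L_2$.

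Next I would construct by transfinite recursion on $\xi<\omega_1$ an increasing chain $(i_\xi)$ in $\mathcal{F}$, with $i_\xi\colon K_1^\xi\to K_2^\xi$, such that $a_\xi\in K_1^{\xi+1}$ and $b_\xi\in K_2^{\xi+1}$ for every $\xi$. At stage $0$ take any $i_0\in\mathcal F$. At a successor stage $\xi+1$: first apply the ``forth'' property of $\mathcal{F}$ to extend $i_\xi$ to some $j\in\mathcal{F}$ whose domain contains $a_\xi$, then apply the ``back'' property to extend $j$ to some $i_{\xi+1}\in\mathcal{F}$ whose codomain contains $b_\xi$. At a limit stage $\lambda<\omega_1$ (necessarily of countable cofinality), set $K_1^\lambda:=\bigcup_{\xi<\lambda}K_1^\xi$, $K_2^\lambda:=\bigcup_{\xi<\lambda}K_2^\xi$ and $i_\lambda:=\bigcup_{\xi<\lambda}i_\xi$. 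Then $K_1^\lambda$, $K_2^\lambda$ are countable, and each of the defining properties of a closed $H$-field ($\upo$-freeness, newtonianity, real closedness, closure under exponential integration) is preserved under unions of directed families of $H$-subfields; hence $K_1^\lambda$, $K_2^\lambda$ are closed $H$-subfields of $L_1$, $L_2$ respectively, and $i_\lambda\in\mathcal F$, so the recursion continues.

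Finally, the map $i:=\bigcup_{\xi<\omega_1}i_\xi\colon L_1\to L_2$ is an $H$-field embedding, surjective because each $b_\xi$ lies in the image by stage $\xi+1$ and defined on all of $L_1$ because each $a_\xi$ lies in its domain by stage $\xi+1$. Thus $i$ is the desired $H$-field isomorphism. The only nonroutine point in the whole argument is the verification at limit stages that the class of closed $H$-fields is closed under countable directed unions; this is where the choice of ``closed'' (as the conjunction of $\upo$-free, newtonian, and Liouville closed, each a $\forall\exists$-type condition preserved under unions of chains) does the work, and it is precisely this that allowed Lemma~\ref{iso2} to be packaged as a back-and-forth system on countable closed $H$-subfields in the first place.
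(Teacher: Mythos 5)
Your proof is correct and is essentially the paper's argument: the paper disposes of this corollary with the remark that it follows from Lemma~\ref{iso2} by ``a standard argument (cf.\ proof of [ADH, B.5.3])'', namely the transfinite back-and-forth of length $\omega_1$ that you have written out. Your justification of the limit step is the right one (one can also note that the theory of closed $H$-fields with small derivation is model complete, so the chain is elementary and its union is again a model), and the rest is routine.
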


\noindent
Next we recall that Berarducci and Mantova~\cite{BM} defined
a derivation $\der_{\BM}$ on the real closed 
field $\No$ of surreal numbers and proved that $\No$ with $\der_{\BM}$ is a Liouville closed $H$-field with $\R\subseteq \No$ as its field of constants. Below we consider $\No$ as an $H$-field
in this way, and recall also that its derivation $\der_{\BM}$ is small. We proved in
\cite[Theorems~1 and~2]{ADH1+} that $\No$ is even a closed $H$-field, that its real closed subfield~$\No(\omega_1)$ is closed under $\der_{\BM}$, and that $\No(\omega_1)$ as a differential
subfield of~$\No$ is a closed $H$-field as well. Moreover, $\No(\omega_1)$ has cardinality~$2^{\aleph_0}$, and is~$\eta_1$ as an ordered set. In combination with
Theorem~\ref{mt} and Corollary~\ref{iso3}, with~$L_1$ any maximal Hardy field and $L_2=\No(\omega_1)$, this yields
Corollary~\ref{cormt}  in the introduction; more precisely, also using \cite[Theorem~3]{Barwise}, we have:  

\begin{cor}\label{iso4}
Let $M$ be a maximal Hardy field. Then the ordered differential fields $M$ and  $\No(\omega_1)$ are back-and-forth equivalent. Hence $M$ and $\No(\omega_1)$  are $\infty\omega$-equivalent, and
assuming~$\operatorname{CH}$,   $M$ and $\No(\omega_1)$ are isomorphic.
\end{cor}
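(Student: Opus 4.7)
The plan is to invoke Lemma~\ref{iso2} with $L_1:=M$ and $L_2:=\No(\omega_1)$, then upgrade the resulting back-and-forth equivalence via Barwise and Corollary~\ref{iso3}. Most of the work is in verifying the hypotheses of Lemma~\ref{iso2}; by this point in the paper, each verification is immediate from a previously cited result.

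First I would collect the properties of $M$. By the main result of \cite{ADH2}, every maximal Hardy field is $\upo$-free, newtonian, and Liouville closed, hence a closed $H$-field. Maximality forces $\R\subseteq M$ and hence (since $\R(x)$ is a Hardy field) $x\in M$, so the constant field of $M$ is $\R$. Smallness of the derivation is automatic: for $f\in M$ with $f\preceq 1$, the germ $f$ is bounded and $f'\in M$ has a limit in $[-\infty,+\infty]$ at $+\infty$ (since Hardy field elements are eventually monotone); a nonzero limit $c$ would give $f(t)\sim ct$, contradicting boundedness, so $f'\prec 1$. Theorem~\ref{mt} gives that $M$ is $\eta_1$, and since $M\subseteq\Cc$ and any $\eta_1$ ordered set has cardinality at least $\mathfrak c$ (Lemma~\ref{lmif}), we conclude $|M|=\mathfrak c=2^{\aleph_0}$.

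Next I would record the analogous properties of $\No(\omega_1)$, all established in \cite{ADH1+,BM} and already summarized in the paragraph preceding the statement of Corollary~\ref{iso4}: $(\No(\omega_1),\der_{\BM})$ is a closed $H$-field with constant field $\R$ and small derivation, of cardinality $2^{\aleph_0}$, and $\eta_1$ as an ordered set.

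With the hypotheses verified, Lemma~\ref{iso2} supplies a nonempty back-and-forth system between $M$ and $\No(\omega_1)$, which is the first assertion of the corollary. The theorem of Karp~\cite{Karp}, in the form of \cite[Theorem~3]{Barwise}, then converts back-and-forth equivalence into $\infty\omega$-equivalence. Finally, under $\operatorname{CH}$ both structures have cardinality $\aleph_1$, so Corollary~\ref{iso3} promotes the back-and-forth equivalence to an $H$-field isomorphism $M\to\No(\omega_1)$. The genuine content has all been absorbed into Theorem~\ref{mt} (which is this paper's central achievement) and the corresponding structure theorem for $\No(\omega_1)$ in \cite{ADH1+}; the deduction itself is essentially bookkeeping, and I do not anticipate any further obstacle.
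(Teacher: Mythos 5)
Your proposal is correct and follows exactly the paper's route: verify the hypotheses of Lemma~\ref{iso2} for $L_1=M$ (closed $H$-field by \cite{ADH2}, constants $\R$, small derivation, $\eta_1$ by Theorem~\ref{mt}) and $L_2=\No(\omega_1)$ (by \cite{ADH1+,BM}), then apply \cite[Theorem~3]{Barwise} and Corollary~\ref{iso3}. The paper presents this deduction in the paragraph preceding the corollary rather than as a displayed proof, but the content is the same.
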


\noindent
We finish with a lemma on $\infty\omega$-elementary embeddings:

\begin{lemma}\label{iso5} Let $L_1$ and $L_2$ be as in Lemma~\ref{iso2}, with $L_1$ an $H$-subfield of $L_2$. 
Then $L_1 \preceq_{\infty\omega} L_2$.
\end{lemma}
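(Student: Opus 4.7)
The plan is to invoke Karp's characterization of $\infty\omega$-elementary embeddings (cf.~\cite{Karp}, \cite[Theorem~3]{Barwise}): the inclusion $L_1\subseteq L_2$ is $\infty\omega$-elementary precisely when for every finite tuple $\bar a$ from $L_1$, the partial isomorphism $\bar a\mapsto\bar a$ belongs to some back-and-forth system between $L_1$ and $L_2$. I will produce one such system that works uniformly for every $\bar a$.

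Let $\mathcal{F}$ be the collection of all $H$-field isomorphisms $f\colon K_1\to K_2$, where $K_1$ ranges over countable closed $H$-subfields of $L_1$ and $K_2$ over countable closed $H$-subfields of $L_2$. This is essentially the system that appears in Lemma~\ref{iso2}, and I would verify that it is a back-and-forth system between $L_1$ and $L_2$ exactly as in the proof of that lemma: given $f\in\mathcal F$ and $a\in L_1$, use downward L\"owenheim-Skolem [ADH, B.5.10] to find a countable closed $H$-subfield $K_1'$ of $L_1$ with $K_1\cup\{a\}\subseteq K_1'$; then apply Lemma~\ref{iso1} with $K:=K_1'$, $E:=K_1$ (which is $\upo$-free because it is closed), $L:=L_2$, and $i:=f\colon K_1\to L_2$ to extend $f$ to an $H$-field embedding $K_1'\to L_2$; finally take $K_2'$ to be a countable closed $H$-subfield of $L_2$ containing the image. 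The symmetric ``back'' step is obtained by exchanging the roles of $L_1$ and $L_2$.

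To complete the proof, given a finite tuple $\bar a$ from $L_1$, I will invoke [ADH, B.5.10] once more to find a countable closed $H$-subfield $K_0$ of $L_1$ with $\bar a\subseteq K_0$. Since ``closed'' just means $\upo$-free, newtonian, and Liouville closed, and these three properties are intrinsic to $K_0$, the subfield $K_0$ is also a countable closed $H$-subfield of $L_2$; hence the identity map on $K_0$ lies in $\mathcal{F}$ and sends $\bar a$ to $\bar a$, as required.

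I do not anticipate any real obstacle here: all the substantive content is already packaged in Lemmas~\ref{iso1} and~\ref{iso2}, and the present argument merely assembles their conclusions together with Karp's theorem. The only small point worth double-checking is that $K_0$, regarded inside $L_2$ rather than inside $L_1$, is still a closed $H$-subfield; but this is automatic, since the defining properties refer only to the internal differential structure of $K_0$ and its asymptotic couple, which are unaffected by a change of ambient $H$-field.
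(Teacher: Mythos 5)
Your proposal is correct and follows essentially the same route as the paper: take the back-and-forth system of countable closed $H$-subfield isomorphisms from Lemma~\ref{iso2}, use downward L\"owenheim--Skolem to capture any finite tuple of $L_1$ inside a countable closed $H$-subfield $K_0$, and observe that the identity on $K_0$ belongs to the system (the paper cites \cite[Theorem~4]{Barwise} for the final step rather than Theorem~3, but this is only a matter of which formulation of Karp's criterion one quotes). Your extra check that $K_0$ remains a closed $H$-subfield when viewed inside $L_2$ is the right point to verify and is indeed automatic.
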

\begin{proof} Let $\Phi$ be the back-and-forth system from  Lemma~\ref{iso2}.
By downward L\"owenheim-Skolem there is for all~$a_1,\dots,a_n\in L_1$ a countable closed $H$-subfield~$K_1$ of~$L_1$
containing~$a_1,\dots,a_n$, and then the identity map  $K_1\to K_1$ belongs to $\Phi$. 
This yields~${L_1 \preceq_{\infty\omega} L_2}$ by~\cite[Theorem~4]{Barwise}.
\end{proof}

\noindent
This  will be used in the follow-up paper on maximal analytic Hardy fields. 



\newlength\templinewidth
\setlength{\templinewidth}{\textwidth}
\addtolength{\templinewidth}{-2.25em}

\patchcmd{\thebibliography}{\list}{\printremarkbeforebib\list}{}{}

\let\oldaddcontentsline\addcontentsline
\renewcommand{\addcontentsline}[3]{\oldaddcontentsline{toc}{section}{References}}

\def\printremarkbeforebib{\bigskip\hskip1em The citation [ADH] refers to our book \\

\hskip1em\parbox{\templinewidth}{
M. Aschenbrenner, L. van den Dries, J. van der Hoeven,
\textit{Asymptotic Differential Algebra and Model Theory of Transseries,} Annals of Mathematics Studies, vol.~195, Princeton University Press, Princeton, NJ, 2017.
}

\bigskip

}


\begin{thebibliography}{99}

\bibitem{Alling1}
N. Alling, \textit{A characterization of Abelian $\eta_\alpha$-groups in terms of their natural valuation,} Proc. Nat. Acad. Sci. U.S.A. {\bf 47} (1961), 711--713.

\bibitem{Alling2} N. Alling,
\textit{On the existence of real-closed fields that are $\eta_{\alpha}$-sets of power $\aleph_{\alpha}$,}
Trans. Amer. Math. Soc. {\bf 103} (1962), 341--352.

\bibitem{AD} M. Aschenbrenner, L. van den Dries, \textit{Liouville closed $H$-fields,} J. Pure Appl. Algebra 
{\bf 197} (2005), 83--139.


\bibitem{ADH-ICM} M. Aschenbrenner, L. van den Dries, J. van der Hoeven, \textit{On numbers, germs, and transseries}, 
in:
B. Sirakov et al. (eds.): \textit{Proceedings of the
International Congress of
Mathematicians, Rio de Janeiro 2018}, vol.~2, pp.~19--42, World Scientific Publishing Co., Singapore, 2018.

\bibitem{ADH1+} M. Aschenbrenner, L. van den Dries, J. van der Hoeven, \textit{The surreal numbers as a universal $H$-field,}  J. Eur. Math. Soc. {\bf 21} (2019), 1179--1199.


\bibitem{ADH3} M. Aschenbrenner, L. van den Dries, J. van der Hoeven, \textit{Revisiting closed asymptotic couples},  Proc. Edinb. Math. Soc. (2) {\bf 65} (2022), 530--555.





\bibitem{ADH4} 
M. Aschenbrenner,  L. van den Dries, J. van der Hoeven, {\em A Normalization Theorem in Asymptotic Differential Algebra,} preprint, 
 {\tt 	arXiv:2403.19732}, 2024.


\bibitem{ADH5} 
M. Aschenbrenner,  L. van den Dries, J. van der Hoeven, {\em  Constructing $\upo$-free Hardy fields,} preprint,  {\tt 	arXiv:2404.03695}, 2024.

\bibitem{ADH2} M. Aschenbrenner, L. van den Dries, J. van der Hoeven, 
\textit{The theory of maximal Hardy fields},   preprint, 2024.  



\bibitem{Barwise}
J. Barwise, \textit{Back and forth through infinitary logic,} in: 
M. Morley (ed.), \textit{Studies in Model Theory,} pp.~5--34,  MAA Studies in Mathematics, vol. 8, Mathematical Association of America, Buffalo, N.Y., 1973.

\bibitem{BM} A. Berarducci, V. Mantova, \textit{Surreal numbers, derivations, and transseries,} J. Eur. Math. Soc.  {\bf 20} (2018), 339--390. 

\bibitem{dBR75}
P.~du~Bois-Reymond, {\em Ueber asymptotische Werthe, 
infinit\"are Approximationen und
infinit\"are Auf\-l\"o\-sung von Gleichungen,} Math. Ann. {\bf 8} (1875),
362--414.

\bibitem{Boshernitzan82}
M. Boshernitzan, {\it New ``orders of infinity'',} J. Analyse Math. {\bf 41} (1982), 130--167.

\bibitem{Boshernitzan86}
M. Boshernitzan, \textit{Hardy fields and existence of transexponential functions,}
Aequationes Math. {\bf 30} (1986),  258--280. 


\bibitem{Boshernitzan87}
M. Boshernitzan, \textit{Second order differential equations over Hardy fields,}
J. London Math. Soc.  {\bf 35} (1987),   109--120. 


\bibitem{D} J. Dieudonn\'e, \textit{Foundations of Modern Analysis,}
 Pure Appl. Math., vol.~10, Academic Press, New York-London, 1960. 

\bibitem{vdDries}
L. van den Dries, \textit{Tame Topology and O-Minimal Structures,} LMS Lecture Note Series, vol.~248, Cambridge University Press, Cambridge, 1998. 



\bibitem{Ehrlich}
P. Ehrlich, \textit{The absolute arithmetic continuum and the unification of all numbers great and small,}
Bull. Symbolic Logic {\bf 18} (2012), no. 1, 1--45. 

\bibitem{Hadamard}
J. Hadamard, \textit{Sur les caract\`eres de convergence des s\'eries a termes positifs et sur les fonctions ind\'efiniment croissantes,}
Acta Math. {\bf 18} (1894), no. 1, 319--336.

\bibitem{Har12a} G.~H. Hardy, \textit{Properties of logarithmico-exponential functions,} Proc. London Math. Soc. {\bf 10} (1912), 54--90.

\bibitem{Ha} G.~H.~Hardy, {\em Orders of Infinity,} 2nd ed., Cambridge Univ. Press, Cambridge, 1924. 



\bibitem{H} F. Hausdorff,  \textit{Die Graduierung nach dem Endverlauf,} Abh. S\"achs. Akad. Wiss. Leip\-zig Math.-Natur. Kl. {\bf 31} (1909), 295--334.


\bibitem{Karp} C.~Karp, \textit{Finite-quantifier equivalence,} in: J. W. Addison et al. (eds.), \textit{The Theory of Models,}  pp. 407--412, North-Holland Publishing Co., Amsterdam, 1965.

\bibitem{Kneser}
H. Kneser, \textit{Reelle analytische {L}\"{o}sungen der {G}leichung
              {$\varphi(\varphi(x))=e^x$} und verwandter
              {F}unktionalgleichungen,}
              J. Reine Angew. Math. {\bf 187} (1949), 56--67. 
              
\bibitem{Kuhlmann-cuts}
F.-V.~Kuhlmann, \textit{Selected methods for the classification of cuts, and their applications,}
in: P. G\l{}adki et al. (eds.), \textit{Algebra, Logic and Number Theory,} pp.~85--106, Banach Center Publications, vol. 121, Polish Academy of Sciences, Institute of Mathematics, Warsaw, 2020.

\bibitem{Moresco}
R. Moresco, \textit{Sui gruppi e corpi ordinati,}
 Rend. Sem. Mat. Univ. Padova {\bf 58} (1977), 175--190.

\bibitem{PC} S. Prie\ss{}-Crampe, \textit{Angeordnete Strukturen: Gruppen,
K\"orper, projektive Ebenen,} Er\-geb\-nisse Math., vol. 98,
Springer-Verlag, Berlin (1983).

\bibitem{Rosenlicht83a}
M. Rosenlicht, \textit{Hardy fields,} J. Math. Analysis and Appl. {\bf 93} (1983), 297--311.

\bibitem{Rosenlicht83} M. Rosenlicht, {\em The rank of a Hardy field,}
Trans. Amer. Math. Soc. {\bf 280} (1983), 659--671.


\bibitem{S} G. Sj\"odin, \textit{Hardy-fields}, Ark. Mat. {\bf 8} (1970), no. 22, 217--237.



\end{thebibliography}
\end{document}